\documentclass[10pt]{amsart}
\usepackage{amsmath, amsrefs, accents, amsfonts, amscd, amssymb, amsthm,
  verbatim, psfrag, dsfont,enumerate,url, graphicx}
\usepackage{color, pdfsync, hyperref}
\usepackage{marginnote}

\newtheorem{conjecture}{Conjecture}

\newtheorem{lemma}{Lemma}[section]
\newtheorem{smalltheorem}[lemma]{Theorem}
\newtheorem{proposition}[lemma]{Proposition}
\newtheorem{theorem}{Theorem}
\newtheorem{corollary}[lemma]{Corollary}
\newtheorem{definition}[lemma]{Definition}
\newtheorem{remark}[lemma]{Remark}
\newtheorem{restatementlemma}[lemma]{Restatement of Lemma}
\newtheorem{restatementproposition}[lemma]{Restatement of Proposition}

\setlength{\marginparwidth}{1.2in}
\let\oldmarginpar\marginpar
\renewcommand\marginpar[1]{\-\oldmarginpar[\raggedright\footnotesize #1]%
{\raggedright\footnotesize #1}}

\begin{document}
\title{Feral Curves and Minimal Sets}
\author[J.W.~Fish]{Joel W.~Fish}
\thanks{The first author's research in development of this manuscript was
supported in part by the Ellentuck Fund, the Fund for Math at the
Institute for Advanced Study, and NSF-DMS Standard Research Grant Award
1610453 }
\author[H.~Hofer]{Helmut Hofer}
\address{
	Joel W.~Fish\\
	Department of Mathematics\\
        University of Massachusetts Boston
	}

\email{joel.fish@umb.edu}
\address{
	Helmut Hofer\\
        School of Mathematics
	Institute for Advanced Study
	}
\email{hofer@math.ias.edu}
\keywords{Gottschalk, Herman, Hamiltonian, feral, minimal set} 
\maketitle

\begin{abstract}
Here we prove that for each Hamiltonian function \(H\in
  \mathcal{C}^\infty(\mathbb{R}^4, \mathbb{R})\) defined on the standard
  symplectic \((\mathbb{R}^4, \omega_0)\), for which \(M:=H^{-1}(0)\) is a
  non-empty compact regular energy level, the Hamiltonian flow on \(M\) is
  not minimal.
That is, we prove there exists a closed invariant subset of the
  Hamiltonian flow in \(M\) that is neither \(\emptyset\) nor all of \(M\).
This answers the four dimensional case of a twenty year old question of
  Michel Herman, part of which can be regarded as a special case of the
  Gottschalk Conjecture. 

Our principal technique is the introduction and development of a
  new class of pseudoholomorphic curve in the ``symplectization''
  \(\mathbb{R} \times M\) of framed Hamiltonian manifolds \((M, \lambda,
  \omega)\).
We call these \emph{feral} curves because they are allowed to have
  infinite (so-called) Hofer energy, and hence may limit to invariant sets
  more general than the finite union of periodic orbits.
Standard pseudoholomorphic curve analysis is inapplicable without
  energy bounds, and thus much of this manuscript is devoted to establishing
  properties of feral curves, such as area and curvature estimates,
  energy thresholds, compactness, asymptotic properties, etc.
\end{abstract}

\tableofcontents
\allowdisplaybreaks
\newcounter{CurrentSection}
\newcounter{CurrentLemma}
\newcounter{CurrentTheorem}
\newcounter{CounterSectionFeralLimitCurves}
\newcounter{CounterLemmaFeralLimitCurves}
\newcounter{CounterSectionBoundedIntersections1}
\newcounter{CounterLemmaBoundedIntersections1}
\newcounter{CounterSectionAsymptoticLocalLocalAreaBound}
\newcounter{CounterTheoremAsymptoticLocalLocalAreaBound}
\newcounter{CounterSectionAsymptoticCurvatureBound}
\newcounter{CounterTheoremAsymptoticCurvatureBound}
\newcounter{CounterSectionAreaBounds}
\newcounter{CounterLemmaAreaBounds}
\newcounter{CounterSectionExtensionStructures}
\newcounter{CounterLemmaExtensionStructures}
\newcounter{CounterSectionExtensionExistence}
\newcounter{CounterLemmaExtensionExistence}
\newcounter{CounterSectionTheoremOne}
\newcounter{CounterTheoremTheoremOne}
\newcounter{CounterSectionEnergyThreshold}
\newcounter{CounterTheoremEnergyThreshold}
\newcounter{CounterSectionExistenceWorkhorse}
\newcounter{CounterTheoremExistenceWorkhorse}
\newcounter{CounterSectionAreaHomotopy}
\newcounter{CounterTheoremAreaHomotopy}
\newcounter{CounterSectionTheoremTwo}
\newcounter{CounterTheoremTheoremTwo}

\section{Introduction and Results}

Almost since their inception, pseudoholomorphic curves have been the
  common thread by which symplectic geometry, topology, and Hamiltonian
  dynamics have been intertwined.
Specifically, these curves generalize the notion of holomorphic
  curves in a complex manifold to curves in an \emph{almost complex} 
  symplectic manifold; moreover, they do so while preserving a variety of
  robust properties which detect subtle geometric aspects, dynamical
  features, and relationships between the two.
At its core, this manuscript is about the discovery of a
  new class of pseudoholomorphic curve (with reasonable properties) and
  their application toward answering a twenty year old question of Michel
  Herman \cite{Herman} raised at the 1998 ICM.
We should mention, this new class of potentially infinite energy curve
  seems to have been very difficult to predict, particularly as a natural
  extension of finite energy curves.
We elaborate further on this in Section \ref{SEC_overview_of_results}.
What follows are two separate but inextricably linked results,
  each of which is of notable interest to a separate camp of
  mathematician: the dynamicist and symplectic topologist.
We begin with an easy to state variant of our main dynamical theorem and
  provide some brief discussion of the significance of the result and its
  proof.

\setcounter{CounterSectionTheoremOne}{\value{section}}
\setcounter{CounterTheoremTheoremOne}{\value{theorem}}
\begin{theorem}(Main dynamical result)
  \label{THM_main_result}
  \hfill\\
Consider \(\mathbb{R}^4\) equipped with the standard symplectic structure
  and a Hamiltonian \(H\in \mathcal{C}^\infty(\mathbb{R}^4, \mathbb{R})\)
  for which \(M:=H^{-1}(0)\) is a non-empty compact regular energy level.
Then the Hamiltonian flow on \(M\) is not minimal.
\end{theorem}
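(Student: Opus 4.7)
\emph{Proof proposal.}

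The plan is to use feral pseudoholomorphic curves in the symplectization $(\mathbb{R} \times M, d(e^a\lambda) + \omega)$ to construct, by a limiting procedure, a non-empty proper closed subset of $M$ that is invariant under the Hamiltonian flow. Such a subset immediately gives non-minimality.

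First I would equip the compact three-manifold $M$ with a framed Hamiltonian structure $(\lambda, \omega)$ compatible with its Hamiltonian vector field, and choose a translation-invariant almost complex structure $J$ on $\mathbb{R} \times M$ adapted to this framing. Using the fact that $M$ is a compact separating hypersurface in $(\mathbb{R}^4, \omega_0) \cong (\mathbb{C}^2, i)$, I would exhibit a large supply of pseudoholomorphic curves, for instance from the standard pencil of complex lines through fixed points of $M$, and then perform an SFT-style neck stretching of the almost complex structure along $M$. As the stretching parameter tends to infinity, suitable truncations of these curves concentrate inside the neck $\mathbb{R} \times M$ and pass through prescribed marked points there, providing candidates for passage to the limit.

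Next I would extract a limit via the compactness theorem for feral curves developed in this manuscript, which guarantees that, after passing to a subsequence, a nontrivial pseudoholomorphic limit $u\colon \Sigma \to \mathbb{R} \times M$ exists with uniform local area bounds, even when the Hofer energies blow up. Two cases arise. If $u$ has finite Hofer energy, then by standard asymptotic analysis for finite-energy curves in symplectizations its ends converge exponentially to periodic Hamiltonian orbits of $M$; any such periodic orbit is a one-dimensional closed flow-invariant subset of the three-manifold $M$, hence a proper subset, and we are done. If instead $u$ is genuinely feral, the asymptotic structure theorems for feral curves produce a non-empty compact flow-invariant subset $K \subset M$ as the set of accumulation points in $M$ of the $M$-projection of $u(z_n)$ along sequences $z_n$ exiting to ends of $\Sigma$.

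The main obstacle is the feral case, where one must rule out $K = M$. The strategy is to leverage the target-local area bounds and the asymptotic curvature estimates of the manuscript: the feral curve $u$ can deposit only a uniformly bounded amount of $\omega$-area per unit $\mathbb{R}$-height in any fixed small neighborhood of any point of $M$, independent of the total energy of $u$, and its asymptotic geometry is forced to be two-dimensional in a controlled sense. Minimality of the flow would require $u$ to enter every open subset $U \subset M$ with positive frequency at arbitrarily large $\mathbb{R}$-heights, densely filling $M$ in an essentially three-dimensional manner; this is incompatible with the uniform two-dimensional local geometry provided by the feral curvature and area estimates, yielding a contradiction. Consequently $K$ is a proper non-empty closed flow-invariant subset of $M$, and the Hamiltonian flow on $M$ is not minimal.
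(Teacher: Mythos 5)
Your broad set-up — embed $M$ in a closed symplectic ambient (you use $\mathbb{C}^2 \subset \mathbb{C}P^2$ via a pencil of lines, the paper explicitly caps into $\mathbb{C}P^2$ and forms an extended symplectic cobordism), stretch the neck along $M$, and pass to a feral limit via the exhaustive/target-local compactness theory — matches the paper up to and including the extraction of a limit curve whose limit set is a closed invariant subset $\Xi \subset M$. The finite-energy vs.\ feral dichotomy in your write-up is harmless, though the paper does not phrase it as a case split; it handles both uniformly through the $\mathbf{x}$-limit set construction (Definition~\ref{DEF_x_limit_set}, Proposition~\ref{PROP_properties_of_x_limit_set}).

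The genuine gap is your final paragraph, the argument that $K \neq M$. You claim that the uniform connected-local area and curvature bounds for a feral curve are ``incompatible'' with the curve filling $M$ densely ``in an essentially three-dimensional manner.'' This does not follow and is in fact the heart of the difficulty the paper is at pains to address: Theorem~\ref{THM_local_local_area_bound} and Theorem~\ref{THM_curv_bound} give pointwise, \emph{connected-local} control, but a proper curve with uniformly bounded local area and bounded second fundamental form can perfectly well have a limit set equal to all of $M$ (think of a pseudoholomorphic half-cylinder asymptotic to a dense orbit; or, more to the point, the paper states explicitly that ``in general it may be the entire framed Hamiltonian manifold $M$''). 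Local two-dimensionality of the curve places no obstruction to the $\mathbb{R}$-projections of far-out slices accumulating on everything.

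What actually rules out $\Xi = M$ in the paper is an \emph{intersection-theoretic} argument, and it requires the whole family of curves, not a single limit. Degree-one spheres in $\mathbb{C}P^2$ are embedded and pairwise meet transversally in exactly one point (adjunction inequality plus positivity of intersections, Proposition~\ref{PROP_embeddedness}). After trimming and shifting, this yields a family $\{\mathbf{u}_k^b\}$ with a uniform bound $\#\big(u_k^b(S_k^b)\cap u_k^{b'}(S_k^{b'})\big)\leq C$, which is one of the hypotheses of the workhorse Theorem~\ref{THM_existence}. In the proof of that theorem, the assumption $\Xi = M$ (for every subsequence limit set) forces, via target-local Gromov compactness applied to suitably rescaled pieces of two distinct feral limits $\bar{\mathbf{w}}_0$ and $\bar{\mathbf{w}}_\ell$, the existence of strictly more than $C$ transverse intersection points between them, contradicting Lemma~\ref{LEM_bounded_transverse_intersections}. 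Your proposal never uses the intersection structure and so has no mechanism to close this gap; the curvature/area estimates alone are insufficient.
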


Recall that a flow is minimal provided that every trajectory is dense; or,
  in other words, if there exist no closed invariant subsets other than the
  empty set and the total space.
Specialists in dynamical systems may regard the above result as a  
  proof of a Hamiltonian version of the Gottschalk conjecture; for
  additional details, see Section \ref{SEC_into_dyn_sys} below.
For symplectic topologists, the  dynamical result itself is perhaps less
  important than the proof, which heavily uses a new class of
  pseudoholomorphic curve, so called \emph{feral curves}.
This new class of pseudoholomorphic curves opens the door  for further
  studies of symplectic cobordisms without the usual requirement that the
  boundaries are of contact or stable Hamiltonian type.
For the current theory of pseudoholomorphic curves, such requirements have
  been technical necessities.

To sketch the proof idea of Theorem \ref{THM_main_result}, we note that
  the key technique we employ is both venerably old and radically new: Given
  our smooth hypersurface \(M:=H^{-1}(0)\subset \mathbb{R}^4\), we
  symplectically embed a neighborhood of \(M\) into \(\mathbb{C}P^2\),
  stretch the neck along this hypersurface, use Gromov's existence result
  for degree one pseudoholomorphic spheres, show these curves stretch as
  they fall into the negative symplectization end, and then establish a
  compactness result which yields a non-compact pseudoholomorphic curve in
  \(\mathbb{R}\times M\) which limits to the desired closed invariant
  subset.

The novelty here is not so much the simple geometric idea underlying the
  proof, but rather that the proof can be made to work at all.
Specifically, because the hypersurface \(H^{-1}(0)\) is neither contact
  type nor stable-Hamiltonian type, we do not have a priori Hofer energy
  bounds as we stretch the neck.
The result of our analysis is then to find a potentially \emph{infinite
  energy} pseudoholomorphic curve, which has surprisingly nice
  properties.
For example, the ends detect the desired closed invariant subset.
In either case, these curves have the interesting feature that each can
  interpolate between finite and infinite energy ends, and in families these
  curves can interpolate between finite and infinite energy curves.
Precisely because of this ability to transition between the tame (finite
  energy) and the wild (infinite energy), we have picked the name
  \emph{feral curves}.

Those familiar with pseudoholomorphic curves in symplectizations
  should readily be aware of the fact that finiteness of, and a priori
  bounds on, Hofer energy is an absolute bedrock assumption upon which a
  tremendous number of additional properties are built.
By removing this assumption, we must return to basics, and it should be no
  surprise then that this takes considerable effort.
In particular the widely used domain-centric approach of predominantly
  regarding curves as maps, must be replaced by a more target-centric
  approach which treats curves more like submanifolds.
The origin of this more target-centric approach was likely Taubes' work in
  \cite{Taubes1998} which regarded curves as integral currents, however
  the techniques therein are  too coarse for our needs here.
Instead we build on a mixture of ideas initiated in the alternate approach
  to compactness in Symplectic Field Theory\footnote{Symplectic
  Field Theory was introduced by Eliashberg, Givental, and Hofer in
  \cite{EGH}, and more recently a very nice overview and background was
  provided by Chris Wendl in \cite{WendlSFT}.} provided by Kai Cieliebak
  and Klaus Mohnke in \cite{CieM2005}, and then heavily generalized by the
  first author of this manuscript in \cite{Fish2}.

In Section \ref{SEC_context_pseudoholomorphic_curves} below, we elaborate
  on the difficulties involved with analyzing curves of \emph{infinite}
  energy, but note that the end result is the establishment of a decidedly
  novel class of pseudoholomorphic curve equipped with many properties
  which are not dissimilar from \emph{finite} energy curves, and moreover
  which strongly suggest a rich avenue of future research.  
Indeed, one natural direction would be to explore whether there exists a
  homology theory akin to ECH\footnote{For a nice introduction to Embedded
  Contact Homology, see \cite{HutchLec}, and for some nice dynamical
  applications thereof, see Daniel Cristofaro-Gardiner, Michael Hutchings,
  and Vinicius Ramos in \cite{CHR}, as well as Masayuki  Asaoka and Kei
  Irie in \cite{AI}.  },
  or SFT but which has generators which are
  dynamical structures other than (weighted) sets of periodic
  orbits.
An alternate direction would be to explore the possibility that for
  a \emph{generic} framework\footnote{Perhaps using abstract
  perturbations.}, feral curves actually have finite energy, and
  hence Symplectic Field Theory has extension to symplectic manifolds with
  generic boundary rather than contact-type or stable Hamiltonian boundary.

At present we outline the remainder of the manuscript.
First, in Section \ref{SEC_into_dyn_sys}, we provide some historical
  context for Theorem \ref{THM_main_result} from the perspective of
  dynamical systems.
In Section \ref{SEC_context_pseudoholomorphic_curves}, we elaborate on the
  historical context from the pseudoholomorphic curve perspective, and we
  highlight some of the potential difficulties that must be resolved in
  order to prove Theorem \ref{THM_main_result}.
We finish this introduction with Section \ref{SEC_overview_of_results}
  which provides an overview of the additional theorems proved in this
  manuscript.
Then, in Section \ref{SEC_background}, we provide background definitions
  and state some known results which will be used throughout later proofs.
Most of the material in this section is likely to be familiar to those
  comfortable with pseudoholomorphic curve analysis, however there are a
  number of definitions which may be novel.
In Section \ref{SEC_existence_minimal_subsets} we provide the main
  argument which establishes Theorem \ref{THM_main_result}.
This proof relies on several technical supporting results, and these are
  restated and proved in Section \ref{SEC_supporting_proofs}.

\subsection{Context: Dynamical Systems}\label{SEC_into_dyn_sys} 
In the 1950s, the following two important conjectures  about autonomous flows
  on \(S^3\) were stated:\\

  \noindent {\bf Seifert Conjecture:} \emph{Every non-singular flow
 on \(S^3\) has a periodic orbit.}\\
 
\noindent {\bf Gottschalk Conjecture:}  \emph{ $S^3$ does not support  a minimal
flow.}\\

Regarding the history of these two conjectures, we begin with the Seifert
  Conjecture.
After being posed in the early 1950s, it stood as an open problem for over
  twenty years, until 1974 when Paul Schweitzer \cite{Schw} proved the
  existence of a \(\mathcal{C}^1\) vector field on \(S^3\) with no closed
  orbits.
The existence of such a vector field then disproved the Seifert
  Conjecture, and hence Schweitzer's vector field was regarded as a
  \(\mathcal{C}^1\) counterexample.
Of course, \(\mathcal{C}^1\) vector fields are of rather low regularity,
  and hence comprise a rather broad class of vector fields, so it is
  natural to ask if there are more restrictive classes of vector fields,
  say of higher regularity, for which the Seifert Conjecture is true.
And indeed, over the next thirty years, this question was raised and
  answered in the negative for flows of increasing regularity.
For example, in 1988 Jenny Harrison \cite{Harr} adapted Schweitzer's
  argument to find a \(\mathcal{C}^{2+\delta}\) counterexample.
Using very different techniques, in 1994 Krystyna Kuperberg \cite{KK2} found
  a \(\mathcal{C}^\infty\) smooth counterexample to the Seifert
  Conjecture, and in 1996 Greg Kuperberg and Krystyna Kuperberg \cite{KK1}
  established an analytic counterexample.
Also in 1996, Greg Kuperberg \cite{Kup} found a volume preserving
  \(\mathcal{C}^1\) counterexample to the Seifert conjecture.

With so many counterexamples established, the Seifert Conjecture seemed
  definitively disproved, with one notable exception: Reeb flows.
Indeed, in 1993 Helmut Hofer \cite{H93} proved that every
  \(\mathcal{C}^\infty\) Reeb vector field on \(S^3\) generates a periodic
  orbit.  
A corollary of this result is the following. 
Let $\Omega$ be a smooth volume form on $S^3$ and $X$ a nonsingular volume
  preserving vector field.
Then  it holds  $d(i_X\Omega)=0$ and since $H^2(S^3,{\mathbb R})=0$ we can
  find a $1$-form $\lambda$ satisfying $d\lambda= i_X\Omega$.
Since $H^1(S^3,{\mathbb R})=0$ any primitive $\lambda'$  of $i_X\Omega$
  differs from $\lambda$ by the differential of a smooth map
  $h:S^3\rightarrow {\mathbb R}$, i.e. $\lambda' =\lambda+dh$. 
Hofer's theorem implies that in the case where a primitive $\lambda'$ of
  $i_X\Omega$ can be found satisfying  $\lambda'(X(x)))\neq 0$  for all
  $x\in S^3$, there exists a periodic orbit.
  
There are several points of note regarding Hofer's 1993 result.
Of particular interest is how heavily it relied on deep results from
  contact topology, like Eliashberg's classification of overtwisted contact
  three-manifolds as either tight or overtwisted, see \cite{E1}; Bennequin's
  proof that the standard contact structure on \(S^3\) is tight, see
  \cite{B}; and Eliashberg's complete classification of contact \(S^3\), see
  \cite{E2}.
To establish existence of periodic Reeb orbits, Hofer built on the theory
  of pseudoholomorphic curves introduced by Gromov in \cite{Gr}, and on
  Floer's idea to use them to find periodic orbits of Hamiltonian vector
  fields as in \cite{Floer}.
It is worth noting that Hofer's techniques were quite robust, and although
  not explicitly used to do so in \cite{H93}, they were capable of
  recovering Rabinowitz's results in \cite{Rab} which guarantee the
  existence of periodic Reeb orbits on the boundary of star-shaped domains
  in \(\mathbb{R}^4\).
Hofer's approach is relevant, since it is the principle idea behind the
  proof of Theorem \ref{THM_main_result} above.

Before proceeding, it is important to highlight a result which should be
  kept in mind, and held in contrast to Theorem \ref{THM_main_result}
  namely:

\begin{smalltheorem}[2003, Ginzburg-G\"{u}rel \cite{GG}] 
  \label{THM_ginzburg_gurel}
  \hfill \\
There exists a proper \(\mathcal{C}^2\)-smooth function
  \(H:\mathbb{R}^4\to \mathbb{R}\), for which \(H^{-1}(0) \simeq S^3\) is a
  regular level set on which the Hamiltonian flow has no periodic orbits.
\end{smalltheorem}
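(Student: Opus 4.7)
The plan is to construct $H$ via a plug-and-surgery procedure adapted to the symplectic setting. Starting from the harmonic oscillator $H_0(q,p) = \tfrac12(|q|^2+|p|^2) - \tfrac12$, whose zero level set is $S^3$ carrying the periodic Hopf flow, I would progressively modify $H_0$ inside small open regions of $\mathbb{R}^4$, so as to destroy every potential closed orbit on the resulting level set, while preserving both the diffeomorphism type of $H^{-1}(0)$ and the regularity of $0$ as a value. The key building block is a \emph{symplectic plug}: a compactly supported Hamiltonian perturbation, supported in a Darboux ball, such that every trajectory entering the plug either exits through the opposite side of the supporting collar or is trapped in an aperiodic invariant set — in particular, no periodic orbit can live inside a plug.

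First I would build a single plug locally. Choose Darboux coordinates on a slab of the form $[-1,1]\times B^3$ transverse to a regular piece of a periodic orbit, and exhibit a Hamiltonian $K$ whose time-one flow between the two boundary slices is a compactly supported symplectomorphism of $B^3$ that contains no recurrent points, modelled on a Schweitzer/Kuperberg-style aperiodic construction but realized symplectically (for example via a suspension of an aperiodic area-preserving map on a surface of higher genus with a matched shear in the transverse symplectic direction). I would interpolate between $K$ and the ambient Hamiltonian in a thin collar using a $\mathcal{C}^2$ cut-off; the limited regularity is essential, since under higher regularity KAM-type arguments together with Hofer's theorem and its descendants provide genuine obstructions to an aperiodic Reeb/Hamiltonian flow on $S^3$. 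Second, I would cover $S^3$ by finitely many flow boxes for the Hopf flow, using that every Hopf orbit meets a fixed $S^2$ transversal exactly once, and insert a plug into each box so that every remaining orbit of the surgered system must enter at least one plug.

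The principal obstacle is the combination of three competing demands: (i) each local modification must be Hamiltonian, hence the induced flow is volume-preserving and symplectic, which rules out Wilson-type and Schweitzer-type plugs directly and forces one to work with generating-function or suspension models; (ii) the invariant set inside a plug must itself be aperiodic, which requires a recursive construction in the spirit of K.~Kuperberg adapted to the symplectic category; and (iii) the $\mathcal{C}^2$ norms must remain controlled under the iterated surgery, while $\mathcal{C}^3$ control is \emph{not} permitted — indeed any construction producing a $\mathcal{C}^3$ function will eventually run into the persistence of invariant tori or the rigidity of the Hamiltonian flow detected by pseudoholomorphic curves. I expect step (ii), the construction of an aperiodic, compactly supported, genuinely symplectic plug matching a translation flow on its boundary, to be the decisive technical hurdle; everything downstream is bookkeeping.

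Once a single plug is built, the rest of the proof would be of a verification nature: show that the surgered function is globally $\mathcal{C}^2$ and proper, that $0$ is still a regular value with level set diffeomorphic to $S^3$, and that any hypothetical periodic orbit of the resulting Hamiltonian flow must enter some plug and is therefore trapped — contradicting periodicity. This yields the desired $\mathcal{C}^2$ Hamiltonian $H$ whose zero level set is a topological $S^3$ carrying a periodic-orbit-free Hamiltonian flow.
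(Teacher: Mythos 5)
The paper states this result only as a citation to Ginzburg and G\"{u}rel; there is no internal proof to compare against. Your overall plan --- grafting aperiodic ``symplectic plugs'' onto the standard $S^3$ --- does match the actual strategy, but two things need correction. Your assertion that $\mathcal{C}^3$ regularity is \emph{ruled out} by KAM theory, Hofer's theorem, and pseudoholomorphic-curve rigidity is false: Hofer's theorem obstructs aperiodic Reeb flows on contact-type hypersurfaces, not on arbitrary compact energy levels, and KAM is a perturbative phenomenon with no purchase here. Ginzburg and G\"{u}rel themselves expect a $\mathcal{C}^{2+\alpha}$ refinement of their construction (as quoted in Section \ref{SEC_into_dyn_sys}), the $\mathbb{R}^{2n}$, $n\ge 3$, analogues are $\mathcal{C}^\infty$, and the present paper's Theorem \ref{THM_main_result} concerns non-minimality, hence is compatible with a $\mathcal{C}^\infty$ aperiodic example in $\mathbb{R}^4$ should one exist. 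The $\mathcal{C}^2$ in the statement reflects a limitation of the known construction, not a rigidity theorem.

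More substantively, the only nontrivial step of your sketch is deferred. ``A Schweitzer/Kuperberg-style aperiodic construction but realized symplectically'' is not a workable design principle: those plugs are not volume-preserving, so there is no symplectic version of them to appeal to, and the naive symmetrizations that restore measure preservation destroy aperiodicity. The Ginzburg--G\"{u}rel trap uses a different dynamical model, in the lineage of Herman's horocycle-type counterexamples in higher dimensions, and the technical heart of their paper is the chain of matching estimates needed to embed that flow symplectically into a Darboux neighborhood --- exactly the place where the $\mathcal{C}^2$ loss occurs, and exactly what your sketch postpones into ``the decisive technical hurdle.'' A correct proof would have to specify the aperiodic model and carry out that embedding; declaring the rest ``bookkeeping'' leaves the argument without its content.
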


The focus here should not be on the relatively low regularity of
  the Hamiltonian, but rather on the non-existence of a periodic orbit.
Indeed, the relevance is that while the above result guarantees non-existence of
  any periodic orbits, the principle result of this manuscript guarantees
  the existence of a closed flow-invariant proper subset as a type of
  limit set of a pseudoholomorphic curve; this is discussed further in
  Section \ref{SEC_context_pseudoholomorphic_curves} below.
In particular then, this suggests that the new class of curves explored
  below indeed find closed invariant subsets more general than periodic
  orbits.
We note one slight caveat: Our analysis here is done in regularity
  \(\mathcal{C}^\infty\), while the Ginzburg-G\"{u}rel result holds in
  \(\mathcal{C}^2\).
Nevertheless we believe both results can be generalized to reach the
  desired conclusion. Indeed, the constructions in the present paper
  should be doable in a $\mathcal{C}^{2+\alpha}$-frame
  work.
Also, in \cite{GG}, the authors remark: `` It is quite likely that our
  construction gives an embedding  $S^3\rightarrow {\mathbb R}^4$ without
  closed characteristics, which is $\mathcal{C}^{2+\alpha}$-smooth."

With these results established, the answer to the Seifert Conjecture is
  well understood and essentially complete: It is false for vector fields
  as regular as one likes, and false for volume preserving flows, but true
  for Reeb flows.
At this point we turn our attention to Question 2 and the Gottschalk
  Conjecture, and we begin by noting that the \emph{lack} of progress on
  this problem stands in stark contrast to the nearly complete
  understanding of the Seifert Conjecture.
Indeed, despite more than a half century worth of attempts, no essential
  progress has been made on the Gottschalk Conjecture.
We make two important qualifications to that statement.
First, strictly speaking, results stated above which guarantee existence
  of periodic orbits, for example \cite{H93} and \cite{Rab}, are progress
  on the Gottschalk Conjecture for the class of Reeb vector fields, however
  because the closed invariant sets are always periodic orbits, this is more
  a result about the Seifert Conjecture than the Gottschalk Conjecture.
Second, although there has been no direct progress on the Gottschalk
  Conjecture, there have been a variety of results on related problems.
For example, in 2009 Clifford Taubes \cite{Taubes-Erg} proved that a
  volume preserving vector field on a compact 3-manifold  whose dual 2-form
  is exact (such as \(S^3\)) can not generate uniquely ergodic dynamics
  unless its asymptotic linking number is zero; in 2014 Bassam Fayad and
  Anatole Katok \cite{FK} construct analytic uniquely ergodic (hence
  minimal) volume preserving \emph{maps} (but not flows) on odd dimensional
  spheres; and in 2015 Ginzburg and Niche \cite{GN} showed that the
  autonomous Hamiltonian flow on a compact regular energy level in
  \(\mathbb{R}^{2n}\) (and somewhat more generally) cannot be uniquely
  ergodic.

In short, results in the direction of the Gottschalk Conjecture have been
  one of two types, namely either establishing the existence of periodic
  orbits as in the Reeb case, or else making definitive progress on a
  related problem.
As such, we note that it is somewhat surprising that more direct progress
  has not been made given the importance of this problem.
For example, if the Gottschalk conjecture is true, then in all likelihood a
  method to prove it will need to develop a global theory for
  finding closed invariant subsets, which in turn will touch on long-standing
  questions in dynamical systems, particularly in cases in which flows are
  volume-preserving.
It is also worth noting that Gottschalk's question has been well
  established as historically significant.
Indeed, it was raised in 1974 during the American Mathematical Society's
  special symposium on the mathematical consequences of Hilbert's problems
  \cite{AMS}.
It made another appearance in \cite{Smale} when mentioned by Steven Smale
  in his list of the most important problems for the twenty-first century.  
And it appeared again in 1998 at the International Congress of
  Mathematics during Michael Herman's talk \cite{Herman}, in which he
  raised the following related question.\\

\noindent{\bf Question:} (1998, Herman) \emph{When \(n\geq 2\), can one find
  a \(\mathcal{C}^\infty\) compact, connected, regular hypersurface in
  \(\mathbb{R}^{2n}\) on which the characteristic flow is minimal?}\\

Recall that the characteristic flow is just the Hamiltonian flow associated
  to any smooth Hamiltonian for which the hypersurface is a regular
  energy level.
Consequently, Herman's question might be regarded as the Hamiltonian
  Gottschalk conjecture for compact energy levels in \(\mathbb{R}^{2n}\),
  and the principle result of this manuscript is to answer his question in
  the negative when \(n=2\), i.e. a version of the  Gottschalk conjecture holds 
  for compact regular Hamiltonian energy surfaces in ${\mathbb R}^4$. 
We complete this section by stating a conjecture, which seems plausible
  given the developments in this paper. 
It combines a question about almost existence of periodic orbits, a
  well-studied problem, with the existence question of proper closed
  invariant subsets.

\begin{conjecture}[minimal sets in energy piles]
  \label{CON_minimal_sets_energy_piles}
  \hfill\\
Assume that $\Omega$ is a symplectic form on
  $[-1,1]\times S^3$ and denote by $H:[-1,1]\times S^3\rightarrow {\mathbb
  R}$ the Hamiltonian defined by $H(t,m)=t$.
Denote by $\Sigma_t$  the regular compact energy surface $H^{-1}(t)$ and
  define the subset $S\subset [-1,1]$ to consist of all $t$ for which the
  energy surface $\Sigma_t$ carries a periodic orbit. Then the following
  holds:
\begin{enumerate}                                                         
  \item \(\text{measure}(S)=2\)
  \item For \(t\in [-1,1]\setminus S\) there exists a closed proper
    invariant subset for the Hamiltonian flow on \(\Sigma_t\).
  \end{enumerate}
\end{conjecture}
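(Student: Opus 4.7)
The plan is to treat the two claims separately: (1) via a Hofer--Zehnder style almost-existence argument, and (2) via the feral curve machinery developed in the main body of this manuscript. For (1), observe first that $H^2([-1,1]\times S^3;{\mathbb R})=0$ (indeed the total space is homotopy equivalent to $S^3$), so $\Omega$ is exact with some primitive $\lambda$. I would then bound the Hofer--Zehnder capacity $c_{HZ}([-1,1]\times S^3,\Omega)$ from above by constructing a symplectic embedding of the entire energy pile into a finite-capacity target -- for example, a large ball or ellipsoid in standard $({\mathbb R}^4,\omega_0)$, or a closed symplectic manifold of controlled Gromov width -- whose capacity is already known. Once such a bound is established, the classical Hofer--Zehnder--Struwe almost-existence theorem yields a periodic orbit on $\Sigma_t$ for almost every $t\in[-1,1]$, i.e.\ $\text{measure}(S)=2$.

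For (2), fix $t\in[-1,1]\setminus S$, so that $\Sigma_t$ supports no periodic orbits. I would mimic the proof of Theorem \ref{THM_main_result}: symplectically embed a neighborhood of $\Sigma_t$ into a closed symplectic $4$-manifold $(W,\omega_W)$ which admits a Gromov-type existence result for pseudoholomorphic spheres (the model case being $\mathbb{C}P^2$, accessed by first embedding the neighborhood into a ball in ${\mathbb R}^4$), stretch the neck along $\Sigma_t$, and extract a subsequential Gromov-compactness limit whose component in the negative end is a non-compact feral curve in ${\mathbb R}\times\Sigma_t$. By the area, curvature, and asymptotic estimates developed earlier in the manuscript, the asymptotic limit set of this feral curve is a nonempty closed flow-invariant subset of $\Sigma_t$. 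The uniform bound on the $\omega$-area of horizontal slices prevents this limit from exhausting $\Sigma_t$; alternatively, in the degenerate case where the limit set did equal $\Sigma_t$, the absence of periodic orbits together with a Birkhoff-type extraction of a minimal closed invariant subset still produces a proper closed invariant subset, so properness follows either way.

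The principal obstacle is step (1): giving a quantitative bound on $c_{HZ}([-1,1]\times S^3,\Omega)$ for an \emph{arbitrary} symplectic form $\Omega$ which is not assumed to be compatible with the product structure $H(t,m)=t$. Unlike the case of hypersurfaces in $({\mathbb R}^4,\omega_0)$, no universal compactification with known capacity is manifestly available, and the Liouville primitive $\lambda$ of $\Omega$ need not respect the slicing. A plausible workaround is to bypass capacity theory entirely and derive almost existence directly from the feral curve construction of (2), by showing via a Sard-type argument on the moduli of stretched pseudoholomorphic spheres that for a full-measure set of $t$ the feral limit is in fact compact in the symplectization direction and hence represents a periodic orbit. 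The feral curve construction in (2) itself, while technically heavy, is a close analogue of Theorem \ref{THM_main_result} and should transfer with only minor modifications once the ambient symplectic embedding of a neighborhood of $\Sigma_t$ is arranged.
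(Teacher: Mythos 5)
The statement you are attempting to prove is explicitly labeled a \emph{conjecture} in the paper, and the authors do not prove it; they present it as an open problem motivated by their results. So there is no paper proof to compare against, and your sketch---which you yourself flag as having an unresolved ``principal obstacle''---is a strategy, not a proof. Two concrete gaps are worth naming. First, for part (1): the Hofer--Zehnder route requires a finite capacity bound for $([-1,1]\times S^3,\Omega)$, and you correctly observe that no such bound is available for an arbitrary $\Omega$, since there is no universal compactification or capacity-controlled embedding. The proposed workaround (Sard on a moduli of stretched spheres forcing compactness for a.e.\ $t$) is not something the paper's machinery establishes or even suggests a mechanism for; nothing in the feral-curve results equates ``full measure in $t$'' with ``finite Hofer energy in the limit.'' Second, for part (2), your fallback is wrong: Birkhoff's theorem guarantees a nonempty minimal closed invariant set, but it could be all of $\Sigma_t$ (that is exactly what a minimal flow is), so ``Birkhoff extraction'' does not produce a \emph{proper} invariant subset and the argument is circular. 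Relatedly, your claim that ``the uniform bound on the $\omega$-area of horizontal slices prevents this limit from exhausting $\Sigma_t$'' is contradicted by the paper's own discussion preceding Theorem~\ref{THM_existence}: the limit set of a \emph{single} feral curve can be all of $M$, and properness of the invariant set requires the full workhorse hypotheses---a family of curves parameterized by depth $b\in[a_k,0]$, with uniform bounds on genus, nodes, $\omega$-energy, boundary components, and crucially the pairwise intersection bound. None of these are produced by a bare neck-stretch of a single Gromov sphere around $\Sigma_t$.

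There is also a structural issue with transferring the proof of Theorem~\ref{THM_main_result} to this setting even for part (2) alone: that proof depends on being in $(\mathbb{R}^4,\omega_0)$, which symplectically caps off inside $\mathbb{C}P^2$ and gives a rich family of degree-one spheres, automatic transversality, and positivity of intersections. For an arbitrary symplectic $\Omega$ on $[-1,1]\times S^3$, one would first need to symplectically embed a neighborhood of $\Sigma_t$ into a closed $4$-manifold whose Gromov--Witten theory supplies the needed foliating family of spheres or disks, and it is not known that such an embedding exists in this generality. You flag this as ``arranging the ambient symplectic embedding,'' but it is not a routine step---it is precisely the part that stops the authors short of upgrading this statement from conjecture to theorem.
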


\subsection{Context: Pseudoholomorphic
  Curves}\label{SEC_context_pseudoholomorphic_curves}
In 1985 Mikhail Gromov \cite{Gr} introduced the notion of 
  pseudoholomorphic curves in almost complex manifolds.
Such curves were a generalization from  holomorphic curves in complex
  manifolds, to curves in real manifolds equipped by a preferred rotation by
  \(90\) degrees in the tangent bundle (determined by an almost complex
  structure; see Definition \ref{DEF_acm} below).
Roughly speaking then, a pseudoholomorphic curve is a map from a Riemann
  surface into a manifold equipped with an almost complex structure with the
  property that the derivative of the map intertwines the complex structure
  on the Riemann surface with the almost complex manifold on the target.

These curves solve an elliptic partial differential equation and they form
  the zero set of a non-linear Fredholm operator and thus tend to live in
  smooth families.
A crucial observation by Gromov was that if the almost complex
  structure \(J\) is tamed by a symplectic form, then curves in a fixed
  homology class will have a priori bounded energy and area, and hence
  they degenerate in a manner which is essentially indistinguishable from
  the manner in which algebraic curves degenerate in smooth projective
  varieties; from a geometric analysis perspective, this is also
  essentially the same manner in which minimal surfaces degenerate in
  Riemannian manifolds.
Put another way, modulo the formation of nodal or cusp curves, families of
  pseudoholomorphic curves of a fixed homology class are compact; this is
  the celebrated Gromov compactness theorem for pseudoholomorphic curves.
Moreover, algebraic counts of curves have yielded the so-called
  Gromov-Witten invariants. 

In 1986, shortly after Gromov's seminal paper, Andreas Floer \cite{Floer}
  discovered that an inhomogeneous version of the pseudoholomorphic curve
  equation could be used to study the Morse homology of the loop space of a
  closed symplectic manifold.
Here the Morse function was the symplectic action functional associated to
  a one-periodic Hamiltonian function.
In turn, this action functional had one-periodic orbits of a Hamiltonian
  flow as critical points, and with such orbits as generators, the
  differential was determined by counting perturbed pseudoholomorphic
  cylinders (the so called Floer trajectories) between such orbits.
The resulting theory has become known as Hamiltonian Floer homology.

Then in 1993, Helmut Hofer \cite{H93} considered a sort of
  hybrid case: pseudoholomorphic curves in symplectizations of contact
  manifolds. 
Here the interesting feature was that the curves had infinite area, but
  had finite Hofer-energy; or equivalently, uniformly bounded local-area.
It turned out that such curves were asymptotic to cylinders over periodic
  Reeb orbits.
Furthermore, these curves were either positively or negatively asymptotic
  to such orbit cylinders, and hence under certain hypotheses one could
  construct a variety of flavors of contact homology (cylindrical,
  linearized, full, rational, embedded, etc), in which the generators are
  certain sets of (sometimes weighted) periodic Reeb orbits, and with the
  differential determined by counting certain finite energy
  pseudoholomorphic curves which positively limit to one orbit set and
  negatively limit to another orbit set.

It was eventually discovered that each of these theories (Gromov-Witten
  invariants, Hamiltonian Floer homology, contact homology, etc) was
  subsumed in a larger Symplectic Field Theory (SFT) proposed by
  Eliashberg, Givental, and Hofer in \cite{EGH}.
More precisely, the moduli spaces of pseudoholomorphic curves that
  generate each of these theories is contained in the collection of moduli
  spaces studied in SFT.

An absolutely crucial feature in each of these theories is that the curves
  in question have an a priori energy bound, which should be deduced from
  representing a fixed homology class, and which in turn guarantees a
  (local) area bound.
Indeed, without such energy control, pseudoholomorphic curves have
  notably wild behavior.
For example, in the symplectization of a contact manifold,
  \(\mathbb{R}\times M\), for any admissible almost complex structure and
  any Reeb trajectory \(\gamma:\mathbb{R}\to M\), the map \((s,t)\mapsto (s,
  \gamma(t))\in \mathbb{R}\times M\) is pseudoholomorphic and of infinite
  energy and which may have an image which is dense in \(\mathbb{R}\times
  M\); we call such curves pseudoholomorphic \emph{sheets}.
As a consequence of the apparent wild behavior of infinite energy curves,
  both popular and expert belief has been that there exists a dichotomy
  among pseudoholomorphic curves: those with energy bounds and those
  without.
Moreover, the former are tame and well understood while the latter have
  such wild behavior that one cannot feasibly hope study them in a
  meaningful way.

To illustrate this idea, we draw an analogy with holomorphic functions on
  the punctured complex plane.
Here, of course, there \emph{is} a dichotomy, namely functions with poles
  versus functions with essential singularities.
The former are meromorphic functions and are algebraic in nature, while
  the latter are especially unmanageable, particularly in light of Picard's
  Great Theorem, which states that in each neighborhood of an essential
  singularity, a holomorphic function takes on every complex value (except
  possibly one) infinitely many times.
This clear division of holomorphic functions has long been assumed to
  carry over into the realm of Symplectic Field Theory: curves either have
  bounded energy, are tame, and are well understood, or else they have
  unbounded energy, are wild, and are unmanageable.
One of the main thrusts of this manuscript is to defy conventional wisdom,
  and illuminate an intermediate class of infinite energy curves.
Or, perhaps more accurately, identify a class of curves which appears to
  \emph{interpolate} between tame and wild curves, which we designate as
  \emph{feral} curves.
We give a precise formulation of feral curves in Definition
  \ref{DEF_feral_J_curve} below, but roughly speaking they are proper
  pseudoholomorphic maps \(u:S\to \mathbb{R}\times M\) into
  symplectizations of framed Hamiltonian manifolds\footnote{
    For a precise formulation of a framed Hamiltonian manifold see
    Definition \ref{DEF_framed_Hamiltonian_structure} below, however at
    present we note that it is more general than both contact and stable
    Hamiltonian.
    } 
  for which \(S\) has finite topology (genus, connected components, etc.)
  and \(\int_S u^*\omega < \infty\).
We note that on one hand, the properness condition rules out the
  aforementioned pseudoholomorphic sheets \((s,t)\mapsto (s, \gamma(t))\),
  and the finite \(\omega\)-energy condition tends to prevent such curves
  becoming too wild, however, by not requiring the Hamiltonian structure
  to be stable allows feral curves to have infinite Hofer energy, and
  indeed we expect that some definitely do.

Before proceeding, we aim to give some idea of how difficult it is to
  study pseudoholomorphic curves without a priori bounded energy, so we
  take a moment to step through some potential issues.
As a model starting point, one might consider a sequence of finite energy
  planes, all asymptotic to the same simply covered orbit cylinder, and
  study what might happen as one progresses through the sequence while
  assuming the Hofer energy tends to infinity.
First, the SFT compactness theorem for pseudoholomorphic curves
  \cite{BEHWZ} does not apply directly, since energy is unbounded.
Nevertheless, one might mimic the argument to see where it breaks down.
In this model case, the conformal structures on the domain Riemann
  surfaces do not change, so the key issue is whether or not the gradient
  is bounded; if boundedness fails, we attempt bubbling analysis.
This is where difficulties start to arise.

In Gromov-Witten theory, if the gradient blows up, then rescaling analysis
  extracts a sphere-bubble, which captures a threshold amount of energy.
In SFT compactness something similar occurs, except that rescaling
  analysis extracts  either a sphere-bubble or else a finite energy plane,
  and either object captures a threshold amount of
  \(\omega/d\lambda\)-energy, so the process terminates after finitely
  many iterations.  
But without energy bounds, we cannot guarantee that a \emph{finite} energy
  plane bubbles off -- instead one might only be able to extract something
  akin to an infinite energy sheet, which has arbitrarily small
  \(\omega/d\lambda\)-energy.
Worse still, without some threshold amount of energy being captured via
  rescaling analysis, one can no longer guarantee that the gradient blows up
  only in a neighborhood of finitely many points.
Indeed, a priori the gradient could blow up everywhere.

Still, maybe by some alternate methods, or by considering a model
  example, one could perhaps extract something like an infinite energy plane
  which has finite \(\omega\)-energy.
However, even in such a case, two possibilities complicate matters
  further.
First, a priori, it need not be the case that the domain Riemann surface of
  such a curve is conformally equivalent to the complex plane; it could be
  an open disk instead.
In the SFT setting, it is usually assumed that the domains of curves are
  conformally equivalent to punctured Riemann surfaces, however this is an
  assumption which can be removed and then easily deduced from other
  standard analysis.
However, for infinite energy curves it is a possibility which needs to be
  more seriously considered.
Second, given a single proper infinite energy plane (or disk, as the case
  may be), it need not be the case that the gradient is globally bounded.
Again, in the usual SFT setting, this can be deduced in a variety of ways
  which depend on asymptotic analysis or finiteness of energy, but in
  the infinite energy case it is a possibility that must again be
  considered.

To summarize the difficulties, we see that once we remove a priori
  energy bounds, SFT compactness does not apply, there is no local area
  bound, there is no energy threshold, there is potentially dense gradient
  blowup, a single curve can have unbounded gradient, and even something
  simple like an infinite energy ``plane'' might in fact be holomorphically
  parametrized by an open disk, or its image may be dense in the target
  manifold.
In short, without energy bounds our arsenal of standard pseudoholomorphic
  techniques becomes largely ineffectual, and curve analysis rapidly
  appears unmanageable.
Those somewhat familiar with pseudoholomorphic curves can then perhaps
  see the difficulty faced at the outset: With so many basic tools rendered
  inapplicable, it becomes exceedingly difficult to formulate what
  properties to expect, let alone prove them.

Nevertheless, despite these obstacles, analysis is still possible, and it
  should not be surprising that a bulk of this manuscript is dedicated to
  establishing sufficient properties to prove the main dynamical result.
An overview of these results is provided in Section
  \ref{SEC_overview_of_results} below, but at present we provide an
  alternate characterization of feral curves which may be less amenable to
  analysis but which is better for providing a conceptual framework.

To that end, we first back up and re-characterize finite energy curves
  inside symplectizations of contact manifolds, where \(\omega =
  d\lambda\),  as follows.
Outside a large compact set, say \( [-n, n]\times M\) for \(n \gg 1\), a
  finite Hofer-energy curve is immersed, and the tangent planes are nearly
  vertical; that is, they are nearly tangent to the two-plane distribution
  \({\rm ker}\; \omega \subset T(\mathbb{R}\times M)\).
Consequently, outside a large compact set, one can project the
  asymptotic ends of a curve into the manifold \(M\) and regard this as a
  path of loops parameterized by level sets of the symplectization
  coordinate \(\mathbb{R}\).
Of interest here is the fact that such a path of loops is in fact an
  integral curve of a gradient-like vector field on the loop space of \(M\)
  which has periodic Reeb orbits as rest points.
Keeping this in mind, one can then regard finite energy pseudoholomorphic
  curves as submanifolds which can be geometrically or topologically
  interesting in some large compact sets of \(\mathbb{R}\times M\), like
  inside \([-n, n]\times M\), but outside of this compact set they can
  morally be thought of as gradient flow lines converging to critical
  points of a functional on the loop space of \(M\).
The surprising feature of feral curves is that they can be thought of in
  nearly the same way.
Indeed, as we make clear below, outside a large compact set a feral
  pseudoholomorphic curve is immersed with tangent planes nearly vertical.
Again, the result is that the ends of a feral curve can be regarded as
  path of loops in \(M\), and this path is in fact an integral curve of a
  gradient-like vector field.
The key difference however, which stands in stark contrast with the
  contact and stable Hamiltonian case, is that in the general framed
  Hamiltonian case the action functional  is not Palais-Smale.
More specifically, feral curves have ends which are ``gradient'' flow
  lines along which the action is bounded but the trajectory escapes to
  infinity.

The above characterization of feral curves is then both a boon and a curse.
The upside is that despite the fact that curves without energy bounds seem
  wildly unmanageable, we show that feral curves nevertheless have a
  surprising number of properties which make their study tractable and
  somewhat familiar, if non-standard.
Moreover, feral curves still lie in the general heuristic framework in
  which pseudoholomorphic curves are of type of generalized gradient flow
  line, and hence could be used to define some generalized version of Morse
  homology or a more complicated algebraic invariant like Symplectic Field
  Theory.
The great downside though, is that Morse theory for a general
  non-Palais-Smale functional is an ill conceived notion, and at best it
  is unlikely to be an invariant, and at worst it simply cannot be
  defined.
Indeed, in some sense, the general action functional in the framed
  Hamiltonian case appears to have ``critical points at infinity,'' which,
  at present, defy direct analysis, and hence preclude a complete SFT
  compactness theorem for feral curves, as well as a Fredholm theory, a
  gluing theory, and a reasonable hope of an algebraic invariant.

It is possible that the above characterization, and the potential problems
  it brings, may give the impression of casting a dark shadow over the
  landscape of possibilities for feral curves.
We take a moment then to highlight certain glimmers of hope.
First, we note that in examples, feral curves tend to have ends with a
  rather nice property: They tend to limit to a finite collection of
  hyperbolic minimal sets connected by families of heteroclinic
  trajectories.
Or, more geometrically then, while we have become used to
  pseudoholomorphic curves bubbling or breaking (as in Floer homology,
  contact homology, etc) and limiting to periodic orbits, now it seems
  possible that periodic orbits can themselves bubble or break and that
  feral curves detect this and limit to the broken orbit.
This raises a question: If one can analytically understand the violent
  breaking and gluing phenomena in Morse-like homology theories, then why
  can one not adapt the analysis to understand curves limiting to broken
  periodic orbits as well?
Perhaps one can.
Or perhaps one must regularize the space of periodic orbits, broken or
  not, in a fashion similar to regularizing moduli spaces of
  pseudoholomorphic curves before defining a differential or more
  complicated algebraic invariant.
In either case, these possibilities warrant investigation.

Finally, we raise an important, and perhaps deeper, question.\\

\noindent{\bf Question:} \emph{Are feral pseudoholomorphic curves
  essential or inessential?}\\

We elaborate.  
One perspective is that rather fundamentally, pseudoholomorphic curves
  detect topology of a symplectic nature.
Thus when pseudoholomorphic curves behave unexpectedly, there are roughly
  two possibilities.
The first is that the odd behavior is somehow non-generic and therefore is
  likely to be inconsequential.
The second is that the pseudoholomorphic curves in question are actually
  detecting an unexpected topological feature, and thus such curves, and
  the detected phenomena, are important and essential.
It is hopefully clear that answering the above question -- in either
  direction -- is an important avenue of research.

To close this section, we bring the discussion back, almost full circle,
  to the tame/wild dichotomy, and how feral curves fit comfortably in
  neither class, but rather share properties of each.
A consequence is that they provide a definitive opportunity to push
  pseudoholomorphic curves beyond their conventional limitations and
  possibly discover remarkably novel phenomena.
In order to proceed, the only price to pay is a willingness to give up a
  large body of conventional tools and intuition in favor for building new
  techniques from the ground up. 
The task is arduous, but in the end appears fruitful, as our principle
  dynamical result suggests.\\
 
\subsection{Overview of Results}\label{SEC_overview_of_results}
The purpose of this section is to provide an overview of the most
  important results proved in this manuscript.
The first result, Theorem \ref{THM_main_result}, has already been stated,
  but we restate it here for completeness.
The second, Theorem \ref{THM_second_main_result}, is an immediate
  generalization.
Each of these results are proved in Section
  \ref{SEC_existence_minimal_subsets}, however they each rely on some rather
  non-trivial properties of pseudoholomorphic curves which are then proved
  in Section \ref{SEC_supporting_proofs}.
Indeed, these results regarding properties of the so-called feral curves
  appear to be quite fundamental not just to our results, but for many
  future results as well.
Indeed, they appear to form the basic foundational analysis for the
  extension of pseudoholomorphic curve theory beyond symplectizations of
  contact and stable Hamiltonian manifolds, and into the realm of only
  framed Hamiltonian manifolds and symplectic cobordisms with simply generic
  boundary.
As such, we designate these results as theorems and highlight them below.
  In order to understand the statement of some of these results, we also
  provide some basic definitions, including the namesake of this manuscript,
  the \emph{feral curve}.
For each such result we provide a brief description to highlight its
  utility.

\setcounter{CurrentSection}{\value{section}}
\setcounter{CurrentTheorem}{\value{theorem}}
\setcounter{section}{\value{CounterSectionTheoremTwo}}
\setcounter{theorem}{\value{CounterTheoremTheoremTwo}}
\begin{theorem}[main dynamical result]\hfill\\
Consider \(\mathbb{R}^4\) equipped with the standard symplectic structure
  and a Hamiltonian \(H\in \mathcal{C}^\infty(\mathbb{R}^4, \mathbb{R})\)
  for which \(M:=H^{-1}(0)\) is a non-empty compact regular energy level.
Then the Hamiltonian flow on \(M\) is not minimal.
\end{theorem}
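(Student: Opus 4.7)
The plan follows the neck-stretching scheme outlined in the introduction. First, after rescaling, a bounded neighborhood of $M$ in $(\mathbb{R}^4, \omega_0)$ embeds symplectically into $(\mathbb{C}P^2, \omega_{\mathrm{FS}})$, so that $M$ becomes a smooth separating hypersurface of $\mathbb{C}P^2$, splitting it into a compact ``inner'' region $W^-$ and its complement $W^+$. I equip $M$ with a framed Hamiltonian structure $(\lambda, \omega)$ with $\omega = \omega_0|_M$ and $\lambda(X_H) \equiv 1$, and I choose an $\omega_{\mathrm{FS}}$-tame almost complex structure $J$ on $\mathbb{C}P^2$ which in a collar of $M$ is translation-invariant and adapted to $(\lambda, \omega)$. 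For each $n$, let $J_n$ denote the almost complex structure on $\mathbb{C}P^2$ obtained by inserting a neck $[-n,n]\times M$ along this collar. Fix points $p \in W^-$ and $q \in W^+$ well outside the neck region. Gromov's existence theorem for the degree-one class $[\mathbb{C}P^1] \in H_2(\mathbb{C}P^2)$ yields a $J_n$-holomorphic sphere $u_n : S^2 \to \mathbb{C}P^2$ of $\omega_{\mathrm{FS}}$-area $[\omega_{\mathrm{FS}}] \cdot [\mathbb{C}P^1]$ passing through both $p$ and $q$. Since $p$ and $q$ lie on opposite sides of $M$, each $u_n$ must stretch all the way across the neck, and the portion of its image lying in the symplectization region is driven arbitrarily far into the negative end as $n \to \infty$.

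Next, I take the limit $n \to \infty$. Because $(\lambda, \omega)$ is only framed and not stable Hamiltonian, the Hofer energies of $u_n$ are not a priori bounded, and standard SFT compactness does not apply. This is where the feral-curve machinery developed in Section \ref{SEC_supporting_proofs} enters: combining the target-centric area and curvature estimates, the energy thresholds, and the feral compactness theorem, I extract, after a suitable translation in the symplectization direction, a non-trivial feral limit curve $u_\infty : S \to \mathbb{R} \times M$ in the sense of Definition \ref{DEF_feral_J_curve}, with $S$ of finite topology, $\int_S u_\infty^* \omega < \infty$, non-compact domain, and properly embedded image. By construction $u_\infty$ has at least one non-compact end that has been pushed into the negative symplectization direction.

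Finally, the asymptotic structure theorem for feral curves supplies the dynamical output: outside a large compact set $u_\infty$ is immersed with tangent planes nearly vertical, so the projections of its non-compact ends to $M$ are nearly tangent to $X_H$ and accumulate, as the symplectization coordinate tends to $\pm\infty$, on a closed flow-invariant subset $K \subset M$. This set $K$ is automatically non-empty because the ends exist. The remaining crucial point is that $K$ must be a \emph{proper} subset of $M$; I would use the finite $\omega$-energy of $u_\infty$ together with the finite topology of $S$ to rule out $K = M$: if $K$ coincided with all of $M$, the verticality of the tangent planes along an end, together with the recurrence forced by a minimal flow, would compel either the accumulated $\omega$-area of $u_\infty$ to diverge, or the asymptotic behavior of the end to violate the intersection-theoretic constraints inherited from $[\mathbb{C}P^1] \in H_2(\mathbb{C}P^2)$. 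This proper-subset step is the main obstacle of the proof, and it is precisely the payoff of the feral-curve analysis — control of non-compact ends of pseudoholomorphic curves having potentially infinite Hofer energy but finite $\omega$-energy. Once it is in place, $K$ furnishes the desired closed flow-invariant proper subset of $M$, contradicting minimality.
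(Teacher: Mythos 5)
Your overall skeleton — symplectic embedding into $\mathbb{C}P^2$, Gromov's degree-one spheres, stretching, extracting a feral limit in $\mathbb{R}\times M$, reading off a closed invariant set from the asymptotics of the ends — matches the paper's strategy, and you correctly identify the proper-subset step as the crux. But the mechanism you propose for that step would fail, and this is the genuine gap.

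The paper explicitly states that the limit set of a \emph{single} feral curve can be the entire manifold $M$; nothing about finite $\omega$-energy or finite topology of one curve forbids this. In particular your claim that if $K=M$ then ``the accumulated $\omega$-area of $u_\infty$ would diverge'' is false: by Theorem \ref{THM_local_local_area_bound} and the finite-$\omega$-energy hypothesis, the end of a feral curve is a thin near-vertical ribbon with small $\omega$-area per unit symplectization length, and under the contradiction hypothesis that the Hamiltonian flow on $M$ is minimal there is no obstruction to the projected loops sweeping out a dense subset of $M$ as they flow downward. Your appeal to intersection-theoretic constraints from $[\mathbb{C}P^1]$ is the right instinct, but applied to a single curve's self-intersections it gives only embeddedness (adjunction), which says nothing about the limit set.

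What actually closes this gap (Theorem \ref{THM_existence}) is a comparison of a whole family of feral curves, not one. Because the spheres have degree one, any two distinct ones intersect exactly once and transversally (Proposition \ref{PROP_embeddedness}), and that uniform intersection bound passes to the feral limits (Lemma \ref{LEM_bounded_transverse_intersections}). The paper constructs, for every depth $b$, a trimmed degree-one sphere whose $a$-minimum is exactly $b$, via Proposition \ref{PROP_curves_fall_complete} — an open-closed argument showing the moduli space $\mathcal{M}$ of degree-one spheres descends arbitrarily deeply into the negative cylindrical end — together with the area estimate of Lemma \ref{LEM_ad_hoc_area_estimate} and the boundary-component control of Lemma \ref{LEM_bounds_on_number_of_boundary_components}. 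If the $\mathbf{x}$-limit set of one feral limit $\bar{\mathbf{w}}_0$ were all of $M$, then target-local Gromov compactness applied to translates of $\bar{\mathbf{w}}_0$ — using Theorem \ref{THM_local_local_area_bound} and Theorem \ref{THM_curv_bound} — produces an essentially horizontal patch of $\bar{\mathbf{w}}_0$ that transversally intersects a second feral limit $\bar{\mathbf{w}}_\ell$ (taken at a deeper starting level) at more than $C$ points, contradicting the uniform intersection bound. Your single $u_\infty$, obtained by passing two fixed points, supplies no second curve to compare against. As written, the proposal is missing the family $\mathcal{M}$, the pairwise-intersection mechanism, and the workhorse argument, and therefore does not establish $K \neq M$.
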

\setcounter{section}{\value{CurrentSection}}
\setcounter{theorem}{\value{CurrentTheorem}}

This of course is the main dynamical result of this manuscript.  
It is worth noting that it is crucial that the energy level be compact. 

\setcounter{CounterSectionTheoremTwo}{\value{section}}
\setcounter{CounterTheoremTheoremTwo}{\value{theorem}}
\begin{theorem}[second main dynamical result]
  \label{THM_second_main_result}
  \hfill\\
Let \((M^\pm, \eta^\pm)\) be a pair of compact three-dimensional framed
  Hamiltonian manifolds, and let \((\widetilde{W}, \tilde{\omega})\) be a
  symplectic cobordism from \((M^+, \eta^+)\) to \((M^-, \eta^-)\) in the
  sense of Definition \ref{DEF_symplectic_cobordism}.
Suppose that \((\widetilde{W}, \tilde{\omega})\) is exact,  \(M^-\) is
  connected, and that \((M^+, \eta^+)\) is contact type and has a
  connected component \(M'\) which is either \(S^3\), overtwisted, or
  there exists an embedded \(S^2\) in \(M'\subset \partial \widetilde{W}\)
  which is homotopically nontrivial in \(\widetilde{W}\).
Then the flow of the Hamiltonian vector field \(X_{\eta^-}\) on \(M^-\) is
  not minimal.
\end{theorem}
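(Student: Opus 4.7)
The plan is to mirror the strategy used for Theorem~\ref{THM_main_result}, replacing the ad hoc embedding into $\mathbb{CP}^2$ by a more flexible construction whose existence follows from the three alternatives imposed on the component $M'\subseteq M^+$. First, I would complete the cobordism $(\widetilde{W},\tilde{\omega})$ by attaching cylindrical ends $[0,\infty)\times M^+$ and $(-\infty,0]\times M^-$, producing an exact symplectic manifold $(W^\ast,\omega^\ast)$ equipped with a compatible almost complex structure $J$ that is admissible on each end: of standard SFT type on the positive end (legitimate because $(M^+,\eta^+)$ is of contact type) and of the framed Hamiltonian type recalled in Section~\ref{SEC_background} on the negative end. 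Exactness of $\tilde{\omega}$ is precisely what will furnish the uniform $\omega$-energy bound that the feral curve machinery requires in the negative end.

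Second, I would use the hypothesis on $M'$ to produce an initial non-constant proper $J$-holomorphic curve $u_0$ in $W^\ast$, with positive asymptotics constrained to $M'$, passing through a chosen marked point deep inside $\widetilde{W}$. In the case $M'\cong S^3$, one caps off this component by a standard symplectic ball and invokes Gromov's degree-one sphere existence, exactly as in the proof of Theorem~\ref{THM_main_result}. In the overtwisted case, one invokes Hofer's Bishop-disk family emanating from an overtwisted disk following \cite{H93}, yielding a proper $J$-holomorphic plane through a prescribed point. In the case of a homotopically nontrivial embedded $S^2\subset M'\subset\partial\widetilde{W}$, one caps this sphere off symplectically along the positive end and uses a Gromov-type compactness argument together with positivity of intersections to obtain a $J$-holomorphic sphere through the chosen marked point.

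Third, I would stretch the neck along slices $\{s_n\}\times M^-$ with $s_n\to-\infty$, producing a sequence $u_n$ of $J_n$-holomorphic curves each passing through the fixed marked point and each constrained on top by the initial construction. The stretching forces at least one component of each $u_n$ to extend arbitrarily deep into the negative cylindrical end. Applying the feral compactness, area, and curvature estimates established in Section~\ref{SEC_supporting_proofs}, I would extract a limit feral curve $u_\infty:S_\infty\to\mathbb{R}\times M^-$ with $S_\infty$ of finite topology, finite $\omega$-energy coming from exactness of $\tilde{\omega}$, and at least one end descending into $M^-$. By the asymptotic structure theorems for feral ends, the projection to $M^-$ of the image of this end accumulates on a nonempty closed flow-invariant subset $K\subseteq M^-$ for the flow of $X_{\eta^-}$.

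The main obstacle will be verifying that $K$ is a \emph{proper} subset of $M^-$. Non-emptiness is essentially automatic from the existence and non-degeneracy of a feral end. The harder task is excluding the possibility $K=M^-$: here one leverages the uniform $\omega$-energy bound together with the local area estimates for feral curves to forbid the image of $u_\infty$ from being $\omega$-dense in $\mathbb{R}\times M^-$, and then uses connectedness of $M^-$ together with the asymptotic description of feral ends as gradient-like trajectories in the loop space of $M^-$ to conclude $K\subsetneq M^-$. This step parallels the analogous argument in the proof of Theorem~\ref{THM_main_result}, but must now be carried out in the generality of an arbitrary exact cobordism rather than a concrete embedding, and this generality is exactly where the flexibility of the feral curve framework becomes indispensable.
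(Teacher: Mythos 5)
Your outline of the first two steps---attaching cylindrical ends, invoking the three alternatives on $M'$ to produce an initial curve (Gromov's degree-one sphere after capping off $S^3$, Hofer's Bishop family of disks emanating from an overtwisted disk, and Bishop disks with boundary on a lift of the non-trivial sphere for the third case), and using the maximum principle on the contact-type positive end to push curves into the negative end---matches the paper's approach in spirit. However, there is a genuine gap at exactly the step you flag as ``the main obstacle'': excluding $K = M^-$.

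The paper does \emph{not} establish properness of the limit set from a single extracted feral curve, and it cannot be done that way. As the paper emphasizes in the discussion following Theorem~\ref{THM_existence}, a single feral curve may very well have $\mathbf{x}$-limit set equal to all of $M^-$, and no amount of energy or connected-local area control rules this out. What forces $K \subsetneq M^-$ is Theorem~\ref{THM_existence}, and the engine of its proof is a one-parameter \emph{family} of trimmed curves $\mathbf{u}_k^b$ satisfying hypotheses (P1)--(P8) together with the crucial pairwise intersection bound $\#\big(u_k^b(S_k^b)\cap u_k^{b'}(S_k^{b'})\big)\leq C$. That bound comes from positivity of intersections for the degree-one spheres (Proposition~\ref{PROP_embeddedness}) or from pairwise disjointness of the Bishop disks, and it is what yields the contradiction if $\widehat{\Xi}=M$: if the limit set were everything, two of the feral limits $\bar{\mathbf{w}}_c$ and $\bar{\mathbf{w}}_{c'}$ extracted from the family would transversally intersect in more than $C$ points, violating Lemma~\ref{LEM_bounded_transverse_intersections}. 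Your sketch abandons the family entirely after the compactness step, keeping only one feral limit, and tries to show $K\subsetneq M^-$ by arguing the image of that one curve is not ``$\omega$-dense'' and appealing to the gradient-trajectory picture of feral ends. Neither of those observations bears on the size of the limit set $K$. To make the argument work, you need to carry the entire moduli family of Bishop disks (or spheres) through the trimming construction, verify (P1)--(P8) with a uniform constant $C$, secure the intersection count for the trimmed curves, and invoke Theorem~\ref{THM_existence}---exactly as in Step 5 of the proof of Theorem~\ref{THM_main_result}. Incidentally, once you do this the separate neck-stretching in your Step 3 is unnecessary: the moduli space already extends arbitrarily deep into the negative cylindrical end by the open/closed argument, with the area estimates of Theorem~\ref{THM_area_bounds_homotopy} supplying the input for target-local Gromov compactness; and exactness of $\tilde{\omega}$ is used to preclude bubbling, not to manufacture the $\omega$-energy bound.
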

This is the second main dynamical result of this manuscript.  
It is perhaps surprising that this generalization can be obtained with so
  little modifications from the proof of Theorem \ref{THM_main_result}.

\begin{remark}[removing the exactness condition]
  \label{REM_removing_exactness}
  \hfill \\
It should be straightforward to generalize the argument of the proof to
  the case where exactness is replaced by the assumption that
  $\widetilde{\omega}$ vanishes on $\pi_2$.
This assumption would prevent a certain type of bubbling. 
Possibly, using polyfold technology, one might even get away without any
  assumption on the symplectic form $\widetilde{\omega}$.
\end{remark}
%

We now turn our attention to providing some definitions, which will in
  turn allow us to state a number of properties of the pseudoholomorphic
  curves to be studied.
We begin with the notion of a generalized puncture, which is necessary to
  define since a priori our curves may be non-compact but their domains need
  not be conformally equivalent to a finitely punctured Riemann surface.
\begin{definition}[generalized punctures]
  \label{DEF_generalized_punctures}
  \hfill\\
Let \(S\) and \(W\) each be smooth finite dimensional manifolds, each
  possibly non-compact, and each possibly with smooth compact boundary.
Let \(u:(S, \partial S)\to (W, \partial W)\) be a smooth proper map.
Let \(W_k\subset W\) be a sequence of open sets each with compact closure
  which satisfy
  \begin{enumerate}                                                      
      \item \(W_k \subset W_{k+1}\) for all \(k\in \mathbb{N}\)
      \item \(W = \cup_{k\in\mathbb{N}} W_k\).
     \end{enumerate}
Define \({\rm Punct}^{W_k}(S)\) to be the number of non-compact
  path-connected components of the set \(S\setminus u^{-1}(W_k)\).
Define 
\begin{equation*}                                                         
    {\rm Punct}(S):= \lim_{k\to \infty} {\rm Punct}^{W_k}(S).
  \end{equation*}
\end{definition}
%

\begin{remark}[monotonicity of $\rm{Punct}$]
  \label{REM_punct}
  \hfill\\
Regarding Definition \ref{DEF_generalized_punctures}, we note that if
  \(W'\subset W''\) are open subsets of \(W\), each with compact closure,
 then it straightforward to show that
  \begin{equation*}                                                      
      {\rm Punct}^{W'}(S) \leq {\rm Punct}^{W''}(S)
    \end{equation*}
  and hence \({\rm Punct}(S)\) is well-defined, and defined independent
  of the choice of exhausting sequence \(\{W_k\}_{k\in \mathbb{N}}\).
\end{remark}
%

Next we aim to provide the primary novel definition of this manuscript,
  however it relies on a number of standard notions which some readers may
  not be familiar with, but which are provided later in Section
  \ref{SEC_background}.
As such, we note that it will be helpful to be familiar with the notion of a
  framed Hamiltonian manifold (Definition
  \ref{DEF_framed_Hamiltonian_structure}), an \(\eta\)-adapted almost
  Hermitian structure (Definition \ref{DEF_adapted}), and a proper marked
  nodal pseudoholomorphic curve (Definition
  \ref{DEF_pseudholomorphic_curve}).
With these understood, we can then define a feral curve.

\begin{definition}[feral curves]
  \label{DEF_feral_J_curve}
  \hfill\\
Let \((M, \eta)\) be a framed Hamiltonian manifold, and let \((J, g)\) be an
  \(\eta\)-adapted almost Hermitian structure on \(\mathbb{R}\times
  M\).
Let \(\mathbf{u}=(u, S, j, W, J, \mu, D)\) be a proper marked nodal
  pseudoholomorphic curve (possibly with compact boundary) in
  \(\mathbb{R}\times M\).
We say  \(\mathbf{u}\) is a \emph{feral pseudoholomorphic
  curve}, or simply a \emph{feral curve}, provided
  \begin{enumerate}                                                       
    \item 
    \(\int_{S}u^*\omega <\infty\)
    \item 
    \({\rm Genus}(S)<\infty\)
    \item 
    \({\rm Punct}(S)<\infty\); that is, \((u, S, j)\) has a finite
    number of generalized punctures.
    \item 
    \(\#\mu <\infty\)
    \item 
    \(\# D < \infty\)
    \item 
    \(\# \pi_0(S)<\infty\)
    \end{enumerate}
\end{definition}
%

The above is the namesake definition of this manuscript. 
It may be helpful to think of such a curve simply as being proper
  pseudoholomorphic map, with finite \(\omega\)-energy, and finite topology.
It is also worth noting that in the more usual case that \(\eta = (\lambda
  , d\lambda)\) is a contact manifold, a feral curve is nothing other than a
  finite energy pseudoholomorphic curve.

We are now prepared to state the main properties of feral curves.
\setcounter{CounterSectionAreaBounds}{\value{section}}
\setcounter{CounterLemmaAreaBounds}{\value{lemma}}
\begin{theorem}[area bounds]
  \label{THM_area_bounds}
  \hfill\\
Let \((M, \eta)\) be a compact framed Hamiltonian manifold, let \((J,
  g)\) be an \(\eta\)-adapted almost Hermitian structure on
  \(\mathbb{R}\times M \), and fix positive constants \(r>0\) and
  \(E_0>0\).
Then there exists a constant \(C=C(J, g, \omega, \lambda, r, E_0)\)
  with the following property.
For each proper pseudoholomorphic map \(u:S\to \mathbb{R}\times M\)
  without boundary which satisfies
  \begin{equation*}                                                       
    \int_S u^*\omega \leq E_0 <\infty,
    \end{equation*}
  and for which there exists there exists \(a_0\in \mathbb{R}\) such
  that \((a\circ u)^{-1}(a_0) = \emptyset\) (e.g. if \(a\circ u\subset
  [0, \infty)\times M\)), the following holds. 
  \begin{equation*}                                                       
    \int_{\widetilde{S}} u^* (da \wedge \lambda + \omega) \leq C,
    \end{equation*} 
  where 
  \begin{equation*}                                                       
    \widetilde{S}:=\{\zeta\in S: a_0 - r <a\circ u (\zeta)< a_0 + r\}.
    \end{equation*}
To be clear: \(C\) depends on ambient geometry in \(\mathbb{R}\times
  M\), \(r\), and the \(\omega\)-energy bound \(E_0\), but \emph{not}
  the map \(u\).
\end{theorem}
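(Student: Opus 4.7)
The plan is to exploit the hypothesis that $u$ avoids $\{a_0\}\times M$ together with the coarea formula to reduce the area bound to a uniform $\lambda$-action bound on level slices of $u$, and then to attack the action bound by a target-centric comparison.

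\textbf{Step 1 (slice reduction).} Properness of $u$ together with compactness of $M$ and the hypothesis $(a\circ u)^{-1}(a_0) = \emptyset$ make $u^{-1}([a_0 - r, a_0 + r]\times M)$ compact and force $a\circ u$ to stay bounded away from $a_0$ on this set; hence $\widetilde{S}$ decomposes as $\widetilde{S}^-\sqcup \widetilde{S}^+$, where $\widetilde{S}^+:= u^{-1}((a_0, a_0 + r)\times M)$ has compact closure in $S$. By symmetry it suffices to treat $\widetilde{S}^+$. Since $u^*(da\wedge\lambda) = d(a\circ u)\wedge u^*\lambda$, the coarea formula applied to $a\circ u$ yields
\begin{equation*}
\int_{\widetilde{S}^+} u^*(da\wedge\lambda) \;=\; \int_0^r \mathcal A(\tau)\,d\tau, \qquad \mathcal A(\tau) := \int_{(a\circ u)^{-1}(a_0+\tau)} u^*\lambda,
\end{equation*}
where the level sets are 1-dimensional submanifolds at almost every (Sard-regular) value of $\tau$. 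Combined with $\int u^*\omega \le E_0$, the theorem reduces to a uniform bound $\mathcal A(\tau) \le C$ for a.e.~$\tau \in (0, r)$ with $C = C(J,g,\lambda,\omega,r,E_0)$.

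\textbf{Step 2 (the main obstacle: bounding $\mathcal A(\tau)$).} This is the genuinely difficult step. In the contact or stable Hamiltonian setting a Hofer-type energy would directly control $\mathcal A(\tau)$, but in the framed Hamiltonian setting no such energy is available, so the action must be bought with $\omega$-energy and ambient data alone. I would attack this by a target-centric comparison in the spirit of the first author's work in \cite{Fish2}. The principle is that any collection of loops at height $a_0+\tau$ whose total $\lambda$-action substantially exceeds the intrinsic length scale set by $(M,\eta,J,g)$ must shadow the Hamiltonian vector field $X_\eta$ over a long arc; a $J$-holomorphic filling of such loops inside $\widetilde{S}^+$ cannot stay close to $X_\eta$ throughout $\widetilde{S}^+$ without either transversely separating from $X_\eta$ --- which, by a monotonicity-type lemma, consumes at least a fixed amount of $\omega$-area per unit of excess action --- or reaching the forbidden level $\{a_0\}\times M$, which the hypothesis forbids. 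Quantifying this ``$\omega$-area per unit action'' tradeoff into a threshold lemma yields $\mathcal A(\tau) \le C_1 E_0 + C_2$ uniformly in $\tau$.

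\textbf{Step 3 (assembly).} Plugging the bound of Step 2 into the slice identity of Step 1 gives $\int_{\widetilde{S}^+} u^*(da\wedge\lambda + \omega) \le r(C_1 E_0 + C_2) + E_0$, and the symmetric argument on $\widetilde{S}^-$ produces the total bound on $\widetilde{S}$ of the prescribed form, with constant depending only on $(J,g,\omega,\lambda,r,E_0)$. The technical heart of the proof is the action/$\omega$-energy monotonicity in Step 2, which in the non-stable framed setting must substitute for the classical Hofer action-area trade that underlies the contact and stable Hamiltonian theories.
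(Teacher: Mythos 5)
Your Step 1 matches the paper's reduction: the coarea formula converts the area bound into an action bound $\mathcal{A}(\tau) = \int_{(a\circ u)^{-1}(a_0+\tau)} u^*\lambda$ on level slices, and this is exactly how Proposition \ref{PROP_area_bounds_carefully} is framed. The gap is Step 2, which you correctly flag as the difficult step but leave as an appeal to an unspecified ``monotonicity-type lemma'' trading excess action for $\omega$-area. That lemma is neither formulated nor proved in your proposal, and it is not the mechanism the paper uses. The paper instead runs a differential-inequality argument anchored at the empty level. Set $h(\tau) := \mathcal{A}(\tau)$ and write $\alpha = u^*\lambda$. Stokes' theorem gives $h(\tau+\epsilon) - h(\tau) = \int_{\{a_0+\tau \leq a\circ u \leq a_0+\tau+\epsilon\}} u^*d\lambda$, and the purely algebraic pointwise estimate of Lemma \ref{LEM_dlambda_bounds}, namely $|d\lambda(Y,JY)| \leq C_0\,(da\wedge\lambda + \omega)(Y, JY)$ with $C_0$ depending only on $(M,\eta,J,g)$, bounds the integrand by a multiple of the area form. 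This produces the differential inequality
\begin{equation*}
  h'(\tau) \;\leq\; C_{\mathbf h}\big(h(\tau) + G'(\tau)\big), \qquad G(\tau) := \int_{\{a_0\leq a\circ u\leq a_0+\tau\}} u^*\omega.
\end{equation*}
The hypothesis $(a\circ u)^{-1}(a_0) = \emptyset$ enters precisely here: it forces the initial condition $h(0) = 0$, and Gronwall (Lemma \ref{LEM_gronwall}) yields $h(\tau) \leq C_{\mathbf h}\,E_0\,e^{C_{\mathbf h}\tau}$. Integrating over $\tau\in(0,r)$, and running the mirror argument for negative $\tau$, recovers the stated bound; this is carried out, with the extra care needed for critical values and perturbed maps, in Theorem \ref{THM_area_bound_estimate}.

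Two further points. First, your claimed bound $\mathcal{A}(\tau) \leq C_1 E_0 + C_2$ with a universal additive constant $C_2$ does not reflect the actual structure: the paper's bound scales linearly in $E_0$ with an exponential-in-$\tau$ prefactor and has no additive term, which is the correct degeneration as $E_0\to 0$ (a zero-$\omega$-energy curve forced to miss $\{a_0\}\times M$ lies in an orbit cylinder and carries no action at the reference level). Second, your heuristic about curves shadowing $X_\eta$ and either spending $\omega$-energy or hitting the forbidden level is a serviceable intuition for why the bound should exist, but it does not substitute for the quantitative ODE argument; the ``rate of action growth is pointwise controlled by $\|d\lambda\|$'' step is what makes the estimate precise, and it is absent from your sketch.
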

%

The above estimate, as well as the generalizations provided in Section
  \ref{SEC_proof_of_exp_area_bounds}, are rather interesting.
Roughly the above states that if a feral curve has a local maximum or a
  local minimum, then the area cannot be arbitrarily large in a bounded
  neighborhood of that extremal point.
This is important because in general feral curves definitely can develop
  unbounded local area, but in some sense this must occur very far away from
  the absolute minimum or maximum.
In Section \ref{SEC_proof_of_exp_area_bounds} we shall prove a more general result
  about area bounds in a neighborhood of a level set of a proper curve with
  finite \(\omega\)-energy.
Very roughly, we show that for 
  \begin{equation*}                                                       
    \widetilde{S}_r:=\{\zeta\in S: a_0 - r <a\circ u (\zeta)< a_0 + r\}.
    \end{equation*}
  we have
  \begin{align*}                                                          
    {\rm Area}_{u^*g}(\widetilde{S}_r) \leq A e^{B r} 
    \end{align*}
  where \(A\) depends on \(\int_{(a\circ u)^{-1}(0)}u^*\lambda\) and \(\int_S
  u^*\omega\), and \(B\) depends only on the geometry of the ambient
  manifold.

\setcounter{CounterSectionEnergyThreshold}{\value{section}}
\setcounter{CounterTheoremEnergyThreshold}{\value{theorem}}
\begin{theorem}[$\omega$-energy threshold]
  \label{THM_energy_threshold}
  \hfill\\
Let \((M, \eta=(\lambda, \omega))\) be a compact framed Hamiltonian
  manifold, and let \((J, g)\) be an \(\eta\)-adapted almost Hermitian
  structure on \(\mathbb{R}\times M\).
Also, fix positive constants \(r>0\), and \(C_g>0\). 
Then there exists a positive constant \(0<\hbar=\hbar(M, \eta, J, g,
  r, C_g)\) with the following significance.
Let \(\{\mathbf{h}_k\}_{k\in\mathbb{N}}\) be a sequence of quadruples
  \((J_k, g_k, \lambda_k, \omega_k)\) with the property that each
  \(\eta_k=(\lambda_k, \omega_k)\) is a Hamiltonian structure on \(M\),
  and each \((J_k, g_k)\) is an \(\eta_k\)-adapted almost Hermitian
  structure on \(\mathbb{R}\times M\), and suppose that
  \begin{align*}                                                          
    (J_k, g_k, \lambda_k, \omega_k) \to (J, g, \lambda, \omega)
    \qquad\text{in }\mathcal{C}^\infty\text{ as }k\to \infty.
    \end{align*}
Furthermore, fix \(a_0\in \mathbb{R}\), and let \(u_k\colon S_k\to
  \mathbb{R}\times M\) be a sequence of compact connected generally
  immersed pseudoholomorphic maps which satisfy the following conditions:
  \begin{enumerate}[($\hbar$1)]                                           
      \item\label{EN_hbar1} either \(a\circ u_k(S_k)\subset [a_0,
        \infty)\) or  \(a\circ u_k(S_k)\subset (-\infty, a_0]\) for all
        \(k\in \mathbb{N}\)
      \item\label{EN_hbar2} \({\rm Genus}(S_k)\leq C_g\)
      \item\label{EN_hbar3} \(a\circ u_k(\partial S_k)\cap [a_0-r, a_0+r]
	= \emptyset\)
      \item\label{EN_hbar4} \(a_0\in a\circ u_k (S_k)\).
    \end{enumerate}
Then for all sufficiently large \(k\in \mathbb{N}\) we have
  \begin{equation*}                                                       
      \int_{S_k} u_k^*\omega_k \geq \hbar.
    \end{equation*}
\end{theorem}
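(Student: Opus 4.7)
The plan is to argue by contradiction: if no such $\hbar > 0$ exists, then after passing to a subsequence there are data $(J_k, g_k, \lambda_k, \omega_k)$ and maps $u_k : S_k \to \mathbb{R} \times M$ satisfying $(\hbar 1)$--$(\hbar 4)$ with $\int_{S_k} u_k^* \omega_k \to 0$. After translating in the $\mathbb{R}$-factor I may take $a_0 = 0$, and (interchanging orientations if necessary) assume $a \circ u_k(S_k) \subset [0, \infty) \times M$. Choose $\zeta_k \in S_k$ with $a \circ u_k(\zeta_k) = 0$; compactness of $M$ then produces a subsequence along which $u_k(\zeta_k) \to p_\infty \in \{0\} \times M$.

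First I would apply Theorem \ref{THM_area_bounds} (or, more precisely, the exponential-in-$r$ strengthening described immediately after it, which holds uniformly for the perturbed data because $(J_k, g_k, \lambda_k, \omega_k) \to (J, g, \lambda, \omega)$ in $\mathcal{C}^\infty$) to obtain a constant $C$ independent of $k$ with
\[
\int_{\widetilde{S}_k} u_k^*(da \wedge \lambda_k + \omega_k) \leq C, \qquad \widetilde{S}_k := (a \circ u_k)^{-1}\bigl((-r,r)\bigr).
\]
Since $(\hbar 3)$ places $\widetilde{S}_k$ in the interior of $S_k$, each restriction $u_k|_{\widetilde{S}_k}$ is a pseudoholomorphic curve into the relatively compact slab $(-r,r) \times M$ with uniform bounds on area (with respect to the taming form $da \wedge \lambda_k + \omega_k$) and on genus. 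Together with $\mathcal{C}^\infty$-convergence of the almost Hermitian data, this is exactly the input needed to run a target-local Gromov-type compactness argument in the spirit of \cite{Fish2}: after passing to a subsequence, the $u_k$ converge on any smaller slab $[-r',r'] \times M$ with $r' < r$ to a possibly nodal pseudoholomorphic limit $u_\infty : \Sigma \to [-r',r'] \times M$ of finite topology.

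Next I would use the vanishing of $\omega$-energy to pin down the limit. Since $u_k^*\omega_k \geq 0$ pointwise (by $\eta_k$-adaptedness) and $\int_{\widetilde{S}_k} u_k^*\omega_k \to 0$, the limit satisfies $u_\infty^*\omega \equiv 0$; and because $\omega$ has one-dimensional kernel $\langle X_\lambda \rangle$ while $J$ preserves $\langle \partial_a, X_\lambda \rangle \oplus \ker\lambda$, the only $J$-invariant $\omega$-isotropic $2$-plane in $T(\mathbb{R}\times M)$ is the vertical one $\langle \partial_a, X_\lambda\rangle$. Consequently every component of $u_\infty$ has image contained in an integral surface $\mathbb{R} \times \gamma$, where $\gamma$ is an orbit of $X_\lambda$. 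The local monotonicity lemma applied to the taming symplectic form $da \wedge \lambda_k + \omega_k$ at the points $u_k(\zeta_k)$ supplies a definite amount of area in any fixed neighborhood of $p_\infty$, so the component $\Sigma_\infty \subset \Sigma$ containing the limit of the $\zeta_k$ is non-constant; the closed condition $a\circ u_\infty \geq 0$ combined with $a(p_\infty) = 0$ moreover forces $a \circ u_\infty|_{\Sigma_\infty}$ to attain an interior minimum at this point.

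This yields the contradiction. On $\Sigma_\infty$ the map $u_\infty$ factors through the two-dimensional surface $\mathbb{R} \times \gamma$, on which $J$ is the standard complex structure in coordinates $(a,t)$, so $a$ is a harmonic function on $\mathbb{R}\times\gamma$ and $a\circ u_\infty$ is harmonic on $\Sigma_\infty$. The strong minimum principle forces $a \circ u_\infty \equiv 0$ near $p_\infty$, sending a neighborhood of $p_\infty$ into the one-dimensional set $\{0\}\times\gamma$; but $du_\infty$ is complex-linear, hence of even real rank, so any pseudoholomorphic map with image of real dimension at most one must be locally constant, contradicting the monotonicity area bound. The main obstacle I anticipate is the target-local compactness extraction itself: outside the contact or stable Hamiltonian regime, standard SFT compactness does not apply, and one must combine the area estimate from Theorem \ref{THM_area_bounds} with a monotonicity lemma that is uniform in $k$ under the $\mathcal{C}^\infty$-convergence hypothesis to realize the Gromov-type compactness framework of \cite{Fish2} in this setting.
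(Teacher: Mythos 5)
Your proposal takes essentially the same approach as the paper: argue by contradiction, normalize $a_0 = 0$, invoke Theorem \ref{THM_area_bounds} for a uniform area bound on $(a\circ u_k)^{-1}([0,a'])$, extract a target-local Gromov limit $u_\infty$, observe that vanishing $\omega$-energy confines the image to an orbit cylinder $\mathbb{R}\times\gamma$, note that $a\circ u_\infty$ is therefore harmonic, and derive a contradiction from an interior minimum.

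The one place you diverge is in justifying non-constancy of the relevant limit component, and there your argument has a small gap. You invoke a monotonicity estimate at $u_k(\zeta_k)$ and conclude that the component $\Sigma_\infty$ containing $\lim\zeta_k$ is non-constant; but the area guaranteed by monotonicity near $p_\infty$ need not live on that component --- it could concentrate on a bubble attached at exactly that point, leaving $\Sigma_\infty$ constant. The paper avoids this by first restricting to suitable connected components of the $\widetilde{S}_k$ so the Gromov limit has connected image, then arguing that if every component meeting level $0$ were constant, connectedness of the image would force the entire limit map to be constant, contradicting the fact that the approximants are generally immersed. Your route can be repaired by concluding instead that \emph{some} non-constant component of the limit has $p_\infty$ in its (compact, hence closed) image, and running the minimum-principle argument on that component, noting it is interior because $u_\infty(\partial\Sigma)\subset\{a''\}\times M$ with $a''>0$. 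Separately, $da\wedge\lambda_k + \omega_k$ is not closed when $(\lambda_k,\omega_k)$ is a framed but non-stable Hamiltonian structure, so it is not a taming symplectic form; the monotonicity estimate should be stated with respect to the almost Hermitian metric $g_k$, where it still applies to $J_k$-holomorphic curves.
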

%

Whereas Theorem \ref{THM_area_bounds} is concerned with showing that the
  area near an absolute maximum or minimum of a feral curve cannot be too
  large, Theorem \ref{THM_energy_threshold} shows that it cannot be to
  small either; or more precisely that the \(\omega\)-energy cannot be too
  small.
This result is one of the easiest to obtain, and follows essentially from
  a compactness theorem.
However, we note that such a compactness theorem \emph{requires} an area
  bound which one only has as an application of Theorem
  \ref{THM_area_bounds}.
We also note that the bound on genus can almost certainly be removed.
Indeed, whereas our proof employs target-local Gromov compactness, which
  requires the genus bound, one could probably replace our argument with
  Taubes's convergence as integral currents which does not require a genus
  bound.
Because some of our later results do require such a genus bound, such a
  (potentially) superfluous condition is not a hindrance and creates a more
  self-contained presentation.

\setcounter{CounterSectionAsymptoticLocalLocalAreaBound}{\value{section}}
\setcounter{CounterTheoremAsymptoticLocalLocalAreaBound}{\value{theorem}}
\begin{theorem}[asymptotic connected-local area bound]
  \label{THM_local_local_area_bound}
  \hfill\\
Let \((M, \eta)\) be a compact framed Hamiltonian manifold, and let \((J,
  g)\) be an \(\eta\)-adapted almost Hermitian structure on
  \(\mathbb{R}\times M\).
Then there exists a positive constant \(r_1=r_1(M, \eta, J, g)\) with
  the following significance.
For each generally immersed feral pseudoholomorphic curve \((u, S, j)\) in
  \(\mathbb{R}\times M\), there exists a compact set of the form
  \(K:=[-a_0, a_0]\times M\) with the property that for each \(\zeta\in
  S\) such that \(u(\zeta)\notin K\) we have
  \begin{equation*}                                                       
      {\rm Area}_{u^*g}\big(S_{r_1}(\zeta)\big)\leq 1;
    \end{equation*}
  here \(S_{r_1}(\zeta)\) is defined to be the connected component of
  \(u^{-1}(\mathcal{B}_{r_1}(u(\zeta)))\) containing \(\zeta\), and
  \(\mathcal{B}_{r_1}(p)\) is the open metric ball of radius \(r_1\)
  centered at the point \(p\in \mathbb{R}\times M\).
\end{theorem}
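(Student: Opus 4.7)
The plan is to argue by contradiction, using the $\mathbb R$-translation invariance of the adapted structure $(J,g)$ on the symplectization together with a target-local Gromov compactness argument. Assume the conclusion fails. Then there is a sequence $\zeta_n \in S$ with $|a \circ u(\zeta_n)|\to\infty$, say $a\circ u(\zeta_n)\to +\infty$, for which ${\rm Area}_{u^*g}(S_{r_1}(\zeta_n))>1$ for every $n$. Because $\int_S u^*\omega<\infty$, for any prescribed $\hbar'>0$ we may select $a^*$ so that $\int_{\{a\circ u>a^*\}}u^*\omega<\hbar'$; for $n$ large the inclusion $S_{r_1}(\zeta_n)\subset\{a\circ u>a\circ u(\zeta_n)-r_1\}\subset\{a\circ u>a^*\}$ then forces $\int_{S_{r_1}(\zeta_n)}u^*\omega<\hbar'$.

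Next, I would $\mathbb R$-translate each $\zeta_n$ down to the level $\{a=0\}$, producing a sequence of compact connected pseudoholomorphic maps $u_n\colon \overline{S_{r_1}(\zeta_n)}\to \overline{\mathcal B_{r_1}(u_n(\zeta_n))}$ with $u_n(\zeta_n)\in \{0\}\times M$, genus bounded by ${\rm Genus}(S)<\infty$, area exceeding $1$, and $\omega$-energy less than $\hbar'$. Using compactness of $M$, pass to a subsequence so that $u_n(\zeta_n)\to p_*\in\{0\}\times M$. Theorem~\ref{THM_energy_threshold} rules out bubbles absorbing positive $\omega$-energy: any such bubble must capture at least $\hbar=\hbar(M,\eta,J,g,r_1,C_g)>0$, contradicting $\omega$-energy $<\hbar'<\hbar$. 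A target-local Gromov compactness argument in the spirit of \cite{Fish2}, phrased via integral currents so as to bookkeep any degenerations tangent to $\ker\omega$, then produces a subsequential limit $u_\infty$ in $\overline{\mathcal B_{r_1}(p_*)}$ passing through $p_*$, with $\int u_\infty^*\omega=0$ and mass at least~$1$ by lower semicontinuity.

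Finally, since $u_\infty$ is a $J$-holomorphic current with vanishing $\omega$-energy, every non-constant component is $\omega$-isotropic and hence tangent to the two-plane distribution ${\rm span}(\partial_a,X_\eta)$ spanned by the symplectization direction and the Hamiltonian vector field of $\eta$. Each such component is therefore a piece of an $\mathbb R$-invariant cylinder over an integral curve of $X_\eta$. Any such cylinder piece inside a ball of radius $r_1$ has area at most $c(M,\eta,J,g)\cdot r_1^2$, and the number of components of $u_\infty$ is controlled topologically in terms of $C_g={\rm Genus}(S)$. Choosing $r_1$ purely in terms of $(M,\eta,J,g)$ so small that the resulting count times $c\,r_1^2$ is strictly less than $1$ then contradicts the mass lower bound on $u_\infty$, establishing the claim and fixing the universal constant $r_1$.

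The principal obstacle will be organizing the target-local Gromov compactness step so that the limiting mass is conserved and the constants can be extracted universally. The delicate point is the treatment of ``Reeb-sheet'' bubbles (tangent to $\ker\omega$), which carry zero $\omega$-energy and are therefore not directly controlled by Theorem~\ref{THM_energy_threshold}; bounding their mass contribution requires combining the exponential area estimate alluded to after Theorem~\ref{THM_area_bounds} with an integral-currents compactness argument along the lines of Taubes \cite{Taubes1998} and Fish \cite{Fish2}. Once this is in place, the classification of the limit's non-constant components as cylinders over $X_\eta$-trajectories follows directly from the $J$-holomorphic equation together with the $\omega$-isotropy forced by $\int u_\infty^*\omega=0$.
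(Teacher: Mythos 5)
Your proposal has a circular-reasoning gap at its core. You want to extract a subsequential limit $u_\infty$ from the maps $u_n\colon \overline{S_{r_1}(\zeta_n)}\to \overline{\mathcal B_{r_1}}$ via target-local Gromov compactness or an integral-currents argument, and then derive a contradiction from the mass of $u_\infty$. But every such compactness theorem — Theorem \ref{THM_target_local_gromov_compactness}, Fish's results in \cite{Fish2}, Taubes's current-theoretic compactness in \cite{Taubes1998}, Federer--Fleming for integral currents — takes a uniform area (mass) bound as a \emph{hypothesis}. Your contradiction hypothesis only supplies a lower bound ${\rm Area}_{u^*g}(S_{r_1}(\zeta_n))>1$, no upper bound. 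A uniform upper bound on ${\rm Area}_{u^*g}(S_{r_1}(\zeta_n))$ is precisely what the theorem asserts, so you cannot invoke compactness to get the limit. You flag this difficulty yourself (``bounding their mass contribution requires combining the exponential area estimate ... with an integral-currents compactness argument''), but the exponential area bound of Theorem \ref{THM_area_bounds} cannot close the gap: the constant $A$ in the bound ${\rm Area}\leq A e^{Br}$ depends on $\int_{(a\circ u)^{-1}(a_0)}u^*\lambda$, and for a genuine feral curve this $\lambda$-integral can grow without bound as $a_0\to\infty$ — that is exactly the sense in which feral curves fail to have finite Hofer energy. So the exponential estimate does not hand you a uniform mass bound on the pieces.

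A second, independent problem is in your final counting step. You propose to bound the number of $\omega$-isotropic components of $u_\infty$ ``topologically in terms of $C_g={\rm Genus}(S)$'' and then to shrink $r_1$ so the total cylinder-area contribution stays below $1$. But $r_1$ in Theorem \ref{THM_local_local_area_bound} must depend only on $(M,\eta,J,g)$; it cannot depend on the genus of the particular feral curve. Curve-dependent quantities are allowed to enter the choice of the compact set $K=[-a_0,a_0]\times M$, not the radius $r_1$. As written, your argument produces a radius that varies with the curve.

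The paper avoids the compactness trap entirely. Its proof of Proposition \ref{PROP_local_local_area_bound_2}, the technical core, is a direct estimate: one takes a small $(\delta,\epsilon)$-tame perturbation so that $a\circ\tilde u$ is Morse away from the branch points, analyzes the gradient flow of $a\circ\tilde u$, and shows — via Lemma \ref{LEM_lambda_shrinkage}, Lemma \ref{LEM_general_strip_estimate}, and Lemma \ref{LEM_modest_length_flow_lines} — that in a measure-theoretic sense most gradient trajectories emanating from the bottom boundary of a thin strip reach the top boundary without being trapped by critical points or drifting into neighborhoods of non-immersed points. This yields a decomposition of the strip into ``tracts'' whose areas are controlled via Stokes and the $d\tilde\alpha$ estimate of Lemma \ref{LEM_d_alpha_tilde_bound}. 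The set $S_{r_1}(\zeta)$ is then shown, via the geometric estimate of Lemma \ref{LEM_small_radius}, to fit inside at most three consecutive tracts, and the area bound follows by summing. No limiting curve is ever extracted; the estimate is purely local and quantitative, which is why $r_1$ can be chosen as a function of $(M,\eta,J,g)$ alone. Theorem \ref{THM_local_local_area_bound} itself is then reduced to the proposition by choosing $a_0$ large enough (using the finite $\omega$-energy, genus, and puncture count of a feral curve together with Lemma \ref{LEM_Q_has_small_measure}) to select good levels $a', b'$ whose preimages satisfy the hypotheses, and handling some degenerate boundary cases by direct application of Theorem \ref{THM_area_bound_estimate}.
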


Arguably, Theorem \ref{THM_local_local_area_bound} is the most important
  property of feral curves developed here.
The difficulty is that in general the Hofer-energy of a feral curve may be
  infinite, which is to say that in general, we have
  \begin{align*}                                                            
    \sup_{z\in S} {\rm Area}_{u^*g}\Big(
    u^{-1}\big(\mathcal{B}_\epsilon(u(z))\big)\Big) = \infty
    \end{align*}
  for each \(\epsilon>0\); here \(\mathcal{B}_\epsilon(p)\) is a ball of
  radius \(\epsilon\) centered at \(p\in\mathbb{R}\times M\).
In contrast, Theorem \ref{THM_local_local_area_bound} states that if we
  replace \(u^{-1}(\mathcal{B}_\epsilon(u(z)))\) with the connected
  component in this set containing \(z\), then the associated supremum is
  in fact finite.
Establishing this estimate is a rather technical process, and it is
  perhaps worth noting that our proof crucially relies on the fact that the
  genus of a feral curve is finite and that the number of generalized
  punctures is also finite.
Indeed, deducing these area bounds in part from genus bounds is notably
  delicate.

With such a ``connected-local'' area bound established, a variety of
  asymptotic properties of feral curves can be established essentially via
  target-local Gromov compactness.
One such important result is the following.

\setcounter{CounterSectionAsymptoticCurvatureBound}{\value{section}}
\setcounter{CounterTheoremAsymptoticCurvatureBound}{\value{theorem}}
\begin{theorem}[asymptotic curvature bound]
  \label{THM_curv_bound}
  \hfill\\
Let \((M, \eta)\) be a compact framed Hamiltonian manifold, and let \((J,
  g)\) be an \(\eta\)-adapted almost Hermitian structure on
  \(\mathbb{R}\times M\).
For each feral pseudoholomorphic curve \(\mathbf{u}= (u, S, j,
  \mathbb{R}\times M, J, \mu, D)\), there exists a compact set of the form
  \(K:=[-a_2, a_2]\times M\), and positive constant \(C_\kappa=C_\kappa(M,
  \eta, J, g)\) with the following significance.
First, the restricted map
  \begin{equation*}                                                       
      u:S\setminus u^{-1}(K)\to \mathbb{R}\times M 
    \end{equation*}
  is an immersion. 
Second, for each \(\zeta\in S\setminus u^{-1}(K)\) we have 
  \begin{equation*}                                                       
      \|B_u(\zeta)\| \leq C_\kappa
    \end{equation*}
  where \(B_u(\zeta)\) is the second fundamental form of the immersion
  \(u\) evaluated at the point \(\zeta\).
\end{theorem}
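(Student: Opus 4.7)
The plan is a blow-up argument by contradiction, extracting a bubble via target-local Gromov compactness and then invoking the \(\omega\)-energy threshold of Theorem \ref{THM_energy_threshold}. Suppose no such compact set \(K\) and constant \(C_\kappa\) exist. Then one can select a sequence \(\zeta_k\in S\) with \(|a\circ u(\zeta_k)|\to\infty\) and \(\|B_u(\zeta_k)\|\to\infty\); observe that a branch point forces \(\|B_u\|=\infty\), so this sequence also witnesses the failure of the immersion statement.

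\textbf{Rescaling.} A Hofer-type point-picking argument in local conformal coordinates produces a nearby sequence \(\zeta_k'\) such that, setting \(\epsilon_k:=\|B_u(\zeta_k')\|^{-1}\to 0\), one has \(\|B_u\|\le 2\|B_u(\zeta_k')\|\) on a conformal disk of radius \(R_k\epsilon_k\) around \(\zeta_k'\) with \(R_k\to\infty\). Using the \(\mathbb{R}\)-translation invariance of \(J\), define rescaled \(J\)-holomorphic maps
\[
\tilde u_k(z):=\bigl(a(u(\zeta_k'+\epsilon_k z))-a(u(\zeta_k')),\;\pi_M\circ u(\zeta_k'+\epsilon_k z)\bigr),\qquad z\in D_{R_k}(0),
\]
with \(\|B_{\tilde u_k}(0)\|=1\), a uniform curvature bound on \(D_{R_k}(0)\), and \(\tilde u_k(0)\in\{0\}\times M\). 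Theorem \ref{THM_local_local_area_bound} furnishes the connected-local area bound \(\mathrm{Area}\le 1\) on \(r_1\)-balls for \(u\) near \(\zeta_k'\) once \(|a\circ u(\zeta_k')|\) is large, and invariance of area under domain reparametrization transfers this bound to \(\tilde u_k\). This is the key input for target-local Gromov compactness, which after passing to a subsequence produces a \(C^\infty_{\mathrm{loc}}\)-limit \(v:\mathbb{C}\to \mathbb{R}\times M\), a non-constant \(J\)-holomorphic plane with \(v(0)\in\{0\}\times M\) and finite area.

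\textbf{Contradiction.} Since \(v\) is non-constant with finite \(\omega\)-energy, one selects a regular value \(a_0\in\mathbb{R}\) of \(a\circ v\) and a sufficiently large disk \(D_r(0)\subset\mathbb{C}\) so that a connected component \(\Sigma\) of \(v^{-1}([a_0,\infty)\times M)\cap D_r(0)\) containing \(0\) is compact, has genus zero, and has boundary in \(\{a=a_0\}\) bounded away from the band \([a_0-r,a_0+r]\). Corresponding approximating domains \(\Sigma_k\) exist for the \(\tilde u_k\), and Theorem \ref{THM_energy_threshold} applied to \(\tilde u_k|_{\Sigma_k}\) yields \(\int_{\Sigma_k}\tilde u_k^*\omega\ge\hbar\) for \(k\) large, so in particular \(\int_{\mathbb{C}}v^*\omega\ge\hbar\). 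Because \(\omega\)-energy is preserved under the rescaling, this \(\hbar\) is drawn from a vanishing-diameter disk around \(\zeta_k'\) in the original \(S\). Passing to a subsequence with the values \(a(u(\zeta_k'))\) widely separated, these regions in \(S\) are pairwise disjoint, so \(\int_S u^*\omega\ge N\hbar/2\) for arbitrarily large \(N\), contradicting finiteness of \(\omega\)-energy. The curvature bound follows, and the immersion statement is then immediate, since any branch point would force \(\|B_u\|\) to be unbounded nearby.

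\textbf{Main obstacle.} The most delicate step is the extraction of the bubble together with the verification of the hypotheses of Theorem \ref{THM_energy_threshold} for the limit plane \(v\): one must rule out that target-local Gromov compactness delivers a pseudoholomorphic sheet (or a limit parametrized by a disk rather than \(\mathbb{C}\)) and must locate a compact one-sided sub-piece of \(v\) carrying a quantum of \(\omega\)-energy. In the contact or stable Hamiltonian case both issues are handled for free by Hofer-energy bounds and the maximum principle; here one must rely on the intrinsic area bound of Theorem \ref{THM_local_local_area_bound} to prevent sheet-like degeneration, and on non-triviality of \(v\) combined with its finite \(\omega\)-energy to furnish sufficient oscillation of \(a\circ v\) to isolate the required sub-piece.
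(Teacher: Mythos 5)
Your proposal takes a genuinely different route from the paper's, and unfortunately it has two gaps that are each fatal.

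\textbf{Gap 1: domain rescaling does not normalize the second fundamental form.} The quantity $\|B_u(\zeta)\|$ in Definition \ref{DEF_second_fundamental_form} is the second fundamental form of the immersed submanifold $u(S)\subset(\mathbb{R}\times M, g)$ at the point $u(\zeta)$: it is a geometric invariant of the image and does \emph{not} depend on the parametrization of $u$. Composing $u$ with a conformal reparametrization of the domain, $z\mapsto\zeta_k'+\epsilon_k z$, preserves $\|B_u\|$ pointwise; what it rescales is $\|du\|$, the norm of the differential. So after your rescaling $\|B_{\tilde u_k}(0)\| = \|B_u(\zeta_k')\| \to \infty$, not $1$. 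The $\mathbb{R}$-action on $\mathbb{R}\times M$ is an isometry, not a dilation, so there is also no target rescaling that would help. The curvature blow-up hypothesis therefore cannot be converted by a point-picking/rescaling scheme into a sequence with curvature normalized to $1$.

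\textbf{Gap 2: any translated Gromov limit has zero $\omega$-energy, so the energy-threshold contradiction is unavailable.} You extract pieces of a \emph{single} fixed feral curve $u$ from pairwise disjoint regions at levels $a\circ u(\zeta_k')\to\pm\infty$. Since $\int_S u^*\omega<\infty$ and $u^*\omega\geq 0$, the $\omega$-energy of those disjoint pieces must tend to zero. Consequently any Gromov limit satisfies $\int v^*\omega=0$, so $v$ has image in a trivial orbit cylinder $\mathbb{R}\times\gamma(\mathbb{R})$. The claimed inequality $\int_{\mathbb{C}}v^*\omega\geq\hbar$ is therefore impossible, and the invocation of Theorem \ref{THM_energy_threshold} is flawed: a non-constant holomorphic map of $\mathbb{C}$ into a trivial cylinder has harmonic $a\circ v$ with no interior extrema, so the one-sided hypothesis $(\hbar\ref{EN_hbar1})$ cannot be arranged for an interior sub-piece with a definite boundary gap $(\hbar\ref{EN_hbar3})$. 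Put differently, you are running the finite-energy-bubble argument in a setting where the bubble necessarily captures \emph{zero} $\omega$-energy.

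The paper's actual argument avoids both issues by translating (not rescaling) and deriving a \emph{topological}, not energetic, contradiction. One applies Theorem \ref{THM_local_local_area_bound} and target-local Gromov compactness to the translated pieces $S_{r_1}(\zeta_k)$, uses Lemma \ref{LEM_no_nodes} to show the limit is a non-nodal, generally immersed curve, and then observes that the limit must have a branch point at $\hat\zeta_\infty$ (otherwise $C^\infty$-convergence near an immersed point of the limit would bound $\|B_u(\zeta_k)\|$, or rule out $Tu(\zeta_k)=0$). Since the limit has $\int\hat v^*\omega=0$ it is a branched cover of a holomorphic disk in a trivial cylinder, $\psi^{-1}\circ\hat v(z)=z^{n_\dag}$ with $n_\dag\geq 2$, and this branching structure propagates back to the approximating slabs $\Sigma_k$: after cutting along gradient arcs one obtains $4n_\dag$-gon subsurfaces $\Sigma_{k,0}$ and shows $\chi(\widetilde\Sigma_k(\zeta_k))<0$ for large $k$. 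Since the $\widetilde\Sigma_k$ are pairwise disjoint in $S$ with bounded genus and at most two boundary components (by Lemmas \ref{LEM_at_most_one_part_1}, \ref{LEM_at_most_one_part_2}), this forces either infinitely many punctures or infinite genus, contradicting the definition of a feral curve. The energy threshold is used in that proof only to control the inessential caps (Lemma \ref{LEM_extensions_disjoint_from_4n_gon}), not to assign energy to the bubble.
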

%

It may be useful to paraphrase the above result as saying that outside a
  large compact set, a feral curve is immersed with a uniform point-wise
  curvature bound.
This result, together with Theorem \ref{THM_local_local_area_bound}
  guarantees that our curves have the nicest possible asymptotic behavior
  given that the Hofer-energy can be infinite.
Indeed, given that a curve with infinite Hofer-energy is generally thought
  to be too wild to analyze, the above two results provide a tremendous
  amount of structure.

\setcounter{CounterSectionExistenceWorkhorse}{\value{section}}
\setcounter{CounterTheoremExistenceWorkhorse}{\value{theorem}}
\begin{theorem}[existence workhorse]
  \label{THM_existence}
  \hfill\\
Let \((M, \eta)\) be a compact framed Hamiltonian manifold with \({\rm
  dim}(M) = 3\).
Let \(\{a_k\}_{k\in \mathbb{N}}\subset \mathbb{R}^-\) be a sequence for
  which \(a_k\to -\infty\) monotonically.
For each \(k\in \mathbb{N}\), let \((J_k, g_k)\) be a \(\eta\)-adapted
  almost complex structure on \(\mathbb{R}\times M\).
Suppose that there exists a positive constant \(C\geq 1\), and suppose that
  for each \(k\in \mathbb{N}\) and each \(b\in [a_k, 0] \) there exists a
  stable\footnote{Here we mean stable in the sense described just after 
  Definition \ref{DEF_pseudholomorphic_curve}.} unmarked but
  possibly nodal pseudoholomorphic curve
  \begin{equation*}                                                       
    \mathbf{u}_k^b=\big(u_k^b, S_k^b, j_k^b, (-\infty, 1)\times M, J_k,
    \emptyset, D_k^b \big)
    \end{equation*}
  with the following properties.\footnote{By definition, the
  \(\mathbf{u}_k^b\) are boundary-immersed.}
  \begin{enumerate}[(P1)]                                                 
    \item
    the topological space \(|S_k^b|\) is connected, which implies (P\ref{EN_P4}) below,
    \item 
    \(\mathbf{u}_k^b\) is compact and \(u_k^b(\partial S_k^b)\subset
    (0,1)\times M \),
    \item 
    \(\inf_{\zeta\in S_k^b} a\circ u_k^b(\zeta) = b\),
    \item \label{EN_P4}
    there exists a continuous path \(\alpha:[0,1]\to |S_k^b|\) satisfying 
    \begin{equation*}                                                     
      a\circ u_k^b\circ \alpha(0) = b \qquad\text{and}\qquad \alpha(1)\in
      \partial S_k^b
      \end{equation*},
    \item 
    \({\rm Genus}(S_k^b)\leq C\),
    \item 
    \(\int_{S_k^b}(u_k^b)^*\omega \leq C\),
    \item 
    \(\#D_k^b\leq C\),
    \item 
    the number of connected components of \(\partial S_k^b\) is bounded
    above by \(C\).
    \end{enumerate}
Furthermore, suppose that \(J_k\to J\) in \(\mathcal{C}^\infty\), and for
  each fixed \(k\), and each pair \(b, b'\in [a_k, 0]\) with \(b\neq b'\)
  we have
  \begin{equation*}                                                       
    \#\big(u_k^b(S_k^b)\cap u_k^{b'}(S_k^{b'})\big)\leq C.
    \end{equation*}
\emph{Then} there exists a closed set \(\Xi\subset M\)  satisfying
  \(\emptyset \neq \Xi \neq M\) which is invariant under the flow of the
  Hamiltonian vector field \(X_{\eta}\).
\end{theorem}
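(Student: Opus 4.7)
My strategy is to pass to the limit $k\to\infty$ in the sequence $\mathbf{u}_k := \mathbf{u}_k^{a_k}$, producing a feral pseudoholomorphic curve $\mathbf{u}_\infty$ in $(-\infty, 1)\times M$ whose image is unbounded below, and then to read off a closed, proper, $X_\eta$-invariant subset of $M$ from the asymptotic behavior of $\mathbf{u}_\infty$ as $a\to -\infty$.

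For the limit extraction, each $\mathbf{u}_k := \mathbf{u}_k^{a_k}$ carries the uniform bounds on $\omega$-energy, genus, nodes, and number of boundary components supplied by (P5)--(P8), while (P2) pins the boundary in $(0,1)\times M$ and (P3) forces $\inf a\circ u_k = a_k \to -\infty$. Because no $u_k$ enters $[1,\infty)\times M$, Theorem \ref{THM_area_bounds} applied with $a_0 = 1+\epsilon$ delivers a uniform area bound for each $\mathbf{u}_k$ in a thick neighborhood of its top boundary. To propagate control down through each compact slab $[-N, 1]\times M$, I would combine this top-area bound with the $\omega$-energy threshold of Theorem \ref{THM_energy_threshold}, which limits the number of sheets of $u_k$ that can cross a given level while carrying bounded $\omega$-energy, together with the family intersection bound $\#(u_k^b(S_k^b) \cap u_k^{b'}(S_k^{b'})) \leq C$, which I expect to yield a uniform control on how distinct sheets proliferate at fixed $k$. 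Target-local Gromov compactness then produces a subsequential limit on each slab, and a diagonal argument across $N\to\infty$ yields a proper limit $u_\infty : S_\infty \to (-\infty, 1)\times M$. The bounds on genus, nodes, boundary, and $\omega$-energy all pass to the limit, so $\mathbf{u}_\infty$ is a feral pseudoholomorphic curve, and (P\ref{EN_P4}) guarantees $\inf a\circ u_\infty = -\infty$.

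With $\mathbf{u}_\infty$ in hand, Theorem \ref{THM_curv_bound} and Theorem \ref{THM_local_local_area_bound} supply a compact set $K = [-a_2, a_2]\times M$ outside of which $u_\infty$ is immersed with uniform second fundamental form bound $C_\kappa$ and connected-local area bounded by $1$ on $r_1$-balls. For every regular value $a < -a_2$ of $a\circ u_\infty$, the preimage $(a\circ u_\infty)^{-1}(a)$ is then a finite disjoint union of smoothly immersed loops in $\{a\}\times M$ of uniformly bounded length, and since the tangent planes are nearly vertical outside $K$ (forced by finiteness of $\int u_\infty^*\omega$ over an unbounded region), projecting these loops to $M$ yields arbitrarily long approximate integral curves of $X_\eta$. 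Let $\Xi \subset M$ be the set of accumulation points in $M$ of these projected loops as $a\to -\infty$; by compactness of $M$ the set $\Xi$ is closed and nonempty, and because each loop is an approximate $X_\eta$-trajectory of arbitrarily large duration, $\Xi$ is $X_\eta$-invariant. The principal obstacle is then to rule out $\Xi = M$. I expect this to follow by an $\omega$-energy accounting argument: if $\Xi = M$, the bottom end would have to meet every open subset of $M$ at arbitrarily negative $a$, producing infinitely many uniformly separated sheets thanks to the connected-local area bound, and iterating Theorem \ref{THM_energy_threshold} would extract a quantum $\hbar$ of $\omega$-energy from each, contradicting $\int u_\infty^*\omega < \infty$. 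This step is the one I would expect to absorb most of the technical work, and it is where the bounded-intersection hypothesis between distinct family members must play its decisive role.
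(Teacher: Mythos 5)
Your limit-extraction plan matches the paper's in spirit (pass to exhaustive/target-local limits of the deepest curves and read off a limit set via translations), but there is a genuine gap in the final and most important step, and it stems from an earlier omission: you only extract a limit from the single-parameter sequence $\mathbf{u}_k^{a_k}$, discarding the rest of each family $\{\mathbf{u}_k^b\}_{b\in[a_k,0]}$. The paper instead defines, for \emph{every} $c\geq 0$, a family $\mathbf{w}_{k,c}$ built from $\mathbf{u}_k^{a_k+c}$ (shifted by $a_k$), passes to a diagonal subsequence so that \emph{all} the limits $\bar{\mathbf{w}}_\ell$ exist simultaneously, and then compares limit curves at different values of $c$. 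That comparison is precisely where the hypothesis $\#\big(u_k^b(S_k^b)\cap u_k^{b'}(S_k^{b'})\big)\leq C$ enters (Lemma \ref{LEM_bounded_transverse_intersections}): one cannot use it with a single curve in hand.

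This matters because your proposed $\omega$-energy accounting argument for ruling out $\Xi = M$ does not actually work. The connected-local area bound of Theorem \ref{THM_local_local_area_bound} controls the area of a \emph{single} connected component in a small ball; it does not manufacture ``infinitely many uniformly separated sheets'' out of the hypothesis that the limit set is dense. A single end of a feral curve can have a dense $\mathbf{x}$-limit set while satisfying all the connected-local bounds --- for instance an end asymptotic to a dense orbit of $X_\eta$ contributes arbitrarily little $\omega$-energy on each slab (the $\omega$-energy integrand degenerates precisely on orbit cylinders), so Theorem \ref{THM_energy_threshold} extracts no threshold and there is no contradiction with $\int u_\infty^*\omega < \infty$. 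You flag that the intersection hypothesis ``must play its decisive role'' in this step, but your sketch never invokes it; replacing the energy accounting by the intersection argument requires the two-parameter construction you dropped. The paper's actual contradiction is: if $\Xi = M$, then density of the translated $\bar{\mathbf{w}}_0$-pieces forces them to pass transversely through more than $C$ chosen points on a fixed limit curve $v$ arising from $\bar{\mathbf{w}}_\ell$ for large $\ell$; by $\mathcal{C}^\infty$-stability of transverse intersections this pulls back to more than $C$ intersection points between $u_k^{a_k}(S_k^{a_k})$ and $u_k^{a_k+\ell}(S_k^{a_k+\ell})$ for large $k$, contradicting the family intersection bound. Without retaining the whole family of curves indexed by $b$, no version of this step is available.
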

%
Our final main result regarding feral curves is the above workhorse
  theorem, which perhaps requires some explanation.
First, we must note how much more complicated this result is than the
  finite energy case.
Indeed, in the contact case it is sufficient to know that a finite energy
  curve exists in order to deduce that a periodic orbit exists.
In contrast, we cannot guarantee a similar dynamics result in the case of
  having a feral curve.
The trouble is that although a feral curve does indeed have a notion of a
  limit set, which is both closed and invariant, in general it may be the
  entire framed Hamiltonian manifold \(M\).
That is, the main difficulty is to establish that the limit (or rather, a
  limit) of a feral curve is not all of \(M\), and this is why we need
  Theorem \ref{THM_existence}.
While of course it would be more appealing to have conditions on a single
  feral curve which guaranteed that the associated limit set was not the
  total space, Theorem \ref{THM_existence} is general enough to apply to a
  rather large number of cases in which one constructs feral curves, and
  hence establishes non-minimal dynamics in a good number of cases.
  
As we close out this section, we provide a brief synthesis of the above
  results, in order to illustrate the new class of curves that have been
  found.  
Specifically, we put feral curves in the context of some historical curves
  as well as a more general class which we introduce as \(F\)-dominated
  curves.
As we shall see, these \(F\)-dominated curves have a weak but useful
  notion of a compactness result.
To define them, it will be convenient to make the following preliminary
  definition for a proper map \(u:S\to \mathbb{R}\times M\).
Indeed, for each \(c\in \mathbb{R}\) and \(r>0\), we define
\begin{align*}                                                            
  S_r(c):= u^{-1}\big( \mathcal{I}_r(c)\times
  M\big)\qquad\text{where}\qquad\mathcal{I}_r(c) := [c-r, c+r] \subset
  \mathbb{R}.
  \end{align*}

\begin{definition}[$F$-dominated pseudoholomorphic curves]
  \label{DEF_dominated_curves}
  \hfill \\
Let \((M, \eta)\) be an almost Hermitian manifold\footnote{See Definition
  \ref{DEF_almost_hermitian}.} equipped with an
  \(\eta\)-adapted\footnote{See Definition \ref{DEF_adapted}.} almost
  Hermitian structure.
Let \(\mathbf{u}=(u, S, j, \mathbb{R}\times M, J, \mu, D)\) be a marked
  nodal pseudoholomorphic curve\footnote{See Definition
  \ref{DEF_pseudholomorphic_curve}.}.
Suppose further that \(u:S\to \mathbb{R}\times M\) is proper, and for each
  connected component \(S'\subset S\) on which the restriction \(u:S'\to
  \mathbb{R}\times M\) is constant, we have
  \begin{equation*}                                                       
    \chi(S')- \#\big(S'\cap (\mu \cup D) )  <0.
    \end{equation*}
Let \(F:\mathbb{R}\to [0, \infty)\) be a continuous function. 
We say \(\mathbf{u}\) is \(F\)-dominated, provided there exists an \(c\in
  \mathbb{R}\) such that the following holds for every \(r\in \mathbb{R}\):
  \begin{align*}                                                          
    {\rm Points}\big(S_r(c)\big)+{\rm Genus} \big(S_r(c)\big) +
    {\rm Area}_{u^*g} \big(S_r(c)\big) \leq F(r)
    \end{align*}
  where
  \begin{align*}                                                          
    {\rm Points}\big(S_r(c)\big) := \#\big((\mu \cup D) \cap S_r(c)\big).
    \end{align*}
\end{definition}
%

Geometrically, we note that any slightly reasonable proper
  pseudoholomorphic map \(u:S\to \mathbb{R}\times M\) is \(F\)-dominated for
  some \(F\).
In contrast, if first given an \(F\), one next finds a curve which is in
  fact \(F\)-dominated, then this condition guarantees a certain maximal
  growth rate of the area as one moves away from a reference level (say
  \(\{0\}\times M\)). 
Similarly, we have bounds on the growth rate of genus and special points
  etc.
Next, let us recall that when Gromov introduced pseudoholomorphic
  curves in closed symplectic manifolds, a taming condition guaranteed
  that curves in a fixed homology class had uniformly bounded total area.
Moreover, a uniform \emph{total} area bound was precisely the analytic
  condition needed to prove compactness of a family of curves.
Then, in the symplectization case for SFT, specifically with an
  \(\mathbb{R}\)-invariant Riemannian metric, it turned out that curves in a
  fixed (relative) homology class could develop infinite area, however they
  still had a uniform \emph{local} area bound.
Again, this uniform local area bound was precisely the condition needed to
  prove SFT compactness.
Feral curves then go one step further in this progression, and need not
  even have uniform local area bounds.
Instead they have uniform \emph{connected} local area bounds, as in
  Theorem \ref{THM_local_local_area_bound}, and moreover fit within the
  class of \(F\)-dominated pseudoholomorphic curves, and hence one can
  prove a sort weak one-level SFT compactness theorem, or more
  specifically an \emph{exhaustive} Gromov compactness theorem; see
  Definition \ref{DEF_exhaustive_gromov_convergence} and Theorem
  \ref{THM_exhaustive_gromov_compactness} below.

We can then summarize as follows.
The results in our paper show that feral curves belong to the
  distinguished class of $F$-dominated pseudoholomorphic curves for which a
  version of an exhaustive Gromov compactness theorem exists, see \cite{FH1}.
However, feral curves are somewhat more special:
\begin{enumerate}                                                         
  \item 
  Due to the assumption of finiteness of the $\omega$-energy and a bound
  on genus and the number of ends,   it follows that the behavior of the
  ends of these curves reflects some of the underlying dynamical features
  of the Hamiltonian flow on \(M\).
  \item 
  In general, the uniform connected-local area bound can be obtained from
  topological bounds, which is an important feature in their construction.
  \end{enumerate}
We note that the methods in this paper can be used to establish the
  existence of nontrivial feral curves in quite general contexts.
We now take a moment to describe this process in a fairly general setting,
  thereby establishing truly feral curves.
Recall Proposition \ref{PROP_energy_levels_framed_ham} which associates to
  a smooth compact regular energy surface  $M$ of a Hamiltonian function
  $H$ on a symplectic manifold $(W,\Omega)$ the data $(\lambda,\omega,J)$
  which equips ${\mathbb R}\times M$ with a canonical almost complex
  structure.
For simplicity assume that $M$ is connected and denote the closures of the
  two components of $W\setminus M$ by by $A$ and $B$  so that $A\cap B=M$.
In favorable circumstances, for example a sufficiently rich Gromov-Witten
  theory, one can use a stretching construction around $M$ as described in
  this paper to obtain feral curves in ${\mathbb R}\times M$ with image
  in $[0,\infty)\times M$ or $(-\infty,0]\times M$.
Specifically carrying out this idea in the following  example leads to
  true feral curves which are not of finite energy.
Pick a smooth compact regular hypersurface $M$ in the standard symplectic
  vector space ${\mathbb R}^{2n}$, with $n\geq 3$, which does not admit a
  periodic  orbit.
Such surfaces exist by the results of M. Herman and Ginzburg, see
  \cites{Ginzburg:1995, Ginzburg1997, Herman:1999}.
One can view $M$ as lying in ${\mathbb C}{\mathbb P}^n$ in the complement
  of the divisor at infinity.
Then the fact that there are plenty of complex lines allows one to carry
  out the previously described deformation argument in such a way that the
  obtained curve is feral, and even better has a ${\mathbb R}$-projection
  which  has a minimum.\\

As a final remark, we would like to forewarn the reader that we
  have meticulously kept track of constants in our estimates, and that for
  the first ninety pages it would seem that the sole purpose of this is to
  {\it torture} the reader.
However, in later proofs we use rather sophisticated arguments which only
  work because of our careful book keeping.\\

\noindent{\bf Acknowledgements:}
The first author would like to thank Professors Kai Cieliebak, William
  Minicozzi, and Chris Wendl for a number of helpful conversations.
The first author would also like to thank the Institute for Advanced
  Study, the University of Massachusetts Boston, and the National Science
  Foundation for their generous accommodation and support of this
  research.

\section{Background}\label{SEC_background}

In this section, we will recall some basic notions which will be used
  throughout later sections.
All of these definitions should be either well known or readily absorbed
  by specialists of pseudoholomorphic curves.
We note that there are two notions presented below which may nevertheless
  be unfamiliar to such a reader, namely namely so-called target-local
  Gromov compactness (see Theorem \ref{THM_target_local_gromov_compactness})
  and exhaustive Gromov convergence (see Definition
  \ref{DEF_exhaustive_gromov_convergence}) and compactness (see Theorem
  \ref{THM_exhaustive_gromov_compactness}).
Before progressing to those rather technical concepts, we begin with more
  elementary notions.

\begin{remark}[on smoothness]
  \label{REM_on_smoothness}
  \hfill \\
Throughout this article, when referring to the regularity of
  differentiable objects (functions, forms, manifolds, etc.) the term
  \emph{smooth} will always refer to \(\mathcal{C}^\infty\)-smooth; any less
  regularity, for example \(\mathcal{C}^1\), will be mentioned explicitly.
\end{remark}
%

\subsection{Ambient Geometric Structures}\label{SEC_ambient_structures}

Here we begin by considering geometric structures on certain manifolds
  which will serve as the target space for our later defined
  pseudoholomorphic maps.

\begin{definition}[almost complex manifold]
  \label{DEF_acm}
  \hfill \\
Let \(W\) be a smooth manifold not necessarily closed, possibly with
  boundary, and let \(J\in \Gamma(\rm{End}(TW))\) be a smooth
  section for which \(J\circ J=-\mathds{1}\).
We call \(J\) an almost complex structure for \(W\), and the pair \((W,
  J)\) an almost complex manifold.
\end{definition}
%

\begin{definition}[almost Hermitian manifold]
  \label{DEF_almost_hermitian}
  \hfill \\
Let \(W\) be a smooth finite dimensional manifold equipped with an
  almost complex structure \(J\) and a Riemannian metric \(g\).
We say the pair \((J, g)\) is an almost Hermitian structure on \(W\)
  provided that \(J\) is an isometry for \(g\).
That is, \(g(x,y) = g(Jx,Jy)\) for all \(x,y\in TW\).
\end{definition}
%

We pause for a moment to comment on almost Hermitian manifolds, since they
  may at first seem needlessly tangential to the more natural objects of
  study, namely symplectic manifolds with compatible or tame almost complex
  structures.
First we point out that any almost complex manifold \((W, J)\) can be
  given an almost Hermitian structure \((J, g)\) by choosing an arbitrary
  Riemannian metric \(\tilde{g}\) and defining
  \(g(x,y):=\frac{1}{2}\big(\tilde{g}(x,y)+ \tilde{g}(Jx,Jy)\big).\)
Second, for a symplectic manifold \((W, \Omega)\), an
  \(\Omega\)-compatible almost complex structure \(J\) satisfies, by
  definition, the property that \(g(x,y):= \Omega(x, Jy)\) is a Riemannian
  metric.
For this metric, we immediately see that \(J\) is a \(g\)-isometry.
Third, in the case that \(J\) is \(\Omega\)-tame, we have \(\Omega(x, Jx)
  > 0\) for all \(x\in TW\) with \(x\neq 0\).
An associated Riemannian metric is then given by \(g(x, y) =
  \frac{1}{2}\big(\Omega(x,Jy) + \Omega(y, Jx)\big)\), for which again
  \(J\) a \(g\)-isometry.

At this point, one may still question the utility of moving from analysis
  in symplectic manifolds to almost Hermitian manifolds, and the answer is
  fairly simple: The manifolds that occupy our primary interest are not,
strictly speaking, symplectic.
Moreover, the principle role a symplectic form typically plays is to
  guarantee that pseudoholomorphic curves (defined below) in a fixed
  homology class have uniformly bounded energy, or area; however, this a
  priori bound fails in the almost Hermitian manifolds in which we are
  interested.
Nevertheless, our analysis will require the aid of Riemannian metric for
  which the almost complex structure is an isometry.
This inevitably leads the definition of an almost Hermitian manifold,
  and hence motivates our generalization.

In order to more precisely specify the manifolds of interest, we will
  require two more definitions.

\begin{definition}[framed Hamiltonian structure]
  \label{DEF_framed_Hamiltonian_structure}
  \hfill\\
Let \(M\) be a \(2n+1\) dimensional closed manifold, and let \(\lambda\)
  and \(\omega\) respectively be a smooth one-form and smooth two-form on
  \(M\).
We say \(\eta:=(\lambda, \omega)\) is a Hamiltonian structure for \(M\)
  provided \(d\omega =0\) and \(\lambda\wedge \omega^n\) is a volume form on
  \(M\).
We call \((M, \eta)\) a \emph{framed Hamiltonian manifold}.
We call \((M, \eta)\) an \emph{exact} framed Hamiltonian manifold, and
  \(\eta =(\lambda,\omega)\) an exact Hamiltonian structure, provided there
  is a one-form \(\tau\) on \(M\), for which \(\omega=d\tau\).
\end{definition}
%

Note that in the special case that a framed Hamiltonian structure
  \((\lambda, \omega)\) satisfies the additional condition that \({\rm
  ker}\, \omega\subset {\rm ker}\, d\lambda\), we call \((\lambda,
  \omega)\) a \emph{stable} Hamiltonian structure.
Throughout this article we will not make this additional assumption,
  however it will often be useful to make comparisons between analysis in
  the framed versus stable case.
We also note that there exists a vector field \(X_{\eta}\) associated to a
  Hamiltonian structure \(\eta=(\lambda, \omega)\) uniquely determined by
  the equations
  \begin{equation*}
    \lambda(X_{\eta})\equiv 1\qquad\text{and}\qquad \omega(X_{\eta},
    \cdot) \equiv 0.
    \end{equation*}
We call \(X_{\eta}\) the \emph{Hamiltonian vector field} associated to
  \(\eta\). Observe that by definition of \(X_{\eta}\) and Cartan's formula,
  we have:
  \begin{equation*}
    \mathcal{L}_{X_{\eta}} \omega = d(i_{X_{\eta}} \omega) + i_{X_{\eta}}
    d\omega = 0.
    \end{equation*}

\begin{definition}[$\eta$-adapted almost Hermitian structures]
  \label{DEF_adapted}
  \hfill\\
Given a manifold \(M\) with a framed Hamiltonian structure \(\eta=(\lambda,
  \omega)\), we consider \(\mathbb{R}\times M\); we
  henceforth equip \(\mathbb{R}\) with the coordinate \(a\).
Furthermore, we say the pair \((J, g)\) is an \(\eta\)-adapted almost
  Hermitian structure on \(\mathbb{R}\times M\) provided it satisfies the
  following conditions.
\begin{enumerate}[(J1)]
  \item\label{EN_J1} 
  \(J\) is an \(\mathbb{R}\)-invariant almost complex structure
  \item\label{EN_J2} 
  \(J\partial_a = X_{\eta}\)
  \item\label{EN_J3} 
  \(g=(da\wedge \lambda + \omega)(\cdot,J \cdot)\) is a Riemannian metric.
  \item\label{EN_J4} 
  \(J:{\rm ker}\, \lambda \cap {\rm ker}\, da\to {\rm ker}\, \lambda \cap
  {\rm ker}\, da\)
  \end{enumerate}
  where we have abused notation by writing \(\lambda\) and \(\omega\)
  instead of \({\rm pr}^*\lambda\) and \({\rm pr}^*\omega\) where \({\rm
  pr}: \mathbb{R}\times M\to M\) is the canonical projection.  
We shall refer to ${\mathbb R}\times M$ colloquially as the {\it
  `symplectization'} of $M$ even if this is admittedly not a good name.
\end{definition}
%

We will need to verify that our definition of \(\eta\)-adapted almost
  Hermitian structure is aptly named.
Specifically, we will need to verify that \(J\) is indeed an isometry for
  \(g\).
This will be accomplished momentarily, see Lemma
  \ref{LEM_eta_adapted_are_almost_Hermitian} below, however first we need
  the following.

\begin{lemma}[property of $\eta$-adapted $J$]
  \label{LEM_J_diff}
  \hfill\\
Let \(J\) be an adapted almost complex structure on the symplectization of
  the manifold \(M\) with Hamiltonian structure \((\lambda, \omega)\).
Then 
  \begin{equation*}
  -da\circ J = \lambda \quad\text{and}\quad \omega(Y, JY)\geq
  0\quad\text{for all}\quad Y\in T(\mathbb{R}\times M).
  \end{equation*}
\end{lemma}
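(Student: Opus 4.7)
The plan is to prove both identities by decomposing tangent vectors using the splitting induced by the framed Hamiltonian structure and then computing directly.

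At any point $(a,m)\in \mathbb{R}\times M$, I would use the splitting
\[
T_{(a,m)}(\mathbb{R}\times M) = \mathbb{R}\partial_a \oplus \mathbb{R} X_\eta \oplus \xi,
\]
where $\xi := \ker\lambda\cap \ker da$. The summands are well-defined because $\lambda(X_\eta)=1$ guarantees $TM = \mathbb{R} X_\eta \oplus \ker\lambda$ and $\ker\lambda \subset \ker da$ pulled up to $\mathbb{R}\times M$. Write an arbitrary $Y\in T(\mathbb{R}\times M)$ as $Y=\alpha\partial_a + \beta X_\eta + V$ with $V\in \xi$. Using (J\ref{EN_J2}), $J\partial_a = X_\eta$, and the identity $J^2=-\mathds{1}$, I obtain $JX_\eta = -\partial_a$, while (J\ref{EN_J4}) yields $JV\in\xi$. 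Hence
\[
JY = \alpha X_\eta - \beta\partial_a + JV.
\]

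For the first identity, I evaluate $-da(JY)$: since $da(X_\eta)=0$, $da(\partial_a)=1$, and $da$ kills $\xi$, the result is $\beta$. On the other hand, $\lambda(Y)=\beta$ by the defining properties of $X_\eta$ and the fact that $\lambda$ kills both $\partial_a$ and $\xi$. This gives $-da\circ J = \lambda$.

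For the nonnegativity of $\omega(Y,JY)$, I observe that $\omega$ is the pullback of the two-form on $M$, so $\iota_{\partial_a}\omega = 0$, and the defining property of the Hamiltonian vector field gives $\iota_{X_\eta}\omega = 0$. Therefore only the $\xi$-components contribute, yielding
\[
\omega(Y, JY) = \omega(V, JV).
\]
To see this is nonnegative, I compute the companion quantity $(da\wedge \lambda)(Y,JY)$. Using the formulas above and $\lambda(JY)=\alpha$ (from (J\ref{EN_J4}) combined with $\lambda(X_\eta)=1$), a direct expansion gives $(da\wedge\lambda)(Y,JY) = \alpha^2 + \beta^2$. Hence by (J\ref{EN_J3}),
\[
g(Y,Y) = \alpha^2 + \beta^2 + \omega(V,JV).
\]
Specializing to $Y=V$ (i.e.\ $\alpha=\beta=0$) gives $g(V,V) = \omega(V,JV)\geq 0$ since $g$ is Riemannian, and for general $Y$ we still have $\omega(Y,JY) = \omega(V,JV)\geq 0$.

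There is no serious obstacle here: the statement follows from unpacking the definitions, and the only point requiring care is being consistent about viewing $\lambda$ and $\omega$ as their pullbacks to $\mathbb{R}\times M$ so that $\partial_a$ lies in the kernel of both. The proof serves as a stepping stone to verifying in Lemma \ref{LEM_eta_adapted_are_almost_Hermitian} that $(J,g)$ is genuinely almost Hermitian, by showing that the skew part of $g(J\cdot,\cdot)$ vanishes.
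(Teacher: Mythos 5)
Your proof is correct and follows essentially the same route as the paper: the same splitting $Y = \alpha\partial_a + \beta X_\eta + V$ with $V\in\xi$, the same computation $JY = \alpha X_\eta - \beta\partial_a + JV$ for the first identity, and the same reduction of $\omega(Y,JY)$ to $\omega(V,JV) = g(V,V) \geq 0$ via (J\ref{EN_J3}). The small detour through the general formula $g(Y,Y) = \alpha^2+\beta^2+\omega(V,JV)$ before specializing to $Y=V$ is cosmetically different from the paper's one-line identity $\omega(Y_\xi, JY_\xi) = (da\wedge\lambda+\omega)(Y_\xi, JY_\xi) = \|Y_\xi\|_g^2$, but it is the same argument.
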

%
\begin{proof}
Observe that any tangent vector \(Y\in T(\mathbb{R}\times M)\) can be
  uniquely written as \(Y=c_1 \partial_a + c_2 X_{\eta} + Y_\xi\) with
  \(Y_\xi\in {\rm ker}\, da \cap  {\rm ker}\,\lambda\).
In this case we have
  \begin{equation*}
    -da\circ J (Y) = -da(c_1 J \partial_a + c_2 J X_{\eta} +J Y_\xi) =
    -da( c_1 X_{\eta} -c_2 \partial_a + J Y_\xi) = c_2 = \lambda(Y).
    \end{equation*}

To prove the second part, we compute as follows.
\begin{align*}
  \omega\big(c_1 \partial_a + c_2 X_\eta + Y_\xi, J(c_1 \partial_a + c_2
  X_\eta + Y_\xi)\big)&= \omega(Y_\xi, JY_\xi)
  \\
  &=(da\wedge \lambda + \omega)(Y_\xi, JY_\xi)
  \\
  &=\|Y_\xi\|_g^2
  \\
  &\geq 0.
  \end{align*}
\end{proof}
%

\begin{lemma}[$\eta$-adapted $(J,g)$ are indeed almost Hermitian]
  \label{LEM_eta_adapted_are_almost_Hermitian}
  \hfill\\
Let \((M, \eta)\) be a framed Hamiltonian manifold with \(\eta= (\lambda,
  \omega)\), and let \((J, g)\) be an \(\eta\)-adapted almost Hermitian
  structure on \(\mathbb{R}\times M\) in the sense of Definition
  \ref{DEF_adapted}.
\emph{Then} \((J, g) \) is an almost Hermitian structure for
  \(\mathbb{R}\times M\).
That is, \(J\) is an isometry for \(g\). 
\end{lemma}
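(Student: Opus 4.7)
The plan is to exploit the natural $J$-invariant splitting of $T(\mathbb{R}\times M)$ and reduce the isometry condition to a single compatibility identity on the framing distribution, which will then follow from the fact that $g$ is already assumed to be a Riemannian metric.

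First I would fix the decomposition $T(\mathbb{R}\times M) = \mathbb{R}\partial_a \oplus \mathbb{R}X_\eta \oplus \xi$, where $\xi := \ker da \cap \ker\lambda$. Property (J\ref{EN_J2}) gives $J\partial_a = X_\eta$ and hence $JX_\eta = -\partial_a$, while (J\ref{EN_J4}) says $J$ preserves $\xi$; so the splitting is $J$-invariant. Next, I would record the two ``diagonal'' formulas $-da\circ J = \lambda$ (from Lemma \ref{LEM_J_diff}) and its $J$-iterate $\lambda\circ J = da$, together with the structural identities $\omega(\partial_a,\cdot)=0$ (since $\omega$ is pulled back from $M$) and $\omega(X_\eta,\cdot)\equiv 0$ (by definition of $X_\eta$).

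Writing any vector as $X = x_1\partial_a + x_2 X_\eta + X_\xi$ and $Y = y_1\partial_a + y_2 X_\eta + Y_\xi$, I would plug into $g(X,Y) = (da\wedge\lambda+\omega)(X,JY)$ and evaluate each piece using the identities above. A short calculation yields
\begin{equation*}
g(X,Y) = x_1 y_1 + x_2 y_2 + \omega(X_\xi, JY_\xi),
\end{equation*}
and the analogous computation with $X,Y$ replaced by $JX,JY$ yields
\begin{equation*}
g(JX,JY) = x_1 y_1 + x_2 y_2 - \omega(JX_\xi, Y_\xi).
\end{equation*}
Thus the desired equality $g(JX,JY) = g(X,Y)$ reduces to verifying the single identity $\omega(JX_\xi,Y_\xi) = \omega(JY_\xi,X_\xi)$ for all $X_\xi,Y_\xi\in\xi$.

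The final step is to extract this remaining identity from axiom (J\ref{EN_J3}), which asserts that $g$ is a Riemannian metric and in particular is symmetric. Symmetry of $g$ on $\xi$ gives $\omega(X_\xi,JY_\xi) = \omega(Y_\xi,JX_\xi)$ for all $X_\xi,Y_\xi\in\xi$, and antisymmetry of $\omega$ immediately converts this into the needed identity. Combining the computations above then completes the proof that $J$ is a $g$-isometry. The argument is essentially algebraic; the only potential pitfall is to remember that ``Riemannian metric'' in (J\ref{EN_J3}) silently supplies the symmetry of $g$, which is exactly what closes the gap on $\xi$, since no explicit $\omega$-compatibility of $J|_\xi$ is postulated in Definition \ref{DEF_adapted}.
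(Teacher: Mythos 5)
Your proof is correct, and the mechanism is essentially the same as the paper's: both establish the $J$-identities $J\partial_a = X_\eta$, $-da\circ J = \lambda$, $\lambda\circ J = da$, compute $g(JX,JY)$ using them, and then close the argument by invoking the symmetry of $g$ supplied implicitly by axiom (J\ref{EN_J3}). The paper keeps $X$ and $Y$ abstract and simply observes that $g(JX,JY) = g(Y,X) = g(X,Y)$, whereas you decompose along $\mathbb{R}\partial_a \oplus \mathbb{R}X_\eta \oplus \xi$ and isolate the residual obligation to the single identity $\omega(JX_\xi,Y_\xi)=\omega(JY_\xi,X_\xi)$ on $\xi$. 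Your version is slightly longer but makes visible exactly where each axiom enters; in particular, it highlights that no explicit compatibility of $J|_\xi$ with $\omega$ is postulated and that the needed identity is extracted purely from the symmetry of $g$ together with the antisymmetry of $\omega$ — an observation the paper's proof uses silently at the step $g(Y,X)=g(X,Y)$.
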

%
\begin{proof}
By Definition \ref{DEF_adapted}, we know that \(g = (da\wedge \lambda
  +\omega) (\cdot, J \cdot)\) is a Riemannian metric, so we must show that
  \(g(X,Y) = g(JX, JY)\).
Observe that
\begin{align*}                                                            
  g(X, Y) &= (da\wedge \lambda +\omega) (X, J Y)
  \\
  &=da(X)\lambda(JY) - \lambda(X) da(JY) + \omega(X, JY),
  \end{align*}
  and by Lemma \ref{LEM_J_diff}, we have \(-da(J\cdot) = \lambda(\cdot )\)
  so \(\lambda(J\cdot ) = da(\cdot)\), from which it immediately follows
  that
  \begin{equation}\label{EQ_formula_for_g}                                
    g = da \otimes da + \lambda \otimes \lambda + \omega(\cdot, J\cdot) 
    \end{equation}
  and
  \begin{align*}                                                          
    g(JX, JY) &=da(JX)\lambda(JJY) - \lambda(JX) da(JJY) + \omega(JX, JJY),
    \\
    &=-da(JX)\lambda(Y) + \lambda(JX) da(Y) - \omega(JX, Y),
    \\
    &=   da(Y)\lambda(JX)-\lambda(Y)da(JX) + \omega( Y,  JX),
    \\
    &=g(Y, X)
    \\
    &=g(X, Y)
    \end{align*}
  so indeed, \(J\) is an isometry, and hence \((J, g)\) is an almost
  Hermitian structure.
\end{proof}

We take a moment to give an example of how a framed Hamiltonian structure
  and adapted almost complex structure might arise in practice.
Indeed, we first consider a \((2n+2)\)-dimensional symplectic manifold
  \((W, \Omega)\) equipped with a smooth function \(H:W\to \mathbb{R}\) for
  which \(0\) is a regular value.
One may allow that \(W\) is a manifold with boundary, however in this case
  we require that \(\{H=0\} \cap \partial W = \emptyset\)
Assume further that \(J\) is an almost complex structure on \(W\) for
  which \(\Omega(\cdot, J\cdot)\) is a Riemannian metric.
Define \(M:=H^{-1}(0)\), and consider \(\xi:= TM \cap JTM\) as a subset of
  \(TM\subset TW\).
A bit of linear algebra shows that \(\xi\) is a hyperplane distribution in
  \(TM\), and by construction \(J:\xi\to \xi\).
Next, define the vector field \(X_H\in \Gamma(TM)\) by the following
  \begin{equation*}
    \Omega(X_H, \cdot) = -dH.  
    \end{equation*}
Note that \(X_H\) never vanishes since \(0\) is a regular value of \(H\).
We then define \(\lambda\) on \(M\) to be the unique one-form for which
  \begin{equation*}
    \lambda(X_H)\equiv 1\qquad\text{and}\qquad {\rm ker}\, \lambda = \xi.
    \end{equation*}
Regarding \(M\) as a closed manifold with \(i: M\hookrightarrow W\) the
  canonical inclusion, we define \(\omega\) to be the closed two-form given
  by \(\omega:= i^*\Omega\).
To show that \((\lambda, \omega)\) is a framed Hamiltonian structure for
  \(M\), it is then sufficient to show that \(\lambda\wedge \omega^n\) is a
  volume form on \(M\).
Since \(J:\xi\to \xi\), and \(\Omega(\cdot, J\cdot) \) is a Riemannian
  metric, and since \(\Omega\big|_\xi = \omega\),  it follows that there
  exists a symplectic basis of \(\xi\) of the form \(\{e_1, Je_1, \ldots,
  e_n, J e_n\}\).
However, we then have
  \begin{equation*}
    \lambda\wedge \omega^n (X_H, e_1, J e_1, \ldots, e_n, J e_n) =
    \lambda(X_H)\cdot \omega^n(e_1, J e_1, \ldots, e_n, J e_n)>0;
    \end{equation*}
  here we have made use of the fact that \(\lambda\big|_\xi\equiv 0\), and
  for any \(v\in \xi\) we have \(\omega(X_H, v)=\Omega(X_H, v) = -dH(v)
  =0\) since \(\xi\subset TM\) and \(dH\big|_{TM}\equiv 0\).
Thus \((\lambda, \omega)\) is indeed a framed Hamiltonian structure for
  \(M\).
Using the fact that \(0= -dH\big|_{TM} = \Omega(X_H,
  \cdot)\big|_{TM}=\omega(X_H, \cdot)\) and the definition of \(\lambda\),
  we find that \(X_H = X_{\eta}\).

Continuing our construction, we obtain an adapted almost complex structure
  on \(\mathbb{R}\times M\) in the following way.
We demand \(\mathbb{R}\)-translation invariance, so it is sufficient to
  define \(J\) along \(\{0\}\times M\subset \mathbb{R}\times M\).
Along this hypersurface, we can identify \(\xi\subset TM\) with \({\rm
  ker}\, da \cap {\rm ker}\, \lambda \subset T(\{0\}\times M)\).
Using this identification, \(J: {\rm ker}\, da \cap {\rm ker}\, \lambda
  \to {\rm ker}\, da \cap {\rm ker}\, \lambda\), and we then define
  \(J\partial_a = X_H=X_{\eta}\).

We summarize the previous discussion as follows.

\begin{proposition}[energy levels are framed Hamiltonian]
\label{PROP_energy_levels_framed_ham}
\hfill \\
Consider a symplectic manifold \((W, \Omega)\) equipped with a
  compatible almost complex structure \(J\), and a smooth function \(H:W\to
  \mathbb{R}\) for which \(0\) is a regular value and    \(M:= H^{-1}(0)\)
  is compact and disjoint from \(\partial W\).
Then $M$ naturally carries a framed Hamiltonian structure \(\eta=(\lambda,
  \omega)\) defined by \( {\rm ker}(\lambda) = TM\cap J(TM)\) and
  \(\lambda(X_H)\equiv 1$, where $\omega$ is the pull-back of $\Omega$ to
  $M$.
With the restriction of \(J\)  to \(TM\cap J(TM)\)  denoted again by \(J\), 
  we obtain \((\lambda,\omega, J)\) which defines a natural
  \(\eta\)-adapted almost Hermitian structure \((J,g)\) on the
  symplectization \(\mathbb{R}\times M\).
\end{proposition}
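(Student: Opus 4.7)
The proof is largely a verification of the construction sketched in the paragraphs preceding the statement. The plan has two parts: (i) confirm that $(\lambda,\omega)$ defines a framed Hamiltonian structure on $M$, and (ii) verify that the resulting $(J,g)$ on $\mathbb{R}\times M$ satisfies conditions (J\ref{EN_J1})--(J\ref{EN_J4}) of Definition \ref{DEF_adapted}, so that Lemma \ref{LEM_eta_adapted_are_almost_Hermitian} applies.

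For part (i), I would first establish that $\xi := TM\cap J(TM)$ is a rank-$2n$ subbundle of $TM$ on which $J$ restricts to an automorphism. The key observation is that with $g_W := \Omega(\cdot,J\cdot)$, one has $X_H\in TM$ (since $dH(X_H)=-\Omega(X_H,X_H)=0$) while $JX_H\notin TM$ (since $dH(JX_H)=-\Omega(X_H,JX_H)=-g_W(X_H,X_H)<0$). Therefore $TM+JTM=TW$, and a dimension count forces $\dim\xi=2n$; the same observation shows $X_H\notin\xi$, which is exactly what makes $\lambda$, determined by $\lambda(X_H)=1$ and $\ker\lambda=\xi$, well-defined. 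Closedness $d\omega=0$ is immediate from $\omega=i^*\Omega$ and $d\Omega=0$.

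Next I would establish the volume form condition. Since $J:\xi\to\xi$ and $\omega|_\xi=\Omega|_\xi$ satisfies $\omega(v,Jv)=g_W(v,v)>0$ for $v\neq 0$, the restriction $\omega|_\xi$ is a symplectic form compatible with $J|_\xi$; hence $\xi$ admits a $J$-standard symplectic basis $\{e_1,Je_1,\ldots,e_n,Je_n\}$ with $\omega^n(e_1,Je_1,\ldots,e_n,Je_n)\neq 0$. Using $\omega(X_H,e_i)=\Omega(X_H,e_i)=-dH(e_i)=0$ (since $e_i\in TM$), one expands
\[
(\lambda\wedge\omega^n)(X_H,e_1,Je_1,\ldots,e_n,Je_n)=\lambda(X_H)\cdot\omega^n(e_1,Je_1,\ldots,e_n,Je_n)\neq 0,
\]
so $\lambda\wedge\omega^n$ is a volume form. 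The identity $X_H=X_\eta$ then follows from the defining equations $\lambda(X_H)=1$ and $\omega(X_H,\cdot)=-dH|_{TM}=0$.

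For part (ii), I would extend $J$ to $\mathbb{R}\times M$ by declaring it $\mathbb{R}$-invariant, setting $J\partial_a:=X_\eta$ (pulled back via projection), and using the given $J|_\xi$. Splitting $T(\mathbb{R}\times M)=\mathbb{R}\partial_a\oplus\mathbb{R}X_\eta\oplus\xi$, one checks $J^2=-\mathds{1}$ on each summand (the last from $J^2=-\mathds{1}$ on $W$), and (J\ref{EN_J1}), (J\ref{EN_J2}), (J\ref{EN_J4}) hold by construction. The last step is (J\ref{EN_J3}): that $g=(da\wedge\lambda+\omega)(\cdot,J\cdot)$ is a Riemannian metric. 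The same splitting, together with the relation $\lambda\circ J=da$ established as in Lemma \ref{LEM_J_diff}, gives the diagonal decomposition $g=da\otimes da+\lambda\otimes\lambda+\omega(\cdot,J\cdot)|_\xi$, which is manifestly positive definite because $\omega(\cdot,J\cdot)|_\xi=g_W|_\xi$; then Lemma \ref{LEM_eta_adapted_are_almost_Hermitian} upgrades $(J,g)$ to an almost Hermitian structure.

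There is no real obstacle here --- the whole argument is formal verification. What mild care is required concentrates in step (i): one must simultaneously read off $\dim\xi=2n$, $X_H\notin\xi$, and the symplectic nature of $\omega|_\xi$, all of which follow from the single fact that $\Omega$-compatibility of $J$ forces $g_W$-positivity of $X_H$ and thereby controls the intersection geometry of $TM$ and $JTM$ inside $TW$.
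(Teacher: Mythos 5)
Your argument is correct and follows essentially the same route as the paper's construction: the same definition of $\xi = TM\cap JTM$, the same use of a $J$-adapted symplectic basis of $\xi$ to verify $\lambda\wedge\omega^n>0$, the same identification $X_H = X_\eta$, and the same $\mathbb{R}$-invariant extension of $J$ with $J\partial_a=X_\eta$. The only difference is that you spell out the linear-algebra step (showing $JX_H\notin TM$, hence $\dim\xi=2n$ and $X_H\notin\xi$) that the paper dispenses with via ``A bit of linear algebra shows that $\xi$ is a hyperplane distribution,'' which is a welcome bit of explicitness rather than a genuinely different approach.
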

%

We now provide several more general target manifolds with adapted
  structures, the first of which we call a \emph{realized Hamiltonian
  homotopy}, and which is made precise in Definition
  \ref{DEF_hamiltonian_homotopy} below.
For clarity, we first provide the following motivation for the definition.
Consider a closed symplectic manifold \((W, \Omega)\) with compatible
  \(J\) as in Proposition \ref{PROP_energy_levels_framed_ham}, and consider
  a smooth Hamiltonian \(H:W\to \mathbb{R}\) for which \(0\) is a regular
  value.
Let us define \(M:= H^{-1}(0)\), and observe that \(M\) is diffeomorphic
  to \(\{H=t\}\) for all \(t\) sufficiently close to \(0\), and an
  explicit diffeomorphism is obtained by the flow of the vector field
  \(Y:= \frac {\nabla H}{\|\nabla H\|^{2}}\). 
We denote this diffeomorphism by \(\psi_t: M\to \{H=t\}\).
As a consequence of Proposition \ref{PROP_energy_levels_framed_ham}, this
  gives rise to a family of framed Hamiltonian structures \(\eta_t:=
  (\lambda_t, \omega_t)\) on \(M\) with \(\omega_t = \psi_t^*\Omega\).
Moreover, for all \(t\) and \(t_0\) sufficiently close to \(0\) we in fact
  have that \((\lambda_{t_0}, \omega_t)\) is a framed Hamiltonian
  structure on \(M\).
With this in mind, we then consider \(\mathcal{I} \subset \mathbb{R}\) to
  be any open interval, and we let \(f:\mathcal{I}\to \mathbb{R}\) be any
  smooth function mapping into a neighborhood of \(0\) for which \(f'\geq
  0\).
We can then equip \(\mathcal{I}\times M\) with the one-form
  \(\hat{\lambda}:= \pi^* \lambda_{t_0}\) and with the two-form
  \(\hat{\omega}= \Psi^* \Omega \) where \(\Psi(t, p) = \psi_{f(t)}(p)\).
In this way, a homotopy of two-forms \(t\mapsto \omega_t\) on \(M\)
  arising from framed Hamiltonian structures gives rise to a single two-form
  \(\hat{\omega}\) on \(\mathcal{I}\times M\), and somewhat similarly for
  \(\hat{\lambda}\).
It is for this reason that we call \((\mathcal{I}\times M, (\hat{\lambda},
  \hat{\omega}))\) a ``realized Hamiltonian homotopy.''
We make this idea both more precise and more general with the following
  definition.

\begin{definition}[realized Hamiltonian homotopy]
  \label{DEF_hamiltonian_homotopy}
  \hfill\\
Let \(M\) be a smooth (odd-dimensional) closed manifold, let
  \(\mathcal{I}\subset \mathbb{R}\) be an interval equipped with the
  coordinate \(t\), and let \(\hat{\lambda}\) and \(\hat{\omega}\)
  respectively be a one-form and two-form on \(\mathcal{I}\times M\).
We say \((\mathcal{I}\times M, (\hat{\lambda}, \hat{\omega}))\) is a
  realized Hamiltonian homotopy provided the following hold.
\begin{enumerate}                                                        
  \item \(\hat{\lambda}(\partial_t)=0 \).
  \item \(i_{\partial_t} \hat{\omega}=0\).
  \item \(d \hat{\omega}\big|_{\{t={\rm const}\}} = 0\)
  \item \(dt\wedge \hat{\lambda}\wedge \hat{\omega}\wedge \cdots \wedge
    \hat{\omega} >0\).
  \item \(\hat{\lambda}\) is invariant under the flow of \(\partial_t\)
  \item if \(\mathcal{I}\) is unbounded, then there exists a neighborhood
    of \(\{\pm\infty\}\times M\) on  which
    \(\hat{\omega}\) is invariant under the flow of \(\partial_t\).
  \end{enumerate}
\end{definition}
%
We note that the properties of a realized Hamiltonian homotopy imply that,
  near $\{\pm\infty\}\times M$, $\hat{\lambda}$ and $\hat{\omega}$ are
  pull-backs of forms $\lambda$ and $\omega$ on $M$, where in addition
  $\omega$ is closed and further $\lambda\wedge\omega^n>0$ holds with
  $2n+1=\text{dim}(M)$.
Observe that a realized Hamiltonian homotopy gives rise to two additional
  structures, the first of which is a vector field \(\widehat{X}\) on
  \(\mathcal{I}\times M\) which is uniquely determined by the equations
  \begin{align*}                                                          
    dt(\widehat{X})=0, \qquad \hat{\lambda}(\widehat{X}) = 1, \qquad
    i_{\widehat{X}}\hat{\omega} = 0.
    \end{align*}
The second structure is the codimension-two plane field distribution given
  by
  \begin{align*}                                                          
    \hat{\xi} = {\rm ker}\; dt \cap {\rm ker}\; \hat{\lambda}.
    \end{align*}
With this in mind, we now provide the notion of an almost Hermitian
  structure adapted to a realized Hamiltonian homotopy.

\begin{definition}[adapted structures for a realized Hamiltonian
  homotopy]
  \label{DEF_adapted_structures_Ham_homotopy}
  \hfill\\
Let \((\mathcal{I}\times M , (\hat{\lambda}, \hat{\omega}))\) be a
  realized Hamiltonian homotopy.
We say an almost Hermitian structure \((\widehat{J}, \hat{g})\) on
  \(\mathcal{I}\times M\) is adapted to this realized Hamiltonian homotopy
  provided the following hold.
\begin{enumerate}                                                         
  \item \(\widehat{J}\partial_t = \widehat{X}\).
  \item \(\widehat{J}\colon \hat{\xi}\to \hat{\xi}\).
  \item \(\hat{g} = (dt \wedge \hat{\lambda} + \hat{\omega})(\cdot,
    \widehat{J} \cdot) \).
  \item if \(\mathcal{I}\) is unbounded, then there exists a neighborhood
    of \(\{\pm\infty\}\times M\) on which the restriction
    \(\widehat{J}\big|_{\hat{\xi}}\) is invariant under
    the flow of \(\partial_t\).
  \end{enumerate}
\end{definition}
%

\begin{definition}[symplectic cobordism]
  \label{DEF_symplectic_cobordism}
  \hfill\\
Let \(M^+\) and \(M^-\) be smooth closed manifolds, and let \(\eta^\pm =
  (\lambda^\pm, \omega^\pm)\) denote framed Hamiltonian structures for each.
Suppose that each \(M^\pm\) is oriented by the volume form 
  \begin{align*}                                                          
    \lambda^\pm \wedge \omega^\pm \wedge \cdots \wedge \omega^\pm.
    \end{align*}
We say a compact symplectic manifold \((\widetilde{W}, \tilde{\omega})\)
  is a symplectic cobordism from
  \((M^+, \eta^+)\) to \((M^-, \eta^-)\) provided 
  \begin{enumerate}                                                       
    \item \(\partial \widetilde{W} = M^+ - M^-\)
    \item \(i_\pm^* \tilde{\omega} = \omega^\pm\), where \(i_\pm: M^\pm
      \to \widetilde{W}\) is the canonical inclusion.
    \end{enumerate}
\end{definition}
%
The basic concept we would like to introduce next is that of an
  extended symplectic cobordism and it is quite involved,
  however the overall notion should be mostly familiar to
  those experienced with symplectic manifolds with cylindrical ends.
The complexity of the definition arises in large part because the ends are
  not (symplectizations of) contact or stable Hamiltonian manifolds, and
  hence later when we derive new energy/area estimates for
  pseudoholomorphic curves we must rely on a very carefully arranged
  structure in the target manifold, specifically on the region which
  transitions from symplectic part to the cylindrically ended part.
To help digest these notions, 
  we break the definition apart and introduce auxiliary
  objects in several preliminary definitions.
In the first definition we disregard symplectic and almost complex
  considerations and target the notion of an
  \emph{extended} cobordism in general.

\begin{figure}
\label{FIG2}
\includegraphics[scale=0.3]{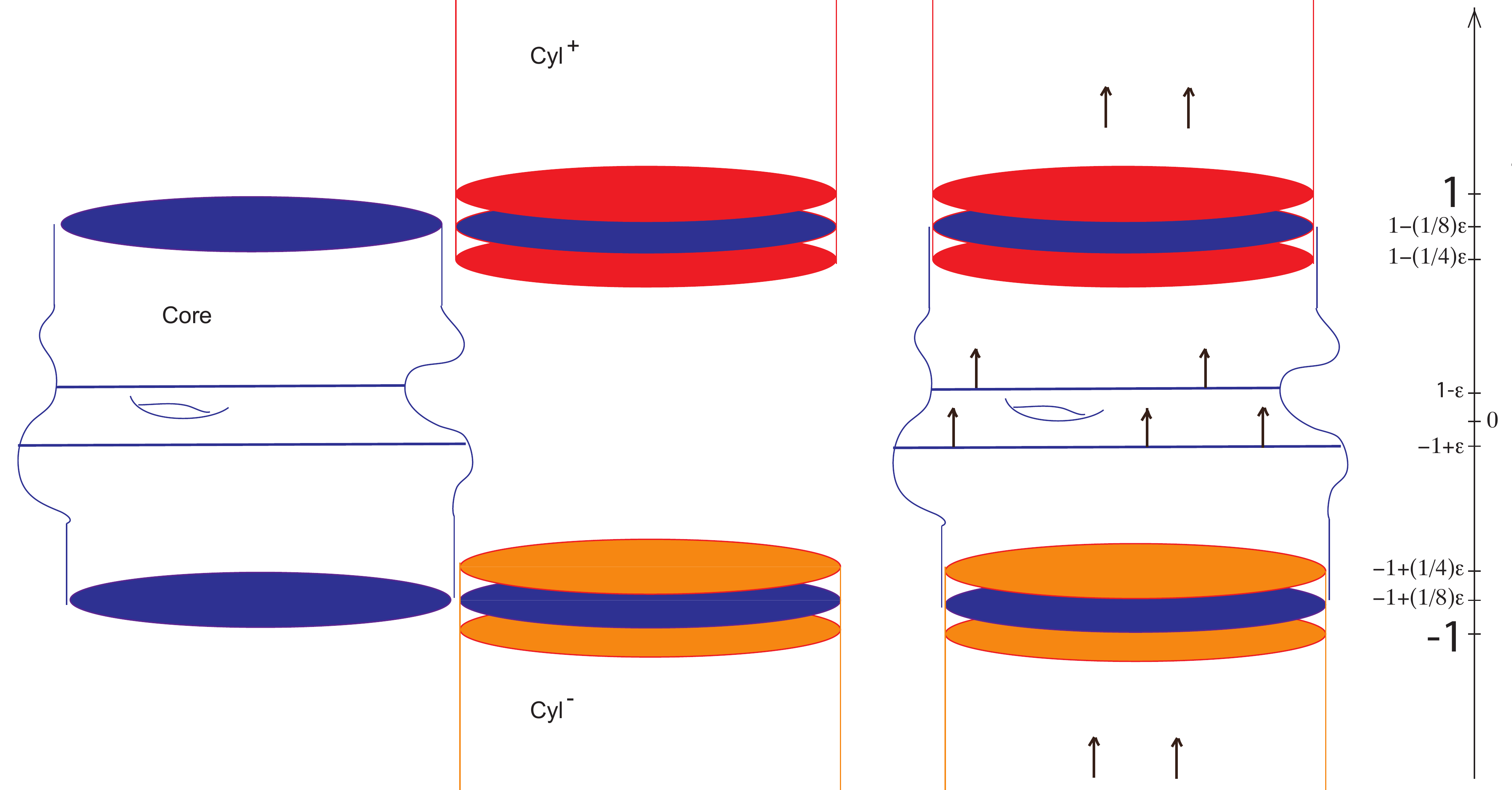}
\caption{
  The figure shows the core, \({\rm Cyl}^+\) and \({\rm Cyl}^-\)
  regions and indicates the region where
  \(d\bar{a}(\partial_{\bar{a}})=1\).
  }
\end{figure}
%

\begin{definition}[extended cobordism]
  \label{DEF_extended_cobordism}
  \hfill\\
Let \(M^\pm\) be two closed oriented manifolds. 
An extended cobordism from \(M^+\) to \(M^-\) is given by a tuple
  \((\bar{W},\bar{a},\partial_{\bar{a}},\varepsilon)\) and embeddings
  \(\phi^\pm:M^\pm\rightarrow \bar{W}\) with the following properties.
\begin{enumerate}                                                         
  \item 
  \(\overline{W}\) is a smooth oriented manifold.
  \item 
  \(\bar{a}:\overline{W}\rightarrow {\mathbb R}\) is a proper smooth
  function.
  \item 
  \(\partial_{\bar{a}}\in\Gamma(T\overline{W})\) is a smooth complete
  vector field.
  \item 
  \(\varepsilon>0\).
  \end{enumerate}
This data has the following additional properties:
\begin{enumerate}                                                         
  \item 
  \(d\bar{a}(\partial_{\bar{a}})=1\) on the domain \(\{|\bar{a}|>
  1-\varepsilon\}\).
  \item 
  The images of the embeddings \(\phi^\pm\) are the codimension-one
  submanifolds \(\bar{a}^{-1}(\pm 1)\).
  \item 
  The orientation induced on \(\bar{a}^{-1}( 1)\cup \bar{a}^{-1}(-1)\) as
    the boundary of the subdomain  with smooth boundary
    \(\bar{a}^{-1}([-1,1])\) of \(\overline{W}\) has the property that
    \(\phi^+:M^+\rightarrow \bar{a}^{-1}(1)\) is orientation-preserving and
    \(\phi^-:M^-\rightarrow \bar{a}^{-1}(-1)\) is orientation-reversing.
  \end{enumerate}
The compact sub-domain with smooth boundary defined by 
\begin{align}\label{EQ_core_W}                                           
  {\rm Core}(\overline{W}):=\big\{|\bar{a}|\leq
  1-\textstyle{\frac{1}{8}}\varepsilon\big\}
  \end{align}
  is called  the  ``core of \(\overline{W}\).'' 
The following domains are called the ``positive cylinder'' and ``negative
  cylinder'' respectively.
\begin{align}\label{EQ_cyl_W}                                            
  {\rm Cyl}^\pm(\overline{W}):=
  \big\{\pm\bar{a}>1-\textstyle{\frac{1}{4}}\varepsilon\big\}.
  \end{align}
\end{definition}
%

We illustrate the above definition by Figure \ref{FIG2}. 
We also note that in the case that \(\varepsilon> 1\) it holds that
  \(d\bar{a}(\partial_{\bar{a}})=1\) on all of \(\overline{W}\), and hence
  in this case \(\overline{W}\) is diffeomorphic to the oriented product
  \({\mathbb R}\times M^+\) via \((s,m)\rightarrow s\cdot \phi^+(m)\),
  where \((s,w)\rightarrow s\cdot w\) is notation for the flow associated
  to \(\partial_{\bar{a}}\).

Definition \ref{DEF_extended_cobordism} above has introduced the ``smooth
  aspects'' of the target manifolds which we shall need for further
  analysis, and so our next step is to introduce the additional
  symplectic, Hermitian, and framed Hamiltonian features on such
  manifolds.
In this case we shall start with \((M^\pm,\eta^\pm)\), where
  \(\eta^\pm=(\lambda^\pm,\omega^\pm)\) are framed Hamiltonian structures,
  and we will define an extended symplectic cobordism between \((M^+,
  \eta^+)\) and \((M^-, \eta^-)\) which carries additional compatible
  data.
For this new extension, with underlying extended cobordism
  \((\overline{W},\bar{a},\partial_{\bar{a}},\varepsilon)\), it will be
  important to identify certain subdomains distinguished by
  ranges of values of the function \(\bar{a}\).
While the explicit definition of these subdomains will seem pedantic, it
  is important to realize that they are necessary for the delicate analysis
  performed later.
In order to aid comprehension of these technical elements, we will
  elaborate on some of the more important features after providing the
  definition.

As a final point, we mention that our notion of an extended symplectic
  cobordism actually relies on not just a symplectic structure, but also
  framed Hamiltonian structures and an adapted almost Hermitian structure.
  As such, it would be more accurate to give it a more cumbersome name like
  ``extended almost K\"{a}hler cobordism,'' however here and throughout we
  opt for that brevity which emphasizes the most important structure.

\begin{definition}[extended symplectic cobordism]
  \label{DEF_symplectic_extended_cobordism}
  \hfill\\
Consider a pair of framed Hamiltonian structures \((M^\pm, \eta^\pm)\),
  with \(\eta^\pm = (\lambda^\pm, \omega^\pm)\) and view \(M^\pm\)
  equipped with the induced orientations.
Consider also the tuple \(\mathbf{W}=(\overline{W}, \bar{\omega},
  \overline{J}, \bar{g}, \bar{a}, \partial_{\bar{a}}, \epsilon)\) and
  embeddings \(\phi^\pm:M^\pm \to \overline{W}\) consisting of the following
  data.
\begin{enumerate}                                                         
  \item 
  \((\overline{W},\bar{a},\partial_{\bar{a}},\varepsilon)\) together with
  the $\phi^\pm$ is an extended cobordism between the oriented manifolds
  $M^+$ and $M^-$.
  \item 
  \(\bar{\omega}\) is a smooth closed two-form on \(\overline{W}\).
  \item 
  \((\overline{J}, \bar{g})\) is an almost Hermitian structure on
  \(\overline{W}\).
  \end{enumerate}
This data has the following additional properties. There exists 
  a smooth function \(\beta\colon \mathbb{R}\to
  \mathbb{R}\) satisfying 
\begin{align*}                                                            
  \beta(\bar{a}) = 0\; \; &\text{on}\; \; \{|\bar{a}|\geq 1\}\\
  \beta'(\bar{a})> 0 \; \; &\text{on}\; \;\{1-\epsilon <|\bar{a}| < 1\}
  \end{align*}
 such that the following hold:
\begin{enumerate}                                                         
  \item on the region \(\{|\bar{a}| < 1\}\) we have
  \begin{enumerate}                                                       
    \item 
    \(\bar{\omega}\) is non-degenerate.
    \item 
    \(\overline{J}\) is \(\bar{\omega}\)-compatible; that is,
    \(\bar{\omega}(\cdot , \overline{J}\cdot)\) is a Riemannian metric.
    \end{enumerate}
  \item 
  on the region \(\{|\bar{a}| < 1- \frac{1}{2}\epsilon \}\) we have
  \(\bar{g} = \bar{\omega}(\cdot , \overline{J} \cdot )\)
  \item 
  on the region \(\{|\bar{a}| > 1-\epsilon\}\), where we recall that
  \(d\bar{a}(\partial_{\bar{a}}) = 1\), we have 
  \begin{enumerate}                                                       
    \item 
    \(\bar{\omega} = \omega^\pm + d(\beta \lambda^\pm) =
    \Big(\beta'(d\bar{a}\wedge \lambda^\pm)\Big)+\Big(\omega^\pm + \beta
    d\lambda^\pm\Big)\)  where \(\beta = \beta(\bar{a})\) and we have
    abused notation by writing \(\lambda^\pm\) and \(\omega^\pm\) rather
    than the more accurate \((\phi_\pm^{-1} \circ {\rm pr}_\pm)^*
    \lambda^\pm \) and \((\phi_\pm^{-1} \circ {\rm pr}_\pm)^* \omega^\pm
    \) where
    \begin{align*}                                                        
      {\rm pr}_\pm \colon \{\pm \bar{a} > 1-\epsilon\} \to
      \{\bar{a}=\pm1\}
      \end{align*}
    is the smooth projection along the trajectories of
    \(\partial_{\bar{a}}\).  
    \item 
    Make the following definitions:
    \begin{enumerate}                                                     
      \item 
      the two form \(\hat{\omega}^\pm := \omega^\pm + \beta d\lambda^\pm\)
      \item 
      the co-dimension two plane field \(\xi:= {\rm ker}\; d\bar{a} \cap
      {\rm ker}\; \lambda^\pm\)
      \item 
      smooth vector field \(\overline{X}\) determined by 
      \begin{align*}                                                      
	d\bar{a}(\overline{X}) = 0\qquad \lambda^\pm(\overline{X}) =
	1\qquad {\rm ker}\; \hat{\omega}^\pm = {\rm
	Span}(\partial_{\bar{a}}, \overline{X});
        \end{align*}
      \end{enumerate}
    then \(T\overline{W} = \xi \oplus {\rm ker}\; \hat{\omega}^\pm\), and
    \(\overline{J}\) preserves this splitting; moreover we have
    \(\overline{J} \partial_{\bar{a}} = \overline{X}\), and
    \(\hat{\omega}^\pm(\cdot , \overline{J}\cdot)\big|_{\xi}\) is a bundle
    metric
    \item
    there exists a smooth positive function \(\theta\colon \mathbb{R}\to
    \mathbb{R}\) satisfying \(\theta(\bar{a}) = 1\) for
    \(|\bar{a}|>1-\frac{1}{4}\epsilon\) for which
    \begin{align*}                                                        
      \bar{g} = \big(\theta(\bar{a}) (d\bar{a}\wedge \lambda^\pm )+
      \hat{\omega}^\pm\big)(\cdot, \overline{J}\cdot)
      \end{align*}
    \end{enumerate}
  \item 
  on the region \(\{|\bar{a}| \geq 1\}\) we have \(\overline{J}\) is
  invariant under the flow of \(\partial_{\bar{a}}\)
  \end{enumerate}
Then we call the tuple \((\overline{W}, \bar{\omega}, \overline{J},
  \bar{g}, \bar{a}, \partial_{\bar{a}}, \epsilon)\) an extended symplectic
  cobordism from \((M^+, \eta^+)\) to \((M^-, \eta^-)\) equipped with
  adapted almost Hermitian structure.
\end{definition}
%

We need a quick definition before we provide some comments on the previous definition.                                               
\begin{definition}[compact region]
  \label{DEF_compact_region}
  \hfill \\
Let \(W\) be a manifold.  
Suppose \(\mathcal{U}\subset W\) is an open set for which its closure
  \({\rm cl}(\mathcal{U})\) inherits from \(W\) the structure of a
  smooth compact manifold possibly with boundary.
Then we call \({\rm cl}(\mathcal{U})\) a \emph{compact region} in \(W\).
\end{definition}
%

\begin{remark}[structural observations]
  \label{REM_structureal_observations}
  \hfill\\
In light of the careful definition of an extended symplectic cobordism
  denoted by \(\mathbf{W}=(\overline{W}, \bar{\omega}, \overline{J},
  \bar{g}, \bar{a}, \partial_{\bar{a}}, \epsilon)\), it is possible to
  identify a number of structures which we will make use of later.
The first of these is the set \({\rm Core}(\overline{W})\), as defined in
  equation (\ref{EQ_core_W}), and which also is a compact region in the
  sense of Definition \ref{DEF_compact_region}.
Likewise, we also make use of the open regions \({\rm
  Cyl}^\pm(\overline{W})\), as defined in equation (\ref{EQ_cyl_W}).
We also specifically note that 
\begin{align*}                                                            
  \overline{W} = {\rm Cyl}^-(\overline{W}) \cup {\rm Core}(\overline{W})
  \cup {\rm Cyl}^+(\overline{W}).
  \end{align*}
Second, we note there exist diffeomorphisms 
  \begin{align*}                                                          
    &\Phi^\pm\colon \mathcal{I}^\pm\times M^\pm \to {\rm
    Cyl}^\pm(\overline{W})
    \\
    &\Phi^\pm(t,p) = \varphi_{\partial_{\bar{a}}}^{t\mp
    1}\big(\phi_\pm(p)\big)
    \end{align*}
    where 
  \begin{align*}                                                          
    \mathcal{I}^+ = \big(1-{\textstyle \frac{1}{4}}\epsilon,
    \infty\big)\qquad\text{and}\qquad \mathcal{I}^- = \big(-\infty,
    -1+{\textstyle\frac{1}{4}\epsilon}\big),
    \end{align*}
    the \(\phi_\pm\colon M^\pm \to \{\bar{a} = \pm 1\}\subset
    \overline{W}\) are as in Definition
    \ref{DEF_symplectic_extended_cobordism}, and
    \(\varphi_{\partial_{\bar{a}}}^t\) is the time \(t\) flow of the
    vector field \(\partial_{\bar{a}}\).

As a consequence of these structures, we see that the manifolds
  \(\mathcal{I}^\pm\times M^\pm\) equipped with the pair of differential
  forms \((\Phi_\pm^* \lambda^\pm, \Phi_\pm^* \hat{\omega}^\pm)\) are each
  a realized Hamiltonian  homotopy in the sense of Definition
  \ref{DEF_hamiltonian_homotopy}; here recall that
  \begin{align*}                                                          
    \hat{\omega}^\pm = \omega^\pm +\beta d\lambda^\pm
    \qquad\text{and}\qquad \bar{\omega} = \omega^\pm + d(\beta\lambda^\pm)
    \end{align*}
  for \(\beta:\mathbb{R}\to \mathbb{R}\) satisfying \(\beta(\bar{a}) = 0\)
  on \(\{|\bar{a}| \geq 1-\frac{1}{4}\epsilon \}\) and \(\beta'(\bar{a})
  >0 \) on the region \(\{1-\epsilon < |\bar{a}| < 1\}\).
Moreover, the pair \((\Phi_\pm^*\bar{g}, \Phi_\pm^* \overline{J})\) is an
  adapted structure for the realized Hamiltonian homotopy, in the sense of
  Definition \ref{DEF_adapted_structures_Ham_homotopy}.

Finally, regarding \({\rm Core}(\overline{W})\), we note that there exists
  a positive constant, denoted \(C_\theta =
  C_\theta(\mathbf{W})\), for which
  \begin{align}\label{EQ_omega_coercive}                                  
    C_\theta^{-1} 
    \leq 
    \inf_{\bar{q}\in {\rm Core}(\overline{W})} \inf_{\substack{v\in
    T_{\bar{q}} \overline{W}\\ \|v\|_{\bar{g}}=1}} \bar{\omega}(v, Jv)
    \leq
    \sup_{\bar{q}\in {\rm Core}(\overline{W})} \sup_{\substack{v\in
    T_{\bar{q}} \overline{W}\\ \|v\|_{\bar{g}}=1}} \bar{\omega}(v, Jv)
    \leq 
    C_\theta.
    \end{align}
\end{remark}
%

The next lemma will give a means to construct an extended symplectic
  cobordism \(\boldsymbol{W}=(\overline{W}, \bar{\omega}, \overline{J},
  \bar{g}, \bar{a}, \partial_{\bar{a}},\epsilon)\) from a symplectic
  cobordism \(\widetilde{W}\) from \(M^+\) to \(M^-\) provided the
  \(M^\pm\) are equipped with framed Hamiltonian structures.
The proof of the lemma will also make the content and utility of
  Definition \ref{DEF_symplectic_extended_cobordism} more transparent.

\begin{lemma}[cobordism to extended cobordism]
  \label{LEM_cobordism_to_extended_corbordism}
  \hfill\\
Let \((M^\pm, \eta^\pm)\) be a pair of framed Hamiltonian manifolds with
  \(\eta^\pm = (\lambda^\pm, \omega^\pm)\).
Let \((\widetilde{W}, \tilde{\omega})\) be a symplectic cobordism from
  \((M^+, \eta^+)\) to \((M^-, \eta^-)\).
Then there exists an extended symplectic cobordism from \((M^+,
  \eta^+)\) to \((M^-, \eta^-)\) equipped with an adapted almost
  Hermitian structure, which we denote \(\mathbf{W}=(\overline{W},
  \bar{\omega}, \overline{J}, \bar{g}, \bar{a}, \partial_{\bar{a}},
  \epsilon)\).
Moreover, there exists a smooth surjection \(\Psi\colon \overline{W}\to
  \widetilde{W}\) such that the following hold.
  \begin{enumerate}                                                       
    \item 
    the domain restricted map \(\Psi\colon \{|\bar{a}| < 1\}\to
    \widetilde{W}\setminus \partial \widetilde{W}\) is a diffeomorphism
    \item 
    \(\Psi( \{|\bar{a}|\geq 1\}) = \partial \widetilde{W}\)
    \item \(\bar{\omega} = \Psi^* \widetilde{\omega}\)
    \end{enumerate}
\end{lemma}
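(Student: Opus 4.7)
The plan is to build $\overline{W}$ from $\widetilde{W}$ by gluing on cylindrical ends along $\partial\widetilde{W}$ and to define $\Psi$ so that it smoothly collapses each end onto its boundary component. The non-trivial content is really concentrated in two places: a Moser-type normal form near $\partial\widetilde{W}$, and a clever choice of a cutoff function $\beta$ whose infinite-order vanishing makes $\Psi$ smooth even though it ceases to be a diffeomorphism at $\{\bar a=\pm 1\}$.

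First I would establish a collar theorem: there exist $\delta_\pm>0$ and diffeomorphisms $\Phi^\pm \colon I^\pm \times M^\pm \to \mathcal{N}^\pm \subset \widetilde{W}$, with $I^+=(-\delta_+,0]$ and $I^-=[0,\delta_-)$, sending $\{0\}\times M^\pm$ to $\phi^\pm(M^\pm)$ and pulling $\tilde{\omega}$ back to the model $\omega^\pm + d(t\lambda^\pm)$. The key input is a vector field $Z^\pm$ along $M^\pm$ satisfying $i_{Z^\pm}\tilde{\omega}|_{TM^\pm}=\lambda^\pm$: non-degeneracy of $\tilde{\omega}$ produces $Z^\pm$ solving this pointwise, and since $\lambda^\pm(X_{\eta^\pm})=1$ while $i_{X_{\eta^\pm}}\omega^\pm=0$, the vector field $Z^\pm$ cannot lie in $TM^\pm$ and is therefore transverse. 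Flowing $M^\pm$ along $Z^\pm$ yields an initial collar in which $\tilde{\omega}$ already agrees with the model on $\{0\}\times M^\pm$, and the standard Moser deformation argument (both forms closed, both non-degenerate on a sufficiently thin collar, their difference $d\alpha$ with $\alpha$ vanishing on $M^\pm$) produces the desired $\Phi^\pm$.

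Next, I would construct $\overline{W}$ topologically as $\widetilde{W}\cup_{\phi^+}([1,\infty)\times M^+)\cup_{\phi^-}((-\infty,-1]\times M^-)$, fix a small $\epsilon>0$ with $\epsilon\le\min(\delta_+,\delta_-)$, and choose a smooth $\beta\colon \mathbb{R}\to\mathbb{R}$ with $\beta=0$ on $\{|\bar a|\ge 1\}$, $\beta'>0$ on $(1-\epsilon,1)\cup(-1,-1+\epsilon)$, $\beta$ restricting to a diffeomorphism $(1-\epsilon,1)\to(-\delta_+,0)$ and $(-1,-1+\epsilon)\to(0,\delta_-)$, and all derivatives of $\beta$ vanishing at $\bar a=\pm 1$ (this last condition is compatible because one can e.g. use a tail of type $-e^{-1/(1-\bar a)}$). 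I would then declare the smooth structure on $\overline{W}$ so that near the positive end the coordinates are $(\bar a,p)\in(1-\epsilon,\infty)\times M^+$, with the overlap identification to $\widetilde{W}$ given by $(\bar a,p)\mapsto \Phi^+(\beta(\bar a),p)$ (smooth because $\beta$ is, with infinite-order contact at $\bar a=1$), and similarly for the negative end. The function $\bar a$ is extended smoothly to all of $\overline{W}$, taking values in $(-1+{\textstyle\frac{\epsilon}{2}}, 1-{\textstyle\frac{\epsilon}{2}})$ on the core complement, and $\partial_{\bar a}$ is extended to a complete vector field equal to the coordinate $\partial_{\bar a}$ on $\{|\bar a|>1-\epsilon\}$.

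Finally, I would define $\Psi\colon\overline{W}\to\widetilde{W}$ piecewise: equal to $\Phi^+(\beta(\bar a),p)$ on the transition $(1-\epsilon,1]\times M^+$, equal to the collapse $(\bar a,p)\mapsto\phi^+(p)$ on the cylindrical end $\{\bar a\ge 1\}$, the analogous expression on the negative side, and the inclusion on the core. Smoothness across $\{\bar a=\pm 1\}$ follows because $\beta$ and all its derivatives vanish there, so every $\bar a$-derivative of $\Psi$ from below matches the trivially zero derivatives from above. Setting $\bar\omega := \Psi^*\tilde{\omega}$ yields a smooth closed $2$-form that, by a direct chain-rule computation $(\beta\times\mathrm{id})^*(\omega^\pm+d(t\lambda^\pm))=\omega^\pm+d(\beta(\bar a)\lambda^\pm)$, realizes the required formula on the cylinder region. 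To produce $(\overline{J},\bar g)$ I would pick any $\tilde{\omega}$-compatible almost complex structure on $\widetilde{W}$ and transport it to $\{|\bar a|\le 1-\epsilon\}$, build the prescribed adapted structure on $\{|\bar a|>1-\epsilon\}$ via the formulas in Definition \ref{DEF_symplectic_extended_cobordism} (choosing the cutoff $\theta$ as indicated), and interpolate between them on the small overlap using contractibility of the space of $\bar\omega$-tamed almost complex structures preserving $\xi$. The verifications of non-degeneracy on $\{|\bar a|<1\}$, compatibility, and all four conclusions on $\Psi$ are then routine checks; the main obstacle—and the only genuinely delicate point—is engineering the infinite-order vanishing of $\beta$ so that both $\Psi$ and $\bar\omega$ remain $C^\infty$ across the hypersurface where $\Psi$ loses local injectivity.
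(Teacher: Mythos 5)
Your proposal reaches the same conclusion and follows the same broad outline (Darboux/Moser normal form near $\partial\widetilde{W}$, cylinder gluing, cutoff $\beta$), but it takes a genuinely different route at one structural point, and that is worth flagging. You build the smooth structure on $\overline{W}$ so that the transition from the cylinder chart $(\bar a, p)$ into $\widetilde{W}$ \emph{is} $(\bar a,p)\mapsto\Phi^+(\beta(\bar a),p)$, which forces you to insist that $\beta$ restrict to a diffeomorphism $(1-\epsilon,1)\to(-\delta_+,0)$. You then get to set $\bar\omega:=\Psi^*\tilde{\omega}$ in one stroke, so smoothness and closedness of $\bar\omega$ are automatic and the formula $\omega^\pm+d(\beta\lambda^\pm)$ on the ends is just a computation, not a definition. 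The paper instead glues with the \emph{affine} collar identification $(t,p)\sim\psi_\pm(t\mp 1,p)$, which is the standard no-thought way to attach a cylinder, and then requires $\beta(\bar a)=\bar a\mp 1$ on the inner half $(1-\epsilon,1-\tfrac12\epsilon)$ of the collar so that the $\beta$-formula for $\bar\omega$ literally coincides there with $\tilde{\omega}$; the global $\bar\omega$ is then defined by patching, and $\Psi$ is constructed piecewise. What you gain is that $\bar\omega$ is globally a pullback; what you pay is that your chart transition is non-standard and, in particular, $\bar a$ viewed as a function on $\widetilde{W}\setminus\partial\widetilde{W}$ does not extend smoothly to the boundary (which is fine, but needs to be understood). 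The two constructions produce diffeomorphic models; the paper's is the one a reader would expect.

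Two smaller remarks. First, the parenthetical ``smooth because $\beta$ is, with infinite-order contact at $\bar a=1$'' conflates two different issues: smoothness of the chart transition is settled by $\beta$ being a diffeomorphism on the \emph{open} interval $(1-\epsilon,1)$, and the infinite-order vanishing at $\bar a=\pm 1$ has nothing to do with that; it is what makes $\Psi$, and hence $\bar\omega=\Psi^*\tilde\omega$, smooth across the hypersurfaces $\{|\bar a|=1\}$. Note also that infinite-order vanishing is not an optional ``clever choice''--it is forced on any smooth $\beta$ that vanishes identically on $\{|\bar a|\ge 1\}$, so the paper's $\beta$ has it too. Second, the paper does not prove the collar normal form (it cites ``a version of Darboux's theorem''); your Moser argument with the vector field $Z^\pm$ solving $i_{Z^\pm}\tilde\omega|_{TM^\pm}=\lambda^\pm$ and the transversality observation $\lambda^\pm(X_{\eta^\pm})=1\neq 0=\omega^\pm(X_{\eta^\pm},\cdot)$ is a correct and self-contained proof of exactly that normal form, so this part is a welcome supplement rather than a deviation.
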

%
\begin{proof}
We begin by noting that a version of Darboux's theorem guarantees that
  there exists an \(\epsilon>0\), disjoint neighborhoods
  \(\widetilde{\mathcal{O}}^\pm\) of the \(M^\pm\subset \partial
  \widetilde{W}\) in $\widetilde{W}$, and diffeomorphisms 
  \begin{align*}                                                          
    \psi_+\colon(-\epsilon, 0]\times M^+\to  \mathcal{O}^+
    \qquad\text{and}\qquad   
    \psi_-\colon [0, \epsilon)\times M^- \to \mathcal{O}^-
    \end{align*}
  for which \( \psi_\pm^* \tilde{\omega} = \omega^\pm + d(t\lambda^\pm) \)
  where \(t\) is the coordinate on \((-\epsilon, 0]\) and \([0,
  \epsilon)\).
This is the desired positive number \(\epsilon>0\).

Next, for notational convenience, we define the following intervals.
  \begin{align*}                                                          
    &\mathcal{I}^+:= (1-\epsilon, \infty) 
    &&\mathcal{I}^-:= (-\infty, \epsilon-1)  
    \\
    &\check{\mathcal{I}}^+:= (1-\textstyle{\frac{1}{4}}\epsilon, \infty)
    &&\check{\mathcal{I}}^-:=
      (-\infty,\textstyle{\frac{1}{4}} \epsilon -1)
    \end{align*}
We can then define the desired manifold \(\overline{W}\) as follows.
\begin{align}
  \overline{W} = \Big(\big( \mathcal{I}^- \times M^-\big)\; \; \sqcup\;\;
  \widetilde{W} \; \;
  \sqcup \; \; \big(\mathcal{I}^+\times  M^+\big)
  \Big)/\sim
  \end{align}
  where \(\tilde{q} \sim (t, p)\) provided one of the following two holds:
  \begin{enumerate}                                                       
    \item \(\tilde{q}\in \widetilde{W}\)
      and \((t, p)\in (1-\epsilon, 1]\times M^+\) and
      \(\psi_+(t-1, p)=\tilde{q}\)
    \item \(\tilde{q}\in \widetilde{W}\)
      and \((t, p)\in [ -1, \epsilon-1)\times M^-\) and
      \(\psi_-(t+1, p)=\tilde{q}\)
    \end{enumerate}
This defines the desired manifold \(\overline{W}\).
In light of the definition of \(\overline{W}\), we then let  \(I_0\),
  \(I_-\), and \(I_+\) denote the natural embeddings:
  \begin{align*}                                                          
    I_\pm\colon \mathcal{I}^\pm\times M^\pm \hookrightarrow
    \overline{W}\qquad\text{and}\qquad I_0\colon \widetilde{W}
    \hookrightarrow \overline{W}.
    \end{align*}
In what follows, it will be convenient to have the following sets
  established.
For each \(\delta \in [0, \epsilon]\) we define the positive and negative
  end regions
  \begin{align*}                                                          
    \overline{W}_\delta^+ = I_+\big((1-\delta, \infty)\times M^+\big) 
    \qquad\text{and}\qquad
    \overline{W}_\delta^- = I_-\big(( -\infty, \delta-1)\times M^-\big) 
  \end{align*}
With these established, we now define a smooth function \(\bar{a}\colon
  \overline{W}\to \mathbb{R}\)  satisfying
  \begin{align*}                                                          
    \bar{a}(\bar{q})= 
    \begin{cases}
    {\rm pr}_1\circ I_+^{-1}(\bar{q}) & \text{ if } \bar{q}\in
    \overline{W}_\epsilon^+
    \\
    {\rm pr}_1\circ I_-^{-1}(\bar{q}) & \text{ if } \bar{q}\in
    \overline{W}_\epsilon^-
    \end{cases}
    \end{align*}
  where \({\rm pr}_1\) is the canonical projection to the first factor,
  and
  \begin{align*}                                                          
    |\bar{a}(\bar{q})|\leq 1-\epsilon \qquad\text{for all }\; \bar{q}\in
    \overline{W}\setminus \big(\overline{W}_\epsilon^- \cup
    \overline{W}_\epsilon^+\big).
    \end{align*}
This defines the desired function \(\bar{a}\).
Letting \(t\) denote the coordinate on \(\mathcal{I}^\pm\) as appropriate,
  we define \(\partial_{a}:= (I_\pm)_*\partial_t\), and smoothly extend it
  on the rest of \(\overline{W}\).
This defines the smooth vector field \(\partial_{\bar{a}}\), and also
  establishes property (3a).
It also allows us to define the desired diffeomorphisms:
  \begin{align*}                                                          
    &\phi_\pm \colon M^\pm \to \{\bar{a}=\pm 1\}
    \\
    &\phi_\pm(p) = \psi_\pm(0,p).
    \end{align*}
It is perhaps worth noting that we have
\begin{align}\label{EQ_tilde_w}                                           
  \tilde{\omega} =
  \begin{cases}
  \omega^+ + d\bar{a} \wedge \lambda^+ + (\bar{a}-1)d\lambda^+ &\text{ on
  }\{ 1 - \epsilon < \bar{a} \leq 1\}
  \\
  \omega^- + d\bar{a} \wedge \lambda^- + (\bar{a}+1)d\lambda^- &\text{ on
  }\{ -1 \leq  \bar{a} < \epsilon- 1\}
  \end{cases}
  \end{align}

Next we aim to define the two-form \(\bar{\omega}\). 
To that end, we first define the desired smooth function \(\beta\colon
  \mathbb{R}\to \mathbb{R}\) which satisfies the following conditions:
  \begin{enumerate}                                                       
    \item 
    \(\beta(\bar{a}) = 0\) for \(|\bar{a}| \geq 1\)
    \item 
    \(\beta'(\bar{a}) > 0\) for \(1-\frac{1}{2}\epsilon < |\bar{a}| < 1\)
    \item 
    \(\beta(\bar{a}) = \bar{a} -1\) for \(1- \epsilon < \bar{a} <
    1-\frac{1}{2}\epsilon\)
    \item 
    \(\beta(\bar{a}) = \bar{a} +1\) for \(\frac{1}{2}\epsilon - 1 <
    \bar{a} < \epsilon - 1\).
    \end{enumerate}
With this function defined, we can then define \(\bar{\omega}\) on
  \(\{1-\epsilon <|\bar{a}|\}\) by the following:
  \begin{align*}                                                          
    \bar{\omega} 
    &= 
    \omega^\pm + d\big(\beta(\bar{a})\lambda^\pm\big)
    \\
    &= 
    \omega^\pm +\beta'(\bar{a})d\bar{a} \wedge \lambda^\pm +
    \beta(\bar{a})d\lambda^\pm
    \\
    &= 
    \beta'(\bar{a}) d\bar{a} \wedge \lambda^\pm + \hat{\omega}^\pm
    \end{align*}
  for 
  \begin{align}\label{DEF_hat_w}                                          
    \hat{\omega}^\pm = \omega^\pm + \beta(\bar{a})d\lambda^\pm.
    \end{align}

As a consequence of the definition of \(\beta\) together with
  \(\tilde{\omega}\) expressed as in equation (\ref{EQ_tilde_w}), we see
  that on \(\{1-\epsilon < |\bar{a}| < 1- \frac{1}{2}\epsilon\}\) we have
  \(\tilde{\omega} = \bar{\omega}\), and hence we extend the definition of
  \(\bar{\omega}\) so that \(\bar{\omega}\big|_{\{|\bar{a}| \leq 1 -
  \epsilon\}}:= \widetilde{\omega}\), which yields a smooth two-form
  \(\bar{\omega}\) defined on all of \(\overline{W}\).
This is the desired \(\bar{\omega}\) and establishes property (3b).
It also immediately follows that on \(\{|\bar{a}| < 1\}\) the two-form
  \(\bar{\omega}\) is non-degenerate which establishes property (1a).
This latter fact is established by observing that
  \begin{align*}                                                          
    &\beta\times Id\colon (1-\epsilon, 1)\times M^+ \to (-\epsilon,
    0)\times M^+
    \\
    &\beta\times Id\colon (-1, \epsilon-1)\times M^- \to (0,
    \epsilon)\times M^-
    \end{align*}
  are diffeomorphisms which pull back \(\tilde{\omega}\) to
  \(\bar{\omega}\).

At this point, we are prepared to define the smooth map \(\Psi\colon
  \overline{W}\to \widetilde{W}\).
As a first step, we define the smooth map
  \begin{align*}                                                          
    &\Psi \colon \overline{W}_\epsilon^+\cup \overline{W}_\epsilon^- \to
    \widetilde{W}
    \\
    &\Psi =  \psi_\pm \circ (\beta\times Id) \circ I_\pm^{-1}
    \end{align*}
We then observe that on the collar regions
  \(\overline{W}_\epsilon^\pm\setminus
  \overline{W}_{\frac{3}{4}\epsilon}^\pm\) we have \(I_0\circ \Psi = Id\),
  so the following extension yields the desired smooth surjection.
  \begin{align*}                                                          
    &\Psi \colon \overline{W}\to \widetilde{W}
    \\
    &\Psi\colon = 
    \begin{cases}
      \psi_\pm \circ (\beta\times Id) \circ I_\pm^{-1}(\bar{q}) &\text{ if
      }|\bar{a}(\bar{q})| > 1-\epsilon
      \\ 
      I_0^{-1}(\bar{q}) &\text{ if }|\bar{a}(\bar{q})| < 1 -
      \frac{3}{4}\epsilon.
    \end{cases}
    \end{align*}
From this, and our above constructions, we can immediately see that
  \(\Psi\) satisfies the desired properties, including \(\Psi^*\tilde{w} =
  \bar{\omega}\).

We now turn our attention to defining the almost complex structure
  \(\overline{J}\) on \(\overline{W}\).
To that end, we first observe that on the ends
  \(\overline{W}_\epsilon^\pm\) we have the splitting
  \begin{align}\label{EQ_split_ends}                                      
    T\overline{W} = {\rm ker}\; (d\bar{a}\wedge \lambda^\pm) \oplus {\rm
    ker}\; \hat{\omega}^\pm
    \end{align}
  where \(\hat{\omega}^\pm\) is defined in equation (\ref{DEF_hat_w}), and
  we observe that by construction we have \(\bar{\omega}\big|_{{\rm ker}\;
  (d\bar{a}\wedge \lambda^\pm)} = \hat{\omega}^\pm\).
We also define the vector field \(\overline{X}_\pm\) by
\begin{align*}                                                            
  d\bar{a}(\overline{X}_\pm)=0\qquad \lambda^\pm(\overline{X}_\pm) = 1
  \qquad i_{\overline{X}_\pm}\hat{\omega}^\pm = 0.
  \end{align*}  
From this we define \(\overline{J}\) so that the following conditions hold
  \begin{align*}                                                          
    \overline{J}\colon {\rm ker}\; (d\bar{a}\wedge \lambda^\pm) \to {\rm
    ker}\; (d\bar{a}\wedge \lambda^\pm) \qquad\text{and}\qquad
    \overline{J}\colon {\rm ker}\; \hat{\omega}^\pm \to {\rm ker}\;
    \hat{\omega}^\pm,
    \end{align*}
  and moreover, \(\overline{J}\partial_{\bar{a}} = \overline{X}\) and
  \(\hat{\omega}^\pm(\cdot , \overline{J}\cdot )\big|_{{\rm ker}\;
  (d\bar{a}\wedge \lambda^\pm)} \) is symmetric and positive definite.
Now, recall that on \(\{1-\epsilon< |\bar{a}|\}\) we have \(\bar{\omega} =
  \beta'(\bar{a}) d\bar{a}\wedge \lambda^\pm + \hat{\omega}^\pm\), so that
  this \(\overline{J}\) is \(\bar{\omega}\)-compatible on \(\{1-\epsilon <
  |\bar{a}| < 1\}\), and because \(\bar{\omega}\) is non-degenerate on
  \(\{|\bar{a}| < 1\}\) we may then smoothly extend \(\overline{J}\) to be
  an almost complex structure which is \(\bar{\omega}\)-compatible on
  \(\{|\bar{a}|< 1\}\).
This defines the desired \(\overline{J}\); in particular, this establishes
  properties (1b), (3c), and (4)

At this point, we note that it only remains to define the Riemannian
  metric \(\bar{g}\), show that \(\overline{J}\) is a \(\bar{g}\)-isometry,
  and  properties (2) and (3d).

To that end, we must first define the metric \(\bar{g}\), and to do that,
  we will first fix a smooth function \(\chi\) which satisfies
  \begin{align*}                                                          
    &\chi\colon \mathbb{R}\to [0, 1]
    \\
    &\chi(\bar{a}) = 
    \begin{cases}
    0 &\text{if } |\bar{a}| -1 < -\frac{1}{2}\epsilon\\
    1 &\text{if } |\bar{a}| -1 > -\frac{1}{4}\epsilon.
    \end{cases} 
    \end{align*}
Then, working on the ends, we define
  \begin{align*}                                                          
    \bar{g} =\Big( \theta(\bar{a})(d\bar{a} \wedge \lambda^\pm)  +
    \hat{\omega}^\pm\Big)(\cdot, \overline{J}\cdot)
    \end{align*}
  where
  \begin{align*}                                                          
    \theta(\bar{a}) = \chi(\bar{a})
    +\big(1-\chi(\bar{a})\big)\beta'(\bar{a}) \qquad\text{and}\qquad
    \hat{\omega}^\pm = \omega^\pm + \beta(\bar{a}) d{\lambda}^\pm .
    \end{align*}
We see immediately from this definition that on the set \(\{|\bar{a}| -1 >
  -\epsilon\}\) that \(\bar{g}\) is a Riemannian metric for which
  \(\overline{J}\) is an isometry.
Moreover, on the region \(\{ 1 - \epsilon < |\bar{a}| < 1 -
  \frac{1}{2}\epsilon\} \) we have \(\chi=0\) and hence on this region we
  have
  \begin{align*}                                                          
    \bar{g} &= \Big(\beta'(\bar{a}) (d\bar{a}\wedge \lambda^\pm) +
    \omega^\pm +\beta(\bar{a}) d\lambda^\pm\Big)(\cdot , \overline{J}\cdot)
    \\
    &=\bar{\omega}(\cdot, \overline{J} \cdot)
    \end{align*}
  so that we may smoothly extend \(\bar{g}\) by requiring that on \(\{|a|
  < 1-\frac{1}{2}\epsilon\}\), we have \(\bar{g} = \bar{\omega}(\cdot,
  \overline{J}\cdot)\).
Consequently, \(\bar{g}\) is indeed a Riemannian metric on
  \(\overline{W}\), for which \(\overline{J}\) is always an isometry.
This, in turn, establishes properties (2) and (3d), which then completes
  the proof of Lemma \ref{LEM_cobordism_to_extended_corbordism}.
\end{proof}
%

\begin{remark}[adjustment of $\overline{J}$]
  \label{REM_adjustment_of_J}
  \hfill\\
Recall (see for example Proposition 2.63 of \cite{MS2}) that if
  \(\widetilde{W}\) is a finite dimensional manifold, and \(\widetilde{E}\to
  \widetilde{W}\) is a rank \(2n\) bundle with a symplectic bilinear form
  \(\tilde{\omega}\), then on \(\widetilde{E}\) there exists an almost
  complex structure \(\widetilde{J}\) compatible with \(\tilde{\omega}\),
  and moreover the space of such almost complex structures is contractible.
As a consequence, if a symplectic cobordism
  \((\widetilde{W},\tilde{\omega})\) is equipped with an almost complex
  structure \(\widetilde{J}\), and \(\mathcal{K}\subset
  \widetilde{W}\setminus \partial \widetilde{W}\) is a compact set, then
  after shrinking \(\epsilon>0\) if necessary, the associated extended
  symplectic cobordism \(\mathbf{W} = (\overline{W}, \bar{\omega},
  \overline{J}, \bar{g}, \bar{a}, \partial_{\bar{a}}, \epsilon)\) can be
  arranged so that there exists a neighborhood \(\mathcal{O}\) of
  \(\mathcal{K}\) such that on \(\Psi^{-1}(\mathcal{O})\) we have
  \(\Psi^*\widetilde{J} = \overline{J}\).
\end{remark}
%

\subsection{Pseudoholomorphic Curves} \label{SEC_pseudoholomorphic_curves}

We now turn to pseudoholomorphic maps and properties thereof.

\begin{definition}[pseudoholomorphic map]
  \label{DEF_pseudoholomorphic_map}
  \hfill\\
Let \((S, j)\) and \((W, J)\) be smooth almost complex manifolds with
  \({\rm dim}(S)=2\), each possibly with boundary.
A \(\mathcal{C}^\infty\)-smooth map \(u:S\to W\) is said to be
  \emph{pseudoholomorphic} provided \(J\cdot Tu = Tu\cdot j\). 
That is, the tangent map of \(u\) intertwines the almost complex
  structures on domain and target.
Unless otherwise specified, we allow \(S\) to be disconnected.

We say such a map is \emph{proper} provided the preimage of any compact
  set is compact.
We say such a map is \emph{boundary-immersed} provided either \(u:
  \partial S \to W\) is an immersion, or else if \(\partial S =
  \emptyset\).
\end{definition}
%

Given a proper pseudoholomorphic map \(u:S\to W\), it will be convenient to
  denote the set of critical points by \(\mathcal{Z}_u:=\{\zeta\in S:
  T_\zeta u = 0\}\).
Recall that any connected component \(S_0\) of \(S\) for which
  \(\mathcal{Z}_u\cap S_0\) contains an interior accumulation point, we
  must have \(u\big|_{S_0}\equiv p\in W \); for details, see Lemma 2.4.1
  in \cite{MS}.
As such, for a pseudoholomorphic map \(u:S\to W\), the restriction of
  \(u\) to each connected component \(S_0\subset S\) is either a constant
  map, or else it is generally immersed in the following sense.

\begin{definition}[generally immersed]
  \label{DEF_generally_immersed}
  \hfill\\
We say a pseudoholomorphic map is \emph{generally immersed} provided the
  set of critical points has no interior accumulation points.
\end{definition}
%

\begin{definition}[marked nodal Riemann surface]
  \label{DEF_nodal_riemann_surface}
  \hfill\\
A \emph{nodal Riemann surface} is a triple \((S, j, D)\), with entries as
  follows.
The first entry, \(S\), is a real two-dimensional manifold, which may have
  smooth boundary, but we require that each connected component of
  \(\partial S\) is compact.
The second entry, \(j\), is a smooth almost complex structure on \(S\).
Finally, the third entry, \(D\subset S\setminus \partial S\), is an
  unordered closed discrete set of pairs \(D=
  \{\overline{d}_1,\underline{d}_1, \overline{d}_2,\underline{d}_2,\ldots
  \} \) which we call nodal points, and the pairs \(\{\underline{d}_i,
  \overline{d}_i\}\) we call nodal pairs.

A \emph{marked nodal Riemann surface} is the four-tuple \((S, j, \mu, D)\)
  where \((S, j, D)\) is a nodal Riemann surface, and where \(\mu\subset
  S\setminus (D\cup \partial S) \) is a discrete closed set of points.
\end{definition}
%

\begin{remark}[nodal notation]
  \label{REM_nodal_noation}
  \hfill\\
A careful reader may notice that in a nodal Riemann surface, the
  structure which determines which nodal points are paired with which
  other nodal points to form a nodal pair is implied by the notation but
  not explicitly provided in the tuple \((S, j, D)\).
Although this ambiguity is standard in the literature, it can be made
  precise by letting \(D = \{d_1, d_2, \ldots\}\) be a closed discrete
  set of points, and letting \(\iota: D\to D\) denote an involution which
  sends each nodal point \(d\in D \) to the unique point \(d'\in D\) (with
  \(d\neq d'\)) with the property that \(\{d, d'\}\) is a nodal pair.
A nodal Riemann surface would then be given by the tuple \((S, j,
  D, \iota)\).
In this way, \(\iota(\overline{d}_i) = \underline{d}_i\) and
  \(\iota(\underline{d}_i) = \overline{d}_i\).
Here and throughout, we shall follow the more ambiguous but less
  cumbersome notation of Definition \ref{DEF_nodal_riemann_surface}, and
  leave the obvious precisification to the reader.
\end{remark}
%

Associated to a nodal Riemann surface is the topological space \(|S|\)
  defined by identifying a nodal point with the other point in its nodal
  pair; in other words, the space \(S/ (\underline{d}_i \sim
  \overline{d}_i)\).

As in Section 4.4 of \cite{BEHWZ}, we define \(S^D\) to
  be the oriented blow-up of \(S\) at the points $D$, and we
  let $\overline{\Gamma}_i:=\big( T_{\overline{d}_i}(S)\setminus
  \{0\}\big)/\mathbb{R}_+^*\subset S^D$ and \( \underline{\Gamma}_i:=\big(
  T_{\underline{d}_i}(S)\setminus \{0\}\big)/ \mathbb{R}_+^*\subset S^D\)
  denote the newly created boundary circles over the \(d_i\). 

\begin{definition}[decorated marked nodal Riemann surface]
  \label{DEF_decorated_nodal_riemann_surface}
  \hfill\\
A \emph{decorated marked nodal Riemann surface} is a tuple \((S, j, \mu,
  D, r)\) where \((S, j, \mu, D)\) is a marked nodal Riemann surface, and
  \(r\) is a set of orientation reversing orthogonal maps
  \(\bar{r}_\nu:\overline{\Gamma}_\nu\to \underline{\Gamma}_\nu\) and 
  \(\underline{r}_\nu:\underline{\Gamma}_\nu\to \overline{\Gamma}_\nu\),
  which we call \emph{decorations}; here by orthogonal orientation
  reversing, we mean that \(r_\nu(e^{i\theta}z) = e^{-i\theta}r_\nu(z)\)
  for each \(z\in \Gamma_\nu \).
We also define \(S^{D,r}\) to be the smooth surface obtained
  by gluing the components of \(S^D\) along the boundary circles
  \(\{\overline{\Gamma}_1,\underline{\Gamma}_1,
  \overline{\Gamma}_2,\underline{\Gamma}_2, \ldots  \}\) via the
  decorations \(\bar{r}_\nu\) and \(\underline{r}_\nu\).
We will let \(\Gamma_\nu\) denote the special circles
  \(\overline{\Gamma}_\nu=\underline{\Gamma}_\nu\subset S^{D,r}\).
\end{definition}
%

We will also need the following definition.

\begin{definition}[arithmetic genus] 
  \label{DEF_arithmetic_genus}
  \hfill\\
Let \(\mathbf{S}= (S, j, \mu, D)\) be a marked nodal nodal
  Riemann surface.
As above, let \(S^D\) be the oriented blow-up of \(S\) at the points
  \(D\), and let \(S^{D,r}\) denote the surface obtained by gluing \(S^D\)
  together along pairs of circles associated to pairs of nodal points.
We define the arithmetic genus of \(\mathbf{S}\) to be the genus of
  \(S^{D,r}\).
That is,
  \begin{equation*}                                                       
    {\rm Genus}_{arith}(\mathbf{S})  = {\rm Genus}(S^{D, r}).
    \end{equation*}
\end{definition}
%

We note that it is more standard to define the arithmetic genus in terms
  of a formula involving the genera of connected components, number of
  marked points, number of nodal points, etc.
It will be convenient for later applications to have the above definition
  at our disposal, however it is equivalent to the more standard formulaic
  definition; see the Appendix of \cite{FH1} for details.

\begin{definition}[stable Riemann surface]
  \label{DEF_stable_Riemann_surface}
  \hfill\\
We say a compact marked nodal Riemann surface, \((S,j, \mu, D)\), is
  \emph{stable} if and only if for each connected component
  \(\widetilde{S}\subset S\) we have
  \begin{equation*}                                                       
    \chi(\widetilde{S})-\#(\widetilde{S}\cap (\mu\cup D)) < 0.
    \end{equation*}
\end{definition}
%

\begin{lemma}[uniformization]
  \label{LEM_uniformization}
  \hfill \\
Let \((S,j, \mu, D)\) be a stable compact marked nodal Riemann surface,
  possibly with boundary.
Then there exists a unique smooth geodesically complete metric \(h\) on
  \(\dot{S}:=S\setminus(\mu\cup D)\) in the conformal class of \(j\) such
  that \({\rm Area}_h(\dot{S}) <\infty\), the Gauss curvature of \(h\) is
  identically \(-1\), and the boundary components of \(S\) are all
  \(h\)-geodesics.
\end{lemma}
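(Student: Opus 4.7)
\medskip

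\noindent\textbf{Proof proposal for Lemma \ref{LEM_uniformization}.}

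The plan is to reduce this classical uniformization statement to the standard uniformization theorem for finite type Riemann surfaces, treating the boundary via a Schwarz doubling construction. First I would reduce to the case where $S$ is connected, since the stability condition is imposed component by component and both existence and uniqueness are local on components. Fix such a connected component and write $n = \#(\mu \cup D \cap S)$. By stability, $\chi(S) - n < 0$, so after removing the discrete set $\mu \cup D$ from the interior of $S$ one obtains a Riemann surface $\dot S$ of finite type with strictly negative Euler characteristic.

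If $\partial S = \emptyset$, I would invoke the classical uniformization theorem for punctured compact Riemann surfaces (e.g.\ via the universal cover being $\mathbb{H}$ and a Fuchsian representation of $\pi_1(\dot S)$) to produce a unique complete hyperbolic metric $h$ in the conformal class of $j$. Finite area then follows from Gauss--Bonnet, $\mathrm{Area}_h(\dot S) = -2\pi\chi(\dot S) = -2\pi(\chi(S) - n) < \infty$, and the asymptotic geometry at each removed point is a standard hyperbolic cusp, yielding geodesic completeness.

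If $\partial S \neq \emptyset$, I would apply the Schwarz doubling construction: form $S^d := S \cup_{\partial S} \overline{S}$, where $\overline{S}$ denotes $S$ equipped with the conjugate complex structure $-j$, and glue via the identity along $\partial S$. The resulting Riemann surface $S^d$ is closed, the natural anti-holomorphic involution $\sigma : S^d \to S^d$ fixes $\partial S$ pointwise, and the set $(\mu \cup D) \cup \sigma(\mu \cup D)$ is a $\sigma$-invariant discrete subset of $S^d \setminus \partial S$. The stability condition on $S$ implies stability of the doubled surface, so the case treated above produces a unique complete finite area hyperbolic metric $h^d$ on $\dot S^d$ in the conformal class of the doubled $j$. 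By uniqueness, $\sigma^* h^d = h^d$, so $\sigma$ is an isometry, and its fixed set $\partial S$ is therefore a union of totally geodesic submanifolds, i.e.\ of closed geodesics. Setting $h := h^d|_{\dot S}$ yields the desired metric, and geodesic completeness and finite area are inherited from $h^d$.

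The main obstacle (and the only genuinely delicate point) will be justifying that the doubling preserves smoothness of the conformal structure across $\partial S$ so that the uniformization theorem applies to $S^d$, and then invoking uniqueness of the hyperbolic metric in the conformal class to force $\sigma$-invariance. Smoothness across $\partial S$ is standard via holomorphic boundary charts sending $\partial S$ into $\mathbb{R} \subset \mathbb{C}$ and conjugating, and once $\sigma$-invariance of $h^d$ is established, the assertion that $\partial S$ consists of $h$-geodesics is immediate from the fact that the fixed locus of an isometric involution on a Riemannian manifold is totally geodesic. Uniqueness of $h$ on $S$ then follows because any metric with the stated properties on $\dot S$ could be doubled to produce a metric on $\dot S^d$ of the type covered by the uniqueness clause in classical uniformization.
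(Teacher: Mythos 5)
Your proof is correct, but it takes a genuinely different route from the one the paper sketches. The paper invokes Tromba's variational PDE approach to uniformization: existence in the closed case is obtained by a minimization argument (a Liouville-type energy functional), the boundary case is treated by replacing the Dirichlet problem by a Neumann boundary value problem (the natural boundary condition that encodes vanishing geodesic curvature), and the punctures are handled by excising small disks and taking a limit. Your proposal instead invokes the classical Fuchsian uniformization of finite-type hyperbolic surfaces for the closed case, and reduces the boundary case to it via Schwarz doubling, deducing that $\partial S$ is geodesic from $\sigma$-invariance of the unique hyperbolic metric on the double. Both are legitimate, and each buys something: the variational route gives a unified analytic framework (and generalizes to higher-regularity or parametrized families, which matters elsewhere in the pseudoholomorphic curve literature), while the doubling argument is shorter, structurally cleaner, and makes the geodesic boundary conclusion a formal consequence of uniqueness rather than a boundary regularity estimate. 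One point you flag as ``delicate'' but somewhat underspell: for uniqueness you double a candidate hyperbolic metric with geodesic boundary, and you need that this doubling is actually $\mathcal{C}^\infty$, not merely $\mathcal{C}^{1,1}$. This does hold --- writing $h = e^{2\phi}\,|dz|^2$ in a boundary chart with $\partial S \subset \mathbb{R}$, the Liouville equation $\Delta\phi = e^{2\phi}$ together with the Neumann condition $\partial_y\phi = 0$ forces every odd $y$-derivative of $\phi$ to vanish on $y=0$ by induction --- but it is worth recording, as it is exactly the analytic content that the paper's Neumann boundary value problem is built to deliver directly.
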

%
\begin{proof}
This is the well known uniformization theorem.
A proof via variational partial differential equation methods in the case
  that \(\mu\cup D=\emptyset = \partial S\) case can be found in
  \cite{Tromba}.  
The case with boundary can be treated by modifying the argument in
  \cite{Tromba} to consider an associated Neumann boundary value problem.
The case with punctures can be treated by removing disks of arbitrarily
  small radius centered at points in \(\Gamma\) and taking limits.
\end{proof}

We call \(h\) the Poincar\'{e} metric associated to \((S, j, \mu, D)\), and
  will often denote it \(h^{j, \mu\cup D}\) to denote the dependence upon
  both the conformal structure \(j\) and the special points \(\mu\cup D\);
  for example, see the notion of Gromov convergence given below in
  Definition \ref{DEF_gromov_convergence}.

\begin{remark}[Orientations on Riemann surfaces]
  \label{REM_orientations_surfaces}
  \hfill\\
Any Riemann surface is oriented by the almost complex structure so that
  \((v, jv)\) is a positively oriented frame whenever \(v\neq 0\).
Furthermore, if a Riemann surface \((S, j)\) has boundary, then the
  boundary will be oriented by letting \(\nu\) be an \emph{outward pointing}
  unit normal, and defining \(j\nu\) to be a positively oriented basis of
  \(\partial S\).
\end{remark}
%

\begin{definition}[Genus]
  \label{DEF_genus}
  \hfill\\
Let \(S\) be a two dimensional oriented manifold, possibly with boundary,
  with at most countably many connected components, and with the property
  that each connected component of \(\partial S\) is compact.
Then
\begin{enumerate}                                                         
    \item If \(S\) is closed and connected, then define \({\rm
      Genus}(S):=g\) where \(\chi(S)=2-2g\) is the Euler characteristic
      of \(S\).
    \item If \(S\) is compact and connected with \(n\) boundary
      components, define \(\widetilde{S}=\big(S \sqcup (\sqcup_{k=1}^n
      D^2)\big)/\sim\) to be the closed surface capped off by \(n\) disks,
      and define \({\rm Genus}(S):={\rm Genus}(\widetilde{S})\).
    \item If \(S\) is compact (possibly with boundary), then \({\rm
      Genus}(S)\) is defined to be the sum of the genera of each connected
      component.
    \item If \(S\) is not compact, then \({\rm Genus}(S)\)
      is defined by taking any nested sequence \(S_1\subset S_2\subset
      S_3\subset\cdots\) of compact surfaces (possibly with boundary) such
      that \(S_k\subset S\) for all \(k\in \mathbb{N}\) and such that \(S
      =\cup_k S_k\); then we define \({\rm Genus}(S):=\lim_{k\to \infty}
      {\rm Genus}(S_k)\).
  \end{enumerate}
\end{definition}
%

\begin{remark}[Genus monotonicity]
  \label{REM_genus_monotonicity}
  \hfill \\
Note that for compact surfaces with boundary, \({\rm Genus}(\cdot)\),
  thought of as a function, satisfies a notion of super-additivity made
  precise in Lemma \ref{LEM_genus_addition} below.
As a consequence of this lemma, it immediately follows that if \(S'\)
  and \(S''\) are compact surfaces with boundary and satisfy \(S' \subset
  S''\),  then
  \begin{equation*}                                                       
      {\rm Genus}(S')\leq {\rm Genus}(S''),
    \end{equation*}
  and hence for a non-compact surface \(S\), \({\rm Genus}(S)\) is well
  defined by Definition \ref{DEF_genus}.
\end{remark}
%

\begin{lemma}[Genus super-additivity]
  \label{LEM_genus_addition}
  \hfill\\
Suppose \(S\) is a smooth compact oriented two-dimensional manifold,
  possibly with boundary.
Suppose further that \(S=S_1\cup S_2\), for which the intersection
  \(S_1\cap S_2\) consists of a finite union of pairwise disjoint smooth
  embedded loops for which \(S_1\cap S_2 = (\partial S_1)\cap(\partial
  S_2)\)
Then 
\begin{equation*}                                                         
  {\rm Genus}(S_1 \cup S_2) \geq {\rm Genus}(S_1)+{\rm Genus}(S_2).
  \end{equation*}
\end{lemma}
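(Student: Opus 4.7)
My plan is to reduce the statement to a computation in terms of Euler characteristics and then use the definition of genus via capping off boundary components with disks. I will not need to analyze the topology of the gluing loops in detail — only count them.

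First, by Definition \ref{DEF_genus}, I can cap off every boundary loop of $S_1$ and $S_2$ with a closed disk, producing closed oriented surfaces $\widehat{S}_1$ and $\widehat{S}_2$ with $\mathrm{Genus}(\widehat{S}_i) = \mathrm{Genus}(S_i)$. Letting $n$ denote the (finite) number of embedded loops in $S_1 \cap S_2$, I observe that the closed surface $\widehat{S}$ obtained by capping off the boundary of $S_1 \cup S_2$ can be constructed alternatively from $\widehat{S}_1 \sqcup \widehat{S}_2$ by deleting the interiors of the $2n$ capping disks corresponding to gluing loops and then reidentifying the resulting $n$ pairs of boundary circles. This yields the Euler characteristic identity
\begin{equation*}
\chi(\widehat{S}) \;=\; \chi(\widehat{S}_1) + \chi(\widehat{S}_2) - 2n,
\end{equation*}
since each removed open disk contributes $-1$ and reidentifying circles (with $\chi = 0$) contributes nothing.

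Next, write $k_i$ for the number of connected components of $\widehat{S}_i$, and let $c$ be the number of connected components of $\widehat{S}$. By additivity of $\chi$ and the standard formula $\chi = 2 - 2g$ on each closed connected component, I have $\chi(\widehat{S}_i) = 2k_i - 2\,\mathrm{Genus}(S_i)$ and $\chi(\widehat{S}) = 2c - 2\,\mathrm{Genus}(S)$. Substituting into the identity above and solving gives
\begin{equation*}
\mathrm{Genus}(S) \;=\; \mathrm{Genus}(S_1) + \mathrm{Genus}(S_2) + \bigl(n - (k_1 + k_2 - c)\bigr).
\end{equation*}
The desired super-additivity then reduces to the combinatorial inequality $k_1 + k_2 - c \leq n$, i.e.\ that the $n$ circle-identifications take us from $k_1 + k_2$ components to at least $k_1 + k_2 - n$ components.

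This last step is the only nontrivial point, and it is purely set-theoretic: each of the $n$ reidentifications of a pair of boundary circles is a pushout of connected spaces and can decrease the number of connected components by at most one. A short induction on the number of identifications performed, or equivalently a connectivity graph argument on the pieces, establishes $k_1 + k_2 - c \leq n$. I do not anticipate any genuine obstacle here; the main thing to check carefully is that the construction of $\widehat{S}$ really can be realized as the described two-step process on $\widehat{S}_1 \sqcup \widehat{S}_2$, which follows from the hypothesis $S_1 \cap S_2 = (\partial S_1) \cap (\partial S_2)$ being a disjoint union of smooth embedded loops — ensuring that the gluing is along whole boundary components rather than along arcs or singular sets.
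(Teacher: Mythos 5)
Your proof is correct, and it arrives at the same underlying combinatorics as the paper (Euler characteristic is invariant under circle gluing; each gluing decreases the number of components by at most one), but it organizes the bookkeeping differently. The paper works directly with surfaces with boundary, using the formula $\chi = 2\#\pi_0 - 2\,\mathrm{Genus} - \#\pi_0(\partial)$, establishes the single-gluing case by choosing a metric with geodesic boundary and invoking Gauss--Bonnet to get $\chi(S)=\chi(S')$, and then iterates; the final step specializes $S'=S_1\sqcup S_2$. You instead cap off all boundary circles first (exploiting that capping preserves genus by Definition \ref{DEF_genus}), pass to the closed-surface formula $\chi = 2\#\pi_0 - 2\,\mathrm{Genus}$, and perform all $n$ gluings at once, computing the $\chi$-change by elementary removal of disks and reidentification of circles rather than by Gauss--Bonnet. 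This buys you a cleaner, more self-contained $\chi$ computation, at the cost of deferring one combinatorial step ($k_1+k_2 - c \le n$), which you correctly observe is an easy induction on the number of identifications. The one thing I would tighten in a final write-up is that step: spell out that each reidentification of a pair of circles either joins two distinct components (decreasing the count by one) or lies within a single component (leaving the count unchanged), so $c \ge k_1 + k_2 - n$. With that filled in, the argument is complete and somewhat more elementary than the paper's.
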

%

\begin{proof}
We begin by resolving a related problem. 
Indeed, suppose \(S'\) is a smooth compact oriented two-dimensional
  manifold, possibly with boundary, and further suppose there exists an
  smooth orientation reversing diffeomorphism from one connected component
  of \(\partial S'\) to another.
Define \(S:=S'/\sim\) where \(x\sim \phi(x)\). 
Then we claim 
  \begin{equation}\label{EQ_genus_inequality}                             
    {\rm Genus}(S)\geq {\rm Genus}(S').
    \end{equation}
To prove inequality (\ref{EQ_genus_inequality}),  we recall the Euler
  characteristic of the surface \(S\) is given by
  \begin{equation*}                                                       
    \chi(S)= 2\#\pi_0(S) - 2{\rm Genus}(S) - \#\pi_0(\partial S),
    \end{equation*}
  where \(\#\pi_0(X)\) denotes the number of connected components of the
  space \(X\).
However, by the Gauss-Bonnet theorem we also have 
  \begin{equation*}                                                       
    \chi(S) = \frac{1}{2\pi}\int_S K_g\; d A +
    \frac{1}{2\pi}\int_{\partial S} \kappa_g ds
    \end{equation*}
   where \(g\) is a Riemannian metric on \(S\), \(K_g\) is the Gaussian
   curvature of \(g\), and \(\kappa_g\) is the associated geodesic
   curvature.
By choosing a metric on \(S'\) for which the \(\partial S'\) consists
  of geodesics and which smoothly descends to \(S\), we see that
  \begin{equation*}                                                       
    \chi(S) = \chi(S'). 
    \end{equation*}
Next we observe that 
  \begin{equation*}                                                       
    \#\pi_0(\partial S) = \#\pi_0(\partial S') - 2
    \end{equation*}
  and 
  \begin{equation*}                                                       
    \#\pi_0(S) = \#\pi_0(S') - e
    \end{equation*}
   where \(e\in \{0, 1\}\).
We now take the difference of the two following equations
  \begin{align*}                                                          
    \chi(S)&= 2\#\pi_0(S) - 2{\rm Genus}(S) - \#\pi_0(\partial S)\\
    \chi(S')&= 2\#\pi_0(S') - 2{\rm Genus}(S') - \#\pi_0(\partial S'),
    \end{align*}
  and make use of the three above equations to find that
  \begin{equation*}                                                       
    {\rm Genus}(S)-{\rm Genus}(S') = 1-e\geq 0.
    \end{equation*}
We conclude that 
  \begin{equation*}                                                       
    {\rm Genus}(S)\geq {\rm Genus}(S').
    \end{equation*}
Next observe that if \(\phi\) is a smooth orientation reversing
  diffeomorphism from the union of several connected components of
  \(\partial S'\) to the union of several other connected components of
  \(\partial S'\) then again \({\rm Genus}(S)\geq {\rm Genus}(S')\)
  because the above argument can simply be iterated.
However,  Lemma \ref{LEM_genus_addition} then follows immediately
  by letting
  \begin{equation*}                                                       
    S = S_1\cup S_2\qquad\text{and}\qquad S'=S_1 \sqcup S_2,
    \end{equation*}
   since 
  \begin{equation*}                                                       
    {\rm Genus}(S_1 \sqcup S_2) = {\rm Genus}(S_1) + {\rm Genus}(S_2).
    \end{equation*}
This completes the proof of Lemma \ref{LEM_genus_addition}.
\end{proof}
%

\begin{definition}[marked nodal pseudoholomorphic curve]
  \label{DEF_pseudholomorphic_curve}
  \hfill\\
A \emph{marked nodal pseudoholomorphic curve} is a tuple \(\mathbf{u}=(u,
  S, j, W, J, \mu, D)\) with entries as follows.  
The triple \((S, j, \mu, D)\) is a marked nodal Riemann surface. 
The pair \((W, J)\) is a smooth real \(2n\)-dimensional almost complex
  manifold, and \(u:S\to W\) is a smooth map for which $J\cdot Tu =
  Tu\cdot j$.
Finally, we require that \(u(\overline{d}_i) = u(\underline{d}_i)\) for
  each nodal pair \(\{\overline{d}_i, \underline{d}_i\}\subset D\).
\end{definition}
%

Unless otherwise specified, we will allow \(S\), the domain of a
  pseudoholomorphic curve to be non-compact, to have smooth boundary, and
  to have unbounded topology (i.e. countably infinite connected
  components, boundary components, and genus).

\begin{definition}[stability and common types of pseudoholomorphic curves]
  \label{DEF_stable}
  \hfill\\
We will say that a pseudoholomorphic curve \(\mathbf{u}=(u, S, j, W, J,
  \mu, D)\) is 
\begin{enumerate}                                                         
  \item 
  \emph{compact} provided \(S\) is a compact manifold with smooth boundary,
  \item 
  \emph{closed} provided \(S\) is a compact manifold without boundary,
  \item 
  \emph{connected} provided that \(|S|\) is connected, 
  \item 
  \emph{proper and boundary-immersed} provided the map \(u:S\to W\) is
  proper and boundary-immersed respectively.
  \end{enumerate}
Lastly, we say that a boundary-immersed curve \(\mathbf{u}\) is
  \emph{stable} provided that for each connected component \(S_0\subset
  S\) on which the restriction \(u:S_0\to W\) is constant, we have
  \begin{equation}\label{EQ_stability_condition}                          
    \chi(S_0)- \#\big(S_0\cap (\mu \cup D) )  <0.
    \end{equation}
\end{definition}
%

\begin{remark}[on our notion of stability]
  \label{REM_stability}
  \hfill\\
We begin by observing that our above definition of stability is notably
  different from the more precise definition established in Symplectic
  Field Theory, and instead has more in common with notion from
  Gromov-Witten theory. 
Indeed, in SFT the target manifold may be \(\mathbb{R}\times M\) with an
  \(\mathbb{R}\)-action given by translation in the first factor,
  and therefore stable pseudoholomorphic curves (or buildings thereof) are
  equivalence classes defined via this \(\mathbb{R}\)-action.
More specifically, a pseudoholomorphic building is stable only if it has
  the property that on each floor there is a connected component which is
  not a trivial cylinder, and moreover that each constant component \(S_0\)
  satisfies the inequality of equation (\ref{EQ_stability_condition}).
In this way, an orbit cylinder by itself is not stable in an SFT sense,
  but is stable in the sense of Definition \ref{DEF_stable} above.
Because of this discrepancy, and because of the importance of the notion
  of stability, some discussion is warranted.

To that end, we first recall that in the study of pseudoholomorphic
  curves, the notion of stability arises predominantly so that a sequence of
  \emph{stable} curves of fixed topological type\footnote{
    For example, a sequence of connected curves of some specified genus and
    in a specific (relative) homology class.} 
  necessarily has a subsequence which has a \emph{unique} stable limit.
Moreover, the topology associated to this limit must be such that there
  exists a gluing theorem which (at the very least) finds the tail of the
  subsequence given only a transverse limit curve.
For a historical example, one can consider Gromov's original definition of
  compactness in \cite{Gr}, which led to non-unique limits (due to
  arbitrarily complicated trees of constant spheres) and hence
  non-Hausdorff topologies on the associated moduli spaces.
This was then remedied by Kontsevich, who proposed the notion of stability
  which yielded the desired unique limits.
Similarly, in Symplectic Field Theory, unless one declares levels
  consisting only of trivial cylinders to be unstable (which is \emph{not}
  done in Definition \ref{DEF_stable}), then limit buildings obtained via
  compactness will not be unique.
We now explain why.

The issue is that at present we do not have a full compactness theorem. 
Indeed, for the bulk of the argument below, we really only consider
  curves whose symplectization coordinate has an absolute minimum, and
  moreover if given a sequence of such curves with suitable bounds (though
  not necessarily energy bounds) we extract a very weak notion of a limit
  curve.  
Indeed, in an SFT sense, what we find is only the bottom-most level in any
  naturally arising limit building.
The reason for this is two-fold. 
The first is simply to obtain a feral pseudoholomorphic curve (namely the
  bottom-most level), which may have infinite energy, and we use this to
  find the desired closed invariant subset of the Hamiltonian flow.
This is rather analogous to how Hofer first used a preliminary
  compactness/bubbling argument to establish the existence of a finite
  energy plane in the symplectization of contact \(S^3\), and hence
  proving that Reeb flows on \(S^3\) must have a periodic orbit; see
  \cite{H93}.
Only later was a full compactness theorem established; see \cite{BEHWZ}.
Consequently, stability for the purposes of this manuscript need only take
  into account a single level of whatever building structure the (eventually
  understood) full limit has.

This raises the question: Given an understanding of SFT compactness, and
  an argument to extract a single level of the (supposed) feral limit
  building, why have we not proved a full SFT compactness theorem for
  feral curves?
Indeed, it is in fact not difficult to build on the ideas here and in
  \cite{FH1} to find many levels of a limit building.
However, at present there still remain several complications. 
The first is that the limit may have infinitely many levels, each with
  positive \(\omega\)-energy.
Indeed, unlike feral planes, feral cylinders do not have an
  \(\omega\)-energy threshold.
Second, even if one extracts all levels of a feral limit building which
  have positive \(\omega\)-energy, it is not yet clear if the sum of the
  \(\omega\)-energy of each of the limit levels will equal the
  \(\omega\)-energy of the curves in the approximating sequence.
Without such knowledge, it is not even clear the ``full'' limit has been
  found, since one should expect the \(\omega\)-energy to be preserved in
  the compactification process.
And third, even if a limit curve is understood which captures all the
  \(\omega\)-energy there is to capture, at present it is not understood
  how to glue two properly feral ends together.
There are a variety of special cases in which this seems possible, however
  it is not clear if these cases are exceptional or generic.
As a consequence of all of these issues, one should regard the notion of
  stability provided in Definition \ref{DEF_stable} as preliminary,
  proprietary, and restricted to the specific needs of this manuscript.
We expect an updated and more precise notion to naturally arise in future
  work.

\end{remark}
%

\begin{definition}[decorated marked nodal pseudoholomorphic curve]
  \label{DEF_decorated}
  \hfill \\
A \emph{decorated} marked nodal pseudoholomorphic curve \((\mathbf{u},
  r)\) is a pair for which \(\mathbf{u}=(u, S, j, W, J, \mu, D)\) is a
  marked nodal pseudoholomorphic curve, and \((S, j, \mu, D, r)\) is a
  decorated marked nodal Riemann surface as in Definition
  \ref{DEF_decorated_nodal_riemann_surface}.
As above, we let \(S^{D,r}\) be the smooth surface obtained by taking
  the oriented blow up of \(S\) at the points in \(D\) and then gluing the
  components of the result together along the boundary circles
  \(\overline{\Gamma}_\nu\) and \(\underline{\Gamma}_\nu\).
Consequently, see that the smooth map \(u\colon S\to W\) then lifts to a
  continuous map \(u\colon S^{D,r}\to W\).
\end{definition}
%

\begin{definition}[area of pseudoholomorphic curves]
  \label{DEF_area}
  \hfill \\
Let \mbox{\(\mathbf{u}=(u, S, j, W, J, \mu, D)\)} be a marked nodal
  pseudoholomorphic curve which is proper and boundary immersed.
Assume further that the almost Hermitian manifold \((W, J, g)\) has
  no boundary.
Let \(S_{\rm const}\subset S\) denote the union of connected components
  of \(S\) on which \(u\) is a constant map.
As noted in the discussion following Definition
  \ref{DEF_pseudoholomorphic_map} (pseudoholomorphic map), the map
  \(u:S\setminus S_{\rm const}\to W\) is generally immersed in the sense
  of Definition \ref{DEF_generally_immersed}.
Consequently on \(S\setminus S_{\rm const}\) we  can define the
  following metric
  \begin{equation*}                                                       
    {\rm dist}_{u^*g}(\zeta_0,\zeta_1):=\inf
    \Big\{{\textstyle \int_0^1} \langle
    \dot{\gamma}(t),\dot{\gamma}(t)\rangle_{u^*g}^{\frac{1}{2}}dt:
    \gamma\in \mathcal{C}^1\big([0,1],S\big)\text{ and }
    \gamma(i)=\zeta_i\Big\},
    \end{equation*}
  where our convention will be that if $\zeta_0$ and
  $\zeta_1$ lie in different connected components, then ${\rm
  dist}_{u^*g}(\zeta_0,\zeta_1):=\infty$.
Thus we may regard $(S\setminus S_{\rm const},{\rm dist}_{u^*g})$
  as a metric space, in which case it can be equipped with Hausdorff
  measures $d\mathcal{H}^k$.
Note that if $\mathcal{O}\subset S\setminus S_{\rm const}$ is an open set
  on which $u$ is an immersion, then $d\mathcal{H}^{2}(\mathcal{O})={\rm
  Area}_{u^*g}(\mathcal{O})$.
As such, our convention will be to simply define the area
  of an arbitrary open set \(\mathcal{U}\subset S\setminus S_{\rm
  const}\) to be \({\rm Area}_{u^*g}(\mathcal{U}):=
  d\mathcal{H}^{2}(\mathcal{U})\).
Finally, for an arbitrary open set \(\mathcal{U}\subset S\) we define
  \begin{equation*}                                                       
    {\rm Area}_{u^*g}(\mathcal{U}):=d\mathcal{H}^{2}(\mathcal{U}\setminus
    S_{\rm const}).
    \end{equation*}
\end{definition}
%

Again, in the absence of a symplectic form, the above definition may seem
  foreign, so we pause for a moment to show that in the perhaps more
  familiar setup in which \(\Omega\) is a symplectic form, and \(J\) is an
  \(\Omega\)-compatible almost complex structure so that \(g:= \Omega\circ
  ({\rm Id}\times J)\) is a Riemannian metric, the above definition of
  metric area of a pseudoholomorphic curve agrees with the symplectic area as
  expected.
To that end, we suppose \(u:(S, j)\to (W, J)\) is pseudoholomorphic map,
  and \(z\in S \) for which \(T_z u\neq 0\).
We then let \(\mathcal{O}(z)\subset S\) be an open neighborhood of \(z\) on
  which \(u\) is an immersion and on which there exist conformal
  coordinates \((s, t)\) for which \(j\partial_s = \partial_t\).
Consequently \(J u_s =  u_t\), and
  \begin{equation*}                                                       
    \Omega(u_s, u_t)= \|u_s\|_g^2 = \|J u_t\|_g^2 = \|u_t\|_g^2  
    \end{equation*}
  and
  \begin{equation*}                                                       
    \langle u_s, u_t\rangle  =  \Omega(u_s, Ju_t) =  -\Omega(u_s, u_s) =0,
    \end{equation*}
  from which it follows that
  \begin{align*}
    d\mathcal{H}^2(\mathcal{O})&= {\rm Area}_{u^*g}(\mathcal{O})
    \\
    &=\int_{\mathcal{O}}\big(\|u_s\|_g^2 \|u_t\|_g^2 - \langle
      u_s,u_t\rangle_g^2\big)^{\frac{1}{2}}ds\wedge dt \\
    &=\int_{\mathcal{O}}\|u_s\|_g^2ds\wedge dt
    \\
    &=\int_{\mathcal{O}} u^*\Omega.
    \end{align*}
We conclude that indeed, in the case that \(J\) is \(\Omega\)-compatible,
  metric area as defined above agree with symplectic area of
  pseudoholomorphic curves.

We now turn our attention to issues of convergence of pseudoholomorphic
  curves.
In what follows it will be important to recall that given a compact stable
  marked nodal Riemann surface \((S, j, \mu, D)\), the associated
  Poincar\'{e} metric, as provided in Lemma \ref{LEM_uniformization}, on
  \(S\setminus (\mu\cup D)\) is denoted by \(h^{j, \mu\cup D}\).
We begin with our principle notion of convergence of pseudoholomorphic
  curves.

\begin{definition}[Gromov convergence]
  \label{DEF_gromov_convergence}
  \hfill \\
A sequence \(\mathbf{u}_k = (u_k,S_k,j_k,W, J_k, \mu_k, D_k)\) of
  compact marked nodal stable boundary-immersed pseudoholomorphic curves
  is said to converge in a Gromov-sense to a compact marked nodal stable
  boundary-immersed pseudoholomorphic curve \(\mathbf{u}=(u,S,j,W,J,\mu,
  D)\) provided the following are true for all sufficiently large
  \(k\in \mathbb{N}\).
\begin{enumerate}                                                         
  \item 
  \(J_k\to J$ in $C^\infty\).
  \item 
  There exist sets of marked points 
  \begin{equation*}                                                       
    \mu_k'\subset S_k\setminus (\partial S_k\cup \mu_k\cup D_k) 
    \qquad\text{and}\qquad
    \mu'\subset S\setminus (\partial S \cup \mu \cup D)
    \end{equation*}
  with the property that \(\#\mu' = \# \mu_k'\), and with the property
  that for each connected component \(\widetilde{S}_k\) of \(S_k\)
  we have
  \begin{equation*}                                                       
    \chi(\widetilde{S}_k) - \#\big(\widetilde{S}_k\cap (\mu_k\cup \mu_k'
    \cup D_k)\big) < 0
    \end{equation*}
  and for each connected component \(\widetilde{S}\) of \(S\) we have
  \begin{equation*}                                                       
    \chi(\widetilde{S}) - \#\big(\widetilde{S}\cap (\mu\cup \mu' \cup
    D)\big) < 0.
    \end{equation*}
  \item 
  There exists a decoration \(r\) for \(\mathbf{u}\), a sequence of
  decorations \(r_k\) for the \(\mathbf{u}_k\), and sequences of
  diffeomorphisms \(\phi_k: S^{D,r}\to S_k^{D_k,r_k}\) such that the
  following hold
  \begin{enumerate}                                                       
    \item 
    \(\phi_k(\mu) = \mu_k\)
    \item 
    \(\phi_k(\mu') = \mu_k'\)
    \item 
    for each \(i=1,\ldots,\delta\) the curve \(\phi_k(\Gamma_i)\) is
    a \(h^{j_k,\mu_k\cup \mu_k'\cup D_k}\)-geodesic in the punctured
    surface \(S_k':= S_k\setminus (\mu_k\cup \mu_k'\cup D_k)\).
    \end{enumerate}
  \item 
  \(\phi_k^* h^{j_k,\mu_k\cup\mu_k'\cup D_k} \to h^{j,\mu\cup\mu'\cup
    D}\) in \(C_{loc}^\infty\big(S^{D,r}\setminus (\mu\cup\mu'\cup_i
    \Gamma_i) \big)\); here we have abused notation by letting
    \(h^{j,\mu\cup\mu'\cup D}\) also denote its lift to \(S^{D,r}\).
  \item 
  \(\phi_k^*u_k \to u$ in $C^0(S^{D,r})\).
  \item 
  \(\phi_k^*u_k \to u\) in \(C_{loc}^\infty(S^{D,r}\setminus
    \cup_i\Gamma_i)\).
  \item 
  For each connected component \(\Lambda\) of \(\partial \overline{S}\),
  the \(\phi_k^* h^{j_k,\mu_k\cup\mu_k'\cup D_k}\)-length of \(\Lambda\)
  is uniformly bounded away from \(0\) and \(\infty\).
  \end{enumerate}
\end{definition}
With this notion of convergence established, we can now provide the target-localized version of Gromov's compactness
  theorem for pseudoholomorphic curves.
\begin{smalltheorem}[Target-local Gromov compactness]
  \label{THM_target_local_gromov_compactness}
  \hfill\\
Let $(W,J,g)$ be an almost Hermitian manifold, possibly with boundary,
  and let $(J_k,g_k)$ be a sequence of almost Hermitian structures which
  converge in $C^\infty$ to $(J,g)$.
Also let $\mathcal{K}_1, \mathcal{K}_2\subset {\rm Int}(W)$ be compact
  regions, satisfying \(\mathcal{K}_1\subset {\rm Int}(\mathcal{K}_2)\), and
  let $\mathbf{u}_k=(u_k, S_k, j_k, W, J_k, \mu_k, D_k)$ be a sequence of
  stable compact marked nodal pseudoholomorphic curves satisfying
  \(u_k(\partial S_k) \cap \mathcal{K}_2 = \emptyset\) and suppose there
  exists a large positive constant \(C>0\) for which
  \begin{enumerate}
    \item 
    \({\rm Area}_{u_k^*g_k}(S_k)\leq C\),
    \item 
    \({\rm Genus}(S_k)\leq C\),
    \item
    \(\# \big(\mu_k \cup D_k\big) \leq C\)
    \end{enumerate}
Then, after passing to a subsequence (still denoted with subscripts
  \(k\)), there exist compact surfaces with boundary
  \(\widetilde{S}_k\subset S_k\) with the following properties
\begin{enumerate}                                                         
  \item the following are compact pseudoholomorphic curves
    \begin{equation*}                                                     
      (u_k, \widetilde{S}_k, j_k, \mu_k \cap \widetilde{S}_k, D_k
      \cap \widetilde{S}_k)
      \end{equation*}
  \item these domain-restricted converge in a Gromov sense to a compact
    stable marked nodal boundary immersed pseudoholomorphic curve.
  \item \(u_k(S_k\setminus \widetilde{S}_k) \subset W\setminus
    \mathcal{K}_1\).
  \end{enumerate}
\end{smalltheorem}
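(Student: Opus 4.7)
The plan is to reduce the problem to classical Gromov compactness for curves with compact smooth boundary by truncating each $S_k$ via a cutoff function in the target. First, I fix a smooth function $\chi \colon W \to [0,1]$ with $\chi \equiv 0$ on $\mathcal{K}_1$ and $\chi \equiv 1$ outside some intermediate compact region $\mathcal{K}'$ satisfying $\mathcal{K}_1 \subset {\rm Int}(\mathcal{K}') \subset \mathcal{K}' \subset {\rm Int}(\mathcal{K}_2)$, and define $f_k := \chi \circ u_k \colon S_k \to [0,1]$. Since $u_k$ is pseudoholomorphic, wherever $u_k$ is immersed the induced map on tangent spaces is conformal, so $|df_k|_{u_k^* g_k} \leq \|d\chi\|_{\infty}$. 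Combining this with the coarea formula and the uniform area bound gives
\begin{equation*}
\int_{1/4}^{3/4} \mathcal{H}^1\bigl(f_k^{-1}(c)\bigr)\, dc \leq \|d\chi\|_{\infty}\cdot {\rm Area}_{u_k^*g_k}(S_k) \leq \|d\chi\|_\infty\cdot C.
\end{equation*}
By Sard's theorem and the pigeonhole principle, for each $k$ there is a regular value $c_k \in (1/4, 3/4)$ of $f_k$ with $\mathcal{H}^1(f_k^{-1}(c_k)) \leq 4\|d\chi\|_\infty C$; after passing to a subsequence, $c_k \to c_\infty$.

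Next, I set $\widetilde{S}_k := f_k^{-1}([0, c_k])$. Because $c_k$ is a regular value of $f_k$ and $\chi$ is supported in $\mathcal{K}_2$, while $u_k(\partial S_k) \cap \mathcal{K}_2 = \emptyset$, the subset $\widetilde{S}_k$ is a compact sub-surface of $S_k$ with smooth boundary $f_k^{-1}(c_k)$. The restricted tuple $(u_k, \widetilde{S}_k, j_k, \mu_k \cap \widetilde{S}_k, D_k \cap \widetilde{S}_k)$ is then a compact pseudoholomorphic curve; since $df_k \neq 0$ on the boundary forces $Tu_k \neq 0$ there, it is moreover boundary-immersed. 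This establishes conclusion (1), while conclusion (3) is immediate, as $f_k > c_k > 0$ on $S_k \setminus \widetilde{S}_k$ forces $u_k$ to map out of $\mathcal{K}_1$ there. Crucially, the truncated curves now satisfy uniform bounds on area, genus, number of special points, \emph{and} boundary length.

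With these bounds in hand, I invoke a classical Gromov compactness theorem for compact pseudoholomorphic curves with uniformly length-bounded boundary (in the vein of Hummel, Cieliebak--Mohnke, or the version developed in \cite{FH1}). Before doing so one must stabilize: choose auxiliary marked points $\mu_k' \subset \widetilde{S}_k$, disjoint from $\mu_k \cup D_k \cup \partial \widetilde{S}_k$, converging to $\mu' \subset \widetilde{S}$, so that every constant component of the prospective limit satisfies $\chi(\widetilde{S}_0) - \#(\widetilde{S}_0 \cap (\mu \cup \mu' \cup D)) < 0$. One then extracts a subsequence along which the Poincaré metrics $h^{j_k,\mu_k\cup\mu_k'\cup D_k}$ converge in $C^\infty_{\rm loc}$ away from a finite collection of geodesics whose $h$-length tends to zero (these become nodes in the limit), and the maps converge in the sense of Definition \ref{DEF_gromov_convergence}.

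I expect the hardest step to be verifying the last condition of Gromov convergence, namely that the $\phi_k^* h^{j_k,\mu_k\cup\mu_k'\cup D_k}$-length of each boundary component $\Lambda$ of $\widetilde{S}$ is uniformly bounded away from $0$ and $\infty$. Upper boundedness would fail only if a cylindrical collar of $\Lambda$ were to develop in the Poincaré geometry, which is incompatible with the compactness of $\widetilde{S}_k$ once nodes along the interior have been accounted for; lower boundedness requires a monotonicity-based argument showing that, because the Euclidean boundary length is bounded and $c_k$ sits away from $0$ and $1$, no pinching of a boundary component into a cusp can occur. Once this boundary analysis is in place, the standard removal-of-singularities and bubbling arguments assemble the desired limit pseudoholomorphic curve.
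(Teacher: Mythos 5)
Your preliminary slicing step is fine and is genuinely part of how such results are proved: the coarea/Sard argument produces regular values $c_k\in(1/4,3/4)$ whose level sets have uniformly bounded length, the truncated domains $\widetilde{S}_k=f_k^{-1}([0,c_k])$ avoid $\partial S_k$, conclusions (1) and (3) follow as you say, boundary-immersedness follows since a nonzero complex-linear differential is injective, and stability survives because constant components are either kept or discarded whole. The gap is in the step you delegate to a ``classical Gromov compactness theorem for compact pseudoholomorphic curves with uniformly length-bounded boundary.'' No such classical theorem exists: Hummel-type and Cieliebak--Mohnke-type compactness require boundary conditions (totally real or Lagrangian constraints) or closed curves. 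For a \emph{free} boundary with only a length bound there is no a priori gradient, curvature, or conformal-structure control up to the boundary (boundary bubbling cannot be excluded or incorporated, since there is no constraint to make a boundary bubble well behaved), and the bounded total length of $f_k^{-1}(c_k)$ does not bound the \emph{number} of boundary components -- yet Gromov convergence in the sense of Definition \ref{DEF_gromov_convergence}, which uses global diffeomorphisms $\phi_k$ onto the domains, forces the number of boundary components of $\widetilde{S}_k$ to stabilize. Your closing remarks about condition (7) (Poincar\'e lengths of boundary components bounded away from $0$ and $\infty$) are heuristic and do not supply the missing mechanism.

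This is not a technicality: the statement that one can trim and obtain genuine convergence up to the cut is precisely the analytic content of the theorem, and it is why the theorem is phrased with $\widetilde{S}_k$ produced as part of the \emph{conclusion}, with only $u_k(S_k\setminus\widetilde{S}_k)\subset W\setminus\mathcal{K}_1$ asserted about the discarded part. The paper itself proves the statement by citation -- it is Corollary 3.1 of \cite{FH1} and a slight generalization of Theorem 3.1 of \cite{Fish2} -- and in those proofs the trimming is chosen \emph{adaptively, after} the convergence analysis (curvature/area estimates, monotonicity, control of where the curves are already graphical over the emerging limit), exactly because a boundary fixed in advance by a length-pigeonhole argument carries no usable analytic control. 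By freezing $\widetilde{S}_k$ first, you need a statement strictly stronger than the results you could legitimately cite, so the argument as written is circular at its core. Either invoke the target-local results of \cite{Fish2}, \cite{FH1} directly (as the paper does), or restructure so that the choice of cut is made in tandem with the compactness argument rather than before it.
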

%
\begin{proof}
This is essentially a restatement of Corollary 3.1 from \cite{FH1}, and a
  slight generalization of Theorem 3.1 from \cite{Fish2}.
\end{proof}

Our final task of this section is to provide a notion of Gromov
  compactness for pseudoholomorphic curves in an exhaustive sense.
The idea is best illustrated with an example. 
Consider \(\mathbb{R}\times M\) equipped with an almost Hermitian
  structure \((J, g)\).
Now let \(\mathbb{D}\) denote the compact unit disk in \(\mathbb{C}\), and
  consider a sequence of pseudoholomorphic curves \(u_k : \mathbb{D}\to
  \mathbb{R}\times M\) with the following properties.
  \begin{enumerate}                                                       
    \item \(\inf \{a\circ u_k(\mathbb{D})\} = 0\) for each
      \(k\in\mathbb{N}\)
    \item \(\sup \{a\circ u_k(\mathbb{D}) \}= k = a\circ u_k (\partial
      \mathbb{D}) \) for each \(k\in\mathbb{N}\)
    \item \({\rm Area}_{u_k^*g} \big((a\circ u_k)^{-1}([-n, n])\big) \leq
      C_n\) for each \(k\in\mathbb{N}\) and each \(n\in \mathbb{N}\).
    \end{enumerate}
Geometrically then, we have a sequence of disks, with a minimum in
  \(\{0\}\times M\), both a maximum and boundary in \(\{k\}\times M\), and a
  sort of locally bounded area.
The question then becomes: Is there a notion of convergence for such
  curves which can be guaranteed after passing to a subsequence, and which
  yields a proper curve without boundary in \(\mathbb{R}\times M\)?
As it turns out, the answer is yes, and we make the notion and the result
  precise with Definition  \ref{DEF_exhaustive_gromov_convergence} and
  Theorem \ref{THM_exhaustive_gromov_compactness} respectively below.
First however, we will need the following definition so that the desired
  exhaustive compactness result can be stated in sufficient generality.

\begin{definition}[properly exhausting regions]
  \label{DEF_properly_exhausting_regions}
  \hfill\\
Let \((\overline{W}, \overline{J}, \bar{g})\) be an almost Hermitian
  manifold, which need not be compact.
We say a sequence of almost Hermitian manifolds \((W_k, J_k, g_k)\)
  \emph{properly exhaust} \((\overline{W}, \overline{J}, \bar{g})\)
  provided the following hold.
  \begin{enumerate}                                                       
    \item 
    For each \(k\in \mathbb{N}\) we have \(W_k\subset W_{k+1}\), and
    moreover \(W_k\) is an open subset of \(W_{k+1}\) in the \(W_{k+1}\)
    topology.
    \item 
    \(\overline{W} = \bigcup_{k\in\mathbb{N}} W_k\)
    \item 
    The smooth structure on \(W_k\) equals the smooth structure induced
    from \(W_{k+1}\).
    \item 
    The set \({\rm cl}(W_k) \subset W_{k+1}\) is a compact manifold with
    smooth boundary.
    \item 
    Regarding \((J_k, g_k)\) as almost Hermitian structures on
    \(\overline{W}\), we require \((J_k, g_k)\to (\overline{J}, \bar{g})\)
    in \(\mathcal{C}_{loc}^\infty\).
    \end{enumerate}
\end{definition}
%

\begin{definition}[convergence in an exhaustive Gromov sense]
  \label{DEF_exhaustive_gromov_convergence}
  \hfill \\
Let \((\overline{W}, \overline{J}, \bar{g})\) be a smooth almost Hermitian
  manifold, not necessarily compact, and let \((W_k, J_k, g_k)\) be a
  sequence which properly exhausts \((\overline{W}, \overline{J},
  \bar{g})\), in the sense of Definition
  \ref{DEF_properly_exhausting_regions}.
Suppose further that the tuples \(\bar{\mathbf{u}}=(\bar{u},
  \overline{S}, \bar{j}, \overline{W}, \overline{J}, \bar{\mu},
  \overline{D})\) and, for each \(k\in \mathbb{N}\), \(\mathbf{u}_k=(u_k,
  S_k, j_k, W_k, J_k, \mu_k, D_k)\), are each marked nodal proper stable
  pseudoholomorphic curves without boundary.
We say the sequence \(\{\mathbf{u}_k\}_{k\in \mathbb{N}}\)
  converges to \(\bar{\mathbf{u}}\) in an \emph{exhaustive Gromov
  sense} provided there exists a collection of compact smooth two
  dimensional manifolds with boundary \(\{\overline{S}^\ell\}_{\ell \in
  \mathbb{N}}\) with \(\overline{S}^\ell\subset \overline{S}\) for each
  \(\ell\in \mathbb{N}\), and there exists a collection of compact smooth
  two dimensional manifolds with boundary \(\{S_k^\ell\}_{\substack{\ell
  \in \mathbb{N}\\ k\geq \ell}}\) with \(S_k^\ell \subset S_k\) for
  all \(k, \ell\in \mathbb{N}\) with \(k\geq \ell\) for which the
  following hold.
\begin{enumerate}                                                         
  \item 
  \(\overline{S}^\ell\subset \overline{S}^{\ell+1}\setminus \partial
  \overline{S}^{\ell+1}\) for all \(\ell\in \mathbb{N}\)
  \item 
  \(\overline{S} = \bigcup_{\ell \in \mathbb{N}} \overline{S}^\ell\) 
  \item
  for each fixed \(k\in \mathbb{N}\) and each \(0\leq \ell \leq k-1\) we
    have \(S_k^\ell\subset S_k^{\ell+1}\setminus \partial S_k^{\ell+1}\)
  \item
  for each \(k\geq  \ell\in \mathbb{N}\) we have
  \begin{equation*}                                                       
    u_{k} ^{-1}(W_{\ell})\subset S_k^{\ell},
    \end{equation*}
  \item 
  for each fixed \(\ell\in \mathbb{N}\), the sequence 
    \begin{equation*}                                                     
      \big\{
      \big( u_k,\; 
      S_k^{\ell},\; 
      j_k,\;  
      \overline{W}, \;
      J_k, \;
      S_k^{\ell}\cap\mu_k,\; 
      S_k^{\ell}\cap D_k\big)
      \big\}_{k \geq \ell}
      \end{equation*}
    is a sequence of compact marked nodal stable boundary-immersed
    pseudoholomorphic curves which converges in a Gromov sense to the
    proper marked nodal stable boundary-immersed pseudoholomorphic curve
    \begin{equation*}                                                       
      \big(\bar{u},\; 
      \overline{S}^{\ell},\; 
      \bar{j},\;  
      \overline{W}, \;
      \overline{J}, \;
      \overline{S}^{\ell}\cap\bar{\mu},\; 
      \overline{S}^{\ell}\cap \overline{D}\big).
      \end{equation*}
  \end{enumerate}
\end{definition}
%

\begin{smalltheorem}[exhaustive Gromov compactness]
  \label{THM_exhaustive_gromov_compactness}
  \hfill \\
Let \((\overline{W}, \overline{J}, \bar{g})\) be a smooth almost Hermitian
  manifold, not necessarily compact, and let \((W_k, J_k, g_k)\) be a
  sequence which properly exhausts \((\overline{W}, \overline{J},
  \bar{g})\), in the sense of Definition
  \ref{DEF_properly_exhausting_regions}.
Suppose further that the sequence denoted by 
\begin{align*}                                                            
  \{\mathbf{u}_k\}_{k\in \mathbb{N}}=\{(u_k, S_k, j_k, W_k, J_k, \mu_k,
  D_k)\}_{k\in \mathbb{N}}
  \end{align*}
  is a sequence of proper stable marked nodal pseudoholomorphic curves
  without boundary for which there also exists a sequence of large
  constants \(C_k\) with the property that for each fixed \(k\in
  \mathbb{N}\) the following hold 
  \begin{enumerate}[(C1)]
    \item 
    \( \displaystyle{\sup_{\ell \geq k}} \; {\rm Area}_{u_\ell^*g_\ell}
    (\widehat{S}_\ell^k)\leq C_k \)
    \item 
    \(\displaystyle{\sup_{\ell \geq k}} \; {\rm
    Genus}(\widehat{S}_\ell^k)\leq C_k \)
    \item 
    \(\displaystyle{\sup_{\ell \geq k}}\; \# \big((\mu_\ell \cup
    D_\ell)\cap \widehat{S}_\ell^k\big)\leq C_k \)
    \end{enumerate}
  where \(\widehat{S}_\ell^k:=u_\ell^{-1}(W_k)\). 
\emph{Then} a subsequence converges in an exhaustive Gromov sense to
  \((\bar{u}, \overline{S}, \bar{j}, \overline{W}, \overline{J}, \bar{\mu},
  \overline{D})\) which is a proper stable marked nodal pseudoholomorphic
  curve without boundary.
\end{smalltheorem}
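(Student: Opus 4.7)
The plan is a diagonal extraction that invokes target-local Gromov compactness (Theorem \ref{THM_target_local_gromov_compactness}) on nested compact regions drawn from the exhaustion. Fix $k\in\mathbb{N}$ and set $\mathcal{K}_1^k:=\mathrm{cl}(W_k)$ and $\mathcal{K}_2^k:=\mathrm{cl}(W_{k+1})$; Definition \ref{DEF_properly_exhausting_regions} gives $\mathcal{K}_1^k\subset\mathrm{Int}(\mathcal{K}_2^k)$, each a compact manifold with smooth boundary, and $(J_\ell,g_\ell)\to(\overline{J},\bar g)$ in $\mathcal{C}^\infty$ on $\mathcal{K}_2^k$. For any $\ell\geq k+2$, $\mathrm{cl}(W_{k+1})\Subset W_{k+2}\subset W_\ell$, so properness of $u_\ell\colon S_\ell\to W_\ell$ ensures that $u_\ell^{-1}(\mathcal{K}_2^k)$ has compact closure in $S_\ell$; since $S_\ell$ has no boundary, the hypothesis $u_\ell(\partial S_\ell)\cap\mathcal{K}_2^k=\emptyset$ of Theorem \ref{THM_target_local_gromov_compactness} is vacuous.

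First I would, for each fixed $k$, apply Theorem \ref{THM_target_local_gromov_compactness} to the sequence $\{\mathbf{u}_\ell\}_{\ell\geq k+2}$ using the pair $\mathcal{K}_1^k\subset\mathrm{Int}(\mathcal{K}_2^k)$. The uniform bounds required on area, genus and $\#(\mu_\ell\cup D_\ell)$ over $u_\ell^{-1}(W_{k+1})$ are provided by hypotheses (C1)--(C3) at level $k+1$ (uniformly in $\ell\geq k+1$). The conclusion supplies compact sub-surfaces $\widetilde{S}_\ell^k\subset S_\ell$ with $u_\ell(S_\ell\setminus\widetilde{S}_\ell^k)\subset\overline{W}\setminus\mathcal{K}_1^k$, and, after passing to a subsequence, Gromov convergence of the restricted curves to a compact stable marked nodal boundary-immersed pseudoholomorphic limit $\bar{\mathbf{u}}^k=(\bar u^k,\overline{S}^k,\bar j^k,\overline{W},\overline{J},\bar\mu^k,\overline{D}^k)$ whose image contains $\mathcal{K}_1^k\cap\bar u^k(\overline{S}^k)$.

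Next I would diagonalize: having chosen, for each $k$, a subsequence refining the one chosen for $k-1$, the standard diagonal extraction yields a single subsequence along which target-local Gromov convergence holds at every level $k$ simultaneously. The crucial compatibility step is to identify, for $k<k'$, the restriction of $\bar{\mathbf{u}}^{k'}$ to the preimage of $\mathrm{Int}(\mathcal{K}_1^k)$ with $\bar{\mathbf{u}}^k$: both arise as Gromov limits of the same subsequence restricted to nested compact target regions, so the uniqueness clause built into Gromov convergence (via the diffeomorphisms $\phi_k$ of Definition \ref{DEF_gromov_convergence}) forces a canonical biholomorphism between them off the artificial boundary introduced by the cutoff. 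Taking the direct limit of $(\overline{S}^k,\bar j^k,\bar\mu^k,\overline{D}^k,\bar u^k)$ along these identifications produces $(\overline{S},\bar j,\bar\mu,\overline{D},\bar u)$; properness of $\bar u$ follows from $\bar u^{-1}(W_k)\subset\overline{S}^k$, absence of boundary on $\overline{S}$ follows because every point lies in the interior of some $\overline{S}^k$, and stability of $\bar{\mathbf{u}}$ is inherited from the stability of each $\bar{\mathbf{u}}^k$. Setting $\overline{S}^\ell$ and $S_k^\ell:=\widetilde{S}_k^\ell$ (from the target-local extraction), items (1)--(5) of Definition \ref{DEF_exhaustive_gromov_convergence} follow directly.

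The main obstacle is the compatibility/uniqueness bookkeeping that glues consecutive levels. One has to track the decorations $r_\ell$ at the nodes of each intermediate limit, the auxiliary marked points $\mu_\ell'$ introduced to stabilize limit components with borderline Euler characteristic, and the hyperbolic geometry of the boundary circles of $\overline{S}^k$ that are artifacts of the cutoff and must melt away in the direct limit. One must also arrange that no nodal pair straddles the boundary of $\overline{S}^k$ in a way that obstructs gluing to $\overline{S}^{k+1}$; this is ensured because such a pair would have to lie in the preimage of $\mathrm{cl}(W_{k+1})\setminus W_k$, and the genus and special-point bounds (C2)--(C3) prevent any topology or special point from escaping a fixed $W_j$, while (C1) controls the analytic behavior. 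Once this bookkeeping is carried out carefully, the remaining verifications are routine applications of the properties of Gromov convergence established in Definition \ref{DEF_gromov_convergence}.
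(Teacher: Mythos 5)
The paper does not prove this theorem; its ``proof'' is a one-line citation to Theorem~1 of \cite{FH1}. Your sketch is therefore a genuinely different kind of contribution: you reconstruct the argument from Theorem~\ref{THM_target_local_gromov_compactness} by a nested-exhaustion diagonal extraction, which is indeed the natural strategy and almost certainly the shape of the proof in \cite{FH1}. Two things should be flagged.

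First, the claim that ``since $S_\ell$ has no boundary, the hypothesis $u_\ell(\partial S_\ell)\cap\mathcal{K}_2^k=\emptyset$ of Theorem~\ref{THM_target_local_gromov_compactness} is vacuous'' does not quite parse, because that theorem requires the input curves to be \emph{compact}, and $S_\ell$ is generally non-compact (it is proper without boundary in the non-compact $W_\ell$). Before invoking target-local compactness you must first replace $\mathbf{u}_\ell$ by its restriction to a compact sub-surface with boundary, e.g.\ $u_\ell^{-1}(\mathcal{K})$ for a suitable compact region $\mathcal{K}$ whose boundary meets $u_\ell$ transversally, containing $\mathcal{K}_2^k$ in its interior. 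Once you have done that, the boundary hypothesis is not vacuous --- the new boundary must map outside $\mathcal{K}_2^k$ --- though it is easily arranged by the nesting $\mathcal{K}_2^k\Subset\mathcal{K}$. This is a small fix, but as written the application of the cited theorem is not licit.

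Second, and more substantively, the compatibility and gluing step that you correctly identify as ``the main obstacle'' is where essentially all of the work lies, and your sketch gestures at it rather than carrying it out. The extracted trimmed surfaces $\widetilde{S}_\ell^k$ are produced \emph{internally} by the compactness theorem at each level $k$, and there is no reason for them to nest across $k$ without further arrangement; the nesting condition $S_k^\ell\subset S_k^{\ell+1}\setminus\partial S_k^{\ell+1}$ required by Definition~\ref{DEF_exhaustive_gromov_convergence} must be engineered, typically by shrinking each $\widetilde{S}_\ell^k$ until its image is safely inside $\mathcal{K}_1^k$ and only then comparing to level $k+1$. Likewise, ``the uniqueness clause built into Gromov convergence'' does not hand you a canonical biholomorphism between $\overline{S}^k$ and the corresponding piece of $\overline{S}^{k'}$; uniqueness of Gromov limits is up to the reparametrizations $\phi_k$, so one has to construct the identifications explicitly, track the auxiliary markings $\mu_k'$ and decorations $r_k$ across levels, rule out nodal pairs splitting across the cutoff, and verify that the boundary hyperbolic geodesics of each $\overline{S}^k$ become interior in $\overline{S}^{k+1}$. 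None of this is automatic. So the outline is sound, but the proof proposal as it stands has a gap precisely at the point you call routine bookkeeping; in the cited reference this is the body of the argument, not an afterthought.
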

%
\begin{proof}
This is a restatement of Theorem 1 from \cite{FH1}. 
\end{proof}

\section{Existence of Minimal Subsets}\label{SEC_existence_minimal_subsets}
The primary purpose of this section is to prove our main dynamical
  result, Theorem \ref{THM_main_result}, as well as an almost
  immediate generalization, Theorem \ref{THM_second_main_result}.
It is useful to note that substantial work is required to prove the first
  result, however the second result will follow from combining the
  foundational results about feral curves developed in later sections
  together with some well established techniques from \cite{H93}.

Due to the length of the proof of Theorem \ref{THM_main_result}, we take a
  moment to sketch the main ideas in order to outline it.
The steps are as follows.\\

\noindent{\bf Step 1:} \emph{Geometric and dynamical setup.} 
Here we embed our dynamical problem into \(\mathbb{C}P^2\) and build an
  extended symplectic cobordism from the empty set to a framed Hamiltonian
  manifold with dynamics conjugated to those on the given energy level.  
We also establish the existence of an embedded pseudoholomorphic sphere
  which we later show has some nice properties.\vspace{5pt}\\
\noindent{\bf Step 2:} \emph{Automatic transversality and an abundance of
  curves.}
Here we define a moduli space of curves containing the previously found
  curve, and show these curves are very nice: they are each embedded,
  pairwise intersect exactly once, are cut out transversely, and locally fill
  out an open set.\vspace{5pt}\\
\noindent{\bf Step 3:} \emph{The moduli space \(\mathcal{M}\) extends into
  the negative end of \(\overline{W}\)}.
Here we show that the curves in the moduli space above have images which
  descend arbitrarily deep into the negative cylindrical end of the extended
  cobordism.\vspace{5pt}\\
\noindent{\bf Step 4:} \emph{An area estimate}.
Here we prove an area estimate, which can roughly be regarded as showing
  that within a bounded distance from the non \(\mathbb{R}\)-invariant
  region the areas of the curves must be uniformly bounded.
This estimate is not particularly difficult to obtain, but it is necessary
  in order to apply Theorem \ref{THM_existence}.\vspace{5pt}\\
\noindent{\bf Step 5:} \emph{Trimming curves and applying the workhorse
  theorem.}
Finally we use the area bound from the previous step to trim away portions
  of the curve in the non \(\mathbb{R}\)-invariant regions in the extended
  cobordism, and we show that the resulting family of curves satisfy the
  hypotheses of Theorem \ref{THM_existence}, which then guarantees the
  existence of a non-trivial closed invariant subset.
The desired result is then immediate.

\subsection{Proof of Theorem \ref{THM_main_result}}\hfill \\
We now prove our first main dynamical result regarding the non-minimality
  of the Hamiltonian flow on compact hypersurfaces in \(\mathbb{R}^4\).

\setcounter{CurrentSection}{\value{section}}
\setcounter{CurrentTheorem}{\value{theorem}}
\setcounter{section}{\value{CounterSectionTheoremOne}}
\setcounter{theorem}{\value{CounterTheoremTheoremOne}}
\begin{theorem}[First main dynamical result]\hfill\\
Consider \(\mathbb{R}^4\) equipped with the standard symplectic structure
  and a Hamiltonian \(H\in \mathcal{C}^\infty(\mathbb{R}^4, \mathbb{R})\)
  for which \(M:=H^{-1}(0)\) is a non-empty compact regular energy level.
Then the Hamiltonian flow on \(M\) is not minimal.
\end{theorem}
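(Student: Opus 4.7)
The plan is to follow the five-step outline the authors indicate, using the foundational results on feral curves, in particular the existence workhorse (Theorem \ref{THM_existence}) and the level-set area estimate (Theorem \ref{THM_area_bounds}). First, since $M$ is a compact regular energy surface in $(\mathbb{R}^4,\omega_0)$ bounding a compact region, after symplectic rescaling I embed a neighborhood of $\overline{A}$ (where $A$ is the bounded component of $\mathbb{R}^4\setminus M$) into $(\mathbb{C}P^2,\omega_{FS})$; then $M$ separates $\mathbb{C}P^2$ into two components. By Proposition \ref{PROP_energy_levels_framed_ham}, $M$ inherits a canonical framed Hamiltonian structure $\eta=(\lambda,\omega)$ whose Hamiltonian vector field $X_\eta$ generates a reparametrization of the original Hamiltonian flow. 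Using Lemma \ref{LEM_cobordism_to_extended_corbordism} together with Remark \ref{REM_adjustment_of_J}, I construct a one-parameter family of extended symplectic cobordisms $\mathbf{W}_T=(\overline{W}_T,\bar{\omega}_T,\overline{J}_T,\bar{g}_T,\bar{a},\partial_{\bar{a}},\epsilon)$ in which a cylinder of length $2T$ is inserted along $M$, with $\overline{J}_T$ adapted to $\eta$ on the neck and extended compatibly on the two caps.

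Next I set up the relevant moduli space. For each $T$, Gromov's existence theorem for $\omega_{FS}$-tame almost complex structures produces, through each generic point, an embedded degree-one $\overline{J}_T$-holomorphic sphere. Pinning such curves through a fixed generic point $p^+$ in the positive cap yields a two-real-dimensional moduli space $\mathcal{M}_T$. Automatic transversality applies because degree-one spheres have self-intersection one, Chern number three, and genus zero, and because distinct lines intersect transversely in exactly one point. Hence $\mathcal{M}_T$ consists of embedded curves, pairwise intersecting in a single point, whose images sweep out an open subset of $\overline{W}_T$ containing the complement of $p^+$.

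Because curves in $\mathcal{M}_T$ fill out the negative cap including the inserted neck, for every depth $b\in[-T,0]$ there is a curve $\mathbf{u}_T^b\in\mathcal{M}_T$ realizing $\inf\bar{a}\circ \mathbf{u}_T^b = b$, obtained by pinning through a point at the level $\{b\}\times M$ and arguing by continuity of $\mathcal{M}_T$ in the pinning constraint. The $\bar{\omega}_T$-energy of each such curve equals $\omega_{FS}\cdot[L]$, a constant independent of $T$ and of $b$, since $\bar{\omega}_T$ represents the line-cohomology class throughout the construction. Applying Theorem \ref{THM_area_bounds} in the negative cylindrical region around the level $\{b\}\times M$ then yields a uniform bound, depending only on $r>0$ and this fixed $\omega$-energy, on the $\bar{g}_T$-area of the portion of $\mathbf{u}_T^b$ lying in the slab $\{|\bar{a}-b|\le r\}$. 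Trimming each curve at the level $\bar{a}=1/2$, just inside the positive cap, yields a compact piece $\tilde{\mathbf{u}}_T^b$ in $(-\infty,1)\times M$ with boundary in $(0,1)\times M$, of genus zero, with a uniformly bounded number of boundary circles.

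Finally, I verify that the trimmed family $\{\tilde{\mathbf{u}}_{T_k}^b\}$, indexed by $T_k\to\infty$ and $b\in[a_k,0]$ with $a_k\to-\infty$, meets hypotheses (P1)--(P8) of Theorem \ref{THM_existence}: connectedness and the path to the boundary follow from the sphere being connected and the trimming being along a regular level; the genus bound is zero; the $\omega$-energy bound is the fixed line area; nodes do not arise since the representatives are embedded; the boundary-component bound follows from the step-three area estimate; and pairwise intersection numbers are bounded by one. Applying Theorem \ref{THM_existence} then produces a nonempty proper closed $X_\eta$-invariant subset $\Xi\subset M$, which by conjugacy of flows is the desired non-trivial closed invariant subset for the Hamiltonian flow on $M$. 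The hardest step will be Step~3: since the negative end is only framed Hamiltonian (not stable), there is no a priori Hofer-energy bound, only an $\omega$-energy bound, so demonstrating that the depth parameter $b$ can be pushed to $-\infty$ while preserving uniform control on area, topology, and intersection numbers forces simultaneous use of Theorem \ref{THM_area_bounds} and the $F$-dominated compactness framework underlying the workhorse theorem.
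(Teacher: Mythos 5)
Your proposal follows the same five-step skeleton and invokes the same foundational tools as the paper, but Step 3 as you have set it up contains a genuine gap. You insert a finite neck of length $2T$ into $\mathbb{C}P^2$ along $M$ and then claim that Gromov's existence theorem ``for $\omega_{FS}$-tame almost complex structures produces, through each generic point, an embedded degree-one $\overline{J}_T$-holomorphic sphere.'' But the whole point of the framed Hamiltonian setting is that this claim fails: an extended symplectic cobordism (Definition \ref{DEF_symplectic_extended_cobordism}) has a two-form $\bar{\omega}_T$ which degenerates on the translation-invariant part of the neck (where $\beta\equiv 0$, one gets $\bar{\omega}_T = \omega$ and $\ker\omega = {\rm Span}(\partial_{\bar{a}},\overline{X})$), so $\overline{J}_T$ is \emph{not} $\bar{\omega}_T$-tame there, $\overline{W}_T$ is not a closed symplectic manifold, and Gromov's sweeping-out argument has no purchase on the neck region. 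If you instead keep $\omega_{FS}$ fixed and only stretch $J$, Gromov does apply, but then the neck has bounded $\omega_{FS}$-volume for all $T$ and the ``depth $b$'' is no longer a symplectization coordinate, so the hypotheses of the workhorse theorem and of Theorem \ref{THM_area_bounds} are not directly in play; either way, the assertion that curves ``fill out the negative cap including the inserted neck'' is exactly what has to be proved.

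The paper avoids taking a limit in $T$ altogether. It works in a \emph{single} extended cobordism $\overline{W}$ with an infinite negative cylindrical end, builds one path-connected moduli space $\mathcal{M}$ of degree-one spheres containing the line at infinity (which lives in the symplectic core where automatic transversality applies), and proves that curves in $\mathcal{M}$ reach arbitrarily deep (Proposition \ref{PROP_curves_fall_complete}) by an open-and-closed argument: if all curves had $\inf \bar{a}\circ u$ bounded below, then Lemma \ref{LEM_bounded_depth_implies_bounded_area} together with the area bound in the realized Hamiltonian homotopy (Theorem \ref{THM_area_bounds_homotopy}) would give a uniform area bound; the swept-out set $\mathcal{U}\subset \overline{W}$ is open by the implicit function theorem (Lemma \ref{LEM_curve_through_nearby_points}) and closed by target-local Gromov compactness plus the degree-one homology/positivity-of-intersections argument, forcing $\mathcal{U}=\overline{W}$ and contradicting the assumed bound on depth. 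That open-closed argument is the correct replacement for the Gromov existence statement you invoke. You correctly flag Step 3 as the hard one, but the mechanism you propose for it does not work as stated.

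Two smaller points: (i) even granting that lines sweep every point, pinning through a point at level $b$ only gives $\inf \bar{a}\circ u \le b$, not equality; one then needs connectedness of $\mathcal{M}$ and continuity of $u\mapsto \inf \bar{a}\circ u$ to realize every depth via an intermediate-value argument, which you allude to but do not spell out. (ii) Your trimming step quietly assumes the number of boundary circles stays bounded; in the paper this requires a dedicated compactness argument (Lemma \ref{LEM_bounds_on_number_of_boundary_components}) applied in a fixed annular region where the ad hoc area estimate (Lemma \ref{LEM_ad_hoc_area_estimate}) holds, not just the coarse statement of Theorem \ref{THM_area_bounds}.
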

%
\begin{proof}\hfill\\
\setcounter{section}{\value{CurrentSection}}
\setcounter{theorem}{\value{CurrentTheorem}}

\noindent{\bf Step 1:} \emph{Geometric and dynamical setup.}\\

Here we build an extended symplectic cobordism, which has a negative end
  \((M^-, \eta^-)\) which is a framed Hamiltonian manifold with dynamics
  conjugated to those on \(H^{-1}(0)\).
We begin by letting \(\omega_0\) denote the standard symplectic form on
  \(\mathbb{R}^4\) given by
  \begin{align*}                                                          
    \omega_0 = dx_1\wedge dy_1 + dx_2\wedge dy_2. 
    \end{align*}
For each \(\epsilon>0\) define the linear isomorphism
  \begin{align*}                                                          
    &L_\epsilon \colon \mathbb{R}^4\to \mathbb{R}^4
    \\
    &L_\epsilon(x_1, y_1, x_2, y_2) =
    \Big(\frac{x_1}{\epsilon},\frac{y_1}{\epsilon},\frac{x_2}{\epsilon},
    \frac{y_2}{\epsilon}\Big).
    \end{align*}
We also define the smooth family of functions \(H_\epsilon\colon
  \mathbb{R}^4\to \mathbb{R}\) by \(H_\epsilon:= \epsilon^2 H\circ
  L_\epsilon\), and we define manifolds \(M_\epsilon:=
  H_\epsilon^{-1}(0)\).
By definition, it follows that
  \begin{align*}                                                          
    L_\epsilon \colon M_\epsilon \to M
    \end{align*}
  is a diffeomorphism for each \(\epsilon>0\).
Moreover, along \(M\) and \(M_\epsilon\) respectively we have \(X_H \in
  \Gamma(TM)\) and \(X_{H_\epsilon} \in \Gamma(TM_\epsilon)\).
Further still, we have

\begin{align*}                                                            
  {\omega}_0(X_{H_\epsilon}, v ) 
  &= - d H_\epsilon( v)\\
  &=  -\epsilon^2 d H(TL_\epsilon \cdot v) \\
  &= \epsilon^2 {\omega}_0 (X_H, TL_\epsilon \cdot v)\\
  &= \epsilon^2 {\omega}_0 (TL_\epsilon \cdot (TL_\epsilon)^{-1} \cdot
    X_H, TL_\epsilon \cdot v)\\
  &= \epsilon^2 (L_\epsilon^*{\omega}_0) \big( L_\epsilon^* X_H, v)\\
  &=  {\omega}_0(L_\epsilon^* X_H , v),
  \end{align*}
   from which we see that \(X_{H_\epsilon} = L_\epsilon^* X_H\).
Consequently, the flow of the Hamiltonian vector field \(X_H\) on \(M\) is
  not minimal if and only if the flow of the Hamiltonian vector field
  \(X_{H_\epsilon}\) on \(M_\epsilon\) is not minimal.

Next let \(\tilde{\omega}\) denote the Fubini-Study metric on
  \(\mathbb{C}P^2\).
Recall that there exists a holomorphic embedding \(\iota\colon
  \mathbb{C}^2\to \mathbb{C}P^2\) so that \(\Sigma:= \mathbb{C}P^2\setminus
  \iota(\mathbb{C}^2) \) is an embedded complex submanifold holomorphically
  diffeomorphic to \(\mathbb{C}P^1\).
Next, we regard \(\mathbb{R}^4 \simeq \mathbb{C}^2\) so that \(\iota\colon
  \mathbb{R}^4\to \mathbb{C}P^2\) is an embedding.
Then \(\iota^* \tilde{\omega}\) is a symplectic form on \(\mathbb{R}^4\),
  so there exists a open neighborhood \(\mathcal{O}\) of \(0\in
  \mathbb{R}^4\) and a diffeomorphism \(\phi\colon \mathcal{O}\to
  \phi(\mathcal{O}) \subset \mathbb{R}^4\) for which \(\phi^* \omega_0 =
  \iota^* \tilde{\omega}\).
Next, we fix \(\epsilon>0\) sufficiently small so that
  \(H_\epsilon^{-1}(0) \subset \phi(\mathcal{O})\), and we define the open
  set \(\mathcal{U}:= \{p\in \mathbb{R}^4: H_\epsilon(p)< 0\}\subset
  \phi(\mathcal{O})\).
However, we then have \(H_\epsilon^{-1}(0) = \partial\big({\rm
  cl}(\mathcal{U})\big)\), and that \(\phi(\mathcal{O})\) is an open
  neighborhood of \(\mathcal{U}\cup H_\epsilon^{-1}(0) = {\rm
  cl}(\mathcal{U})\) and
  \begin{align*}                                                          
    \iota \circ\phi^{-1}\colon \phi(\mathcal{O}) \to \mathbb{C}P^2
    \end{align*}
  is an embedding for which \((\iota\circ \phi^{-1})^* \tilde{\omega} =
  \omega_0 \).
We then define the manifold \(\widetilde{W}\) with smooth boundary by
  \(\widetilde{W}:= \mathbb{C}P^2 \setminus \iota \circ \phi^{-1}
  (\mathcal{U})\) and equip it with the symplectic form
  \(\tilde{\omega}\).
We then define the (not yet oriented) manifold \(M^-:= \partial
  \widetilde{W}\), and equip it with the two-form \(\omega: =
  i^*\tilde{\omega}\) where \(i:M^-\hookrightarrow \widetilde{W}\) is the
  canonical inclusion.
Letting \(X:= X_{H_\epsilon}\big|_{H_\epsilon^{-1}(0)}\), we see that by
  construction we have
  \begin{align*}                                                            
    \omega \big( (\iota \circ \phi^{-1})_* X ,\cdot \big) = 0
    \end{align*}
  and \(X\) never vanishes, so that we can define the one-form \(\lambda^-\) via 

  \begin{align*}                                                            
    \lambda^-(X) = 1 \qquad\text{and}\qquad {\rm ker}\; \lambda^- = TM^-
    \cap (J TM^-)
    \end{align*}
  where \(\widetilde{J} \in \Gamma({\rm End} (T\widetilde{W}))\) is the
  (almost) complex structure induced on the \emph{real} manifold
  \(\widetilde{W}\) from multiplication by \(i=\sqrt{-1}\) on
  \(\widetilde{W} \subset \mathbb{C}P^2\) when regarded as a
  \emph{complex} manifold.
We note that \(\widetilde{J}\) is \(\tilde{\omega}\)-compatible in the
  sense that \(\tilde{\omega}( \cdot, \widetilde{J}\cdot)\) is a Riemannian
  metric.
Defining \(\eta^-:= (\lambda^-, \omega^-)\), we see that \((M^-, \eta^-)\)
  is a framed Hamiltonian manifold, with \(X_{\eta^-} = (\iota \circ
  \phi^{-1})_*X\), and hence the flow of \(X_{\eta^-}\) on \(M^-\) is
  minimal if and only if the flow of \(X_H\) on \(H^{-1}(0)\) is minimal.

We pause to sum up the salient features of our geometric
  construction.
Given our compact regular energy surface \(H^{-1}(0)\subset \mathbb{R}^4\)
  inside \((\mathbb{R}^4, \omega_0)\), we have constructed a connected
  symplectic cobordism \((\widetilde{W}, \tilde{\omega})\), in the sense
  of Definition \ref{DEF_symplectic_cobordism}, from the empty framed
  Hamiltonian manifold to \((M^-, \eta^-)\), where the flow of
  \(X_{\eta^-}\) is conjugated to that of \(X_H\) on \(H^{-1}(0)\).
Consequently, the flow of \(X_H\) on \(H^{-1}(0)\) is minimal if and only
  if the flow of \(X_{\eta^-}\) is minimal on \(M^-\).
Moreover, \(\widetilde{W}\setminus \partial \widetilde{W}\) contains an
  embedded degree one sphere, \(\widetilde{\Sigma}\), with the property
  that its tangent planes are \(\widetilde{J}\) invariant.
Or in other words, there exists an embedded pseudoholomorphic curve
  \begin{align*}                                                          
    \tilde{\mathbf{u}} = (\tilde{u}, S^2, j, \widetilde{W}, \widetilde{J},
    \emptyset, \emptyset)
    \end{align*}
  with the property that \(\tilde{u}(S^2) =
  \widetilde{\Sigma}\subset \widetilde{W} \subset \mathbb{C}P^2\).
Finally, we note that  \(H_2(\mathbb{C}P^2, \mathbb{Z}) = \mathbb{Z}\) and
  is generated by \(A_{\widetilde{\Sigma}}\), where
  \(A_{\widetilde{\Sigma}}\) is the homology class associated to
  \(\tilde{u}\).
Moreover,
\begin{align*}                                                            
  \frac{1}{\pi} \int_{S^2} \tilde{u}^*\tilde{\omega} =1.
  \end{align*}

At this point, we apply Lemma \ref{LEM_cobordism_to_extended_corbordism},
  which yields an associated symplectic cobordism \(\mathbf{W} =
  (\overline{W}, \bar{\omega}, \overline{J}, \bar{g}, \bar{a},
  \partial_{\bar{a}}, \epsilon)\), in the sense of Definition
  \ref{DEF_symplectic_extended_cobordism}, from the empty framed
  Hamiltonian manifold to \((M^-, \eta^-)\).
Moreover, in light of Remark \ref{REM_adjustment_of_J}, we may assume that
  \(\bar{\mathbf{u}}_0 = (\bar{u}_0, S^2, j, \overline{W}, \overline{J},
  \emptyset, \emptyset) \) is a pseudoholomorphic curve, where
  \(\bar{u}_0:=\Psi^{-1}\circ \tilde{u}\).
We also define \(\overline{\Sigma} = \Psi^{-1}(\widetilde{W}) \subset
  \overline{W}\), and we let \(A_{\overline{\Sigma}}\) denote the homology
  class associated to \(\bar{u}_0\).\\

\noindent{\bf Step 2:} \emph{Automatic transversality and an abundance of
curves.}\\

Here we define moduli spaces of interest and establish properties
  thereof.
Essentially we are interested in the moduli space of degree one spheres in
  \(\overline{W}\), and a certain path connected component thereof, and we
  show these curves are embedded, cut out transversely, and pairwise
  intersect exactly once and do so transversely.
These ideas were all essentially introduced by Gromov in \cite{Gr} and
  have been extensively employed since, however we provide details for
  completeness, heavily referencing Hofer-Lizan-Sikorav \cite{HLS} and
  McDuff-Salamon \cite{MS} for detailed proofs.

We begin by defining \(\widetilde{\mathcal{M}}\) to be the following
  set of pseudoholomorphic curves
  \begin{align*}                                                            
    \widetilde{\mathcal{M}} = \big\{\mathbf{v} &= (v, S^2, j,
    \overline{W}, \overline{J}, \emptyset, \emptyset) : \mathbf{v} \text{
    is a pseudoholomorphic curve, and }\\ &\qquad \mathbf{v} \text{ and
    }\bar{\mathbf{u}}_0\text{ are homologous.}\big\}
    \end{align*}
  where here, as before, \(j\) denotes the standard (almost) complex
  structure on \(S^2\).
We equip \(\widetilde{\mathcal{M}}\) with the \(\mathcal{C}^\infty\)
  topology, and we then let \(\mathcal{M}\) denoted the path connected
  component of \(\widetilde{\mathcal{M}}\) which contains
  \(\bar{\mathbf{u}}_0\).

Recall that a closed pseudoholomorphic map \(u\colon (S, j) \to (W, J)\)
  is a multiple cover provided there exists another Riemann surface
  \((S',j')\), a holomorphic branched covering \(\phi\colon (S, j)\to (S',
  j')\) with degree strictly greater than one, and a pseudoholomorphic map
  \(u'\colon (S', j')\to (W, J)\) for which \(u = u' \circ \phi\).
Also recall that a closed curve is said to be simple whenever it is not
  multiply covered.

\begin{proposition}[embeddedness and transvserse intersections]
  \label{PROP_embeddedness}
  \hfill \\
Let \(\mathbf{W}\) and \(\widetilde{\mathcal{M}}\) be as above. 
Then the following hold.
\begin{enumerate}                                                         
  \item If \(\mathbf{u}\in \widetilde{\mathcal{M}}\) then \(u:S^2\to
    \overline{W}\) is an embedding.
  \item If \(\mathbf{u},\mathbf{v}\in \widetilde{\mathcal{M}}\) with
    \(u(S^2)\neq v(S^2)\) then
    \begin{align*}                                                        
      1= \#\{(\zeta_0, \zeta_1)\in S^2\times S^2 : u(\zeta_0) =
      v(\zeta_1)\},
      \end{align*}
    and these intersections are transverse.   
  \end{enumerate}
\end{proposition}
%
\begin{proof}
As a first step, we recall Theorem 2.6.3 from \cite{MS} which is often
  called ``positivity of intersections.''
Roughly it states that if \((W, J)\) is an almost complex four-manifold,
  and \(A_1, A_2\in H_2(W; \mathbb{Z})\) are homology classes represented by
  simple pseudoholomorphic curves \((u_1, S_1, j_1, W, J, \emptyset,
  \emptyset )\) and \((u_2, S_2, j_2, W, J, \emptyset, \emptyset) \)
  respectively, which have the property that there do not exist non-empty
  open sets \(\mathcal{U}_1\subset S_1\) and \(\mathcal{U}_2\subset S_2\)
  with the property that \(u_1(\mathcal{U}_1) = u_2(\mathcal{U}_2)\), then
  \begin{align}\label{EQ_intersection_inequality}                         
    \delta(\mathbf{u}_1, \mathbf{u}_2) \leq A_1\cdot A_2,
    \end{align}
  where 
  \begin{align*}                                                          
    \delta(\mathbf{u}_1, \mathbf{u}_2) = \#\{(z_1, z_2)\in S_1 \times S_2
    : u_1(z_1) = u_2(z_2)\}.
    \end{align*}
Moreover, we have equality in equation (\ref{EQ_intersection_inequality})
  if and only if the intersections are transverse.
From this, we may immediately draw the following conclusion: If
  \(\mathbf{u}\in \widetilde{\mathcal{M}}\), then \(\mathbf{u}\) is
  simple.
Indeed, if \(\bar{\mathbf{u}}_0\) represents the homology class \(A\), and
  \(\mathbf{u}\in \widetilde{\mathcal{M}}\) were not simple, then there
  would exist a homology class \(B\) represented by a simple
  pseudoholomorphic curve \(\mathbf{u}'\) which would necessarily satisfy
  \begin{align*}                                                          
    0 < \delta(\mathbf{u}', \bar{\mathbf{u}}_0) \leq  B\cdot A < A\cdot A
    = 1,
    \end{align*}
  which is impossible.
As a consequence of this fact, together with unique continuation (see
  Theorem 2.3.2 and Corollary 2.3.3 in Section 2.3 of \cite{MS}), the
  second part of Proposition \ref{PROP_embeddedness} follows immediately.

To prove the first part of Proposition \ref{PROP_embeddedness}, we first
  recall Theorem 2.6.4 from \cite{MS}, namely the adjunction inequality.
It states that if \((W, J)\) is an almost complex four-manifold, and
  \(A\in H_2(M;\mathbb{Z})\) is a homology class represented by a simple
  pseudoholomorphic curve \(\mathbf{u}\), then
  \begin{align*}                                                          
    2\delta(\mathbf{u}) - \chi(S) \leq A\cdot A - c_1(A)
    \end{align*}
  where  
  \begin{align*}                                                          
    \delta(\mathbf{u}) = {\textstyle \frac{1}{2}} \#\{(z_1, z_2)\in S
    \times S :  z_1\neq z_2,\ u(z_1) = u(z_2)\}.
    \end{align*}
    Moreover, equality holds if and only if \(\mathbf{u}\) is an immersion
  with only transverse self-intersections.
For curves \(\mathbf{u}\in \widetilde{\mathcal{M}}\) we have \(\chi(S) =
  \chi(S^2) = 2\), \(A\cdot A = 1\), and \(c_1(A) = 3\).
By definition we must have \(\delta(\mathbf{u})\geq 0\), and by the
  adjunction inequality we must then also have \(\delta(\mathbf{u}) \leq
  0\).
It immediately follows that \(\mathbf{u}\) is an embedded
  pseudoholomorphic curve.
This establishes the first part, and hence completes the proof of
  Proposition \ref{PROP_embeddedness}.
\end{proof}

As identified by Gromov in \cite{Gr} and detailed by Hofer-Lizan-Sikorav
  in \cite{HLS}, for a given pseudoholomorphic curve
  \(\mathbf{u} = (u, S^2, j, W, J, \emptyset, \emptyset)\), there exists a
  non-linear partial differential operator denoted \(\bar{\partial}_\nu\)
  called the normal Cauchy-Riemann operator, which is defined on suitably
  small sections of the normal bundle over \(\mathbf{u}\); that is, the
  sub-bundle of \(u^*TW\) consisting of those planes orthogonal to the
  tangent sub-bundle \(TS\subset u^* TW\).
We call the space \(\widetilde{\mathcal{M}}/{\rm Aut}(S^2)\) the space of
  \emph{non-parameterized} curves homologous to \(\bar{\mathbf{u}}_0\), and
  note that a neighborhood of \([\mathbf{u}]\in \widetilde{\mathcal{M}}/{\rm
  Aut}(S^2)\) is given by the zero-set of the normal Cauchy-Riemann operator
  near \(\mathbf{u}\).
The linearization of \(\bar{\partial}_\nu\) at \(\mathbf{u}\), denoted
  \(L_\nu\), is a first order elliptic differential operator of the
  following form
  \begin{align}\label{EQ_linearization}                                   
    L_{\nu_{\mathbf{u}}} = \bar{\partial} + a,
    \end{align}
  where \(a\in \Omega^{0,1}\big({\rm End}_\mathbb{R}
  (\nu_{\mathbf{u}})\big)\).
In preparation for a later result, we now claim the following.

\begin{lemma}[only one zero]
  \label{LEM_only_one_zero}
  \hfill\\
Let \(\mathbf{u}\in \widetilde{\mathcal{M}}\), and let
  \(L_{\nu_{\mathbf{u}}}\) be the linearization of
  \(\bar{\partial}_{\nu}\) at \(\mathbf{u}\) as above.
Let \(0\neq \sigma\in {\rm ker}(L_{\nu_{\mathbf{u}}})\).  
Then 
\begin{align*}                                                            
  1 = \#\{z\in S^2: \sigma(z) =0\}.
  \end{align*}
\end{lemma}
%
\begin{proof}
First observe that \(c_1(\nu_{\mathbf{u}}) = 1\), where
  \(c_1(\nu_{\mathbf{u}})\) is the first Chern number of the normal bundle
  \(\nu_{\mathbf{u}}\) over \(\mathbf{u}\); that is, it is the algebraic
  count of zeros of a generic section of \(\nu\).
Consequently
  \begin{align*}                                                          
    1 \leq \#\{z\in S^2: \sigma(z) =0\}.
    \end{align*}
Next we claim the zeros of \(\sigma\) are isolated and each contributes
  positively to \(c_1(\nu_{\mathbf{u}})\).
Indeed, this follows from the form \(L_{\nu_{\mathbf{u}}}\) takes,
  specifically equation (\ref{EQ_linearization}), together with the Carleman
  similarity principle.\footnote{See Theorem 2.3.5 of \cite{MS}.}
From this we conclude
  \begin{align*}                                                          
    1 \geq \#\{z\in S^2: \sigma(z) =0\}.
    \end{align*}
The desired result is immediate.  
This completes the proof of Lemma \ref{LEM_only_one_zero}.

\end{proof}

Returning to our discussion of the linearized operator, we recall that
  \(L_{\nu_{\mathbf{u}}}\) is Fredholm for suitable choices of Banach
  spaces; for example H\"{o}lder spaces \(\mathcal{C}^{k, \alpha}\) or
  Sobolev spaces \(W^{k, p}\) with \(k\geq 1\) and \(p>2\).
Moreover the index of \(L_\nu\) at a curve \(\mathbf{u}=(u, S, j, W, J,
  \emptyset, \emptyset)\), is given by
  \begin{align*}                                                          
    {\rm Ind}(L_{\nu_{\mathbf{u}}}) = 2\big( c_1(\nu_{\mathbf{u}}) + 1 -
    {\rm Genus}(S)\big)
    \end{align*}
  where \(c_1(\nu_{\mathbf{u}})\) is the first Chern number of the normal
  bundle \(\nu_{\mathbf{u}}\) over \(\mathbf{u}\), or equivalently if
  \(\mathbf{u}\) is embedded, then
  \begin{align*}                                                          
    c_1(\nu_{\mathbf{u}}) = \mathbf{u} \cdot \mathbf{u}, 
    \end{align*}
  where \(\mathbf{u} \cdot \mathbf{u}\) is the self-intersection number of
  \(\mathbf{u}\).
One of the main results of \cite{HLS} is that if \(c_1(\nu_{\mathbf{u}})
  \geq 2{\rm Genus}(S) - 1\) then \(L_{\nu_\mathbf{u}}\) is surjective.
As a consequence of Proposition \ref{PROP_embeddedness}, each
  \(\mathbf{u}\in \widetilde{\mathcal{M}}\) is embedded with \({\rm
  Genus}(S) = 0\) so that indeed \(c_1(\nu_{\mathbf{u}}) = \mathbf{u}\cdot
  \mathbf{u} = 1\) and hence \(L_{\nu_{\mathbf{u}}}\) is surjective with
  \({\rm Ind}(L_\nu) = 4\).
By the implicit function theorem on Banach spaces it follows that a
  neighborhood of \([\mathbf{u}]\in \widetilde{\mathcal{M}}/{\rm Aut}(S^2)\)
  is a manifold of dimension \(4\).
Moreover, for each integer \(k\geq 1\) there exists a convex open
  neighborhood \(\mathcal{O} \subset {\rm ker}(L_{\nu_{\mathbf{u}}})\) of
  the zero section and a smooth embedding \(E:\mathcal{O}\to
  \mathcal{C}^{k, \alpha}(\nu_{\mathbf{u}})\) with the following
  properties.
\begin{enumerate}[($\mathcal{F}$1)]                                       
  \item \label{EN_FF1}
  For each \(\sigma\in \mathcal{O}\), \(E(\sigma) \in
  \mathcal{C}^\infty(\nu_{\mathbf{u}})\); that is, \(E(\sigma)\) is a
  smooth section of the normal bundle \(\nu_{\mathbf{u}}\) over
  \(\mathbf{u}\).
  \item \label{EN_FF2}
  The linearization of \(E\) at the zero section, denoted by \(TE_0\),
  satisfies \(T E_0(\sigma) = \sigma\) for all \(\sigma\in  {\rm
  ker}(L_{\nu_{\mathbf{u}}})\).
  \item \label{EN_FF3}
  For each \(\sigma\in \mathcal{O}\), there exists a pseudoholomorphic
  curve 
  \begin{align*}                                                          
    \mathbf{u}_\sigma = (u_\sigma, S^2, j, \overline{W}, J, \emptyset,
    \emptyset)
    \end{align*}
  and a (not necessarily holomorphic) diffeomorphism
  \(\psi_\sigma: S^2\to S^2\) for which
  \begin{align*}                                                          
    u_\sigma \circ \psi_\sigma = \exp\big(E(\sigma)\big).
    \end{align*}
  Moreover, for a continuous path \([0,1]\to \mathcal{O}\) denoted by
  \(\tau\mapsto \sigma_\tau\), the \(\psi_{\sigma_\tau}\) can be found so
  that the map \([0, 1]\to \mathcal{C}^\infty(S^2, \overline{W})\) given
  by 
  \begin{align*}                                                            
    \tau\mapsto u_{\sigma_\tau}:= {\rm exp}_{u\circ
    \psi_{\sigma_\tau}^{-1}}\big(E(\sigma_\tau)\big)
    \end{align*}
  is continuous.
  \item \label{EN_FF4}
  The map 
  \begin{align*}                                                          
    &F\colon \mathcal{O}\times S^2\to \overline{W}
    \\
    &F(\sigma, z) = \exp_{u(z)}\big(E(\sigma)\big)
    \end{align*}
  is \(\mathcal{C}^\infty\) smooth.
  \end{enumerate}

\begin{remark}[paths of reparametrizations] 
  \label{REM_paths_of_reparametrizations}
  \hfill\\
To see the validity of the second part of property
  (\(\mathcal{F}\)\ref{EN_FF3}), one first observes that after choosing
  three distinct points \(\{z_1, z_2, z_3\}\subset S^2\) and \(\sigma\in
  \mathcal{O}\), the diffeomorphism \(\psi_\sigma\) is uniquely determined
  by requiring \(\psi_\sigma(z_i)=z_i\) for \(i\in \{1, 2, 3\}\).
Given \(\tau\mapsto \sigma_\tau\), this then uniquely determines the map
  \(\tau\mapsto \psi_{\sigma_\tau}\), for our choice of \(z_i\).
Continuity of the map \(\tau\mapsto u_{\sigma_\tau} := {\rm
  exp}_{u\circ \psi_{\sigma_\tau}^{-1}}\big(E(\sigma_\tau)\big)\) then follows
  from the continuity of the map \(\tau\mapsto \psi_{\sigma_\tau}\), which
  essentially follows from Gromov compactness; here it may be helpful to
  recall that \(\tau \mapsto j_\tau : = (u_{\sigma_\tau}\circ
  \psi_{\sigma_\tau})^* J\) is a continuous map into the space of smooth
  sections \(\Gamma\big({\rm End}(TS^2)\big)\).
\end{remark}
%

With these facts recalled, we are now prepared to prove the following.

\begin{lemma}[curve through nearby points]
  \label{LEM_curve_through_nearby_points}
  \hfill\\
Let \(\mathbf{W} = (\overline{W}, \bar{\omega}, \overline{J}, \bar{g},
  \bar{a}, \partial_{\bar{a}}, \epsilon)\) and \(\widetilde{\mathcal{M}}\)
  be as above, and let \(\mathbf{u}\in \widetilde{\mathcal{M}}\).
Fix \(z_0\in S^2\).
Then there exists \(\delta>0\) with the property that for each \(q\in
  \mathcal{B}_\delta\big(u(z_0)\big)\), there exists a continuous map
  \(h:[0,1]\to \widetilde{\mathcal{M}}\) for which \(h(0) = \mathbf{u}\),
  and \(h(1) = \mathbf{u}_1 = (u_1, S^2, j, \overline{W}, \overline{J},
  \emptyset, \emptyset)\) with \(q\in u_1(S^2)\).
\end{lemma}
%
\begin{proof}
We begin by letting \(V_{z_0}\) be the fiber of \(\nu_{\mathbf{u}}\) over
  the point \(z_0\in S^2\).  
Next we claim that for each point \(v\in V_{z_0}\) there exists
  \(\sigma\in {\rm ker}(L_{\nu_{\mathbf{u}}})\) such that \(\sigma(z_0) =
  v\).
To see this, suppose not. 
Then there exists a vector subspace \(Q\subset {\rm
  ker}(L_{\nu_{\mathbf{u}}})\) of dimension at least three for which
  \(\sigma\in Q\) implies \(\sigma(z_0) = 0\).
Because \(Q\) is at least three dimensional, it follows that there exists
  \(z_1\in S^2\setminus \{z_0\}\) and \(0\neq \sigma \in Q\) for which
  \(\sigma(z_1) = 0 = \sigma(z_0)\).
However this contradicts Lemma \ref{LEM_only_one_zero}.
This contradiction then establishes that indeed, for each point \(v\in
  V_{z_0}\), there exists \(\sigma\in {\rm ker} (L_{\nu_{\mathbf{u}}})\) for
  which \(\sigma(z_0) = v\).

In light of this observation, we choose \(\check{\sigma},
  \hat{\sigma}\in \mathcal{O} \) with \(\mathcal{O}\subset {\rm
  ker}(L_{\nu_{\mathbf{u}}})\) as above, so that \(V_{z_0} = {\rm
  Span}\big(\check{\sigma}(z_0), \hat{\sigma}(z_0)\big)\).
We then fix local coordinates \((s, t)\) centered at \(z_0\in S\) in a
  neighborhood \(\mathcal{U}\subset S\) and we then define the map
  \begin{align*}                                                          
    &\mathcal{F}\colon \mathcal{U}\times \mathcal{D}\to \overline{W}
    \\
    &\mathcal{F}(s,t, x, y) = {\rm
    exp}_{u(s,t)}\big(E\big(x\check{\sigma} + y\hat{\sigma})\big)
    \end{align*}
  where \(\mathcal{D} = \{(x, y)\in \mathbb{R}^2: x^2 +y^2 < 1\}\).
By property (\(\mathcal{F}\)\ref{EN_FF4}) the map \(\mathcal{F}\) is
  smooth.
Moreover, by property (\(\mathcal{F}\)2) the linearization
  \(T\mathcal{F}(0, 0, 0, 0)\) is surjective.
Consequently, there exists \(\delta>0\) such that
  \(\mathcal{B}_\delta(u(z_0))\subset \mathcal{F}(\mathcal{U}\times
  \mathcal{D})\).
Letting \(q\in \mathcal{B}_\delta(u(z_0))\), there exists \((s, t, x,
  y)\in \mathcal{U}\times \mathcal{D}\) so that \(\mathcal{F}(s, t, x, y) =
  q\).
Define \(\sigma:= x\check{\sigma} + y\hat{\sigma}\).
By convexity of \(\mathcal{O}\), it follows that \(\tau \sigma \in
  \mathcal{O}\) for all \(\tau\in [0,1]\).
Consequently, by property (\(\mathcal{F}\)\ref{EN_FF3}), the continuous
  path \([0, 1]\to \mathcal{O}\) given by \(\tau\mapsto \sigma_\tau: =
  \tau\sigma\) gives rise to a continuous map
  \begin{align*}                                                          
    &h\colon [0, 1]\to \widetilde{\mathcal{M}}
    \\
    &h(\tau)= \mathbf{u}_{\sigma_\tau} = (u_{\sigma_\tau}, S^2, j,
    \overline{W}, \overline{J}, \emptyset, \emptyset)
    \end{align*}
  for which \(h(0) = \mathbf{u}\), and \(h(1) = \mathbf{u}_{\sigma}\) with
  \(q\in u_{\sigma}(S^2)\).
This completes the proof of Lemma \ref{LEM_curve_through_nearby_points}.
\end{proof}

\noindent{\bf Step 3:} \emph{The moduli space $\mathcal{M}$ extends into
the negative end of $\overline{W}$}.\\

Here we aim to prove the following proposition.

\begin{proposition}[curves fall completely]
  \label{PROP_curves_fall_complete}
  \hfill \\
Let \(\mathbf{W} = (\overline{W}, \bar{\omega}, \overline{J}, \bar{g},
  \bar{a}, \partial_{\bar{a}}, \epsilon)\) and \(\mathcal{M}\) be as above.
Then for each \(a_0 \leq -1\), there exists \(z_0\in S^2\) and
  \(\mathbf{u}\in \mathcal{M}\) for which 
  \begin{align*}                                                          
    a_0 = a\circ u(z_0) = \inf_{z\in S^2} a\circ u(z).
    \end{align*}
In other words, the images of the curves in \(\mathcal{M}\) extend as far
  down as we like into \((-\infty, -1)\times M^- \subset \overline{W}\).
\end{proposition}
%
\begin{proof}

In order to proceed, we will need the following result.

\begin{lemma}[bounded depth implies bounded area]
  \label{LEM_bounded_depth_implies_bounded_area}
  \hfill\\
Let \(\mathbf{W} = (\overline{W}, \bar{\omega}, \overline{J}, \bar{g},
  \bar{a}, \partial_{\bar{a}}, \epsilon)\) and \(\mathcal{M}\) be as above.
For each \(a_0\leq -1\), there exists a \(C = C(a_0,
  \mathbf{W}, C_{\bar{\mathbf{u}}_0} )\)  with the following
  property.
For each \(\mathbf{u}\in \mathcal{M}\) for which \(u(S^2)\subset \{p \in
  \overline{W}: \bar{a}(p) > a_0\}\), we have
  \begin{align*}                                                          
    {\rm Area}_{u^*\bar{g}} (S^2) \leq C.
    \end{align*}
\end{lemma}
%
\begin{proof}
We begin by fixing \(\delta\) so that  \(-1+\frac{1}{8}\epsilon\leq \delta
  \leq -1+\frac{1}{4}\epsilon\) so that \(\delta\) is a regular value of
  \(a\circ u\).
We then observe that 
  \begin{align*}                                                          
    {\rm Area}_{u^*\bar{g}}(S^2) = {\rm Area}_{u^*\bar{g}}(S^+) +{\rm
    Area}_{u^*\bar{g}}(S^-)
    \end{align*}
  where 
  \begin{align*}                                                          
    S^+ = \{\zeta \in S^2 : \bar{a}\circ u (\zeta) \geq \delta\}
    \qquad\text{and}\qquad 
    S^- =  \{\zeta \in S^2 : \bar{a}\circ u (\zeta) \leq \delta\}.
    \end{align*}
Recall our definition of cylindrical end and core of \(\overline{W}\) are
  adapted from equations (\ref{EQ_core_W}) and (\ref{EQ_cyl_W}) to our
  case in which
  \(M^+=\emptyset\) as follows:
  \begin{align*}                                                          
    {\rm Core}(\overline{W}) = \{\bar{a} \geq -1 +{\textstyle
    \frac{1}{8}\epsilon}\}\qquad\qquad {\rm Cyl}^-(\overline{W})= \{
    \bar{a} < -1+{\textstyle \frac{1}{4}}\epsilon \}.
    \end{align*}
From this we immediately see that 
  \begin{align*}                                                          
    u(S^+) \subset {\rm Core}(\overline{W})\qquad\text{and}\qquad
    u(S^-)\subset {\rm Cyl}^-(\overline{W}).
    \end{align*}
Letting \(C_\theta = C_\theta(\mathbf{W})\) denote the constant
  guaranteed by the final part of Remark
  \ref{REM_structureal_observations}, we see immediately from equation
  (\ref{EQ_omega_coercive}) that
  \begin{align*}                                                          
    {\rm Area}_{u^*g}(S^+) \leq C_\theta \int_{S^+}u^*\bar{\omega} 
    \leq C_\theta \int_{S^2} u^*\bar{\omega}
    \leq C_\theta \int_{S^2} \bar{u}_0^*\bar{\omega}
    = \pi C_\theta.
    \end{align*}
Somewhat similarly, we have \(u(S^-)\subset {\rm Cyl}^-(\overline{W})\),
  and by Remark \ref{REM_structureal_observations}, we see that \({\rm
  Cyl}^-(\overline{W})\) has the structure of a realized Hamiltonian
  homotopy with suitably adapted almost Hermitian structure.
Consequently, Theorem \ref{THM_area_bounds_homotopy} below guarantees
  the existence of a constant \(C_A = C_A(a_0, \mathbf{W}
  )\) for which
  \begin{align*}                                                          
    {\rm Area}_{u^*\bar{g}}(S^-)\leq C_A.
    \end{align*}
Combining these two inequalities yields
  \begin{align*}                                                          
    {\rm Area}_{u^*\bar{g}}(S^2) \leq C_A + C_\theta =: C,
    \end{align*}
  which completes the proof of Lemma
  \ref{LEM_bounded_depth_implies_bounded_area}.
\end{proof}

Before proceeding with the proof of Proposition
  \ref{PROP_curves_fall_complete}, we state Theorem
  \ref{THM_area_bounds_homotopy}. 
The proof is provided in Section \ref{SEC_proof_of_exp_area_bounds}.

\setcounter{CounterSectionAreaHomotopy}{\value{section}}
\setcounter{CounterTheoremAreaHomotopy}{\value{theorem}}

\begin{theorem}[area bounds in realized Hamiltonian homotopy]\hfill \\
Fix positive constants \(C_H>0\), \(r>0\), and \(E_0>0\). 
Then there exists a constant \(C_A=C_A(C_H, r, E_0)\) with the following
  significance.
Let \(\mathcal{I}\times M, (\hat{\lambda}, \hat{\omega}))\) denote a
  realized Hamiltonian homotopy in the sense of Definition
  \ref{DEF_hamiltonian_homotopy}, and let \((J, g)\) be an adapted almost
  Hermitian structure in the sense of Definition
  \ref{DEF_adapted_structures_Ham_homotopy} with
  \begin{align*}                                                            
    C_{\mathbf{H}}:= \sup_{q\in \mathcal{I} \times M}
    \|d\hat{\lambda}_q\|_g \leq C_H.
    \end{align*}
For each proper pseudoholomorphic
  map \(u\colon S\to \mathcal{I}_r\times M\), where 
\begin{align*}                                                            
  \mathcal{I}_r=(a_0 -r, a_0 +r)\subset \mathcal{I}
  \end{align*}
  for which \(\partial S = \emptyset\),
  \(u^{-1}(\{a_0\}\times M)=\emptyset\), and
  \begin{equation*}                                                       
    \int_S u^*\omega \leq E_0 <\infty,
    \end{equation*}
  the following also holds:
  \begin{equation*}                                                       
    {\rm Area}_{u^*g}(S) = \int_{S} u^* (da \wedge \hat{\lambda} +
    \hat{\omega}) \leq C_A.
    \end{equation*} 
Additionally, for any \([ a_0, a_1] \subset \mathcal{I}\) and any compact
  pseudoholomorphic map \(u: S\to [a_0,a_1]\times M\) for which \(a_0\) and
  \(a_1\) are regular values of \(a\circ u\) and
  \(u^{-1}\big(\{a_0,a_1\}\times M\big) = \partial S\), the following also
  hold:
  \begin{align*}                                                          
    \int_{\Gamma_{a_0}} u^*\lambda\leq
    \Big(C_{H} E_0+ \int_{\Gamma_{a_1}}
    u^*\lambda\Big) e^{C_H (a_1-a_0)},
    \end{align*}
  and
  \begin{align*}                                                          
    \int_{\Gamma_{a_1}} u^*\lambda\leq
    \Big(C_{H} E_0+ \int_{\Gamma_{a_0}}
    u^*\lambda\Big) e^{C_H (a_1-a_0)},
    \end{align*}
  where \( \Gamma_{a_i} = (a\circ u)^{-1}(a_i) \) for \(i \in \{0, 1\}\).
Similarly, for 
  \begin{align*}                                                          
    \ell: = \min_{i\in \{0, 1\}} \Big\{ \int_{\Gamma_{a_i}} u^*\lambda
    \Big\}
    \end{align*}
  we have
  \begin{align*}                                                            
    {\rm Area}_{u^*g} (S) \leq (C_H^{-1} \ell +E_0 ) (e^{C_H (a_1-a_0)}
    -1)+E_0.
    \end{align*}
\end{theorem}
%

With Lemma \ref{LEM_bounded_depth_implies_bounded_area} established, we
  can now complete the proof of Proposition
  \ref{PROP_curves_fall_complete}.
Indeed, we do this by contradiction, and hence begin by assuming that
  Proposition \ref{PROP_curves_fall_complete} is false.
In this case, Lemma \ref{LEM_bounded_depth_implies_bounded_area}
  guarantees that the curves in \(\mathcal{M}\) have uniformly bounded
  area.
We define the set \(\mathcal{U}\subset \overline{W}\) by the following:
  \begin{align*}                                                          
    \mathcal{U}= \{\bar{q}\in \overline{W}: \exists \; \mathbf{u}\in
    \mathcal{M}\; \text{s.t.}\; u(z) = \bar{q}\}.
    \end{align*}
\noindent{\bf Claim 1:} The set \(\mathcal{U}\) is open.\\

This follows immediately from Lemma \ref{LEM_curve_through_nearby_points}. \\

\noindent{\bf Claim 2:} The set \(\mathcal{U}\) is closed.\\

To see this, we take a sequence \(\bar{q}_k\to \bar{q}\) with
  \(\{\bar{q}_k\}_{k\in \mathbb{N}} \subset \mathcal{U}\).
Then there exist \(\mathbf{u}_k\in \mathcal{M} \) with \(\bar{q}_k\in
  u_k(S^2)\), which have uniformly bounded area and genus. 
By Theorem \ref{THM_target_local_gromov_compactness}, a sub-sequence
  converges to the stable curve
  \begin{align*}                                                          
    \mathbf{u}= \big(u, S, j, \overline{W}, \overline{J}, \emptyset , D\big)
    \end{align*}
  with \(\bar{q}\in u(S)\) and \(S = \sqcup_{i=1}^{n+1} S^2\), for some
  \(n\geq 0\).\\

\noindent\emph{Case I:} \(\overline{\Sigma}\subset u(S)\).  \\
\indent In this case there must exist a connected component \(S_0\subset S\) for
  which \(u(S_0)= \overline{\Sigma}\).
If \(S = S_0\), then \(D=\emptyset\), and hence \(\mathbf{u}\in
  \mathcal{M}\), and we are done, so assume \(S\neq S_0\).
In this case, we denote any remaining connected components by \(S_1,
  \ldots, S_n\).
By stability of \(\mathbf{u}\), we may re-order the \(S_i\) so that
  \(u:S_1\to \overline{W}\) is not a constant map and \(u(S_1)\cap
  \overline{\Sigma}\neq \emptyset\).
However, in a neighborhood of \(\overline{\Sigma}\), the two-form
  \(\bar{\omega}\) is symplectic and evaluates positively on
  \(\overline{J}\)-complex lines.
Consequently \(\int_{S_1}u^*\bar{\omega} >0\), and because
  \(\bar{\omega}\) evaluates non-negatively on \(\overline{J}\)-complex
  lines in general, it follows that
  \begin{align*}                                                          
    \int_{S} u^*\bar{\omega} > \int_{S^2} \bar{u}_0^*\bar{\omega}
    \end{align*}
  which is impossible since \(\mathbf{u}\) and \(\bar{\mathbf{u}}_0\)
  represent the same homology class.
This contradiction establishes that \(S=S_0\), and hence \(\mathbf{u}\in
  \mathcal{M}\).\\

\noindent\emph{Case II:} \(\overline{\Sigma}\not\subset u(S)\).  \\
\indent By positivity of intersections, there exists exactly one connected
  component \(S_0\) of \(S\) for which \(u(S_0)\cap \overline{\Sigma} \neq
  \emptyset\).
As before, if \(S_0=S\) then we are done, so we consider the case that
  \(S\) has other connected components which we denote \(S_1, \ldots, S_n\).
Next we note that because \(\bar{\omega}\) evaluates non-negatively on
  \(\overline{J}\)-complex lines, it follows that 
\begin{align*}                                                            
  \int_{S_i} u^*\bar{\omega} \geq 0\qquad \text{for }i\in \{0, \ldots n\}.
  \end{align*}
As before, it follows from stability of \(\mathbf{u}\) that we may reorder
  the \(S_i\) so that \(u: S_1\to \overline{W}\) is non-constant.
Observe that by unique continuation, we must have \(\int_{S_1}
  u^*\bar{\omega}>0\).
However, because \(\bar{\omega} = \Psi^* \tilde{\omega}\), and because
  \(\int_{S_i} u^*\bar{\omega} >0\) for \(i\in \{0, 1\}\), it follows that
  \(\int_{S_i}(\Psi\circ u)^*\tilde{\omega} \geq 1\) for \(i\in \{0,1\}\),
  and hence
  \begin{align*}                                                          
    1 &= \frac{1}{\pi}\int_{S}  u^* \bar{\omega}
    \\
    &=\sum_{i=0}^n \frac{1}{\pi}\int_{S_i}  u^* \bar{\omega}
    \\
    &\geq \frac{1}{\pi}\int_{S_0}  u^*
    \bar{\omega}+\frac{1}{\pi}\int_{S_1}  u^* \bar{\omega}
    \\
    &=\frac{1}{\pi}\int_{S_0}  (\Psi \circ u)^*
    \tilde{\omega}+\frac{1}{\pi}\int_{S_1}  (\Psi\circ u)^* \tilde{\omega}
    \\
    &\geq 2.
    \end{align*}.
This contradiction establishes that \(S=S_0\), and hence \(\mathbf{u} \in
  \mathcal{M}\).
This completes Claim 2.
At this point we realize that \(\mathcal{U}\) is both open and closed, and
  hence must equal \(\overline{W}\), which is impossible. 
This contradiction then completes the proof of Proposition
  \ref{PROP_curves_fall_complete}.
\end{proof}

\noindent{\bf Step 4:} \emph{An area estimate}.\\

Here we prove the following.

\begin{lemma}[ad hoc area estimate]
  \label{LEM_ad_hoc_area_estimate}
  \hfill \\
There exists a \(C=C(\mathbf{W})>0\) with the following significance.
Let \(\mathbf{u}\in \mathcal{M}\). 
Define 
  \begin{align*}                                                          
    \widetilde{S} = (a \circ u)^{-1}(\widetilde{\mathcal{I}})
    \end{align*}
  where \(\mathcal{I}\) is the interval \(\widetilde{\mathcal{I}}=(-2, -1)\).
Then 
\begin{align*}                                                            
  {\rm Area}_{u^*g} (\widetilde{S}) \leq C.
  \end{align*}
\end{lemma}
%
\begin{proof}
For notational convenience, we define the interval \(\mathcal{I}' =
  \big(\frac{1}{8}\epsilon -1 , \frac{1}{4}\epsilon -1\big)\).
We then define the region
  \begin{align*}                                                          
    \widehat{W} = \big\{\bar{q} \in \overline{W} : \bar{a}(\bar{q}) \in
    \mathcal{I}'\big\}.
    \end{align*}
We then recall that because \(\mathbf{W}\) is an extended symplectic
  cobordism in the sense of Definition
  \ref{DEF_symplectic_extended_cobordism}, it follows that on
  \(\overline{W}\) we have
  \begin{align*}                                                          
    \bar{\omega} = \Big( \beta'(\bar{a}) (d\bar{a}\wedge \lambda^-)\Big) +
    \Big(\omega^- + \beta(\bar{a}) d\lambda^- \Big)
    \end{align*}
  where we define the positive constants \(c_\beta\) and \(c_\beta'\) by 
  \begin{align*}                                                          
    0 < c_\beta := \inf_{\bar{a}\in \mathcal{I}'} \beta(\bar{a}) 
    \qquad\text{and}\qquad
    0 < c_\beta' := \inf_{\bar{a}\in \mathcal{I}'} \beta'(\bar{a}) .
    \end{align*}
Because \(\overline{J}\) preserves the kernel of each of \(d \bar{a}\wedge
  \lambda^-\) and \(\omega^- +\beta d\lambda^-\), and because each of
  these two-forms evaluates non-negatively on \(\overline{J}\)-complex
  lines, the following holds:
  \begin{align*}                                                          
    \int_{u^{-1}(\widehat{W})} u^* (d\bar{a}\wedge \lambda^-)
    &\leq 
    c_\beta'^{-1} \int_{u^{-1}(\widehat{W})} u^* (\beta' d\bar{a}\wedge
    \lambda^-)
    \\
    &\leq 
    c_\beta'^{-1} \int_{u^{-1}(\widehat{W})} u^* (\beta' d\bar{a}\wedge
    \lambda^-)  + u^* (\omega^- + \beta d\lambda^-)
    \\
    &\leq 
    c_\beta'^{-1} \int_{u^{-1}(\widehat{W})} u^*\bar{\omega}
    \\
    &\leq c_\beta'^{-1} \pi.
    \end{align*}

However, by the co-area formula\footnote{See Lemma \ref{LEM_coarea_lambda}
  below for a precise statement and proof of the required version of the
  co-area formula.} we also have
  \begin{align*}                                                          
    \int_{u^{-1}(\widehat{W})} u^*(d\bar{a}\wedge \lambda^-) =
    \int_{\mathcal{I}'} \Big(\int_{\Gamma_t} u^*\lambda^-\Big) \; dt, 
    \end{align*}
  where \(\Gamma_t = (a\circ u)^{-1}(t)\) for each regular value \(t\) of
  \(a\circ u\).
Combining the above two observations then guarantees the existence a regular
  value \(t_0\) of \(a\circ u\) satisfying \(\frac{1}{8}\epsilon -1 < t_0<
  \frac{1}{4}\epsilon - 1\), and with the property that
  \begin{align*}                                                          
    \int_{\Gamma_{t_0}} u^*\lambda^- \leq \frac{16\pi }{\epsilon c_\beta'}.
    \end{align*}
In particular, \(\int_{\Gamma_{t_0}} u^*\lambda^-\) is bounded in terms of
  the geometry of \(\mathbf{W}\).
However, we then note that \(\widehat{W}\subset {\rm
  Cyl}^-(\overline{W})\), and by Remark \ref{REM_structureal_observations}
  we recall that \({\rm Cyl}^-(\overline{W})\) has the structure of a
  realized Hamiltonian homotopy.
Consequently Theorem \ref{THM_area_bounds_homotopy} applies, which
  guarantees the existence of a constant \(C = C(\mathbf{W})>0 \) so that
  \begin{align*}                                                          
    {\rm Area}_{u^*g}(\widetilde{S})
    \leq {\rm Area}_{u^*g}\big( (a\circ u)^{-1} \big((-2, t_0)\big)\big)
    \leq C.
    \end{align*}
This is the desired inequality which proves Lemma
  \ref{LEM_ad_hoc_area_estimate}.
\end{proof}

\noindent{\bf Step 5:} \emph{Trimming curves and applying the workhorse
  theorem}.\\

In order to complete the proof of Theorem \ref{THM_main_result}, we will
  apply Theorem \ref{THM_existence} to a collection of curves which we now
  construct from curves in \(\mathcal{M}\).
The rough idea is to carefully trim curves from \(\mathcal{M}\) so that we
  may regard the resulting compact curves with boundary as having images in
  the translation invariant region of \({\rm Cyl}^-(\overline{W})\) in a
  manner that Theorem \ref{THM_existence} applies.
After reviewing the hypotheses of Theorem \ref{THM_existence}, the main
  concern becomes how to trim the curves so the boundary of the domains
  have images in \((-2, -1)\times M^- \subset {\rm Cyl}^-(\overline{W})\),
  and so that the number of boundary components stays bounded.
To that end, we will need the following result.

\begin{lemma}[bounds on number of boundary components]
  \label{LEM_bounds_on_number_of_boundary_components}
  \hfill\\
Let \((W, J, g)\) be a compact almost Hermitian manifold with smooth
  boundary, and let \((J_k, g_k)\) be a sequence of almost Hermitian
  structures which converge in \(\mathcal{C}^\infty(W)\) to \((J, g)\).
Let \(I\) be an index set, possibly uncountable, and denote the interior
  of \(W\) by \(W^0 := {\rm Int} (W)\).
Suppose there exists a constant \(C>0\), and a set of stable proper
  pseudoholomorphic curves
  \begin{equation*}                                                       
    \mathbf{u}_{k, \iota} =(u_{k, \iota}, S_{k, \iota}, j_{k, \iota}, W^0,
    J_k, \mu_{k, \iota}, D_{k, \iota})
    \end{equation*}
  which satisfy
  \begin{enumerate}                                                       
    \item 
    \({\rm Area}_{u_{k, \iota}^* g_k}(S_{k,\iota}) < C\)
    \item 
    \({\rm Genus}(S_{k, \iota}) < C\) 
    \item
    \(\# (\mu_{k, \iota} \cup D_{k, \iota})\leq C \)
  \end{enumerate}
Then for each sufficiently small \(\delta>0\), there exists another
  constant \(C'=C'(\delta)>0\) with the following property.
For each \((k, \iota)\in \mathbb{N}\times I\), there exists a compact
  two-dimensional submanifold (possibly with smooth boundary)
  \(\widetilde{S}_{k, \iota}\subset S_{k, \iota}\) with the property that 
  \begin{equation*}                                                       
    \sup_{\zeta \in S_{k, \iota}\setminus \widetilde{S}_{k, \iota}} {\rm
    dist}_g\big(u_{k, \iota}(\zeta), \partial W\big) \leq \delta
    \end{equation*}
  and 
  \begin{equation*}                                                       
    \#\pi_0(\partial \widetilde{S}_{k, \iota}) \leq C';
    \end{equation*}
  here \(\#\pi_0(X)\) denotes the number of connected components of \(X\).
\end{lemma}
%
\begin{proof}
We suppose the lemma is not true and aim to derive a contradiction.
To that end, there must exist a sequence \(\ell\mapsto (k_\ell ,
  \iota_\ell)\) with the property that for each compact two-dimensional
  submanifold (possibly with smooth boundary) \(\widetilde{S}_{k_\ell,
  \iota_\ell}\subset S_{k_\ell, \iota_\ell}\) that satisfies
  \begin{equation*}                                                       
    \sup_{\zeta \in S_{k_\ell, \iota_\ell}\setminus \widetilde{S}_{k_\ell,
    \iota_\ell}} {\rm dist}_g\big(u_{k_\ell, \iota_\ell}(\zeta), \partial
    W\big) \leq \delta
    \end{equation*}
  also satisfies 
  \begin{equation*}                                                       
    \#\pi_0(\partial \widetilde{S}_{k_\ell, \iota_\ell}) \geq \ell.
    \end{equation*}
Without loss of generality, we may assume that the map \(\ell\mapsto
  k_\ell\) is either strictly monotonically increasing or else constant.
Here we shall assume \(\ell\mapsto k_\ell\) is strictly monotonic, and
  leave trivial modifications for the constant case to the reader.
Next, for notational convenience, we define a sequence of
  pseudoholomorphic curves by the following.
\begin{equation*}                                                         
  (\hat{u}_\ell, \widehat{S}_\ell, \hat{j}_\ell, W^0, \widehat{J}_\ell,
  \hat{\mu}_\ell, \widehat{D}_\ell):=(u_{k_\ell, \iota_\ell}, S_{k_\ell,
  \iota_\ell}, j_{k_\ell, \iota_\ell}, W^0, J_{k_\ell}, \mu_{k_\ell,
  \iota_\ell}, D_{k_\ell, \iota_\ell})
  \end{equation*}

We now observe that by assumption, this sequence of pseudoholomorphic
  curves is stable and proper in \(W^0\), and \(\widehat{J}_\ell\to J\) in
  \(\mathcal{C}^\infty(W)\), and they have uniformly bounded area, genus,
  number of special points.
We then define the compact set 
  \begin{equation*}                                                       
    \mathcal{K}:= \Big\{ p\in W: {\rm dist}_g(p, \partial W) \geq
    \delta\Big\},
    \end{equation*}
  and apply Theorem \ref{THM_target_local_gromov_compactness},
  which guarantees that after passing to a subsequence (still denoted with
  subscripts \(\ell\)), there exist compact manifolds (possibly with
  smooth boundary) denoted by \(\widetilde{S}_\ell\subset
  \widehat{S}_\ell\) such that
  \begin{equation*}                                                       
    \hat{u}_\ell(\widehat{S}_\ell\setminus \widetilde{S}_\ell)\subset
    \Big\{p \in W: {\rm dist}_g(p, \partial W) < \delta\Big\}
    \end{equation*}
  and the curves
  \begin{equation*}                                                       
    (\hat{u}_\ell, \widetilde{S}_\ell, \hat{j}_\ell, W^0, \widehat{J}_\ell,
    \hat{\mu}_\ell, \widehat{D}_\ell)
    \end{equation*}
  converge in a Gromov sense.
In particular, for all sufficiently large \(\ell\), we have
  \(\#\pi_0(\partial \widetilde{S}_\ell) = n\) for all sufficiently large
  \(\ell\).
But since \(\widetilde{S}_\ell \subset \widehat{S}_\ell = S_{k_\ell,
  \iota_\ell}\), we must have \(\#\pi_0(\partial \widetilde{S}_\ell) \geq
  \ell\) have the desired contradiction.
This completes the proof of Lemma
  \ref{LEM_bounds_on_number_of_boundary_components}
\end{proof}

We are now prepared to complete the proof of Theorem
  \ref{THM_main_result}.
First, for each real number \(b< 0\), we fix \(\hat{\mathbf{u}}^b\in
  \mathcal{M}\) such that
  \begin{align*}                                                          
    \hat{\mathbf{u}}^b = \big(\hat{u}^b, S^2, \hat{j}^b, \overline{W},
    \overline{J}, \emptyset, \emptyset\big),
    \end{align*}
  and
  \begin{align*}                                                          
    \inf_{z\in S^2} a \circ \hat{u}^b(z) = a\circ \hat{u}^b(z_b) = b -2.
    \end{align*}
Next, define the manifold
\begin{align*}                                                            
  \check{W} := \big\{\bar{q}\in \overline{W}: - {\textstyle \frac{19}{10}}
  \leq \bar{a}(\bar{q}) \leq -{\textstyle \frac{11}{10}}\big\},
  \end{align*}
its interior
\begin{align*}                                                            
  \check{W}^0: =\big\{\bar{q}\in \overline{W}: - {\textstyle
  \frac{19}{10}}  <  \bar{a}(\bar{q}) < -{\textstyle \frac{11}{10}}\big\},
  \end{align*}
the surfaces
\begin{align*}                                                            
  \check{S}^b = \{z\in S^2: \hat{u}^b(z) \in \check{W}^0 \},
  \end{align*}
and the pseudoholomorphic curves
\begin{align*}                                                            
  \check{\mathbf{u}}^b= (\check{u}^b, \check{S}^b, \check{j}^b, \check{W},
  \overline{J}, \emptyset, \emptyset),
  \end{align*}
  where \(\check{u}^b = \hat{u}^b\big|_{\check{S}^b}\) and \(\check{j}^b =
  \hat{j}^b\big|_{\check{S}^b}\).
We then apply Lemma \ref{LEM_bounds_on_number_of_boundary_components} to
  the curves \(\check{\mathbf{u}}^b\) in \((\check{W}, \overline{J},
  \bar{g})\) with \(\delta < \frac{1}{10}\) to obtain the compact surfaces
  with boundary denoted:
\begin{align*}                                                            
  \widetilde{\check{S}}^b \subset \check{S}^b \subset S^2.
  \end{align*}
Finally, we define the compact surfaces with boundary denoted by \(S^b\)
  to be the connected component of
  \begin{align*}                                                          
    \widetilde{\check{S}}^b \cup  (u^b)^{-1}\Big( \{\bar{q}\in
    \overline{W} : \bar{a}(\bar{q}) < -{\textstyle \frac{3}{2}}\}\Big)
    \end{align*}
  which contains a point \(z_b\in S^2\) so that 
  \begin{align*}                                                          
    \inf_{z\in S^2} a \circ \hat{u}^b(z) = a\circ \hat{u}^b(z_b) = b -2.
    \end{align*}
By construction, we then have that each \(S^b\) is compact, connected,
  with \(\hat{u}^b(\partial S^b) \subset (-2, -1) \times M^- \subset {\rm
  Cyl}^-(\overline{W})\), and
  \begin{align*}                                                          
    \sup_{b < 0} \# \pi_0(\partial S^b) < \infty.
    \end{align*}
Next, we let 
  \begin{align*}                                                          
    \mathbf{u}_k^b = \mathbf{u}^b = \big(u^b, S^b, j^b, (-\infty,
    1)\times M^-, \overline{J}, \emptyset, \emptyset\big).
    \end{align*}
We also require that \(j^b = \hat{j}^b \big|_{S^b}\)
  and define
  \begin{align*}                                                          
    u^b = {\rm Sh}_{-2}\circ (\Phi^-)^{-1} \circ \hat{u}^b,
    \end{align*}
  where \(\Phi^-: (-\infty, -1+\frac{1}{4}\epsilon)\times M^- \to {\rm
  Cyl}^-(\overline{W})\) is the diffeomorphism guaranteed by Remark
  \ref{REM_structureal_observations}, and \({\rm Sh}_{-2}\colon
  \mathbb{R}\times M^- \to \mathbb{R}\times M^-\) is the shift map given
  by \({\rm Sh}_{-2}(a, p) = (a+2, p)\).

With our curves \(\mathbf{u}_k^b = \mathbf{u}^b\) defined, we now collect
  the properties they have.
  \begin{enumerate}[(P1)]                                                 
    \item
    each \(S^b=|S^b|\) is connected 
    \item 
    \(\mathbf{u}_k^b\) is compact and \(u^b(\partial S^b)\subset
    (0,1)\times M^- \),
    \item 
    \(\inf_{\zeta\in S^b} a\circ u^b(\zeta) = b\)
    \item 
    there exists a continuous path \(\alpha:[0,1]\to |S^b|=S^b\) satisfying 
    \begin{equation*}                                                     
      a\circ u^b\circ \alpha(0) = b \qquad\text{and}\qquad \alpha(1)\in
      \partial S^b
      \end{equation*}
    \item 
    \({\rm Genus}(S^b)=0\)
    \item 
    \(\int_{S^b}(u^b)^*\omega^- \leq \pi\) 
    \item 
    \(\#D^b=0\)
    \item 
    The number of connected components of \(\partial S_k^b\) is uniformly bounded
    \end{enumerate}
Moreover, by Proposition \ref{PROP_embeddedness}, it follows that for any
  \(b, b'< 0\) with \(b\neq b'\) we have
  \begin{align*}                                                          
    \#\big(u^b(S^b)\cap u^{b'}(S^{b'})\big) \leq 1.
    \end{align*}
From this we see that the hypotheses of Theorem \ref{THM_existence} are
  satisfied, and hence we conclude the existence of a closed  set \(\Xi
  \subset M^-\) satisfying \(\emptyset \neq \Xi \neq M^-\) which is
  invariant under the flow of the Hamiltonian vector field \(X_{\eta^-}\).
Recalling the Hamiltonian \(H:\mathbb{R}^4\to \mathbb{R}\) given in they
  hypotheses of Theorem \ref{THM_main_result}, we note that by construction
  the Hamiltonian flow on \(M^-\) is conjugated to the flow on on
  \(H^{-1}(0)\), and hence we conclude that the Hamiltonian flow on
  \(H^{-1}(0)\) is not minimal.
This completes the proof of Theorem \ref{THM_main_result}.

\end{proof}

\subsection{Proof of Theorem
  \ref{THM_second_main_result}}\label{SEC_second_main_result} \hfill\\
We are now prepared to prove the second main dynamical result.
We begin with a few preliminaries.
In what follows, we let \(\mathcal{D}\) denote the closed disk in the
  complex plane, given by
  \begin{align*}                                                          
    \mathcal{D}= \{(s, t) \in \mathbb{R}^2 : s^2 + t^2 \leq 1\}.
    \end{align*}

\begin{definition}[contact type]
  \label{DEF_contact_type}
  \hfill \\
Let \((M , \eta)\) be a framed Hamiltonian manifold with \(\eta =
  (\lambda, \omega)\).
We say \((M, \eta)\) is \emph{contact type} provided that \(\omega=
  d\lambda\).
\end{definition}
%

\begin{definition}[tight/overtwisted]
  \label{DEF_tight_ot}
  \hfill\\
Let \((M, \eta)\) be a three-dimensional framed Hamiltonian manifold of
  contact type.
We say \((M, \eta)\) is \emph{overtwisted} provided there exists an
  embedding \(\phi: \mathcal{D}\to M\) so that the one form
  \(\phi^*\lambda\) has \(\{0\}\cup \partial \mathcal{D}\) as its zero set.
If no such embedding exists, then we call \((M, \eta)\) \emph{tight}.
\end{definition}
%

\setcounter{CurrentSection}{\value{section}}
\setcounter{CurrentTheorem}{\value{theorem}}
\setcounter{section}{\value{CounterSectionTheoremTwo}}
\setcounter{theorem}{\value{CounterTheoremTheoremTwo}}
\begin{theorem}[second main dynamical result]\hfill \\
Let \((M^\pm, \eta^\pm)\) be a pair of three-dimensional framed
  Hamiltonian manifolds, and let \((\widetilde{W}, \tilde{\omega})\) be a
  symplectic cobordism from \((M^+, \eta^+)\) to \((M^-, \eta^-)\) in the
  sense of Definition \ref{DEF_symplectic_cobordism}.
Suppose that \(\tilde{\omega}\) is exact, \(M^-\) is connected, and that
  \((M^+, \eta^+)\) is contact type and has a connected component \(M'\)
  which is either \(S^3\), overtwisted, or there exists an embedded
  \(S^2\) in \(M'\subset \partial \widetilde{W}\) which is homotopically
  nontrivial in \(\widetilde{W}\).
Then the flow of the Hamiltonian vector field \(X_{\eta^-}\) on \(M^-\) is
  not minimal.
\end{theorem}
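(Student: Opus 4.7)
The plan is to mirror the five-step architecture of the proof of Theorem \ref{THM_main_result}, substituting the Gromov embedding into $\mathbb{CP}^2$ with one of three classical constructions at the contact-type positive end. First, I would apply Lemma \ref{LEM_cobordism_to_extended_corbordism} to obtain an extended symplectic cobordism $\mathbf{W}=(\overline{W}, \bar{\omega}, \overline{J}, \bar{g}, \bar{a}, \partial_{\bar{a}}, \epsilon)$ from $(M^+,\eta^+)$ to $(M^-, \eta^-)$. Since $(M^+,\eta^+)$ is contact type, on the positive cylindrical end the form $\bar{\omega}$ is exact and $\overline{J}$ can be chosen as an SFT-type adapted almost complex structure (by Remark \ref{REM_adjustment_of_J}), so Hofer's classical machinery from \cite{H93} is available on $\mathrm{Cyl}^+(\overline{W})$.

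Second, in each of the three cases I would produce an initial marked nodal pseudoholomorphic curve $\bar{\mathbf{u}}_0$ in $\overline{W}$ whose interior enters the positive end: if $M'$ is overtwisted I would invoke Hofer's Bishop family from an overtwisted disk to obtain an embedded pseudoholomorphic disk with boundary on a totally real Legendrian-type disk in $M^+$; if $M'\cong S^3$ I would use the corresponding Bishop family for tight $S^3$ (together with Eliashberg's classification) to extract an embedded pseudoholomorphic disk; if $M'$ contains an embedded $S^2$ homotopically non-trivial in $\widetilde{W}$ I would use the pairing of this $S^2$ with Gromov's linear-system-style existence of a holomorphic sphere in the appropriate relative homology class, made possible by the exactness of $\tilde{\omega}$ which rules out sphere bubbling. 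In each case the curve is the base of a moduli space $\mathcal{M}$ which, by the same index and positivity-of-intersection analysis as in Proposition \ref{PROP_embeddedness}, consists of embedded curves cut out transversely and pairwise intersecting in a bounded number of transverse points. Lemma \ref{LEM_curve_through_nearby_points} then shows that the image set $\mathcal{U}\subset\overline{W}$ swept out by $\mathcal{M}$ is open.

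Third, parallel to Step 3 of the proof of Theorem \ref{THM_main_result}, I would show that $\mathcal{U}$ is also closed: the exactness of $\tilde{\omega}$ combined with the $\bar{\omega}$-energy bound inherited from $\bar{\mathbf{u}}_0$, the target-local Gromov compactness (Theorem \ref{THM_target_local_gromov_compactness}), and the area bound on the negative cylindrical end coming from Theorem \ref{THM_area_bounds_homotopy} (exactly as in Lemma \ref{LEM_bounded_depth_implies_bounded_area}) prevent any bubbling off of $\bar{\omega}$-energy, so Gromov limits remain in $\mathcal{M}$. Hence $\mathcal{U}=\overline{W}$ and curves in $\mathcal{M}$ descend arbitrarily deep into $\mathrm{Cyl}^-(\overline{W})$. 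Then the \emph{ad hoc} area estimate of Lemma \ref{LEM_ad_hoc_area_estimate} applied to $\{\bar{a}\in(-\tfrac{19}{10},-\tfrac{11}{10})\}$ furnishes, via the co-area formula, a regular value $t_0$ with uniformly bounded $\lambda^-$-action, so that Theorem \ref{THM_area_bounds_homotopy} gives a uniform area bound in a slab of the negative end; combined with Lemma \ref{LEM_bounds_on_number_of_boundary_components} this yields a trimming procedure producing, for each $b\ll 0$, a compact connected stable curve $\mathbf{u}^b$ in $(-\infty,1)\times M^-$ whose boundary lies in $(0,1)\times M^-$ with uniformly bounded genus, $\omega^-$-energy, number of boundary components, and pairwise intersection count.

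Finally, the family $\{\mathbf{u}^b\}_{b\ll 0}$ satisfies all hypotheses (P1)--(P8) of Theorem \ref{THM_existence}, which produces a closed proper invariant subset $\Xi\subset M^-$ for the flow of $X_{\eta^-}$. The main obstacle will be Step 2: verifying that in all three cases for $M'$ the classical Bishop or Gromov construction, originally performed inside a symplectization of a single contact manifold, extends to yield a global moduli space $\mathcal{M}$ in $\overline{W}$ for which Proposition \ref{PROP_embeddedness} and Lemma \ref{LEM_only_one_zero} remain valid. The exactness of $\tilde{\omega}$ is the crucial hypothesis that makes the closedness argument of Step 3 work, precisely as flagged in Remark \ref{REM_removing_exactness}; relaxing it would require control over sphere bubbling in $\widetilde{W}$.
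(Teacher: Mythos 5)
Your overall architecture (Step 1 build the extended cobordism via Lemma \ref{LEM_cobordism_to_extended_corbordism}, Steps 3--5 show the curves escape into the negative end, trim, and feed into Theorem \ref{THM_existence}) is correct, and your Case II (overtwisted, Bishop family from the overtwisted disk) matches the paper. However, your proposed constructions for Cases I and III are essentially swapped relative to what actually works, and neither is correct as you describe. For Case I (tight $S^3$, since overtwisted is already Case II), you propose a Bishop family, but there is no overtwisted disk in tight $S^3$, and if you instead fill a small embedded $2$-sphere in $M'$, the moduli space of filling disks can close up without contradiction: $\pi_2(S^3)=0$, so exhibiting $M'$ as a filled $3$-ball proves nothing. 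There is no topological obstruction forcing escape into $\mathrm{Cyl}^-(\overline{W})$. The paper's Case I instead invokes Eliashberg's classification to write $M'$ as the boundary of a star-shaped domain $\mathcal{O}\subset\mathbb{R}^4\subset\mathbb{CP}^2$, caps $M'$ off with $\mathbb{CP}^2\setminus\mathcal{O}$, and then runs the degree-one-sphere moduli argument of Proposition \ref{PROP_embeddedness} exactly as in the proof of Theorem \ref{THM_main_result} -- the degree-one condition supplies the forcing that your Bishop family lacks.

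For Case III (homotopically nontrivial embedded $S^2\subset M'$), ``pairing this $S^2$ with Gromov's linear-system-style existence of a holomorphic sphere'' is not a construction; the $S^2$ lives in $\partial\widetilde{W}$, exactness of $\tilde{\omega}$ precludes closed curves in $\widetilde{W}$, and there is no obvious way to produce a holomorphic sphere from the hypothesis. The paper's Case III is precisely the Bishop-family argument you wanted to use for Case I, run on the $S^2$ after perturbing it so its characteristic foliation has exactly two elliptic points $e^\pm$: if the resulting disk family stayed in a compact region, the open-and-closed argument in $\Sigma$ would show the family sweeps out all of $\Sigma$, which (following \cite{H93}) would exhibit $\Sigma$ as homotopically trivial in $\widetilde{W}$, contradicting the hypothesis. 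That contradiction is the mechanism forcing escape, playing the role the degree-one sphere plays in Case I. One more small point: your Step 3 identifies exactness of $\tilde{\omega}$ as the key control on escape into the positive end, but the key fact is the \emph{maximum principle}, which holds for $a\circ u$ precisely because $(M^+,\eta^+)$ is of contact type (so $\Delta(a\circ u)\,ds\wedge dt = u^*d\lambda^+ = u^*\omega^+ \geq 0$); exactness only suppresses bubbling. The paper establishes this maximum principle first and then uses it in all three cases to exclude escape through $\mathrm{Cyl}^+(\overline{W})$.
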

%
\begin{proof}
\setcounter{section}{\value{CurrentSection}}
\setcounter{theorem}{\value{CurrentTheorem}}
We begin by noting that the core arguments of the proof of Theorem
  \ref{THM_second_main_result} are identical to those in Theorem
  \ref{THM_main_result}, with some minor modifications from \cite{H93}.
As such, our argument here will be brief. 

The first key observation is to see that for an almost Hermitian structure
  adapted to a framed Hamiltonian manifold which is contact type, the
  function \(a\circ u: S\to \mathbb{R}\times M\) has a maximum principle
  whenever \(u:S\to \mathbb{R}\times M\) is a pseudoholomorphic map.
That is, \(a\circ u\) can have no interior local maxima.
To see this, observe that 
  \begin{align*}                                                          
    \big(\Delta (a\circ u)\big) ds\wedge dt
    &=
    \big((a\circ u)_{ss} + (a\circ u)_{tt}\big)ds \wedge dt
    \\
    &= d\big( (a\circ u)_s dt - (a\circ u)_t ds\big)
    \\
    &= - d\big(d(a\circ u) \circ j\big)
    \\
    &=  - d\big( da ( du \circ j) \big)
    \\
    &= -d\big( da (J\circ du)\big)
    \\
    &=d\big(  \lambda (du)\big)
    \\
    &=u^* d\lambda  
    \\
    &= u^* \omega
    \\
    &\geq 0.
    \end{align*}
Following \cite{H93}, we then break the problem into three cases.\\

\noindent{\bf Case I:} \emph{The connected component \((M', \lambda^+)\) is
  tight \(S^3\).}

In this case, it follows from deep work of Eliashberg (see \cite{E1}  and
  \cite{E2}) that up to diffeomorphism there exists a unique positive
  tight contact structure on \(S^3\), and moreover  there exists a smooth
  embedding
  \(\phi\colon M' \to \mathbb{R}^4\) for which
  \begin{align*}                                                          
    \lambda^+\big|_{M'} = \phi^*(x_1 dy_1 + x_2 dy_2).
    \end{align*}
Additionally, \(\phi(M')\) is the boundary of a compact 
  star-shaped\footnote{
    A compact set \(\mathcal{K}\subset \mathbb{R}^4\) is said to be
    \emph{star-shaped} provided that for each \((x_1, y_1, x_2, y_2)\in
    \mathcal{K}\) and each \(\tau\in [0, 1]\), one also has \((\tau x_1,
    \tau y_1, \tau x_2, \tau y_2)\in \mathcal{K}\).
    }
  set \(\mathcal{O}\subset \mathbb{R}^4\) with \(\mathcal{O}\)
  diffeomorphic to a compact four-ball.

Following the construction in the proof of Theorem \ref{THM_main_result},
  it then becomes possible to build a symplectic cobordism
  obtained by symplectically capping off \(M'\subset
  \partial\widetilde{W}\) by \(\mathbb{C}P^2\setminus \mathcal{O}\).
The resulting manifold, denoted \((\check{W}, \check{\omega})\), is then a
  symplectic cobordism from \((M^+\setminus M', \eta^+)\) to \((M^-,
  \eta^-)\).
One can then find an almost Hermitian structure \((\widetilde{J},
  \tilde{g})\) on \(\check{W}\) for which \(\widetilde{J}\) is adapted to
  \(\check{\omega}\) and for which  there exists an embedded
  pseudoholomorphic sphere \(u\colon S^2 \to \check{W}\) which has the
  same properties as \(\bar{\mathbf{u}}_0\) from the proof of Theorem
  \ref{THM_main_result}.
That is, one considers the moduli space \(\mathcal{M}\) of non-nodal
  curves which are homotopic (through non-nodal pseudoholomorphic curves) to
  this special curve.
Curves in this moduli space are cut out transversely and pairwise
  intersect at exactly one point.
By the same means as in the proof Theorem \ref{THM_main_result}, one shows
  that if this family of curves is  contained in a compact region, then the
  area is uniformly bounded.

By positivity of intersections and exactness of \(\bar{\omega}\), bubbling
  is impossible, and hence one can show that the set of points in the
  extension of \(\check{W}\) which are in the image of a curve in
  \(\mathcal{M}\) is both open and closed if the curves stay in a compact
  region.
This contradiction establishes that the curves must escape into a
  cylindrical end of the extended cobordism, but the maximum principle
  prevents them from escaping into the positive end.
Thus the curves must extend all the way down into the single negative end
  of the extension of \(\check{W}\), while each still intersects the
  initial curve.
Trimming the curves as in the proof of Theorem \ref{THM_main_result} then
  yields a sequence of curves to which Theorem \ref{THM_existence} applies,
  and hence the non-minimality of the flow of \(X_{\eta^-}\) on \(M^-\) is
  established.\\

\noindent{\bf Case II: }\emph{The manifold \((M', \eta^+)\) is overtwisted.}

This case relies more heavily on input from \cite{H93}.  
Begin by letting the tuple \(\mathbf{W} = (\overline{W}, \bar{\omega},
  \overline{J}, \bar{g}, \bar{a}, \partial_{\bar{a}}, \epsilon)\) denote the
  extension associated to \((\widetilde{W}, \tilde{\omega})\).
Letting \(\phi\colon \mathcal{D}\to M'\) be the overtwisted disk guaranteed
  to exist, we lift this to an embedding \(\tilde{\phi}: \mathcal{D}\to
  \mathbb{R}^+\times M' \) via \(\tilde{\phi}(s,t) = \big(10, \phi(s,t)\big)
  \in \mathbb{R}\times M'\).
Letting \(\Phi^+\colon (1-\frac{1}{4}\epsilon, \infty)\times M^+ \to {\rm
  Cyl}^+(\overline{W})\) denote the embedding guaranteed by Remark
  \ref{REM_structureal_observations}, we then define the embedded disk
  \begin{align*}                                                          
    \Sigma := \Phi^+(\tilde{\phi}(\mathcal{D})).
    \end{align*}
Define the point \(e:= \Phi^+(\tilde{\phi}(0))\in \Sigma\). 
Then, following \cite{H93}, one constructs a family of pseudoholomorphic
  curves of the form
  \begin{align*}                                                          
    u\colon \mathcal{D}\to \overline{W}\qquad\text{with}\qquad
    u\colon\partial \mathcal{D}\to \Sigma
    \end{align*}
  so that \(u(\partial \mathcal{D})\) is transverse to \(T\Sigma \cap {\rm
  ker}\lambda^+\) and winds around \(e\) precisely once.
As is shown in \cite{H93}, such curves are the zero set of a smooth
  non-linear Fredholm section the linearization of which is always
  surjective.
Additionally, the curves are pairwise disjoint, and the images of their
  boundaries locally foliate \(\Sigma\).
An additional crucial fact is that the boundaries \(u(\partial
  \mathcal{D})\) always stay transverse to the characteristic foliation
  given by the integral curves of \(T\Sigma\cap {\rm ker}\lambda^+\), and
  hence the boundaries \(u(\partial \mathcal{D})\) must always stay disjoint
  from \(\partial \Sigma\).

At this point, we follow the script from Case I and from the proof of
  Theorem \ref{THM_main_result}.
We let \(\mathcal{M}\) denote the moduli space of such curves, and we note
  that if there exists a compact set \(\mathcal{K} \subset \overline{W}\)
  which contains the images of all the curves in \(\mathcal{M}\), then the
  set of points in \(\Sigma\) which are in the image of \(u\big|_{\partial
  \mathcal{D}}\) for \(u\in \mathcal{M}\) is both open and closed, which
  is impossible.
Here again we are making use of the fact that the existence of such a
  \(\mathcal{K}\) guarantees a uniform area bound as before, which then
  guarantees Gromov convergence, which establishes closedness.
Again, the maximum principle prevents curves from escaping into \({\rm
  Cyl}^+(\overline{W})\), so the curves must instead escape into \({\rm
  Cyl}^-(\overline{W})\), and again the curves can be trimmed so that
  Theorem \ref{THM_second_main_result} applies, and again the flow of
  \(X_{\eta^-}\) on \(M^-\) is not minimal.\\

\noindent{\bf Case III: }\emph{There exists embedded \(S^2\subset M'\)
  which is homotopically nontrivial in \(\widetilde{W}\).}

In this case we again follow \cite{H93} rather closely.
In particular, by assumption, there exists an embedded sphere in \(M'\)
  which when included into \(\widetilde{W}\) is homotopically nontrivial.
We also may assume that \(M'\) is tight, since the overtwisted case has
  already been established.
Consequently, we may perturb this sphere, keeping it embedded, so that
  there exist precisely two points \(\{e^+, e^-\}\subset \Sigma\) at which
  we have \(T\Sigma = {\rm ker}\lambda^+\), and all integral curves of
  \(T\Sigma\cap {\rm ker}\lambda^+\) have \(e^+\) as one end point and
  \(e^-\) as the other.
As in Case II, we lift this sphere into a level set \(\Sigma\subset
  \{10\}\times M'\subset \overline{W}\).
As in the proof of Case II, we then construct a family of
  pseudoholomorphic disks with boundary in the sphere \(\Sigma\), each
  winding around \(e^\pm\) exactly once.
These curves again have similar properties, like being cut out
  transversely, and they are pairwise disjoint and have boundaries which
  locally foliate \(\Sigma\).
Once again one shows that if the images of the curves in this moduli space
  are contained in some compact set, then the set of points in \(\Sigma\)
  which are in the image of the boundaries of curves in this moduli space is
  both open and closed in \(\Sigma\), and hence are all of \(\Sigma\).
However, if this is the case, then, as in \cite{H93}, one can use the
  moduli space of curves to show that \(\Sigma\) is homotopically trivial,
  which is impossible.
Consequently, the images of the curves cannot stay in a compact region, so
  they must escape out into a cylindrical end of \(\overline{W}\), and by
  the maximum principle it cannot be the positive end.
Again, curves escape into the negative end, are pairwise disjoint, with a
  suitable area bound to obtain the appropriate trimmings to apply the
  workhorse theorem and again the flow of \(X_{\eta^-}\) on \(M^-\) is not
  minimal.

\end{proof}

\section{Supporting Proofs}\label{SEC_supporting_proofs}

In this section we prove the main foundational results about feral curves
  which are needed to prove Theorem \ref{THM_main_result} and Theorem
  \ref{THM_second_main_result}.
Each of the following sections is dedicated to precisely one of the proofs
  of Theorem \ref{THM_area_bounds}  through Theorem \ref{THM_existence}.

\subsection{Proof of Theorem \ref{THM_area_bounds}: Exponential Area
  Bounds}\label{SEC_proof_of_exp_area_bounds}\hfill\\

The main purpose of this section is to prove Theorem
  \ref{THM_area_bounds} as well as several important generalizations.
Our first step will be to give a brief overview of the structure of the
  proofs, while indicating the methods used to overcome certain obstacles.
Throughout this section, we will assume that \(M\) is a closed manifold
  equipped with a framed Hamiltonian structure \(\eta=(\lambda, \omega)\),
  and that \((J, g)\) is an \(\eta\)-adapted almost Hermitian structure
  on \(\mathbb{R}\times M\).

\subsubsection{The Rough Sketch}
Before proceeding into some of the technical (and tedious but elementary)
  details, we first provide the core idea in a model scenario.  
Afterwards, we describe how to generalize.
To that end, we first need some definitions.
We begin by assuming \(u:S\to \mathbb{R}\times M\) is a pseudoholomorphic
  map, with the following properties.
\begin{enumerate}                                                         
  \item \(u(S)\subset [0, r]\times M\) for some fixed positive \(r>0\)
  \item \(\partial S = u^{-1}(\{0, r\}\times M)\)
  \item \(\{\zeta\in S: d(a\circ u)_\zeta = 0\} = \emptyset\), where \(a\)
  is the symplectization coordinate on \(\mathbb{R}\times M\).
  \end{enumerate}
Geometrically then, we should think of \(S\) as an annulus (or a finite
  union of annuli), and \(u\) maps one boundary component to \(\{0\}\times
  M\), and it maps the other boundary component to \(\{r\}\times M\).
Furthermore, since the set of critical points of the function \(a\circ u:
  S\to [0, r]\)  is empty, we know that any gradient trajectory in the lower
  boundary component \(u^{-1}(\{0\}\times M)\) will terminate in the upper
  boundary component \(u^{-1}(\{r\}\times M)\).
This is a fact we will heavily exploit.

Next, for each \(x,y\in [0, r]\) with \(x< y\) we define 
  \begin{equation*}                                                         
    S_x^y:= \{\zeta\in S: x \leq a\circ u(\zeta) \leq y\},
    \end{equation*}
  and
  \begin{equation*}                                                         
  \alpha:= -(u^* da)\circ j = u^*\lambda,
  \end{equation*} 
  as well as the functions
  \begin{equation*}                                                         
    h(s):=\int_{(a\circ u)^{-1}(s)}\alpha \qquad\text{and}\qquad G(s) =
    \int_{S_0^s} u^*\omega,
    \end{equation*}
  and the Riemannian metric
  \begin{equation*}                                                         
    \gamma:= u^*g.
    \end{equation*}
We note that \(h\), \(G\), and \(\gamma\)  are smooth.

With these definitions in place, we next recall a few linear algebra and
  calculus facts.
The first is that there exists a large constant \(C>0\) which depends on
  ambient geometry but not on the map \(u\) for which
  \begin{equation*}                                                       
    \|d\alpha \|_\gamma \leq C.
    \end{equation*}
This is readily seen here by recalling that \(\alpha=u^*\lambda\), and
  \(\gamma=u^*g\), so that \(\|d\alpha\|_\gamma = \|u^* d\lambda\|_{u^* g}
  \leq \|d\lambda\|_g \leq C\).
Next is the fact that given a two-dimensional oriented Riemannian
  manifold, like \((S, \gamma)\), there exists a corresponding
  two-dimensional Hausdorff measure \(d\mu_\gamma^2\); similarly for
  other dimensions.
Furthermore, because \((J,g)\) is suitably adapted to \(\eta\), and
  because \(u\) is pseudoholomorphic, we find
  \begin{equation*}                                                         
    {\rm Area}_\gamma(S) = \int_S d\mu_\gamma^2 = \int_S u^* da \wedge
    \alpha + u^* \omega,
    \end{equation*}
  with similar statements for subdomains in \(S\).
Finally, the following result is an immediate application of the co-area
  formula (see Lemma \ref{PROP_coarea_body} below), or a suitably applied
  change of coordinates.
\begin{equation*}                                                         
  \int_{S_x^y} (u^* da)\wedge \alpha= \int_x^y   \Big(\int_{(a\circ
  u)^{-1}(s)} \alpha \Big) ds
  \end{equation*}

With these preliminaries established, we now establish our principle
  differential inequality.
\begin{align*}                                                          
  |h'(s)|&=\Big|\lim_{\epsilon\to 0^+}
    \epsilon^{-1}\big(h(s+\epsilon)-h(s)\big)\Big|\\
  &=
    \lim_{\epsilon\to 0^+} \epsilon^{-1}\Big|\int_{(a\circ
    u)^{-1}( s+ \epsilon)}\alpha - \int_{(a\circ
    u)^{-1}( s)}\alpha\Big|\\
  &=
    \lim_{\epsilon\to 0^+}
    \epsilon^{-1}\Big|\int_{S_{s}^{s+\epsilon}}d
    \alpha\Big|\\
  &\leq
    \lim_{\epsilon\to 0^+}
    \epsilon^{-1}\int_{S_{s}^{s+\epsilon}} \|d
    \alpha\|_{\gamma}d\mu_{\gamma}^2 \\
  &\leq 
    \lim_{\epsilon\to 0^+} \epsilon^{-1}C
    \int_{S_{s}^{s+\epsilon}}
    d\mu_{\gamma}^2 \\
  &=  
    C \lim_{\epsilon\to 0^+} \epsilon^{-1}
    \int_{S_{s}^{s+\epsilon}} \big((u^*da)
    \wedge \alpha + u^*\omega\big)       \\
  &=
    C \Big(\lim_{\epsilon\to 0^+}\epsilon^{-1}
    \int_{S_{s}^{s+\epsilon}} (u^*da)\wedge
    \alpha + \lim_{\epsilon\to 0^+}\epsilon^{-1}
    \int_{S_{s}^{s+\epsilon}}u^*\omega\Big)\\
  &= 
    C\big( \int_{(a\circ u)^{-1}(s)} \alpha
    +G'(s)\big)\\
  &= 
    C \big(h(s) +  G'(s)\big)
  \end{align*}
Or to put it succinctly and in a more useable form, 
  \begin{equation*}                                                       
      h'(s)\leq C \big(h(s) + G'(s)\big),
    \end{equation*}
  where \(C\) depends on ambient geometry, but not on the map \(u\).
Integrating up, we find
  \begin{align*}                                                          
    h(s)&\leq h(0) + C \int_0^s h(t) dt + C\big(G(s) - G(0)\big)\\
    &\leq \big(h(0) + C\int_S u^* \omega\big) + C \int_0^s h(t) dt
    \end{align*}
By Gronwall's inequality (see Lemma \ref{LEM_gronwall} below), we then
  have
  \begin{equation*}                                                       
    h(s)\leq \big( h(0) + C\int_S u^*\omega\big)  e^{Cs},
    \end{equation*}
  or rewriting it, making use of the definition of \(h\), and the fact
  that \(\alpha=u^*\lambda\), we have
  \begin{equation*}                                                       
    \int_{(a\circ u)^{-1}(s)} u^*\lambda \leq \Big(\int_{(a\circ
    u)^{-1}(0)} u^*\lambda + C \int_S u^*\omega \Big) e^{Cs}.
    \end{equation*}
In essence, this is precisely the desired inequality which establishes
  Theorem \ref{THM_area_bounds}.
To emphasize the key characteristics, the above inequality says that if we
  consider the function \(s\mapsto \int_{(a\circ u)^{-1}(s)} u^*\lambda\),
  then the function is bounded from above by \(s\mapsto A e^{Cs}\), where
  \(C\) depends only on ambient geometry, and  \(A\) is bounded in terms
  of ambient geometry constant \(C\),  the \(\omega\)-energy (which is
  always a priori bounded),  and \(\int_{(a\circ u)^{-1}(0)} u^*\lambda\).
Essentially then, we have an exponential bound on the growth of the
  function \(s\mapsto \int_{(a\circ u)^{-1}(s) }u^*\lambda\).
To obtain a similar exponential bound on the area, it is sufficient to
  recall that
  \begin{align*}                                                          
    {\rm Area}_{u^* g} (S_0^r) &= \int_{S_0^r} u^* (da \wedge \lambda +
    \omega)
    \\
    &=\int_{S_0^r} u^* da \wedge \lambda +\int_{S_0^r} u^* \omega
    \\
    &=\int_0^r \Big(\int_{(a\circ u)^{-1}(t)} u^*\lambda \big) dt +
    \int_{S_0^r} u^*\omega
    \end{align*}
    and then employ our previous exponential growth estimate for
    \(s\mapsto \int_{(a\circ u)^{-1}(s)} u^*\lambda\).
We note that while all of the above estimates assume that \(s\in [0, r]\),
  as similar construction and analysis establish the case that \(s\in [-r,
  0]\).

With this principle estimate established, we can then generalize as appropriate.  
First, to move from annuli to more general surfaces, one observes
  that Sard's theorem guarantees that for a more general domain \(S\), the
  set of regular values of \(a\circ u\) has full measure, and it must be
  open since the set of critical points is closed.
A bit of elementary measure theory then lets us approximate the set of
  regular values from the inside by a finite set of compact intervals on
  which the desired estimate holds.
Making use of the fact that \(\omega\) evaluates non-negatively on
  \(J\)-complex lines and \(\int_S u^*\omega <\infty\), and some
  elementary real analysis then allows us to conclude the desired
  inequality for the more general surface.
We carry out these details below, but for the moment we sketch further
  generalizations.

Already, such an exponential growth bound on area is rather useful, however
  there are two more related results which prove to be quite important,
  and each essentially stems from the fact that gradient-flow type
  coordinates are more useful to us than holomorphic coordinates.
More specifically, one can construct local coordinates \((s, t)\) on
  \(S\), with the property that \(a\circ u(s,t) = t\), and the map
  \(t\mapsto (s_0, t)\in S\) is contained in an integral curve of the vector
  field \(\nabla (a \circ u)\).
One can then ask if our exponential growth bound on \(\int u^*\lambda\)
  and the area holds on such rectangular patches \((s,t)\in [0, b]\times [0,
  r]\) of pseudoholomorphic curve.
As it turns out, the answer is yes, essentially because \(0 = -(u^*da)(j
  \nabla(a\circ u)) = \alpha (\nabla (a\circ u)) = u^*\lambda(\nabla(a\circ
  u))\).
Indeed, replacing \(S_x^y\) with 
  \begin{equation*}                                                       
    \widetilde{S}_x^y:=\{\zeta\in S_x^y : 0 \leq s(\zeta) \leq b\}
    \end{equation*}
  where \((s,t)\) are rectangular gradient-like coordinates as above, we
  see the entire argument carries over unchanged, including
  \begin{equation*}                                                       
    \int_{(a\circ u)^{-1}(s+\epsilon)} \alpha - \int_{(a\circ
    u)^{-1}(s+\epsilon)} \alpha 
    =
    \int_{\widetilde{S}_s^{s+\epsilon}} d\alpha.
    \end{equation*}
This latter equality holds precisely because \(\alpha(\nabla(a\circ
  u))=0\), and this guarantees that there are no contributions to Stokes'
  theorem coming from the gradient-like ``sides'' of our pseudoholomorphic
  rectangle.
Essentially then, this establishes exponential area growth for certain
  pseudoholomorphic rectangles, which we define more precisely as
  \emph{tracts of pseudoholomorphic curves} in Definition
  \ref{DEF_tract_of_perturbed_J_map} below.

The final generalization of our exponential growth bound is less
  enlightening and more a necessary evil.
The issue is that in later sections, we will need to study portions of a
  pseudoholomorphic curve restricted to \(S_x^{x+\epsilon}\) where \(x\) and
  \(x+\epsilon\) are  regular values of \(a\circ u\).
Moreover, we would like to claim that if \(\int_{(a\circ u)^{-1}(x)} u^*
  \lambda\) is very large, and if \(\int_{S_x^{x+\epsilon}} u^*\omega\) is
  very small, then most of the gradient trajectories which start along the
  set \((a\circ u)^{-1}(x)\) terminate at a point in \((a\circ
  u)^{-1}(x+\epsilon)\).
As it turns out, this is not difficult to establish in the special case
  that the function \(a\circ u\) is Morse, but it appears to be
  intractable in the general case.
This forces us into the position that we must establish the desired
  exponential growth bound on area for \emph{perturbed} pseudoholomorphic
  curves -- that is, for curves which are no longer pseudoholomorphic.
Worse still, the \emph{type} of perturbation, and specifically its precise
  \emph{size}, will be important for later estimates, so we must establish
  the desired area estimate for all perturbed pseudoholomorphic curves for
  which the perturbation is small in a very explicit manner.
In turn, this seems to force us to establish a number of rather elementary
  estimates via rather tedious but elementary means, and this takes up a bulk
  of the Section \ref{SEC_proof_of_exp_area_bounds}.
The upshot however, is that we establish the desired estimates for
  perturbed curves, which is crucial for later results.
The remainder of Section \ref{SEC_proof_of_exp_area_bounds} is then
  devoted to making these above sketches rigorous.

\subsubsection{Definitions and Elementary Estimates}
\begin{definition}[perturbed pseudoholomorphic map]
  \label{DEF_perturbed_J_map}
  \hfill\\
Let \((M, \eta)\) be a framed Hamiltonian manifold with
  \(\eta=(\lambda,\omega)\), and let \((J, g)\) be an \(\eta\)-adapted
  almost Hermitian structure on \(\mathbb{R}\times M\).
A \emph{perturbed pseudoholomorphic map} consists of the tuple
  \((\tilde{u}, \tilde{\jmath}, f, u, S, j)\) where
  \begin{enumerate}[(p1)]                                                 
    \item\label{EN_p1} \(u:(S, j)\to (\mathbb{R}\times M, J)\) is a
      generally immersed pseudoholomorphic map, which is possibly
      non-compact,
    \item\label{EN_p2} \(f:S\to \mathbb{R}\) is a smooth function,
    \item\label{EN_p3} the support of \(f\) is compact and satisfies
      \({\rm supp}(f)\subset S\setminus (\partial S\cup \mathcal{Z})\),
      where
      \begin{equation*}                                                   
	\mathcal{Z}=\{\zeta\in S: Tu(\zeta)=0\}
      \end{equation*}
    \item\label{EN_p4} \(\tilde{u}(\zeta) = \exp_{u(\zeta)}^g(f (\zeta)
      \partial_a)\), where \(\exp^g\) is the exponential map associated
      to the Riemannian metric \(g\), and \(\partial_a\) is the coordinate
      vector field associated to the coordinate \(a\in \mathbb{R}\),
    \item\label{EN_p5} \(\tilde{\jmath}\) is a smooth almost complex
      structure on \(S\) which induces the same orientation as \(j\),
    \item\label{EN_p6} on the complement of \({\rm supp}(f)\) we
      have \(j=\tilde{\jmath}\), and elsewhere \(\tilde{\jmath}\)
      is uniquely determined by requiring that \(\tilde{\jmath}\) is
      a \(\tilde{u}^*g\)-isometry.
  \end{enumerate}
\end{definition}
%

Geometrically then, a perturbed pseudoholomorphic map is obtained by
  nudging an honestly pseudoholomorphic map a bit in the symplectization
  direction.
We require this modification to be away from the boundary of \(S\) and
  critical points of \(u\), and in practice it will occur only in a small
  neighborhood of the critical points of \(a\circ u\).
We adapt the almost complex structure on the domain so that our new
  perturbed map is an isometry, but not pseudoholomorphic.
For notational convenience and ease of exposition, rather than write the
  full tuple \((\tilde{u}, \tilde{\jmath}, f, u, S, j)\) to specify a
  perturbed pseudoholomorphic map, we will instead say: Let \((\tilde{u}, S,
  \tilde{\jmath})\) be an \(f\)-perturbation of a pseudoholomorphic map
  \((u, S, j)\).

\begin{definition}[tract of perturbed pseudoholomorphic map]
  \label{DEF_tract_of_perturbed_J_map}
  \hfill \\
Let \((M, \eta)\) be a framed Hamiltonian manifold with
  \(\eta=(\lambda,\omega)\), and let \((J, g)\) be an \(\eta\)-adapted
  almost Hermitian structure on \(\mathbb{R}\times M\).
A \emph{tract of perturbed pseudoholomorphic map} consists of the tuple
  \((\tilde{u}, \widetilde{S}, \tilde{\jmath}, f, u, S, j)\) where
  \begin{enumerate}                                                       
    \item \((\tilde{u}, \tilde{\jmath}, f, u, S, j)\) is a perturbed
      pseudoholomorphic map,
    \item \(\widetilde{S}\subset S\) is a smooth real two dimensional
      non-empty manifold, possibly with boundary, possibly with corners,
      and possibly non-compact,
    \item the restriction \(\tilde{u}:\widetilde{S} \to \mathbb{R}\times
      M\) is a proper\footnote{By proper, we mean that for each
      compact set \(\mathcal{K}\subset \mathbb{R}\times M\) the set
      \(\tilde{u}^{-1}(\mathcal{K})\) is compact.} map satisfying
      \begin{equation*}                                                   
	\{\zeta \in  S: d(a\circ \tilde{u})_\zeta = 0\}\cap \partial
	\widetilde{S}=\emptyset,
      \end{equation*}
    \item the boundary of \(\widetilde{S}\) decomposes as \(\partial
      \widetilde{S} = \partial_0 \widetilde{S}\cup \partial_1
      \widetilde{S}\) where
      \begin{enumerate}                                                   
	\item the set \(\partial_0 \widetilde{S}\cap
	  \partial_1\widetilde{S}\) is finite
	\item along \(\partial_1 \widetilde{S}\) the vector field
	  \(\widetilde{\nabla}(a\circ \tilde{u})\) is tangent to
	  \(\partial_1 \widetilde{S}\); here \(\widetilde{\nabla}\)
	  is the gradient computed with respect to the metric
	  \(\tilde{\gamma}:=\tilde{u}^* g\),
	\item the restriction of the map \(a\circ \tilde{u}\) to
	  each connected component of \(\partial_0 \widetilde{S}\) is a
	  constant map.
      \end{enumerate}
  \end{enumerate}
\end{definition}
%
Geometrically, a tract of perturbed pseudoholomorphic map is a perturbed
  pseudoholomorphic map with boundary and corners, with the property that
  the boundary is piecewise smooth, and each smooth portion is either a
  level set of \((a\circ u\) or else an integral curve of
  \(\widetilde{\nabla} a\circ \tilde{u}\).
We denote the level-set type boundary by \(\partial_0 \widetilde{S}\), and
  we denote the gradient-line type boundary by \(\partial_1 \widetilde{S}\).
The corners of the boundary are those points where the two types of
  boundary intersect.

\begin{definition}[the characteristic $\alpha$-foliation:
  $\mathcal{F}^\alpha$]
  \label{DEF_char_alpha_fol}
  \hfill\\
Let \(S\) be a real \(2\)-dimensional manifold, possibly with boundary,
  possibly with corners, and possibly non-compact.
Suppose \(\alpha\in \Omega^1(S)\) is a smooth one-form on \(S\). 
Then we define the characteristic \(\alpha\)-foliation,
  \(\mathcal{F}^\alpha\subset TS\), by
  \begin{align*}
    \mathcal{F}^\alpha &=\bigcup_{\zeta\in S}
      \mathcal{F}_\zeta^\alpha\qquad\text{where } \mathcal{F}_\zeta^\alpha
      \subset T_\zeta S\text{ is given by}\\
    \mathcal{F}_\zeta^\alpha &= 
      \begin{cases}
	{\rm ker}\;\alpha_\zeta &\text{if }{\rm dim}({\rm
	  ker}\;\alpha_\zeta) =1\\
	0 &\text{otherwise}.
      \end{cases} 
  \end{align*}
\end{definition}
%

\begin{lemma}[characteristic $\tilde{\alpha}$-foliation is
  gradient]
  \label{LEM_char_fol_grad}
  \hfill\\
Let \((M, \eta=(\lambda, \omega))\) be a framed Hamiltonian manifold,
  let \((J, g)\) be an \(\eta\)-adapted almost Hermitian structure
  on \(\mathbb{R}\times M\), let \((\tilde{u}, S, \tilde{\jmath}) \) be an
  \(f\)-perturbation of a pseudoholomorphic map \((u, S, j)\) Definition
  \ref{DEF_perturbed_J_map}, and let \(\tilde{\alpha}=-d(a\circ
  \tilde{u})\circ \tilde{\jmath}\) be as above.
Then 
  \begin{equation}\label{EQ_char_fol_grad_1}                              
    \{\zeta\in \widetilde{S}: {\rm
    dim}\;(\mathcal{F}_\zeta^{\tilde{\alpha}})=0\}=
    \{\zeta\in \widetilde{S}: d(a\circ u)_\zeta=0\},
    \end{equation}
  and \(\widetilde{\nabla}(a\circ \tilde{u})\), when thought of
  as a subset of \(T\widetilde{S}\) and computed with respect to the
  metric \(\tilde{\gamma}=\tilde{u}^*g\) satisfies the property
  \begin{equation}\label{EQ_char_fol_grad_2}                              
    \widetilde{\nabla}(a\circ \tilde{u}) \subset
    \mathcal{F}^{\tilde{\alpha}}.
    \end{equation}
\end{lemma}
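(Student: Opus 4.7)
The plan is to first unpack what the perturbation does in coordinates and then handle $(\ref{EQ_char_fol_grad_2})$ and $(\ref{EQ_char_fol_grad_1})$ in turn. Because $(J,g)$ is $\eta$-adapted, equation $(\ref{EQ_formula_for_g})$ gives the product-type decomposition $g = da\otimes da + \lambda\otimes\lambda + \omega(\cdot, J\cdot)$; one checks directly that $\partial_a$ is a unit-length $g$-vector field $g$-orthogonal to its complement and satisfying $\nabla^g_{\partial_a}\partial_a = 0$, so its integral curves are $g$-geodesics. Consequently $\tilde u(\zeta) = \exp^g_{u(\zeta)}(f(\zeta)\partial_a) = u(\zeta) + f(\zeta)\partial_a$ in the $\mathbb{R}$-coordinate, whence $a\circ\tilde u = a\circ u + f$; in particular $\tilde u^*\lambda = u^*\lambda$ and $\tilde u^*\omega = u^*\omega$, which is the bookkeeping needed for the rest.

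For $(\ref{EQ_char_fol_grad_2})$, which I expect to be the easier half, I would use (p\ref{EN_p6}): $\tilde\jmath$ is a $\tilde\gamma$-isometry inducing the same orientation as $j$, so $\tilde\jmath$ acts as a $90$-degree rotation on each tangent plane. Then $\tilde\jmath\widetilde\nabla(a\circ\tilde u)$ is $\tilde\gamma$-orthogonal to $\widetilde\nabla(a\circ\tilde u)$, and since $d(a\circ\tilde u)(X) = \tilde\gamma(\widetilde\nabla(a\circ\tilde u), X)$ one computes
\[
\tilde\alpha\bigl(\widetilde\nabla(a\circ\tilde u)\bigr)
= -d(a\circ\tilde u)\bigl(\tilde\jmath\widetilde\nabla(a\circ\tilde u)\bigr)
= -\tilde\gamma\bigl(\widetilde\nabla(a\circ\tilde u),\tilde\jmath\widetilde\nabla(a\circ\tilde u)\bigr) = 0.
\]
Hence $\widetilde\nabla(a\circ\tilde u)_\zeta \in \ker\tilde\alpha_\zeta$; at points where the gradient is nonzero this kernel is $1$-dimensional and equals $\mathcal{F}^{\tilde\alpha}_\zeta$ by $(\ref{DEF_char_alpha_fol})$, and at zeros of the gradient the inclusion into $\mathcal{F}^{\tilde\alpha}_\zeta$ is trivially satisfied.

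For $(\ref{EQ_char_fol_grad_1})$, first note that since $\tilde\jmath_\zeta$ is a linear isomorphism of the $2$-dimensional space $T_\zeta S$, the set $\{\dim\mathcal{F}^{\tilde\alpha}_\zeta = 0\}$ equals $\{\tilde\alpha_\zeta = 0\} = \{d(a\circ\tilde u)_\zeta = 0\}$. Off $\operatorname{supp}(f)$, where $f\equiv 0$, this matches $\{d(a\circ u)_\zeta = 0\}$ immediately. On $\operatorname{supp}(f)$, condition (p\ref{EN_p3}) gives $\zeta\notin\mathcal{Z}$, so $u$ is an immersion there, and I would combine a local pseudoholomorphic normal form (in $j$-conformal coordinates $Ju_s = u_t$, so that $d(a\circ u)(\partial_s) = u_s^a$ and $d(a\circ u)(\partial_t) = -u_s^{X_\eta}$) with the explicit isometry condition characterizing $\tilde\jmath$ on the perturbation region to verify that the loci $\{d(a\circ u) = 0\}$ and $\{d(a\circ u + f) = 0\}$ coincide pointwise. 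The main obstacle is exactly this last step: since $\operatorname{supp}(f)$ is only required to avoid $\mathcal{Z}$ rather than the possibly larger set $\{d(a\circ u) = 0\}$, the needed equivalence is not automatic and has to be extracted from the precise interplay between the isometry condition defining $\tilde\jmath$ on $\operatorname{supp}(f)$ and the pseudoholomorphic structure of $u$; every other step reduces to a careful but routine Riemannian calculation using the product form of $g$.
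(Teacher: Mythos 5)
Your argument for (\ref{EQ_char_fol_grad_2}) matches the paper's proof exactly: both use $\tilde\alpha(\widetilde\nabla(a\circ\tilde u)) = -d(a\circ\tilde u)(\tilde\jmath\,\widetilde\nabla(a\circ\tilde u))$ together with the fact that $\tilde\jmath$ is a $\tilde\gamma$-isometry, hence $\tilde\gamma$-skew-adjoint, to conclude the pairing vanishes. Likewise, your observation that $\{\dim\mathcal{F}^{\tilde\alpha}_\zeta = 0\} = \{\tilde\alpha_\zeta = 0\} = \{d(a\circ\tilde u)_\zeta = 0\}$ (using that $\tilde\jmath_\zeta$ is a linear isomorphism of $T_\zeta S$) is precisely the paper's argument for (\ref{EQ_char_fol_grad_1}).

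The ``main obstacle'' you flag at the end is real, but you should not try to close it: the displayed equation (\ref{EQ_char_fol_grad_1}) contains a typo, and the right-hand side should read $\{\zeta : d(a\circ \tilde u)_\zeta = 0\}$, not $d(a\circ u)_\zeta$. The paper's own proof derives exactly the $\tilde u$-version and then simply asserts ``(\ref{EQ_char_fol_grad_1}) follows immediately,'' which only parses if $\tilde u$ was intended. As stated, the equality of critical sets $\{d(a\circ u) = 0\} = \{d(a\circ\tilde u) = 0\}$ is false in general: since $a\circ\tilde u = a\circ u + f$, these sets coincide only if $f$ is constant near the critical locus, whereas the entire purpose of the $(\delta,\epsilon)$-tame perturbation construction (Lemma \ref{LEM_f_perturbation}) is to move and nondegenerify critical points of $a\circ u$ so that $a\circ\tilde u$ becomes Morse away from $\mathcal{Z}$. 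Indeed, in that construction $\operatorname{supp}(f)$ is deliberately placed in $\mathcal{V}_\delta$, a neighborhood of $\operatorname{Crit}_{a\circ u}$. So there is no ``precise interplay'' to extract; the corrected statement is what your argument already proves, and what every later application of the lemma (e.g.\ in Theorem \ref{THM_area_bound_estimate}) actually uses. You may also note a second minor typo: the sets in (\ref{EQ_char_fol_grad_1})--(\ref{EQ_char_fol_grad_2}) are written over $\widetilde S$, but the lemma's hypotheses only introduce $S$.
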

%
\begin{proof}
We first observe that \(\tilde{\alpha}\) is a smooth one-form on a
  two-manifold, and hence \({\rm dim}({\rm ker}\, \tilde{\alpha}_\zeta)\in
  \{1, 2\}\) .
The set of points where this dimension is two, is precisely the
  set of points where \(\tilde{\alpha}\) is zero -- however, \(0=
  \tilde{\alpha}_\zeta= d(a\circ \tilde{u})_\zeta\circ \tilde{\jmath}\),
  and since \(\tilde{\jmath}\) is a \(\tilde{\gamma}\)-isometry, we
  see that the set where this dimension is two is precisely the set of
  critical points of \(a\circ \tilde{u}\).
Equation (\ref{EQ_char_fol_grad_1}) follows immediately.

To establish (\ref{EQ_char_fol_grad_2}), we observe that 
  \begin{equation*}                                                       
    \tilde{\alpha}\big(\widetilde{\nabla}(a\circ \tilde{u})\big) =
    -d(a\circ \tilde{u})\big(\tilde{\jmath} \widetilde{\nabla}(a\circ
    \tilde{u})\big) = 0
  \end{equation*}
  where to obtain the second equality we have used the fact
  that the almost complex structure \(\tilde{\jmath}\) is a
  \(\tilde{\gamma}\)-isometry and \(\widetilde{\nabla}\) is the gradient
  with respect to \(\tilde{\gamma}\).
\end{proof}

Given an \(f\)-perturbation of a pseudoholomorphic map, it will be
  important to have certain properties of the metric
  \(\tilde{\gamma}=\tilde{u}^*g\) estimated in terms of properties of the
  metric \(\gamma=u^*g\).
As such, we have the following. 

\begin{lemma}[$\tilde{\gamma}$-estimates]
  \label{LEM_gamma_tilde_estimates}
  \hfill \\
Let \(0<\epsilon < 2^{-24}\), let \((M, \eta)\) be a framed Hamiltonian
  manifold with \(\eta=(\lambda,\omega)\), and let \((J, g)\) be an
  \(\eta\)-adapted almost Hermitian structure on \(W:=\mathbb{R}\times M\),
  and let \((u, S, j)\) be a generally immersed pseudoholomorphic map.
Let \(f\) be a smooth function, and let  \(\tilde{\mathbf{u}}=(\tilde{u},
  S, \tilde{\jmath})\) be an \(f\)-perturbation of \((u, S, j)\).
Suppose further that
  \begin{equation*}                                                       
      \|df\|_\gamma+ \|\nabla df\|_\gamma \leq \frac{\epsilon}{2^{11}
      (1+\|B_u\|_\gamma)}
    \end{equation*}
  here \(\nabla\) denotes covariant differentiation with respect to
  the Levi-Civita connection associated to the metric \(\gamma=u^*g\),
   \(B_u\) denotes the second fundamental form of \(u:S\to
   \mathbb{R}\times M\) as given in Definition
   \ref{DEF_second_fundamental_form}, and finally by \(\|df\|_\gamma\),
   \(\|\nabla df\|_\gamma\), and \(\|B_u\|_\gamma\) we respectively mean
   the \(L^\infty\) norm of each over the support of \(f\). Recall that
   ${\mathcal Z}$ is the set of singular points of $u$, i.e. \({\mathcal
   Z}=\{\zeta\in S\ |\ Tu(\zeta)=0\}\).
\emph{Then} for any vector fields \(Y, Z\in \Gamma(TS\to
  (S\setminus\mathcal{Z}))\) we have
  \begin{equation}\label{EQ_gamma_tilde_estimate_1}                      
      \big| \langle Y, Z\rangle_\gamma - \langle Y,
      Z\rangle_{\tilde{\gamma}} \big| \leq \epsilon\|Y\|_{\gamma}
      \|Z\|_{\gamma}
    \end{equation}
  and
  \begin{equation}\label{EQ_gamma_tilde_estimate_2}                      
      \|\nabla_Y Z - \widetilde{\nabla}_Y Z \|_\gamma \leq
      \epsilon\|Y\|_{\gamma} \|Z\|_{\gamma}.
    \end{equation}
And for each one-form \(\alpha\) on \(S\) we have
  \begin{equation}\label{EQ_gamma_tilde_estimate_3}                      
    \|\nabla_Y \alpha - \widetilde{\nabla}_Y \alpha \|_\gamma \leq
    \epsilon\|Y\|_{\gamma} \|\alpha\|_{\gamma}.
    \end{equation}
\end{lemma}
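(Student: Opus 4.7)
My plan is to exploit the $\mathbb{R}$-invariance of the adapted structure $(J, g)$. From equation (\ref{EQ_formula_for_g}) we have $g = da\otimes da + \lambda\otimes\lambda + \omega(\cdot,J\cdot)$, so $\partial_a$ is a Killing vector field of constant unit $g$-norm. Any Killing field of constant norm satisfies $\nabla_{\partial_a}\partial_a = 0$ (pair the Killing equation with $\partial_a$ itself and use $\partial_a|\partial_a|^2 = 0$), so its integral curves are unit-speed geodesics; consequently $\exp^g_p(s\partial_a) = \varphi^s(p)$, where $\varphi^s$ is the time-$s$ translation flow of $\partial_a$, and condition (p\ref{EN_p4}) reduces to the explicit formula $\tilde u(\zeta) = \varphi^{f(\zeta)}(u(\zeta))$. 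Outside ${\rm supp}(f)$ one has $\tilde u = u$ and $\tilde\jmath = j$, so all three desired inequalities are trivial; I will therefore work on ${\rm supp}(f)$, where by (p\ref{EN_p3}) the map $u$ is an immersion so that the second fundamental form $B_u$ is genuinely defined.

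Differentiating the formula for $\tilde u$, and using that $\varphi^s$ is a $g$-isometry satisfying $(\varphi^s)_*\partial_a = \partial_a$, I would obtain $T\tilde u(Y) = (\varphi^{f(\zeta)})_* Tu(Y) + df(Y)\,\partial_a|_{\tilde u(\zeta)}$, and pairing two copies via the formula for $g$ yields
\begin{equation*}
\tilde\gamma(Y,Z) = \gamma(Y,Z) + df(Y)(u^*da)(Z) + df(Z)(u^*da)(Y) + df(Y)\,df(Z).
\end{equation*}
Since $(u^*da)(Y) = g(TuY,\partial_a)$ and $\|\partial_a\|_g=1$, Cauchy-Schwarz gives $\|u^*da\|_\gamma \leq 1$, so the smallness hypothesis on $\|df\|_\gamma$ yields (\ref{EQ_gamma_tilde_estimate_1}) with abundant room. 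That same pointwise bound, combined with a Neumann series expansion (legitimate because $\epsilon < 2^{-24}$), will also show that $\tilde\gamma^{-1} - \gamma^{-1}$ is controlled pointwise by a constant multiple of $\|df\|_\gamma$, a fact I will need in the next step.

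For (\ref{EQ_gamma_tilde_estimate_2}), write $\tilde\gamma = \gamma + h$ with $h$ the remainder above, and apply the standard Koszul identity
\begin{equation*}
2\,\tilde\gamma\bigl(\widetilde\nabla_X Y - \nabla_X Y,\, W\bigr) = (\nabla_X h)(Y,W) + (\nabla_Y h)(X,W) - (\nabla_W h)(X,Y),
\end{equation*}
then convert the left side to a $\gamma$-pairing using the Neumann estimate just mentioned. The problem thus reduces to a pointwise bound on $\nabla h$. Expanding, each term of $\nabla h$ is either a product involving $\nabla df = \nabla^2 f$ (controlled directly by hypothesis) or a product involving $\nabla(u^*da)$. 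For the latter, I will invoke the Gauss formula at immersed points of $u$: writing $\bar\nabla$ for the Levi-Civita connection of $g$, we have $\bar\nabla_{TuY}(TuZ) = Tu(\nabla_Y Z) + B_u(Y,Z)$, whence
\begin{equation*}
(\nabla_Y (u^*da))(Z) = g\bigl(B_u(Y,Z),\partial_a\bigr) + g\bigl(TuZ,\, \bar\nabla_{TuY}\partial_a\bigr),
\end{equation*}
and $\|\bar\nabla\partial_a\|_g$ is bounded by an ambient constant because $\partial_a$ is Killing on a manifold of controlled geometry. Combining gives a pointwise bound of the form $\|\nabla h\|_\gamma \leq C_0\bigl(\|\nabla df\|_\gamma + \|df\|_\gamma(1+\|B_u\|_\gamma)\bigr)$ with $C_0$ depending only on ambient geometry, from which the hypothesis delivers (\ref{EQ_gamma_tilde_estimate_2}). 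Inequality (\ref{EQ_gamma_tilde_estimate_3}) follows from (\ref{EQ_gamma_tilde_estimate_2}) by duality, since the connection difference on one-forms is minus the transpose of the connection difference on vector fields.

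The principal subtlety, and the step that will consume essentially all of the bookkeeping, is tracking constants precisely enough that $\|B_u\|_\gamma$ enters $\nabla h$ only multiplied by $\|df\|_\gamma$ and never by $\|\nabla df\|_\gamma$; this asymmetric dependence is exactly what the factor $1/(1+\|B_u\|_\gamma)$ in the hypothesis is calibrated to exploit. All absolute constants arising from the curvature of $(\mathbb{R}\times M, g)$ on a neighborhood of ${\rm supp}(f)$, from $\|\bar\nabla\partial_a\|_g$, and from the Neumann correction converting $\tilde\gamma^{-1}$ to $\gamma^{-1}$, must then be consolidated into the universal prefactor $2^{11}$ appearing in the statement.
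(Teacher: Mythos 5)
Your proposal is correct and takes a genuinely different, more intrinsic route than the paper.  The paper's proof works entirely in coordinates: it fixes $\zeta_0\in{\rm supp}(f)$, takes $\gamma$-geodesic normal coordinates on $S$ and geodesic normal coordinates on $\mathbb{R}\times M$ adapted to the splitting $da\otimes da + \lambda\otimes\lambda + \omega(\cdot,J\cdot)$, writes $\tilde u^i = u^i + \delta^{0i}f$, and reads off $\tilde\gamma_{k\ell} - \gamma_{k\ell} = f_{,k}u^0_{,\ell} + u^0_{,k}f_{,\ell} + f_{,k}f_{,\ell}$ and the corresponding Christoffel-symbol estimates directly, using $|u^0_{,k\ell}| \le \|B_u\|_\gamma$.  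Your version derives the identical tensorial identity $\tilde\gamma = \gamma + h$ with $h(Y,Z) = df(Y)(u^*da)(Z) + df(Z)(u^*da)(Y) + df(Y)\,df(Z)$ coordinate-free via the flow/exponential identification, and then routes the connection estimate through the Koszul formula for the difference of two Levi-Civita connections plus the Gauss equation.  This is tidier conceptually and makes transparent the one structural fact the paper's bookkeeping is calibrated around: $\|B_u\|$ enters only multiplied by $\|df\|$, never by $\|\nabla df\|$, which is what the prefactor $(1+\|B_u\|_\gamma)^{-1}$ in the hypothesis is designed to absorb.  Your duality argument for (\ref{EQ_gamma_tilde_estimate_3}) also matches the effect of the paper's computation.

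One step needs tightening.  You assert that $\|\bar\nabla\partial_a\|_g$ is ``bounded by an ambient constant'' and then propose to absorb that constant, along with the others, into the universal prefactor $2^{11}$.  That step does not work as stated: $2^{11}$ is a fixed numerical constant, so any contribution from $\|\bar\nabla\partial_a\|_g$ that depends on $(M,\eta,J,g)$ cannot be absorbed into it.  (The hypothesis $\|df\|_\gamma + \|\nabla df\|_\gamma \le \frac{\epsilon}{2^{11}(1+\|B_u\|_\gamma)}$ does not leave room for a further geometry-dependent multiplicative loss.)  Fortunately the constant is actually zero: by equation (\ref{EQ_formula_for_g}) the metric $g$ splits as $da\otimes da$ plus an $a$-independent metric on $M$, so $\mathbb{R}\times M$ is a Riemannian product and $\partial_a$ is $\bar\nabla$-parallel; this is exactly Corollary \ref{COR_da_is_parallel}.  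Your own observation that $\partial_a$ is a unit-norm Killing field gives only $\bar\nabla_{\partial_a}\partial_a = 0$ (geodesic flow), which is all you need for the exponential formula $\tilde u = \varphi^{f}\circ u$, but not the full parallelism needed to make $\nabla(u^*da)(Y,Z) = g(B_u(Y,Z),\partial_a)$ exact with no extrinsic remainder.  Cite the product-metric structure or Corollary \ref{COR_da_is_parallel} at that point and the argument closes with all constants genuinely universal.
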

%
\begin{proof}
Since the inequalities are trivially true for any \(\zeta\in S\setminus
  {\rm supp}(f)\), we begin by fixing a point \(\zeta_0\in {\rm supp}(f)\),
  and  letting \((y^1, y^2)\) denote \(\gamma\)-normal geodesic coordinates
  centered at \(\zeta_0\).
Next, we let \((x^1, \ldots, x^{2m -1})\) denote normal geodesic
  coordinates on \(M\) associated to metric \(\lambda\otimes \lambda +
  \omega\circ (\mathds{1}\times J\pi_\xi)\) and centered at the point
  \(u(\zeta_0)\); here \(\pi_\xi\) is the projection along the line bundle
  \({\rm ker}\, \omega\to M\) to the hyperplane distribution \(\xi:={\rm
  ker}\, \lambda\).
Define the coordinate \(x^0:= a\) where \(a\) is the usual
  symplectization coordinate on \(\mathbb{R}\).
We simplify the following computation using the abbreviation
  \(\tilde{u}^i=x^i\circ \tilde{u}\) and \(\tilde{u}_{, i}= Tu\cdot
  \partial_{y^i}\), and then
  \begin{align*}                                                            
      \tilde{\gamma}_{k\ell} &=\langle \tilde{u}_{,k},
	\tilde{u}_{,\ell}\rangle_g\\
      &=
	g_{ij}\tilde{u}_{,k}^i \tilde{u}_{,\ell}^j\\
      &=
	g_{ij}(u_{, k}^i + \delta^{0i}f_{, k})(u_{, \ell}^j +
	\delta^{0j}f_{, \ell})\\
      &=
	g_{ij}u_{, k}^i u_{, \ell}^j +
	g_{ij}\delta^{0i}f_{, k}u_{, \ell}^j +
	g_{ij} u_{, k}^i  \delta^{0j}f_{, \ell} +
	g_{ij} \delta^{0i}f_{, k} \delta^{0j}f_{, \ell}\\
      &=
	g_{ij}u_{, k}^i u_{, \ell}^j +
	g_{0j}f_{, k}u_{, \ell}^j +
	g_{i0} u_{, k}^i  f_{, \ell} +
	g_{00} f_{, k} f_{, \ell}\\
      &=
	g_{ij}u_{, k}^i u_{, \ell}^j +
	f_{, k}u_{, \ell}^0 +
	u_{, k}^0  f_{, \ell} +
	f_{, k} f_{, \ell}\\
      &=\gamma_{k \ell} +f_{, k}u_{, \ell}^0 +
	u_{, k}^0  f_{, \ell} +
	f_{, k} f_{, \ell}; 
    \end{align*}
  where throughout the above computation \(g_{ij}\) is evaluated at
  \(\tilde{u}(y^1, y^2)\), and \(g_{ij}u_{, k}^i u_{, \ell}^j=\gamma_{k
  \ell}\) because the \(g_{ij}\) are independent of the \(x^0\)
  coordinate; that is, the metric \(g\) is independent of translation
  in the symplectization direction.
To be clear, by definition it is true that
  \begin{align*}                                                            
    g_{ij}(u(y^1, y^2))u^i_{,k}(y^1,y^2)
    u^j_{,l}(y^1,y^2)=\gamma_{kl}(y^1,y^2),
    \end{align*}
however we are claiming
  \begin{align*}                                                            
    g_{ij}(\tilde{u}(y^1, y^2))u^i_{,k}(y^1,y^2)
    u^j_{,l}(y^1,y^2)=\gamma_{kl}(y^1,y^2),
    \end{align*}
which is only true because \(\tilde{u}(y^1,y^2) = u(y^1,y^2) +
  \big(f(y^1,y^2), 0, \ldots, 0\big)\) and \(g\) is invariant under
  \(\mathbb{R}\)-shifts.
Furthermore, since \((y^1, y^2)\) are \(\gamma\)-normal geodesic
  coordinates, it follows that\footnote{See for instance equation
  (\ref{EQ_christoffel_prop_2}).} \(\gamma_{ij, k}(\zeta_0)=0\).
Consequently
\begin{equation}\label{EQ_christoffel_term_estimate}                     
    \tilde{\gamma}_{k\ell, n}(\zeta_0) =
    (f_{, kn}u_{, \ell}^0 +
    f_{, k}u_{, \ell n}^0 +
    u_{, kn}^0  f_{, \ell} +
    u_{, k}^0  f_{, \ell n} +
    f_{, kn } f_{, \ell} +
    f_{, k} f_{, \ell n}) \big|_{\zeta_0}
  \end{equation}
It will be convenient to evaluate a few functions at \(\zeta_0\); making
  use of the fact that \((y^1, y^2)\) is a \(\gamma\)-normal geodesic
  coordinate system, together with Lemma \ref{LEM_g_and_Gamma_properties}
  and the fact that \(\|dx^0\|_g = 1\) the following inequalities are
  straightforward to verify.
\begin{align}                                                             
    |f_{, k}(\zeta_0)| 
    &\leq \|df\|_{\gamma}\label{EQ_messy_1}\\
      |f_{,k\ell} (\zeta_0)|&\leq\|\nabla df\|_{\gamma}\label{EQ_messy_2}\\
      |u_{,k}^0(\zeta_0)|
    &\leq 1.\label{EQ_messy_3}
  \end{align}
Now we make use of some elementary results from Riemannian geometry which
  are elaborated upon in Appendix  \ref{SEC_riemannian}, specifically Lemma
  \ref{LEM_g_and_Gamma_properties}, Corollary
  \ref{COR_g_and_Gamma_properties}, and Definition
  \ref{DEF_second_fundamental_form}, and we establish the following.
\begin{align}                                                             
    |u_{, k\ell }^0(\zeta_0)|
    &=\big|\partial_{y^\ell} \big(dx^0(u_{,
      k})\big)\big|_{\zeta_0}\big|\notag\\\
    &\leq |(\nabla_{u_{,\ell}} dx^0)(u_{, k})| +
      |dx^0(\nabla_{u_{,\ell}} u_{, k})|\notag\\
    &\leq  |(\nabla_{u_{,\ell}} dx^0)(u_{, k})| +
      \|\nabla_{u_{,\ell}} u_{, k}\|_g\notag\\
    &\leq   \|(\nabla_{u_{,\ell}} u_{, k})^\top\|_g +
      \|(\nabla_{u_{,\ell}} u_{, k})^\bot\|_g\notag \\
    &=\|\nabla_{\partial_{y^\ell}} \partial_{y^k}\|_{u^*g} + \|B_u(u_{,
      k}, u_{, \ell})\|_g \notag\\
    &\leq \|B_u\|_g. \label{EQ_ukl_estimate}
  \end{align}
With this inequality established, we let \(Y=Y^i \partial_{y^i}\) and
  \(Z=Z^i\partial_{y^i}\), and establish inequality
  (\ref{EQ_gamma_tilde_estimate_1}).
The following functions, forms, vector fields, etc, are all assumed to
  be evaluated at \(\zeta_0\).
\begin{align*}                                                            
    \big| \langle Y, Z \rangle_{\tilde{\gamma}} - \langle Y, Z
      \rangle_{\gamma} \big|
    &= 
      \big|\tilde{\gamma}_{k\ell} Y^k Z^\ell -  \gamma_{k \ell} Y^k
      Z^\ell \big| \\
    &\leq 
      \big(|f_{, k}u_{, \ell}^0| + |u_{, k}^0  f_{, \ell}| + |f_{, k}
      f_{, \ell}|\big)|Y^k| |Z^\ell|\\
    &\leq 
      4\big(2\|df\|_\gamma + \|df\|_\gamma^2 \big)\|Y\|_\gamma
      \|Z\|_\gamma\\
    &\leq \epsilon \|Y\|_\gamma \|Z\|_\gamma;
  \end{align*}
  this establishes inequality (\ref{EQ_gamma_tilde_estimate_1}). 
To establish inequality (\ref{EQ_gamma_tilde_estimate_2}), we make
  use of the formula for the Christoffel symbols given in equation
  (\ref{EQ_christoffel}) as well as the fact that the Christoffel symbols
  vanish at the center of normal geodesic coordinates to estimate the
  following; again all functions, forms, etc are assumed to be evaluated
  at \(\zeta_0\).
\begin{align}                                                             
    \|\nabla_Y Z - \widetilde{\nabla}_Y Z\|_\gamma 
    &= 
      \|Y^i Z^j \widetilde{\Gamma}_{ij}^k
      \partial_{y^k}\|_{\gamma}\notag\\
    &\leq 
      2^3 \|Y\|_\gamma \|Z\|_\gamma \max_{i, j, k}
      |\widetilde{\Gamma}_{ij}^k|\notag\\
    &= 
      2^3 \|Y\|_\gamma \|Z\|_\gamma \max_{i, j, k} \big|
      \tilde{\gamma}^{k\ell} \big(\tilde{\gamma}_{i\ell, j} +
      \tilde{\gamma}_{j\ell, i} - \tilde{\gamma}_{ij, \ell}  \big)
      \big|\notag\\
    &\leq 
      2^6 \|Y\|_\gamma \|Z\|_\gamma \big( \max_{i, j}
      |\tilde{\gamma}^{ij}|  \big) \big(\max_{i, j, k}
      |\tilde{\gamma}_{ij, k}|\big).\label{EQ_messy_6}
  \end{align}
To continue, we make use of inequality (\ref{EQ_gamma_tilde_estimate_1}),
  which guarantees that \(|\delta_{ij} - \tilde{\gamma}_{ij}|\leq
  \epsilon\), and estimate
  \begin{align}                                                           
    \max_{i, j} |\tilde{\gamma}^{ij}| 
    &\leq 
      \frac{\max_{i, j}
      |\tilde{\gamma}_{ij}|}{\tilde{\gamma}_{11}\tilde{\gamma}_{22}
      - \tilde{\gamma}_{12}\tilde{\gamma}_{21}}\notag\\
    &\leq 
      \frac{1+\epsilon}{(1-\epsilon)^2 -\epsilon^2}\notag\\
    &\leq 
      2^2.\label{EQ_messy_5}
    \end{align}
By combining inequalities (\ref{EQ_christoffel_term_estimate})  -
  (\ref{EQ_ukl_estimate}) we have
    \begin{align*}                                                        
      |\tilde{\gamma}_{ij, k}(\zeta_0)| 
      &\leq 
	2 \|\nabla df\|_\gamma +2  \|df\|_\gamma\, \|B_u\|_\gamma +
	2 \|df\|_\gamma\, \|\nabla df\|_\gamma \\
      &\leq 
	2\Big( \frac{\epsilon}{2^{11} (\|B_u\|_\gamma+1)} + \frac{\epsilon
	\|B_u\|_\gamma}{2^{11} (\|B_u\|_\gamma+1)}+ \frac{\epsilon}{2^{11}
	(\|B_u\|_\gamma+1)}\Big)\\
      &\leq \frac{\epsilon}{2^8}.
    \end{align*}  
Combining this with inequalities (\ref{EQ_messy_6}) and (\ref{EQ_messy_5})
  then yields
  \begin{equation*}                                                       
      \|\nabla_Y Z - \widetilde{\nabla}_Y Z\|_\gamma \leq \frac{2^6 \cdot
      2^2 \epsilon }{2^8}\|Y\|_\gamma \|Z\|_\gamma\leq
      \epsilon\|Y\|_\gamma \|Z\|_\gamma.
    \end{equation*}
This establishes inequality (\ref{EQ_gamma_tilde_estimate_2}). 
Finally, recall (e.g. from Section \ref{SEC_riemannian}) that
  \(\widetilde{\nabla}_{\partial_{y^k}}dy^i=-\widetilde{\Gamma}_{ki}^\ell
  dy^\ell \) and at \(\zeta_0\) we have \(\nabla_{\partial_{y^k}}dy^i=0\).
Consequently, if \(\alpha\in \Omega^1(S)\) is a one-form written in
  coordinates as \(\alpha=\alpha_\ell dy^\ell\), then at \(\zeta_0\)
  we have
  \begin{align*}                                                          
      \|\nabla_Y \alpha - \widetilde{\nabla}_Y \alpha\|_\gamma &= \|Y^i
      \alpha_j \widetilde{\Gamma}_{ij}^k d x^k\|_{\gamma}\leq \epsilon\|
      Y\|_{\gamma} \|\alpha\|_{\gamma}
    \end{align*}
  as above. 
This establishes inequality (\ref{EQ_gamma_tilde_estimate_3}), and
  completes the proof of Lemma \ref{LEM_gamma_tilde_estimates}.
\end{proof}
Given a framed Hamiltonian manifold \((M, \eta)\) and an
  \(f\)-perturbation \((\tilde{u}, S, \tilde{\jmath})\) of a generally
  immersed pseudoholomorphic map \((u, S, j)\) in \(\mathbb{R}\times M\)
  with \((J, g)\) an \(\eta\)-adapted almost Hermitian structure, we
  define a one-form \(\tilde{\alpha}\) by the following.
  \begin{equation}\label{EQ_def_alpha_tilde}                              
    \tilde{\alpha}\in \Omega^1(S)\qquad\text{by}\qquad\tilde{\alpha}:=
      -(\tilde{u}^*da)\circ \tilde{\jmath}
    \end{equation}

\begin{lemma}[$d\tilde{\alpha}$ estimate]
  \label{LEM_d_alpha_tilde}
  \hfill\\
Let \(\tilde{\alpha}\) be the one-form defined by equation
  (\ref{EQ_def_alpha_tilde}).
Then 
  \begin{equation}\label{EQ_d_tilde_alpha_1}                              
    d\tilde{\alpha} = \big(\widetilde{\Delta} (a\circ \tilde{u})\big)
      {\rm vol}_{\tilde{\gamma}}
    \end{equation}
where \(\widetilde{\Delta}\) is the Laplace-Beltrami operator given by
  \begin{align*}                                                          
    \widetilde{\Delta} f 
    &= 
      {\rm tr} (\widetilde{\nabla} df)\\
    &= 
      \tilde{\gamma}^{ij}\Big(\widetilde{\nabla}_{\partial_{x^i}}(
      \widetilde{\nabla}_{\partial_{x^j}} f) -
      \widetilde{\nabla}_{\widetilde{\nabla}_{\partial_{x^i}}
      \partial_{x^j}}f\Big)
    \end{align*}
  and \({\rm vol}_{\tilde{\gamma}}\) is the volume form\footnote{See
  Section \ref{SEC_riemannian} for more details, specifically equation
  (\ref{EQ_hausdorff_measure}).} associated to \(\tilde{\gamma}\)
  and the orientation on \(S\).
Consequently, the following pointwise equality holds 
\begin{equation}\label{EQ_d_tilde_alpha_2}                                
  \|d\tilde{\alpha}\|_{\tilde{\gamma}}= |\widetilde{\Delta}(a\circ
    \tilde{u})|.
  \end{equation}
\end{lemma}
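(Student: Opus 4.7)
The plan is to recognize $\tilde{\alpha}$ as the Hodge star of the exact one-form $d(a\circ \tilde{u})$, and then invoke the standard identification of the Laplace-Beltrami operator on a two-dimensional oriented Riemannian manifold. By construction, $\tilde{\jmath}$ is a $\tilde{\gamma}$-isometry compatible with the orientation on $S$, so on one-forms the Hodge star $\ast$ with respect to $\tilde{\gamma}$ and this orientation satisfies $\ast\beta = -\beta\circ \tilde{\jmath}$. This identity can be verified pointwise in any positively oriented $\tilde{\gamma}$-orthonormal frame $\{e_1, e_2=\tilde{\jmath}e_1\}$ with dual coframe $\{e^1,e^2\}$: one has $\ast e^1 = e^2$ and $\ast e^2 = -e^1$, and a direct computation shows $-e^i\circ \tilde{\jmath}$ agrees. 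Hence
\begin{equation*}
\tilde{\alpha} = -(\tilde{u}^\ast da)\circ \tilde{\jmath} = -d(a\circ \tilde{u})\circ \tilde{\jmath} = \ast\, d(a\circ \tilde{u}).
\end{equation*}

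Next, I would appeal to the standard formula on any oriented Riemannian 2-manifold that $d\ast d h = (\widetilde{\Delta} h)\, {\rm vol}_{\tilde{\gamma}}$ for any smooth function $h$, where $\widetilde{\Delta}$ is exactly the Laplace-Beltrami operator appearing in the lemma's statement. A short verification in local coordinates (or an orthonormal coframe) reduces this to the flat computation: if $h = h(x,y)$ on $(\mathbb{R}^2, dx^2+dy^2)$, then $\ast dh = h_x\, dy - h_y\, dx$ and $d\ast dh = (h_{xx}+h_{yy})\, dx\wedge dy$, while $\widetilde{\Delta} h$ reduces to $h_{xx}+h_{yy}$; the general case follows since both sides are tensorial. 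Applying this identity with $h = a\circ \tilde{u}$ yields equation (\ref{EQ_d_tilde_alpha_1}) immediately.

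Finally, equation (\ref{EQ_d_tilde_alpha_2}) follows by noting that ${\rm vol}_{\tilde{\gamma}}$, as the Riemannian volume form on a two-dimensional oriented manifold, is a unit-norm section of $\Lambda^2 T^\ast S$ with respect to the induced norm $\|\cdot\|_{\tilde{\gamma}}$. Thus
\begin{equation*}
\|d\tilde{\alpha}\|_{\tilde{\gamma}} = \bigl|\widetilde{\Delta}(a\circ \tilde{u})\bigr| \cdot \|{\rm vol}_{\tilde{\gamma}}\|_{\tilde{\gamma}} = \bigl|\widetilde{\Delta}(a\circ \tilde{u})\bigr|.
\end{equation*}

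There is no serious obstacle here: the lemma is essentially a reformulation of the classical identity relating the Hodge Laplacian on functions, the exterior derivative, and the Hodge star on a Riemann surface. The only point requiring care is the sign convention relating $\tilde{\jmath}$ and $\ast$, which must match both the orientation conventions of the paper (Remark \ref{REM_orientations_surfaces}) and the defining property (p\ref{EN_p6}) of $\tilde{\jmath}$ as a $\tilde{u}^\ast g$-isometry; a single-frame computation suffices to pin this down.
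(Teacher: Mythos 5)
Your proof is correct, but it takes a genuinely different route from the paper's. The paper works pointwise in $\tilde{\jmath}$-holomorphic (isothermal) coordinates $(\tilde{s},\tilde{t})$ normalized at the base point, expands $\tilde{\alpha} = -(a\circ\tilde{u})_{\tilde{t}}\,d\tilde{s} + (a\circ\tilde{u})_{\tilde{s}}\,d\tilde{t}$, differentiates, and then converts the ordinary second partials into the covariant Hessian using the auxiliary fact that $\widetilde{\nabla}_{\partial_{\tilde{s}}}\partial_{\tilde{s}} + \widetilde{\nabla}_{\partial_{\tilde{t}}}\partial_{\tilde{t}} = 0$ in conformal coordinates (Lemma \ref{LEM_complex_coord_property}). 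You instead identify $\tilde{\alpha} = \ast\, d(a\circ\tilde{u})$ via the pointwise relation $\ast\beta = -\beta\circ\tilde{\jmath}$ (whose sign you correctly pin to the orientation convention of Remark \ref{REM_orientations_surfaces} and property (p5)/(p6) of the perturbed map) and then invoke the standard identity $d\ast dh = (\widetilde{\Delta} h)\,{\rm vol}_{\tilde{\gamma}}$ on an oriented surface. Your approach buys two things: it bypasses the existence of isothermal coordinates for $\tilde{\jmath}$ (which the paper's choice of holomorphic coordinates silently uses) and it makes Lemma \ref{LEM_complex_coord_property} unnecessary, reducing everything to textbook Hodge-theoretic facts; the paper's computation, in exchange, is self-contained and stays within the explicit coordinate framework it reuses elsewhere. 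One small wording caveat: the justification ``both sides are tensorial'' is loose, since each side involves second derivatives of $h$ and first derivatives of the metric; the clean statement is that both sides are globally defined two-forms, so the identity is pointwise and may be checked at each point in geodesic normal coordinates, where the Christoffel symbols and the first derivatives of the metric vanish and both sides reduce to the flat formula. The final step, using $\|{\rm vol}_{\tilde{\gamma}}\|_{\tilde{\gamma}} = 1$ to get (\ref{EQ_d_tilde_alpha_2}), is exactly as in the paper.
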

%
\begin{proof}
Fix \(\zeta_0\in S\), and let \((\tilde{s},
  \tilde{t})\) denote \(\tilde{\jmath}\)-holomorphic
  coordinates centered at \(\zeta_0\) for which
  \(\|\partial_{\tilde{s}}\|_{\tilde{\gamma}}= 1
  =\|\partial_{\tilde{t}}\|_{\tilde{\gamma}}\) at \(\zeta_0\).
Then in these coordinates we have
  \begin{align*}                                                          
    \tilde{\alpha} 
    &= 
      \tilde{\alpha}(\partial_{\tilde{s}})\, d\tilde{s} +
      \tilde{\alpha}(\partial_{\tilde{t}})\, d\tilde{t}\\
    &= 
      -da(T\tilde{u}\cdot \tilde{\jmath}\cdot \partial_{\tilde{s}})\,
      d\tilde{s}   - da(T\tilde{u}\cdot \tilde{\jmath} \cdot
      \partial_{\tilde{t}})\, d\tilde{t}\\
    &= 
      -da(T\tilde{u}\cdot  \partial_{\tilde{t}})\, d\tilde{s}   +
      da(T\tilde{u}\cdot  \partial_{\tilde{s}})\, d\tilde{t}\\
    &=
      -(a\circ \tilde{u})_{\tilde{t}}\, d\tilde{s} + (a\circ
      \tilde{u})_{\tilde{s}}\, d\tilde{t}.
    \end{align*}
Consequently
  \begin{align*}                                                          
    d\tilde{\alpha}&=\big( (a\circ \tilde{u})_{\tilde{s}\tilde{s}} +
      (a\circ \tilde{u})_{\tilde{t}\tilde{t}}\big)\, d\tilde{s}\wedge
      d\tilde{t}.
    \end{align*}
Next we note
  \begin{align*}                                                          
    (a\circ \tilde{u})_{\tilde{s}\tilde{s}}
    &=
      \widetilde{\nabla}_{\partial_{\tilde{s}}} (a\circ
      \tilde{u})_{\tilde{s}}=\widetilde{\nabla}_{\partial_{\tilde{s}}}\big(
      d(a\circ \tilde{u}) (\partial_{\tilde{s}})\big)\\
    &= 
      \big(\widetilde{\nabla}_{\partial_{\tilde{s}}}
      d(a\circ u)\big)(\partial_{\tilde{s}}) + d(a\circ
      \tilde{u})\big(\widetilde{\nabla}_{\partial_{\tilde{s}}}
      \partial_{\tilde{s}}\big).
    \end{align*}
  and thus
  \begin{align*}                                                          
    d\tilde{\alpha}&=
      \Big(\big(\widetilde{\nabla}_{\partial_{\tilde{s}}}
      d(a\circ u)\big)(\partial_{\tilde{s}})
      +\big(\widetilde{\nabla}_{\partial_{\tilde{t}}} d(a\circ
      u)\big)(\partial_{\tilde{t}}) \Big) \, d\tilde{s}\wedge
      d\tilde{t}\\
    &\qquad+ 
      \Big(d(a\circ
      \tilde{u})\big(\widetilde{\nabla}_{\partial_{\tilde{s}}}
      \partial_{\tilde{s}} + \widetilde{\nabla}_{\partial_{\tilde{t}}}
      \partial_{\tilde{t}}\big)\Big)\, d\tilde{s}\wedge d\tilde{t}\\
    &=
      \Big(\big(\widetilde{\nabla}_{\partial_{\tilde{s}}}
      d(a\circ \tilde{u})\big)(\partial_{\tilde{s}})
      +\big(\widetilde{\nabla}_{\partial_{\tilde{t}}} d(a\circ
      \tilde{u})\big)(\partial_{\tilde{t}}) \Big) \, d\tilde{s}\wedge
      d\tilde{t}
    \end{align*}
  where the final equality follows from Lemma
  \ref{LEM_complex_coord_property} below.
Finally, we evaluate this equality at \(\zeta_0\) and
  make use of the fact that at \(\zeta_0\) the ordered pair
  \((\partial_{\tilde{s}}, \partial_{\tilde{t}})\) is a positively oriented
  \(\tilde{\gamma}\)-orthonormal basis to conclude that indeed
  \begin{equation*}                                                       
    d\tilde{\alpha} = \big(\widetilde{\Delta} (a\circ \tilde{u})\big)
    {\rm vol}_{\tilde{\gamma}}.
    \end{equation*}
This establishes equation (\ref{EQ_d_tilde_alpha_1}); equation
  (\ref{EQ_d_tilde_alpha_2}) follows immediately.
\end{proof}
%

\begin{lemma}[conformal coordinates property]
  \label{LEM_complex_coord_property}
  \hfill \\
Let \(S\) be a smooth real \(2\)-dimensional manifold, equipped with
  an almost Hermitian structure \((j, \gamma)\).
Suppose further that \(\nabla\) denotes covariant differentiation
  with respect to the Levi-Civita connection associated to the metric
  \(\gamma\), and \((s, t)\) are local coordinates for which \(j\partial_s
  = \partial_t\).
Then 
  \begin{equation*}                                                       
      \nabla_{\partial_s} \partial_s + \nabla_{\partial_t}\partial_t = 0.
    \end{equation*}
\end{lemma}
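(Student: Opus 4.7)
The plan is to reduce to a direct computation in isothermal coordinates. The hypothesis that $j\partial_s = \partial_t$ together with $j$ being a $\gamma$-isometry forces the coordinate system $(s,t)$ to be isothermal, i.e.\ $\gamma$ takes the conformally flat form $\gamma = \lambda(ds\otimes ds + dt\otimes dt)$ for some positive smooth function $\lambda$. Once this normal form is in place, the identity will follow from an elementary Christoffel-symbol calculation.

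First I would establish the conformal form. Since $j$ is a $\gamma$-isometry, $\gamma(\partial_s,\partial_s) = \gamma(j\partial_s,j\partial_s) = \gamma(\partial_t,\partial_t)$, so the diagonal entries agree; call the common value $\lambda$. For the off-diagonal entry, observe that for any nonzero $v$, one has $\gamma(v, jv) = \gamma(jv, j^2 v) = -\gamma(jv, v) = -\gamma(v, jv)$, so $\gamma(v,jv)=0$; applied to $v=\partial_s$ this gives $\gamma(\partial_s,\partial_t)=0$. Thus $\gamma_{ij} = \lambda\,\delta_{ij}$ with $\lambda>0$.

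Next I would compute the Christoffel symbols from the usual formula
\begin{equation*}
\Gamma_{ij}^k = \tfrac{1}{2}\gamma^{k\ell}\bigl(\gamma_{i\ell,j} + \gamma_{j\ell,i} - \gamma_{ij,\ell}\bigr).
\end{equation*}
Using $\gamma_{ij}=\lambda\delta_{ij}$ and $\gamma^{k\ell}=\lambda^{-1}\delta^{k\ell}$, this collapses to
\begin{equation*}
\Gamma_{ij}^k = \tfrac{1}{2\lambda}\bigl(\lambda_{,j}\delta_i^k + \lambda_{,i}\delta_j^k - \delta_{ij}\delta^{k\ell}\lambda_{,\ell}\bigr).
\end{equation*}
Specializing to $(i,j)=(1,1)$ and $(i,j)=(2,2)$, a short calculation yields $\Gamma_{11}^1 = \lambda_{,1}/(2\lambda) = -\Gamma_{22}^1$ and $\Gamma_{22}^2 = \lambda_{,2}/(2\lambda) = -\Gamma_{11}^2$. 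Consequently $\Gamma_{11}^k + \Gamma_{22}^k = 0$ for each $k\in\{1,2\}$.

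Finally, expanding $\nabla_{\partial_s}\partial_s + \nabla_{\partial_t}\partial_t = (\Gamma_{11}^k + \Gamma_{22}^k)\partial_{y^k}$ in these coordinates and invoking the cancellation from the previous step gives the desired identity. There is no real obstacle here; the only thing to be careful about is verifying the conformal normal form rigorously, since the rest is a mechanical computation. An equivalent and perhaps more illuminating way to phrase the final step is to note that the identity is just the statement that $-\gamma^{ij}\Gamma_{ij}^k = \Delta_\gamma y^k$ vanishes because the coordinate functions $y^1=s$ and $y^2=t$ are harmonic with respect to the conformally flat metric $\lambda(ds^2+dt^2)$; I would mention this as a conceptual remark but carry out the direct Christoffel computation for the actual proof.
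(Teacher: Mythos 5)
Your proof is correct and follows essentially the same route as the paper: establish that the metric is conformally flat, $\gamma = \lambda(ds^2+dt^2)$, using the $j$-isometry property, then compute the Christoffel symbols and observe the cancellation $\Gamma_{11}^k + \Gamma_{22}^k = 0$. The concluding remark relating the identity to harmonicity of the coordinate functions is a nice conceptual gloss not present in the paper, but the computation itself is identical.
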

%
\begin{proof}
Let \((x^1, x^2) = (s, t)\), so that \(\gamma=\gamma_{ij}\,dx^i\otimes
  dx^j\).
Observe that because \((j, \gamma)\) is almost Hermitian, we have
  \(\langle \partial_s, \partial_t \rangle_\gamma = \langle \partial_s,
  j\partial_s\rangle_\gamma =0\) and \(\langle \partial_s, \partial_s
  \rangle_\gamma = \langle j\partial_s , j \partial_s \rangle_\gamma =
  \langle \partial_t, \partial_t\rangle_\gamma\).
Consequently 
  \begin{equation*}                                                       
      \gamma = h\, ds^2 + h\, dt^2,
    \end{equation*}
  where \(h\) is a smooth positive function depending on \(s\) and \(t\). 
Let \(\Gamma_{ij}^k\) be denote the Christoffel symbols associated to
  the Levi-Civita connection corresponding to this metric.
Recall these are given by 
  \begin{equation*}                                                       
      \Gamma_{ij}^k=\frac{1}{2}\gamma^{k\ell}\big(\gamma_{i\ell, j} + \gamma_{j\ell,
      i} - \gamma_{ij, \ell}) \qquad\text{where}\qquad \gamma_{ij, k} =
      \frac{\partial}{\partial x^k} \gamma_{ij}.
    \end{equation*}
We compute
  \begin{align*}                                                          
      \nabla_{\partial_s} \partial_s + \nabla_{\partial_t}\partial_t 
      &= \big(\Gamma_{11}^1\partial_s + \Gamma_{11}^2\partial_t\big)
	+ \big(\Gamma_{22}^1\partial_s + \Gamma_{22}^2\partial_t\big) \\
      &=(\Gamma_{11}^1 + \Gamma_{22}^1)\partial_s + (\Gamma_{11}^2 +
	\Gamma_{22}^2)\partial_t;
    \end{align*}
  however,
  \begin{align*}                                                          
      \Gamma_{11}^1
      &=
	\frac{1}{2}\gamma^{11}(\gamma_{11, 1} + \gamma_{11,1} -
	\gamma_{11,1})=\frac{1}{2h}\frac{\partial}{\partial x^1} h\\
      \Gamma_{22}^1 &=
	\frac{1}{2}\gamma^{11}(\gamma_{21, 2} + \gamma_{21, 2} - \gamma_{22,
	1})=-\frac{1}{2h}\frac{\partial}{\partial x^1} h\\
      \Gamma_{11}^2 &=
	\frac{1}{2}\gamma^{22}(\gamma_{12, 1} + \gamma_{12, 1} - \gamma_{11,
	2})=-\frac{1}{2h}\frac{\partial}{\partial x^2} h\\
      \Gamma_{22}^2 &=
	\frac{1}{2}\gamma^{22}(\gamma_{22, 2} + \gamma_{22,2} -
	\gamma_{22,2})=\frac{1}{2h}\frac{\partial}{\partial x^2} h
    \end{align*}
The desired result is immediate.
\end{proof}
%

\begin{lemma}[coercive estimate]
  \label{LEM_linear_alg_coercive}
  \hfill\\
Let \(0<\epsilon <2^{-24} \), let \((M, \eta=(\lambda, \omega))\) be
  a framed Hamiltonian manifold, let \((J, g)\) be an \(\eta\)-adapted
  almost Hermitian structure on \(W:=\mathbb{R}\times M\), and let  \(u:(S,
  j)\to ( W, J)\) be a \(J\)-holomorphic map which is also an immersion.
Suppose further that \(\tilde{\mathbf{u}}=(\tilde{u}, S,
  \tilde{\jmath})\) is an \(f\)-perturbation of \((u, S, j)\), where
  \(f:S\to \mathbb{R}\) is a smooth function satisfying
  \begin{equation*}                                                       
      \|df\|_{\gamma}+ \|\nabla df\|_{\gamma} \leq \frac{\epsilon}{2^{11}
      (1+\|B_{u}\|_{\gamma})};
    \end{equation*}
  here \(\nabla\) denotes covariant differentiation with respect to the
  Levi-Civita connection associated to the metric \(\gamma=u^*g\),
  \(B_{u}\) denotes the second fundamental form of \(u:S\to
  \mathbb{R}\times M\) as given in Definition
  \ref{DEF_second_fundamental_form}, and finally by \(\|df\|_{\gamma}\),
  \(\|\nabla df\|_{\gamma}\), and \(\|B_{u}\|_{\gamma}\) we respectively
  mean the \(L^\infty\) norm of each.
\emph{Then}, for any \(\tilde{\gamma}\)-unit vector \(\tau\in TS\)
  we have
  \begin{equation*}
      \frac{1}{2}\leq \big((\tilde{u}^* da)\wedge \tilde{\alpha} +
      \tilde{u}^*\omega\big)(\tau, \tilde{\jmath}\tau).
    \end{equation*}
\end{lemma}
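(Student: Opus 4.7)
The plan is to reduce the claimed coercive inequality to the pointwise identity $g(X,X) = (da\wedge\lambda + \omega)(X,JX)$ from equation (\ref{EQ_formula_for_g}), applied to $X = T\tilde{u}\tau$, and then control the errors coming from the facts that $\tilde{u}$ is not pseudoholomorphic and $\tilde{\jmath}$ is not $j$. A direct computation using $\tilde{\alpha} = -(\tilde{u}^*da)\circ\tilde{\jmath}$ and $\tilde{\jmath}^2 = -\mathds{1}$ gives
\begin{equation*}
(\tilde{u}^*da\wedge\tilde{\alpha})(\tau,\tilde{\jmath}\tau) = d(a\circ\tilde{u})(\tau)^2 + d(a\circ\tilde{u})(\tilde{\jmath}\tau)^2.
\end{equation*}
Because $i_{\partial_a}\omega = 0$ and $i_{\partial_a}\lambda = 0$ (as $\omega$ and $\lambda$ are pulled back from $M$), the relation $T\tilde{u}\tau - Tu\tau = df(\tau)\,\partial_a$ implies $\tilde{u}^*\omega = u^*\omega$ and $\tilde{u}^*\lambda = u^*\lambda$, so equation (\ref{EQ_formula_for_g}) applied to $X = T\tilde{u}\tau$ combined with $\tilde{\gamma}(\tau,\tau)=1$ yields
\begin{equation*}
\omega(T\tilde{u}\tau, J T\tilde{u}\tau) = 1 - d(a\circ\tilde{u})(\tau)^2 - (u^*\lambda)(\tau)^2.
\end{equation*}

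Next I would split $\tilde{u}^*\omega(\tau,\tilde{\jmath}\tau) = \omega(T\tilde{u}\tau, J T\tilde{u}\tau) + \omega(T\tilde{u}\tau, T\tilde{u}\tilde{\jmath}\tau - J T\tilde{u}\tau)$ and insert the two displayed formulas. Everything telescopes to
\begin{equation*}
(\tilde{u}^*da\wedge\tilde{\alpha} + \tilde{u}^*\omega)(\tau,\tilde{\jmath}\tau) = 1 + \bigl[d(a\circ\tilde{u})(\tilde{\jmath}\tau)^2 - (u^*\lambda)(\tau)^2\bigr] + \omega(T\tilde{u}\tau, T\tilde{u}\tilde{\jmath}\tau - JT\tilde{u}\tau).
\end{equation*}
So the task reduces to showing the bracket plus the $\omega$-error is bounded below by $-\tfrac{1}{2}$.

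For the bracket, I would use pseudoholomorphicity $u^*\lambda = -(u^*da)\circ j$ to rewrite $(u^*\lambda)(\tau)^2 = d(a\circ u)(j\tau)^2$, and $d(a\circ\tilde{u}) = d(a\circ u) + df$, and then factor as a difference of two squares so that every resulting summand contains a factor of $df$ or of $(\tilde{\jmath}-j)\tau$. For the $\omega$-error I would use $Tu\circ j = J\circ Tu$ and $J\partial_a = X_\eta$ to compute
\begin{equation*}
T\tilde{u}\tilde{\jmath}\tau - JT\tilde{u}\tau = Tu(\tilde{\jmath}-j)\tau + df(\tilde{\jmath}\tau)\,\partial_a - df(\tau)\,X_\eta,
\end{equation*}
and then bound its $g$-norm using the elementary fact $|\omega(X,Y)| \leq \|X\|_g\|Y\|_g$ (which follows from equation (\ref{EQ_formula_for_g}) since $\omega(\cdot,J\cdot)$ is the restriction of $g$ to $\xi$), together with $\|\partial_a\|_g = \|X_\eta\|_g = 1$. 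The bound $\|T\tilde{u}\tau\|_g = \|\tau\|_{\tilde{\gamma}} = 1$ and the estimate $\|\tau\|_\gamma \leq 2$ (which follows from inequality (\ref{EQ_gamma_tilde_estimate_1}) applied with $Y=Z=\tau$ and $\epsilon < \tfrac{1}{2}$) convert each factor into something small.

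The only genuinely new ingredient needed is a pointwise bound of the form $\|(\tilde{\jmath}-j)\tau\|_\gamma \leq C_1(\|df\|_\gamma + \|\nabla df\|_\gamma)\|\tau\|_\gamma$ with $C_1$ depending only on $\|B_u\|_\gamma$. This I would deduce from Lemma \ref{LEM_gamma_tilde_estimates}: in an oriented two-dimensional vector space, the unique orientation-preserving isometric almost complex structure associated to a Riemannian inner product is an algebraic (rational) function of the metric coefficients, so the map ``metric $\mapsto$ orthogonal $\pi/2$-rotation'' is Lipschitz, and inequality (\ref{EQ_gamma_tilde_estimate_1}) controls $\tilde{\gamma} - \gamma$ in terms of the assumed smallness of $df$ and $\nabla df$. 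Combining this with the hypothesis $\|df\|_\gamma + \|\nabla df\|_\gamma \leq \epsilon/[2^{11}(1+\|B_u\|_\gamma)]$ forces each error term to be of size at most a small multiple of $\epsilon$, and $\epsilon < 2^{-24}$ is large room for keeping the total error below $\tfrac{1}{2}$. I expect the main obstacle to be precisely this bookkeeping; controlling $\tilde{\jmath} - j$ with constants explicit in $\|B_u\|_\gamma$ and then tracking the multiplicative losses through each factored term is routine but demands care to match the declared threshold $\epsilon < 2^{-24}$.
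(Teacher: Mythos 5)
Your argument is correct, but it takes a genuinely different decomposition from the paper's proof, so it is worth comparing. The paper compares the perturbed expression directly to the unperturbed one at the \emph{same} tangent vector: it estimates $\|j-\tilde{\jmath}\|_\gamma\leq 100\epsilon$ by a bare-hands Gram--Schmidt computation, then bounds $|\tilde{u}^*\omega(\tau,\tilde{\jmath}\tau)-u^*\omega(\tau,j\tau)|$ and $|(\tilde{u}^*da\wedge\tilde{\alpha})(\tau,\tilde{\jmath}\tau)-(u^*da\wedge u^*\lambda)(\tau,j\tau)|$ separately, and finally uses that $\big(u^*da\wedge u^*\lambda+u^*\omega\big)(\tau,j\tau)=\|\tau\|_\gamma^2$ is close to~$1$. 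You instead exploit $\tilde{\gamma}(\tau,\tau)=1$ and the decomposition $g=da\otimes da+\lambda\otimes\lambda+\omega(\cdot,J\cdot)$ applied to $X=T\tilde{u}\tau$ (together with the fact that $\lambda$ and $\omega$ annihilate $\partial_a$, so that $\tilde{u}^*\lambda=u^*\lambda$ and $\tilde{u}^*\omega=u^*\omega$) to write the target quantity as an \emph{exact} $1$ plus two error terms $\bigl[d(a\circ\tilde{u})(\tilde{\jmath}\tau)^2-(u^*\lambda)(\tau)^2\bigr]$ and $\omega(T\tilde{u}\tau,\,T\tilde{u}\tilde{\jmath}\tau-JT\tilde{u}\tau)$. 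This has the small virtue of eliminating the approximation $\|\tau\|_\gamma^2\approx 1$ that the paper's proof implicitly relies on (it is written there as an equality, which is strictly speaking only correct for a $\gamma$-unit vector). Both routes then rest on the same inputs: the metric comparison from Lemma~\ref{LEM_gamma_tilde_estimates}, a pointwise $\|\tilde{\jmath}-j\|_\gamma\lesssim\epsilon$ bound, and the elementary bound $|\omega(X,Y)|\leq\|X\|_g\|Y\|_g$. Your phrasing of the $\tilde{\jmath}-j$ estimate as Lipschitz dependence of the unique orientation-compatible isometric complex structure on the metric tensor is the conceptual content of the paper's Gram--Schmidt computation, and your factoring of the bracket as a product with at least one factor containing $df$ or $(\tilde{\jmath}-j)\tau$ is exactly what is needed. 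One small caveat in the bookkeeping: Lemma~\ref{LEM_gamma_tilde_estimates} outputs the metric comparison directly with constant $\epsilon$, so the cleanest deduction yields $\|\tilde{\jmath}-j\|_\gamma\leq C\epsilon$ with $C$ universal; writing the bound in the form $C_1(\|df\|_\gamma+\|\nabla df\|_\gamma)$ with $C_1$ depending on $\|B_u\|_\gamma$ is equivalent after chasing through the hypothesis, but introduces a dependence that then cancels, so it is cleaner to leave $\epsilon$ as the controlling parameter throughout.
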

%
\begin{proof}
To begin, we recall that as a consequence of Lemma
  \ref{LEM_gamma_tilde_estimates}, we have
  \begin{equation*}                                                       
      (1-2\epsilon)\|\tau\|_{\tilde{\gamma}}\leq
      \frac{1}{(1+\epsilon)^{\frac{1}{2}}}
      \|\tau\|_{\tilde{\gamma}} \leq \|\tau\|_{\gamma}\leq
      \frac{1}{(1-\epsilon)^{\frac{1}{2}}} \|\tau\|_{\tilde{\gamma}}
      \leq (1+2\epsilon)\|\tau\|_{\tilde{\gamma}}
    \end{equation*}
  for any \(\tau\in TS\); here we have made use of the elementary
  inequalities \((1-\epsilon)^{-\frac{1}{2}}\leq 1+2\epsilon\) and
  \(1-2\epsilon \leq (1+\epsilon)^{-\frac{1}{2}}\) which hold for
  \(0\leq \epsilon \leq \frac{1}{10}\).
We henceforth assume \(\|\tau\|_{\tilde{\gamma}}=1\). 
Our first task is to estimate \(\|j-\tilde{\jmath}\|_{\gamma}\). 
To that end, we recall that \(j\) and \(\tilde{\jmath}\) are the
  almost complex structures uniquely determined by the respective metrics
  \(\gamma\) and \(\tilde{\gamma}\) and the orientation on \(S\).
Consequently, we can write 
  \begin{equation*}                                                       
      j\tau = \|\tau\|_{\gamma}\frac{ \tilde{\jmath}\tau- \frac{\langle
      \tilde{\jmath} \tau, \tau\rangle_{\gamma} }{\|\tau \|_{\gamma}^2}
      \tau}{ \| \tilde{\jmath}\tau- \frac{\langle \tilde{\jmath} \tau,
      \tau\rangle_{\gamma} }{\|\tau \|_{\gamma}^2} \tau \|_{\gamma} }.
    \end{equation*}
We can then write
  \begin{align}\label{EQ_j_tilde_j}                                      
      \|(j-\tilde{\jmath})\tau \|_{\gamma}
      &= \| (K-1)\tilde{\jmath}\tau - KL\tau\|_{\gamma}.
    \end{align}
  where
  \begin{equation*}                                                       
      L=\frac{\langle \tilde{\jmath}\tau, \tau\rangle_{\gamma}
      }{\|\tau\|_{\gamma}^2} \qquad\text{and}\qquad K=\frac{
      \|\tau\|_{\gamma}} {\| \tilde{\jmath}\tau- L\tau \|_{\gamma}}.
    \end{equation*}
We now estimate the relevant terms.
\begin{align*}
    |L|&=\frac{|\langle \tilde{\jmath}\tau,
    \tau\rangle_{\gamma}|}{\|\tau\|_{\gamma}^2} 
    \leq \frac{\epsilon
    \|\tilde{\jmath}\tau\|_\gamma}{\|\tau\|_\gamma} \leq
    \epsilon\frac{1+2\epsilon}{1-2\epsilon} \leq 2\epsilon.
  \end{align*}
\begin{equation*}
    |K|\leq \frac{\|\tau\|_\gamma}{\|\tilde{\jmath} \tau\|_\gamma -
    |L|\|\tau\|_\gamma} \leq  \frac{1+2\epsilon}{(1-2\epsilon) -2\epsilon
    (1+2\epsilon)  } \leq 2.
  \end{equation*}
\begin{align*}                                                            
    \big| |K|-1 \big| 
    &= 
      \Big| \frac{\|\tau\|_\gamma - \|\tilde{\jmath}\tau -
      L\tau\|_\gamma}{  \|\tilde{\jmath}\tau - L\tau\|_\gamma} \Big|\\
    &\leq
      \frac{  \big|\|\tau\|_\gamma-1 \big|  + \big| \|\tilde{\jmath}\tau
      - L\tau\|_\gamma-1  \big|}{ (1-2\epsilon) -2\epsilon (1+2\epsilon)
      }\\
    &\leq\frac{ 2\epsilon + (1+2\epsilon)^2  - (1-2\epsilon -2\epsilon
      (1+2\epsilon)) }{ (1-2\epsilon) -2\epsilon (1+2\epsilon) }\\
    &\leq 20\epsilon.
  \end{align*}
Combining these estimates with equation (\ref{EQ_j_tilde_j}) immediately
  yields
  \begin{equation*}                                                       
      \|(j-\tilde{\jmath})\tau \|_{\gamma} \leq \big(|K-1| + |K|\,
      |L|\big) \|\tau\|_\gamma \leq 25\epsilon(1+2\epsilon),
    \end{equation*}
and hence
  \begin{equation*}                                                       
      \|j-\tilde{\jmath} \|_{\gamma} \leq 25\epsilon(1+2\epsilon)^2\leq
      100\epsilon.
    \end{equation*}
Now making use of the fact that \(\omega(\partial_a, \cdot) =0\),
  and the fact that
  \begin{equation*}                                                       
      \tilde{u}(\zeta) = \exp_{u(\zeta)}^{g}(f(\zeta) \partial_a)
    \end{equation*}
we find
  \begin{align*}                                                          
      |\tilde{u}^*\omega(\tau, \tilde{\jmath}\tau) - u^*\omega(\tau,
	j\tau)|&=|u^*\omega(\tau, (\tilde{\jmath}-j)\tau)|\\
      &\leq 
	\|u^*\omega\|_{\gamma} \|\tau\|_{\gamma}^2
	\|j-\tilde{\jmath}\|_{\gamma}\\
      &\leq 
	100\epsilon (1+2\epsilon)^2\|\omega\|_g\\
      &\leq 
	400\epsilon\\ 
      &\leq
	\frac{1}{10}; 
    \end{align*}
  here we have made use of the easily verified estimate \(\|\omega\|_g
  \leq 1\).
To obtain a similar estimate for the \(\tilde{u}^*da\wedge
  \tilde{\alpha}\) term, we first observe that
  \begin{align*}                                                          
      (\tilde{u}^* da)
      &\wedge 
	(\tilde{u}^*da \circ \tilde{\jmath})(\tau, \tilde{\jmath}\tau)\\
      &= 
	(df + u^* da)\wedge \big(df \circ \tilde{\jmath}+u^*da
	\circ j+u^*da \circ (\tilde{\jmath}- j)\big) (\tau,
	(\tilde{\jmath}-j)\tau)\\
      &\quad+ 
	(df + u^* da)\wedge \big(df \circ \tilde{\jmath}+u^*da \circ
	j+u^*da \circ (\tilde{\jmath}-j )\big) (\tau, j\tau)\\
    \end{align*}
From this it follows that
  \begin{align*}
      \big|(&\tilde{u}^* da)\wedge (\tilde{u}^*da \circ
	\tilde{\jmath})(\tau, \tilde{\jmath}\tau)
	-(u^* da)\wedge (u^*da \circ j)(\tau, j\tau) |\\
      &\leq 
	\big(\|df\|_{\gamma} + \|u^*
	da\|_{\gamma}\big) \big(\|df\|_{\gamma}
	\|\tilde{\jmath}\|_{\gamma} + \|u^*da\|_{\gamma}+
	\|u^*da\|_{\gamma}\|\tilde{\jmath}-j\|_{\gamma}
	\big)\|\tilde{\jmath}-j\|_{\gamma} \|\tau\|_{\gamma}^2\\
      &\quad+ 
	\|df\|_{\gamma}\big(\|df\|_{\gamma} \|\tilde{\jmath}\|_{\gamma}
	+\|u^*da\|_{\gamma}\|\tilde{\jmath}\|_{\gamma}\big)
	\|\tau\|_{\gamma}^2\\
      &\quad+ 
	\|u^*da\|_{\gamma}\big( \|df\|_{\gamma}
	\|\tilde{\jmath}\|_{\gamma} +
	\|u^*da\|_{\gamma}\|\tilde{\jmath}-j\|_{\gamma} \big)
	\|\tau\|_{\gamma}^2\\
      &\leq 
	\big(\epsilon + 1\big) \big(\epsilon(1+100\epsilon)+ 1+
	100\epsilon\big)100\epsilon(1+2\epsilon)^2\\
      &\quad+ 
	\epsilon\big(\epsilon (1+100\epsilon) +(1+100\epsilon)\big)
	(1+2\epsilon)^2\\
      &\quad+ 
	\big( \epsilon (1+100\epsilon)+ 100\epsilon \big)
	(1+2\epsilon)^2\\
      &\leq 
	1000\epsilon\\
      &\leq \frac{1}{10}.
    \end{align*}
Combining this with our previous estimate for \(|\tilde{u}^*\omega(\tau,
  \tilde{\jmath}\tau) - u^*\omega(\tau, j \tau)|\) then yields
  \begin{align*}                                                          
      \big|\big((\tilde{u}^* da)\wedge \tilde{\alpha} 
      &+ 
	\tilde{u}^*\omega\big)(\tau, \tilde{\jmath} \tau) - 1\big|\\
      &\qquad=
	\big|\big((\tilde{u}^* da)\wedge \tilde{\alpha} +
	\tilde{u}^*\omega\big)(\tau, \tilde{\jmath} \tau) - \big((u^*
	da)\wedge  \alpha + u^*\omega\big)(\tau, j \tau) \big|\\
      &\qquad\leq \frac{1}{2}.
    \end{align*}
where we have written
  \begin{equation*}                                                       
      \tilde{\alpha}=-\tilde{u}^*da \circ
      \tilde{\jmath}\qquad\text{and}\qquad\alpha=-u^*da \circ
      j=u^*\lambda.
    \end{equation*}
This is the desired estimate, and hence completes the proof of Lemma
  \ref{LEM_linear_alg_coercive}.
\end{proof}

Before proceeding, we must make some quick estimates of \(d\lambda\) and
  establish some geometric constants.
This is the purpose of Lemma \ref{LEM_dlambda_bounds} and Definition
  \ref{DEF_ambient_geometry_constant}, below.

\begin{lemma}[$d\lambda$ bounds]
  \label{LEM_dlambda_bounds}
  \hfill\\
Let \((M, \eta)\) be a framed Hamiltonian manifold with  \(\eta=(\lambda,
  \omega)\), let \((J, g)\) be an \(\eta\)-adapted almost Hermitian
  structure on \(\mathbb{R}\times M\),  and fix \(Y\in T(\mathbb{R}\times
  M)\). 
Then
  \begin{equation*}                                                       
    |d\lambda(Y, JY)| \leq C_0 (da\wedge \lambda+\omega)(Y, JY)
    \end{equation*}
  where 
\begin{align}\label{EQ_def_C}                                             
  C_0 =\|i_{X_\eta} d\lambda\big|_\xi\|_g  +\|d\lambda\big|_{\xi}\|_g
  \end{align}
  where
  \begin{equation*}                                                       
    \|i_{X_\eta} d\lambda\big|_\xi\|_g =
    \sup_{\substack{ Y\in \xi \\ Y\neq 0}}\frac{|d\lambda(
    X_\eta, Y)|}{(\omega(Y, JY))^{\frac{1}{2}}} 
    \end{equation*}
  and
  \begin{equation*} 
    \|d\lambda\big|_{\xi}\|_g=\sup_{\substack{
    Y\in \xi \\ Y\neq 0}} \frac{|d\lambda (Y, JY)|}{\omega(Y,
    JY)}.
    \end{equation*} 
\end{lemma}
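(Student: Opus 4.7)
The plan is to pointwise decompose $Y$ using the splitting $T(\mathbb{R}\times M) = \mathbb{R}\,\partial_a \oplus \mathbb{R}\,X_\eta \oplus \xi$, where $\xi = \ker\,\lambda \cap \ker\,da$, and then expand $d\lambda(Y, JY)$ by bilinearity. Writing $Y = c_1\partial_a + c_2 X_\eta + Y_\xi$, the key initial observation is that (abusing notation as in Definition \ref{DEF_adapted}) $\lambda$ and $\omega$ are pulled back from $M$, and hence $i_{\partial_a} d\lambda = 0$ and $i_{\partial_a}\omega = 0$. Combined with $J\partial_a = X_\eta$, $JX_\eta = -\partial_a$, and $J(\xi)\subset \xi$, this gives $JY = c_1 X_\eta - c_2\partial_a + JY_\xi$, and the $\partial_a$ entries drop out of $d\lambda(Y,JY)$. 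Since also $d\lambda(X_\eta, X_\eta) = 0$, the expansion collapses to
\begin{equation*}
d\lambda(Y, JY) = c_1\, d\lambda(Y_\xi, X_\eta) + c_2\, d\lambda(X_\eta, JY_\xi) + d\lambda(Y_\xi, JY_\xi).
\end{equation*}

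Next I would bound each of the three summands using the definitions of the two norms entering $C_0$. The third term is immediate from the definition of $\|d\lambda|_\xi\|_g$: $|d\lambda(Y_\xi, JY_\xi)| \leq \|d\lambda|_\xi\|_g \cdot \omega(Y_\xi, JY_\xi)$. For the cross terms, I would note that $JY_\xi\in\xi$ and that $\omega(JY_\xi, J(JY_\xi)) = \omega(Y_\xi, JY_\xi)$, so the definition of $\|i_{X_\eta}d\lambda|_\xi\|_g$ yields $|d\lambda(X_\eta, JY_\xi)| \leq \|i_{X_\eta}d\lambda|_\xi\|_g \cdot \omega(Y_\xi, JY_\xi)^{1/2}$, and similarly for $|d\lambda(X_\eta, Y_\xi)|$.

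Combining the three estimates and applying the elementary inequality $|c_i|\, \omega(Y_\xi, JY_\xi)^{1/2} \leq \tfrac{1}{2}(c_i^2 + \omega(Y_\xi, JY_\xi))$ to absorb the cross terms, one obtains
\begin{equation*}
|d\lambda(Y, JY)| \leq \bigl(\|i_{X_\eta}d\lambda|_\xi\|_g + \|d\lambda|_\xi\|_g\bigr)\,\bigl(c_1^2 + c_2^2 + \omega(Y_\xi, JY_\xi)\bigr).
\end{equation*}
Finally, a direct computation using $da(Y)=c_1$, $\lambda(Y)=c_2$, $da(JY)=-c_2$, $\lambda(JY)=c_1$, together with $i_{X_\eta}\omega = 0$ and $i_{\partial_a}\omega = 0$, gives $(da\wedge\lambda)(Y, JY) = c_1^2 + c_2^2$ and $\omega(Y, JY) = \omega(Y_\xi, JY_\xi)$, yielding the claimed inequality with $C_0$ as defined.

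The argument is pure pointwise linear algebra on each tangent space, so there is no real analytic difficulty. The only mildly delicate point is making sure the cross terms, which are only linear in $Y_\xi$, are correctly dominated using the $\omega$-norm to the half power and then repackaged via AM--GM into something linear in the full quadratic form $(da\wedge \lambda + \omega)(Y, JY) = \|Y\|_g^2$; provided one tracks that $J$ preserves $\xi$ and that $\omega$ restricted to $\xi$ is compatible with $J$, this is straightforward.
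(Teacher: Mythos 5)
Your proposal is correct and follows essentially the same route as the paper's proof: decompose $Y$ along $\mathbb{R}\partial_a \oplus \mathbb{R}X_\eta \oplus \xi$, use $i_{\partial_a}d\lambda = 0$, $i_{X_\eta}\omega = 0$, and $J$-compatibility to reduce to the three terms, bound each with the given norms, and absorb the cross terms by AM--GM. The paper's write-up packages the decomposition as $Y = Y_1 + Y_2$ and carries the scalars as $\lambda(Y)$, $da(Y)$ rather than $c_2$, $c_1$, but the computation and the final estimate are identical.
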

%
\begin{proof}
First, write \(Y=Y_1+Y_2\) with \(Y_1\in {\rm Span}(\partial_a, X_\eta)\)
  and  \(Y_2\in \xi\).
Then
  \begin{align*}                                                           
 |d\lambda(Y, JY)| &\leq 
      |d\lambda(Y_1, JY_1)| + |d\lambda(Y_1, JY_2)| + |d\lambda(Y_2,
      J Y_1)| + |d\lambda(Y_2, JY_2)| \\
    &=
      |d\lambda(Y_1, JY_2)| + |d\lambda(Y_2, J Y_1)| + |d\lambda(Y_2,
      JY_2)|\\
    &=
      |d\lambda(\lambda(Y)X_{\eta} ,J Y_2)| + |d\lambda(Y_2, \lambda(J
      Y)X_{\eta})| + |d\lambda(Y_2, J Y_2)|\\
    &=
      |\lambda(Y)| |d\lambda(X_{\eta} , J Y_2)| + |da(Y)| |d\lambda(Y_2,
      X_{\eta})| + |d\lambda(Y_2, JY_2)|\\
    &\leq 
      |\lambda(Y)| c_0 \big(\omega(Y_2 , JY_2)\big)^{\frac{1}{2}}
      + |da(Y)| c_0\big(\omega(Y_2 , JY_2)\big)^{\frac{1}{2}}+c_1
      \omega(Y_2, JY_2) \\
    &= 
      |\lambda(Y)| c_0 \big(\omega(Y , JY)\big)^{\frac{1}{2}} + |da(Y)|
      c_0\big(\omega(Y , JY)\big)^{\frac{1}{2}}+c_1 \omega(Y, JY) \\
    &\leq 
      {\textstyle \frac{1}{2}}c_0\big(|\lambda(Y)|^2 + |da(Y)|^2\big) +
      (c_0+c_1)\omega(Y, JY)\\
    &=
      {\textstyle \frac{1}{2}}c_0 (da\wedge \lambda)(Y, JY) +
      (c_0+c_1)\omega(Y, JY)\\
    &\leq
      C_0 (da\wedge \lambda+  \omega)(Y, JY) ,
    \end{align*} 
  where to obtain the final equality we have made use of Lemma
  \ref{LEM_J_diff}.  
This is the desired estimate.
\end{proof}
%

\begin{remark}[deviation from stable]
  \label{REM_deviation_from_stable}
  \hfill\\
We note that the proof of Lemma \ref{LEM_dlambda_bounds} immediately
  establishes the more precise estimate
  \begin{equation*}                                                       
    |d\lambda(Y, JY)| \leq {\textstyle \frac{1}{2}}c_0 (da\wedge
    \lambda)(Y, JY) + (c_0+c_1)\omega(Y, JY)
    \end{equation*}
  where \(c_0 = \|i_{X_\eta} d\lambda\big|_{\xi}\|_g\) and
  \(c_1=\|d\lambda \big|_{\xi}\|_g\).
We do not use this additional precision in this manuscript, however it
  highlights the fact that \(c_0\) becomes a means to measure the degree to
  which \((\lambda, \omega)\) fails to be a \emph{stable}
  Hamiltonian structure.
That is, \(c_0=0\) if and only if \((\lambda, \omega)\) is a stable
  Hamiltonian structure, and in some sense, the larger \(c_0\) is the
  further our Hamiltonian structure deviates from being stable.
\end{remark}
%

\begin{definition}[ambient geometry constant]
  \label{DEF_ambient_geometry_constant}
  \hfill\\
For each tuple \(\mathbf{h}=(M, \lambda, \omega, J, g)\), where \(M\) is a
  closed odd dimensional manifold, \(\eta=(\lambda, \omega)\) is a framed
  Hamiltonian structure on \(M\), and \((J, g)\) is an \(\eta\)-adapted
  almost Hermitian structure on \(\mathbb{R}\times M\), we define the
  following finite number:
  \begin{equation*}                                                       
    C_{\mathbf{h}}:=2\big(10 + {\rm max}(1, c_0+c_1)\big)
    \end{equation*}
  where
  \begin{equation*}                                                       
    c_0=\|i_{X_\eta} d\lambda\big|_\xi\|_g
    \qquad\text{and}\qquad
    c_1=\|d\lambda\big|_{\xi}\|_g
    \end{equation*}
  as above in Lemma \ref{LEM_dlambda_bounds}.
\end{definition}
%

We now proceed with our final elementary estimate.

\begin{lemma}[$d\tilde{\alpha}$ bounds]
  \label{LEM_d_alpha_tilde_bound}
  \hfill\\
Let \((M, \eta=(\lambda, \omega))\) be a framed Hamiltonian manifold, let
  \((J, g)\) be an \(\eta\)-adapted almost Hermitian structure on
  \(\mathbb{R}\times M\), and let \((\tilde{u}, \tilde{\jmath}, f, u, S,
  j)\) be a perturbed pseudoholomorphic map.
Let 
  \begin{equation*}                                                       
      0<\epsilon < \min(2^{-24}, (1+\sup_{\zeta\in
      {\rm supp}(f)}\|B_u(\zeta)\|_{{\gamma}})^{-1}).
    \end{equation*}
Suppose further that
  \begin{equation*}                                                       
      \|df\|_{{\gamma}}+ \|\nabla df\|_{{\gamma}} \leq
      \frac{\epsilon}{2^{11}(1+\|B_{{u}}\|_{{\gamma}})},
    \end{equation*}
  where \(\|df\|_{{\gamma}}\), \(\|\nabla df\|_{{\gamma}} \), and
  \(\|B_{{u}}\|_{{\gamma}}\) are the \(L^\infty\) norms over the support
  of \(f\).
\emph{Then}
  \begin{equation*}                                                       
      \sup_{\zeta\in S} \|d\tilde{\alpha}_\zeta\|_{\tilde{\gamma}}\leq
      {\textstyle \frac{1}{2}}C_{\mathbf{h}}
    \end{equation*}
  where \(C_{\mathbf{h}}\) is the ambient geometry constant associated to
  \(\mathbf{h}=(M, \lambda, \omega, J, g) \) and as provided in Definition
  \ref{DEF_ambient_geometry_constant}.
\end{lemma}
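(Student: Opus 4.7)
By Lemma \ref{LEM_d_alpha_tilde}, it suffices to bound $|\widetilde{\Delta}(a\circ\tilde{u})|$ pointwise by $\tfrac{1}{2}C_{\mathbf{h}}$. At any point $\zeta \notin \mathrm{supp}(f)$ we have $\tilde{u}=u$, $\tilde{\jmath}=j$, $\tilde{\gamma}=\gamma$, so $\tilde{\alpha}=u^{*}\lambda$ and $d\tilde{\alpha}=u^{*}d\lambda$; evaluating at a $\gamma$-unit $j$-complex basis and invoking Lemma \ref{LEM_dlambda_bounds} together with $J$-holomorphicity of $u$ gives $\|d\tilde{\alpha}\|_{\gamma}\leq C_{0}\leq \tfrac{1}{2}C_{\mathbf{h}}$, where $C_0=c_0+c_1$ as in Lemma \ref{LEM_dlambda_bounds}. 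The rest of the work is at a chosen $\zeta_{0}\in\mathrm{supp}(f)$.

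The main structural input is that the adapted metric $g = da\otimes da+\lambda\otimes\lambda+\omega(\cdot,J\cdot)$ of equation (\ref{EQ_formula_for_g}) is a Riemannian product with $\mathbb{R}$; hence $\partial_{a}$ is $\nabla^{g}$-parallel, its integral curves are $g$-geodesics, and the definition of $\tilde{u}$ via the exponential map collapses to the $\mathbb{R}$-shift $\tilde{u}(\zeta)=u(\zeta)+(f(\zeta),0,\ldots,0)$. In particular $a\circ\tilde{u}=a\circ u+f$, so
\[
\widetilde{\Delta}(a\circ\tilde{u})=\widetilde{\Delta}(a\circ u)+\widetilde{\Delta}f.
\]
Parallelism of $da$ further reduces the pointwise Hessian identity to $(\nabla^{\gamma}d(a\circ u))(Y,Z)=da(B_{u}(Y,Z))$, giving $\|d(a\circ u)\|_{\gamma}\leq 1$ and $\|\nabla d(a\circ u)\|_{\gamma}\leq \|B_{u}\|_{\gamma}$.

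Next, to control $\widetilde{\Delta}(a\circ u)$ I first apply Lemma \ref{LEM_d_alpha_tilde} to the unperturbed pseudoholomorphic map $u$: in $j$-holomorphic $\gamma$-unit coordinates $(s,t)$ at $\zeta_{0}$ one has $\Delta(a\circ u)(\zeta_{0})=u^{*}d\lambda(\partial_{s},\partial_{t})=d\lambda(u_{s},Ju_{s})$, which Lemma \ref{LEM_dlambda_bounds} bounds by $C_{0}\|u_{s}\|_{g}^{2}=C_{0}$. I then switch to $\gamma$-normal geodesic coordinates at $\zeta_{0}$, in which $\gamma^{ij}=\delta^{ij}$ and $\Gamma_{ij}^{k}=0$, and write
\[
\widetilde{\Delta}(a\circ u)-\Delta(a\circ u)=(\tilde{\gamma}^{ij}-\delta^{ij})\partial_{i}\partial_{j}(a\circ u)-\tilde{\gamma}^{ij}\widetilde{\Gamma}_{ij}^{k}\partial_{k}(a\circ u).
\]
The estimates from the proof of Lemma \ref{LEM_gamma_tilde_estimates} give $|\tilde{\gamma}^{ij}|\leq 4$ and $|\widetilde{\Gamma}_{ij}^{k}|\leq \epsilon/2^{8}$; re-tracking the explicit formula $\tilde{\gamma}_{k\ell}-\gamma_{k\ell}=f_{,k}u_{,\ell}^{0}+u_{,k}^{0}f_{,\ell}+f_{,k}f_{,\ell}$ with the sharper hypothesis $\|df\|_{\gamma}\leq \epsilon/(2^{11}(1+\|B_{u}\|_{\gamma}))$ gives $|\tilde{\gamma}^{ij}-\delta^{ij}|\leq C\epsilon/(1+\|B_{u}\|_{\gamma})$. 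Combined with $|\partial_{i}\partial_{j}(a\circ u)|\leq \|B_{u}\|_{\gamma}$ and $|\partial_{k}(a\circ u)|\leq 1$, this makes $|\widetilde{\Delta}(a\circ u)-\Delta(a\circ u)|$ a small absolute constant. An analogous but simpler computation, using only $\|df\|_{\gamma},\|\nabla df\|_{\gamma}\leq \epsilon\leq 2^{-24}$, controls $|\widetilde{\Delta}f|$ by $1$. Summing the three pieces yields $|\widetilde{\Delta}(a\circ\tilde{u})(\zeta_{0})|\leq C_{0}+O(1)\leq 10+\max(1,c_{0}+c_{1})=\tfrac{1}{2}C_{\mathbf{h}}$.

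The main obstacle is purely bookkeeping: the potentially large factor $\|B_{u}\|_{\gamma}$ appearing when differentiating $a\circ u$ twice must be cancelled by the factor $(1+\|B_{u}\|_{\gamma})^{-1}$ built into the perturbation hypothesis on $f$, and the numerical constants must fit inside the additive ``$10$'' in the definition of $C_{\mathbf{h}}$. No individual estimate is subtle; the challenge is only to keep careful track of the $\|B_{u}\|_{\gamma}$-dependence through every term of the comparison $\widetilde{\Delta}-\Delta$ so that nothing diverges with the curve's extrinsic geometry.
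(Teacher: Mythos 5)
Your proof is correct and follows essentially the same strategy as the paper's: reduce via Lemma \ref{LEM_d_alpha_tilde} to a pointwise bound on $\widetilde{\Delta}(a\circ\tilde{u})$, split $a\circ\tilde{u} = a\circ u + f$ using the product structure of $g$, bound $\Delta(a\circ u)$ by $C_0$ via Lemma \ref{LEM_dlambda_bounds}, and control the Laplacian discrepancy and $\widetilde{\Delta}f$ through the Hessian identity $\nabla^\gamma d(a\circ u)(Y,Z) = da(B_u(Y,Z))$ and the estimates of Lemma \ref{LEM_gamma_tilde_estimates}. The only deviations are cosmetic — you work in a single $\gamma$-normal coordinate system with the coordinate formula for the Laplacian (which is arguably cleaner than the paper's two aligned normal frames), and your stated Christoffel bound $|\widetilde{\Gamma}_{ij}^k|\leq\epsilon/2^8$ is off by a small constant factor (the honest bound from Lemma \ref{LEM_gamma_tilde_estimates} is about $12\epsilon/2^8$), but the slack in the constant $C_{\mathbf{h}}$ absorbs this comfortably.
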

%

\begin{proof}
First we note that for \(\zeta\notin {\rm supp}(f)\),
  we have \(\tilde{u}=u\) and \(\tilde{\alpha}=u^*\lambda\),
  and then it follows from Lemma \ref{LEM_dlambda_bounds} that
\(\|d\tilde{\alpha}_\zeta\|_{\gamma} = \|u^*d\lambda_\zeta\|_{\gamma}
  \leq \frac{1}{2}C_{\mathbf{h}}\).
As such we will assume for the remainder of the proof that \(\zeta\in
  {\rm supp}(f)\), and for notational clarity we remove \(\zeta\) from
  the notation.

In light of Lemma \ref{LEM_d_alpha_tilde}, we have
  \begin{equation*}                                                       
      \|d\tilde{\alpha}\|_{\tilde{\gamma}}=|\widetilde{\Delta}(a\circ
      \tilde{u})|\qquad\text{and}\qquad\|d\alpha\|_{\gamma}=|\Delta(a\circ
      u)|.
    \end{equation*}

In order to estimate further, fix \(\zeta_0\in S\), and choose
  \({\gamma}\)-orthonormal coordinates \(({y}^1, {y}^2)\) and
  \(\tilde{\gamma}\)-orthonormal coordinates \((\tilde{y}_1, \tilde{y}_2)\)
  with each centered at \(\zeta_0\).
We choose these coordinates so that at \(\zeta_0\) we
  have \(\partial_{{y}^1}\wedge \partial_{\tilde{y}^1}= 0 =
  \partial_{{y}^2}\wedge \partial_{\tilde{y}^2}\).
Such coordinates can be constructed by fixing an auxiliary
  \({\gamma}\)-orthonormal basis \((\partial_{z^1}, \partial_{z^2})\)
  of  \(T_{\zeta_0}S\), observing that the matrix
  \begin{equation*}                                                       
    \left(
    \begin{matrix}
      \langle \partial_{z^1}, \partial_{z^1} \rangle_{\tilde{\gamma}}
      & \langle \partial_{z^2}, \partial_{z^1} \rangle_{\tilde{\gamma}}\\
      \langle \partial_{z^1}, \partial_{z^2} \rangle_{\tilde{\gamma}}
      & \langle \partial_{z^2}, \partial_{z^2} \rangle_{\tilde{\gamma}}
    \end{matrix}
    \right)
  \end{equation*} 
is symmetric, and hence has an orthonormal eigen-basis
  \((\begin{smallmatrix} c_{11}\\ c_{12}\end{smallmatrix}),
  (\begin{smallmatrix} c_{21}\\ c_{22}\end{smallmatrix}) \);
  defining \(\partial_{{y}^i} = c_{ik}\partial_{z^k}\) yields
  a \(\gamma\)-orthonormal basis of \(T_{\zeta_0} S\), and defining
  \(\partial_{\tilde{y}^i}=
  \|\partial_{{y}^i}\|_{\tilde{\gamma}}^{-1}\partial_{{y}^i}\) yields
  a \(\tilde{\gamma}\)-orthonormal basis of \(T_{\zeta_0} S\).
The coordinates \(({y}^1, {y}^2)\) and \((\tilde{y}^1, \tilde{y}^2)\)
  are respectively the \({\gamma}\) and \(\tilde{\gamma}\)
  normal geodesic coordinates respectively associated to the frames
  \((\partial_{{y}^1}, \partial_{{y}^2})\) and \((\partial_{\tilde{y}^1},
  \partial_{\tilde{y}^2})\).
For future use, we also note 
\begin{align*}                                                            
  \big| 1 - \|\partial_{y^i}\|_{\tilde{\gamma}}^2 \big| \leq \epsilon
  \end{align*}
  by Lemma \ref{LEM_gamma_tilde_estimates}, and hence  
  \begin{align}\label{EQ_norm_change_estimate}                            
    \|\partial_{y^i}\|_{\tilde{\gamma}}^{-2} \leq 2\qquad\text{and} \qquad
    \big| 1 - \|\partial_{y^i}\|_{\tilde{\gamma}}^{-2} \big| \leq
    2\epsilon.
    \end{align}
Next, we write 
\begin{align*}                                                            
  \tilde{\gamma} = \tilde{\gamma}_{ik} d\tilde{y}^i\otimes d\tilde{y}^k
  \qquad\text{and}\qquad\gamma = \gamma_{ik} dy^i \otimes dy^k
  \end{align*}
  so that by evaluating at \(\zeta_0\) we have \(\tilde{\gamma}
  =\delta_{ik} d\tilde{y}^i\otimes d\tilde{y}^k\) and \(\gamma =
  \delta_{ik} dy^i \otimes dy^k \) as well as
  \begin{align*}                                                          
      \widetilde{\Delta} (a\circ \tilde{u}) 
      &= 
	{\rm tr}(\widetilde{\nabla}\big( d (a\circ \tilde{u}))\big)\\
      &=
	\tilde{\gamma}^{ik} \big(\widetilde{\nabla}_{\partial_{\tilde{y}^i}}
	d (a\circ \tilde{u})\big) (\partial_{\tilde{y}^k}) - d (a\circ
	\tilde{u})( \widetilde{\nabla}_{\partial_{\tilde{y}^i}}
	\partial_{\tilde{y}^k}) \big)\\
      &=
	\big(\widetilde{\nabla}_{\partial_{\tilde{y}^1}}
	d(a\circ \tilde{u})\big) (\partial_{\tilde{y}^1}) +
	\big(\widetilde{\nabla}_{\partial_{\tilde{y}^2}}
	d(a\circ \tilde{u})\big) (\partial_{\tilde{y}^2})\\
      &=
	\big(\widetilde{\nabla}_{\partial_{\tilde{y}^1}}
	d(a\circ{u})\big) (\partial_{\tilde{y}^1}) +
	\big(\widetilde{\nabla}_{\partial_{\tilde{y}^2}}
	d(a\circ {u})\big) (\partial_{\tilde{y}^2})\\
      &\qquad
	+\big(\widetilde{\nabla}_{\partial_{\tilde{y}^1}}
	df \big) (\partial_{\tilde{y}^1}) +
	\big(\widetilde{\nabla}_{\partial_{\tilde{y}^2}}
	df \big) (\partial_{\tilde{y}^2})
    \end{align*}
  and similarly
  \begin{align*}                                                          
      \Delta (a\circ u) 
      &=
	\big(\nabla_{\partial_{y^1}}
	d(a\circ{u})\big) (\partial_{y^1}) +
	\big(\nabla_{\partial_{y^2}}
	d(a\circ {u})\big) (\partial_{y^2}).
      \end{align*}
We now estimate
  \begin{align*}                                                          
    \big| 
    \Delta (a\circ u) - \widetilde{\Delta} (a\circ \tilde{u})  
    \big| 
    &\leq 
    \big|	
    \big(\widetilde{\nabla}_{\partial_{\tilde{y}^1}} d(a\circ{u})\big)
    (\partial_{\tilde{y}^1}) - \big(\nabla_{\partial_{y^1}}
    d(a\circ{u})\big) (\partial_{y^1})
    \big|\\
    &\quad+
    \big|
    \big(\widetilde{\nabla}_{\partial_{\tilde{y}^2}} d(a\circ {u})\big)
    (\partial_{\tilde{y}^2}) - \big(\nabla_{\partial_{y^2}} d(a\circ
    {u})\big) (\partial_{y^2})
    \big|\\
    &\quad+
    \big|
    \big(\widetilde{\nabla}_{\partial_{\tilde{y}^1}} df \big)
    (\partial_{\tilde{y}^1})
    \big|\\
    &\quad+
    \big|
    \big(\widetilde{\nabla}_{\partial_{\tilde{y}^2}} df \big)
    (\partial_{\tilde{y}^2})
    \big|.
    \end{align*}
For the moment, let us write \(c_i =
  \|\partial_{y^i}\|_{\tilde{\gamma}}^{-1}\) so that
  \(\partial_{\tilde{y}^i}=c_i\partial_{y^i}  \)
  \begin{align*}                                                          
    \big|	
    \big(\widetilde{\nabla}_{\partial_{\tilde{y}^1}} d(a\circ{u})\big)
    (\partial_{\tilde{y}^1}) &- \big(\nabla_{\partial_{y^1}}
    d(a\circ{u})\big) (\partial_{y^1})
    \big|\\
    &=
    \big|	
    \big(c_1^2\widetilde{\nabla}_{\partial_{y^1}} d(a\circ{u})\big)
    (\partial_{y^1}) - \big(\nabla_{\partial_{y^1}}
    d(a\circ{u})\big) (\partial_{y^1})
    \big|
    \\
    &\leq 
    \big|	
    \big(c_1^2\widetilde{\nabla}_{\partial_{y^1}} d(a\circ{u})\big)
    (\partial_{y^1}) - c_1^2\big(\nabla_{\partial_{y^1}}
    d(a\circ{u})\big) (\partial_{y^1})
    \big|
    \\
    &\quad+  
    \big|	
    \big(c_1^2\nabla_{\partial_{y^1}} d(a\circ{u})\big)
    (\partial_{y^1}) - \big(\nabla_{\partial_{y^1}}
    d(a\circ{u})\big) (\partial_{y^1})
    \big|
    \\
    &\leq 
    c_1^2 \big|	
    \big(\widetilde{\nabla}_{\partial_{y^1}} d(a\circ{u})\big)
    (\partial_{y^1}) - \big(\nabla_{\partial_{y^1}}
    d(a\circ{u})\big) (\partial_{y^1})
    \big|
    \\
    &\quad+  
    |1-c_1^2|\cdot \big|	
    \big(\nabla_{\partial_{y^1}} d(a\circ{u})\big)
    (\partial_{y^1}) 
    \big|
    \\
    &\leq 
    c_1^2 \epsilon \|\partial_{y^1}\|_{\gamma}^2 +
    |1-c_1^2|\cdot \big|	
    \big(\nabla_{\partial_{y^1}} d(a\circ{u})\big)
    (\partial_{y^1}) 
    \big|
    \\
    &\leq 
    c_1^2 
    \epsilon + |1-c_1^2|\cdot\|B\|_{\gamma}
    \\
    &\leq 
    2 \epsilon +
    2\epsilon\|B\|_{\gamma}
    \\
    &\leq 3,
    \end{align*}
  where we have made use of the fact that 
  \begin{align*}                                                          
    \big|	
    \big(\nabla_{\partial_{y^1}} d(a\circ{u})\big) (\partial_{y^1}) 
    \big|
    &=
    \big|	
    \nabla_{\partial_{y^1}} \big(d(a\circ{u}) (\partial_{y^1}) \big) 
    -
    \big(d(a\circ{u})\big) (\nabla_{\partial_{y^1}} \partial_{y^1}) 
    \big|
    \\
    &=
    \big|	
    \nabla_{\partial_{y^1}} \big(d(a\circ{u}) (\partial_{y^1}) \big)
    \big|
    \\
    &=
    \big|	
    \nabla_{\partial_{y^1}} \big(da(u_{,1})  \big) 
    \big|
    \\
    &=
    \big|	
    \nabla_{u_{,l}} \big(da(u_{,1})  \big) 
    \big|
    \\
    &=
    \big|	
    \big(\nabla_{u_{,1}} da\big)(u_{,1})
    +
    da(\nabla_{u_{,1}} u_{,1})
    \big|
    \\
    &=
    \big|	
    da(\nabla_{u_{,1}} u_{,1})
    \big|
    \\
    &=
    \big|	
    da\big((\nabla_{u_{,1}} u_{,1})^\bot\big)
    +
    da\big((\nabla_{u_{,1}} u_{,1})^\top\big)
    \big|
    \\
    &=
    \big|	
    da\big((\nabla_{u_{,1}} u_{,1})^\bot\big)
    \big|
    \\
    &=
    \big|	
    da\big(B(u_{,1}, u_{,1})\big)
    \big|
    \\
    &\leq \|B\|_\gamma
    \end{align*}
    where we have used Corollary \ref{COR_da_is_parallel} from Appendix
    \ref{SEC_riemannian} which guarantees that \(\nabla da =0\).
We note that a similar estimate establishes that
  \begin{align*}                                                          
    \big|	
    \big(\widetilde{\nabla}_{\partial_{\tilde{y}^2}} d(a\circ{u})\big)
    (\partial_{\tilde{y}^2}) &- \big(\nabla_{\partial_{y^2}}
    d(a\circ{u})\big) (\partial_{y^2})
    \big|\leq 3.
    \end{align*}

Next we note that 
  \begin{align*}                                                          
    \big| \big(\widetilde{\nabla}_{\partial_{\tilde{y}^1}}
    df\big)(\partial_{\tilde{y}^1})\big| 
    &=c_1^2\big| \big(\widetilde{\nabla}_{\partial_{y^1}}
    df\big)(\partial_{y^1})\big| 
    \\
    &\leq
    c_1^2\big| \big(\nabla_{\partial_{y^1}}
    df\big)(\partial_{y^1})\big| + c_1^2\big|
    \big(\widetilde{\nabla}_{\partial_{y^1}} df\big)(\partial_{y^1}) -
    \big(\nabla_{\partial_{y^1}} df\big)(\partial_{y^1}) \big|
    \\
    &\leq c_1^2 \|\nabla df\|_{\gamma} + \epsilon c_1^2 \| df\|_\gamma 
    \\
    &\leq 1.
    \end{align*}
Similarly
  \begin{align*}                                                          
    \big| \big(\widetilde{\nabla}_{\partial_{\tilde{y}^2}}
    df\big)(\partial_{\tilde{y}^2})\big| \leq 1.
    \end{align*}
We conclude from these estimates that
  \begin{align*}                                                          
    \big| \Delta (a\circ u) - \widetilde{\Delta} (a\circ \tilde{u})  \big|
    \leq 10.
    \end{align*}

Combining these inequalities, we then find   \begin{align*}                                                          
      \|d\tilde{\alpha}\|_{\tilde{\gamma}} 
      &= 
	|\widetilde{\Delta}(a\circ \tilde{u})|& \text{by equation }
	(\ref{EQ_d_tilde_alpha_2}),\\
      &\leq 
	|{\Delta}(a\circ {u})|+10& \text{by above inequalities,}\\
      &=
	 \|-d(({u}^*da)\circ j)\|_{{\gamma}} + 10  & \text{by equation }
	(\ref{EQ_d_tilde_alpha_2}), \\
      &=
	\|-d({u}^*(da\circ J))\|_{\gamma}+ 10& \text{since }{u}\text{
	is }J\text{-holomorphic,}\\
      &=
	\|{u}^* d\lambda\|_{{\gamma}} + 10  & \text{by Lemma }
	\ref{LEM_J_diff},  \\
      &\leq 
	\textstyle{\frac{1}{2}}C_{\mathbf{h}}&\text{by Lemma
	\ref{LEM_dlambda_bounds}}.
    \end{align*}
This is the desired estimate, and hence we have completed the proof of
  Lemma \ref{LEM_d_alpha_tilde_bound}.
\end{proof}
%


\subsubsection{Core proofs}
With the elementary estimates established, we now move on to proving the
  main technical results, specifically Theorem
  \ref{THM_area_bound_estimate}, from which Theorem
  \ref{THM_area_bounds} follows as an immediate corollary.
We begin with a special case of the co-area formula.

\begin{lemma} [co-area formula with $\tilde{\alpha}$]
  \label{LEM_coarea_lambda}
  \hfill\\
Let \((W, g)\) be a smooth Riemannian manifold, and suppose \(S\) is a two
  dimensional manifold equipped with a smooth almost complex structure
  \(\tilde{\jmath}\).
To be clear, we require \(\partial S = \emptyset\).
Suppose \(\tilde{u}\colon S\to W\) satisfies \(\tilde{u}^*g(x, y) =
  \tilde{u}^*g(\tilde{\jmath} x, \tilde{\jmath} y)\) for all \(x,y\in
  TS\).
Let \(a\colon W\to \mathbb{R}\) be a smooth function, and
  \(\tilde{\alpha}\) the one-form on \(S\) defined by
  \begin{align*}                                                          
    \tilde{\alpha} = - (\tilde{u}^*da)\circ \tilde{\jmath} .
    \end{align*}
Finally, we assume \(a\circ\tilde{u}(S)\subset [a_0, a_1] \).
Then 
  \begin{equation*}                                                       
    \int_{S}( \tilde{u}^*da)\wedge \tilde{\alpha} =\int_{a_0}^{a_1}
    \Big(  \int_{(a\circ \tilde{u})^{-1}(t)\setminus \mathcal{X}}
    \tilde{\alpha}\Big)\, dt,
    \end{equation*}
  where \(\mathcal{X}:=\{\zeta\in S: d(a\circ \tilde{u})_\zeta = 0\}\).
\end{lemma}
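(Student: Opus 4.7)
The plan is to recognize the integrand $\tilde{u}^*da \wedge \tilde{\alpha}$ as a gradient-squared volume form associated to $f := a \circ \tilde{u}$, and then invoke the classical Riemannian co-area formula. Throughout, I will write $\gamma := \tilde{u}^*g$ and note that the set of points where $\gamma$ degenerates (i.e.\ where $d\tilde{u} = 0$) is contained in the critical set $\mathcal{X} = \{df = 0\}$, because $df = da \circ d\tilde{u}$; hence $\gamma$ is a genuine Riemannian metric on the open set $S \setminus \mathcal{X}$, and $(S \setminus \mathcal{X}, \tilde{\jmath}, \gamma)$ is almost Hermitian.

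First, I will establish a pointwise identity on $S \setminus \mathcal{X}$. Fix a regular point and pick positively oriented $\tilde{\jmath}$-holomorphic local coordinates $(x, y)$ (so that $\tilde{\jmath}\partial_x = \partial_y$). Reading off the definition $\tilde{\alpha} = -df \circ \tilde{\jmath}$ gives $\tilde{\alpha} = -f_y\, dx + f_x\, dy$, from which
\[
df \wedge \tilde{\alpha} = (f_x^2 + f_y^2)\, dx \wedge dy = |\nabla f|_\gamma^2 \, d{\rm vol}_\gamma.
\]
On $\mathcal{X}$ both $df$ and $\tilde{\alpha}$ vanish, so this identity extends trivially and $\int_S \tilde{u}^*da \wedge \tilde{\alpha} = \int_{S \setminus \mathcal{X}} |\nabla f|_\gamma^2 \, d{\rm vol}_\gamma$. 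Next, for each regular value $t$ of $f$ I will orient $\Gamma_t := f^{-1}(t) \setminus \mathcal{X}$ by the unit tangent $\tau := \tilde{\jmath}(\nabla f / |\nabla f|_\gamma)$ (the boundary orientation of $\{f \le t\}$); then a direct computation using $\tilde{\jmath}^2 = -1$ yields $\tilde{\alpha}(\tau) = -df(\tilde{\jmath}\tau) = df(\nabla f / |\nabla f|_\gamma) = |\nabla f|_\gamma$, so the restriction $\tilde{\alpha}|_{\Gamma_t}$ equals $|\nabla f|_\gamma \, d\mathcal{H}^1$.

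Finally, I will apply the classical co-area formula on $(S \setminus \mathcal{X}, \gamma)$ to the submersion $f$ with integrand $|\nabla f|_\gamma$, which gives
\[
\int_{S \setminus \mathcal{X}} |\nabla f|_\gamma^2 \, d{\rm vol}_\gamma = \int_{\mathbb{R}} \left(\int_{\Gamma_t} |\nabla f|_\gamma \, d\mathcal{H}^1\right) dt.
\]
Combining with the two pointwise identifications and restricting the outer integral to $[a_0, a_1]$ (since $\Gamma_t = \emptyset$ otherwise, by the hypothesis $a \circ \tilde{u}(S) \subset [a_0,a_1]$) yields the claim. Sard's theorem ensures that excluding critical values of $f$ from the outer integral costs nothing, and the only mildly delicate point, namely the possibly non-smooth critical level sets of $f$, is precisely what removing $\mathcal{X}$ on both sides handles; I do not anticipate a genuine obstacle beyond verifying the pointwise identity and applying the classical formula.
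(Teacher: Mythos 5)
Your proof is correct and follows essentially the same route as the paper's: both compute the integrand $df\wedge\tilde{\alpha}$ as $|\nabla f|_\gamma^2\,d\mathrm{vol}_\gamma$ on $S\setminus\mathcal{X}$ (you via $\tilde{\jmath}$-holomorphic coordinates, the paper via the orthonormal frame $(\nu,\tau)$ with $\nu = \widetilde{\nabla}(a\circ\tilde{u})/\|\widetilde{\nabla}(a\circ\tilde{u})\|$ and $\tau=\tilde{\jmath}\nu$, which is equivalent), identify $\tilde{\alpha}|_{\Gamma_t}$ with $|\nabla f|_\gamma\,d\mathcal{H}^1$ via $\tilde{\alpha}(\tau)=|\nabla f|_\gamma$, and then apply the classical Riemannian co-area formula on the open manifold $S\setminus\mathcal{X}$ with integrand $|\nabla f|_\gamma$. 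The remark about $\tilde{u}$ being an immersion on $S\setminus\mathcal{X}$ (so $\gamma$ is a genuine metric there) is the same minor point the paper addresses, and your observation that $\{d\tilde{u}=0\}\subset\mathcal{X}$ because $df=da\circ d\tilde{u}$, combined with the conformality hypothesis forcing $d\tilde{u}_\zeta\ne0$ to imply injectivity, fills it in correctly.
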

\begin{proof}
We begin by defining \(\widetilde{S}:=S\setminus\mathcal{X}\) and making
  a few observations.
First,  \(\mathcal{X}\) is closed and hence \(\widetilde{S}\subset S\)
  is open,  and therefore it carries the structure of a smooth manifold.
Second, by definition we have \((\tilde{u}^*da) \wedge
  \tilde{\alpha}\big|_{\mathcal{X}}\equiv 0\), so
  \begin{equation*}                                                       
    \int_{S}(\tilde{u}^*da) \wedge \tilde{\alpha}= \int_{\widetilde{S}}
    (\tilde{u}^*da) \wedge \tilde{\alpha}.
    \end{equation*}
Observe that \(\tilde{u}:\widetilde{S}\to \mathbb{R}\times M\)
  is an immersion, and hence may be equipped with the metric
  \(\tilde{\gamma}=\tilde{u}^*g\).
The almost complex structure \(\tilde{\jmath}\) on \(S\) induces an
  orientation on \(\widetilde{S}\), and hence we have
  \begin{equation}\label{EQ_int_a_lambda_A}                               
    \int_{\widetilde{S}} (\tilde{u}^*da) \wedge \tilde{\alpha}=
    \int_{\widetilde{S}} \big( (\tilde{u}^*da)\wedge
    \tilde{\alpha}\big)(\nu, \tau) d\mu_{\tilde{\gamma}}^2,
    \end{equation}
  where \((\nu, \tau)\) is a positively oriented
  \(\tilde{\gamma}\)-orthonormal frame field, and
  \(d\mu_{\tilde{\gamma}}^2\) is the volume form on \(\widetilde{S}\)
  associated to the metric \(\tilde{\gamma}\); see Section
  \ref{SEC_riemannian} for further details.
Equation (\ref{EQ_int_a_lambda_A}) holds for arbitrary orthonormal frame
  field \((\nu, \tau)\), however we shall  henceforth make use of the
  following particular frame.
  \begin{equation*}                                                       
    \nu:= \frac{\widetilde{\nabla} (a\circ
    \tilde{u})}{\|\widetilde{\nabla} a\circ
    \tilde{u}\|_{\tilde{\gamma}}}\qquad\text{and}\qquad \tau:=
    \tilde{\jmath}\nu.
    \end{equation*}
Making use of the fact that \(\tilde{\jmath}\) is a
  \(\tilde{\gamma}\)-isometry and an almost complex structure, it is
  straightforward to verify the following.
\begin{align*}                                                            
  \tilde{\alpha}(\nu) &= 0 = \tilde{u}^*da(\tau) \\
  0 &< \tilde{u}^*da (\nu) = \tilde{\alpha}(\tau)\\
  1 &= \|\tau\|_{\tilde{\gamma}}^2 =\|\nu\|_{\tilde{\gamma}}^2 
  \end{align*}
Also,
  \begin{equation}\label{EQ_gradient_relationship_A_A}                      
    \|\widetilde{\nabla} (a\circ \tilde{u})\|_{\tilde{\gamma}} =
    \sup_{\substack{ x\in T_\zeta S\\ \|x\|_{\tilde{\gamma}}=1}}
    d(a\circ \tilde{u})(x) =\sup_{\substack{ x\in T_\zeta S\\
    \|x\|_{\tilde{\gamma}}=1}} da(T\tilde{u}\cdot x) =\tilde{u}^*da(\nu),
    \end{equation}
  and
  \begin{equation}\label{EQ_taut_nu_2_A_A}                                   
    \|\tilde{\alpha}\|_{\tilde{\gamma}}
    = \tilde{\alpha}(\tau)=-\tilde{u}^*da
    (\tilde{\jmath}\tilde{\jmath}\nu)=
    \|\widetilde{\nabla}(a\circ\tilde{u})\|_{\tilde{\gamma}}.
    \end{equation} 
With \((\nu, \tau)\) defined as such, we have the following.  
\begin{align*}                                                            
  \int_{\widetilde{S}} \big( (\tilde{u}^*da)\wedge
    \tilde{\alpha}\big)(\nu, \tau) d\mu_{\tilde{\gamma}}^2
  &=
    \int_{\widetilde{S}} da(Tu\cdot\nu )  \tilde{\alpha}(\tau)
    d\mu_{\tilde{\gamma}}^2\\
  &=
    \int_{\widetilde{S}} \|\widetilde{\nabla} a\circ
    \tilde{u}\|_{\tilde{\gamma}}^2 d\mu_{\tilde{\gamma}}^2\\
  \end{align*}
Next we recall the following version of the co-area formula. 
A proof is provided in Section \ref{SEC_co-area}.
\begin{proposition}[The co-area formula]
  \label{PROP_coarea_body}
  \hfill\\
Let \((S, \gamma)\) be a \(\mathcal{C}^1\) oriented Riemannian manifold
  of dimension two; we allow that \(S\) need not be complete\footnote{That
  is, there may exist Cauchy sequences, with respect to \(g\), which do
  not converge in \(S\).}.
Suppose that \(\beta:S\to [a, b]\subset\mathbb{R}\) is a
  \(\mathcal{C}^1\) function without critical points.  Let \(f:S\to [0,
  \infty)\) be a measurable function  with respect to \(d\mu_\gamma^2\).
Then
  \begin{equation}
    \int_S f\|\nabla \beta\|_\gamma \, d\mu_\gamma^2 = \int_a^b \Big(
    \int_{\beta^{-1}(t)} f \, d\mu_\gamma^1\Big) dt
    \end{equation}
  where \(\nabla\beta\) is the gradient of \(\beta\) computed with
  respect to the metric \(\gamma\).
\end{proposition}

We employ this result on \(\widetilde{S}\) where \(\gamma
  = \tilde{\gamma}\), \(\beta=a\circ \tilde{u}\), and
  \(f=\|\widetilde{\nabla} (a\circ \tilde{u})\|_{\tilde{\gamma}}\)
  to obtain
  \begin{align*}                                                          
    \int_{\widetilde{S}}\|\widetilde{\nabla} (a\circ
      \tilde{u})\|_{\tilde{\gamma}}^2 d\mu_{\tilde{\gamma}}^2
    &=
      \int_{a_0}^{a_1} \Big( \int_{(a\circ \tilde{u})^{-1}(t)\setminus
      \mathcal{X}}  \|\widetilde{\nabla} (a\circ
      \tilde{u})\|_{\tilde{\gamma}}\, d\mu_{\tilde{\gamma}}^1\Big) dt\\
    &= 
      \int_{a_0}^{a_1} \Big( \int_{(a\circ \tilde{u})^{-1}(t)\setminus
      \mathcal{X} }  \tilde{\alpha}(\tau)\, d\mu_{\tilde{\gamma}}^1\Big)
      dt\\
    &=
      \int_{a_0}^{a_1} \Big( \int_{(a\circ \tilde{u})^{-1}(t)\setminus
      \mathcal{X}}  \tilde{\alpha}\Big) dt,
    \end{align*}
  and hence by combining equalities we have
  \begin{equation*}                                                       
    \int_{S}(\tilde{u}^*da) \wedge \tilde{\alpha}=\int_{a_0}^{a_1}
    \Big( \int_{(a\circ \tilde{u})^{-1}(t)\setminus \mathcal{X}}
    \tilde{\alpha}\Big) dt,
    \end{equation*}
  which is the desired equality.
This completes the proof of Lemma \ref{LEM_coarea_lambda}.
\end{proof}

We are now prepared to state and proof the main result of
  this section.
While rather technical in its statement, it is applicable throughout the
  remainder of this manuscript without need of generalization.
A more accessible corollary is stated immediately afterwards.

\begin{theorem}[area bound estimate]
  \label{THM_area_bound_estimate}
  \hfill \\
Let \((M, \eta=(\lambda, \omega))\) be a framed Hamiltonian manifold, let
  \((J, g)\) be an \(\eta\)-adapted almost Hermitian structure on
  \(\mathbb{R}\times M\), let  \(C_{\mathbf{h}}\) be the ambient geometry
  constant given in Definition \ref{DEF_ambient_geometry_constant}, and
  let \(E_0>0\) be a positive constant,
Then for each \(r>0\) and tract\footnote{
  Here we mean a tract of perturbed pseudoholomorphic map in the sense of
  Definition \ref{DEF_tract_of_perturbed_J_map}.
  } 
  of perturbed pseudoholomorphic map \((\tilde{u}, \widetilde{S},
  \tilde{\jmath}, f, u, S, j)\), satisfying
  \begin{enumerate}                                                       
  \item   
    \begin{equation*}                                                     
      \|df\|_{{\gamma}}+ \|\nabla df\|_{{\gamma}} \leq
      \frac{\epsilon}{2^{11}(1+\|B_{{u}}\|_{{\gamma}})},
      \end{equation*}
  where 
  \begin{equation*}                                                       
      0<\epsilon < \min(2^{-24}, (1+\sup_{\zeta\in
      {\rm supp}(f)}\|B_u(\zeta)\|_{{\gamma}})^{-1}),
    \end{equation*}
  and \(\|df\|_{{\gamma}}\), \(\|\nabla df\|_{{\gamma}} \), and
  \(\|B_{{u}}\|_{{\gamma}}\) are the \(L^\infty\) norms over the support
  of \(f\),
  \item \(a\circ\tilde{u}(\widetilde{S})\subset [0, r]\),
    \item \((0, r)\cap a\circ \tilde{u}(\partial_0 \widetilde{S})
      = \emptyset\),
    \item \(\int_{\widetilde{S}}u^*\omega \leq E_0\),
    \item \(0\) and \(r\) are regular values of \(a\circ \tilde{u}\),
    \end{enumerate}
  we have
  \begin{equation}\label{EQ_lambda_growth_inequality_1}                   
    \int_{(a\circ \tilde{u})^{-1}(r)} \tilde{\alpha}\leq
    \Big(C_{\mathbf{h}} E_0+ \int_{(a\circ \tilde{u})^{-1}(0)}
    \tilde{\alpha}\Big) e^{C_{\mathbf{h}} r},
    \end{equation}
  and
  \begin{equation}\label{EQ_area_growth_inequality}                       
    \int_{\widetilde{S}} (\tilde{u}^*da) \wedge \tilde{\alpha}+
    \tilde{u}^*\omega\leq \Big( C_{\mathbf{h}}^{-1} \int_{(a\circ
    \tilde{u})^{-1}(0)} \tilde{\alpha} +E_0 \Big) \big(e^{ C_{\mathbf{h}}
    r }-1) + E_0.
    \end{equation}
Similarly for each \(r>0\) and tract of perturbed pseudoholomorphic map,
  \((\tilde{u}, \widetilde{S}, \tilde{\jmath}, f, u, S, j)\), for which
  \begin{enumerate}                                                       
    \item \(a\circ\tilde{u}(\widetilde{S})\subset [-r, 0]\),
    \item \((-r, 0)\cap a\circ \tilde{u}(\partial_0 \widetilde{S})
      = \emptyset\),
    \item \(\int_{\widetilde{S}}u^*\omega \leq E_0\),
    \item \(0\) and \(-r\) are regular values of \(a\circ \tilde{u}\),
    \end{enumerate}
  we have
  \begin{equation}\label{EQ_lambda_growth_inequality_2}
    \int_{(a\circ \tilde{u})^{-1}(-r)} \tilde{\alpha}\leq
    \Big(C_{\mathbf{h}} E_0+ \int_{(a\circ \tilde{u})^{-1}(0)}
    \tilde{\alpha}\Big) e^{ C_{\mathbf{h}} r},
    \end{equation}
  and inequality (\ref{EQ_area_growth_inequality}) again holds.  
\end{theorem}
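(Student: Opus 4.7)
The plan is to carry out the exponential area sketch from the start of Section \ref{SEC_proof_of_exp_area_bounds} in the full generality of a tract of a perturbed pseudoholomorphic map. Define
\begin{align*}
h(s) := \int_{(a\circ\tilde{u})^{-1}(s)} \tilde{\alpha}, \qquad G(s) := \int_{\widetilde{S}_0^s} \tilde{u}^*\omega,
\end{align*}
where $\widetilde{S}_x^y := \{\zeta\in\widetilde{S}:\ x\leq a\circ\tilde{u}(\zeta)\leq y\}$. Because $\tilde{u}$ and $u$ agree off the compact set ${\rm supp}(f)$, and because $\omega$ is $\mathbb{R}$-translation invariant and annihilates $\partial_a$, a direct pointwise computation gives $\tilde{u}^*\omega = u^*\omega$; hence $G$ is nondecreasing with $G(s) - G(0) \leq E_0$ for every $s\in[0,r]$. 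The target is the differential inequality
\begin{align*}
|h'(s)| \leq C_{\mathbf{h}}\bigl(h(s) + G'(s)\bigr)
\end{align*}
on regular values of $a\circ\tilde{u}$; both conclusions of the theorem will then follow from Gronwall's inequality and the co-area identity.

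To derive the differential inequality, apply Stokes' theorem to $\tilde{\alpha}$ on the slab $\widetilde{S}_s^{s+\epsilon}$ between two nearby regular values. Its boundary consists of the level sets $(a\circ\tilde{u})^{-1}(s)$ and $(a\circ\tilde{u})^{-1}(s+\epsilon)$, together with the portion of $\partial_1\widetilde{S}$ trapped inside. The tract condition forces $\partial_1\widetilde{S}$ to be tangent to $\widetilde{\nabla}(a\circ\tilde{u})$, and Lemma \ref{LEM_char_fol_grad} places this gradient in the kernel of $\tilde{\alpha}$, so the $\partial_1$-contribution to Stokes vanishes and
\begin{align*}
h(s+\epsilon) - h(s) = \int_{\widetilde{S}_s^{s+\epsilon}} d\tilde{\alpha}.
\end{align*}
Combining the pointwise bound $\|d\tilde{\alpha}\|_{\tilde{\gamma}} \leq \tfrac{1}{2}C_{\mathbf{h}}$ from Lemma \ref{LEM_d_alpha_tilde_bound} with the coercive estimate $\tfrac{1}{2}\|\tau\|_{\tilde{\gamma}}^2 \leq (\tilde{u}^*da \wedge \tilde{\alpha} + \tilde{u}^*\omega)(\tau,\tilde{\jmath}\tau)$ from Lemma \ref{LEM_linear_alg_coercive} dominates the slab integral by $C_{\mathbf{h}}\int_{\widetilde{S}_s^{s+\epsilon}}(\tilde{u}^*da \wedge \tilde{\alpha} + \tilde{u}^*\omega)$. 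Dividing by $\epsilon$ and letting $\epsilon\to 0^+$, the co-area formula of Lemma \ref{LEM_coarea_lambda} identifies the first term with $h(s)$ and the second with $G'(s)$, producing the desired inequality.

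Integrating from $0$ to $s$ and absorbing $C_{\mathbf{h}}(G(s)-G(0)) \leq C_{\mathbf{h}} E_0$ yields $h(s) \leq h(0) + C_{\mathbf{h}}E_0 + C_{\mathbf{h}}\int_0^s h(t)\,dt$, and Gronwall's inequality then delivers the exponential bound (\ref{EQ_lambda_growth_inequality_1}) at $s=r$. For the area conclusion (\ref{EQ_area_growth_inequality}), rewrite the left-hand side as $\int_0^r h(t)\,dt + \int_{\widetilde{S}}\tilde{u}^*\omega$ via Lemma \ref{LEM_coarea_lambda}, insert the exponential bound on $h$, and perform the elementary $t$-integration. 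The $[-r,0]$ case follows verbatim after running the same argument in the variable $-s$.

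The main obstacle, which accounts for the length of the preparatory work in this section rather than for the brevity of the core calculation, is the care required to legitimate each step in the perturbed, non-Morse, possibly non-compact setting. Sard's theorem must be invoked so that the level sets used in $h$ are genuine smooth submanifolds for a dense set of values; the critical points of $a\circ\tilde{u}$ must be handled so that Stokes on the slab produces no spurious contributions at the corners $\partial_0\widetilde{S}\cap\partial_1\widetilde{S}$; and the bookkeeping of constants must guarantee that the final bound depends only on $C_{\mathbf{h}}$ and $E_0$, not on $f$ or $\tilde{u}$ individually. The smallness hypotheses on $\|df\|_\gamma + \|\nabla df\|_\gamma$ are precisely what enables Lemmas \ref{LEM_linear_alg_coercive} and \ref{LEM_d_alpha_tilde_bound} to absorb the perturbation into the ambient geometry constant $C_{\mathbf{h}}$ without deterioration of the exponential rate.
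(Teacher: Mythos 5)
Your proposal correctly identifies the overall mechanism of the paper's proof: a differential inequality $|h'(s)|\leq C_{\mathbf h}(h(s)+G'(s))$ derived from Stokes on thin slabs, the coercive estimate of Lemma \ref{LEM_linear_alg_coercive}, the pointwise bound of Lemma \ref{LEM_d_alpha_tilde_bound}, the co-area formula, and Gronwall. The core computation you lay out is faithful to the rough sketch at the start of Section \ref{SEC_proof_of_exp_area_bounds}.

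However, there is a genuine gap in the passage from the differential inequality to the exponential bound, and it is exactly the point the paper's detailed proof spends most of its effort on. Your proposal says ``Integrating from $0$ to $s$ and absorbing \dots yields \dots Gronwall's inequality then delivers,'' while in the same breath acknowledging that $h$ is only defined on the (dense open) set $\mathcal{R}^\pm$ of regular values. The issue is not merely that $h$ is undefined on a null set: $h$ need not extend continuously across critical values, and in general it can jump there. Gronwall cannot be applied across a critical value without first controlling those jumps. The paper handles this by decomposing $\mathcal{R}^\pm$ into countably many open intervals, exhausting them by closed intervals $\mathcal{I}_{k,n}^\pm$, applying Gronwall only on each closed interval, and then \emph{separately} bounding the jump $h(a_{k+1,n}^\pm)-h(b_{k,n}^\pm)$ across each gap by the integral of $d\tilde\alpha$ over that gap slab, which in turn is controlled by the coercive and $d\tilde\alpha$ estimates. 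One then shows the total gap contribution is $<\delta$ for any prescribed $\delta>0$ (equation \ref{EQ_small_integral}), iterates the one-interval Gronwall estimate across all intervals, and sends $\delta\to 0$. Your listed ``main obstacle'' — Sard, Stokes corners, constant bookkeeping — names the scenery but not this actual argument; the iteration-across-gaps with a vanishing error term is the crux, and it needs to be supplied for the proof to close.
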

%
\begin{proof}
Observe that the above problem has two distinct cases: the positive
  case and the negative case; we refer to each as such.
We also pause to recall some of the geometry involved.
First, because \((\tilde{u}, \widetilde{S}, \tilde{\jmath}, f, u, S,
  j)\) is a tract of perturbed pseudoholomorphic curve in the sense of
  Definition \ref{DEF_tract_of_perturbed_J_map}, it follows that
  \(\tilde{u}:\widetilde{S}\to \mathbb{R}\times M\) is proper, and because
  \(M\) is compact and \(\tilde{u}(\widetilde{S})\subset [-r, r]\times M\),
  it follows that \(\widetilde{S}\) is compact.
Second, the boundary of \(\widetilde{S}\), if it is not empty, is
  piecewise smooth, and can be written as the union of two sets, namely
  \(\partial_0 \widetilde{S}\)  and \(\partial_1 \widetilde{S}\) where the
  connected components of the former are  level sets of \(a\circ
  \tilde{u}\), the latter are   integral curves of
  \(\widetilde{\nabla}(a\circ \tilde{u})\), and the set
  \(\partial_0\widetilde{S} \cap \partial_1\widetilde{S}\) is finite and
  consists of precisely those non-smooth points of \(\partial
  \widetilde{S}\).

To proceed with the proof, we begin by fixing \(\delta>0\). 
For the positive case, we define \(\mathcal{R}^+\subset
  [0, r]\) to be the set of regular values of the function
  \(a\circ\tilde{u}:\widetilde{S}\to \mathbb{R}\).
In the negative case we define \(\mathcal{R}^-\subset[0,
  r]\) to be the set of regular values of the function
  \(-a\circ\tilde{u}:\widetilde{S}\to \mathbb{R}\).
Depending on the case, we then define the following functions.
\begin{equation*}                                                         
  h^\pm: \mathcal{R}^\pm\subset [0, r]\to [0, \infty)\quad\text{given
  by}\quad h^\pm(s):=\int_{(a\circ \tilde{u})^{-1}(\pm s)}\tilde{\alpha}.
  \end{equation*}
Note that \(\mathcal{R}^\pm\) are relatively open subsets of \([0,
  r]\), and by Sard's theorem they each are of full measure in \([0, r]\).
Recall that any open set of \(\mathbb{R}\) can be written as the
  countable union of disjoint open intervals, and hence we may write
  \(\mathcal{R}^\pm = \cup_{k\in\mathbb{N}_0} \mathcal{O}_k^\pm\) with
  the \(\mathcal{O}_k^\pm\) relatively open and pairwise disjoint.
Henceforth, we will only consider the case that \(\mathcal{O}_k^\pm\neq
  \emptyset\) for all \(k\in \mathbb{N}_0\).
More generally, one can always assume \(\mathcal{O}_k^\pm\) is never
  the empty set, however in such a case there may only be finitely many
  such open sets \(\mathcal{O}_k^\pm\);  the proof in the finite case
  however is easily adapted from the infinite case.

Without loss of generality, we may re-index the
  \(\{\mathcal{O}_k^\pm\}_{k\in{\mathbb{N}}}\) to guarantee that  \(0\in
  \mathcal{O}_0^\pm\), and \(r\in \mathcal{O}_1^\pm\).
Next, using the fact that each \(\mathcal{O}_k^\pm\) is a non-empty
  open interval, we may choose a sequence of finite sets of closed
  intervals \(\{\mathcal{I}_{0, n}^\pm, \mathcal{I}_{1, n}^\pm, \ldots,
  \mathcal{I}_{n, n}^\pm \}_{n\in \mathbb{N}}\) with the following
  properties.
\begin{enumerate}[($\mathcal{I}$1)]                                       
  \item\label{EN_I1} \(\mathcal{I}_{k, n}^\pm = [a_{k, n}^\pm, b_{k,
    n}^\pm]\)
  \item\label{EN_I2} \(0= a_{0, n}^\pm <  b_{0, n}^\pm <  a_{1, n}^\pm <
    b_{1, n}^\pm  < \cdots <  a_{n, n}^\pm  <  b_{n, n}^\pm=r\)
  \item\label{EN_I4} for each \(n\in \mathbb{N}\) we have \(\cup_{k=0}^n
    \mathcal{I}_{k, n}^\pm\subset   \cup_{k=0}^{n+1} \mathcal{I}_{k,
    n+1}^\pm  \)
  \item\label{EN_I5} 
    \begin{align*}                                                        
      r=\lim_{n\to\infty}\sum_{k=0}^n |b_{k, n}^\pm - a_{k, n}^\pm|
      =\lim_{n\to\infty}\sum_{k=0}^n (b_{k, n}^\pm - a_{k, n}^\pm).
      \end{align*}
  \end{enumerate}

Recall that because \(\widetilde{S}\) is compact, it follows that
  \(\int_{\widetilde{S}} (\tilde{u}^*da)\wedge \tilde{\alpha}< \infty\).
By Lemma \ref{LEM_coarea_lambda}, we then find that in the positive case
  we have
  \begin{equation*}                                                       
    \int_{0}^{r}\Big( \int_{(a\circ \tilde{u})^{-1}(s)\setminus
      \mathcal{X}} \tilde{\alpha}\Big)\, ds
    =
      \int_{\widetilde{S}\setminus \partial \widetilde{S} }(
      \tilde{u}^*da) \wedge \tilde{\alpha}
    = 
      \int_{\widetilde{S}} (\tilde{u}^*da) \wedge \tilde{\alpha}
      < \infty,
    \end{equation*}
  where \(\mathcal{X}=\{\zeta\in \widetilde{S}: d(a\circ \tilde{u})_\zeta
  =0\}\); here we have also made use of the fact that \(\partial
  \widetilde{S}\subset \widetilde{S}\) has zero measure -- this follows
  from the fact that \(\partial \widetilde{S}\subset \widetilde{S}\)
  is a piecewise smooth embedded
  submanifold of codimension one. 
Also, the orientation on  \((a\circ \tilde{u})^{-1}(s)\setminus
  \mathcal{X}\) is such that \(\tilde{\jmath}\widetilde{\nabla}(a\circ
  \tilde{u})\) is a positive frame field.  Similarly in the negative case
  we have
  \begin{equation*}                                                       
    \int_{-r}^{0}\Big( \int_{(a\circ \tilde{u})^{-1}(s)\setminus
      \mathcal{X}} \tilde{\alpha}\Big)\, ds
    =
      \int_{\widetilde{S}\setminus \partial \widetilde{S} }(
      \tilde{u}^*da) \wedge \tilde{\alpha}
    = 
      \int_{\widetilde{S}} (\tilde{u}^*da) \wedge \tilde{\alpha}
      < \infty.
    \end{equation*}
Recall that \(\tilde{\alpha}=-\tilde{u}^*da\circ\tilde{\jmath}\)
  and \((v, \tilde{\jmath}v)\) is a positively oriented basis for
  \(v\neq 0\), and hence \(\tilde{u}^*da\wedge \tilde{\alpha}\) is a
  non-negative function multiple of the area form on \(\widetilde{S}\)
  associated to \(\tilde{\gamma}\), and hence the functions
  \(\tilde{h}^\pm(s) :=\int_{(a\circ \tilde{u})^{-1}(\pm s)\setminus
  \mathcal{X}}\tilde{\alpha}\) are integrable.
Recall a consequence of the dominated convergence theorem:  if
  \(\{A_k\}_{k\in \mathbb{N}}\) is a sequence of sets \(A_k\subset [-r,
  r]\) satisfying \(A_{k+1}\subset A_k\) for all \(k\in \mathbb{N}\)
  and the measure of the \(A_k\) tends to zero as \(k\to \infty\), then
  \(\int_{A_k}\tilde{h}(s) ds \to 0 \).
Again employing Lemma \ref{LEM_coarea_lambda} together with this latter
  application of the dominated convergence theorem we find that
  \begin{equation*}                                                       
    0=\lim_{n\to \infty} \sum_{k=0}^{n-1}\int_{\widetilde{S}_{b_{k,
    n}^\pm}^{a_{k+1, n }^\pm}} (\tilde{u}^*da) \wedge \tilde{\alpha},
    \end{equation*}
  where 
  \begin{equation}\label{EQ_Sab}                                          
    \widetilde{S}_{a_0}^{a_1}= \{\zeta\in \widetilde{S}: a_0 \leq a\circ
    \tilde{u}(\zeta)\leq a_1\}.
    \end{equation}

Consequently, there exists an \(n\in \mathbb{N}\) with the property that
  \begin{equation}\label{EQ_small_integral}                               
    C_{\mathbf{h}} e^{C_{\mathbf{h}} r} \sum_{k=0}^{n-1}\Big(
    \int_{\widetilde{S}_{b_{k,
    n}^\pm}^{a_{k+1, n }^\pm}} (\tilde{u}^* da)
    \wedge\tilde{\alpha}\Big)< \delta
    \end{equation}
  with \(\delta>0\) defined at the start of this proof. 
For the remainder of the proof we shall assume \(n\) is fixed
  sufficiently large so that (\ref{EQ_small_integral})  holds.

We now aim to study the growth rate of \(h^\pm\). 
Recalling the definition of \(\mathcal{R}^\pm\) and the fact that
  the \(\mathcal{R}^\pm\) are open, we see that \(h^\pm\) is smooth on
  \(\mathcal{R}^\pm\), and hence we will estimate \(|(h^\pm)'|\).
To do this, it will be convenient to have first made the following
  definitions.
\begin{equation*}                                                         
  G^+:\mathcal{R}^+\to [0, \infty)\qquad\text{given by}\qquad
  G^+(s):=\int_{S_{0}^s} \tilde{u}^*\omega.
  \end{equation*}
\begin{equation*}                                                         
  G^-:\mathcal{R}^-\to [0, \infty)\qquad\text{given by}\qquad
  G^-(s):=\int_{S_{-s}^0} \tilde{u}^*\omega.
  \end{equation*}

Using the fact that \(\omega\) evaluates non-negatively on complex lines,
  it follows that \(u^*\omega\) evaluates non-negatively on positively
  oriented bases; then by definition of \(\tilde{u}\), particularly
  property (p\ref{EN_p4}) of Definition \ref{DEF_perturbed_J_map},
  together with the fact that \(\omega(\partial_a, \cdot) \equiv 0\),
  it follows that \(u^*\omega=\tilde{u}^*\omega\); these two results
  together then show that the \(G^\pm\) are monotone increasing, and
  since they are differentiable, we must have \((G^\pm)'\geq 0\).
Recalling that the \(\mathcal{R}^\pm\) are open, we assume \(s\in
  \mathcal{R}^\pm\setminus \{r\}\); then
  \begin{align*}                                                          
    |(h^+)'(s)|&=\Big|\lim_{\epsilon\to 0^+}
      \epsilon^{-1}\big(h^+(s+\epsilon)-h^+(s)\big)\Big|\\
    &=
      \lim_{\epsilon\to 0^+} \epsilon^{-1}\Big|\int_{(a\circ
      \tilde{u})^{-1}( s+ \epsilon)}\tilde{\alpha} - \int_{(a\circ
      \tilde{u})^{-1}( s)}\tilde{\alpha}\Big|\\
    &=
      \lim_{\epsilon\to 0^+}
      \epsilon^{-1}\Big|\int_{\widetilde{S}_{s}^{s+\epsilon}}d
      \tilde{\alpha}\Big|\\
    &=
      \lim_{\epsilon\to 0^+}
      \epsilon^{-1}\Big|\int_{\widetilde{S}_{s}^{s+\epsilon}\setminus              
        \mathcal{X}}d \tilde{\alpha}\Big|\\
    &\leq
      \lim_{\epsilon\to 0^+}
      \epsilon^{-1}\int_{\widetilde{S}_{s}^{s+\epsilon}\setminus
      \mathcal{X}} \|d
      \tilde{\alpha}\|_{\tilde{\gamma}}d\mu_{\tilde{\gamma}}^2 \\
    &\leq 
      \lim_{\epsilon\to 0^+}
      \epsilon^{-1}{\textstyle\frac{1}{2}}C_{\mathbf{h}}
      \int_{\widetilde{S}_{s}^{s+\epsilon}\setminus \mathcal{X}}
      d\mu_{\tilde{\gamma}}^2 \\
    &\leq  
      {\textstyle\frac{1}{2}}C_{\mathbf{h}}  \lim_{\epsilon\to 0^+}
      \epsilon^{-1} \int_{\widetilde{S}_{s}^{s+\epsilon}}
      2\big((\tilde{u}^*da) \wedge \tilde{\alpha} + \tilde{u}^*\omega\big)
      \\
    &=
      C_{\mathbf{h}}\Big(\lim_{\epsilon\to 0^+}\epsilon^{-1}
      \int_{\widetilde{S}_{s}^{s+\epsilon}} (\tilde{u}^*da)\wedge
      \tilde{\alpha} + \lim_{\epsilon\to 0^+}\epsilon^{-1}
      \int_{\widetilde{S}_{s}^{s+\epsilon}}\tilde{u}^*\omega\Big)\\
    &= 
      C_{\mathbf{h}}\Big( \int_{(a\circ u)^{-1}(s)} \tilde{\alpha}
      +(G^+)'(s)\Big)\\
    &= 
      C_{\mathbf{h}} \big(h^+(s) +  (G^+)'(s)\big), 
    \end{align*} 
  where to obtain the third equality we have made use of Stokes'
  theorem and Lemma \ref{LEM_char_fol_grad}, to obtain the second
  inequality we have employed Lemma \ref{LEM_d_alpha_tilde_bound}, to
  obtain the third inequality we have employed  Lemma
  \ref{LEM_linear_alg_coercive}, and to obtain the sixth equality we have
  employed Lemma \ref{LEM_coarea_lambda}.
A similar computation shows that
  \begin{equation*}                                                       
    |(h^-)'(s)|\leq C_{\mathbf{h}}\big(h^-(s) +  (G^-)'(s)\big).
    \end{equation*}
Summarizing, we have shown that for \(s\in \mathcal{R}^\pm\), we have
  the following differential inequalities.
  \begin{equation}                                                        
    \label{EQ_dh_est}(h^\pm)'(s)\leq C_{\mathbf{h}} \big(h^\pm(s) +
    (G^\pm)'(s)\big)\\
    \end{equation}
Assume that \(a_{k, n}^\pm\) and \(b_{k, n}^\pm\) are as in
  (\(\mathcal{I}\)\ref{EN_I1}), so that \([a_{k, n}^\pm, b_{k,
  n}^\pm]\subset \mathcal{R}^\pm\); then integrate inequality
  (\ref{EQ_dh_est}) on \([a_{k, n}^\pm, s]\subset [a_{k, n}^\pm, b_{k,
  n}^\pm]\) to obtain the following.
  \begin{align*}                                                          
    h^\pm(s)&\leq h^\pm(a_{k, n}^\pm)+C_{\mathbf{h}}\int_{a_{k, n}^\pm}^s
      h^\pm(t)\,
      dt  + C_{\mathbf{h}} \big( G^+(s) - G^+(a_{k, n}^+)\big)\\
    &\leq 
      h^\pm(a_{k, n}^\pm)+C_{\mathbf{h}}\int_{\widetilde{S}_{a_{k,
      n}^\pm}^{b_{k, n}^\pm}}u^*\omega +C_{\mathbf{h}}\int_{a_{k,
      n}^\pm}^s h^\pm(t)\, dt,
    \end{align*}
  or in short,
  \begin{equation}\label{EQ_gronwall_est1}                                
    h^\pm(s) \leq h^\pm(a_{k,
    n}^\pm)+C_{\mathbf{h}}\int_{\widetilde{S}_{a_{k,
    n}^\pm}^{b_{k, n}^\pm}}u^*\omega +C_{\mathbf{h}}\int_{a_{k, n}^\pm}^s
    h^\pm(t)\, dt,
    \end{equation}
  for \(s\in [a_{k, n}^+, b_{k, n}^+]\).
In order to obtain a sharper estimate on \(h^\pm\), we now employ
  Gronwall's inequality.

\begin{lemma}[Gronwall's inequality]
  \label{LEM_gronwall}
  \hfill\\
Assume that for \(t_0< t_1\), the functions \(\phi, \psi:[t_0, t_1]\to
  [0, \infty)\) are continuous.
Suppose further that \(\delta_1>0\) and \(\delta_3>0\) are positive
  constants, and the following estimate is satisfied for all \(t\in
  [t_0, t_1]\)
  \begin{equation*}                                                       
    \phi(t)\leq \delta_1 \int_{t_0}^t\psi(t)\phi(t) ds + \delta_3.
    \end{equation*}
Then for each \(t\in[t_0, t_1]\) the following estimate also holds.
  \begin{equation*}
    \phi(t)\leq \delta_3 e^{\delta_1 \int_{t_0}^t \psi(s) ds}
    \end{equation*}
\end{lemma}
%
\begin{proof}
See Section 1.3 of \cite{Ve}.
\end{proof}

Applying Lemma \ref{LEM_gronwall} to inequality (\ref{EQ_gronwall_est1})
  then yields
  \begin{align}                                                           
    h^\pm(s)&\leq \Big( h^\pm(a_{k, n}^\pm)+C_{\mathbf{h}}E_{a_{k,
      n}^\pm}^{b_{k, n}^\pm}\Big) e^{ C_{\mathbf{h}}( s-a_{k,
      n}^\pm)}\notag\\
    &=
      h^\pm(a_{k, n}^\pm)e^{ C_{\mathbf{h}}( s-a_{k, n}^\pm)}
      +C_{\mathbf{h}}E_{a_{k, n}^\pm}^{b_{k, n}^\pm} e^{ C_{\mathbf{h}}(
      s-a_{k, n}^\pm)} \label{EQ_f_estimate_1}
    \end{align}
  where we have abbreviated
  \begin{equation*}                                                       
    E_{a_0}^{a_1}= \int_{\widetilde{S}_{a_0}^{a_1}}u^*\omega.
    \end{equation*}
Assume \( k< n\) and estimate \(h(a_{k+1, n}^\pm)\) as follows, again
  making use of Lemma \ref{LEM_char_fol_grad} and Lemma
  \ref{LEM_d_alpha_tilde_bound}, we find
  \begin{align*}                                                          
    h^\pm(a_{k+1, n}^\pm)&= 
      h^\pm(b_{k, n}^\pm) + \int_{\widetilde{S}_{b_{k, n}^\pm}^{a_{k+1,
      n}^\pm}}d \tilde{\alpha}\\
    &\leq 
      h^\pm(b_{k, n}^\pm) + {\textstyle
      \frac{1}{2}}C_{\mathbf{h}}\int_{\widetilde{S}_{b_{k,
      n}^\pm}^{a_{k+1, n}^\pm}} d\mu_{\tilde{\gamma}}^2\\
    &\leq 
      h^\pm(b_{k, n}^\pm) + C_{\mathbf{h}}\int_{\widetilde{S}_{b_{k,
      n}^\pm}^{a_{k+1, n}^\pm}}  (\tilde{u}^*da)\wedge \tilde{\alpha}
      + C_{\mathbf{h}}\int_{\widetilde{S}_{b_{k, n}^\pm}^{a_{k+1, n}^\pm}}
      u^* \omega\\
    &=
      h^\pm(b_{k, n}^\pm) +C_{\mathbf{h}}\int_{\widetilde{S}_{b_{k,
      n}^\pm}^{a_{k+1, n}^\pm}}(\tilde{u}^*da)\wedge \tilde{\alpha} +
      C_{\mathbf{h}}E_{b_{k, n}^\pm}^{a_{k+1, n}^\pm}.
    \end{align*}
Making use of estimate (\ref{EQ_f_estimate_1}), we have
  \begin{equation}                                                        
    h^\pm(b_{k, n}^\pm)\leq h^\pm(a_{k, n}^\pm)e^{ C_{\mathbf{h}}( b_{k,
    n}^\pm-a_{k, n}^\pm)} +C_{\mathbf{h}}E_{a_{k, n}^\pm}^{b_{k, n}^\pm}
    e^{ C_{\mathbf{h}}( b_{k, n}^\pm-a_{k, n}^\pm)}
    \end{equation}
  \begin{equation*}                                                       
    h^\pm(a_{k+1, n}^\pm)\leq h^\pm(b_{k, n}^\pm)
    +C_{\mathbf{h}}\int_{\widetilde{S}_{b_{k, n}^\pm}^{a_{k+1,
    n}^\pm}}(\tilde{u}^*da)\wedge \tilde{\alpha} + C_{\mathbf{h}}E_{b_{k,
    n}^\pm}^{a_{k+1, n}^\pm}.
    \end{equation*}
  and thus
  \begin{align*}                                                          
    h^\pm(a_{k+1, n}^\pm)&\leq
      \Big(  h^\pm(a_{k, n}^+)e^{ C_{\mathbf{h}}( b_{k, n}^\pm-a_{k,
      n}^\pm)} +C_{\mathbf{h}}E_{a_{k, n}^\pm}^{b_{k, n}^\pm} e^{
      C_{\mathbf{h}}( b_{k, n}^\pm -a_{k, n}^\pm)}\Big)\\
    &\qquad  
      +C_{\mathbf{h}}\int_{\widetilde{S}_{b_{k, n}^\pm}^{a_{k+1, n}^\pm
      }}(\tilde{u}^*da)\wedge \tilde{\alpha} + C_{\mathbf{h}}E_{b_{k,
      n}^\pm }^{a_{k+1, n}^\pm }\\
    &\leq  
      h^\pm (a_{k, n}^\pm)e^{C_{\mathbf{h}}( a_{k+1, n}^\pm -a_{k,
      n}^\pm)} +C_{\mathbf{h}} E_{a_{k, n}^\pm }^{a_{k+1, n}^\pm } e^{
      C_{\mathbf{h}}( a_{k+1, n}^\pm -a_{k, n}^\pm )}\\
    &\qquad  
      +C_{\mathbf{h}}\int_{\widetilde{S}_{b_{k, n}^\pm}^{a_{k+1,
      n}^\pm}}(\tilde{u}^*da)\wedge \tilde{\alpha}.
    \end{align*}
Iterating this estimate backwards in \(k\) from \(n-1\) to \(0\)
  then yields
  \begin{align*}                                                          
    h^\pm (a_{n, n}^\pm)&\leq h^\pm(a_{0, n}^\pm)e^{ C_{\mathbf{h}}( a_{n,
      n}^+-a_{0, n}^\pm)}+C_{\mathbf{h}}E_{a_{0,
      n}^\pm}^{a_{n, n}^\pm} e^{ C_{\mathbf{h}}( a_{n, n}^\pm -a_{0,
      n}^\pm)}\\
    &\qquad
      +C_{\mathbf{h}} e^{C_{\mathbf{h}} (a_{n, n}^+ - a_{0, n}^\pm)}
      \sum_{\ell = 1}^{n} \int_{\widetilde{S}_{b_{\ell-1,
      n}^\pm}^{a_{\ell, n}^\pm}} (\tilde{u}^*da )\wedge \tilde{\alpha}.
    \end{align*}
We recall that by construction \(a_{0, n}^\pm = 0\) and \(b_{n, n}^\pm =
  r\), so that with final application of estimate (\ref{EQ_f_estimate_1})
  we find
  \begin{align*}                                                          
    h^\pm(r)&\leq 
      \Big(h^\pm(0)+C_{\mathbf{h}} \int_{\widetilde{S}} u^*\omega \Big)
      e^{ C_{\mathbf{h}} r} +C_{\mathbf{h}} e^{C_{\mathbf{h}} r }
      \sum_{\ell = 1}^{n} \int_{\widetilde{S}_{b_{\ell-1, n}^\pm
      }^{a_{\ell, n}^\pm }}( \tilde{u}^*da) \wedge
      \tilde{\alpha}\\
    &\leq 
      \Big(h^\pm(0)+C_{\mathbf{h}}\int_{\widetilde{S}} u^*\omega \Big) e^{
      C_{\mathbf{h}} r } + \delta.
    \end{align*}
Since \(\delta>0\) was arbitrary, we conclude that
  \begin{equation*}                                                       
    h^\pm(r)=\int_{(a\circ \tilde{u})^{-1}(\pm r)}\tilde{\alpha} \leq
    \Big( \int_{(a\circ u)^{-1}(0)} \tilde{\alpha} +C_{\mathbf{h}}\int_S
    u^*\omega \Big) e^{C_{\mathbf{h}} r },
    \end{equation*}
which establishes inequalities (\ref{EQ_lambda_growth_inequality_1})
  and (\ref{EQ_lambda_growth_inequality_2}).

To achieve inequality (\ref{EQ_area_growth_inequality}) in both the
  positive and negative case, we essentially employ Lemma
  \ref{LEM_coarea_lambda} and observe that the functions \(s\mapsto
  \int_{(a\circ u)^{-1}(\pm s)} \tilde{\alpha}\) is defined almost
  everywhere (specifically on \(\mathcal{R}^\pm\) which has full measure,
  and on \(\mathcal{R}^\pm\) it agrees with \(h^\pm\)); the result is then
  obtained  by integrating the estimates just obtained for \(h^\pm(r)\).
Indeed, in the positive case we have 
  \begin{align*}                                                          
    \int_{\widetilde{S}} (\tilde{u}^*da) \wedge \tilde{\alpha}+
      \tilde{u}^*\omega
    &= 
      \int_{\widetilde{S}} (\tilde{u}^*da)\wedge \tilde{\alpha} +
      \int_{\widetilde{S}} u^*\omega\\
    &= 
      \int_{0}^{r} \Big( \int_{(a\circ \tilde{u})^{-1}(s)\setminus
      \mathcal{X}}\tilde{\alpha} \Big)\, ds + \int_{\widetilde{S}}u^*\omega
      \\
    &= 
      \int_{0}^{r} h^+(s)\, ds + \int_{\widetilde{S}}u^*\omega\\
    &\leq 
      \int_{0}^{r} \Big( h^+(0) +C_{\mathbf{h}} E_0 \Big) e^{
      C_{\mathbf{h}} s } \, ds + \int_{\widetilde{S}}u^*\omega\\
    & \leq \Big( C_{\mathbf{h}}^{-1} h^+(0)+E_0 \Big) \big(e^{
      C_{\mathbf{h}} r }-1) + E_0,
    \end{align*} 
  which establishes inequality (\ref{EQ_area_growth_inequality}) in
  the positive case; the negative case is established similarly.
This completes the proof of Theorem \ref{THM_area_bound_estimate}.
\end{proof}

To complete Section \ref{SEC_proof_of_exp_area_bounds}, it remains to
  prove Theorem \ref{THM_area_bounds}.
This will follow as an immediate corollary to the following result.

\begin{proposition}[area bounds more carefully]
  \label{PROP_area_bounds_carefully}
  \hfill \\
Fix positive constants \(C_H>0\), \(r>0\), and \(E_0>0\). 
Then there exists a constant \(C_A=C_A(C_H, r, E_0)\)  with the following
  significance.
For each closed odd-dimensional manifold \(M\) equipped with the quadruple
  \(\mathbf{h}=(J, g, \lambda, \omega)\) where \(\eta:=(\lambda, \omega)\)
  is a Hamiltonian structure on \(M\) and \((J, g)\) is an \(\eta\)-adapted
  almost Hermitian structure on \(\mathbb{R}\times M\) with the property
  that \(C_{\mathbf{h}}\leq C_H\), where \(C_{\mathbf{h}}\) is the ambient
  geometry constant established in Definition
  \ref{DEF_ambient_geometry_constant}, and for each proper pseudoholomorphic
  map \(u\colon S\to \mathcal{I}\times M\), where \(\mathcal{I}=(a_0 -r,
  a_0 +r)\), for which \(\partial S = \emptyset\), \(u^{-1}(\{a_0\}\times
  M)=\emptyset\), and
  \begin{equation*}                                                       
    \int_S u^*\omega \leq E_0 <\infty,
    \end{equation*}
  the following also holds:
  \begin{equation*}                                                       
    \int_{S} u^* (da \wedge \lambda + \omega) \leq C_A.
    \end{equation*} 
\end{proposition}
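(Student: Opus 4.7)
The plan is to reduce to a single connected component of $S$ whose image lies entirely in one half-space $\{a>a_0\}\times M$ or $\{a<a_0\}\times M$ (the two cases being symmetric), and then apply Theorem \ref{THM_area_bound_estimate} to an exhausting family of tracts. The main obstacle will be showing that the $\lambda$-integral over a level set tends to zero as the level approaches the minimum of $a\circ u$.

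Since $\partial S=\emptyset$ and $u^{-1}(\{a_0\}\times M)=\emptyset$, each connected component of $S$ maps entirely into one of the two half-spaces, so by symmetry and additivity I may assume $S$ is connected and $u(S)\subset(a_0,a_0+r)\times M$. Set $a_1:=\inf_S a\circ u$. I claim $a_1>a_0$ and the infimum is attained. Indeed, if $\zeta_n\in S$ satisfied $a\circ u(\zeta_n)\to a_0^+$, then eventually $u(\zeta_n)$ would lie in the compact subset $[a_0,a_0+r/2]\times M$ of $\mathcal{I}\times M$, so properness of $u$ would extract a subsequential limit $\zeta_\ast\in S$ with $a\circ u(\zeta_\ast)=a_0$, contradicting the hypothesis; the analogous argument yields attainment. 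For any regular values $a_1<s_0<s_1<a_0+r$ of $a\circ u$, the set $\widetilde{S}:=u^{-1}([s_0,s_1]\times M)$ is a compact smooth surface with boundary $(a\circ u)^{-1}(\{s_0,s_1\})$, and $(u,\widetilde{S},j,0,u,S,j)$ is a tract of unperturbed pseudoholomorphic map with $\partial_1\widetilde{S}=\emptyset$. Applying Theorem \ref{THM_area_bound_estimate} (after translating the symplectization coordinate so that $s_0$ becomes $0$) yields
\[
\int_{\widetilde{S}} u^*(da\wedge\lambda+\omega)
\;\leq\;
\bigl(C_H^{-1}\,h(s_0)+E_0\bigr)\bigl(e^{C_H(s_1-s_0)}-1\bigr)+E_0,
\]
where $h(s):=\int_{(a\circ u)^{-1}(s)} u^*\lambda$. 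Once I verify $h(s_0)\to 0$ as $s_0\to a_1^+$ through regular values, letting $s_0\to a_1^+$, $s_1\to(a_0+r)^-$ and summing over connected components (whose $\omega$-energies add to at most $E_0$) delivers the bound $C_A=E_0\,e^{C_H r}$.

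The heart of the proof is establishing $h(s_0)\to 0$, and I plan to do this via Stokes' theorem applied to the compact set $\mathcal{K}(s_0):=u^{-1}([a_1,s_0]\times M)$. Since $a\circ u\geq a_1$ globally, the minimum set $(a\circ u)^{-1}(a_1)$ consists of interior points of $\mathcal{K}(s_0)$, so $\mathcal{K}(s_0)$ is a compact smooth surface whose only boundary component is $(a\circ u)^{-1}(s_0)$. Stokes' theorem gives $h(s_0)=\int_{\mathcal{K}(s_0)} u^*d\lambda$, and the pointwise bound from Lemma \ref{LEM_dlambda_bounds} (with $C_0\leq C_H$) converts this into $|h(s_0)|\leq C_H\cdot{\rm Area}_{u^*g}\bigl(\mathcal{K}(s_0)\bigr)$. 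As $s_0\to a_1^+$, the nested compact sets $\mathcal{K}(s_0)$ decrease to $u^{-1}(\{a_1\}\times M)$, which has $u^*g$-measure zero: otherwise $a\circ u$ would equal $a_1$ on a set of positive two-dimensional measure, forcing it --- and by unique continuation, $u$ itself --- to be constant on the component, a case contributing nothing to the integral. The dominated convergence theorem applied to the smooth density $u^*(da\wedge\lambda+\omega)$ on the fixed compact set $\mathcal{K}(s_1)$ then yields ${\rm Area}(\mathcal{K}(s_0))\to 0$, hence $h(s_0)\to 0$. The main subtlety throughout is this shrinkage claim, which crucially depends on unique continuation to rule out positive-measure minimum sets.
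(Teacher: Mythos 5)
Your proof takes a fundamentally different route from the paper's, and the difference introduces a gap that the paper's approach avoids entirely.

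Your strategy is to anchor the growth estimate of Theorem~\ref{THM_area_bound_estimate} at the infimum $a_1 := \inf_S a\circ u$ and then argue that $h(s_0) := \int_{(a\circ u)^{-1}(s_0)}u^*\lambda \to 0$ as $s_0\to a_1^+$. For this you need $(a\circ u)^{-1}(a_1)$ to have $u^*g$-measure zero, which you assert via ``unique continuation.'' That step is not justified. Unique continuation for pseudoholomorphic curves (e.g.\ Section 2.3 of \cite{MS}) says that two pseudoholomorphic maps agreeing to infinite order at a point coincide locally; it gives no direct control of the scalar function $a\circ u$. That function satisfies $\Delta(a\circ u)\,\mathrm{vol}_{u^*g} = u^*d\lambda$ (Lemma~\ref{LEM_d_alpha_tilde} with $f\equiv 0$), but the right-hand side depends on the $\xi$-component of $du$, not merely on $a\circ u$, so the scalar PDE is not closed and no off-the-shelf unique-continuation theorem for $a\circ u$ applies. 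Furthermore, even granting $a\circ u\equiv a_1$ on a component, $u$ need not be constant there: a nonconstant $J$-holomorphic curve tangent everywhere to $\xi\subset TM$ inside the slice $\{a_1\}\times M$ is consistent with the hypotheses of the proposition. (The conclusion is trivially true in that case, but your argument still requires the intermediate measure-zero claim, and no argument is given that rules out the remaining case --- a positive-measure critical set without $a\circ u$ being constant on the whole component --- when $d\lambda|_\xi$ fails to be definite.)

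The paper sidesteps all of this by noticing what the hypothesis $u^{-1}(\{a_0\}\times M)=\emptyset$ is actually for: the level set at $a_0$ is \emph{empty}, so $\int_{(a\circ u)^{-1}(a_0)}u^*\lambda = 0$ with no analysis at all. Applying inequality~\eqref{EQ_area_growth_inequality} to the tract $(a\circ u)^{-1}([a_0,a_0+r-\epsilon_k])$ (and its reflection) with this vanishing $\lambda$-integral at the inner level gives
\[
\int_{S_{0}^{r-\epsilon_k}} u^*(da\wedge\lambda+\omega)\;\leq\; E_0\,e^{C_{\mathbf h}(r-\epsilon_k)}
\]
immediately, with no limiting argument about the minimum set. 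You anchored at the infimum $a_1$, where the level sets are nonempty and the vanishing of $h$ is subtle; anchoring at the empty level $\{a_0\}$ --- which the hypothesis hands you for free --- is the whole point of the proposition. Even within your own framework, you could have used any regular value $s_0\in(a_0,a_1)$, where $(a\circ u)^{-1}(s_0)=\emptyset$ and $h(s_0)=0$ trivially; the measure-zero claim is not needed at all once this is observed.
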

%
\begin{proof}
Fix \(M\), \(\mathbf{h}\), and \(u\) as in the hypotheses.
By translation invariance of \(J\), \(g\), \(\lambda\), and \(\omega\), we
  may assume with out loss of generality that \(a_0=0\).
Observe that since \((a\circ u)^{-1}(0)=\emptyset\) it follows that \(0\)
  is a regular value of \(a\circ u\).
Next, fix a sequence of \(\epsilon_k >0\) so that \(\epsilon_k\to 0\) as
  \(k\to \infty\), and with the additional property that \(\pm
  \epsilon_k\) are regular values of \(a\circ u\) for all \(k\in
  \mathbb{N}\); note that such choice is possible by Sard's theorem.
Then we have
  \begin{align*}                                                          
    \int_{S} u^*(da\wedge \lambda + \omega)&= \lim_{k\to \infty}
    \int_{S_{-r+\epsilon_k}^{r-\epsilon_k}} u^*(da\wedge \lambda + \omega)
    \end{align*}
  where \(S_{-r+\epsilon_k}^{r-\epsilon_k}\) is defined as in
  (\ref{EQ_Sab}).
We now intend to employ Theorem \ref{THM_area_bound_estimate} in the
  case that the perturbed pseudoholomorphic map is given by \((\tilde{u},
  \widetilde{S}, \tilde{\jmath}, f, u, S, j)=(u, S_0^{r-\epsilon_k}, j, 0,
  u, S, j)\).
In this case \(\tilde{\alpha}=u^*\lambda\), so that by inequality
  (\ref{EQ_area_growth_inequality}) we have
  \begin{align*}                                                          
    \int_{S_{0}^{r-\epsilon_k}} u^*(da\wedge \lambda + \omega)&\leq 
      E_0 e^{C_{\mathbf{h}}(r-\epsilon_k)} \\
    &\leq 
      E_0 e^{C_{\mathbf{h}}r} \\
    &=:
      {\textstyle \frac{1}{2}}C_A.
    \end{align*}
The same inequality holds for \(S_{-r+\epsilon_k}^0\) instead of
  \(S_0^{r-\epsilon_k}\), and summing the two inequalities then yields the
  desired result.
\end{proof}

As remarked above, Theorem \ref{THM_area_bounds} follows from
  Proposition \ref{PROP_area_bounds_carefully} as an immediate corollary.

\subsubsection{An area estimate in realized Hamiltonian homotopies}
Here we prove a slight modification of Proposition
  \ref{PROP_area_bounds_carefully}, for the case of realized Hamiltonian
  homotopies in the sense of Definition \ref{DEF_hamiltonian_homotopy}.

\setcounter{CurrentSection}{\value{section}}
\setcounter{CurrentTheorem}{\value{theorem}}
\setcounter{section}{\value{CounterSectionAreaHomotopy}}
\setcounter{theorem}{\value{CounterTheoremAreaHomotopy}}
\begin{theorem}[area bounds in realized Hamiltonian homotopy]
  \label{THM_area_bounds_homotopy}
  \hfill \\
Fix positive constants \(C_H>0\), \(r>0\), and \(E_0>0\). 
Then there exists a constant \(C_A=C_A(C_H, r, E_0)\) with the following
  significance.
Let \(\mathcal{I}\times M, (\hat{\lambda}, \hat{\omega}))\) denote a
  realized Hamiltonian homotopy in the sense of Definition
  \ref{DEF_hamiltonian_homotopy}, and let \((J, g)\) be an adapted almost
  Hermitian structure in the sense of Definition
  \ref{DEF_adapted_structures_Ham_homotopy} with
  \begin{align*}                                                          
    C_{\mathbf{H}}:= \sup_{q\in \mathcal{I} \times M}
    \|d\hat{\lambda}_q\|_g \leq C_H.
    \end{align*}
For each proper pseudoholomorphic
  map \(u\colon S\to \mathcal{I}_r\times M\), where 
\begin{align*}                                                            
  \mathcal{I}_r=(a_0 -r, a_0 +r)\subset \mathcal{I}
  \end{align*}
  for which \(\partial S = \emptyset\),
  \(u^{-1}(\{a_0\}\times M)=\emptyset\), and
  \begin{equation*}                                                       
    \int_S u^*\omega \leq E_0 <\infty,
    \end{equation*}
  the following also holds:
  \begin{equation*}                                                       
    {\rm Area}_{u^*g}(S) = \int_{S} u^* (da \wedge \hat{\lambda} +
    \hat{\omega}) \leq C_A.
    \end{equation*} 
Additionally, for any \([ a_0, a_1] \subset \mathcal{I}\) and any compact
  pseudoholomorphic map \(u: S\to [a_0,a_1]\times M\) for which \(a_0\) and
  \(a_1\) are regular values of \(a\circ u\) and
  \(u^{-1}\big(\{a_0,a_1\}\times M\big) = \partial S\), the following also
  hold:
  \begin{align*}                                                          
    \int_{\Gamma_{a_0}} u^*\lambda\leq
    \Big(C_{H} E_0+ \int_{\Gamma_{a_1}}
    u^*\lambda\Big) e^{C_H (a_1-a_0)},
    \end{align*}
  and
  \begin{align*}                                                          
    \int_{\Gamma_{a_1}} u^*\lambda\leq
    \Big(C_{H} E_0+ \int_{\Gamma_{a_0}}
    u^*\lambda\Big) e^{C_H (a_1-a_0)},
    \end{align*}
  where \( \Gamma_{a_i} = (a\circ u)^{-1}(a_i) \) for \(i \in \{0, 1\}\).
Similarly, for 
  \begin{align*}                                                          
    \ell: = \min_{i\in \{0, 1\}} \Big\{ \int_{\Gamma_{a_i}} u^*\lambda
    \Big\}
    \end{align*}
  we have
  \begin{align*}                                                          
    {\rm Area}_{u^*g} (S) \leq (C_H^{-1} \ell +E_0 ) (e^{C_H (a_1-a_0)}
    -1)+E_0.
    \end{align*}
\end{theorem}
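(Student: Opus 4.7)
The plan is to transplant the argument of Theorem \ref{THM_area_bound_estimate} and Proposition \ref{PROP_area_bounds_carefully} into the realized Hamiltonian homotopy setting, with the global constant $C_H$ substituting for the ambient geometry constant $C_{\mathbf{h}}$. First I would establish the structural facts needed: setting $\alpha := u^*\hat{\lambda}$, a direct transcription of Lemma \ref{LEM_J_diff}, using $\widehat{J}\partial_t = \widehat{X}$, $\widehat{J}(\hat{\xi}) = \hat{\xi}$, $\hat{\xi} = \ker dt \cap \ker \hat{\lambda}$, and $\hat{\lambda}(\widehat{X}) = 1$ from Definition \ref{DEF_adapted_structures_Ham_homotopy}, gives $-dt \circ \widehat{J} = \hat{\lambda}$ and hence $\alpha = -(u^*dt) \circ j$ for pseudoholomorphic $u$; moreover the adapted metric formula yields ${\rm Area}_{u^*g}(S) = \int_S u^*(dt \wedge \hat{\lambda} + \hat{\omega}) = \int_S u^*dt \wedge \alpha + u^*\hat{\omega}$, together with the pointwise coercivity $(u^*dt \wedge \alpha + u^*\hat{\omega})(v, jv) = \|Tu \cdot v\|_{\hat{g}}^2$. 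Crucially, the pointwise bound on $d\alpha$ simplifies substantially: at immersion points, $\|d\alpha\|_{u^*g} = \|u^*d\hat{\lambda}\|_{u^*g} \leq \|d\hat{\lambda}\|_g \leq C_H$, bypassing the decomposition of Lemma \ref{LEM_dlambda_bounds} entirely.

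With these preparations the Stokes--coarea chain from the proof of Theorem \ref{THM_area_bound_estimate} reads off verbatim and yields, at regular values of $a \circ u$, the differential inequality $|h'(s)| \leq C_H(h(s) + G'(s))$, where $h(s) = \int_{(a \circ u)^{-1}(s)} \alpha$ and $G(s) = \int_{S_{a_0}^s} u^*\hat{\omega}$; here I would reuse the dominated-convergence argument with the intervals $\mathcal{I}_{k,n}^\pm$ to handle accumulations of critical values of $a \circ u$ exactly as in the original proof. Integrating $h'(s) \leq C_H(h(s) + G'(s))$ forward from $a_0$ and applying Gronwall's inequality (Lemma \ref{LEM_gronwall}) gives $h(s) \leq (h(a_0) + C_H E_0)e^{C_H(s - a_0)}$, and integrating $-h'(s) \leq C_H(h(s) + G'(s))$ after reversing the parameter and applying Gronwall again yields $h(s) \leq (h(a_1) + C_H E_0)e^{C_H(a_1 - s)}$. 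Specializing to $s = a_1$ and $s = a_0$ respectively produces the two boundary-integral inequalities claimed. The total-area estimate then follows by integrating $h$ over $[a_0, a_1]$ using whichever exponential bound emanates from the endpoint realizing $\ell = \min\{h(a_0), h(a_1)\}$ and adding $E_0$, yielding exactly $(C_H^{-1} \ell + E_0)(e^{C_H(a_1 - a_0)} - 1) + E_0$.

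For the first assertion (the proper no-boundary case with $u^{-1}(\{a_0\} \times M) = \emptyset$), I would mimic Proposition \ref{PROP_area_bounds_carefully}: after translating so $a_0 = 0$, the level set at $0$ is empty and each component of $S$ maps either into $(0, r) \times M$ or into $(-r, 0) \times M$. For the positive side, pick regular values $\epsilon_k \to 0^+$ of $a \circ u$; since $u$ is proper into $\mathcal{I}_r \times M$ and $M$ is compact, $S_0^{r - \epsilon_k} := (a \circ u)^{-1}([0, r - \epsilon_k])$ is compact, and applying the bidirectional estimate with $\ell = h(0) = 0$ (trivially, by emptiness of the level set) gives ${\rm Area}_{u^*g}(S_0^{r - \epsilon_k}) \leq E_0 e^{C_H r}$; monotone convergence as $\epsilon_k \to 0^+$, together with the symmetric argument on the negative side, produces the uniform bound $C_A = 2 E_0 e^{C_H r}$. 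The main technical point, more than a genuine obstacle, is verifying the structural identities $-dt \circ \widehat{J} = \hat{\lambda}$, ${\rm Area}_{u^*g}(S) = \int_S u^*(dt \wedge \hat{\lambda} + \hat{\omega})$, and the $d\alpha$ bound in the realized Hamiltonian homotopy setting; once those are in place, the Gronwall iteration and the approximation scheme proceed by mechanical adaptation of the already-developed machinery.
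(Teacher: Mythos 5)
Your proposal is correct and takes essentially the same approach as the paper: the paper's own proof simply declares the argument to be that of Theorem \ref{THM_area_bound_estimate} and Proposition \ref{PROP_area_bounds_carefully} with the perturbation removed, $(\lambda,\omega)$ replaced by $(\hat{\lambda},\hat{\omega})$, and $\tfrac12 C_{\mathbf h}$ replaced by $C_{\mathbf H}$. You have correctly identified the structural ingredients that make the transplant work, in particular the simplified pointwise bound $\|d\alpha\|_{u^*g}\le\|d\hat{\lambda}\|_g\le C_H$ that bypasses the decomposition of Lemma \ref{LEM_dlambda_bounds}, and the bidirectional Gronwall argument that yields both boundary-integral estimates and the claimed area bound.
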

%

\begin{proof}
\setcounter{section}{\value{CurrentSection}}
\setcounter{theorem}{\value{CurrentTheorem}}

The proof is essentially identical to the proof of Theorem
  \ref{THM_area_bound_estimate} and Proposition
  \ref{PROP_area_bounds_carefully}, except with the following essentially
  typographical changes.
In Theorem \ref{THM_area_bound_estimate}, the perturbed map
  \(\tilde{u}\) simply becomes the unperturbed map \(u\); or equivalently
  the perturbing function \(f\equiv 0\).
Similarly, the perturbed almost complex structure \(\tilde{\jmath}\) on
  the domain \(S\) is simply the unperturbed \(j\).
All references to \(\lambda\) and \(\omega\) should respectively be
  replaced with references to \(\hat{\lambda}\) and \(\hat{\omega}\).
The one-form \(\tilde{\alpha}\) is nothing more than
\begin{align*}                                                            
  \tilde{\alpha} = u^*\hat{\lambda}.
  \end{align*}
Instances of \(\frac{1}{2}C_{\mathbf{h}}\) should be replaced with
  \(C_{\mathbf{H}}\).
The proof then goes through unchanged.

\end{proof}

\subsection{Proof of Theorem \ref{THM_energy_threshold}: $\omega$-Energy
  Threshold}

The primary purpose of this section is to prove Theorem
  \ref{THM_energy_threshold}, which we restate below.

\setcounter{CurrentSection}{\value{section}}
\setcounter{CurrentTheorem}{\value{theorem}}
\setcounter{section}{\value{CounterSectionEnergyThreshold}}
\setcounter{theorem}{\value{CounterTheoremEnergyThreshold}}
\begin{theorem}[$\omega$-energy threshold]\hfill\\
Let \((M, \eta=(\lambda, \omega))\) be a compact framed Hamiltonian
  manifold, and let \((J, g)\) be an \(\eta\)-adapted almost Hermitian
  structure on \(\mathbb{R}\times M\).
Also, fix positive constants \(r>0\), and \(C_g>0\). 
Then there exists a positive constant \(0<\hbar=\hbar(M, \eta, J, g,
  r, C_g)\) with the following significance.
Let \(\{\mathbf{h}_k\}_{k\in\mathbb{N}}\) be a sequence of quadruples
  \((J_k, g_k, \lambda_k, \omega_k)\) with the property that each
  \(\eta_k=(\lambda_k, \omega_k)\) is a Hamiltonian structure on \(M\),
  and each \((J_k, g_k)\) is an \(\eta_k\)-adapted almost Hermitian
  structure on \(\mathbb{R}\times M\), and suppose that
  \begin{align*}                                                          
    (J_k, g_k, \lambda_k, \omega_k) \to (J, g, \lambda, \omega)
    \qquad\text{in }\mathcal{C}^\infty\text{ as }k\to \infty.
    \end{align*}
Furthermore, fix \(a_0\in \mathbb{R}\), and let \(u_k\colon S_k\to
  \mathbb{R}\times M\) be a sequence of compact connected generally
  immersed pseudoholomorphic maps which satisfy the following conditions:
  \begin{enumerate}[($\hbar$1)]                                           
      \item either \(a\circ u_k(S_k)\subset [a_0,
        \infty)\) or  \(a\circ u_k(S_k)\subset (-\infty, a_0]\) for all
        \(k\in \mathbb{N}\)
      \item \({\rm Genus}(S_k)\leq C_g\)
      \item \(a\circ u_k(\partial S_k)\cap [a_0-r, a_0+r]
	= \emptyset\)
      \item \(a_0\in a\circ u_k (S_k)\).
    \end{enumerate}
Then for all sufficiently large \(k\in \mathbb{N}\) we have
  \begin{equation*}                                                       
      \int_{S_k} u_k^*\omega_k \geq \hbar.
    \end{equation*}
\end{theorem}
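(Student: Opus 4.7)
I argue by contradiction: suppose a sequence $u_k$ satisfying ($\hbar$1)--($\hbar$4) has $E_k := \int_{S_k} u_k^*\omega_k \to 0$. After passing to a subsequence and by swapping the orientation of $\mathbb{R}$ if necessary, I may assume $a\circ u_k(S_k) \subset [a_0,\infty)$ for all $k$. Since $(J_k,g_k,\lambda_k,\omega_k)\to (J,g,\lambda,\omega)$ in $\mathcal{C}^\infty$, the ambient geometry constant of the $k$-th structure is uniformly controlled, so Theorem~\ref{THM_area_bounds}, applied with reference level $a_0' = a_0 - r/4$ (which satisfies $(a\circ u_k)^{-1}(a_0') = \emptyset$) and radius parameter $r/2$, yields a uniform bound
\[
\int_{\widetilde{S}_k} u_k^*\bigl(da\wedge \lambda_k + \omega_k\bigr) \;\leq\; C
\]
on the slab $\widetilde{S}_k := (a\circ u_k)^{-1}\big([a_0 - r/4, a_0 + r/4]\big)$, where $C$ depends only on $M,\eta,J,g,r$, and the uniform energy bound $E_k \leq 1$ (valid for $k$ large). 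Hypothesis ($\hbar$3) is what allows one to use Theorem~\ref{THM_area_bounds} while remaining disjoint from $\partial S_k$.

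With the area bound in hand, I apply target-local Gromov compactness (Theorem~\ref{THM_target_local_gromov_compactness}) to the restrictions $u_k|_{\widetilde{S}_k}$, taking $\mathcal{K}_1 := [a_0 - r/16, a_0 + r/16]\times M \subset \mathcal{K}_2 := [a_0 - r/8, a_0 + r/8]\times M$. The hypotheses are met: $u_k(\partial S_k)\cap \mathcal{K}_2 = \emptyset$ by ($\hbar$3), the area is bounded by the previous step, the genus is bounded by ($\hbar$2), and there are no marked or nodal points. After extracting a subsequence and shrinking to compact sub-surfaces $\widehat{S}_k \subset \widetilde{S}_k$ with $u_k(\widetilde{S}_k\setminus\widehat{S}_k) \cap \mathcal{K}_1 = \emptyset$, the restrictions converge in a Gromov sense to a compact stable marked nodal boundary-immersed pseudoholomorphic curve $\mathbf{u}_\infty = (u_\infty, S_\infty, j_\infty, \mathbb{R}\times M, J, \emptyset, D_\infty)$.

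The limit has vanishing $\omega$-energy: $\int_{S_\infty} u_\infty^*\omega = \lim_k \int_{\widehat{S}_k} u_k^*\omega_k \leq \lim_k E_k = 0$. Because $\omega(Y,JY)\geq 0$ with equality exactly when $Y \in \mathrm{Span}(\partial_a, X_\eta) = \ker\omega$, the tangent of $u_\infty$ lies pointwise in this integrable rank-two distribution, whose leaves are orbit cylinders of the form $\mathbb{R}\times \gamma(\mathbb{R})$ with $\gamma$ a trajectory of $X_\eta$. Hence on each connected component $S_0\subset S_\infty$, the map $u_\infty|_{S_0}$ is either constant, or factors through a single orbit cylinder, which is biholomorphic to $\mathbb{C}$ or to $\mathbb{C}^*$; on such a cylinder $a$ is the real part of a global holomorphic coordinate, so $a\circ u_\infty|_{S_0}$ is harmonic. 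If $S_0$ is closed, the maximum principle forces $u_\infty|_{S_0}$ to be constant. If $\partial S_0 \neq \emptyset$, then $a\circ u_\infty(\partial S_0) \subset [a_0 + r/16, a_0 + r/8]$ by the compactness setup, so $a\circ u_\infty|_{S_0} \geq a_0 + r/16$ on $\partial S_0$; but $a\circ u_\infty \geq a_0$ everywhere, and any interior point with $a\circ u_\infty = a_0$ would violate the strict minimum principle. Thus every component touching the level $\{a_0\}\times M$ at an interior point must be constant. By iterating this analysis (any component mapping into an orbit cylinder and containing an interior point with $a$-value strictly less than the boundary values is forced to be constant), one verifies that \emph{every} component of $\mathbf{u}_\infty$ is constant, so the image of $a\circ u_\infty$ is a finite subset of $\mathbb{R}$.

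Finally I exploit the connectedness of $S_k$ together with ($\hbar$1) and ($\hbar$4) to reach a contradiction. Since $S_k$ is connected, $a\circ u_k$ attains the value $a_0$, and $a\circ u_k(\partial S_k) > a_0 + r$, the intermediate value theorem forces $a\circ u_k$ to attain every value in $[a_0, a_0 + r/16]$; moreover the preimage of this interval lies in $u_k^{-1}(\mathcal{K}_1) \subset \widehat{S}_k$. Choose a countable dense set $\{\delta_m\} \subset [a_0, a_0 + r/16]$ and points $\zeta_k^m \in \widehat{S}_k$ with $a\circ u_k(\zeta_k^m) = \delta_m$. Diagonal extraction produces limits $\zeta_\infty^m \in S_\infty$, and the Gromov convergence gives $a\circ u_\infty(\zeta_\infty^m) = \delta_m$, so the image of $a\circ u_\infty$ contains the uncountable set $\{\delta_m\}$, contradicting finiteness. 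The main technical obstacle is the third paragraph: one must carefully verify that the ``ghost'' alternative (non-constant components with zero $\omega$-energy) is ruled out in every topological type that can arise in the Gromov limit, including closed sphere bubbles; the uniform Liouville argument via the biholomorphic model $\mathbb{C}$ or $\mathbb{C}^*$ of an orbit cylinder is what makes this clean.
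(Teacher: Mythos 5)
Your overall strategy coincides with the paper's: reduce to the case $a\circ u_k(S_k)\subset[a_0,\infty)$, use Theorem~\ref{THM_area_bounds} (or Proposition~\ref{PROP_area_bounds_carefully} for the varying structures) to bound areas of the slabs near $a_0$, pass to a target-local Gromov limit, observe that the limit has zero $\omega$-energy and so maps each component into an orbit cylinder $\mathbb{R}\times\gamma(\mathbb{R})$ on which $a\circ u_\infty$ is harmonic, and invoke the minimum principle. The divergence is in how the final contradiction is engineered, and it is there that your argument has a gap. The assertion that \emph{every} component of $\mathbf{u}_\infty$ is constant does not follow from the iteration you sketch. What the minimum principle actually gives is: (a) closed non-constant components cannot occur (maximum principle forces constancy), and (b) a non-constant component $S_0$ with $\partial S_0\neq\emptyset$ has its infimum of $a\circ u_\infty$ attained on $\partial S_0$, hence $a\circ u_\infty|_{S_0}\geq a_0+r/16$ \emph{everywhere}. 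Nothing excludes such a component from existing in the Gromov limit. Your iteration propagates constancy only across nodal chains that already touch $\{a_0\}\times M$; components disconnected (in the nodal graph) from those chains need not be constant, and since you do not pass to a single connected piece of $S_k$ before taking the limit, you cannot assume the nodal graph is connected. Consequently the claim ``the image of $a\circ u_\infty$ is a finite subset of $\mathbb{R}$'' is unjustified, and the contradiction as written does not close. (There is also the minor slip that $\{\delta_m\}$ is countable, not uncountable; you want that the image is a closed set containing a dense subset of $[a_0,a_0+r/16]$, hence contains the whole interval.)

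The repair is short and in fact gives a slightly cleaner route than the paper's. From (a) and (b) above, any component taking an $a$-value in $[a_0,a_0+r/16)$ must be a constant closed bubble. Since $S_\infty$ is a compact stable nodal curve, it has only finitely many components, so the image of $a\circ u_\infty$ intersected with $[a_0,a_0+r/16)$ is finite. But your intermediate-value/diagonal argument shows this intersection contains a dense (infinite) subset of $[a_0,a_0+r/16)$. Contradiction. By contrast, the paper first restricts to a single connected component of the slab intersecting $\{a_0\}\times M$ so that $u(S)$ is connected; then either some non-constant component touches $a_0$ at an interior point (ruled out by the minimum principle), or the whole limit is a constant map, which is incompatible with the approximating curves being generally immersed and having boundary at the level $a''>0$. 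Both routes are essentially the same mathematics; yours avoids selecting a connected piece at the cost of needing the finiteness-vs-density argument, which you did not carry out correctly as written.
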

%
\begin{proof}
\setcounter{section}{\value{CurrentSection}}
\setcounter{theorem}{\value{CurrentTheorem}}
For clarity of proof, we will first argue the case in which
\begin{align*}                                                            
  \mathbf{h}_k=(J_k, g_k, \lambda_k, \omega_k) = (J, g, \lambda, \omega)
  \end{align*}
  for all \(k\in \mathbb{N}\).
We argue by contradiction, so suppose not.
Then there exists a sequence \((u_k, S_k, j_k)\) of
   pseudoholomorphic maps satisfying properties (\(\hbar\)\ref{EN_hbar1})
   - (\(\hbar\)\ref{EN_hbar4}) with the property that
  \begin{align*}                                                          
    \int_{S_k}u_k^*\omega \to 0.
    \end{align*}
For simplicity we will assume that \(a\circ u_k(S_k)\subset [a_0,
   \infty)\); the other case is argued identically.
Making use of translation invariance of \(J\), we will also assume
   without loss of generality that \(a_0=0\).
Next, for each \(k\in \mathbb{N}\), fix a regular value \(a_k'\) of
   \(a\circ u_k\) with the property that  \(a_k'\in (\frac{3}{4}r, r) \).
Then define \(\widehat{S}_k\) to be \((a\circ u_k)^{-1}([0, a_k'])\),
  which we observe has non-trivial intersection with  \((a\circ
  u_k)^{-1}(0)\).
By Theorem \ref{THM_area_bounds}, it follows that there exists a constant
  \(C_A>0\) such that
  \begin{align*}                                                          
    {\rm Area}_{u_k^*g}(\widehat{S}_k) = \int_{\widehat{S}_k}u_k^*(da
    \wedge \lambda +\omega ) \leq C_A.
    \end{align*}
Consequently, the sequence of pseudoholomorphic maps \((u_k, \widehat{S}_k,
  j_k)\) has uniformly bounded area, uniformly bounded genus,  and each has
  compact domain \(\widehat{S}_k\) and is generally immersed, and
  furthermore they each satisfy \((a\circ u_k)(\widehat{S}_k)=[0, a_k']\) and
 \(u_k(\partial \widehat{S}_k) \cap [0, \frac{3}{4}r]\times M=\emptyset\).
We conclude from Theorem \ref{THM_target_local_gromov_compactness},
  namely target-local Gromov compactness, that there exists \(a''\in
  (\frac{1}{2}r, \frac{3}{4}r)\) with the property that after passing to a
  subsequence, the pseudoholomorphic curves \((u_k, \widetilde{S}_k,
  j_k)\) defined by \(\widetilde{S}_k:= (a\circ u_k)^{-1}([0 ,a''])\)
  converge in a Gromov sense to a nodal pseudoholomorphic curve which has
  the property that its image both intersects \(\{0\}\times M\)
  non-trivially and is also contained in \([0, a'']\times M\), and that
  the image of the boundary of the limit Riemann surface is contained in
  \(\{a''\}\times M\).

As a consequence of the definition of Gromov convergence, we may
   choose connected components of \(\widetilde{S}_k\) (still denoted as
   \(\widetilde{S}_k\)) with the property that \(u_k(\widetilde{S}_k)\cap
   \{0\}\times M\neq \emptyset\) for all \(k\in \mathbb{N}\) and such
   that the sequence of generally immersed pseudoholomorphic curves \((u_k,
   \widetilde{S}_k, j_k)\) converges in a Gromov sense to a nodal
   pseudoholomorphic curve \((u, S, j, \mathcal{D})\) with the property
   that \(u(S)\) is connected and again \(u(\partial S)\subset
   \{a''\}\times M\).
For any \(\zeta\in S\), define \(S^\zeta\) to be the connected component
   of \(S\) which contains \(\zeta\).
We claim that there must exist \(\zeta\in S\) such that \(a\circ u(\zeta)
   = 0\) and \(u:S^\zeta\to \mathbb{R}\times M\) is not a constant map.
To see this, we suppose not and derive a contradiction.
Indeed, note that there must exist \(\zeta\in S\) such that \(a\circ
   u(\zeta) = 0\), and \(u(S)\) is connected, so that if it is the case
   that \(u:S^\zeta\to \mathbb{R}\times M\) is constant for every such
   \(\zeta\) then it must be the case that \(u:S\to \mathbb{R}\times M\)
   is a constant map.
However, if \(u:S\to \mathbb{R}\times M\) is constant, then the sequence
   \((u_k, \widetilde{S}_k, j_k)\) is converging to a constant map,
   which is only possible if for all sufficiently large \(k\) we have
   \(u_k:\widetilde{S}_k\to \mathbb{R}\times M\) is a constant map,
   which contradicts the fact that the \((u_k, \widetilde{S}_k, j_k)\)
   are each generally immersed.
This contradiction establishes that there does indeed exist \(\zeta\in
   S\) such that \(a\circ u(\zeta)=0\) and \(u:S^\zeta\to \mathbb{R}\times
   M\) is not a constant map.

At this point we observe that \(u:S^\zeta\to \mathbb{R}\times M\)
   is not a constant, and since it is connected it must instead be
   generally immersed.
Furthermore, we have \(a\circ u (\zeta) = 0\), \(u(S^\zeta)\subset [0,
   a'']\times M \),  \(u(\partial S^\zeta)\subset \{a''\}\times M\), and
   \(u(S^\zeta)\) is connected.
Since \(\int_{\widetilde{S}_k}u_k^*\omega\to 0\), we must also have
   \(\int_{S^\zeta}u^*\omega = 0\), and since \(\omega\) evaluates
   non-negatively on \(J\)-complex lines, we conclude that there exists
   a trajectory \(\beta:\mathbb{R}\to M\) of the Hamiltonian vector field
   \(X_\eta\) such that \(u(S)\subset [0, a'']\times \beta(\mathbb{R})\).
Note that \((s, t)\mapsto (s, \beta(t))\) is a holomorphic map, and from
   this it follows that \(a\circ u:S^\zeta\to \mathbb{R}\) is a harmonic
   function, and hence can have no interior minima unless \(a\circ u:
   S^\zeta\to \mathbb{R}\) is a constant map.
An elementary fact from complex variables shows that if \(a\circ u:
   S^\zeta\to \mathbb{R}\) is a constant map, then \(u:S^\zeta\to [0,
   a'']\times \beta(\mathbb{R})\subset \mathbb{R}\times M\) is a constant
   map, which is impossible since \((u, S, j)\) is generally immersed.
Consequently, \(a\circ u:S^\zeta\to \mathbb{R}\) can have no interior
   minima, however from the above properties of \((u, S, j)\) we see that
   \(a\circ u\) achieves its absolute minimum at \(\zeta\in S\setminus
   \partial S\).
This is the desired contradiction which completes our proof in the case
  that \((J_k, g_k, \lambda_k, \omega_k) = (J, g, \lambda, \omega)\) for all
  \(k\in \mathbb{N}\).

To complete the proof Theorem \ref{THM_energy_threshold} it remains to
  consider the more general case in which \((J_k, g_k, \lambda_k,
  \omega_k) \to (J, g, \lambda, \omega)\).
We note that in this case the proof is identical with a single
  modification: Rather than citing Theorem \ref{THM_area_bounds} to
  obtain local area bounds, we instead cite the more general Proposition
  \ref{PROP_area_bounds_carefully}; see below.
Note that the hypotheses of Proposition \ref{PROP_area_bounds_carefully}
  are satisfied precisely because \((J_k, g_k, \lambda_k, \omega_k) \to (J,
  g, \lambda, \omega)\) in \(\mathcal{C}^\infty\).
\end{proof}

\subsection{Proof of Theorem \ref{THM_local_local_area_bound}:
  Asymptotic Connected-Local Area Bound}
  \label{SEC_proof_of_local_local}\hfill\\

The main purpose of this section is to prove Theorem
  \ref{THM_local_local_area_bound}, which roughly states that for each
  given feral curve, there exists a large compact set in the symplectization
  with the property that outside that compact set, the area of each
  connected component of the portion of the image of that curve contained
  in ball of some small radius is universally bounded.
Indeed, the bound is \(1\).
We note that the large compact set will depend upon the curve, but the
  radius of the ball does not.
Indeed, the radius depends only on the ambient geometry of the
  symplectization; specifically the framed Hamiltonian structure and the
  almost Hermitian structure.

Unfortunately, the proof of this result is rather long and complicated, so
  we give the basic idea here, and then later we will elaborate further.
Consider a feral curve, and for some very large \(a_0>0\) we consider the
  portion of curve defined by \(u\colon \widetilde{S} \to \mathbb{R}\times
  M\) where
  \begin{align*}                                                          
    \widetilde{S} = u^{-1}\big([a_0 -\epsilon, a_0+\epsilon]\big)
    \end{align*}
  for some very small \(\epsilon>0\).
Thinking of \(\epsilon\) as incredibly small and fixed, but \(a_0\) large,
  generic, and replaced with a larger value if needed, we then are inclined
  to think of \(u\colon \widetilde{S}\to \mathbb{R}\times M\) as a ribbon
  of pseudoholomorphic curve which is very thin, but also very, very, long,
  so that the area is quite large, and the area gets larger without bound
  as \(a_0\to \infty\).
For each \(\zeta_0\in S\) we then define
  \(\widetilde{S}_{\frac{1}{2}\epsilon}(\zeta_0)\) to be the connected
  component of \(u^{-1}(\mathcal{B}_{\frac{1}{2}\epsilon}(u(\zeta_0))\)
  which contains \(\zeta_0\), where \(\mathcal{B}_{r}(p)\) denotes the
  metric ball in \(\mathbb{R}\times M\) centered at \(p\) and of radius
  \(r\).
The essential question to ask next is then: For each \(\zeta\in (a\circ
  u)^{-1}(a_0)\), can the area of
  \(\widetilde{S}_{\frac{1}{2}\epsilon}(\zeta_0)\) be arbitrarily large by
  making \(a_0\) larger if needed?
We will show that the answer is no, and that the idea is to cut the ribbon
  into tracts of pseudoholomorphic curves of modest length and small height
  so that the area of each such tract is modest and so that
  \(\widetilde{S}_{\frac{1}{2}\epsilon}(\zeta)\) is completely contained
  in one of the tracts.
Put another way, we aim to partition the lower ribbon boundary \((a\circ
  u)^{-1}(a_0-\epsilon)\) so that the length of each of the corresponding
  intervals is small, and so that the end point of each interval can be
  flowed up to the top ribbon boundary \((a\circ u)^{-1}(a_0+\epsilon)\)
  via the gradient flow.
We then cut our ribbon along these gradient flow lines.
Assuming the \(\omega\)-energy of the ribbon is small, which can always be
  guaranteed by making \(a_0\) sufficiently large, it is elementary to show
  the tracts of curve extending from partition intervals have modest area,
  so the argument establishing Theorem
  \ref{THM_local_local_area_bound} is complete if we can show that
  \(\widetilde{S}_{\frac{1}{2}\epsilon}(\zeta)\) is contained in one such
  tract.

Of course there are a variety of obstacles to be overcome, and the bulk of
  these are related to guaranteeing the existence of the desired intervals,
  specifically establishing that most points in the lower ribbon
  boundary can be gradient flowed to the top ribbon boundary.
Indeed, although we have described it as a ribbon, it may be the case that
  our portion of curve is in fact is the disjoint union of disks, annuli,
  pairs of pants, or other more topologically complicated surfaces.
Furthermore one must deal with the possibility that \(u\colon
  \widetilde{S}\to \mathbb{R}\times M\) may not be immersed, \(a\circ
  u\colon \widetilde{S}\to \mathbb{R}\times M\)  may not be Morse, and that
  many flow lines initiating in the lower ribbon boundary have terminal
  points which are local interior maxima.
All of these issues are addressed, however significant preliminaries are
  necessary.
Indeed, in Section \ref{SEC_strip_estimates} we provide Definition
  \ref{DEF_perturbed_J_strip} which is the basic tool used throughout
  Section \ref{SEC_proof_of_local_local}, and we establish a number of
  important properties.
In Section \ref{SEC_miscellany} we establish a few miscellaneous
  results which will be referenced in the main proof.
Finally, in Section \ref{SEC_core_proof_local_local} we will provide the
  complete proof of Theorem \ref{THM_local_local_area_bound}, however
  the bulk of the technical work is established in Proposition
  \ref{PROP_local_local_area_bound_2}, which is essentially a special case
  of Theorem \ref{THM_local_local_area_bound} and is also proved in
  this section.

\subsubsection{Strip Estimates}\label{SEC_strip_estimates}

The purpose of this section is to provide the notion of a perturbed
  pseudoholomorphic strip, see Definition \ref{DEF_perturbed_J_strip}
  below, and to establish a few technical properties, which will be used in
  later sections.

Here and throughout, we let \((M, \eta)\) be a framed Hamiltonian manifold
  with \(\eta=(\lambda, \omega)\), and let \((J, g)\) be an
  \(\eta\)-adapted almost Hermitian structure on the symplectization
  \(\mathbb{R}\times M\).
As in the previous section, specifically in equation
  (\ref{EQ_def_alpha_tilde}), associated to any perturbed
  pseudoholomorphic map \((\tilde{u}, \tilde{\jmath}, f, u, S, j)\), we
  define the smooth one-form
  \begin{equation*}
    \tilde{\alpha}:= -(\tilde{u}^*da)\circ \tilde{\jmath}.
    \end{equation*}

\begin{definition}[perturbed pseudoholomorphic strip]
  \label{DEF_perturbed_J_strip}
  \hfill \\
Let \((M, \eta)\) be a framed Hamiltonian manifold, \((J, g)\) an
  \(\eta\)-adapted almost Hermitian structure on \(\mathbb{R}\times M\).
A perturbed pseudoholomorphic strip consists of the tuple \((\tilde{u},
  \widetilde{S}, \tilde{\jmath}, f, u, S, j)\), where
  \begin{itemize}                                                       
    \item \((\tilde{u}, \tilde{\jmath}, f, u, S, j)\) is a perturbed
      pseudoholomorphic map in the sense of Definition
      \ref{DEF_perturbed_J_map},
    \item \(\widetilde{S}\subset S\) is a compact manifold with boundary
      and corners, it is homeomorphic to a disk, and it is determined by
      the data \((p, \mathcal{I}, h^-, h^+)\) where 
      \begin{enumerate}[(e1)]                                             
	\item\label{EN_e1} \(\mathcal{I}\subset \mathbb{R}\) is an
	  closed interval of finite length
	\item\label{EN_e2} \(p:\mathcal{I}\to S\) is a smooth map for
	  which \(\tilde{u}\circ p:\mathcal{I}\to \mathbb{R}\times M\)
	  is an embedding, \(a\circ \tilde{u}\circ p: \mathcal{I}\to
	  \mathbb{R}\) is the constant map \(a\circ \tilde{u}\circ p=a_0\)
	  , and \(p^* \tilde{\alpha}=dt\) where \(t\) is the coordinate
	  on \(\mathcal{I}\) induced from \(\mathcal{I}\subset
	  \mathbb{R}\)
	\item\label{EN_e3} \( h^\pm :\mathcal{I}\to \mathbb{R}\) are
	  \(\mathcal{C}^1\) functions for which \(h^-< h^+\), and for each
	  \(t\in \mathcal{I}\) there exists a map
	  \begin{equation*}                                               
	    q^t:\Big[\min\big(0, h^-(t)\big), \max\big(0,
	    h^+(t)\big)\Big]\to S
	    \end{equation*}
	  satisfying 
	  \begin{equation*}                                               
	    \frac{d}{d s}q^t(s) = \frac{\widetilde{\nabla}
	    (a\circ \tilde{u})\big(q^t(s)\big)}{\|\widetilde{\nabla}
	    (a\circ \tilde{u})\big(q^t(s)\big)\|_{\tilde{\gamma}}^2}
	    \quad\text{and}\quad q^t(0)= p(t),
	    \end{equation*} 
	  in which case \(\widetilde{S}\) is given by
	  \begin{equation*}                                               
	    \widetilde{S}= \bigcup_{t\in\mathcal{I}} q^t\big( [h^-(t),
	    h^+(t)]\big);
	    \end{equation*}
	  here \(\tilde{\gamma}=\tilde{u}^*g\) and \(\widetilde{\nabla}\)
	  denotes the gradient with respect to the metric
	  \(\tilde{u}^*g\).
	  In the case \(h^-\) and \(h^+\) are constant functions,
	  we say \((\tilde{u}, \widetilde{S}, \tilde{\jmath})\) is a
	  \emph{rectangular} \(f\)-perturbed pseudoholomorphic strip.
	\end{enumerate}
    \end{itemize}
Given \((\tilde{u}, \tilde{\jmath}, f, u, S, j)\), we will refer to
  \((\tilde{u}, \widetilde{S}, \tilde{\jmath}, f, u, S, j)\), as the
  perturbed pseudoholomorphic strip determined by the data \((p,
  \mathcal{I}, h^-, h^+)\).
\end{definition}
%

\begin{remark}[strips vs tracts]
  \label{REM_strips_vs_tracts}
  \hfill\\
It is worth pointing out the differences between  perturbed
  pseudoholomorphic strips and tracts of perturbed pseudoholomorphic maps.
Ignoring the perturbation momentarily, we note that rectangular strips are
  in fact special cases of tracts of pseudoholomorphic maps, with the
  additional property that \(S\) is rectangular (that is, homeomorphic to a
  compact disk, and the boundary is piecewise smooth with four non-smooth
  points), and \(a\circ u:S\to \mathbb{R}\) has no critical points.
The more general notion of a pseudoholomorphic strip then allows for the
  possibility that \(a\circ u\) restricted to \(\partial_0 S\) need not be a
  constant map, and hence is no longer a tract of pseudoholomorphic map.
Regarding perturbations, the main difference is that for tracts of
  perturbed pseudoholomorphic maps, the support of the perturbation must be
  contained in the interior whereas for strips the support may overlap with
  the boundary.
\end{remark}
%

The following result establishes two important facts.  
First, given a perturbed pseudoholomorphic map, one can construct a
  perturbed pseudoholomorphic strip from less stringent data \((p,
  \mathcal{I}, h^-, h^+)\) than that given in Definition
  \ref{DEF_perturbed_J_strip}.
Second, associated to each perturbed pseudoholomorphic strip are
  coordinates \((s, t)\) which have a variety of properties.
We make these facts precise with the following.

\begin{lemma}[strip reparametrization]
  \label{LEM_J_strip_reparam}
  \hfill\\
Let \((M, \eta)\) be a framed Hamiltonian manifold, let \((J, g)\) be
  an \(\eta\)-adapted almost Hermitian structure on \(\mathbb{R}\times
  M\), and let \((\tilde{u}, \tilde{\jmath}, f, u, S, j)\) be a perturbed
  pseudoholomorphic map.
Suppose the \(4\)-tuple \((\hat{p}, \widehat{\mathcal{I}}, \hat{h}^-,
  \hat{h}^+)\) consists of the following data.
  \begin{enumerate}                                                       
    \item \(\widehat{\mathcal{I}}\) is an closed interval of finite
      length
    \item \(\hat{p}:\widehat{\mathcal{I}}\to S\) is a smooth map with
      image which is contained in a level set of \(a\circ \tilde{u}\)
      and disjoint from the critical points of \(a\circ \tilde{u}\),
    \item \(\hat{h}^-\) and \(\hat{h}^+\) satisfy property (e\ref{EN_f3})
      of Definition \ref{DEF_perturbed_J_strip}.
    \end{enumerate}
\emph{Then} there exists an interval \(\mathcal{I}\subset\mathbb{R}\),
  and diffeomorphism \(\psi:\mathcal{I}\to \widehat{\mathcal{I}}\) with
  the property that \((p, \mathcal{I}, h^-, h^+):=(\hat{p}\circ \psi,
  \mathcal{I}, \hat{h}^-\circ\psi, \hat{h}^+\circ\psi)\) satisfy properties
  (e\ref{EN_e1}) - (e\ref{EN_f3}) of Definition
  \ref{DEF_perturbed_J_strip}, and hence define a perturbed
  pseudoholomorphic strip \((\tilde{u}, \widetilde{S}, \tilde{\jmath}, f,
  u, S, j)\).
Moreover the diffeomorphism 
  \begin{align*}                                                          
    &\phi: \{(s, t)\in \mathbb{R}^2: t\in \mathcal{I}\text{ and }h^-(t)<
      s < h^+(t)\}\to \widetilde{S}\\
    &\phi(s, t) = q^t(s)
    \end{align*}
  satisfies the following properties.
\begin{enumerate}[(h1)]                                                   
  \item\label{EN_h1} \(a\circ \tilde{u}\circ \phi (s, t) = a_0+ s\)
  \item\label{EN_h2} \((\tilde{u}\circ \phi)^*g = \hat{\gamma} =
    \hat{\gamma}_{11}\, ds\otimes ds + \hat{\gamma}_{22}\,dt\otimes dt\)
    for smooth  functions \(\hat{\gamma}_{kk}=\hat{\gamma}_{kk}(s, t)\)
    and \(\hat{\gamma}_{11}\geq 1\)
  \item\label{EN_h3} \(\phi^*\tilde{\alpha}= \ell(s, t)\, dt\),
    with \(0<\ell\leq \hat{\gamma}_{22}^{\frac{1}{2}}\)  and \(\ell(0,
    t)\equiv 1\)
  \item\label{EN_h4} \((\phi^*j)\partial_s =\tau(s, t)\partial_t\)
    with \(\tau>0\).
  \end{enumerate}
Consequently, each perturbed pseudoholomorphic strip can be given
  coordinates \((s, t)\) determined by the equation \((s(\zeta), t(\zeta))
  = \phi^{-1}(\zeta)\).
\end{lemma}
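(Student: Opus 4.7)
The plan is to first show that the data $(\hat{p}, \widehat{\mathcal{I}})$ can be rescaled so that $\hat{p}^*\tilde{\alpha} = dt$, then define $\phi$ by flowing off $p(\mathcal{I})$ along the normalized gradient of $a\circ\tilde{u}$, and finally verify (h1)-(h4) by direct computation using the defining ODE for $q^t$ together with the fact that $\tilde{\jmath}$ is a $\tilde{\gamma}$-isometry.

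For the reparametrization, I would first observe that $\hat{p}^*\tilde{\alpha}$ is nowhere-vanishing on $\widehat{\mathcal{I}}$. Indeed, since $\hat{p}(\widehat{\mathcal{I}})\subset (a\circ\tilde{u})^{-1}(a_0)$ and avoids critical points of $a\circ\tilde{u}$, the vector $\hat{p}'(\tau)$ spans $\ker d(a\circ\tilde{u})$, so $\tilde{\jmath}\hat{p}'(\tau)$ is orthogonal to this kernel under $\tilde{\gamma}$ and hence parallel to $\widetilde{\nabla}(a\circ\tilde{u})$; in particular $\hat{p}^*\tilde{\alpha}(\partial_\tau) = -d(a\circ\tilde{u})(\tilde{\jmath}\hat{p}'(\tau)) \neq 0$. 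After possibly reflecting $\widehat{\mathcal{I}}$ to fix the sign, set $T(\tau):=\int_{\tau_0}^\tau \hat{p}^*\tilde{\alpha}$; then $T$ is a diffeomorphism from $\widehat{\mathcal{I}}$ onto an interval $\mathcal{I}$, and $\psi := T^{-1}$, $p := \hat{p}\circ\psi$, $h^\pm := \hat{h}^\pm\circ\psi$ satisfy $p^*\tilde{\alpha} = dt$ by construction. Conditions (e\ref{EN_e1})-(e\ref{EN_e2}) are immediate, and (e\ref{EN_f3}) is just a relabeling of the gradient-flow hypothesis on $\hat{h}^\pm$, so $\widetilde{S}$ and $\phi(s,t):=q^t(s)$ are well defined.

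For the properties of $\phi$, (h\ref{EN_h1}) follows from $\tfrac{d}{ds}(a\circ\tilde{u}\circ q^t) = d(a\circ\tilde{u})\bigl(\widetilde{\nabla}(a\circ\tilde{u})/\|\widetilde{\nabla}(a\circ\tilde{u})\|_{\tilde{\gamma}}^2\bigr) = 1$ and $a\circ\tilde{u}\circ q^t(0)=a_0$. For (h\ref{EN_h2}), $\phi_*\partial_s$ is collinear with the gradient while $\phi_*\partial_t$ is tangent to the level set $\{a\circ\tilde{u}=a_0+s\}$ (the gradient flow preserves the foliation by level sets, so sliding $p(t)$ along the flow keeps the $t$-derivative in $\ker d(a\circ\tilde{u})$), giving $\hat{\gamma}_{12}\equiv 0$. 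Moreover $\hat{\gamma}_{11} = \|\widetilde{\nabla}(a\circ\tilde{u})\|_{\tilde{\gamma}}^{-2} \geq 1$, since $\|d(a\circ\tilde{u})\|_{\tilde{\gamma}^*} = \sup_{v\ne 0}|da(T\tilde{u}\cdot v)|/\|T\tilde{u}\cdot v\|_g \leq \|da\|_g = 1$. For (h\ref{EN_h3}), writing $\phi^*\tilde{\alpha} = A(s,t)\,ds + \ell(s,t)\,dt$, we get $A = \tilde{\alpha}(\phi_*\partial_s) = -d(a\circ\tilde{u})(\tilde{\jmath}\phi_*\partial_s) = 0$ because $\tilde{\jmath}\phi_*\partial_s$ is $\tilde{\gamma}$-orthogonal to $\phi_*\partial_s$ and so lies in $\ker d(a\circ\tilde{u})$; the bound $|\ell|\leq \hat{\gamma}_{22}^{1/2}$ is the pointwise Cauchy-Schwarz $|\tilde{\alpha}(\phi_*\partial_t)| \leq \|\tilde{\alpha}\|_{\tilde{\gamma}^*}\|\phi_*\partial_t\|_{\tilde{\gamma}}$ together with $\|\tilde{\alpha}\|_{\tilde{\gamma}^*} = \|d(a\circ\tilde{u})\|_{\tilde{\gamma}^*}\leq 1$; positivity of $\ell$ follows from $\ell(0,t)=1$ and continuity, since $\tilde{\alpha}$ never vanishes on $\widetilde{S}$ (the strip avoids critical points by construction). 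Finally, (h\ref{EN_h4}) is a consequence of (h\ref{EN_h2}) and the fact that $\tilde{\jmath}$ is an orientation-preserving $\tilde{\gamma}$-isometry: any orthogonal frame diagonalizing a Riemannian metric on an oriented surface is rotated by $\pi/2$ under the associated almost complex structure, so $(\phi^*\tilde{\jmath})\partial_s = (\hat{\gamma}_{11}/\hat{\gamma}_{22})^{1/2}\partial_t$.

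I expect the main subtlety to be conceptual rather than technical, namely keeping careful track of which almost complex structure appears in (h\ref{EN_h4})---the argument above yields the statement for $\tilde{\jmath}$, and on the complement of $\mathrm{supp}(f)$ this coincides with $j$ by property (p\ref{EN_p6}) of Definition \ref{DEF_perturbed_J_map}, which is where the formula is needed in later applications. The remaining work is bookkeeping: fixing orientations once and for all (so that the initial reflection of $\widehat{\mathcal{I}}$ makes $\hat{p}^*\tilde{\alpha}$ positive and the orientation of $\widetilde{S}$ induced from $(s,t)$ agrees with that of $S$), and checking smoothness of $\phi$ at the endpoints of the flow intervals, which follows from standard smooth dependence of ODE solutions on parameters applied to the normalized gradient vector field on $S\setminus\mathcal{Z}$.
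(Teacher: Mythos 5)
Your proposal is correct and follows essentially the same route as the paper's proof: reparametrize $\hat{p}$ by the primitive of $\hat{p}^*\tilde{\alpha}$ (after fixing the sign by a reflection) so that $p^*\tilde{\alpha}=dt$, then obtain (h\ref{EN_h1})--(h\ref{EN_h3}) from the normalized-gradient flow, the $\tilde{\gamma}$-orthogonality of the gradient to level sets, and $\|da\|_g=1$. Your reading of (h\ref{EN_h4}) as a statement about $\tilde{\jmath}$ also matches the paper's intent, since the paper deduces it from (h\ref{EN_h1})--(h\ref{EN_h3}) (equivalently $\tau=1/\ell>0$, which also settles the sign without any extra orientation choice) and $\tilde{\alpha}$ is built from $\tilde{\jmath}$.
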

%
\begin{proof}
We begin by observing that if there were \(t_0\in \widehat{\mathcal{I}}\)
  for which \(\hat{p}^*\tilde{\alpha}(t_0)=0\), then \(\hat{p}(t_0)\)
  is a critical point of \(a\circ \tilde{u}\).
To see this, recall that since \(a\circ \tilde{u}\circ \hat{p}={\rm
  const}\), it follows that \(d(a\circ\tilde{u})(T\hat{p}\cdot
  \partial_t)=0\); but then if \(0=\hat{p}^*\tilde{\alpha}(t_0)\), then
  \begin{equation*}                                                       
    0=\hat{p}^*\tilde{\alpha}(t_0)= -d(a\circ
      \tilde{u})(\tilde{\jmath}\cdot T\hat{p}\cdot \partial_t)\big|_{t_0},
    \end{equation*}
  so that \(d(a\circ \tilde{u})(\hat{p}(t_0))=0\), which is impossible
  since the image of \(\hat{p}\) is disjoint from the critical points of
  \(d(a\circ \tilde{u})\) by assumption.
After possibly precomposing with an orientation reversing diffeomorphism
  of \(\widehat{\mathcal{I}}\), we may consequently assume that
  \(\hat{p}^*\tilde{\alpha}(\partial_t)>0\).
We now define the interval \(\mathcal{I}\) by
  \begin{equation*}                                                       
    \mathcal{I}:= \Big[ 0,
      \int_{\widehat{\mathcal{I}}}\hat{p}^*\tilde{\alpha}\Big] \subset
      \mathbb{R}
    \end{equation*}
  and define the map \(\psi\) by
  \begin{equation*}                                                       
    \psi:\mathcal{I} \to \widehat{\mathcal{I}}\quad\text{by}\quad
      \psi(t):=F^{-1}(t) \qquad\text{where}\qquad F(t):=\int_0^t
      \tilde{\alpha}_{\hat{p}(\hat{t})}(\hat{p}'(\hat{t})) d \hat{t};
    \end{equation*}
here and above the subscripts denote the points of evaluation. 
To see that \(\psi\) is the desired diffeomorphism, it is sufficient
  to show that
  \begin{equation}\label{EQ_psi_prop_1}                                   
    \tilde{\alpha} \big((\hat{p}\circ \psi)'(t)\big)\equiv 1.
    \end{equation}
To establish this equality, we define the function
  \begin{equation*}                                                       
    G(t)=\tilde{\alpha}_{\hat{p}(t)}(\hat{p}'(t))
    \end{equation*}
so that \(F'=G\) and \(\psi=F^{-1}\).  
Next recall that \(t=F(F^{-1}(t))\), and differentiating we find
  \((F^{-1})'(t)=\frac{1}{F'(F^{-1}(t))}\), and hence
  \begin{align*}                                                          
    \tilde{\alpha}_{\hat{p}\circ \psi(t)}\big(\hat{p}_{\psi(t)}'\cdot
      \psi'(t)\big)&= \tilde{\alpha}_{\hat{p}\circ
      \psi(t)}\Big(\hat{p}_{\psi(t)}'\cdot \frac{1}{F'(F^{-1}(t))}
      \Big)\\
    &=\tilde{\alpha}_{\hat{p}\circ \psi(t)}\Big(\hat{p}_{\psi(t)}'\cdot
      \frac{1}{G(\psi(t))} \Big)\\
    &=1.
    \end{align*}
This verifies equation (\ref{EQ_psi_prop_1}). 
With \(\psi\) established,  it is straightforward to show that the
  tuple \((p, \mathcal{I}, h^-, h^+):=(\hat{p}\circ \psi, \mathcal{I},
  \hat{h}^-\circ\psi, \hat{h}^+\circ\psi)\) satisfies properties
  (e\ref{EN_e1}) - (e\ref{EN_f3}) of Definition
  \ref{DEF_perturbed_J_strip}, and hence all that remains is to establish
  properties (h\ref{EN_h1}) - (h\ref{EN_h4}).

Next we note that property (h\ref{EN_h4}) follows from properties
  (h\ref{EN_h1}) - (h\ref{EN_h3}), and property (h\ref{EN_h1}) follows
  immediately from the definition of the diffeomorphism \(\phi\).

To prove property (h\ref{EN_h2}),  we let subscripts denote partial
  differentiation, and then
  \begin{align*}                                                          
    \hat{\gamma}(\partial_s, \partial_t) &=g(T\tilde{u} \cdot \phi_s ,
      T\tilde{u} \cdot \phi_t).
    \end{align*}
However by construction, for each fixed \(s_0\) the map  \(t\mapsto
  \phi(s_0, t)\) is contained in a level set of \(a\circ \tilde{u}\), and
  the vector field \(\phi_s\) is parallel to \(\widetilde{\nabla}(a\circ
  \tilde{u})\).
The gradient is orthogonal to level sets, and hence \(\phi_t\) and
  \(\phi_s\) are \(\tilde{\gamma}\)-orthogonal.
Consequently \(\partial_s\) and \(\partial_t\) are
  \(\hat{\gamma}\)-orthogonal, and \(\hat{\gamma}=\hat{\gamma}_{11}ds^2 +
  \hat{\gamma}_{22}dt^2\) as claimed.
To see \(\hat{\gamma}_{11}\geq 1\),  we first note:
  \begin{align*}                                                          
    \hat{\gamma}_{11}=\|\phi_s\|_{\tilde{\gamma}}^2=(da ( T\tilde{u}
      \cdot \phi_s)\big)^2  + \big(\lambda(T\tilde{u} \cdot \phi_s)
      \big)^2 + \omega(T\tilde{u} \cdot\phi_s, J\cdot T\tilde{u}\cdot
      \phi_s).
    \end{align*}
By Lemma \ref{LEM_J_diff}, we see that \(\omega(T\tilde{u}\cdot \phi_s,
  J \cdot T\tilde{u}\cdot \phi_s)\geq 0\).
Also recall that 
  \begin{equation*}                                                       
    \phi_s =  \frac{\widetilde{\nabla}(a\circ
      \tilde{u})}{\|\widetilde{\nabla}(a\circ
      \tilde{u})\|_{\tilde{\gamma}}^2},
    \end{equation*}
  and hence 
  \begin{align*}                                                          
    \hat{\gamma}_{11} \geq (da ( T\tilde{u} \cdot \phi_s)\big)^2 =
      \Big(\frac{d(a \circ \tilde{u}) (\widetilde{\nabla}(a\circ
      \tilde{u}))}{\|\widetilde{\nabla}(a\circ
      \tilde{u})\|_{\tilde{\gamma}}^2}\Big)^2 =1.
    \end{align*}  
This establishes property (h\ref{EN_h2}).

Finally, to establish property (h\ref{EN_h3}), observe
  \begin{align*}
    \phi^*\tilde{\alpha}(\partial_s)&= 
      -d(a\circ \tilde{u})\big(\tilde{\jmath}\widetilde{\nabla}(a\circ
      \tilde{u})\big)/\|\widetilde{\nabla}(a\circ
      \tilde{u})\|_{\tilde{\gamma}}^2=0
      \end{align*}
  since \(\tilde{\jmath}\) is an almost complex structure and a
  \(\tilde{\gamma}\)-isometry.
Consequently \(\phi^*\tilde{\alpha}=\ell(s, t) dt\).
Note that 
  \begin{equation*}                                                       
    \ell(0, t) =\tilde{\alpha}(\phi_t(0, t))= \tilde{\alpha}(p'(t))=1
    \end{equation*}
  by definition of \(\phi\) and property (e\ref{EN_f2}) of Definition
  \ref{DEF_perturbed_J_strip}.
We also note that \(\ell\) vanishes precisely at the critical points of
  \(a\circ \tilde{u}\), and by construction none of these are contained
  in the region \(\widetilde{S}\) by assumption.

To complete the proof of property (h\ref{EN_h3}) all that remains is
  to show that \(\ell\leq \hat{\gamma}_{22}^{\frac{1}{2}}\).
We estimate as follows. 
\begin{align*}                                                            
  \ell^2 &= 
    \big(\tilde{\alpha}(\phi_t)\big)^2\\
  &= \big( d(a\circ \tilde{u}) (\tilde{\jmath} \phi_t)\big)^2\\
  &\leq \|d(a\circ \tilde{u})\|_{\tilde{\gamma}}^2
    \|\tilde{\jmath}\phi_t\|_{\tilde{\gamma}}^2\\
  &=\|da \|_{g}^2 \|\phi_t\|_{\tilde{\gamma}}^2\\
  &= \|\phi_t\|_{\tilde{\gamma}}^2\\
  &= \hat{\gamma}_{22}.
  \end{align*}
Since \(\ell\) and \(\gamma_{22}\) are both positive functions, the
  desired result is then immediate.
This proves property (h\ref{EN_h3}), and hence completes the proof of
  Lemma \ref{LEM_J_strip_reparam}.
\end{proof}
%

\begin{remark}[strip coordinates]
  \label{REM_strip_coordinates}
  \hfill\\
One immediate consequence of Lemma \ref{LEM_J_strip_reparam} above, is
  that it guarantees the existence of the carefully defined structure of a
  perturbed pseudoholomorphic strip from very little, but necessary,
  geometric data.
A second consequence is that it immediately guarantees coordinates
  \((s,t)\) on a perturbed pseudoholomorphic strip which turn out to be
  quite useful.
In particular, in light of properties (h\ref{EN_h2}) and (h\ref{EN_h3})
  above,  we will henceforth assume
  \begin{align*}                                                          
    &\tilde{u}^*g = \tilde{\gamma} = \tilde{\gamma}_{11} \,ds \otimes ds +
      \tilde{\gamma}_{22} \, dt \otimes dt  \\
    &\tilde{\gamma}_{11}(s,t) \geq 1\\
    &\tilde{\gamma}_{22}(s,t) \geq \ell(s,t)
    \end{align*}
  where
  \begin{align*}                                                          
    \tilde{\alpha} = \ell(s,t) dt.
    \end{align*}
Later we will attempt to provide a uniform lower bound for \(\ell\); for
  the moment though, we only know that this function is smooth and positive.
\end{remark}
%

Lemma \ref{LEM_general_strip_estimate} below can be thought of as an
  analog of Theorem \ref{THM_area_bound_estimate} for perturbed
  pseudoholomorphic strips which are not too tall.
Roughly speaking, it guarantees that if the integral of \(u^*\lambda\)
  (or, more precisely, the integral of \(-\tilde{u}^* da \circ
  \tilde{\jmath}\) along the top boundary is more than twice the integral
  along the bottom boundary, the strip must capture some \(\omega\)-energy
  proportional to this difference.
We note that in what follows we adapt our notation from Definition
  \ref{DEF_tract_of_perturbed_J_map} in which \(\partial \widetilde{S} =
  \partial_0 \widetilde{S}\cup \partial_1\widetilde{S}\), where \(\partial_0
  \widetilde{S}\) is the ``top'' and ``bottom'' boundaries of of
  \(\widetilde{S}\), and \(\partial_1 \widetilde{S}\) is the ``side''
  boundary of \(\widetilde{S}\).

\begin{lemma}[general strip estimate]
  \label{LEM_general_strip_estimate}
  \hfill\\
Let \((M, \eta)\) be a framed Hamiltonian manifold, let \((J, g)\) be
  an \(\eta\)-adapted almost Hermitian structure on \(\mathbb{R}\times
  M\), let \(C_{\mathbf h}\) be the associated ambient geometry constant
  established in Definition \ref{DEF_ambient_geometry_constant}, and let
  \((\tilde{u}, \widetilde{S}, \tilde{\jmath}, f, u, S, j)\) be a
  perturbed \(J\)-strip determined by the data \((p, \mathcal{I}, h^-,
  h^+)\), with
  \begin{equation}\label{EQ_h_bounds}                                     
    -\frac{\ln 2}{2 C_{\mathbf h}}\leq h^-\leq 0 \leq h^+\leq \frac{\ln
    2}{2 C_{\mathbf h}}.
    \end{equation}
With \((s, t)\) the coordinates on \(\widetilde{S}\) as guaranteed by
  Lemma \ref{LEM_J_strip_reparam},  we define
  \begin{equation*}                                                       
    \partial_0^\pm \widetilde{S}:=\{(s, t)\in \partial_0 \widetilde{S}:
    \pm s>0\},
    \end{equation*}
  with orientation such that \(\tilde{\alpha}\) defines a positive volume
  form on each.
Then 
  \begin{align*}                                                          
    \int_{\partial_0^+\widetilde{S} } \tilde{\alpha} -2
      \int_{\partial_0^-\widetilde{S}} \tilde{\alpha}
    &\leq 
      2C_{\mathbf h} \int_{\widetilde{S}}u^*\omega,
    \end{align*}
  and similarly
  \begin{align*}                                                          
    \int_{\partial_0^-\widetilde{S} } \tilde{\alpha} -2
      \int_{\partial_0^+\widetilde{S}} \tilde{\alpha}
    &\leq 
      2C_{\mathbf h} \int_{\widetilde{S}}u^*\omega.
    \end{align*}
\end{lemma}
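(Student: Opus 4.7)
The plan is to adapt the Stokes-plus-Gronwall scheme that powered Theorem \ref{THM_area_bound_estimate}, with one key modification to handle the fact that the top and bottom boundaries of $\widetilde S$ now sit at varying heights $h^\pm(t)$ rather than at constant levels. First, invoke Lemma \ref{LEM_J_strip_reparam} to equip $\widetilde S$ with coordinates $(s,t)$ in which $a \circ \tilde u = a_0 + s$, $\tilde u^* g$ is diagonal, and $\tilde\alpha = \ell(s,t)\, dt$ with $\ell > 0$. Crucially, $\tilde\alpha$ vanishes on the side boundary $\partial_1 \widetilde S$, which runs along $\partial_s$, so Stokes' theorem will never produce side contributions.

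Next, introduce the \emph{clamped} function
\[
H(s) := \int_{\mathcal I} \ell(\mu_s(t), t)\,dt, \qquad \mu_s(t) := \min\bigl(\max(s, h^-(t)), h^+(t)\bigr).
\]
Two easy verifications pin down its values at the extremes: $H(h^-_{\min}) = \int_{\partial_0^-\widetilde S}\tilde\alpha$ and $H(h^+_{\max}) = \int_{\partial_0^+\widetilde S}\tilde\alpha$. A direct integration, using $d\tilde\alpha = \partial_s\ell\, ds\wedge dt$ together with the point-wise identity $[\mu_{s_1}(t),\mu_{s_2}(t)] = [s_1,s_2]\cap [h^-(t),h^+(t)]$, yields the fundamental Stokes-type relation
\[
H(s_2) - H(s_1) \;=\; \int_{\widetilde S\cap\{s_1\leq s'\leq s_2\}} d\tilde\alpha.
\]

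From here everything is routine. Lemma \ref{LEM_d_alpha_tilde_bound} bounds $\|d\tilde\alpha\|_{\tilde\gamma}$ by $C_{\mathbf h}/2$, and the coercive estimate of Lemma \ref{LEM_linear_alg_coercive} converts the resulting area bound into one of the form $2\int(\tilde u^* da\wedge\tilde\alpha + \tilde u^*\omega)$. The piece $\int \tilde u^* da\wedge\tilde\alpha$ over a sub-strip equals $\int_{s_1}^{s_2} h(s)\, ds$, where $h(s) = \int_{I_s}\ell(s,t)\,dt \leq H(s)$ is the ordinary horizontal slice integral; bounding the partial $\omega$-energy by the total $E := \int_{\widetilde S} u^*\omega$ gives the integral inequality
\[
H(s) \;\leq\; H(h^-_{\min}) + C_{\mathbf h} E + C_{\mathbf h}\int_{h^-_{\min}}^s H(s')\, ds'.
\]
Gronwall (Lemma \ref{LEM_gronwall}) then produces $H(s) \leq (H(h^-_{\min}) + C_{\mathbf h} E)\, e^{C_{\mathbf h}(s - h^-_{\min})}$. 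Evaluating at $s = h^+_{\max}$ and invoking the height hypothesis $h^+_{\max} - h^-_{\min} \leq \ln 2 / C_{\mathbf h}$ collapses the exponential factor down to $2$, giving the first inequality. The second inequality follows by running the same Gronwall argument in the opposite direction, starting from $s = h^+_{\max}$ and integrating downward.

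The main obstacle, I anticipate, is purely one of bookkeeping: choosing the \emph{right} auxiliary function $H$, rather than the more obvious horizontal slice integral $h(s)$, is what makes everything work. The function $h$ does not quite satisfy the clean Stokes identity above, because as $s$ sweeps through $[h^-_{\min}, h^+_{\max}]$ the level sets begin and cease intersecting the actual boundary $\partial_0^\pm$ instead of sliding freely through the strip, and each such event produces boundary contributions one has to track. Clamping at $h^\pm(t)$ is precisely the device that quietly absorbs those contributions and restores a clean fundamental-theorem-of-calculus identity, after which the argument is essentially a copy of the proof of Theorem \ref{THM_area_bound_estimate} with the height $r$ replaced by $h^+_{\max} - h^-_{\min}$.
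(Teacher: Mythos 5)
Your proof is correct, but it takes a genuinely different route from the paper's. The paper reduces to the rectangular case: it dyadically subdivides $\mathcal{I}$ into intervals $\mathcal{I}_{k,\ell}$, replaces $h^\pm$ by $\inf_{\mathcal{I}_{k,\ell}}h^+$ and $\sup_{\mathcal{I}_{k,\ell}}h^-$ to get rectangular sub-strips, applies Theorem \ref{THM_area_bound_estimate} on each rectangle (where the height bound (\ref{EQ_h_bounds}) makes the exponential factor at most $2$), and then sums and passes to the limit $k\to\infty$, using Stokes and $\tilde{\alpha}(\partial_s)=0$ to identify the limiting boundary terms with $\int_{\partial_0^\pm\widetilde{S}}\tilde{\alpha}$. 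You instead re-run the Gronwall scheme directly on the variable-height strip: your clamped slice function $H(s)=\int_{\mathcal{I}}\ell(\mu_s(t),t)\,dt$ satisfies the exact identity $H(s_2)-H(s_1)=\int_{\widetilde{S}\cap\{s_1\le s\le s_2\}}d\tilde{\alpha}$ (by Fubini, since $d\tilde{\alpha}=\partial_s\ell\,ds\wedge dt$ and $[\mu_{s_1}(t),\mu_{s_2}(t)]=[s_1,s_2]\cap[h^-(t),h^+(t)]$), after which Lemmas \ref{LEM_d_alpha_tilde_bound}, \ref{LEM_linear_alg_coercive}, the bound $h(s)\le H(s)$, and Lemma \ref{LEM_gronwall} give $H(h^+_{\max})\le 2H(h^-_{\min})+2C_{\mathbf h}\int_{\widetilde{S}}u^*\omega$, with the same factor $2$ coming from $h^+_{\max}-h^-_{\min}\le \ln 2/C_{\mathbf h}$; the reverse inequality is symmetric. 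What each approach buys: yours avoids the dyadic approximation and limit-interchange bookkeeping entirely and does not invoke Theorem \ref{THM_area_bound_estimate} as a black box (though it reuses its three ingredients), at the cost of introducing and verifying the clamped function; the paper's is shorter on the page because it leans on the already-proved rectangular estimate. Two small points to be aware of: your invocation of Lemmas \ref{LEM_d_alpha_tilde_bound} and \ref{LEM_linear_alg_coercive} implicitly assumes the smallness hypothesis on $f$, but the paper's proof makes exactly the same implicit assumption through Theorem \ref{THM_area_bound_estimate}, so this is not a gap relative to the paper; and your endpoint identities $H(h^-_{\min})=\int_{\partial_0^-\widetilde{S}}\tilde{\alpha}$, $H(h^+_{\max})=\int_{\partial_0^+\widetilde{S}}\tilde{\alpha}$ integrate over the full graphs of $h^\pm$, which is the reading the paper itself uses when applying the lemma (e.g.\ with $h^-\equiv 0$ in Lemma \ref{LEM_lambda_shrinkage}), so the strict inequality $\pm s>0$ in the definition of $\partial_0^\pm\widetilde{S}$ should be treated as a labeling convention rather than a restriction.
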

%
\begin{proof}
Without loss of generality, \(\mathcal{I}=[0, b]\). 
For each \(k\in \mathbb{N}_0\) and \(1\leq \ell \leq 2^k\) define
  the interval
  \begin{equation*}                                                       
    \mathcal{I}_{k, \ell}:=\big[{\textstyle \frac{\ell -1}{2^k}}b,
    {\textstyle \frac{\ell}{2^k}}b\big].
    \end{equation*}
Similarly, define \(h_{k, \ell}^+:=\inf_{t\in \mathcal{I}_{k, \ell}}
  h^+(t)\) and \(h_{k, \ell}^-:=\sup_{t\in \mathcal{I}_{k, \ell}} h^-(t)\).
By definition of integrability, we have
  \begin{equation*}                                                       
    \int_{\{(s, t)\in \widetilde{S}: s\geq 0 \}}d\tilde{\alpha}=
    \lim_{k\to \infty} \sum_{\ell =1}^{2^k} \int_{[0, h_{k,
    \ell}^+]\times \mathcal{I}_{k, \ell}} d\tilde{\alpha}
    \end{equation*}
  and
  \begin{equation*}                                                       
    \int_{\{(s, t)\in \widetilde{S}: s\leq 0 \}}d\tilde{\alpha}=
    \lim_{k\to \infty} \sum_{\ell =1}^{2^k} \int_{[h_{k, \ell}^-,
    0]\times \mathcal{I}_{k, \ell}} d\tilde{\alpha},
    \end{equation*}
  and hence by Stokes' theorem and the fact that
  \(\tilde{\alpha}(\partial_s)=0\), we have
  \begin{equation*}                                                       
    \int_{\partial_0^+ \widetilde{S}} \alpha =\lim_{k\to \infty}
    \sum_{\ell =1}^{2^k} \int_{\{h_{k, \ell}^+\}\times \mathcal{I}_{k,
    \ell}} \tilde{\alpha},
    \end{equation*}
  and
  \begin{equation*}                                                       
    \int_{\partial_0^- \widetilde{S}} \alpha =\lim_{k\to \infty}
    \sum_{\ell =1}^{2^k} \int_{\{h_{k, \ell}^-\}\times \mathcal{I}_{k,
    \ell}} \tilde{\alpha}.
    \end{equation*}
At this point we observe that restricting \(\tilde{u}\) to each
  \([h_{k, \ell}^-, h_{k, \ell}^+]\times \mathcal{I}_{k, \ell}\)
  defines a rectangular perturbed pseudoholomorphic strip, and hence by
  Theorem
  \ref{THM_area_bound_estimate} that
  \begin{equation}                                                        
    \int_{\{h_{k, \ell}^+\}\times \mathcal{I}_{k, \ell}}
    \tilde{\alpha}\leq \Big(C_{\mathbf h}\int_{[h_{k, \ell}^-, h_{k,
    \ell}^+]\times\mathcal{I}_{k, \ell}}u^*\omega + \int_{\{h_{k,
    \ell}^-\}\times \mathcal{I}_{k, \ell}} \tilde{\alpha}\Big)
    e^{C_{\mathbf h} (h_{k, \ell}^+-h_{k, \ell}^-)}.
    \end{equation}  
Making use of the inequalities (\ref{EQ_h_bounds}) and rearranging,
  we find
  \begin{equation}                                                        
    \int_{\{h_{k, \ell}^+\}\times \mathcal{I}_{k, \ell}} \tilde{\alpha} -
    2\int_{\{h_{k, \ell}^-\}\times \mathcal{I}_{k, \ell}} \tilde{\alpha}
    \leq 2 C_{\mathbf h}\int_{[h_{k, \ell}^-, h_{k,
    \ell}^+]\times\mathcal{I}_{k, \ell}}u^*\omega.
    \end{equation}
Summing and passing to the limit then yields
  \begin{align*}                                                          
  \int_{\partial_0^+\widetilde{S} } \tilde{\alpha} -2
    \int_{\partial_0^-\widetilde{S}} \tilde{\alpha}
  &=
    \lim_{k\to\infty}\sum_{\ell=1}^{2^k}\int_{\{h_{k,
    \ell}^+\}\times \mathcal{I}_{k, \ell}} \tilde{\alpha} -2
    \lim_{k\to\infty}\sum_{\ell=1}^{2^k} \int_{\{h_{k, \ell}^-\}\times
    \mathcal{I}_{k, \ell}} \tilde{\alpha}\\
  &\leq
    \lim_{k\to \infty} \sum_{\ell=1}^{2^k} 2C_{\mathbf h} \int_{[h_{k,
    \ell}^-, h_{k, \ell}^+]\times\mathcal{I}_{k, \ell}}u^*\omega\\
  &\leq 
    2C_{\mathbf h} \int_{\widetilde{S}}u^*\omega.
  \end{align*}
This is the desired estimate, and hence completes the proof of Lemma
  \ref{LEM_general_strip_estimate}.
\end{proof}

In light of Remark \ref{REM_strip_coordinates}, and with the aid of Lemma
  \ref{LEM_general_strip_estimate}, our next task is to attempt to obtain a
  uniform lower bound on the smooth positive function \(\ell\) defined by
  the property that \(\tilde{\alpha} = \ell(s,t) dt\), since doing so
  would be quite beneficial for later estimates.
Unfortunately, such a uniform pointwise estimate is not true, but rather
  the desired pointwise estimate holds everywhere except on a set which is
  small in a controlled manner.
We make this precise with Lemma \ref{LEM_lambda_shrinkage} below.

\begin{lemma}[$\lambda$-shrinkage]
  \label{LEM_lambda_shrinkage}
  \hfill\\
Let \((M, \eta)\) be a framed Hamiltonian manifold and \((J, g)\) be an
  \(\eta\)-adapted almost Hermitian structure on \(\mathbb{R}\times M\),
  and let \((\tilde{u}, \widetilde{S}, \tilde{\jmath}, f, u, S, j)\) be
  a perturbed pseudoholomorphic strip determined by the data \((p,
  \mathcal{I}, h^-, h^+)\), with coordinates \((s, t)\) as guaranteed by
  Lemma \ref{LEM_J_strip_reparam}.
Let \(C_{\mathbf h}\) be the associated ambient geometry constant
  established in Definition \ref{DEF_ambient_geometry_constant}, 
Suppose that for each \(t\in \mathcal{I}\) we have
  \begin{equation*}
    - \frac{\ln 2}{2 C_{\mathbf{h}}}  \leq h^-(t) \leq 0 \leq h^+(t) \leq
    \frac{\ln 2}{2 C_{\mathbf{h}}}.
    \end{equation*}
Define the set 
  \begin{equation*}
    \mathcal{K}:=\big\{t\in \mathcal{I}: {\textstyle \frac{1}{8}}
    \geq \inf_{h^-(t)\leq s\leq h^+(t)} \ell(s, t)\big\}
    \end{equation*}
  where \(\tilde{\alpha} = \ell(s, t)\, dt\). 
Then
  \begin{equation*}
    4 C_{\mathbf{h}} \int_{\widetilde{S}}u^*\omega\geq \mu(\mathcal{K})
    \end{equation*}
  where \(\mu\) is the Lebesgue measure associated to the coordinate
  \(t\in \mathcal{I}\).
\end{lemma}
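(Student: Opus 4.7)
The plan is to localize where $\ell$ dips below $\tfrac{1}{8}$, convert each such dip into definite $\omega$-energy via Theorem~\ref{THM_area_bound_estimate} applied to a rectangular sub-strip, and then sum the contributions using a covering argument. The crucial input is property (h\ref{EN_h3}) of Lemma~\ref{LEM_J_strip_reparam}: one has $\ell(0,\cdot)\equiv 1$, so $\ell$ decreasing from $1$ to $\tfrac{1}{8}$ along a vertical fiber of height at most $\tfrac{\ln 2}{2 C_{\mathbf h}}$, where $e^{C_{\mathbf h}|s|}\leq\sqrt{2}$, forces a proportional contribution to $\int u^*\omega$ on the rectangle below, by the strip estimate.

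First I would decompose $\mathcal{K}=\mathcal{K}^+\cup\mathcal{K}^-$, where $\mathcal{K}^+:=\{t\in\mathcal{I}:\inf_{s\in[0,h^+(t)]}\ell(s,t)\leq\tfrac{1}{8}\}$ and $\mathcal{K}^-$ is the analogous set with $s\in[h^-(t),0]$, and argue separately; the negative case is symmetric. For each $t_0\in\mathcal{K}^+$, choose $s_0=s_0(t_0)\in[0,h^+(t_0)]$ with $\ell(s_0,t_0)\leq\tfrac{1}{8}$. Continuity of $\ell$ and $h^+$ yields an open interval $I(t_0)\ni t_0$ in $\mathcal{I}$ on which $s_0\leq h^+(\cdot)$ and $\ell(s_0,\cdot)\leq\tfrac{1}{4}$. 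In the $(s,t)$-coordinates of Lemma~\ref{LEM_J_strip_reparam}, the rectangle $R(t_0):=[0,s_0]\times I(t_0)\subset\widetilde{S}$ pulls back, via Definition~\ref{DEF_perturbed_J_strip}, to a rectangular perturbed pseudoholomorphic sub-strip, hence a tract of perturbed pseudoholomorphic map to which Theorem~\ref{THM_area_bound_estimate} applies.

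Applying (the orientation-reversed version of) Theorem~\ref{THM_area_bound_estimate} to $R(t_0)$, and using $\ell(0,\cdot)\equiv 1$, $\ell(s_0,\cdot)\leq\tfrac{1}{4}$ on $I(t_0)$, and $e^{C_{\mathbf h} s_0}\leq\sqrt{2}$, we get
\begin{equation*}
|I(t_0)| \;=\; \int_{\{0\}\times I(t_0)} \tilde\alpha \;\leq\; \sqrt{2}\Bigl(C_{\mathbf h}\!\int_{R(t_0)} u^*\omega + \tfrac{1}{4}|I(t_0)|\Bigr),
\end{equation*}
which rearranges to $|I(t_0)|\leq C_0\,C_{\mathbf h}\int_{R(t_0)} u^*\omega$ for a small absolute constant $C_0$. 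Then I would extract from the open cover $\{I(t_0)\}_{t_0\in\mathcal{K}^+}$ a countable pairwise disjoint subcollection $\{I(t_n)\}$, via the one-dimensional Vitali covering lemma, whose concentric enlargements cover $\mathcal{K}^+$. Because the $t$-projections $I(t_n)$ are disjoint, the sub-rectangles $R(t_n)\subset\widetilde{S}$ are pairwise disjoint, so summing the local estimates yields
\begin{equation*}
\mu(\mathcal{K}^+) \;\leq\; 3\sum_n |I(t_n)| \;\leq\; 3\,C_0\,C_{\mathbf h}\sum_n\!\int_{R(t_n)}\!u^*\omega \;\leq\; 3\,C_0\,C_{\mathbf h}\!\int_{\widetilde{S}} u^*\omega.
\end{equation*}
Adding the symmetric bound on $\mu(\mathcal{K}^-)$ completes the argument.

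\textbf{Main obstacle.} All of the geometric content is absorbed into Theorem~\ref{THM_area_bound_estimate}; the work is essentially measure-theoretic bookkeeping. The delicate point is arriving at the stated constant $4 C_{\mathbf h}$. The naive combination of the exponential factor $\sqrt{2}$, the continuity slack between the thresholds $\tfrac{1}{8}$ and $\tfrac{1}{4}$, and the factor-$3$ overlap loss in the one-dimensional Vitali extraction yields a constant somewhat larger than $4$. To recover $4$ exactly I would sharpen the covering step by noting that $\mathcal{K}^+$ lies in the open set $U^+:=\{t:\inf_{s\in[0,h^+(t)]}\ell(s,t)<\tfrac{1}{4}\}$, decomposing $U^+$ into its component open intervals $J_\alpha$, and bounding each $|J_\alpha|$ directly by a selection-and-approximation argument that applies the strip estimate on a piecewise-constant choice of $s_*:J_\alpha\to[0,\sup h^+]$, thereby avoiding the overlap inflation coming from Vitali.
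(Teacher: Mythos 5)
Your overall strategy---localize where $\ell$ dips below the threshold, convert each dip to $\omega$-energy via a strip estimate, and sum over disjoint pieces---matches the paper's proof in spirit, but two choices cost you the stated constant. First, you appeal to the raw exponential estimate of Theorem~\ref{THM_area_bound_estimate}, whereas the paper invokes Lemma~\ref{LEM_general_strip_estimate} (the ``general strip estimate''), which gives $\int_{\partial_0^+}\tilde{\alpha}-2\int_{\partial_0^-}\tilde{\alpha}\leq 2C_{\mathbf h}\int u^*\omega$ for short strips with $|h^\pm|\leq\tfrac{\ln 2}{2C_{\mathbf h}}$. Feeding $\int_{\text{top}}\tilde\alpha\leq\tfrac14\int_{\text{bottom}}\tilde\alpha$ into that inequality immediately yields $\int_{\text{bottom}}\tilde\alpha\leq 4C_{\mathbf h}\int u^*\omega$ with no exponential slack to absorb. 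Second, and more importantly, the one-dimensional Vitali step inflates the constant by a factor of $3$: your local bound $|I(t_0)|\leq\tfrac{\sqrt2}{1-\sqrt2/4}C_{\mathbf h}\int_{R(t_0)}u^*\omega\approx 2.19\,C_{\mathbf h}\int_{R(t_0)}u^*\omega$, after tripling, lands near $6.6\,C_{\mathbf h}$, which does not reach $4C_{\mathbf h}$. You flag this correctly as the ``main obstacle.''

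The paper avoids the Vitali loss entirely: $\mathcal{K}$ is compact, so after extracting a finite subcover $\{\mathcal{O}_{\zeta_i^\pm}\}$ it disjointifies inductively by setting $\tilde{\mathcal{U}}_i := \mathcal{O}_{\zeta_i}\setminus\bigcup_{i'<i}\overline{\tilde{\mathcal{U}}_{i'}}$, and handles $\pm$ similarly. The resulting $\{\mathcal{U}_{i,i'}^\pm\}$ are pairwise disjoint intervals whose closures still cover $\mathcal{K}$ (no measure is lost, unlike Vitali), and since each piece inherits a constant height $\sigma(\zeta_i^\pm)$ on which $\ell\leq\tfrac14$ (with the intermediate threshold $\tfrac16$ providing the needed continuity slack), the associated rectangles $\mathcal{S}_{i,i'}^\pm$ have pairwise disjoint interiors in $\widetilde S$. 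Lemma~\ref{LEM_general_strip_estimate} then gives $\mu(\overline{\mathcal{U}}_{\nu^\pm}^\pm)\leq 4C_{\mathbf h}\int_{\mathcal{S}_{\nu^\pm}^\pm}u^*\omega$ on each piece, and summing gives exactly $\mu(\mathcal{K})\leq 4C_{\mathbf h}\int_{\widetilde S}u^*\omega$. Your proposed fix---a piecewise-constant $s_*$ on components of $U^+$---is headed the right way, but as sketched it still requires the same disjointification/covering argument to make the selection well-defined, and once carried out carefully it essentially re-derives the content of Lemma~\ref{LEM_general_strip_estimate}, which the paper already supplies as a standalone tool. So: the gap is real, you see where it is, but the repair you gesture at is both incomplete and duplicative of a lemma you should have used directly.
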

%
\begin{proof}
We begin by observing that \(\mathcal{K}\) is compact. 
Next we define
  \begin{equation*}                                                       
    \mathcal{A}:=\{(s, t)\in \widetilde{S}: \ell(s, t) \leq {\textstyle
    \frac{1}{8}}\}.
    \end{equation*}
Next, for each \(\zeta= (s_0, t_0)\in \mathcal{A}\) we define some
  quantities, which we also describe geometrically below:
  \begin{align*}                                                          
    \sigma(\zeta)&= \sigma(s_0, t_0)=
    \begin{cases}
      \sup \big\{\hat{s}\in [0, h^+(t_0)]:{\displaystyle \inf_{0\leq
	s\leq \hat{s}} }\{\ell(s, t_0)\}\geq {\textstyle
	\frac{1}{6}}\big\} &\text{if }s_0> 0\\
      \inf \big\{\hat{s}\in [h^-(t_0), 0]:{\displaystyle
	\inf_{\hat{s}\leq s \leq 0}  \{\ell(s, t_0)\}}\geq {\textstyle
	\frac{1}{6}}\big\} &\text{if }s_0<0
      \end{cases} \\
    x(\zeta)&= x(s_0, t_0) = \inf \big\{\tau\in \mathcal{I}: \tau\leq
      t_0 \text{ and } \sup_{\tau\leq t \leq t_0}\{\ell(\sigma(\zeta),
      t)\}\leq {\textstyle \frac{1}{4}}  \big\}\\ 
    y(\zeta) &= y(s_0, t_0) = \sup \big\{\tau\in \mathcal{I}: \tau\geq
      t_0 \text{ and } \sup_{t_0\leq t \leq \tau}\{\ell(\sigma(\zeta),
      t)\}\leq {\textstyle \frac{1}{4}}  \big\}.
    \end{align*}

We take a moment to describe these functions in a more geometric context.
First, recall that in the coordinates \((s,t)\), the \(s\) coordinate can
  be thought of as the symplectization coordinate, and the \(t\) coordinate
  measuring movement within a symplectization level set.
Suppose we are given some \((s_0, t_0)\in \mathcal{A}\) with \(s_0>0\),
  and consider the coordinate path \(t=t_0\), which by construction is an
  integral curve of \(\widetilde{\nabla} (a\circ \tilde{u})\).
In particular, we restrict the smooth function \(\ell\) to this path, and
  note that \(\ell(0, t_0)=1\) by construction, so that as we increase \(s\)
  from zero, there must be a first value for which
  \(\ell(s,t_0)=\frac{1}{6}\).
This first such \(s\) is then defined to be \(\sigma(s_0, t_0)\), provided
  \(s_0\) is positive, which we have indeed assumed.
A similar construction holds for the case that \(s_0\) is negative.

To understand the functions \(x\) and \(y\), we can consider the point
  \((s_0, t_0)\in \mathcal{A}\), again assuming \(s_0>0\).
Recall that by definition of \(\mathcal{A}\), we have \(\ell(s_0, t_0)
  \leq \frac{1}{8}\).
To the point \((s_0, t_0)\) we will associate the point \((\sigma(s_0,
  t_0), t_0)\), and we recall that \(\ell(\sigma(s_0, t_0), t_0) =
  \frac{1}{6}\) by construction, so that necessarily 
  \begin{align}\label{EQ_technical1}                                      
    \sigma(s_0, t_0) < s_0\leq h^+(t_0).
    \end{align}
We then we aim to construct a path of the form \(\{\sigma(s_0,t_0)\}\times
  I\) on which \(\ell \leq \frac{1}{4}\); here \(I\) is an interval.
This is possible since \(\ell\) is continuous and \(\ell( \sigma(s_0,
  t_0), t_0)=\frac{1}{6}<\frac{1}{4}\).
Thus we define the functions \(x\) and \(y\) to respectively be the
  smallest and largest possible values for which \(I:=[x(s_0,t_0), y(s_0,
  t_0)]\) has the property that on \(\{\sigma(s_0,t_0)\}\times I\) we have
  \(\ell\leq \frac{1}{4}\).
Moreover in light of inequality (\ref{EQ_technical1}) it also follows that
  \begin{align*}                                                          
    x(s_0,t_0) < t_0 < y(s_0, t_0)
    \end{align*}
 whenever \(t_0\in \mathcal{I}\setminus \partial \mathcal{I}\), and for all
  \(t_0\in \mathcal{I}\) we have \(x(s_0,t_0) < y(s_0, t_0)\).

With the functions \(x\) and \(y\) defined and understood, we can now
  define the following collection of (relatively) open subsets
  \(\mathcal{O}_\zeta \subset\mathcal{I}\) for each \(\zeta= (s_0, t_0)\in
  \mathcal{A}\).
\begin{equation*}                                                         
  \mathcal{O}_\zeta:=
  \begin{cases}
    \big[x(\zeta), y(\zeta)\big) &\text{if }x(\zeta)=t_0 \\
    \big(x(\zeta), y(\zeta)\big] &\text{if }y(\zeta)=t_0\\
    \big(x(\zeta), y(\zeta)\big) &\text{otherwise.}
    \end{cases}
  \end{equation*}
Observe that since \(\ell\) is continuous and \(\widetilde{S}\) is
  compact, it follows that if \(t_0\in \mathcal{K}\), then there exists
  \(s_0\in (h^-(t_0), h^+(t_0))\) such that \((s_0, t_0)\in \mathcal{A}\),
  and by construction \(t_0\in \mathcal{O}_{\zeta}\).
Consequently \(\{\mathcal{O}_{\zeta}\}_{\zeta\in \mathcal{K}}\) is an
  open cover of \(\mathcal{K}\), which as previously mentioned, is compact.
It follows that there exists a finite set \(\{\zeta_1^-, \ldots,
  \zeta_{k^-}^-,\zeta_1^+, \ldots, \zeta_{k^+}^+\}\subset\mathcal{A}\)
  such that
  \begin{equation*}                                                       
    \mathcal{K}\subset \big(\bigcup_{i=1}^{k^-}
    \mathcal{O}_{\zeta_i^-}\big)\;\cup\; \big( \bigcup_{i=1}^{k^+}
    \mathcal{O}_{\zeta_{i}^+}  \big),
    \end{equation*}
  and for which \(\zeta_i^\pm=(s_i^\pm, t_i^\pm)\) with \(s_i^+\geq 0\)
  and \(s_i^-<0\).
The pairwise intersection of these open intervals may be open and
  nonempty, so we refine this set of intervals by the following inductive
  procedure.
\begin{align*}                                                            
  \tilde{\mathcal{U}}_1^- &:= \mathcal{O}_{\zeta_1^-}\\
  \tilde{\mathcal{U}}_i^- &:=\mathcal{O}_{\zeta_i^-}\setminus
    \cup_{i'=1}^{i-1} \overline{\tilde{\mathcal{U}}_{i'}^-}\\
  \tilde{\mathcal{U}}_1^+&:=\mathcal{O}_{\zeta_1^+}\setminus
    \cup_{i'=1}^{k^-} \overline{\tilde{\mathcal{U}}_{i'}^-}\\
  \tilde{\mathcal{U}}_i^+&:=\mathcal{O}_{\zeta_i^+}\setminus\Big(
    \big(\cup_{i'=1}^{k^-} \overline{\tilde{\mathcal{U}}_{i'}^-}\big) \cup
    \big(\cup_{i'=1}^{i-1}
    \overline{\tilde{\mathcal{U}}_{i'}^+}\big)\Big),
  \end{align*}
  where the bar denotes closure. 
Observe that each \(\tilde{\mathcal{U}}_i^-\) and
  \(\tilde{\mathcal{U}}_i^+\) is the union of finitely many disjoint open
  intervals, and hence we may further write
  \begin{equation*}                                                       
    \tilde{\mathcal{U}}_i^- = \bigcup_{i'=1}^{m_i^-}\mathcal{U}_{i,
    i'}^-\qquad\text{and}\qquad\tilde{\mathcal{U}}_i^+ =
    \bigcup_{i'=1}^{m_i^+}\mathcal{U}_{i, i'}^+,
    \end{equation*}
  where each \(\mathcal{U}_{i, i'}^-\) and \(\mathcal{U}_{i, i'}^+\)
  is an open interval.

Finally we define the following finite sets of products of closed
  intervals.
\begin{align*}                                                            
  \mathcal{S}_{i, i'}^+&:=
    [0, s_{i, i'}^+]\times \overline{\mathcal{U}}_{i,
    i'}^+\quad\text{for}\quad 1\leq i\leq k^+\;\text{with }s_{i,
    i'}^+:=\sigma(\zeta_i^+)\geq 0, \; \text{and}\; 1\leq i'\leq m_i^+\\
  \mathcal{S}_{i, i'}^-&:=
    [ s_{i, i'}^-, 0]\times \overline{\mathcal{U}}_{i,
    i'}^-\quad\text{for}\quad 1\leq i\leq k^-\;\text{with }s_{i,
    i'}^-:=\sigma(\zeta_i^-)< 0, \; \text{and}\; 1\leq i'\leq m_i^-\\
  \end{align*}
For later computational clarity, it will be convenient to reindex these
  sets by
  \begin{equation*}                                                       
    \nu^\pm: \{1, \ldots, m^\pm:=\sum_{i=1}^{k^\pm} m_i^\pm\}\to \{(1,
    1), \ldots, (1, m_1^\pm), (2, 1), \ldots, (2, m_2^\pm), \ldots,
    (k, m_k^\pm)\}
    \end{equation*}
  so that
  \begin{equation*}                                                       
    \mathcal{S}_{\nu^+}^+ = [0, s_{\nu^+}^+]\times
    \overline{\mathcal{U}}_{\nu^+}^+ \quad\text{for}\quad \nu^+\in \{1,
    \ldots, m^+\}
    \end{equation*}
  and
  \begin{equation*}                                                       
    \mathcal{S}_{\nu^-}^- = [s_{\nu^-}^-, 0]\times
    \overline{\mathcal{U}}_{\nu^-}^- \quad\text{for}\quad \nu^-\in \{1,
    \ldots, m^-\}.
    \end{equation*}
For convenience, define  
  \begin{equation*}                                                       
    \overline{\mathcal{U}}:= \big(\cup_{\nu^-=1}^{m^-}
    \overline{\mathcal{U}}_{\nu^-}^-\big)\bigcup
    \big(\cup_{\nu^+=1}^{m^+} \overline{\mathcal{U}}_{\nu^+}^+\big) ,
    \end{equation*}
  so that \(\{0\}\times\overline{\mathcal{U}}=\big((\cup_{\nu^-=1}^{m^-}
  \mathcal{S}_{\nu^-}^-)\cup ( \cup_{\nu^+=1}^{m^+} \mathcal{S}_{\nu^+}^+
  )\big)\cap \big(\{0\}\times \mathcal{I}\big)\). The following facts
  are then straightforward to verify.
\begin{enumerate}[(S1)]                                                   
  \item\label{EN_S1}  
    \(\mathcal{K}\subset \overline{\mathcal{U}}\)
  \item\label{EN_S2} 
    \begin{equation*}                                                     
      \inf_{\substack{ t\in \mathcal{I}\setminus \overline{\mathcal{U}} \\
      h^-(t)\leq s\leq h^+(t)} } \{\ell(s, t)\}\geq {\textstyle
      \frac{1}{8}}
      \end{equation*} 
  \item\label{EN_S3} 
    for each \(\nu^\pm \in \{1, \ldots, m^\pm\}\) we have 
    \begin{equation*}
      \sup_{t\in \overline{\mathcal{U}}_{\nu^\pm}^\pm}
      \ell(s_{\nu^\pm}^\pm, t)\leq {\textstyle \frac{1}{4}}
      \end{equation*} 
  \item\label{EN_S4} 
    the pairwise intersection of elements of the set \(\{ S_1^-, \ldots,
    S_{m^-}^-, S_1^+, \ldots, S_{m^+}^+ \}\) have empty interior in
    \(\widetilde{S}\).
  \end{enumerate} 
Observe that since \(\tilde{\alpha}= \ell\, dt\), and since \(\ell(0,
  t)\equiv 1\), we may employ property (S\ref{EN_S3}) to estimate
  \begin{align*}                                                          
    \int_{\{s_{\nu^\pm}^\pm\}\times \overline{\mathcal{U}}_{\nu^\pm}^\pm}
      \tilde{\alpha}
    &= 
      \int_{\inf \overline{\mathcal{U}}_{\nu^\pm}^\pm}^{\sup
      \overline{\mathcal{U}}_{\nu^\pm}^\pm} \ell(s_{\nu^\pm}^\pm, t)
      dt \\
    &\leq
      {\textstyle \frac{1}{4}} \int_{\inf
      \overline{\mathcal{U}}_{\nu^\pm}^\pm}^{\sup
      \overline{\mathcal{U}}_{\nu^\pm}^\pm} 1\;dt \\
    &=
      {\textstyle \frac{1}{4}}\int_{\{0\}\times
      \overline{\mathcal{U}}_{\nu^\pm}^\pm}\tilde{\alpha}.
    \end{align*}
Observe that each \((\tilde{u}, \mathcal{S}_{\nu^\pm}^\pm ,
  \tilde{\jmath}, f, u, S, j)\) is a rectangular perturbed
  pseudoholomorphic strip determined by the data \((p,
  \overline{\mathcal{U}}_{\nu^\pm}^\pm
  , h_{\nu^\pm}^-, h_{\nu^\pm}^+)\) where \(h_{\nu^+}^-=0\),
  \(h_{\nu^+}^+=s_{\nu^+}^+\), \(h_{\nu^-}^-=s_{\nu^-}\), and
  \(h_{\nu^-}^+=0\).
Moreover, we have 
  \begin{align*}                                                          
    |s_{\nu^\pm}^\pm|\leq \sup_t |h^\pm(t)|\leq \frac{\ln 2}{
    2C_{\mathbf{h}}}
    \end{align*}
  and
  \begin{align*}                                                          
    \int_{\{s_{\nu^\pm}^\pm\}\times \overline{\mathcal{U}}_{\nu^\pm}^\pm}
      \tilde{\alpha} \leq {\textstyle \frac{1}{4}}\int_{\{0\}\times
      \overline{\mathcal{U}}_{\nu^\pm}^\pm}\tilde{\alpha},
    \end{align*}
  and thus letting \(\mu\) denote the Lebesgue measure associated to the
  coordinate \(t\), we employ Lemma \ref{LEM_general_strip_estimate} to
  estimates
\begin{align*}                                                            
  {\textstyle \frac{1}{2}}\mu(\overline{\mathcal{U}}_{\nu^\pm}^\pm) 
  &=
  {\textstyle\frac{1}{2}}\int_{\{0\}\times
  \overline{\mathcal{U}}_{\nu^\pm}^\pm}\tilde{\alpha}
  \\
  &= \int_{\{0\}\times \overline{\mathcal{U}}_{\nu^\pm}^\pm}\tilde{\alpha}
  -{\textstyle\frac{1}{2}}\int_{\{0\}\times
  \overline{\mathcal{U}}_{\nu^\pm}^\pm}\tilde{\alpha}
  \\
  &\leq \int_{\{0\}\times
  \overline{\mathcal{U}}_{\nu^\pm}^\pm}\tilde{\alpha} - 2
  \int_{\{s_{\nu^\pm}^\pm\}\times
  \overline{\mathcal{U}}_{\nu^\pm}^\pm}\tilde{\alpha}
  \\
  &\leq 2C_{\mathbf{h}} \int_{ \mathcal{S}_{\nu^\pm}^\pm}
  \tilde{u}^*\omega.
  \end{align*}
Or more concisely,
  \begin{align*}                                                          
    \mu(\overline{\mathcal{U}}_{\nu^\pm}^\pm) \leq 4C_{\mathbf{h}}\int_{
    \mathcal{S}_{\nu^\pm}^\pm} \tilde{u}^*\omega.
    \end{align*}
Using property (S\ref{EN_S4}) and the fact that \(\omega\) evaluates
  non-negatively on complex lines, it follows that
  \begin{align*}
    4C_{\mathbf{h}}\int_{\widetilde{S}}u^*\omega &\geq
    4C_{\mathbf{h}}\sum_{\nu^\pm=1}^{m^\pm}
      \int_{\mathcal{S}_{\nu^\pm}^\pm} u^*\omega \geq 
      \sum_{\nu^\pm=1}^{m^\pm}\mu
      (\overline{\mathcal{U}}_{\nu^\pm}^\pm)\\
    &=
      \mu(\cup_{\nu^\pm=1}^{m^\pm}
      \overline{\mathcal{U}}_{\nu^\pm}^\pm)= 
      \mu(\overline{\mathcal{U}})\\
    &\geq 
      \mu(\mathcal{K}),
    \end{align*}
  where the final equality follows from property (S\ref{EN_S1}).
\end{proof}

We finish Section \ref{SEC_strip_estimates} with Lemma
  \ref{LEM_modest_length_flow_lines} below, which roughly states that if a
  pseudoholomorphic strip is not too ``tall,'' and the \(\omega\)-energy is
  less than or equal to the ``height'' times the ``width'' then there must
  exist a gradient trajectory going from bottom to top with length with is
  not too long (relative to the height of the strip).

\begin{lemma}[modest length flow lines]
  \label{LEM_modest_length_flow_lines}
  \hfill\\
Let \((M, \eta)\) be a framed Hamiltonian manifold and \((J, g)\) be
  an \(\eta\)-adapted almost Hermitian structure on \(\mathbb{R}\times M\).
Let \(C_{\mathbf h}\) be the associated ambient geometry constant
  established in Definition \ref{DEF_ambient_geometry_constant}. 
Let \((\tilde{u}, \tilde{\jmath}, f, u, S, j)\) be a perturbed
  pseudoholomorphic map, and fix \(\epsilon\in \mathbb{R}\) such that
  \begin{equation*}                                                       
      0<\epsilon < \min(2^{-24}, (1+\sup_{\zeta\in
      {\rm supp}(f)}\|B_u(\zeta)\|_{{\gamma}})^{-1}).
    \end{equation*}
Suppose further that
  \begin{equation*}                                                       
      \|df\|_{{\gamma}}+ \|\nabla df\|_{{\gamma}} \leq
      \frac{\epsilon}{2^{11}(1+\|B_{{u}}\|_{{\gamma}})},
    \end{equation*}
  where \(\|df\|_{{\gamma}}\), \(\|\nabla df\|_{{\gamma}} \), and
  \(\|B_{{u}}\|_{{\gamma}}\) are the \(L^\infty\) norms over the support
  of \(f\).
Then for any finite set of rectangular perturbed pseudoholomorphic strips,
  denoted by \(\{(\tilde{u}_k, \widetilde{S}_k, \tilde{\jmath}_k, f, u, S,
  j)\}_{k=1}^n\), satisfying
  \begin{enumerate}[(R1)]                                                 
    \item\label{EN_R1} 
      \(a_0 = \inf_{\zeta\in \widetilde{S}_k} a\circ
      \tilde{u}_k(\zeta)\), independent of \(k\)
    \item\label{EN_R2} 
      \(a_1 = \sup_{\zeta\in \widetilde{S}_k} a\circ
      \tilde{u}_k(\zeta)\), independent of \(k\)
    \item\label{EQ_R3} 
      \(a_1-a_0\leq\frac{1}{8C_{\mathbf{h}}}\)
    \item\label{EN_R4} 
      \(\sum_{k=1}^n\int_{\widetilde{S}_k} u_k^*\omega \leq
      (a_1-a_0)\sum_{k=1}^n\int_{(a\circ \tilde{u}_k)^{-1}(a_0)}
      \tilde{\alpha},\)
    \item \label{EN_R5}
      \(\widetilde{S}_k \cap \widetilde{S}_{k'} = \emptyset\) for \(k \neq
      k'\)
    \end{enumerate}
  there exists \(k\in \{1, \ldots, n\}\) and a solution to the
  differential equation
  \begin{equation*}                                                       
    q: [0, s_0]\to  S_k \quad q'(s) = \widetilde{\nabla}(a\circ
    \tilde{u}_k)\big(q(s)\big)\quad a(\tilde{u}_k(q(0)))=a_0\quad
    a(\tilde{u}_k(q(s_0)))=a_1
    \end{equation*}
  for which 
  \begin{equation*}                                                       
    {\rm length}_{\tilde{\gamma}}\big(q([0, s_0])\big)\leq 2^7 (a_1-
    a_0).
    \end{equation*}
\end{lemma}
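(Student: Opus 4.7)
\textbf{Proof proposal for Lemma \ref{LEM_modest_length_flow_lines}.}

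The plan is to put each rectangular strip in the coordinates $(s,t)$ supplied by Lemma \ref{LEM_J_strip_reparam}, use the $\ell$-shrinkage estimate of Lemma \ref{LEM_lambda_shrinkage} to identify a ``good'' $t$-set on which the lateral metric coefficient $\tilde{\gamma}_{22}$ is bounded below, use the area estimate of Theorem \ref{THM_area_bound_estimate} together with hypothesis (R\ref{EN_R4}) to control the total $\tilde{\gamma}$-area, and finally average to locate a gradient trajectory of modest length. Throughout let $h := a_1-a_0$, let $\tilde{\mathcal{I}}_k \subset \mathbb{R}$ denote the parameter interval of the $k$-th strip, let $T_k := \int_{(a\circ \tilde{u}_k)^{-1}(a_0)} \tilde{\alpha}_k = \mu(\tilde{\mathcal{I}}_k)$, and let $E_k := \int_{\widetilde{S}_k} u_k^*\omega$.

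First I would set coordinates so that, after choosing the reference curve $p_k$ appropriately in Lemma \ref{LEM_J_strip_reparam}, each $\widetilde{S}_k$ is parametrized by $[0,h]\times \tilde{\mathcal{I}}_k$ with $a\circ\tilde{u}_k\circ\phi_k(s,t)=a_0+s$, and the metric is $\tilde{\gamma}_k = \tilde{\gamma}_{11}^k\,ds^2+\tilde{\gamma}_{22}^k\,dt^2$ with $\tilde{\gamma}_{11}^k\geq 1$ and $\tilde{\gamma}_{22}^k\geq (\ell^k)^2$, where $\tilde{\alpha}_k=\ell^k\,dt$. The vertical line $\{t=t_0\}$ is an integral curve of $\widetilde{\nabla}(a\circ\tilde{u}_k)$, so after reparametrizing by gradient time its $\tilde{\gamma}$-length is
\begin{equation*}
L_k(t_0) \;=\; \int_0^{h}\sqrt{\tilde{\gamma}_{11}^k(s,t_0)}\,ds.
\end{equation*}
Hence it suffices to produce $(k,t_0)$ with $L_k(t_0)\leq 2^7 h$.

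Next I would apply Lemma \ref{LEM_lambda_shrinkage} to each strip (the height bound (R\ref{EQ_R3}) gives $h\leq \tfrac{1}{8C_{\mathbf{h}}}<\tfrac{\ln 2}{2 C_{\mathbf{h}}}$, so the hypothesis is met) to obtain a ``bad'' set $\mathcal{K}_k\subset\tilde{\mathcal{I}}_k$ with $\mu(\mathcal{K}_k)\leq 4 C_{\mathbf{h}} E_k$ and complement $G_k:=\tilde{\mathcal{I}}_k\setminus\mathcal{K}_k$ on which $\inf_{s}\ell^k(s,t)\geq \tfrac{1}{8}$. On $[0,h]\times G_k$ we then have $\tilde{\gamma}_{22}^k\geq 2^{-6}$, so the area density satisfies $\sqrt{\tilde{\gamma}_{11}^k\tilde{\gamma}_{22}^k}\geq 2^{-3}\sqrt{\tilde{\gamma}_{11}^k}$, giving
\begin{equation*}
\int_{G_k}L_k(t)\,dt \;\leq\; 2^{3}\int_{[0,h]\times G_k}\sqrt{\tilde{\gamma}_{11}^k\tilde{\gamma}_{22}^k}\,ds\,dt \;\leq\; 2^{3}\,\mathrm{Area}_{\tilde{\gamma}_k}(\widetilde{S}_k).
\end{equation*}

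Now I would invoke Theorem \ref{THM_area_bound_estimate} for each rectangular perturbed strip (the hypotheses on $f$ carry over from the lemma's assumptions and the strips are tracts); together with $h\leq \tfrac{1}{8C_{\mathbf{h}}}$ (so $e^{C_{\mathbf{h}} h}-1\leq 2C_{\mathbf{h}} h$) this yields $\mathrm{Area}_{\tilde{\gamma}_k}(\widetilde{S}_k)\leq 2h T_k + \tfrac{5}{4}E_k$. Summing over $k$ and applying (R\ref{EN_R4}) ($\sum_k E_k\leq h\sum_k T_k$) gives $\sum_k\mathrm{Area}_{\tilde{\gamma}_k}(\widetilde{S}_k)\leq \tfrac{13}{4}h\sum_k T_k$, hence $\sum_k\int_{G_k}L_k(t)\,dt\leq 26\,h\sum_k T_k$. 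Meanwhile $\sum_k\mu(G_k)\geq \sum_k T_k-4C_{\mathbf{h}}\sum_k E_k\geq \sum_k T_k-4C_{\mathbf{h}} h\sum_k T_k\geq \tfrac{1}{2}\sum_k T_k$, using $h\leq \tfrac{1}{8C_{\mathbf{h}}}$. Averaging, there must exist $k$ and $t_0\in G_k$ with $L_k(t_0)\leq 52 h < 2^7 h$, and the associated gradient trajectory is the one sought. The disjointness hypothesis (R\ref{EN_R5}) is what permits summing the area contributions without double-counting $\omega$-energy.

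The main obstacle is quantitative: the constants contributed by the shrinkage lemma, by the exponential factor in the area estimate, and by the ratio $\sqrt{\tilde{\gamma}_{22}^k}\geq \tfrac{1}{8}$ on $G_k$ must compose to a final constant no larger than $2^7$. The height threshold $h\leq \tfrac{1}{8C_{\mathbf{h}}}$ is precisely what buys the linearization $e^{C_{\mathbf{h}} h}-1\leq 2C_{\mathbf{h}} h$ and simultaneously ensures $\mu(G_k)\geq \tfrac{1}{2}\mu(\tilde{\mathcal{I}}_k)$; any weaker height bound would force a larger constant. The rest is bookkeeping, with the essential use of (R\ref{EN_R4}) being to convert the $\omega$-energy appearing in the shrinkage and area estimates into an $h$-multiple of $\sum_k T_k$, which is what makes the averaging argument close.
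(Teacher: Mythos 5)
Your proposal is essentially the paper's argument: both put the strips in the $(s,t)$ coordinates of Lemma \ref{LEM_J_strip_reparam}, use Lemma \ref{LEM_lambda_shrinkage} to excise a bad $t$-set of measure at most $4C_{\mathbf{h}}\int u^*\omega$, bound the $\tilde{\gamma}$-area from below on the complement via $\sqrt{\tilde{\gamma}_{22}}\geq\ell\geq\tfrac{1}{8}$, bound the area from above in terms of $h\sum T_k+\sum E_k$, and combine with (R\ref{EN_R4}) to extract a short gradient line (the paper phrases this as an infimum chain rather than an averaging, but it is the same step). One small inaccuracy worth correcting: Theorem \ref{THM_area_bound_estimate} controls $\int(\tilde{u}^*da)\wedge\tilde{\alpha}+\tilde{u}^*\omega$, not $\mathrm{Area}_{\tilde{\gamma}}$, so you must insert the factor of $2$ from Lemma \ref{LEM_linear_alg_coercive} when passing to area (the paper instead runs a fresh Stokes computation that builds this in); with that factor your average becomes roughly $104\,h$, still safely below $2^7 h$.
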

%
\begin{proof}
For convenience, we let \((\tilde{u}, \widetilde{S}, \tilde{\jmath}, f,
  u, S, j)\) denote the union of the rectangular perturbed
  pseudoholomorphic strips \(\{(\tilde{u}_k, \widetilde{S}_k,
  \tilde{\jmath}_k, f, u, S, j)\}_{k=1}^n\) so that
  \(\widetilde{S}=\cup_{k=1}^n \widetilde{S}_k\) and
  \(\tilde{u}\big|_{\widetilde{S}_k} = \tilde{u}_k\), and similarly for
  \(\tilde{\jmath}\).
Next we define the constants 
  \begin{equation*}                                                       
    c_2:=a_1-a_0\qquad\text{and}\qquad c_3:=\int_{(a\circ
    \tilde{u})^{-1}(a_0)}\tilde{\alpha},
    \end{equation*}
  and equip \(\widetilde{S}\) with coordinates \((s, t)\) via
  Lemma \ref{LEM_J_strip_reparam} so that using these coordinates to
  parameterize \(\widetilde{S}\) we have \(\widetilde{S}=[0, c_2]\times
  \mathcal{I}\) where \(\mathcal{I}\subset \mathbb{R}\) is the union
  of finitely many pairwise disjoint closed intervals with total \({\rm
  length}(\mathcal{I}) = c_3\).
Recall that another consequence of Lemma \ref{LEM_J_strip_reparam},
  specifically property (h\ref{EN_h2}), is that
  \begin{equation*}                                                       
    \tilde{\gamma}=\tilde{u}^*g =
    \tilde{\gamma}_{11}\,ds^2+\tilde{\gamma}_{22}\, dt^2.
    \end{equation*}
Consequently, to prove Lemma \ref{LEM_modest_length_flow_lines}, it is
  sufficient to prove
  \begin{equation}\label{EQ_length_est5}                                  
    c_4:=\inf_{t\in \mathcal{I}} \int_0^{c_2}
    \tilde{\gamma}_{11}^{\frac{1}{2}}(s, t)\, ds \leq 2^7 c_2.
    \end{equation}
Next we define the closed set \(\mathcal{K}\) similarly to the way it
  was defined in Lemma \ref{LEM_lambda_shrinkage}:
  \begin{equation*}                                                       
    \mathcal{K}:=\big\{t\in \mathcal{I}: {\textstyle \frac{1}{8}}
    \geq \inf_{0\leq s\leq c_2} \ell(s, t)\big\}
    \end{equation*}
  where \(\tilde{\alpha}=\ell(s, t)\, dt\).
Recall that as a consequence of Lemma \ref{LEM_lambda_shrinkage} we
  have \(\mu(\mathcal{K})\leq 4C_{\mathbf{h}}\int_{\widetilde{S}}
  u^*\omega\), where \(\mu\) denotes the Lebesgue measure on
  \(\mathcal{I}\) associated to the coordinate \(t\).
We then make the following estimate.
\begin{align}                                                             
  {\rm Area}_{\tilde{\gamma}}(\widetilde{S}) 
  &= 
    {\rm Area}_{\tilde{\gamma}}\big([0,c_2]\times\mathcal{I}\big)\notag\\
  &= 
    \int_{\mathcal{I}}\int_0^{c_2} \big(
    \tilde{\gamma}_{11}\tilde{\gamma}_{22}\big)^{\frac{1}{2}} \,
    dsdt\notag\\
  &\geq 
    \int_{\mathcal{I}}\int_0^{c_2}  \tilde{\gamma}_{11}^{\frac{1}{2}}
    \ell  \, dsdt\notag\\
  &\geq 
    \int_{\mathcal{I}\setminus \mathcal{K}} \int_0^{c_2}
    \tilde{\gamma}_{11}^{\frac{1}{2}} \ell  \, dsdt\notag\\
  &\geq 
    {\textstyle \frac{1}{8}} \int_{\mathcal{I}\setminus \mathcal{K}}
    \int_0^{c_2}\tilde{\gamma}_{11}^{\frac{1}{2}} \, dsdt\notag\\
  &\geq 
    {\textstyle \frac{1}{8}} c_4 \int_{\mathcal{I}\setminus \mathcal{K}}
    \, dt\notag\\
  &= 
    {\textstyle \frac{1}{8}} c_4\big(c_3 - \mu(\mathcal{K})\big)\notag\\
  &\geq 
    {\textstyle \frac{1}{8}} c_4\big(c_3 -
    4C_{\mathbf{h}}\int_{\widetilde{S}}
    u^*\omega\big)\label{EQ_area_est_1};
  \end{align}
  where to obtain the second equality we have employed equation
  (\ref{EQ_hausdorff_measure}) from Section \ref{SEC_riemannian}
  which expresses the Hausdorff measure associated to a Riemannian
  metric in local coordinates, and we have used the fact that
  \(\gamma_{21}=\gamma_{12}=0\); to obtain the first inequality we
  have made use of the fact that \(\gamma_{22}^{\frac{1}{2}}\geq \ell\)
  which was established in property (h\ref{EN_h3}) of Lemma
  \ref{LEM_J_strip_reparam}; the third inequality makes use of the
  definition of \(\mathcal{K}\); the fourth inequality follows from the
  definition of \(c_4\); and the final equality employs Lemma
  \ref{LEM_lambda_shrinkage}.
Note, we also have the following estimate.
\begin{align*}                                                            
  \int_{\widetilde{S}} d\big((a\circ
    \tilde{u}-a_1)\tilde{\alpha}\big)+u^*\omega
  &=
    \Big(\int_{\widetilde{S}} (\tilde{u}^*da )\wedge
    \tilde{\alpha}  +\tilde{u}^*\omega\Big) + \int_{\widetilde{S}}
    (a\circ\tilde{u}-a_1) d\tilde{\alpha} \\
  &\geq
    {\textstyle \frac{1}{2}}{\rm Area}_{\tilde{\gamma}} (\widetilde{S})
    + \int_{\widetilde{S}} (a\circ\tilde{u}-a_1) d\tilde{\alpha} \\
  &\geq 
    {\textstyle \frac{1}{2}}{\rm Area}_{\tilde{\gamma}} (\widetilde{S})
    - c_2 C_{\mathbf{h}} {\rm Area}_{\tilde{\gamma}}(\widetilde{S}) \\
  &=
    ({\textstyle \frac{1}{2}}-c_2 C_{\mathbf{h}}){\rm
    Area}_{\tilde{\gamma}} (\widetilde{S}) ,
  \end{align*}
  where the first inequality follows from Lemma 
  \ref{LEM_linear_alg_coercive}, and the second inequality follows from
  Lemma \ref{LEM_d_alpha_tilde_bound}.
We now recall that \(\partial\widetilde{S} = \partial_0 \widetilde{S}\cup
  \partial_1\widetilde{S}\) with \(a\circ \tilde{u}(\partial_0
  \widetilde{S})=\{a_0, a_1\}\) and that \(\partial_1 \widetilde{S}\)
  consists of integral curves of \(\widetilde{\nabla}(a\circ
  \tilde{u})\). We also recall Lemma \ref{LEM_char_fol_grad} which
  guarantees that \(\tilde{\alpha}(\widetilde{\nabla}(a\circ
  \tilde{u}))\equiv 0 \), and hence
  \begin{align*}                                                          
    \int_{\widetilde{S}}  d\big((a\circ \tilde{u}
      -a_1)\tilde{\alpha}\big)
    &= 
      \int_{(a\circ \tilde{u})^{-1}(a_1)} (a\circ \tilde{u}-a_1)
      \tilde{\alpha}-\int_{(a\circ \tilde{u})^{-1}(a_0)} (a\circ
      \tilde{u}-a_1) \tilde{\alpha}\\
    &\qquad  
      +\int_{\partial_1 S}(a\circ \tilde{u}-a_1) \tilde{\alpha} \\
    &=
      0+c_2\int_{(a\circ \tilde{u})^{-1}(a_0)}\tilde{\alpha} + 0\\
    &=
      c_2 c_3.
    \end{align*}
Combining the above inequalities then yields the following.
\begin{equation*}                                                         
  c_2 c_3 + \int_{\widetilde{S}} u^*\omega\geq ({\textstyle
  \frac{1}{2}}-c_2 C_{\mathbf{h}}){\rm Area}_{\tilde{\gamma}}
  (\widetilde{S})
  \end{equation*}
Combining this with inequality (\ref{EQ_area_est_1}), then yields
  the following.
\begin{equation}\label{EQ_area_est_3}                                     
  c_2 c_3 + \int_{\widetilde{S}} u^*\omega \geq {\textstyle
  \frac{1}{8}}({\textstyle \frac{1}{2}}-c_2 C_{\mathbf{h}}) c_4(c_3-
  4C_{\mathbf{h}}\int_{\widetilde{S}} u^*\omega).
  \end{equation}
Finally, we recall our assumptions (R\ref{EQ_R3}) and (R\ref{EN_R4}),
  which can be restated as
  \begin{equation*}                                                       
    c_2= a_1-a_0\leq
    \frac{1}{8C_{\mathbf{h}}}\qquad\text{and}\qquad\int_{\widetilde{S}}
    u^*\omega \leq c_2 c_3.
    \end{equation*}
From these it is elementary to establish the following.
\begin{align*}                                                            
  2 c_2 c_3&\geq 
    c_2 c_3 + \int_{\widetilde{S}} u^*\omega\\
  \frac{1}{2}-c_2 C_{\mathbf{h}}&\geq 
    \frac{1}{4}\\
  c_3-4C_{\mathbf{h}} \int_{\widetilde{S}} u^*\omega &\geq 
    \frac{1}{2}c_3.
  \end{align*}
We now combine these inequalities with (\ref{EQ_area_est_3}) to obtain 
  \begin{equation*}                                                      
    c_4 \leq 2^7 c_2 
    \end{equation*} 
  which is the desired inequality as stated in (\ref{EQ_length_est5}).
This completes the proof of Lemma \ref{LEM_modest_length_flow_lines}.
\end{proof}

\subsubsection{Some preliminary miscellany}\label{SEC_miscellany}

The purpose of this Section is to establish a few miscellaneous results to
  be referenced later in the proof of Theorem
  \ref{THM_local_local_area_bound}.
Firstly, these consist of the notion of a \((\delta,\epsilon)\)-tame
  perturbation of a pseudoholomorphic curve, see Definition
  \ref{DEF_epsilon_tame} below, which essentially provides a certain class
  of perturbations which are sufficiently small so that a variety of
  estimates hold automatically, and then we establish that such
  perturbations exist in sufficient abundance; see Lemma
  \ref{LEM_tame_perturbations}.
Secondly, we also establish that in a very particular measure theoretic
  sense, tangent planes of pseudoholomorphic curves with small
  \(\omega\)-energy have tangent planes which are usually almost vertical;
  see Lemma \ref{LEM_Q_has_small_measure} below.
And finally, we establish the existence of a small geometric constant
  \(r_0\) which will be made use of extensively in Section
  \ref{SEC_core_proof_local_local}; see Lemma \ref{LEM_small_radius}.

\begin{definition}[$(\delta,\epsilon)$-tame perturbations]
  \label{DEF_epsilon_tame}
  \hfill\\
Let \((M, \eta)\) be a framed Hamiltonian manifold, \((J, g)\) be an
  \(\eta\)-adapted almost Hermitian structure on the symplectization
  \(\mathbb{R}\times M\), let \((\tilde{u}, \tilde{\jmath}, f, u, S, j)\)
  be a perturbed pseudoholomorphic map, and let \(\delta, \epsilon>0\).
We say \((\tilde{u}, \tilde{\jmath}, f, u, S, j)\) is a \((\delta,
  \epsilon)\)-tame perturbed pseudoholomorphic map provided the following
  hold, where ${\mathcal Z}=\{\zeta\in S\ |\ Tu(\zeta)=0\}$.
\begin{enumerate}[(d1)]                                                   
  \item\label{EN_d1} 
    \begin{equation*}                                                     
      \delta < {\textstyle \frac{1}{10}}\min\Big({\rm dist}_\gamma({\rm
      Crit}_{a\circ u}, \partial S) , \min_{\substack{\zeta_0, \zeta_1\in
      \mathcal{Z}\\\zeta_0\neq \zeta_1}}{\rm dist}_\gamma(\zeta_0,
      \zeta_1), \; {\rm dist}_\gamma(\mathcal{Z}, \partial S)\Big)
      \end{equation*}
  \item \label{EN_d2} 
    \(\epsilon<\min(2^{-24}, \frac{1}{1+C_B})\)
  \item\label{EN_d3} 
    the restricted map \(f:S\setminus\{\zeta\in S: {\rm
    dist}_\gamma(\zeta, \mathcal{Z})< \delta\}:\to \mathbb{R}\) is Morse
  \item\label{EN_d4} 
    \begin{equation*}                                                     
      \sup_{\zeta\in \Omega} |f(\zeta)|+\sup_{\zeta\in
      \Omega}\|df(\zeta)\|_{\gamma}+ \sup_{\zeta\in \Omega}\|\nabla
      df(\zeta)\|_{\gamma} \leq \frac{\epsilon}{2^{11}(1+C_B)}
      \end{equation*}
\end{enumerate}
  where \(\Omega:={\rm supp}(f)\), \({\rm Crit}_{a\circ u}\) is the set of
  critical points of \(a\circ u: S\to \mathbb{R}\), \(\gamma=u^*g\),
  \(\nabla\) is covariant differentiation with respect to the Levi-Civita
  connection associated to the metric \(\gamma\),  \(B_u\) is the second
  fundamental form associated to \(u\) as recalled in Definition
  \ref{DEF_second_fundamental_form} of Section \ref{SEC_riemannian}, and
  \begin{equation*}                                                       
    C_B:=\sup \{ \|B_u(\zeta)\|_{\gamma}: {\rm dist}_\gamma(\zeta,
    \mathcal{Z})\geq {\textstyle \frac{1}{2}}\delta \}.
    \end{equation*}
\end{definition}
%

\begin{remark}[feature of being $(\delta,\epsilon)$-tame]
  \label{REM_feature_of_de_perturbations}
  \hfill\\
A key feature of an \((\delta, \epsilon)\)-tame perturbed
  pseudoholomorphic map is that the \(f\) and \(\epsilon\) always satisfy
  the hypotheses of Lemma \ref{LEM_gamma_tilde_estimates}.
\end{remark}
%

\begin{lemma}[existence of tame perturbations]
  \label{LEM_tame_perturbations}
  \hfill\\
Let \((M, \eta)\) be a framed Hamiltonian manifold, let \((J, g)\) be
  an \(\eta\)-adapted almost Hermitian structure on the symplectization
  \(\mathbb{R}\times M\), let \((u, S, j)\) be a generally immersed
  pseudoholomorphic map, and let \(\delta>0\) satisfy
  \begin{equation*}                                                       
    \delta < {\textstyle \frac{1}{10}}\min\Big({\rm dist}_\gamma({\rm
    Crit}_{a\circ u}, \partial S) , \min_{\substack{\zeta_0, \zeta_1\in
    \mathcal{Z}\\\zeta_0\neq \zeta_1}}{\rm dist}_\gamma(\zeta_0,
    \zeta_1), \;  {\rm dist}_\gamma(\mathcal{Z}, \partial S)\Big).
    \end{equation*}
Then for each \(\epsilon>0\) satisfying \(\epsilon<\min(2^{-24},
  \frac{1}{1+C_B})\), where
  \begin{equation*}                                                       
    C_B:=\sup \big\{ \|B_u(\zeta)\|_{\gamma}: {\rm dist}_\gamma(\zeta,
    \mathcal{Z})\geq {\textstyle \frac{1}{2}}\delta \big\},
    \end{equation*}
  and \(B_u\) is the second fundamental form of \(u\), there
  exists a smooth map \(f:S\to \mathbb{R}\) for which \((\tilde{u},
  \tilde{\jmath}, f, u, S, j)\) is an \((\delta, \epsilon)\)-tame
  perturbed pseudoholomorphic map in the sense of Definition
  \ref{DEF_epsilon_tame}.
\end{lemma}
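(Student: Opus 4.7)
The plan is to construct $f$ as a small rescaling of a cutoff function times a smooth Morse function on $S$, and then apply a generic perturbation to enforce the Morse condition in the transition zone. Conditions (d1) and (d2) hold by the hypotheses on $\delta$ and $\epsilon$, so only the support requirement from (p\ref{EN_p3}) of Definition \ref{DEF_perturbed_J_map}, (d3), and (d4) must be produced.

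First I would fix two nested compact regions. Since $u$ is generally immersed, $\mathcal{Z}$ is closed and has no interior accumulation points in $S$; the hypothesis on $\delta$ then implies the open $10\delta$-balls about distinct points of $\mathcal{Z}$ are pairwise disjoint and disjoint from $\partial S$, with the critical set of $a\circ u$ also at $\gamma$-distance $>10\delta$ from $\partial S$. Define
\[
K := \{\zeta\in S : {\rm dist}_\gamma(\zeta,\partial S\cup\mathcal{Z})\geq \delta\},\qquad K' := \{\zeta\in S : {\rm dist}_\gamma(\zeta,\partial S\cup\mathcal{Z})\geq \delta/2\},
\]
intersected with a fixed relatively compact exhaustion of $S$ if $S$ is non-compact (this is harmless since (d3) and (d4) are local). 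Pick a smooth cutoff $\chi:S\to[0,1]$ with $\chi\equiv 1$ on $K$ and ${\rm supp}(\chi)\subset K'$; this $\chi$ has $C^2$-norm bounded in terms of $\delta$ and the local $\gamma$-geometry.

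Next, fix any smooth Morse function $h_0:S\to\mathbb{R}$ (for example, the restriction of a generic height function under a smooth embedding $S\hookrightarrow\mathbb{R}^N$), and take the candidate $f_c := c\,\chi\, h_0$ with
\[
c := \frac{\epsilon}{2^{12}(1+C_B)\big(1+\|\chi h_0\|_{C^2_\gamma}\big)}.
\]
Then ${\rm supp}(f_c)\subset K'\subset S\setminus(\partial S\cup\mathcal{Z})$ is compact, securing (p\ref{EN_p3}), and the choice of $c$ gives (d4). On ${\rm int}(K)$ we have $f_c\equiv c\,h_0$, which is Morse in the usual sense.

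The main obstacle is the Morse condition (d3) on the transition collar $K'\setminus K$, where $0<\chi<1$ and critical points of $\chi h_0$ can a priori be degenerate. I would remedy this with a standard transversality argument: consider the perturbed family $f := f_c + \psi$, where $\psi$ ranges over smooth functions compactly supported in ${\rm int}(K')$ with small $C^2$-norm. By the Sard--Smale theorem applied to the $1$-jet evaluation map $(\zeta,\psi)\mapsto j^1 f(\zeta)$, the set of such $\psi$ for which $f$ has only non-degenerate critical points on the open set $S\setminus\{{\rm dist}_\gamma(\cdot,\mathcal{Z})<\delta\}$ is residual, hence dense, in $C^\infty$. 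Choosing $\psi$ from this set with $C^2$-norm small enough to preserve (d4) yields the desired $f$.
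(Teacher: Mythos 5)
Your proposal targets the wrong Morse condition, and this is a genuine gap rather than a cosmetic one. The text of (d\ref{EN_d3}) reads as though $f$ restricted to $S\setminus\{{\rm dist}_\gamma(\cdot,\mathcal Z)<\delta\}$ should be Morse, but this is a typographical slip for $a\circ\tilde u$: Remark \ref{REM_morse_failure} reads (d\ref{EN_d3}) exactly this way, and the paper's proof of Lemma \ref{LEM_tame_perturbations} simply sets $h:=a\circ u$ and invokes Lemma \ref{LEM_f_perturbation}, whose conclusion (f\ref{EN_f3}) is that $h+f$ (which equals $a\circ\tilde u$ since $\tilde u=\exp^g_u(f\partial_a)$ and $\partial_a$ is parallel by Corollary \ref{COR_da_is_parallel}) is Morse away from the $\delta$-neighborhood of $\mathcal Z$. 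Note also that the literal reading is unachievable for any admissible $f$: property (p\ref{EN_p3}) forces ${\rm supp}(f)$ to be compactly contained in $S\setminus(\partial S\cup\mathcal Z)$, so $f\equiv 0$ on a nonempty open collar of $\partial S$, every point of which is a degenerate critical point of $f$; since $\delta<\frac{1}{10}{\rm dist}_\gamma(\mathcal Z,\partial S)$, that collar lies in the set where (d\ref{EN_d3}) is supposed to hold.

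Once the target is corrected, the cutoff-times-Morse-function step of your construction is idle: no universal Morse function $h_0$ bears any relation to the degeneracies of the specific function $a\circ u$, so arranging that $f_c=c\chi h_0$ is Morse on ${\rm int}(K)$ tells you nothing about $(a\circ u)+f_c$. The step that actually does the work is your final Sard--Smale perturbation, but it must be applied to $(a\circ u)+\psi$ rather than to $f$: take $f=\psi$ with $\psi$ compactly supported in $\{\zeta:{\rm dist}_\gamma(\zeta,\partial S\cup\mathcal Z)>\delta/2\}$ and chosen generically so that $(a\circ u)+\psi$ has only non-degenerate critical points on $S\setminus\{{\rm dist}_\gamma(\cdot,\mathcal Z)<\delta\}$. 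The hypothesis $\delta<\frac{1}{10}{\rm dist}_\gamma({\rm Crit}_{a\circ u},\partial S)$ is precisely what guarantees the relevant critical points of $a\circ u$ lie strictly inside the allowed support of $\psi$, so the perturbation can reach them. This is what Lemma \ref{LEM_f_perturbation} carries out via a finite-dimensional parametric transversality argument (a family of compactly supported one-forms $d(x^i_z\beta_z)$ aimed at the kernel directions of $\nabla dh$, followed by Sard's theorem for the projection $\mathcal B\to\mathbb R^{\tilde m}$ and Lemma \ref{LEM_A36}); your transversality instinct is the right one, but applied to the wrong function.
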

%
\begin{proof}
Let \(\epsilon'=\frac{\epsilon}{2^{11}(1 +C_B)}\), let \(h=a\circ u\),
  and apply Lemma \ref{LEM_f_perturbation} from Section 
  \ref{SEC_tame_perturbations}.
\end{proof}

In order to proceed with later proofs, we would like to establish
  that for any given feral curve, outside some large compact set, the curve
  is usually immersed, and the tangent planes are usually close to being
  parallel to \({\rm span}(\partial_a, X_\eta)\).
Our first pass at making this precise is Lemma
  \ref{LEM_Q_has_small_measure} below.
Here the idea is that a tangent plane at a point \(\zeta\) is close to
  being tangent to \({\rm span}(\partial_a, X_\eta)\) if and only if
  \(\|u^*\lambda_\zeta\|_{u^*g}\) is nearly \(1\).
Thus we are interested in the measure of the set of symplectization level
  sets on which there are not many points with \(\|u^*\lambda\|_{u^*g}
  <\theta \), for some specified value \(\theta\in (0,1)\).
Moreover, by ``not too many'' we mean that the Hausdorff \(1\)-measure

  \begin{align*}                                                            
    \mu_{u^*g}^1\big(\{\zeta \in (a\circ u)^{-1}(t): \|(u^* \lambda)_{\zeta}\|_{u^*
    g} < \theta \} \big)
    \end{align*}
  should be smaller than some specified number \(\delta>0\).
Finally, in general the measure of such level sets might of course be
  quite large, however Lemma \ref{LEM_Q_has_small_measure} below essentially
  states that it cannot be too large provided that the \(\omega\)-energy is
  rather small; or more precisely, that for fixed \(\delta\) and \(\theta\),
  the measure of such level sets is bounded in terms of the
  \(\omega\)-energy.
Thus for a feral curve, which has finite \(\omega\)-energy, it should
  follow that outside a large compact set, the curve is usually immersed
  with tangent planes usually close to being parallel to \({\rm
  span}(\partial_a, X_\eta)\).
This is now made precise with Lemma \ref{LEM_Q_has_small_measure} below.

\begin{lemma}[tangent planes usually near vertical]
  \label{LEM_Q_has_small_measure}
  \hfill\\
Let \((M, \eta)\) be a framed Hamiltonian manifold, and \((J, g)\) an
  \(\eta\)-adapted almost complex structure on \(\mathbb{R}\times M\).
Suppose further that \((u, S, j)\) is a compact pseudoholomorphic curve,
  possibly with boundary, with image in \(\mathbb{R}\times M\), which
  satisfies the following conditions.
\begin{enumerate}                                                         
  \item 
    \(\int_S u^*\omega \leq E_0<\infty\) 
  \item 
    \(\{\zeta\in S: d(a\circ u)(\zeta) = 0\}\cap \partial S = \emptyset\)
  \item 
    \(u(\partial S)\subset \{a_0, a_1\}\) and \(a_1 = \sup\{a\circ u
  (S)\}\) and \(a_0 = \inf\{a\circ u (S)\}\).
  \end{enumerate}   
With \(\mathcal{I}:=[a_0, a_1]\), \(\mathcal{R}_u\) defined to be
  the regular values of \(a\circ u\), and for each \(\theta\in (0, 1)\)
  and each \(\delta>0\), we define the following set
  \begin{equation*}                                                       
    \mathcal{Q}_{u, \theta, \delta}:=\Big\{t\in
    \mathcal{R}_u:\mu_{u^*g}^1\big(\{\zeta \in (a\circ u)^{-1}(t):
    \|(u^* \lambda)_{\zeta}\|_{u^* g} < \theta \} \big) > \delta \Big\}
    \end{equation*}
Then
  \begin{equation*}
    \mu(\mathcal{Q}_{u, \theta, \delta}) \leq
    \frac{E_0}{\delta(1-\theta^2)}< \infty.
    \end{equation*}
Here, \(\mu\) is the Lebesgue measure associated to the coordinate \(a\)
  on \(\mathbb{R}\).
\end{lemma}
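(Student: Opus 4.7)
The plan is to apply the co-area formula (Proposition \ref{PROP_coarea_body}) with $\beta = a\circ u$ and a carefully chosen integrand that simultaneously controls the size of $A_t := \{\zeta\in(a\circ u)^{-1}(t):\|(u^*\lambda)_\zeta\|_{u^*g}<\theta\}$ from above and is bounded above by $u^*\omega$. The key observation is a pointwise identity on the open set $S^\circ\subset S$ where $u$ is immersed and $a\circ u$ has no critical points: namely
\begin{equation*}
u^*\omega = \bigl(1-\|u^*\lambda\|_{u^*g}^2\bigr)\, d\mu_{u^*g}^2
\qquad\text{and}\qquad
\|\nabla(a\circ u)\|_{u^*g}=\|u^*\lambda\|_{u^*g} \leq 1.
\end{equation*}

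\textbf{Pointwise identity.} At $\zeta\in S^\circ$, fix a $u^*g$-orthonormal, positively oriented frame $(\tau,j\tau)$ with $\tau$ tangent to the level set $(a\circ u)^{-1}(t)$ for $t=a\circ u(\zeta)$. Decomposing $du(\tau)=c_1\partial_a+c_2 X_\eta+Y_\xi$ with $Y_\xi\in\xi=\ker\lambda\cap\ker da$, tangency to the level set forces $c_1=0$, while $(J1)$--$(J4)$ and Lemma \ref{LEM_J_diff} yield $du(j\tau)=J\,du(\tau)=-c_2\partial_a+JY_\xi$. A direct computation shows $(u^*\lambda)(\tau)=c_2$, $(u^*\lambda)(j\tau)=0$, and $\|Y_\xi\|_g^2=1-c_2^2$; hence $\|(u^*\lambda)_\zeta\|_{u^*g}=|c_2|\le 1$. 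Using $\omega(X_\eta,\cdot)\equiv 0$, $\iota_{\partial_a}\omega\equiv 0$, and property $(J3)$ one finds $u^*\omega(\tau,j\tau)=\omega(Y_\xi,JY_\xi)=g(Y_\xi,Y_\xi)=1-c_2^2$. The identity $\|\nabla(a\circ u)\|_{u^*g}=|c_2|=\|u^*\lambda\|_{u^*g}$ follows similarly since $d(a\circ u)(j\tau)=-c_2$.

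\textbf{Applying co-area.} By Sard's theorem, the critical values of $a\circ u$ are of measure zero in $[a_0,a_1]$, so $\mathcal{R}_u$ has full measure; moreover for $t\in\mathcal{R}_u$, $(a\circ u)^{-1}(t)\subset S^\circ$. The set of non-immersion points $\mathcal{Z}$ is discrete on each non-constant component and contributes zero to the 2-dimensional Hausdorff measure. Apply Proposition \ref{PROP_coarea_body} on $S^\circ$ with $\beta=a\circ u$ and the nonnegative integrand $f=1-\|u^*\lambda\|_{u^*g}^2$:
\begin{equation*}
\int_{S^\circ}\bigl(1-\|u^*\lambda\|_{u^*g}^2\bigr)\|u^*\lambda\|_{u^*g}\,d\mu_{u^*g}^2
=\int_{\mathcal{R}_u}\!\!\int_{(a\circ u)^{-1}(t)}\bigl(1-\|u^*\lambda\|_{u^*g}^2\bigr)\,d\mu_{u^*g}^1\,dt.
\end{equation*}
For $t\in\mathcal{Q}_{u,\theta,\delta}$ the inner integral is bounded below by restricting to $A_t$, yielding
\begin{equation*}
\int_{(a\circ u)^{-1}(t)}\bigl(1-\|u^*\lambda\|_{u^*g}^2\bigr)\,d\mu_{u^*g}^1
\;\ge\;(1-\theta^2)\,\mu_{u^*g}^1(A_t)\;>\;(1-\theta^2)\,\delta.
\end{equation*}
On the other hand, using $\|u^*\lambda\|_{u^*g}\le 1$ and the pointwise identity for $u^*\omega$,
\begin{equation*}
\int_{S^\circ}\bigl(1-\|u^*\lambda\|_{u^*g}^2\bigr)\|u^*\lambda\|_{u^*g}\,d\mu_{u^*g}^2
\le\int_{S^\circ}\bigl(1-\|u^*\lambda\|_{u^*g}^2\bigr)\,d\mu_{u^*g}^2
=\int_{S^\circ}u^*\omega\le E_0.
\end{equation*}
Combining gives $(1-\theta^2)\delta\,\mu(\mathcal{Q}_{u,\theta,\delta})\le E_0$, which is the claimed inequality.

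\textbf{Expected obstacles.} The analytic content is essentially the co-area formula together with the pointwise identity $u^*\omega=(1-\|u^*\lambda\|_{u^*g}^2)\,d\mu_{u^*g}^2$; once this is established, the rest is a one-line estimate. The only bookkeeping to watch is ensuring co-area applies on the open set $S^\circ$ (where both $u$ is an immersion and $a\circ u$ is non-critical), and that the critical-point sets of $u$ and of $a\circ u$ contribute no measure on the relevant side of the formula — both follow from Sard's theorem and the fact that $\mathcal{Z}$ is discrete in non-constant components. I do not anticipate any serious technical obstacle beyond correctly unpacking the definition of $\|(u^*\lambda)_\zeta\|_{u^*g}$ as the $u^*g$-dual norm of a 1-form.
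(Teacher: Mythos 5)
Your proof is correct and takes essentially the same approach as the paper: both arguments hinge on the pointwise algebraic identity $\omega(Tu\cdot\nu,Tu\cdot\tau)=1-\|u^*\lambda\|_{u^*g}^2$ together with $\|\nabla(a\circ u)\|_{u^*g}=\|u^*\lambda\|_{u^*g}\le 1$, followed by one application of the co-area formula with respect to $a\circ u$. You merely package the pointwise identities up front, whereas the paper folds them into a single chain of inequalities ending in a co-area application with the integrand $\omega(Tu\cdot\nu,Tu\cdot\tau)/\|\nabla(a\circ u)\|_{u^*g}$; the two integrands are pointwise equal, so the arguments are interchangeable.
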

%
\begin{proof}
For convenience, for each \(t\in \mathcal{R}_u\) and \(\theta\in (0,
  1)\) we will define
  \begin{equation*}                                                       
    \Gamma_t := \{\zeta\in S: a\circ u(\zeta) =t\} \qquad\text{and}\qquad
    S^\theta:=\{\zeta\in S: \|u^*\lambda_\zeta \| < \theta\} .
    \end{equation*}
Consequently, we may write
  \begin{equation*}                                                       
    \mathcal{Q}_{u, \theta, \delta}:=\big\{t\in
    \mathcal{R}_u:\mu_{u^*g}^1(\Gamma_t\cap S^\theta ) > \delta \big\}.
    \end{equation*}
Next we define the tangent vector fields \(\nu \) and \(\tau\) by
  \begin{equation*}                                                       
    \nu:=\frac{\nabla (a\circ u)}{\|\nabla (a\circ
    u)\|_\gamma}\qquad\text{and}\qquad \tau:= j\nu
    \end{equation*}
  where \(\gamma=u^*g\), and \(g=da\otimes da + \lambda\otimes\lambda +
  \omega(\cdot, J\cdot)\).
It is straightforward to verify the following properties,
  \begin{align*}                                                          
    0&=u^*\lambda(\nu) = u^*da(\tau)\\
    0&<u^*da(\nu)=u^*\lambda(\tau)\\
    1&=\|\tau\|_{\gamma}^2 = \|\nu\|_{\gamma}^2,
    \end{align*}
  from which one can deduce that
  \begin{equation*}                                                       
    0<da(Tu\cdot \nu) =\|\nabla (a\circ u)\|_{u^*g} \leq 1,\qquad
    \|u^*\lambda\|_{u^*g}=\lambda(Tu\cdot \tau)
    \end{equation*}
  and
  \begin{equation*}                                                       
    1 = (\lambda(Tu\cdot \tau))^2 + \omega(Tu\cdot \nu, Tu\cdot \tau).
    \end{equation*}
From these we may estimate the measure of \(\mathcal{Q}_{u, \theta,
   \delta}\) as follows.
\begin{align*}                                                            
  \mu(\mathcal{Q}_{u, \theta, \delta}) &= \int_{\mathcal{Q}_{u, \theta,
    \delta}} 1\; dt\\
  &=
    \int_{\{t\in \mathcal{R}_u:\mu_{u^*g}^1(\Gamma_t\cap S^\theta ) >
    \delta\}} 1\; dt\\
  &=
    \delta^{-1}\int_{\{t\in \mathcal{R}_u:\mu_{u^*g}^1(\Gamma_t\cap
    S^\theta ) > \delta\}} \delta\; dt\\
  &\leq 
    \delta^{-1}\int_{\{t\in \mathcal{R}_u: \mu_{u^*g}^1(\Gamma_t\cap
    S^\theta) >\delta \}} \big(\mu_{u^*g}^1(\Gamma_t\cap S^\theta)\big)\;
    dt\\
  &\leq 
    \delta^{-1}\int_{\mathcal{I}}\big(\mu_{u^*g}^1(\Gamma_t\cap
    S^\theta)\big)\; dt\\
  &=
    \delta^{-1}\int_{\mathcal{I}} \int_{ \Gamma_t \cap S^\theta} 1\;
    d\mu_{u^*g}^1 \;dt\\
  &=
    (\delta(1-\theta^2))^{-1}\int_{\mathcal{I}} \int_{ \Gamma_t \cap
    S^\theta} (1-\theta^2)\; d\mu_{u^*g}^1 \;dt\\
  &\leq 
    (\delta(1-\theta^2))^{-1}\int_{\mathcal{I}} \int_{ \Gamma_t \cap
    S^\theta} \frac{1-\theta^2}{da(Tu\cdot \nu)}\; d\mu_{u^*g}^1 \;dt\\
  &\leq 
    (\delta(1-\theta^2))^{-1}\int_{\mathcal{I}} \int_{ \Gamma_t \cap
    S^\theta} \frac{1-\|u^*\lambda\|_{u^*g}^2}{da(Tu\cdot \nu)}\;
    d\mu_{u^*g}^1 \;dt\\
  &= 
    (\delta(1-\theta^2))^{-1}\int_{\mathcal{I}} \int_{ \Gamma_t \cap
    S^\theta} \frac{1-(\lambda(Tu\cdot \tau))^2}{da(Tu\cdot \nu)}\;
    d\mu_{u^*g}^1 \;dt\\
  &= 
    (\delta(1-\theta^2))^{-1}\int_{\mathcal{I}} \int_{ \Gamma_t \cap
    S^\theta} \frac{\omega(Tu\cdot \nu, Tu\cdot \tau)}{da(Tu\cdot \nu)}\;
    d\mu_{u^*g}^1 \;dt\\
  &= 
    (\delta(1-\theta^2))^{-1}\int_{\mathcal{I}} \int_{ \Gamma_t \cap
    S^\theta} \frac{\omega(Tu\cdot \nu, Tu\cdot \tau)}{\|\nabla(a\circ
    u)\|_{u^*g}}\; d\mu_{u^*g}^1 \;dt\\
  &\leq 
    (\delta(1-\theta^2))^{-1}\int_{\mathcal{I}} \int_{ \Gamma_t
    } \frac{\omega(Tu\cdot \nu, Tu\cdot \tau)}{\|\nabla(a\circ
    u)\|_{u^*g}}\; d\mu_{u^*g}^1 \;dt\\
  &= 
    (\delta(1-\theta^2))^{-1}\int_S \omega(Tu\cdot \tau, Tu\cdot \nu)
    d\mu_{u^*g}^2 \\
  &= 
    (\delta(1-\theta^2))^{-1}\int_S u^*\omega \\
  &\leq 
    \frac{E_0}{\delta(1-\theta^2)},
  \end{align*}
  where to achieve the second to last equality we have made use of
  Proposition \ref{PROP_coarea_body} with \(f=\omega(Tu\cdot \nu,
  Tu\cdot \tau)\).
\end{proof}

In order to state Proposition \ref{PROP_local_local_area_bound_2} below
   concisely, it will be useful to have the following definition at our
   disposal.

\begin{definition}[connected component $S_{\rho}(\zeta)$]
  \label{DEF_local_local_component}
  \hfill\\
Let \((W, g)\) be a Riemannian manifold with bounded
  geometry,\footnote{Recall that a Riemannian manifold is said to have
  bounded geometry provided the sectional curvature is uniformly bounded
  from above and below and the injectivity radius of the manifold is
  positive.}, let \(S\) be manifold, and let \(u:S\to W\) be a smooth map.
For each \(\rho>0\) and each \(\zeta\in S\), we
  define \(S_\rho(\zeta)\) to be the connected component of
  \(u^{-1}\big(\mathcal{B}_\rho(u(\zeta))\big)\) containing \(\zeta\);
  here for each \(p\in W\), the set  \(\mathcal{B}_\rho(p)\subset W\)
  is the metric ball of radius \(\rho\) centered at \(p\).
\end{definition}
%

Before proceeding, we need an additional geometric constant, namely
  \(r_0\), the existence of which is guaranteed by the following lemma.

\begin{lemma}[small radius $r_0$]
  \label{LEM_small_radius}
  \hfill\\
Let \((M, g)\) be a smooth Riemannian manifold of bounded geometry, and
  let \(\lambda\in \Omega^1(M)\) be a smooth one-form with the property
  that for each point \(p\in M\) we have
  \begin{equation*}                                                       
    \sup_{0\neq \tau\in T_p M} \frac{\lambda(\tau)}{\|\tau\|_g}=1
    \end{equation*}
Then there exists a positive real number \(r_0=r_0(M, g, \lambda)\leq
  \frac{1}{100}\) with the following significance.
For each smooth unit speed immersion \(\tilde{q}:[0, T]\to M\) which
  satisfies the following conditions
  \begin{enumerate}                                                       
    \item 
      \(\lambda(\tilde{q}'(t))>0\)
    \item 
      \(r_0\leq \int_{\tilde{q}} \lambda \leq 10 r_0\)
    \item  
      \(\mu_{\tilde{q}^*g}^1(\{t\in [0, T]:
      \lambda(\tilde{q}'(t))<\frac{1}{2}\}) \leq  r_0\)
    \end{enumerate}
  also satisfies
  \begin{equation*}                                                       
    {\rm dist}_g\big(\tilde{q}(0), \tilde{q}(T)\big) \geq
    {\textstyle\frac{1}{2}}r_0.
    \end{equation*}
\end{lemma}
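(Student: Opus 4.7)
The plan is to argue by contradiction: suppose $r_0$ is small but $\operatorname{dist}_g\big(\tilde q(0),\tilde q(T)\big) < \tfrac12 r_0$, and deduce that then $\int_{\tilde q}\lambda < r_0$, violating hypothesis (ii). The key mechanism is Stokes' theorem applied to a short loop that closes up $\tilde q$, where smallness of the ambient $d\lambda$-flux through a capping disk will dominate.

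First I would control the parameter length $T$. Since $\tilde q$ has unit speed and $\|\lambda\|_g\equiv 1$, we have $0<\lambda(\tilde q'(t))\leq 1$. Splitting the interval $[0,T]$ into the region where $\lambda(\tilde q'(t))\geq \tfrac12$ and its complement, hypotheses (ii) and (iii) yield
\begin{align*}
T &= \mu^1\big(\{\lambda(\tilde q')<\tfrac12\}\big) + \mu^1\big(\{\lambda(\tilde q')\geq \tfrac12\}\big)\\
  &\leq r_0 + 2\int_{\tilde q}\lambda \;\leq\; 21\,r_0.
\end{align*}

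Next, assuming $r_0$ is chosen smaller than the injectivity radius of $(M,g)$, I would connect $\tilde q(T)$ back to $\tilde q(0)$ by the unique minimizing geodesic $\sigma$ of $g$-length less than $\tfrac12 r_0$, and consider the piecewise smooth closed loop $\gamma := \tilde q \ast \sigma$, which has total length at most $22\,r_0$. For $r_0$ sufficiently small relative to the injectivity radius and a lower bound on the curvature, $\gamma$ is contained in a single geodesically convex ball, hence bounds a smooth disk $D$ whose area admits an isoperimetric-type bound
\begin{equation*}
\operatorname{Area}_g(D) \;\leq\; C_1\,\ell_g(\gamma)^2 \;\leq\; C_1(22\,r_0)^2,
\end{equation*}
for a constant $C_1$ depending only on the bounded geometry data of $(M,g)$. (One can construct $D$ explicitly as the geodesic cone from $\sigma(0)$, and estimate area via Rauch/Jacobi-field comparison in normal coordinates.)

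Applying Stokes' theorem and using $\|\lambda\|_g\equiv 1$ together with a uniform bound $\|d\lambda\|_g \leq C_2$ (finite by bounded geometry plus smoothness of $\lambda$), I then get
\begin{equation*}
\int_{\tilde q}\lambda \;=\; \int_D d\lambda \,-\, \int_\sigma\lambda \;\leq\; C_2\cdot C_1(22\,r_0)^2 \,+\, \tfrac12 r_0.
\end{equation*}
Choosing $r_0 = r_0(M,g,\lambda)\leq \tfrac{1}{100}$ small enough that $484\,C_1 C_2\,r_0 < \tfrac12$ makes the right-hand side strictly less than $r_0$, contradicting hypothesis (ii). This choice depends only on the injectivity radius, the curvature bounds, and $\|d\lambda\|_g$, so only on $(M,g,\lambda)$, as required.

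The principal obstacle is the capping step: producing a disk $D$ bounding $\gamma$ with the quadratic area bound in $\ell_g(\gamma)$. In the bounded-geometry setting this is standard via convex neighborhoods, but some care is needed because $\gamma$ is only piecewise smooth (with corners at $\tilde q(0)$ and $\tilde q(T)$) and because $\tilde q$ itself may be highly non-geodesic---one must verify that the loop remains inside a convex ball, for which the bound $\ell_g(\gamma)\leq 22\,r_0$ combined with $r_0 \ll \operatorname{inj}(M,g)$ suffices. Everything else is a book-keeping of constants.
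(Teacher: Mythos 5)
Your proof is correct but takes a genuinely different route from the paper's. The paper argues directly: it fixes geodesic normal coordinates $(x^1,\ldots,x^m)$ centered at $p=\tilde q(0)$ with the frame chosen so that $\lambda_p = dx^1_p$, then chooses $r_0$ small enough that $\|dx^1_y-\lambda_y\|_g\leq \tfrac{1}{100}$ for all $y\in\mathcal B_{100r_0}(p)$ uniformly in $p$, and computes
\begin{equation*}
x^1(\tilde q(T))=\int_{\tilde q}\lambda + \int_0^T(dx^1-\lambda)(\tilde q'(t))\,dt \;\geq\; r_0 - \tfrac{1}{100}T \;\geq\; r_0-\tfrac{21}{100}r_0 \;\geq\; \tfrac12 r_0,
\end{equation*}
which bounds the distance from below since in normal coordinates $\operatorname{dist}_g(\tilde q(T),p)=|x(\tilde q(T))|\geq x^1(\tilde q(T))$. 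So the paper avoids contradiction, capping disks, Stokes' theorem, and the isoperimetric inequality altogether; you trade a longer list of auxiliary facts (convex ball, disk filling with quadratic area bound, bound on $\|d\lambda\|_g$) for a somewhat more conceptual framing of why a nearly-closed short curve cannot accumulate much $\lambda$-length. One small point to be aware of: your step ``$\|d\lambda\|_g\leq C_2$ finite by bounded geometry plus smoothness of $\lambda$'' is not automatic for a non-compact manifold---bounded geometry of $(M,g)$ and pointwise $\|\lambda\|_g=1$ do not control $d\lambda$. The paper's proof has a parallel implicit requirement (the uniform-in-$p$ smallness of $\|dx^1-\lambda\|_g$ on $\mathcal B_{100r_0}(p)$ is also a derivative-level condition on $\lambda$); in the only application of this lemma $M$ is compact, so both versions go through, but it would be worth stating the hypothesis explicitly.
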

%
\begin{proof}
We begin by letting \(\rho= \min(1, {\rm inj}(M))\) where \({\rm
  inj}(M)\) is  the injectivity radius of \(M\) with respect to \(g\).
For each \(p\in M\) we let \(\mathcal{B}_\rho(p)\) denote the metric
  ball of radius \(\rho\) centered at \(p\).
Recall that for each point \(p\in M\) and each orthonormal
  frame for \(T_pM\) one may define geodesic normal coordinates on
  \(\mathcal{B}_\rho(p)\) which are centered at \(p\).
We will denote such coordinates as \(x=(x^1, \ldots, x^m)\),
  in which case we can express the metric as \(g=\sum_{i,
  j=1}^m g_{ij}(x)dx^i\otimes dx^j\), and our one-form as
  \(\lambda=\sum_{i=1}^m\lambda_i(x)dx^i\).
Note that for each \(p\in M\) there exists an orthonormal frame of \(T_p
  M\) such that the associated geodesic normal coordinates have the property
  that \(\lambda_{i}(p) = \delta_{1,i}dx^1\), where \(\delta_{1,i}\)
  is the Kronecker delta.
We then fix \(r_0\in (0, \frac{\rho}{100})\) sufficiently small so that
  for any \(p\in M\) and any such orthonormal frame, we have
  \begin{equation}\label{EQ_distance_est}                                 
    \sup_{y\in \mathcal{B}_{100r_0}(p)} \|dx_y^1 - \lambda_y\|_g\leq
    \frac{1}{100}.
    \end{equation}
Next we consider an immersion \(\tilde{q}:[0, T]\to M\) which satisfies
  the above hypotheses of the lemma.
We now need to estimate the length of the path \(\tilde{q}\) in terms of \(r_0\).
To that end, we have:
  \begin{align*}                                                          
  {\rm length}(\tilde{q})&=T\\
    &=
      \int_0^T \|\tilde{q}'(t)\|_g \; dt\\
    &=
      \int_{\{t\in [0, T]: \lambda(\tilde{q}'(t))<
      {\frac{1}{2}}\}}\|\tilde{q}'(t)\|_g\; dt +
      \int_{\{t\in [0, T]: 1\geq \lambda(\tilde{q}'(t))\geq
      {\frac{1}{2}}\}}\|\tilde{q}'(t)\|_g\; dt\\
    &=\mu_{\tilde{q}^*g}^1\big(\{t\in [0, T]: \lambda(\tilde{q}'(t))<
      {\textstyle \frac{1}{2}}\}\big) + \int_{\{t\in [0, T]: 1\geq
      \lambda(\tilde{q}'(t))\geq \frac{1}{2}\}}1 \; dt\\
    &\leq 
      r_0 +2 \int_{\{t\in [0, T]: 1\geq
      \lambda(\tilde{q}'(t))\geq\frac{1}{2}\}}\lambda(\tilde{q}'(t))
      \; dt\\
    &\leq 
      r_0 +2\int_0^T\lambda(\tilde{q}'(t)) \; dt\\
    &\leq   
      21r_0.
    \end{align*}
As a consequence of this estimate, we see that the image of \(\tilde{q}\)
  is contained in the metric ball of radius \(100r_0\) centered at
  \(\tilde{q}(0)\), and hence inequality (\ref{EQ_distance_est}) holds
  along the image of \(\tilde{q}\).
As such, we take geodesic normal coordinates centered at
  \(p:=\tilde{q}(0)\) as above so that \(\lambda_{i}(p) =
  \delta_{1,i}dx^1\), and we estimate as follows.
  \begin{align*}                                                          
    x^1\big(\tilde{q}(T)\big) 
    &= 
      \int_0^T \frac{d}{dt}\big(x^1(\tilde{q}(t)\big)\; dt\\
    &= 
      \int_0^T dx^1(\tilde{q}'(t))\; dt\\
    &= 
      \int_0^T \lambda(\tilde{q}'(t))\; dt +  \int_0^T \big(dx^1
      -\lambda\big)(\tilde{q}'(t))\; dt\\
    &\geq 
      \int_{\tilde{q}} \lambda - {\textstyle \frac{1}{100}}T\\
    &\geq 
      r_0 - {\textstyle\frac{21}{100}} r_0\\
    &\geq 
      {\textstyle\frac{1}{2}}r_0.
    \end{align*}
Since 
  \begin{align*}                                                          
    {\rm dist}_{g}\big(\tilde{q}(T), \tilde{q}(0)\big) 
    &= 
      \Big(\sum_{i=1}^m\big(x^i(\tilde{q}(T))\big)^2\Big)^{\frac{1}{2}}\\
    &\geq 
      x^1(\tilde{q}(T))\\
    &\geq 
      {\textstyle\frac{1}{2}} r_0,
    \end{align*}
  the desired result is immediate. 
\end{proof}

\subsubsection{The core proof}
  \label{SEC_core_proof_local_local}
Here we provide the complete proof of Theorem
  \ref{THM_local_local_area_bound}, which essentially states that for each
  generally immersed feral pseudoholomorphic curve, there exists a large
  compact set in the symplectization with the property that outside this
  compact set, the curve has uniformly bounded connected-local area.
That is, in a small\footnote{Here by ``small'' we mean small relative to
  the geometry of the ambient manifold and not small relative to the curve
  itself.} ball the area of each connected component of the portion of the
  curve contained in the ball has universally bounded area.
The first and most technical step towards proving Theorem
  \ref{THM_local_local_area_bound} is to prove Proposition
  \ref{PROP_local_local_area_bound_2}, which is a special case.
We accomplish this at present.

\begin{proposition}[connected-local area bound -- special
  case]
  \label{PROP_local_local_area_bound_2}
  \hfill\\
Let \((M, \eta=(\lambda, \omega))\) be a framed Hamiltonian manifold, and
  let \((J, g)\) be an \(\eta\)-adapted almost Hermitian structure on the
  symplectization \(\mathbb{R}\times M\).
Let \(r_0\leq \frac{1}{100}\) be the positive constant associated to
  \((M, g, \lambda)\) which is guaranteed by Lemma \ref{LEM_small_radius}.
Let \(r_1= 2^{-24} \min\big( C_{\mathbf{h}}^{-1} , r_0\big)
  \), where \(C_{\mathbf{h}}\) is the ambient geometry constant established
  in Definition \ref{DEF_ambient_geometry_constant}.
For each generally immersed pseudoholomorphic map \((u, S, j)\) satisfying
  the following conditions 
  \begin{enumerate}[(LL1)]                                                
    \item\label{EN_LL1} 
      \(S\) is homeomorphic to an annulus
    \item\label{EN_LL2} \(a\circ u(\partial S)=\{a_0, a_1\}\) with
      \(2^{-14} \min\big( C_{\mathbf{h}}^{-1} , r_0\big)\leq     a_1-a_0\)
    \item\label{EN_LL3} 
      \(\big\{\zeta\in S:a\circ u(\zeta)\in \{a_0, a_1\}\text{ and }d
      (a\circ u)(\zeta)=0\big\}= \emptyset\)
    \item\label{EN_LL4} 
      \(\sup_{\zeta\in S} a\circ u(\zeta)- \inf_{\zeta\in S}a\circ
      u(\zeta)\leq 2^{-11} \min\big(C_{\mathbf{h}}^{-1}, r_0\big)  \)
    \item\label{EN_LL5} 
      \(0< \int_{S} u^*\omega\leq
      r_0\big((a_1-a_0)^{-1}+10C_{\mathbf{h}}\big)^{-1} \)
    \item\label{EN_LL6} 
      \(\int_{(a\circ u)^{-1}(a_0) \cap\partial S}u^*\lambda \geq
      100r_0\)
    \item\label{EN_LL7}  
      \(\mu_{u^*g}^1\big(\{\zeta \in \partial S: a\circ u(\zeta)=a_0 \;
      \text{ and }\; \|(u^* \lambda)_{\zeta}\|_{u^* g} < {\textstyle\frac{1}{2}}\}
      ) \leq r_0 \),     
    \end{enumerate}
  also has the following property:   
For each \(\zeta\in S\) with 
  \begin{equation*}                                                       
    \big|a\circ u (\zeta) - {\textstyle \frac{1}{2}}(a_1+a_0)\big|
    \leq {\textstyle\frac{1}{4}}(a_1-a_0),
    \end{equation*}
  we also have
  \begin{equation}\label{EQ_key_area_estimate}
    {\rm Area}_{u^*g}\big(S_{r_1}(\zeta)\big) \leq 1.
    \end{equation} 
Here, as above, \(\mu_{u^*g}^1\) is the one-dimensional Hausdorff measure
  associated to the metric \(u^*g\).
\end{proposition}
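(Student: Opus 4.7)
\textbf{Proof sketch for Proposition \ref{PROP_local_local_area_bound_2}.} The plan is to cut the annulus $S$ along finitely many gradient flow lines of $a\circ\tilde u$ into rectangular tracts, bound the area of each tract via Theorem \ref{THM_area_bound_estimate}, and show that $S_{r_1}(\zeta)$ meets only a bounded number of these tracts. First, I would pass to a $(\delta,\epsilon)$-tame perturbation $(\tilde u, S, \tilde\jmath)$ using Lemma \ref{LEM_tame_perturbations}, so that $a\circ\tilde u$ is Morse off of a tiny neighborhood of the critical set of $u$ and $\tilde\alpha = -(\tilde u^*da)\circ\tilde\jmath$ is $\mathcal{C}^1$-close to $u^*\lambda$. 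The perturbation is chosen small enough that the quantitative conditions (LL5)--(LL7) survive with harmless numerical changes, and Lemmas \ref{LEM_general_strip_estimate}, \ref{LEM_lambda_shrinkage}, and \ref{LEM_modest_length_flow_lines} all apply.

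Next, I would inductively construct a family of pairwise disjoint gradient trajectories of $\widetilde\nabla(a\circ\tilde u)$, each running from the bottom boundary $\Gamma_{a_0} = (a\circ\tilde u)^{-1}(a_0)\cap\partial S$ to the top $\Gamma_{a_1}$, whose endpoints partition $\Gamma_{a_0}$ into arcs of $\tilde\alpha$-integral in the range $[r_0, 20 r_0]$. The main tool is Lemma \ref{LEM_modest_length_flow_lines}: whenever a sub-strip has bottom $\tilde\alpha$-integral bigger than $20r_0$, hypothesis (LL5) ensures that its total $\omega$-energy is at most the height times the bottom integral, so the lemma provides at least one full bottom-to-top gradient trajectory of length $\leq 2^7(a_1-a_0)$, which splits the sub-strip. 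Iterating (and using that $S$ is an annulus, so each cut merely reduces rank-one topology) yields tracts $T_1,\ldots,T_N$ with $N$ comparable to $\int_{\Gamma_{a_0}} u^*\lambda / r_0$, each a rectangular tract of perturbed pseudoholomorphic map in the sense of Definition \ref{DEF_tract_of_perturbed_J_map}.

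For each tract $T_i$ the height is bounded by $a_1-a_0 \leq 2^{-11}\min(C_\mathbf{h}^{-1},r_0)$ and the bottom $\tilde\alpha$-integral is at most $20 r_0$, so the area growth estimate (\ref{EQ_area_growth_inequality}) of Theorem \ref{THM_area_bound_estimate} yields
\[
  \mathrm{Area}_{u^*g}(T_i) \;\leq\; \bigl(C_\mathbf{h}^{-1}(20 r_0) + E_i\bigr)\bigl(e^{C_\mathbf{h}(a_1-a_0)}-1\bigr) + E_i,
\]
where $E_i := \int_{T_i} u^*\omega$. Since $\sum_i E_i = \int_S u^*\omega$ is tiny by (LL5) and $C_\mathbf{h}(a_1-a_0) \leq 2^{-11}$, each individual $T_i$ has area much smaller than $1$, and the combined estimate summed over any bounded sub-collection of tracts stays well under $1$.

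The heart of the argument, and the main obstacle, is to show that when $\zeta$ is in the middle quarter, $S_{r_1}(\zeta)$ meets only a uniformly bounded number of tracts. Because $r_1 < \tfrac{1}{4}(a_1-a_0)$, the $a$-coordinate on the ball $\mathcal{B}_{r_1}(u(\zeta))$ is too narrow to touch the top or bottom of any tract, so $S_{r_1}(\zeta)$ can leave a tract only by crossing a gradient-line side. To count such possible crossings I would use Lemma \ref{LEM_small_radius}: hypothesis (LL7), together with Lemma \ref{LEM_lambda_shrinkage} applied downstream from $\Gamma_{a_0}$, shows that each bottom arc between consecutive cut points has $\lambda$-integral at least $r_0$ and $\|u^*\lambda\| < \tfrac{1}{2}$ on a set of measure at most $r_0$; Lemma \ref{LEM_small_radius} then forces consecutive cut points on $\Gamma_{a_0}$ to lie at metric distance $\geq r_0/2$ in $\{a_0\}\times M$. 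Since the tract heights are at most $r_0/10$, a bounded-geometry tube argument (using $r_1 \ll r_0$) upgrades this to separation $\geq r_0/4$ between the sides of distinct tracts throughout $\bigcup T_i$, and hence $\mathcal{B}_{r_1}(u(\zeta))$ intersects only a uniformly bounded number of tract boundaries. Summing the area bounds over this bounded family of tracts yields $\mathrm{Area}_{u^*g}(S_{r_1}(\zeta)) \leq 1$, as desired. The delicate point is propagating the ``almost vertical'' condition (LL7) from the bottom boundary into the interior to secure the side-separation estimate; this is where Lemmas \ref{LEM_lambda_shrinkage} and \ref{LEM_general_strip_estimate} do their work.
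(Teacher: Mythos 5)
Your overall strategy — pass to a tame perturbation, partition the annulus along gradient flow lines of $a\circ\tilde u$ into rectangular tracts, bound the area of each tract via the exponential growth estimate, and then use Lemma \ref{LEM_small_radius} to limit the number of tracts that $S_{r_1}(\zeta)$ can meet — is the same strategy the paper employs. But there is a genuine gap at the heart of your inductive cutting step.

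To apply Lemma \ref{LEM_modest_length_flow_lines} to a ``sub-strip'' you need it to actually \emph{be} a rectangular perturbed pseudoholomorphic strip in the sense of Definition \ref{DEF_perturbed_J_strip}: the gradient flow of $a\circ\tilde u$ over the bottom arc must be defined for the full height, terminating at $\partial_0^+ S$, with no critical points in the interior. After the $(\delta,\epsilon)$-tame perturbation this is far from automatic — $a\circ\tilde u$ is Morse away from a neighborhood of the non-immersed set of $u$, so there will generically be saddle points, local maxima (Morse index $2$), and small non-Morse pockets near $\mathcal{Z}$. Gradient trajectories starting on $\partial_0^- S$ can terminate at these rather than at $\partial_0^+ S$, and it is not a priori clear that the bottom arc of your putative sub-strip contains \emph{any} initial condition whose trajectory reaches the top. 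You have therefore assumed the existence of the objects you are trying to construct. The paper resolves this by first bounding the $\tilde\alpha$-measure of the ``bad'' set of initial conditions in $\partial_0^- S$ — those whose trajectories enter the neighborhood $\mathcal{B}$ of $\mathcal{Z}$ (Step 2, using Lemma \ref{LEM_general_strip_estimate} and a counting argument via the projection $\pi$), those terminating at saddles (finitely many), and those limiting to local maxima (Step 3) — showing this bad set has $\tilde\alpha$-measure $\lesssim 10 C_{\mathbf{h}}\int_S u^*\omega$. Only after approximating the complementary ``good'' set $\mathcal{T}$ by finitely many compact intervals $\{\mathcal{T}_k\}$ can one intersect with $\mathcal{T}$ and invoke Lemma \ref{LEM_modest_length_flow_lines} on honest rectangular strips, as in Lemma \ref{LEM_existence_of_modest_gradient_trajectory}. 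This is the bulk of the paper's proof and it is missing from yours.

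A secondary issue: your ``bounded-geometry tube argument'' claiming separation $\geq r_0/4$ between the side walls of \emph{all} distinct tracts throughout $\bigcup T_i$ is too strong a statement and is not what the paper uses. Gradient trajectories can drift, and nothing prevents two side-walls of different tracts (or of the same tract, at a different height) from coming close. The paper's Lemma \ref{LEM_small_cannot_intersect_both_one_boundaries} establishes something weaker and targeted: $\widetilde S_{4r_1}(\zeta)$ cannot intersect both gradient-line side boundaries $\partial_1^\pm\Sigma_k$ of a single strip $\Sigma_k$. This is shown by a concatenated-path argument — go up one side ($\leq 2^7(a_1-a_0)$ by the length bound), across $\mathcal{B}_{4r_1}(\tilde u(\zeta))$, down the other side — giving a path from $\tilde u(z_k^-)$ to $\tilde u(z_{k+1}^-)$ of length $\leq 2^9(a_1-a_0) < \frac{1}{2}r_0$, contradicting the ${\rm dist}\geq \frac{1}{2}r_0$ separation coming from Lemma \ref{LEM_small_radius}. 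The conclusion is that $\widetilde S_{4r_1}(\zeta)$ lies inside the union of three consecutive strips, so the area bound need only be summed over three patches.
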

%

We note that the statement and proof of Proposition
  \ref{PROP_local_local_area_bound_2} are each rather long, so we take a
  moment to clarify the former and outline the latter.
First, the hypotheses require that we are dealing with a compact
  pseudoholomorphic curve, homeomorphic to an annulus, with a ``top''
  boundary at the symplectization level set \(\{a_1\}\times M\), and
  ``bottom'' boundary at the symplectization level set \(\{a_0\}\times
  M\).
We allow that the interior points of the curve may lie either above
  \(\{a_1\}\times M\) or below \(\{a_0\}\times M\), however we demand that
  both \(a_0\) and \(a_1\) be regular values of \(a\circ u\), and if we
  define the ad hoc constant
  \begin{align*}                                                            
    C:= 2^{-11}{\rm min}(C_{\mathbf{h}}^{-1}, r_0)
    \end{align*}
  then we require 
  \begin{align*}                                                            
    \frac{1}{8} C \leq a_1 - a_0 \leq \sup_{\zeta\in S} a\circ u(\zeta)-
    \inf_{\zeta\in S}a\circ u(\zeta)\leq C.
    \end{align*}
Roughly then, both the height difference between the boundaries and the
  height difference between the absolute peak and absolute valley can
  neither be too large nor too small.
We also demand that the \(\omega\)-energy be rather small, the
  \(\lambda\)-integral along the bottom boundary be rather large, and the
  measure of those points in the bottom boundary for which the tangent
  planes are not close to \({\rm span}(\partial_a, X_\eta)\) is rather
  small.
After imposing all of these conditions, Proposition
  \ref{PROP_local_local_area_bound_2} then guarantees that the area of a
  connected component of the portion of the curve that lives in a ball of
  radius \(r_1\) which is centered near \(\frac{1}{2}(a_1+a_0)\) is
  uniformly bounded; indeed, the bound is simply \(1\).

Before outlining the proof, it is natural to ask how one is likely to find
  a curve which satisfies these conditions, so we sketch a candidate
  example.
Indeed, consider a feral curve which, for example, has an absolute minimum
  and no maximum, so it extends to \(\{+\infty\}\times M\).
For simplicity, we assume that for some sufficiently large and generic
  \(a_2\in \mathbb{R}\), the set \((a\circ u)^{-1}((a_2, \infty)\times M)\)
  is diffeomorphic to a cylinder \(\mathbb{R}\times S^1\).
Note that this simplifying condition is essentially what makes Proposition
  \ref{PROP_local_local_area_bound_2} only a \emph{special case} of
  Theorem \ref{THM_local_local_area_bound}. 
Given such a curve, one then considers values \(a_0\) and \(a_1\) which
  are very large, and which are regular values of \(a\circ u\).
One then defines a compact annular curve by restricting the domain of the
  feral curve to \((a\circ u)^{-1}([a_0, a_1]\times M)\), and then capping
  off excess boundary components with appropriate disks.\footnote{That
  such a capping procedure is possible is established later when needed.}
For this resulting curve, we see that by making \(a_0\) sufficiently
  large, condition (LL\ref{EN_LL5}) must be satisfied because feral curves
  have finite \(\omega\)-energy.
Condition (LL\ref{EN_LL6}) follows essentially because if the
  \(\lambda\)-integral along the bottom boundary did not get arbitrarily
  large, our feral curve would have finite Hofer-energy; thus we assume
  this is not the case so we must be able to find many large \(a_0\) for
  which condition (LL\ref{EN_LL6}) is satisfied.
One then finds \(a_0\) for which condition (LL\ref{EN_LL7}) holds by a
  judicious application of Lemma \ref{LEM_Q_has_small_measure}.

We now turn our attention to sketching the proof of Proposition
  \ref{PROP_local_local_area_bound_2}.
As a preliminary step, we give the overarching idea which motivates the
  proof.
Namely, the key conclusion is that the quantity \({\rm
  Area}_{u^*g}\big(S_{r_1}(\zeta)\big)\) is bounded by \emph{some} large
  universal constant. 
That this constant is \(1\) instead of \(10^{(10^{10})}\) is essentially
  irrelevant.
Also irrelevant to the main thrust of the argument is the fact that we
  have explicitly specified \(r_1\) in terms of geometric constants, and
  we have bound the \(\omega\)-energy \(\int_S u^*\omega\) in terms of
  geometric data.
Instead, the key idea is to consider the case that there exists a
  sequence\footnote{The subscripts denoting the index of the term in the
  sequence has been suppressed for notational clarity.} of such
  pseudoholomorphic annuli with the property that as one progresses
  through the sequence, one can find a point \(\zeta\) not near the
  boundary of the curve such that \({\rm
  Area}_{u^*g}\big(S_{r_1}(\zeta)\big)\to \infty\) while \(r_1\to 0\) and
  \(\int_S u^*\omega\to 0\).
For example, given a feral pseudoholomorphic curve, which necessarily has
  finite \(\omega\)-energy, suppose that no matter how small one fixes a
  radius \(r\), and no matter how large one fixes \(A\in \mathbb{R}^+\),
  one can always find a point \(\zeta\in S\) so that \(a\circ u(\zeta)
  \geq A\) and the area of the connected component of
  \(u^{-1}(B_r(u(\zeta)))\), that contains \(\zeta\), is as large as we
  like while the \(\omega\)-energy is as small as we like.
One then aims to derive a contradiction by finding a region of \(S\) that
  contains \(S_r(\zeta)\) but which has bounded area.
Indeed, much of the proof is focused on finding this region of
  \(S\) which provides the desired contradiction.
That we can specify certain quantities, like \(r_1\), the area bound, etc,
  in terms of geometric constants simply follows from taking some extra care
  with our estimates.

Let us now turn our attention to describing that region in \(S\) that
  contains \(S_{r_1}(\zeta)\), but which has the desired area bound.
As a first step, we impose some drastic simplifying assumptions to get at
  the core argument.
In particular, we begin by assuming that on our pseudoholomorphic annulus,
  there are no critical points of the function \(a\circ u\).
We weaken this assumption in a moment, however in this simplified case, we
  observe that every gradient trajectory of \(a\circ u\) initiating
  in \(\partial_0^- S\) will terminate in \(\partial_0^+ S\).
Geometrically then, all gradient flow lines extend from the bottom
  boundary to the top boundary of our annulus, without getting trapped at
  critical points.
We then consider a compact interval \(\mathcal{I}\subset \partial_0^- S\)
  which has small \(\tilde{\alpha}\)-measure; that is, suppose
  \(\int_{\mathcal{I}}\tilde{\alpha} = \int_{\mathcal{I}} u^*\lambda\) is
  small.
Then consider the pseudoholomorphic strip \(u\colon \Sigma \to
  \mathbb{R}\times M\) determined by \(\mathcal{I}\); that is, with
  \(\partial_0^- \Sigma = \mathcal{I}\), \(\partial_0^+ \Sigma\subset
  \partial_0^+S\), and with the other portions of the boundary given as
  gradient flow lines.
In this case, we have
  \begin{align*}                                                          
    {\rm Area}_{u^*g}(\Sigma) 
    &= 
    \int_{\Sigma} u^*da \wedge \tilde{\alpha} + u^*\omega
    \\
    &=\int_{\Sigma} d\big((a\circ u - a_1) \tilde{\alpha}\big) -
    \int_{\Sigma} (a\circ u - a_1) d\tilde{\alpha}+ \int_{\Sigma}
    u^*\omega
    \\
    &=(a_1-a_0)\int_{\mathcal{I}}\tilde{\alpha} -
    \int_{\Sigma} (a\circ u - a_1) d\tilde{\alpha}+ \int_{\Sigma}
    u^*\omega\\
    &\leq
    (a_1-a_0)\int_{\mathcal{I}}\tilde{\alpha} +
    (a_1-a_0)\|d\tilde{\alpha}\|{\rm Area}_{u^*g}(\Sigma)+ \int_{\Sigma}
    u^*\omega.
    \end{align*}
At this point, invoke Lemma \ref{LEM_d_alpha_tilde_bound} which bounds
  \(\|d\tilde{\alpha}\|\) in terms of the ambient geometry constant, and
  note that \(a_1-a_0\) is small, so that we obtain an estimate of the
  form
  \begin{align*}                                                          
    \frac{1}{2}{\rm Area}_{u^*g}(\Sigma) \leq (a_1-a_0)
    \int_{\mathcal{I}}\tilde{\alpha} + \int_\Sigma u^*\omega.
    \end{align*}
Here we recall that \((a_1-a_0)\) is small by our hypotheses, and so is 
  \(\int_{\Sigma} u^*\omega\), and \(\int_{\mathcal{I}}\tilde{\alpha}\) is
  small by assumption.
Thus the area of \(\Sigma\) is bounded, and the region is determined
  simply by choosing an interval \(\mathcal{I}\subset \partial_0^-S\).
The goal then becomes to show that \(S_{r_1}(\zeta)\subset \Sigma\) for
  some choice of \(\mathcal{I}\), which would essentially yield the
  desired bound.
We almost do this.
Instead, we partition \(\partial_0^-S\) into a bunch of small intervals
  so that \(\zeta\) is contained in exactly one of the corresponding strips
  \(\Sigma\).
We then show that \(S_{r_1}(\zeta)\) cannot intersect both gradient-flow
  boundary portions of any strip \(\Sigma\) associated to our partition.
This is achieved by a simple geodesic distance argument combined with
  Lemma \ref{LEM_small_radius}.
With this established, it then follows that \(S_{r_1}(\zeta)\) is
  contained in the union of three consecutive strips, and fails to have
  non-trivial intersection with the outer-most gradient-flow boundary
  portions.
Our previous area estimate applies, but in triple, and this is sufficient
  to obtain the desired area bound on \(S_{r_1}(\zeta)\).

Of course more generally, \(a\circ u\) may indeed have critical points, so
  we next consider the case that \(a\circ u\) is a Morse function.
In this case, we note that gradient trajectories that initiate at points
  in \(\partial_0^-S\) now terminate at either points in
  \(\partial_0^+S\), or else in critical points of \(a\circ u\) of Morse
  index either 1 or 2.
Note that there are only finitely many points in \(\partial_0^-S\) with
  gradient flow lines that limit to critical points of Morse index 1, but
  potentially a continuum which limit to critical points of Morse index 2.
Thus the goal is to show that the set of such points has small
  \(\tilde{\alpha}\) measure.
This follows essentially from Lemma \ref{LEM_general_strip_estimate},
  which guarantees that if the \(\tilde{\alpha}\)-measure of such points
  were not small, then neither would the \(\omega\)-energy, which in fact
  is small.
Knowing that in an \(\tilde{\alpha}\)-measure theoretic sense, most points
  in \(\partial_0^-S\) have gradient flow lines that limit to points in
  \(\partial_0^+S\), we can adapt our aforementioned argument to achieve
  the desired area bound.

Next we note that \(a\circ u\) need not be a Morse function, however if
  \(u\) is an immersion, then we may find a perturbed
  patch of pseudoholomorphic curve via a small perturbing
  function \(f\), so that for the resulting perturbed curve
  \((\tilde{u},\tilde{\jmath}, f, u, S, j)\), the function \(a\circ
  \tilde{u}\) is indeed Morse, and the previous arguments essentially
  hold.
Indeed, for \(f\) chosen suitably small enough, the \(\tilde{u}^*g\)-area
  bound on the patch yields the desired \(u^*g\)-area
  bound on that same patch, which is the goal.

Finally, we must worry about the case that \(u\) is not immersed. 
Unfortunately, our method to perturb the curve does not handle
  non-immersed points. 
However the assumptions of Theorem \ref{THM_local_local_area_bound}
  guarantee that there are only finitely many interior non-immersed points,
  and none on the boundary.
Thus our goal will be to first find certain small neighborhoods of the
  non-immersed points \(\mathcal{Z}\), and perturb the curve on the
  compliment of these neighborhoods so that in the larger region
  \(a\circ\tilde{u}\) is Morse.
We then show that the set of points in \(\partial_0^-S\) which are initial
  points of gradient flow lines of the function \(a\circ \tilde{u}\) which
  pass into the small neighborhood of \(\mathcal{Z}\) has
  \(\tilde{\alpha}\)-measure which is controlled by the \(\omega\)-energy of
  the given pseudoholomorphic annulus, and hence essentially small.
This procedure is similar to showing that the set of points limiting to
  local maxima has small \(\tilde{\alpha}\)-measure, but a touch more
  complicated.

It is perhaps worth mentioning that only at this point does it make
  considerable sense to have established estimates so explicitly in terms
  of geometric and universal constants.
The issue is that in order to establish Theorem
  \ref{THM_local_local_area_bound}, we must guarantee that if the
  \(\omega\)-energy of a curve is small and the \(\tilde{\alpha}\)-measure
  of the bottom boundary \(\partial_0^- S\) is large, then, in an
  \(\tilde{\alpha}\)-measure theoretic sense, most gradient trajectories
  starting in the bottom boundary \(\partial_0^- S\) end in the top boundary
  \(\partial_0^+S \).
Of course, this need not be true if \(a\circ u\) is not Morse, so we need
  to perturb our curve and we needed to show that our area and strip
  estimates hold for the perturbed curve; for example, this motivates
  Theorem \ref{THM_area_bound_estimate}, Lemma
  \ref{LEM_general_strip_estimate}, and Lemma \ref{LEM_lambda_shrinkage} to
  be established for \emph{perturbed} curves, instead of simply
  pseudoholomorphic curves.
Moreover, because our perturbation method does not extend across
  non-immersed points, there will be small regions of unperturbed curve
  and we need to establish that most gradient trajectories avoid these
  regions.
A complication however is that as we make the neighborhood of the critical
  points smaller, our perturbing function \(f\) must also change, which in
  turn changes which gradient trajectories enter the neighborhood.
Worse still, because the curvature of a curve may be unbounded in a
  neighborhood of a non-immersed point, we see that the \(\mathcal{C}^2\)
  norm of the function \(f\) must be made smaller as we shrink the
  neighborhood.
Consequently, less exacting care in obtaining estimates can easily lead to
  circular reasoning: the size of the neighborhood of the non-immersed
  points depends on the gradient flow lines, which depend on the perturbing
  function \(f\), which depends on the size of the neighborhood.
To avoid this circular logic, we are careful throughout this manuscript to
  make estimates and inequalities in terms of universal and geometric
  constants.
Consequently, our choice of neighborhood depends only on geometric
  constants associated to either the curve itself or the ambient geometry,
  and our perturbing function then depends upon the neighborhood, but not
  the other way around.
The upshot is that we avoid circular dependence, but the downside is the
  seemingly pedantic focus on precision\footnote{Of course, ``precision''
  is in the eye of the beholder, since many estimates can be substantially
  sharpened.  Indeed, we have made little effort to distinguish
  \(\frac{1}{2}\) from \(2^{-10}\) etc, however such additional precision
  seems to have little utility in regards to our results.} in the obtained
  inequalities.

Although the above sketch accurately characterizes the proof of
  Theorem \ref{THM_local_local_area_bound}, the actual proof will be
  implemented in somewhat reverse order.
Specifically, as follows:
  \begin{enumerate}[\hspace{30pt}Step 1.]                                 
    \item 
    Carefully define the neighborhoods of the non-immersed points, and
    then define an appropriate perturbation of the curve.
    \item 
    Show that the \(\tilde{\alpha}\)-measure of the initial points of
    gradient flow lines in \(\partial_0^- S\) which enter into the
    neighborhood of the non-immersed points is bounded in terms of an
    ambient geometry constant and the \(\omega\)-energy.
    \item 
    Show that the \(\tilde{\alpha}\)-measure of the initial points of
    gradient flow lines in \(\partial_0^- S\) which limit to local maxima
    of \(a\circ \tilde{u}\) is bounded in terms of an
    ambient geometry constant and the \(\omega\)-energy.
    \item 
    Approximate the set of points in \(\partial_0^-S\), that limit to
    points in \(\partial_0^+S\), from the inside by finitely many compact
    pairwise disjoint intervals.
    \item 
    Construct the desired partition, and associated patches of our curve.
    \item 
    Estimate the area of each of these patches, show that
    \(S_{r_1}(\zeta)\) is contained in the union of a consecutive triple
    of patches, and complete the proof.
    \end{enumerate}

This completes the outline of the proof, so that finally we turn our
  attention toward the actual proof of Proposition
  \ref{PROP_local_local_area_bound_2}.

\begin{proof}[Proof of Proposition \ref{PROP_local_local_area_bound_2}]\hfill\\

\noindent{\bf Step 1.}

We begin by recalling a previously used notation.
  \begin{align*}                                                          
    \partial_0^- S:= (\partial S)\cap (a\circ u)^{-1}(a_0)\\
    \partial_0^+ S:= (\partial S)\cap (a\circ u)^{-1}(a_1)
    \end{align*}
Next, we let \(\mathcal{Z}\subset S\setminus \partial S\) denote the
  set of non-immersed points of \(u\).
Recall that a consequence of \((u, S, j)\) being generally immersed is
  that \(\mathcal{Z}\) is finite.
It will also be convenient to fix \(\delta_7>0\) sufficiently small
  so that
  \begin{equation*}                                                       
    \delta_7<2^{-24} \min\Big({\rm dist}_{\gamma}(\partial S,
    \mathcal{Z}), \min_{\substack{z, z' \in \mathcal{Z}\\ z\neq
    z'}}\big({\rm dist}_\gamma (z, z')\big) , \; {\rm
    dist}_\gamma\big({\rm Crit}_{a\circ u}, \partial S\big)\Big).
    \end{equation*}
Here $\gamma=u^\ast g$.
It is well known (see for example Lemma 2.9 of \cite{Fish2})
  that for each \(z\in \mathcal{Z}\) there exists a local
  holomorphic chart \(\phi_z:\mathcal{O}(z)\to \mathcal{O}(0)\subset
  \mathbb{C}\simeq \mathbb{R}^2\), and geodesic normal coordinates
  \(\Phi_z:\mathcal{O}(u(z))\to \mathcal{O}(0)\subset \mathbb{C}^m\simeq
  \mathbb{R}^{2m}\), and \(2\leq k_z\in \mathbb{N}\), such that
  \(\phi_z(z)=0\), \(\Phi_z(u(z))=0\), and
  \begin{equation*}                                                       
    \Phi_z\circ u\circ \phi_z^{-1}(w) = (w^{k_z}, 0, \ldots, 0) + F_z(w)
    \end{equation*} 
  where \(F_z(w)=O(|w|^{k_z+1})\) and \(dF_z(w) = O(|w|^{k_z})\). 
For each \(z\in \mathcal{Z}\) we may locally define the function
  \(r_z:\mathcal{O}(z)\to \mathbb{R}\) by the following: \(r_z\circ
  \phi_z^{-1}(w) = |w|^{k_z}\), which is smooth everywhere it is defined,
  except possibly at \(w=0\) where it only must be continuous (or more
  specifically, \(\mathcal{C}^{0, \frac{1}{2}}\)).
For each \(z\in \mathcal{Z}\) we then define the sets 
  \begin{equation*}                                                       
    \mathcal{V}_z:=\{\zeta\in \mathcal{O}(z): r_z(\zeta)< \delta_6\}
    \end{equation*}
  where we have assumed that \(\delta_6>0\) is sufficiently small so
  that for each \(0<\delta\leq \delta_6<\delta_7\) we have 
  \begin{enumerate}                                                       
    \item 
      \(\{\zeta\in {\mathcal V}_z: r_z(\zeta)=\delta\}\cong S^1\)
    \item  
      \(\{\zeta\in {\mathcal V}_z: r_z(\zeta)<\delta\}\cap \partial
      S=\emptyset\)
    \item 
      \({\rm length}_\gamma\big( \{\zeta\in {\mathcal V}_z:
      r_z(\zeta)=\delta\}\big)
      \leq 4\pi \delta k_z\)
    \item 
      \(\mathcal{V}_z \cap \mathcal{V}_{z'} = \emptyset\) for each \(z,
      z'\in \mathcal{Z}\) with \(z\neq z'\)
    \item 
      \(\delta \,(\sum_{z\in \mathcal{Z}}k_z) \leq
      \frac{C_{\mathbf{h}}}{8\pi} \int_{S}u^*\omega \).
    \end{enumerate}
For ease of notation, we now define
  \begin{equation*}                                                       
    \mathcal{V}=\bigcup_{z\in \mathcal{Z}} \mathcal{V}_z
    \end{equation*}
   and we define the function
  \begin{equation*}                                                       
    r:\mathcal{V}\to \mathbb{R}\qquad\text{by}\qquad
    r\big|_{\mathcal{V}_z} = r_z.
    \end{equation*}
We now fix \(\delta>0\) so that
  \begin{equation*}                                                       
    \delta < {\textstyle \frac{1}{10}}\min\Big({\rm dist}_\gamma({\rm
    Crit}_{a\circ u}, \partial S) , \min_{\substack{z_0, z_1\in
    \mathcal{Z}\\z_0\neq z_1}}{\rm dist}_\gamma(z_0,
    z_1), \delta_6 \Big),
    \end{equation*}
  and
  \begin{equation}\label{EQ_containment}                                  
    \{\zeta \in S: {\rm dist}_\gamma(\zeta, \mathcal{Z}) \leq \delta
    \}\subset \bigcup_{z\in \mathcal{Z}}\{\zeta\in \mathcal{O}(z):
    r_z(\zeta)< \textstyle{\frac{1}{10}} \delta_6\}.
    \end{equation}
We fix \(\epsilon>0\) sufficiently small so that 
  \begin{equation}\label{EQ_epsilon_suff_small}                           
    \epsilon<2^{-24} \min\Big(\frac{1}{1+C_B}, \frac{1}{C_{\mathbf{h}}},
    r_0\Big)
    \end{equation}
  where \(r_0\) is the small radius guaranteed by Lemma
  \ref{LEM_small_radius}, \(C_{\mathbf{h}}\) is the ambient geometry
  constant given in Definition \ref{DEF_ambient_geometry_constant}, and
  \begin{equation*}                                                       
    C_B:=\sup \big\{ \|B_u(\zeta)\|_{\gamma}: {\rm dist}_\gamma(\zeta,
    \mathcal{Z})\geq {\textstyle \frac{1}{2}}\delta \big\},
    \end{equation*}
  and \(B_u\) is the second fundamental form of \(u\).  
We then let \(f:S\to \mathbb{R}\) be a smooth function for which
  \((\tilde{u}, \tilde{\jmath}, f, u, S, j)\) is an \((\delta,
  \epsilon)\)-tame perturbed pseudoholomorphic map in the sense of
  Definition \ref{DEF_epsilon_tame}; recall that the existence of such a
  perturbation is guaranteed by Lemma \ref{LEM_tame_perturbations}.

\begin{remark}[Morse failure]
  \label{REM_morse_failure}
  \hfill\\
By property (d\ref{EN_d3}) of  Definition \ref{DEF_epsilon_tame},
  the function \(a\circ \tilde{u}\)  will fail to be Morse only in \(
  \{\zeta\in S: {\rm dist}_\gamma(\mathcal{Z}, \zeta) \leq \delta\}\),
  and equation (\ref{EQ_containment}) then guarantees that this function
  only fails to be Morse inside \(\mathcal{B}= \{\zeta\in \mathcal{V}:
  r(\zeta)<{\textstyle \frac{1}{2}}\delta_6\} \).
\end{remark}
%

We will need to define several sets in terms of the following
  differential equation.
  \begin{equation}\label{EQ_gradient_flow_1}                              
    q: [0, T]\to S \qquad q'(s) = \widetilde{\nabla}(a\circ
    \tilde{u})\big(q(s)\big) \qquad q(0)\in \partial_0^-S
    \end{equation}
Here $\widetilde{\nabla}$ is the gradient with respect to
  \(\widetilde{\gamma}=\widetilde{u}^\ast g\); see also Definition
  \ref{DEF_tract_of_perturbed_J_map}.
In particular, we define the sets
  \begin{align}\label{EQ_ASB}                                             
    \mathcal{A}&:=\Big\{\zeta\in \mathcal{V} : \exists \text{ a solution
      to (\ref{EQ_gradient_flow_1}) s.t. } q(T)=\zeta \Big\}\\
    \mathcal{S}&:= \{\zeta\in \mathcal{V}: r(\zeta) ={\textstyle
      \frac{1}{2}}\delta_6\}\nonumber \\
    \mathcal{B}&:= \{\zeta\in \mathcal{V}: r(\zeta) <{\textstyle
      \frac{1}{2}}\delta_6\}\nonumber\\
    \mathcal{B}'&:=\mathcal{B}\cap\mathcal{A}\nonumber\\
    \mathcal{S}'&:=\mathcal{S}\cap\mathcal{A}.\nonumber
    \end{align}

\noindent{\bf Step 2.}

By conditions on \(\delta_6\), we see that \(\mathcal{S}\subset S\) is
  a finite set of pairwise disjoint embedded loops, which we equip with
  the subspace topology, and we  let \(\psi:\sqcup_{z\in \mathcal{Z}}
  S^1 \to \mathcal{S}\) denote a diffeomorphism.
By Lemma \ref{LEM_gamma_tilde_estimates}, the following estimate holds.
\begin{equation}\label{EQ_length_controlled_by_omega_energy}              
  {\rm length}_{\tilde{\gamma}}( \mathcal{S})\leq 2\cdot {\rm
  length}_{\gamma}(\mathcal{S}) \leq 4\pi \delta_6
  \sum_{z\in \mathcal{Z}} k_z\leq {\textstyle
  \frac{1}{2}}C_{\mathbf{h}}\int_S u^*\omega.
  \end{equation}
By existence, uniqueness, and continuous dependence upon initial
  conditions it follows that \(\mathcal{A}\) is open in \(S\), and
  consequently \(\mathcal{S}'\) is open in \(\mathcal{S}\).
Next we note that as a consequence of the definition of \(\mathcal{A}\),
  it follows that there is a well defined smooth map \(\pi\) given by
  \begin{align*}                                                          
    &\pi: \mathcal{A}\to \partial_0^-S\\
    &\pi(\zeta) = \zeta'\qquad\qquad\text{where there exists a solution
      to (\ref{EQ_gradient_flow_1})}\\
    &\qquad\qquad\qquad\qquad\text{ for which
      }q(0)=\zeta'\;\;\text{and}\;\; q(T)=\zeta.
    \end{align*}
By existence, uniqueness, and smooth dependence upon initial
  conditions, the map \(\pi\) is smooth.  
We define \(\mathcal{C}\subset \mathcal{S}'\) to be the set of critical
  points of the restricted map \(\pi: \mathcal{S}'\to \partial_0^- S\), and
  we define
  \begin{equation*}                                                       
    \mathcal{S}'':=\mathcal{S}'\setminus \mathcal{C}.
    \end{equation*} 
In other words, \(\mathcal{S}''\) consists of those points in
  \(\mathcal{S}\) which are hit by gradient trajectories extending from
  points in \(\partial_0^-S\) but which are not critical points of
  \(\pi\).
Let \({\rm cl}_{\mathcal{S}}(\mathcal{C})\) denote the closure in
  \(\mathcal{S}\) of the set \(\mathcal{C}\), and observe that
  \(\mathcal{S}'\cap {\rm cl}_{\mathcal{S}}(\mathcal{C}) =\mathcal{C}\), so
  that \(\mathcal{S}''\) is open in \(\mathcal{S}\).
Next, we note that \(\pi(\mathcal{C})\) has measure zero, and hence
  \(\partial_0^-S\setminus \pi(\mathcal{C})\) is non-empty, and thus we fix
  \(z'\in \partial_0^-S\setminus \pi(\mathcal{C})\).
Of course \(\{z'\}\) is closed in \(\partial_0^-S\), and hence
  \(\pi^{-1}(z')\) is closed in \(\mathcal{S}'\), and thus
  \(\mathcal{S}'\setminus (\mathcal{C}\cup \pi^{-1}(z'))\) is open in
  \(\mathcal{S}\).
As such, we define 
  \begin{align*}                                                          
    \mathcal{S}''':=\mathcal{S}'\setminus \big(\mathcal{C}\cup
    \pi^{-1}(z')\big)={\mathcal S}''\setminus \pi^{-1}(z'),
    \end{align*}
  which then must be open in \(\mathcal{S}\).

Since \(\mathcal{S}\) is diffeomorphic to the disjoint union of finitely
  many copies of \(S^1\), and \(\mathcal{S}'''\) is open in
  \(\mathcal{S}\), we conclude that \(\mathcal{S}'''\) is diffeomorphic to
  the countable disjoint union of pairwise disjoint open intervals and
  copies of \(S^1\).
We immediately note however that it must in fact be the countable disjoint
  union of \emph{intervals} with no copies of \(S^1\), essentially because
  we have removed \(\pi^{-1}(z')\) from \(\mathcal{S}''\) to obtain
  \(\mathcal{S}'''\), which forces each connected component of
  \(\mathcal{S}'''\) to be diffeomorphic to an interval.
Consequently, we write
  \begin{align*}                                                          
    \mathcal{S}'''=\cup_{k\in \mathbb{M}} \mathcal{I}_k
    \end{align*}
  where each \(\mathcal{I}_k\subset \mathcal{S}\) is diffeomorphic to an
  open interval, the \(\mathcal{I}_k\) are pairwise disjoint,
  the index set \(\mathbb{M}\) denotes either a finite set or else
  \(\mathbb{N}\) as appropriate, and \(\pi:\mathcal{I}_k \to
  \pi(\mathcal{I}_k)\subset \partial_0^-S\) is a diffeomorphism for each
  \(k\in \mathbb{M}\).
We note that by construction of the \(\mathcal{I}_k\), the one-form
  \(\tilde{\alpha}\) defines a one-dimensional volume form on each
  \(\mathcal{I}_k\).
However, for each \(k\in \mathbb{M}\), the map \(\pi:\mathcal{I}_k\to
  \pi(\mathcal{I}_k)\subset \partial_0^-S\) is a diffeomorphism, and hence
  we may define a second volume form \(\pi^* \tilde{\alpha}\) on each
  \(\mathcal{I}_k\).
This creates a dichotomy: for each \(k\in \mathbb{M}\), the orientations
  on \(\mathcal{I}_k\) induced from \(\tilde{\alpha}\) and
  \(\pi^*\tilde{\alpha}\) agree, or they do not.
Thus we write \(\{\mathcal{I}_k\}_{k\in
  \mathbb{M}}=\{\mathcal{I}_k^+\}_{k\in \mathbb{M}^+}\cup
  \{\mathcal{I}_k^-\}_{k\in \mathbb{M}^-}\) where
  \(\{\mathcal{I}_k^+\}_{k\in \mathbb{M}^+}\) denotes those intervals for
  which the orientations agree, and \(\{\mathcal{I}_k^-\}_{k\in
  \mathbb{M}^-}\) denotes those intervals for which the orientations
  disagree.
Obviously then,
  \begin{align*}                                                          
    \mathcal{S}'''=\Big(\bigcup_{k\in \mathbb{M}^+} \mathcal{I}_k^+\Big)
    \; \; \bigcup \; \; \Big(\bigcup_{k\in \mathbb{M}^-}
    \mathcal{I}_k^-\Big).
    \end{align*}

We now make the following claim.

\begin{lemma}[a technical containment]
  \label{LEM_technical_containment}
  \hfill\\
  \begin{align}\label{EQ_refinement}                                      
    \pi(\mathcal{C}) \cup \{z'\}\cup \pi(\cup_{k\in \mathbb{M}^+}
    \mathcal{I}_k^+)
    = 
    \pi(\mathcal{S}').
    \end{align}

\end{lemma}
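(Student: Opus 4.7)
The plan is to first recast the equality as two containments, and then reduce the nontrivial direction to a local orientation computation which in fact shows $\mathbb{M}^- = \emptyset$.

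For the containment $\supseteq$, the inclusion $\pi(\mathcal{C}) \subseteq \pi(\mathcal{S}')$ is immediate from $\mathcal{C} \subseteq \mathcal{S}'$, and similarly $\pi(\cup_{k\in \mathbb{M}^+}\mathcal{I}_k^+) \subseteq \pi(\mathcal{S}')$ since $\mathcal{I}_k^+ \subseteq \mathcal{S}''' \subseteq \mathcal{S}'' \subseteq \mathcal{S}'$. The inclusion $\{z'\} \subseteq \pi(\mathcal{S}')$ holds provided $z'$ was chosen from $\pi(\mathcal{S}')\setminus \pi(\mathcal{C})$, which is non-empty since $\pi(\mathcal{C})$ has measure zero while $\pi(\mathcal{S}')$ is expected to have positive measure in the generic case of interest.

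For the containment $\subseteq$, I would use the disjoint decomposition $\mathcal{S}' = \mathcal{C} \sqcup (\mathcal{S}'' \cap \pi^{-1}(z')) \sqcup \bigcup_{k\in \mathbb{M}^+}\mathcal{I}_k^+ \sqcup \bigcup_{k\in \mathbb{M}^-}\mathcal{I}_k^-$, which upon applying $\pi$ yields
\begin{equation*}
\pi(\mathcal{S}') \subseteq \pi(\mathcal{C}) \cup \{z'\} \cup \pi\Bigl(\bigcup_{k\in \mathbb{M}^+}\mathcal{I}_k^+\Bigr) \cup \pi\Bigl(\bigcup_{k\in \mathbb{M}^-}\mathcal{I}_k^-\Bigr),
\end{equation*}
so it suffices to show $\pi(\bigcup_{k\in \mathbb{M}^-}\mathcal{I}_k^-) \subseteq \pi(\mathcal{C}) \cup \{z'\} \cup \pi(\bigcup_{k\in \mathbb{M}^+}\mathcal{I}_k^+)$. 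I would prove the stronger statement that $\mathbb{M}^- = \emptyset$. Fix $\zeta \in \mathcal{S}'''$ and choose local flow coordinates $(s, t)$ on $S$ near $\zeta$ such that $\widetilde{\nabla}(a\circ \tilde u) = \partial_s$ and $(\partial_s, \partial_t)$ is positively oriented for $(S, \tilde\jmath)$. By Lemma \ref{LEM_char_fol_grad} together with the analogue of property (h\ref{EN_h3}) of Lemma \ref{LEM_J_strip_reparam}, we have $\tilde\alpha = \ell(s,t)\,dt$ with $\ell > 0$ (note that $\zeta \notin \mathcal{C}$ is exactly the condition that $\ell|_\mathcal{S} \neq 0$ near $\zeta$). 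In these coordinates the gradient trajectories are horizontal lines, so $\pi$ acts by $(s,t) \mapsto (s_0, t)$.

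Parameterize $\mathcal{S}$ near $\zeta$ by $\sigma \mapsto (s(\sigma), t(\sigma))$. Then
\begin{equation*}
\tilde\alpha|_{\mathcal{S}}(\partial_\sigma) = \ell(\mathcal{S}(\sigma))\, t'(\sigma), \qquad \pi^*\tilde\alpha(\partial_\sigma) = \ell(s_0, t(\sigma))\, t'(\sigma),
\end{equation*}
and both expressions share the same sign, namely the sign of $t'(\sigma)$, because $\ell > 0$. Hence the two 1-forms $\tilde\alpha|_{\mathcal{S}}$ and $\pi^*\tilde\alpha$, which are each non-vanishing on $\mathcal{I}_k \subseteq \mathcal{S}'''$, induce the same orientation at every point of $\mathcal{I}_k$. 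Consequently every connected component of $\mathcal{S}'''$ is of $+$ type, so $\mathbb{M}^- = \emptyset$, and the required containment collapses to the trivial statement $\pi(\emptyset) \subseteq \pi(\mathcal{C}) \cup \{z'\} \cup \pi(\cup \mathcal{I}_k^+)$.

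The main obstacle is conceptual rather than computational: one must be confident that the pointwise sign agreement obtained in flow coordinates actually encodes the same orientation on $\mathcal{I}_k$ as a topological object. This is legitimate because $\mathcal{I}_k$ is a connected 1-manifold on which both 1-forms are continuous and non-vanishing, so pointwise proportionality with positive factor $\ell/\ell$ (at each $\zeta$) promotes to global agreement of induced orientations. A secondary issue is handling the $\{z'\}$ term, which requires verifying that $z'$ may be selected inside $\pi(\mathcal{S}') \setminus \pi(\mathcal{C})$; in the degenerate case $\pi(\mathcal{S}') \subseteq \pi(\mathcal{C})$, the equation still holds modulo reinterpreting $\{z'\}$ as the possibly-empty set $\{z'\} \cap \pi(\mathcal{S}')$, which is the only mild abuse of notation in the statement.
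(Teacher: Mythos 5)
Your argument is correct, and it takes a genuinely different route than the paper's. The paper runs a four-case analysis at $\zeta_1 \in \mathcal{S}'$ (according as $\pi(\zeta_1)=z'$, $\zeta_1\in\mathcal{C}$, or the gradient points inward versus outward relative to $\mathcal{B}$ at $\zeta_1$) and then argues separately that $\pi\big(\bigcup_{k\in\mathbb{M}^-}\mathcal{I}_k^-\big)\setminus\pi(\mathcal{C})\subset\pi\big(\bigcup_{k\in\mathbb{M}^+}\mathcal{I}_k^+\big)$ by chasing the flow line back to its first transversal crossing of $\mathcal{S}'$. You instead prove the strictly stronger statement $\mathbb{M}^-=\emptyset$, which makes the sign dichotomy built into the definition of $\mathcal{I}_k^\pm$ vacuous, so the lemma reduces to the disjoint decomposition $\mathcal{S}'=\mathcal{C}\sqcup(\mathcal{S}''\cap\pi^{-1}(z'))\sqcup\mathcal{S}'''$. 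This is shorter, avoids the flow-line bookkeeping entirely, and can even be phrased without coordinates: with $\pi=\Phi_{-T(\zeta)}$ the return-time flow, the diffeomorphism $T\Phi_{-T}$ is orientation preserving and carries $\widetilde{\nabla}(a\circ\tilde u)$ to $\widetilde{\nabla}(a\circ\tilde u)$, so it carries $\tilde{\jmath}\widetilde{\nabla}(a\circ\tilde u)$ to a vector with positive $\tilde{\jmath}\widetilde{\nabla}$-component, and therefore $\tilde\alpha\circ T\pi$ and $\tilde\alpha$ have the same sign on $T_\zeta\mathcal{S}$ whenever either is nonzero.

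Two small precisifications. The flow coordinates $(s,t)$ must be constructed along the whole tube of trajectories from a neighborhood of $\pi(\zeta)$ in $\partial_0^- S$ up to $\zeta$, not merely ``near $\zeta$''; the identity $\pi(s,t)=(s_0,t)$ needs $\{s=s_0\}$ to be a slice of $\partial_0^- S$ inside the coordinate patch, and this extension is available because the trajectory from $\pi(\zeta)$ to $\zeta$ avoids both $\mathcal{Z}$ and the critical points of $a\circ\tilde u$. Also, the parenthetical ``$\zeta\notin\mathcal{C}$ is exactly the condition that $\ell|_{\mathcal{S}}\neq 0$'' is slightly off: $\ell$ is positive on the whole strip regardless; what $\zeta\notin\mathcal{C}$ actually gives you is $T_\zeta\mathcal{S}\not\subset\ker\tilde\alpha$, i.e.\ $t'(\sigma)\neq 0$, which is what makes both $\tilde\alpha|_{\mathcal{I}_k}$ and $\pi^*\tilde\alpha$ nondegenerate at $\zeta$. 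Your remark about $\{z'\}$ is fair --- the paper fixes $z'\in\partial_0^- S\setminus\pi(\mathcal{C})$ without ensuring $z'\in\pi(\mathcal{S}')$, so the containment $\supseteq$ as literally stated holds only after intersecting $\{z'\}$ with $\pi(\mathcal{S}')$; this has no bearing on the downstream measure computation, which uses only $\subseteq$ together with the fact that $\pi(\mathcal{C})\cup\{z'\}$ is a null set.
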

%
\begin{proof}
To prove this lemma, we first let \(\zeta_1\in \mathcal{S}'\), so there
  exists gradient trajectory emanating from \(\zeta_0\in \partial_0^-S\)
  and terminating at \(\zeta_1\in \mathcal{S}'\).
We observe that there are four possible cases. \\

\noindent \emph{Case I.} \(\zeta_0=z'\).
In this case, \(\pi(\zeta_1)\in \{z'\}\).\\
 
\noindent \emph{Case II.} \(\zeta_0\neq z'\) and the vector
  \(\widetilde{\nabla}(a \circ \tilde{u})\) is tangent to \(\partial
  \mathcal{S}\) at \(\zeta_1\).
In this case \(\zeta_1\in \mathcal{C}\), and hence \(\pi(\zeta_1)\in
  \pi(\mathcal{C})\).\\

\noindent \emph{Case III.} \(\zeta_0\neq z'\) and the vector
  \(\widetilde{\nabla}(a \circ \tilde{u})\) is transverse to \(\partial
  \mathcal{S}\) at \(\zeta_1\) and inward pointing relative to
  \(\mathcal{B}\). 
It immediately follows that \(\zeta_1\in \cup_{k\in \mathbb{M}^-}
  \mathcal{I}_k^-\), and hence \(\pi(\zeta_1)\in \pi\big(\cup_{k\in
  \mathbb{M}^-} \mathcal{I}_k^-\big)\). \\

\noindent \emph{Case IV.} \(\zeta_0\neq z'\) and the vector
  \(\widetilde{\nabla}(a \circ \tilde{u})\) is transverse to \(\partial
  \mathcal{S}\) at \(\zeta_1\) and outward pointing relative to
  \(\mathcal{B}\). 
It immediately follows that \(\zeta_1\in \cup_{k\in \mathbb{M}^+}
  \mathcal{I}_k^+\), and hence \(\pi(\zeta_1)\in \pi\big(\cup_{k\in
  \mathbb{M}^+} \mathcal{I}_k^+\big)\). \\

\noindent We conclude that 
  \begin{align*}                                                          
    \pi(\mathcal{S}')
    = 
    \pi(\mathcal{C}) \cup \{z'\}\cup \pi(\cup_{k\in \mathbb{M}^+}
    \mathcal{I}_k^+) \cup \pi(\cup_{k\in \mathbb{M}^-} \mathcal{I}_k^-),
    \end{align*}
  and hence to establish equation (\ref{EQ_refinement}), it is sufficient
  to prove that
  \begin{align*}                                                          
    \pi\big(\cup_{k\in \mathbb{M}^-}\mathcal{I}_k^-\big)\setminus
    \pi(\mathcal{C}) \subset \pi\big(\cup_{k\in
    \mathbb{M}^+}\mathcal{I}_k^+\big).
    \end{align*}
Note however, that if \(\zeta_1\in \cup_{k\in
  \mathbb{M}^-}\mathcal{I}_k^-\setminus \pi^{-1}\circ \pi (\mathcal{C})\),
  then \(\widetilde{\nabla} (a\circ \tilde{u})\) is pointing outward
  relative to \(\mathcal{B}\) at \(\zeta_1\).
Or in other words, following the gradient flow \(\widetilde{\nabla}(a\circ
  \tilde{u})\) from \(\zeta_1\) for sufficiently small but \emph{negative}
  time, yields a point in \(\mathcal{B}\).
However, \({\rm cl}(\mathcal{B})\cap \partial_0^-S=\emptyset\), and hence
  we conclude that the gradient flow line initiating at \(\zeta_0\in
  \partial_0^-S\) and terminating at \(\zeta_1\subset \cup_{k\in
  \mathbb{M}^-}\mathcal{I}_k^-\setminus \pi^{-1}\circ \pi(\mathcal{C})\)
  must first intersect \(\mathcal{S}'\) in an inward pointing direction,
  and this intersection must be transverse.
It immediately follows that \(\pi(\zeta_1) \subset \pi(\cup_{k\in
  \mathbb{M}^+}\mathcal{I}_k^+)\), and hence
  \begin{align*}                                                          
    \pi\big(\cup_{k\in \mathbb{M}^-}\mathcal{I}_k^-\big)\setminus
    \pi(\mathcal{C}) \subset \pi\big(\cup_{k\in
    \mathbb{M}^+}\mathcal{I}_k^+\big),
    \end{align*}
  as required.
This completes the proof of Lemma \ref{LEM_technical_containment}.
\end{proof}

Next, we  choose \(n_0\in \mathbb{N} \) sufficiently large so that
  \begin{align}\label{EQ_I_and_alpha_estimate}                            
    \Big| \int_{\pi(\cup_{k\in \mathbb{M}^+} \mathcal{I}_k^+)}
    \tilde{\alpha} - \int_{\pi(\cup_{k=1}^{n_0} \mathcal{I}_k^+)}
    \tilde{\alpha} \Big| < {\textstyle \frac{1}{2}}C_{\mathbf{h}} \int_S
    u^*\omega
    \end{align}
  where the orientation on \(\cup_k \mathcal{I}_k^+\subset \partial_0^- S\)
  is such that \(\tilde{\alpha}\) is a volume form.
Recall that the \(\mathcal{I}_k^+\) are diffeomorphic to open intervals,
  and \(\pi:\mathcal{I}_k^+\to \partial_0^-S\) are diffeomorphisms with
  their images.
As such, we may find sets \(\{\mathcal{J}_k \}_{k=1}^{n_1}\)
  in \(\mathcal{S}'\) with the following properties:
\begin{enumerate}                                                         
  \item each \(\mathcal{J}_k\) is diffeomorphic to a compact interval
  \item \(\pi(\mathcal{J}_k)\cap \pi(\mathcal{J}_{k'})=\emptyset\) for
    \(k\neq k'\)
  \item for each \(k\in \{1, \ldots, n_1\}\) there exists \(k'\in \{1,
    \ldots, n_0\}\) such that \(\mathcal{J}_k\subset \mathcal{I}_{k'}^+\)
  \item the map \(\pi\colon\mathcal{J}_k\to \pi(\mathcal{J}_k)\subset
    \partial_0^-S\) is a diffeomorphism
  \item and finally,
  \begin{align}\label{EQ_I_J_comparison}                                  
    \Big|\int_{\cup_{k=1}^{n_0}\pi(\mathcal{I}_k^+)}\tilde{\alpha} -
    \int_{\cup_{k=1}^{n_1}\pi(\mathcal{J}_k)}\tilde{\alpha}\Big| <
    \frac{1}{2}C_{\mathbf{h}} \int_S u^*\omega.
    \end{align}
  \end{enumerate}
As a consequence of the existence of such \(\mathcal{J}_k\), we note that
  there exist perturbed pseudoholomorphic strips \((\tilde{u}_k,
  \widetilde{S}_k, \tilde{\jmath}, f, u, S, j)\) for which
  \begin{align*}                                                          
    \partial_0^- \widetilde{S}_k =
    \pi(\mathcal{J}_k)\qquad\text{and}\qquad \partial_0^+ \widetilde{S}_k
    = \mathcal{J}_k \subset \bigcup_{k\in \mathbb{M}^+} \mathcal{I}_k^+
    \subset \mathcal{S}.
    \end{align*}
With these perturbed pseudoholomorphic strips established, we are now able
to estimate as follows.
\begin{align}                                                             
  \Big|\int_{\pi(\mathcal{B}')}\tilde{\alpha}\Big| 
  &= 
    \int_{\pi(\mathcal{B}')}\tilde{\alpha}\notag\\
  &=
    \int_{\pi(\mathcal{S}'')}\tilde{\alpha}\qquad\qquad\qquad\qquad\text{(See
    Lemma \ref{LEM_alpha_integrals} below)}\notag\\
  &=
    \int_{\cup_{k\in \mathbb{M}^+}\pi(
    \mathcal{I}_k^+)}\tilde{\alpha}\notag\\
  &= 
    \Big(\int_{\cup_{ k\in \mathbb{M}^+}\pi(\mathcal{I}_k^+) }\tilde{\alpha}-
    \int_{\cup_{k=1}^{n_0} \pi(\mathcal{I}_k^+)}\tilde{\alpha}\Big) +
    \int_{\cup_{k=1}^{n_0} \pi(\mathcal{I}_k^+)}\tilde{\alpha}\notag\\
  &\leq
    {\textstyle \frac{1}{2}}C_{\mathbf{h}}\int_S u^*\omega +
    \int_{\cup_{k=1}^{n_0} \pi(\mathcal{I}_k^+)}\tilde{\alpha}\notag\\
  &= 
    {\textstyle \frac{1}{2}}C_{\mathbf{h}}\int_S u^*\omega +
    \Big(\int_{\cup_{ k=1}^{n_0}\pi(\mathcal{I}_k^+) }\tilde{\alpha}-
    \int_{\cup_{k=1}^{n_1} \pi(\mathcal{J}_k)}\tilde{\alpha}\Big) +
    \int_{\cup_{k=1}^{n_1} \pi(\mathcal{J}_k)}\tilde{\alpha}\notag\\
  &\leq 
    C_{\mathbf{h}}\int_S u^*\omega+  \int_{\cup_{k=1}^{n_1}
    \pi(\mathcal{J}_k)}\tilde{\alpha}\notag\\
  &=
    C_{\mathbf{h}}\int_S u^*\omega+ \Big( \int_{\cup_{k=1}^{n_1}
    \pi(\mathcal{J}_k)}\tilde{\alpha} -2\int_{\cup_{k=1}^{n_1}
    \mathcal{J}_k}\tilde{\alpha} \Big)  + 2\int_{\cup_{k=1}^{n_1}
    \mathcal{J}_k}\tilde{\alpha}\notag\\
  &\leq 
    C_{\mathbf{h}}\int_S u^*\omega +
    2C_{\mathbf{h}}\sum_{k=1}^{n_1}\int_{\widetilde{S}_k}
    \tilde{u}^*\omega+ 2\int_{\cup_{k=1}^{n_1}
    \mathcal{J}_k}\tilde{\alpha}\notag\\
  &\leq 
    3C_{\mathbf{h}}\int_{S} \tilde{u}^*\omega+ 2\int_{\cup_{k=1}^n
    \mathcal{J}_k}\tilde{\alpha}\notag\\
  &\leq 
    3C_{\mathbf{h}}\int_{S}\tilde{u}^*\omega
    +2\|\tilde{\alpha}\|_{L^\infty}
    {\rm length}_{\tilde{\gamma}}(\cup_{k=1}^n\mathcal{J}_k)\notag\\
  &\leq 
    3C_{\mathbf{h}}\int_{S}\tilde{u}^*\omega +2\,{\rm
    length}_{\tilde{\gamma}}(\cup_{k=1}^n\mathcal{J}_k)
    \label{EQ_tilde_alpha_est}\\
  &\leq 
    3C_{\mathbf{h}}\int_{S}\tilde{u}^*\omega +2\,{\rm
    length}_{\tilde{\gamma}}(\mathcal{S})\notag\\
  &\leq 
    4C_{\mathbf{h}} \int_{S}\tilde{u}^*\omega \notag
  \end{align}
  where to obtain the second equality we have made use of Lemma
  \ref{LEM_alpha_integrals} below, to obtain the third equality we have
  made use of the fact that \(\pi(\mathcal{C})\cup \{z'\}\) has measure
  zero together with Lemma \ref{LEM_technical_containment}, to obtain the
  first inequality we have made use of equation
  (\ref{EQ_I_and_alpha_estimate}), to obtain the second inequality we have
  employed equation (\ref{EQ_I_J_comparison}), to obtain the third
  inequality we have employed Lemma \ref{LEM_general_strip_estimate}, to
  obtain the inequality at (\ref{EQ_tilde_alpha_est}) we have used
  \begin{equation*}                                                       
    \|\tilde{\alpha}\|_{\tilde{\gamma}}=
    \|-\tilde{u}^*da\circ \tilde{\jmath}\|_{\tilde{u}^*g} =
    \|\tilde{u}^*da\|_{\tilde{u}^*g}\leq \|da\|_{g} = 1,
    \end{equation*}
  and to obtain the final inequality we have employed equation
  (\ref{EQ_length_controlled_by_omega_energy}).  
Note that the above inequality relies on the following equality.

\begin{lemma}[equality of $\tilde{\alpha}$ integrals]
  \label{LEM_alpha_integrals}
  \hfill\\
  \begin{equation*}                                                       
    \int_{\pi(\mathcal{B}')}\tilde{\alpha} = \int_{\pi(\mathcal{S}'')}
    \tilde{\alpha}
    \end{equation*}
\end{lemma}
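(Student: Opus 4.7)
The plan is to show that $\pi(\mathcal{B}')$ and $\pi(\mathcal{S}'')$ coincide up to a subset of $\partial_0^- S$ having Lebesgue measure zero; since $\tilde{\alpha}$ restricts to a bounded one-form on the compact curve $\partial_0^- S$, this immediately yields equality of integrals. The two inclusions proceed via distinct arguments. First, I would show $\pi(\mathcal{B}') \subset \pi(\mathcal{S}')$: given $\zeta \in \mathcal{B}'$, let $q:[0,T] \to S$ be the gradient trajectory of $\widetilde{\nabla}(a\circ \tilde{u})$ with $q(0) = \pi(\zeta) \in \partial_0^- S$ and $q(T) = \zeta$. Since $\operatorname{cl}(\mathcal{B}) \cap \partial S = \emptyset$, the initial point $\pi(\zeta)$ lies outside $\mathcal{B}$ while the terminal point lies inside, so continuity of $r\circ q$ together with $\mathcal{S} = \{r = \tfrac{1}{2}\delta_6\}$ forces $q$ to cross $\mathcal{S}$ at some intermediate parameter $s_0 \in (0,T)$. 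The crossing point $q(s_0)$ is on a trajectory from $\partial_0^- S$, hence lies in $\mathcal{S} \cap \mathcal{A} = \mathcal{S}'$, and has $\pi$-image equal to $\pi(\zeta)$.

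Next I would show $\pi(\mathcal{S}'') \subset \pi(\mathcal{B}')$. The key observation is that $\pi$ is constant along gradient trajectories, so $\widetilde{\nabla}(a\circ \tilde{u})(\zeta) \in \ker d\pi_\zeta$ for every $\zeta \in \mathcal{A}$. For $\zeta \in \mathcal{S}''$, by definition $d\pi|_{T_\zeta \mathcal{S}}$ is injective, so $T_\zeta \mathcal{S}$ cannot contain the gradient direction, i.e., $\widetilde{\nabla}(a\circ \tilde{u})$ is transverse to $\mathcal{S}$ at $\zeta$. Since $\mathcal{S}$ is a regular level set of the smooth function $r$ (by our choice of $\delta_6$) and locally separates $\mathcal{V}$ into the open set $\mathcal{B} = \{r < \tfrac{1}{2}\delta_6\}$ and its complement, transversality implies that on one of the two sides (forward or backward time) the trajectory $q$ immediately enters $\mathcal{B}$. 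Those nearby trajectory points remain in $\mathcal{A}$ (they lie on the same trajectory from $\pi(\zeta)$), hence in $\mathcal{B}'$, and they satisfy $\pi = \pi(\zeta)$, giving $\pi(\zeta) \in \pi(\mathcal{B}')$.

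To finish, I apply Sard's theorem to the smooth map $\pi : \mathcal{S}' \to \partial_0^- S$ between one-dimensional manifolds, concluding that $\pi(\mathcal{C})$ has Lebesgue measure zero in $\partial_0^- S$. Combined with the elementary observation that $\pi(\mathcal{S}') \setminus \pi(\mathcal{S}'') \subset \pi(\mathcal{C})$ and the two inclusions above, we obtain
\[
  \pi(\mathcal{S}'') \;\subset\; \pi(\mathcal{B}') \;\subset\; \pi(\mathcal{S}') \;\subset\; \pi(\mathcal{S}'') \cup \pi(\mathcal{C}),
\]
so $\pi(\mathcal{B}')$ and $\pi(\mathcal{S}'')$ differ by a measure-zero set, and the equality of the two $\tilde{\alpha}$-integrals follows. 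The main subtlety, and the only place where something nontrivial enters, is Step~2: one must be confident that transversality of the gradient to $\mathcal{S}$ at $\zeta$ is enough to ensure the trajectory actually visits the open set $\mathcal{B}$ rather than merely staying on $\mathcal{S}$. This is guaranteed by the fact that $\mathcal{S}$ is a smooth embedded hypersurface separating a neighborhood into a $\mathcal{B}$-side and a non-$\mathcal{B}$-side, which holds by the enumerated regularity properties of $r$ on $\mathcal{V}$ imposed when choosing $\delta_6$.
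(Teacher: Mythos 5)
Your proof is correct and follows essentially the same route as the paper's: establish the chain of inclusions $\pi(\mathcal{S}'') \subset \pi(\mathcal{B}') \subset \pi(\mathcal{S}')$, observe that $\pi(\mathcal{S}')$ and $\pi(\mathcal{S}'')$ differ only by $\pi(\mathcal{C})$, and invoke Sard's theorem to conclude that the sets agree up to measure zero. You spell out the transversality reasoning behind $\pi(\mathcal{S}'') \subset \pi(\mathcal{B}')$ in somewhat more detail than the paper, which states it tersely, but the argument is the same.
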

%
\begin{proof}
Recall the definitions of ${\mathcal B}$ and ${\mathcal B}'$ from
  (\ref{EQ_ASB}).
First observe that each connected component of \(\mathcal{B}\) is
  homeomorphic to an open disk   which is disjoint from \(\partial_0^-S\),
  and \(\partial\mathcal{B} = \mathcal{S}\).
Also observe that \(\mathcal{B}'\subset\mathcal{B}\) and
  \(\pi:\mathcal{B}'\to \partial_0^- S\) is well defined.
It follows that for each \(\zeta\in \mathcal{B}'\) there exists a
  \(\zeta'\in \mathcal{S}'\) such that \(\pi(\zeta)=\pi(\zeta')\).
From this we conclude that
  \begin{equation*}                                                       
    \pi(\mathcal{B}')\subset \pi(\mathcal{S}').
    \end{equation*}
Next observe that the definition of \(\mathcal{S}''\) guarantees that
  if \(\zeta\in \mathcal{S}''\), then the gradient trajectory solving
  (\ref{EQ_gradient_flow_1}) intersects \(\mathcal{S}''\) transversely
  at \(\zeta\).
It follows that \(\pi(\mathcal{S}'')\subset\pi(\mathcal{B}')\), and
  hence we have
  \begin{equation*}                                                       
    \pi(\mathcal{S}'')\subset\pi(\mathcal{B}') \subset \pi(\mathcal{S}').
    \end{equation*}
We then recall that \(\mathcal{S}'' = \mathcal{S}'\setminus \mathcal{C}\)
  where \(\mathcal{C}\) is the set of critical points of the map
  \(\pi:\mathcal{S}'\to \partial_0^-S\), and hence by Sard's theorem we
  conclude that \(\pi(\mathcal{C})\) has Lebesgue measure zero, and thus
  we have
  \begin{equation*}                                                       
    \pi(\mathcal{S}')\setminus
    \pi(\mathcal{C})\subset\pi(\mathcal{S}'')\subset\pi(\mathcal{B}')
    \subset \pi(\mathcal{S}').
    \end{equation*}
Since \(\pi(\mathcal{C})\) has Lebesgue measure zero, it immediately
  follows that \(\mathcal{S}''\) and \(\mathcal{B}'\) differ by a set of
  measure zero and hence
  \begin{equation*}
    \int_{\pi(\mathcal{B}')}\tilde{\alpha} = \int_{\pi(\mathcal{S}'')}
    \tilde{\alpha}
    \end{equation*}
  which is the desired result. 
This completes the proof of Lemma \ref{LEM_alpha_integrals}.
\end{proof}

As a consequence, we have shown that
  \begin{equation}\label{EQ_measure_of_bad_set}                           
    \Big| \int_{\pi(\mathcal{B}')} \tilde{\alpha} \Big| \leq
    4C_{\mathbf{h}} \int_S u^*\omega.
    \end{equation}
Thus we have shown that the \(\tilde{\alpha}\)-measure of those points in
  \(\partial_0^-S\) with gradient flow lines that enter into our
  neighborhood of \(\mathcal{Z}\) is bounded in terms of the
  ambient geometry constant and \(\omega\)-energy, the latter of which is
  assumed to be small.\\

\noindent{\bf Step 3.}

Next, we recall Remark \ref{REM_morse_failure}, which observes that
  there there are only finitely many critical points of \(a\circ
  \tilde{u}\) in \(S\setminus\mathcal{B}\) and each will be
  non-degenerate.
As such, for \(k\in \{0, 1, 2\}\) we define the finite sets
  \begin{equation*}                                                       
    \mathcal{M}_k:=\{\zeta\in S\setminus \mathcal{B}: d(a\circ
    \tilde{u})(\zeta)=0\;\;\text{and}\;\;{\rm Index}_{Morse}(\zeta)=k\}.
    \end{equation*}
Note that \(\mathcal{M}_0\) consists of local minima and therefore
  there cannot exist solutions to the gradient equation
  \begin{equation}\label{EQ_half_infinite_gradient_equation}              
    q: [0, \infty)\to S \qquad q'(s) = \widetilde{\nabla}(a\circ
    \tilde{u})\big(q(s)\big) \qquad q(0)\in \partial_0^-S
    \end{equation}
  which limit to a point in \(\mathcal{M}_0\).
Also note that \(\mathcal{M}_1\) consists of finitely many
  (non-degenerate) saddle-points and hence there are only finitely many
  solutions to (\ref{EQ_half_infinite_gradient_equation}) which limit
  to a point in \(\mathcal{M}_1\); we denote the set of such initial
  conditions \(\mathcal{D}\).
It remains to consider those initial conditions in \(\partial_0^-S\)
  for which solutions to (\ref{EQ_half_infinite_gradient_equation})
  limit to points in \(\mathcal{M}_2\).
To that end, we fix \(\epsilon'>0\) sufficiently small so
  that  for each \(z\in \mathcal{M}_2\) the set 
  \(\{\zeta\in S: a\circ\tilde{u}(\zeta)=a\circ \tilde{u}(z)-\epsilon'\}\)
  contains a connected component,
  \(\mathcal{S}_z\), contained in a small neighborhood of \(z\)
  in which there exist local coordinates, \((s, t)\),  such that 
  \(a\circ\tilde{u}(s, t) = a\circ\tilde{u}(z) - s^2 - t^2\), 
  and furthermore \(\epsilon'\) has been chosen sufficiently small so that
  \begin{equation*}                                                       
    \sum_{z\in \mathcal{M}_2}{\rm length}_{\tilde{\gamma}}(\mathcal{S}_z)
    \leq {\textstyle \frac{1}{2}}C_{\mathbf{h}} \int_S u^*\omega.
    \end{equation*}
Observe that by construction, no trajectory initiating from
  \(\partial_0^-S\)  may limit to a point in \(\mathcal{M}_2\)
  without transversally intersecting \(\cup_{\zeta\in \mathcal{M}_2}
  \mathcal{S}_\zeta\).
Defining
  \begin{equation*}                                                       
    \mathcal{E}:=\Big\{\zeta\in \partial_0^- S: \exists \text{ a solution
    to (\ref{EQ_gradient_flow_1}) such that }q(0)=\zeta\text{ and }q(T)\in
    \cup_{z\in \mathcal{M}_2} \mathcal{S}_z  \Big\},
    \end{equation*}
  we note that \(\mathcal{E}\) is open, and hence we may find finitely
  many pair-wise disjoint closed intervals \(\mathcal{L}_k\subset
  \partial_0^- S\) with the property that
  \begin{align*}                                                          
    \Big|\int_{\mathcal{E}}\tilde{\alpha}\Big| 
    &= 
      \int_{\mathcal{E}}\tilde{\alpha}\\
    &\leq 
      \sum_k\int_{\mathcal{L}_k}\tilde{\alpha} +
      C_{\mathbf{h}}\int_{S}u^*\omega.
    \end{align*}
As above, for each \(\mathcal{L}_k\) one constructs a perturbed
  pseudoholomorphic strip, denoted \((\tilde{u}_k, \widetilde{S}_k,
  \tilde{\jmath}_k, f, u, S, j) \), with the property that \(\partial_0^-
  \widetilde{S}_k = \mathcal{L}_k \) and
  \(\partial_0^+\widetilde{S}_k\subset \cup_{z\in \mathcal{M}_2}
  \mathcal{S}_z\).
Also as above, this yields a similar estimate:
  \begin{align*}                                                          
    \sum_k\int_{\mathcal{L}_k}\tilde{\alpha}
    &=
      \sum_k\Big(\int_{\partial_0^- \widetilde{S}_k}\tilde{\alpha} -
      2\int_{\partial_0^+ \widetilde{S}_k}\tilde{\alpha}\Big) +\sum_k
      2\int_{\partial_0^+ \widetilde{S}_k}\tilde{\alpha}\\
    &\leq 
      2C_{\mathbf{h}}\sum_k\int_{\widetilde{S}_k}u^*\omega  +\sum_k
      2\int_{\partial_0^+ \widetilde{S}_k}\tilde{\alpha}\\
    &\leq 
      2C_{\mathbf{h}}\int_{S}u^*\omega+ 2 \sum_{z\in \mathcal{M}_2}{\rm
      length}_{\tilde{\gamma}}(\mathcal{S}_z)\\
    &\leq 
      3C_{\mathbf{h}}\int_{S}u^*\omega.
    \end{align*}
From this we conclude that
  \begin{equation}\label{EQ_measure_of_E}                                 
    \int_{\mathcal{E}}\tilde{\alpha}\leq 4C_{\mathbf{h}}
    \int_{S}u^*\omega.
    \end{equation}

Thus we have shown that the \(\tilde{\alpha}\)-measure of those points in 
  \(\partial_0^-S\) which have gradient flow lines that limit to local
  maxima of \(a\circ \tilde{u}\) is bounded in terms of the
  ambient geometry constant and the \(\omega\)-energy, the latter of which
  is assumed to be small.\\

\noindent{\bf Step 4.}

At this point we define the \(\mathcal{T}\subset \partial_0^- S\)
  to be the set of points for which there exists a solution
  \begin{equation}                                                        
    q: [0, T]\to S \qquad q'(s) = \widetilde{\nabla}(a\circ
    \tilde{u})\big(q(s)\big)
    \end{equation}
  for which
  \begin{enumerate}                                                       
    \item 
      \(q(0)\in \partial_0^-S\)
    \item 
      \(q(T)\in \partial_0^+S\)
    \item 
      \(q([0, T]) \bigcap \big(\cup_{z\in
      \mathcal{Z}}\{\zeta\in \mathcal{V}_z: r_z(\zeta)\leq
      \frac{1}{4}\delta_6\}\big)=\emptyset\).
    \end{enumerate}
We note that a consequence of Remark \ref{REM_morse_failure} and
  equation (\ref{EQ_containment}),  the function \(a\circ \tilde{u}\) is
  Morse on \(S\setminus \cup_{z\in \mathcal{Z}}\{\zeta\in \mathcal{V}_z:
  r_z(\zeta)\leq \frac{1}{4}\delta_6\}\).
As such, we conclude that \(\mathcal{T}\subset \partial_0^- S\) is open
  in \(\partial_0^- S\).
We then claim the following inequalities are true:
  \begin{equation*}                                                       
    \int_{(\partial_0^- S)\setminus \mathcal{T} }  \tilde{\alpha}
    \leq \int_{\pi(\mathcal{B}')\cup \mathcal{D}\cup\mathcal{E}}
    \tilde{\alpha}\leq 8C_{\mathbf{h}}\int_S u^*\omega.
    \end{equation*}
Observe that the first inequality follows from the fact that \(
  (\partial_0^- S)\setminus \mathcal{T} \subset \pi(\mathcal{B}' )\cup
  \mathcal{D}\cup\mathcal{E}\), and the second inequality follows from the
  fact that \(\mathcal{D}\) is finite together with inequalities
  (\ref{EQ_measure_of_bad_set}) and  (\ref{EQ_measure_of_E}).
Since \(\mathcal{T}\) is open in \(\partial_0^- S\),
  we note that there exist finitely many pairwise disjoint closed
  intervals \(\{\mathcal{T}_k\}_{k=1}^N\), each contained in
  \(\mathcal{T}\), such that
  \begin{equation}\label{EQ_Tk_estimate}                                  
    \sum_{k=1}^N\int_{\mathcal{T}_k}\tilde{\alpha} \geq
    \int_{\partial_0^- S} \tilde{\alpha} - 10C_{\mathbf{h}} \int_S
    u^*\omega.
    \end{equation}
We pause for a moment to highlight the utility of these strips. 
Roughly speaking, the proof of Theorem
  \ref{THM_local_local_area_bound} would be significantly simpler if every
  gradient trajectory initiating at a point in \(\partial_0^- S\) terminated
  at point in \(\partial_0^+S\).
Because this is not the case, the next best scenario would be for there to
  exist a finite set of disjoint closed intervals in \(\partial_0^-S\) with
  the property that their \(\tilde{\alpha}\)-measure was close to that of
  \(\partial_0^-S\), and with the property that every gradient trajectory
  starting in one of these intervals then terminated in \(\partial_0^+S\).
The intervals \(\{\mathcal{T}_k\}_{k=1}^N\) have precisely this property,
  and thus heuristically we should think of the associated perturbed
  pseudoholomorphic strips as taking the place of \(S\).\\

\noindent{\bf Step 5.}

We now claim the following.

\begin{lemma}[modest length gradient trajectories]
  \label{LEM_existence_of_modest_gradient_trajectory}
  \hfill \\
For each closed interval \(\mathcal{I}\subset \partial_0^- S\) satisfying
  \begin{equation*}                                                       
    \int_{\mathcal{I}}\tilde{\alpha} \geq
    \big((a_1-a_0)^{-1}+10C_{\mathbf{h}}\big)\int_S u^*\omega
    \end{equation*} 
  there exists a solution to
  \begin{equation}                                                        
    q: [0, T]\to S \qquad q'(s) = \widetilde{\nabla}(a\circ
    \tilde{u})\big(q(s)\big)
    \end{equation}
  such that \(q(0)\in \mathcal{I}\), \(q(T)\in \partial_0^+S\), and
  \begin{equation}\label{EQ_length_estimate_in_prop}                      
    {\rm length}_{\tilde{\gamma}}\big(q([0, T])\big)\leq 2^7(a_1-a_0).
    \end{equation}
\end{lemma}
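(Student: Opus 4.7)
The plan is to reduce the statement to an application of Lemma \ref{LEM_modest_length_flow_lines} after producing, inside the given interval $\mathcal{I}$, a finite collection of pairwise disjoint closed subintervals from which the gradient flow of $a\circ\tilde{u}$ actually reaches $\partial_0^+ S$ and which together carry enough $\tilde{\alpha}$-measure to satisfy hypothesis (R\ref{EN_R4}) of that lemma.

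First, recall from Step 4 of the proof of Proposition \ref{PROP_local_local_area_bound_2} the open set $\mathcal{T}\subset \partial_0^-S$ consisting of initial conditions whose upward gradient trajectories terminate in $\partial_0^+S$ without entering the excluded neighborhoods $\{r_z\le \tfrac14\delta_6\}$ around the non-immersed points, together with the inequality
\begin{equation*}
\int_{(\partial_0^-S)\setminus \mathcal{T}} \tilde{\alpha} \leq 8 C_{\mathbf{h}} \int_S u^*\omega.
\end{equation*}
Combining this with the hypothesis on $\mathcal{I}$ yields
\begin{equation*}
\int_{\mathcal{I}\cap\mathcal{T}} \tilde{\alpha} \geq \bigl((a_1-a_0)^{-1}+2C_{\mathbf{h}}\bigr)\int_S u^*\omega.
\end{equation*}
Because $\mathcal{I}\cap\mathcal{T}$ is (relatively) open in $\partial_0^-S$, I would then select finitely many pairwise disjoint closed subintervals $\mathcal{J}_1,\dots,\mathcal{J}_n\subset \mathcal{I}\cap\mathcal{T}$ satisfying
\begin{equation*}
\sum_{k=1}^n \int_{\mathcal{J}_k}\tilde{\alpha} \geq (a_1-a_0)^{-1}\int_S u^*\omega.
\end{equation*}

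Next, for each $\mathcal{J}_k$ I would build an associated \emph{rectangular} perturbed pseudoholomorphic strip $(\tilde{u},\widetilde{S}_k,\tilde{\jmath},f,u,S,j)$ with $\partial_0^-\widetilde{S}_k=\mathcal{J}_k$ and $\partial_0^+\widetilde{S}_k\subset\partial_0^+S$, by flowing each initial condition in $\mathcal{J}_k$ along $\widetilde{\nabla}(a\circ\tilde{u})$ and invoking Lemma \ref{LEM_J_strip_reparam}. The definition of $\mathcal{T}$ guarantees that every such trajectory is defined until it hits $\partial_0^+S$ and avoids the critical-point structure, so continuous dependence on initial data produces an honest rectangular strip in the sense of Definition \ref{DEF_perturbed_J_strip}. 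The strips are pairwise disjoint by uniqueness of the flow and disjointness of the $\mathcal{J}_k$, which is hypothesis (R\ref{EN_R5}). By construction $\inf_{\widetilde{S}_k}a\circ\tilde{u}=a_0$ and $\sup_{\widetilde{S}_k}a\circ\tilde{u}=a_1$, giving (R\ref{EN_R1}) and (R\ref{EN_R2}). Hypothesis (R\ref{EQ_R3}) holds because (LL\ref{EN_LL4}) forces $a_1-a_0\le 2^{-11}C_{\mathbf{h}}^{-1}$, which is well within $(8C_{\mathbf{h}})^{-1}$. Finally, since the strips are pairwise disjoint subsets of $S$,
\begin{equation*}
\sum_{k=1}^n\int_{\widetilde{S}_k} u^*\omega \leq \int_S u^*\omega \leq (a_1-a_0)\sum_{k=1}^n \int_{\mathcal{J}_k}\tilde{\alpha},
\end{equation*}
which is (R\ref{EN_R4}).

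With all hypotheses of Lemma \ref{LEM_modest_length_flow_lines} verified, that lemma produces an index $k$ and a unit-speed gradient trajectory $q:[0,s_0]\to \widetilde{S}_k\subset S$ with $q(0)\in\mathcal{J}_k\subset\mathcal{I}$, $q(s_0)\in\partial_0^+S$, and
\begin{equation*}
\mathrm{length}_{\tilde{\gamma}}\bigl(q([0,s_0])\bigr)\leq 2^7(a_1-a_0),
\end{equation*}
which after a time-reparametrization yields a solution of the gradient equation $q'(s)=\widetilde{\nabla}(a\circ\tilde{u})(q(s))$ on some $[0,T]$ satisfying (\ref{EQ_length_estimate_in_prop}). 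The main delicate point is not the estimate itself but verifying that the rectangular-strip construction is actually legitimate on the chosen $\mathcal{J}_k$, i.e.\ that the upward flow from every point in $\mathcal{J}_k$ stays uniformly away from the non-Morse region and the saddles for time long enough to reach $\partial_0^+S$. This is exactly what $\mathcal{J}_k\subset\mathcal{T}$ and the compactness of $\mathcal{J}_k$ deliver, via uniform continuity of the flow on a neighborhood of the compact trajectory set.
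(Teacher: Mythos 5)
Your proposal is correct and follows essentially the same route as the paper: both reduce the statement to Lemma \ref{LEM_modest_length_flow_lines} by producing finitely many pairwise disjoint closed intervals inside \(\mathcal{I}\cap\mathcal{T}\) whose total \(\tilde{\alpha}\)-measure is at least \((a_1-a_0)^{-1}\int_S u^*\omega\), using the Step 4 bound on \(\int_{(\partial_0^-S)\setminus\mathcal{T}}\tilde{\alpha}\). The only cosmetic difference is that the paper intersects \(\mathcal{I}\) with the previously fixed intervals \(\mathcal{T}_1,\dots,\mathcal{T}_N\) (which is where the \(10C_{\mathbf{h}}\) in the hypothesis comes from), whereas you select fresh closed subintervals by inner approximation of \(\mathcal{I}\cap\mathcal{T}\); your explicit verification of (R1)--(R5) and of the rectangular strip construction is exactly what the paper leaves implicit.
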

%
\begin{proof}
We begin by observing
  \begin{align*}                                                          
    \int_{\mathcal{I}\cap (\cup_{k=1}^N \mathcal{T}_k)}\tilde{\alpha}
    &=
      \int_{\mathcal{I}}\tilde{\alpha} - \int_{\mathcal{I}\cap
      (\partial_0^-S \setminus \cup_{k=1}^N
      \mathcal{T}_k)}\tilde{\alpha}\\
    &\geq
      \int_{\mathcal{I}}\tilde{\alpha} - \int_{\partial_0^-S \setminus
      \cup_{k=1}^N \mathcal{T}_k}\tilde{\alpha}\\
    &=
      \int_{\mathcal{I}}\tilde{\alpha} - \int_{\partial_0^-S
      }\tilde{\alpha} + \sum_{k=1}^N\int_{\mathcal{T}_k}\tilde{\alpha}\\
    &\geq 
      \int_{\mathcal{I}}\tilde{\alpha} -
      10C_{\mathbf{h}}\int_{S}u^*\omega\\
    &\geq 
      \frac{1}{a_1-a_0}\int_Su^*\omega.
    \end{align*}
However \(\mathcal{I}\cap (\cup_{k=1}^N \mathcal{T}_k)\) is a finite
  union of closed intervals, so that by Lemma
  \ref{LEM_modest_length_flow_lines} and the inequality just established,
  it follows that there exists \(\zeta\in \mathcal{I}\) with the property
  that the gradient line extending from this point intersects
  \(\partial_0^+S\) in finite time and it satisfies the length estimate
  (\ref{EQ_length_estimate_in_prop}).
This completes the proof of Lemma
  \ref{LEM_existence_of_modest_gradient_trajectory}.  
\end{proof}
To continue, it will be convenient to define the following.
Given two points, \(\zeta_0, \zeta_1\in \partial_0^- S\), we define
  \(\mathcal{I}_{\zeta_0}^{\zeta_1}\subset \partial_0^-S\) to be the
  closed interval, oriented so that \(\tilde{\alpha}\) is a volume
  form on \(\mathcal{I}_{\zeta_0}^{\zeta_1}\), and such that \(\partial
  \mathcal{I}_{\zeta_0}^{\zeta_1} = \zeta_1 - \zeta_0\).
We now find a finite set of points \(\{\zeta_k\}_{k=1}^{2n}\subset
  \partial_0^-S\) with the following properties.
For each \(k\in \{1, \ldots, 2n\}\) we have
  \begin{equation*}                                                       
    r_0< \int_{\mathcal{I}_{\zeta_k}^{\zeta_{k+1}}}\tilde{\alpha} <2r_0,
    \end{equation*}
  where \(\zeta_{2n+1}:=\zeta_1\).
We also require that if \(\zeta_\ell\notin \{\zeta_k, \zeta_{k+1}\}\)
  then \(\zeta_\ell\notin \mathcal{I}_{\zeta_k}^{\zeta_{k+1}}\).
By Lemma \ref{LEM_existence_of_modest_gradient_trajectory}, it follows
  that for each \(k\in \{1, \ldots, 2n\}\) there exists \(\zeta_k^-\in
  \mathcal{I}_{\zeta_k}^{\zeta_{k+1}}\) with the property that there
  exists a solution to
  \begin{equation}                                                        
    q_k: [0, T_k]\to S \qquad q_k'(s) = \widetilde{\nabla}(a\circ
    \tilde{u})\big(q_k(s)\big)
    \end{equation}
  such that \(q_k(0)=\zeta_k^-\), \(q_k(T_k)\in \partial_0^+S\), and
  \begin{equation}\label{EQ_side_length_estimate}                         
    {\rm length}_{\tilde{\gamma}}\big(q_k([0, T_k])\big)\leq
    2^7(a_1-a_0).
    \end{equation}
For each \(k\in \{1, \ldots, n\}\) we then define
  \(z_k^-:=\zeta_{2k}^-\).
These points satisfy the property that  for each \(k\in \{1, \ldots,
  n\}\) we have
  \begin{equation}\label{EQ_some_integral_bounds}                         
    r_0\leq \int_{\mathcal{I}_{z_k^-}^{z_{k+1}^-}}\tilde{\alpha}
    \leq 6r_0,
    \end{equation}
  where for notational convenience we have used \(z_{n+1}^-=z_1^-\).
Denoting \(z_k^+:=q_{2k}(T_{2k})\in \partial_0^+S\), we now define
  \(\Sigma_k\subset S\) to be the surface uniquely determined by having
  boundary
  \begin{equation*}                                                       
    \partial\Sigma_k = q_{2k}([0, T_{2k}])\;\;\bigcup\;\; q_{2k+2}([0,
    T_{2k+2}])\;\;\bigcup\;\; \mathcal{I}_{z_k^-}^{z_{k+1}^-}\;\;\bigcup
    \;\;\mathcal{I}_{z_k^+}^{z_{k+1}^+}.
    \end{equation*}
Here we have abused notation a bit to write
  \(\mathcal{I}_{z_k^+}^{z_{k+1}^+}\subset \partial_0^+ S\), though its
  meaning should be clear form context.
For later use, we make the following definition.
  \begin{align}\label{EQ_Sigma_sides}                                     
    \partial_1^- \Sigma_k:= q_{2k}([0, T_{2k}])\qquad\text{and}\qquad
    \partial_1^+\Sigma_k: = q_{2k+2}([0, T_{2k+2}])
    \end{align}
Observe that the \(\{z_k^-\}_{k=1}^n\) satisfy the property that
  if \(z_\ell^-\notin \{z_k^-, z_{k+1}^-\}\) then \(z_\ell^-\notin
  \mathcal{I}_{z_k^-}^{z_{k+1}^-}\), and hence the \(\{\Sigma_k\}_{k=1}^n\)
  have the property that if \(k\neq \ell\) then \(\Sigma_k\cap
  \Sigma_\ell\) is either empty or consists of a single gradient
  trajectory.\\

\noindent{\bf Step 6.}

It will be convenient to estimate the area of \(\Sigma_k\) which is
  done as follows.
\begin{align*}                                                            
  {\textstyle\frac{1}{2}}{\rm Area}_{\tilde{\gamma}}(\Sigma_k)
  &\leq
    \int_{\Sigma_k} \tilde{u}^*da \wedge \tilde{\alpha}
    +\int_{\Sigma_k}\tilde{u}^*\omega\\
  &= 
    \int_{\Sigma_k}d\big( (a\circ \tilde{u}-a_1)\tilde{\alpha}\big)
    -\int_{\Sigma_k}(a\circ \tilde{u} - a_1)d\tilde{\alpha}
    +\int_{\Sigma_k}\tilde{u}^*\omega\\
  &=
    (a_1-a_0)\int_{\mathcal{I}_{z_k^-}^{z_{k+1}^-}} \tilde{\alpha}
    -\int_{\Sigma_k}(a\circ \tilde{u} - a_1)d\tilde{\alpha}
    +\int_{\Sigma_k}\tilde{u}^*\omega\\
  &\leq 
    (a_1-a_0)6r_0 + \|a\circ\tilde{u} - a_1\|_{L^\infty(\Sigma_k)}
    \|d\tilde{\alpha}\|_{L^\infty(\Sigma_k)} {\rm Area}_{\tilde{\gamma}}
    (\Sigma_k)+ {\textstyle \frac{1}{100}}\\
  &\leq 
    (a_1-a_0)6r_0 + \frac{1}{16C_{\mathbf{h}}}
    \frac{C_{\mathbf{h}}}{2}{\rm
    Area}_{\tilde{\gamma}} (\Sigma_k) + {\textstyle \frac{1}{100}}
  \end{align*}
The first inequality follows from Lemma \ref{LEM_linear_alg_coercive},
  and the final inequality employs Lemma \ref{LEM_d_alpha_tilde_bound}
  and property (LL\ref{EN_LL4}) from our assumptions in Proposition
  \ref{PROP_local_local_area_bound_2}.
Consequently, the desired area estimate is given as      
  \begin{equation*}                                                       
    {\rm Area}_{\tilde{\gamma}}(\Sigma_k)\leq (a_1-a_0)24r_0 +
    {\textstyle \frac{1}{25}}\leq {\textstyle \frac{1}{6}} .
    \end{equation*}
We are now prepared to finish the proof of Proposition
  \ref{PROP_local_local_area_bound_2}.
Indeed, let \(\zeta\in S\) so that 
  \begin{equation*}                                                       
    \big|a\circ u(\zeta)- {\textstyle \frac{1}{2}}(a_1 + a_0)\big|\leq
    {\textstyle\frac{1}{4}}(a_1-a_0),
    \end{equation*}
  and define \(\widetilde{S}_{r}(\zeta)\) to be the connected component
  of \(\tilde{u}^{-1}\big(\mathcal{B}_r(\tilde{u}(\zeta))\big)\) which
  contains \(\zeta\).
Here \(\mathcal{B}_r(p)\) is the open metric ball of radius \(r\)
  centered at \(p\in \mathbb{R}\times M\).
Recalling that \(r_1= 2^{-24} \min\big( C_{\mathbf{h}}^{-1} ,
  r_0\big) \leq 2^{-10}(a_1-a_0)\), we now claim the following.

\begin{lemma}[$\zeta$ cannot be close to both sides simultaneously]
  \label{LEM_small_cannot_intersect_both_one_boundaries}
  \hfill\\
It cannot be the case that \(\widetilde{S}_{4r_1}(\zeta)\cap \partial_1^-
  \Sigma_k\neq \emptyset \) and \(\widetilde{S}_{4r_1}(\zeta)\cap
  \partial_1^+ \Sigma_k\neq \emptyset \) for any \(k\in \{1, \ldots,
  n\}\); here the \(\partial_1^\pm \Sigma_k\) are defined in equation
  (\ref{EQ_Sigma_sides}).
\end{lemma}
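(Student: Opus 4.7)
\emph{Proof plan.} I argue by contradiction. Suppose that for some $k\in\{1,\ldots,n\}$ there exist points
\begin{equation*}
p_- \in \widetilde{S}_{4r_1}(\zeta)\cap \partial_1^-\Sigma_k
\qquad\text{and}\qquad
p_+ \in \widetilde{S}_{4r_1}(\zeta)\cap \partial_1^+\Sigma_k.
\end{equation*}
Since $\widetilde{S}_{4r_1}(\zeta)\subset \tilde{u}^{-1}\bigl(\mathcal{B}_{4r_1}(\tilde{u}(\zeta))\bigr)$, the triangle inequality in $\mathbb{R}\times M$ yields ${\rm dist}_g(\tilde{u}(p_-),\tilde{u}(p_+))\leq 8r_1$. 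By the definition of $\partial_1^\pm\Sigma_k$ in (\ref{EQ_Sigma_sides}), the points $p_\pm$ lie on the gradient trajectories $q_{2k}$ and $q_{2k+2}$ issuing from $z_k^-$ and $z_{k+1}^-$ respectively, each of $\tilde{\gamma}$-length at most $2^7(a_1-a_0)$ by (\ref{EQ_side_length_estimate}). Because $\tilde{\gamma}=\tilde{u}^*g$, the same bound holds for the $g$-length of their images under $\tilde{u}$, so chaining the three pieces and invoking (LL\ref{EN_LL4}) together with the definition of $r_1$ gives
\begin{equation*}
{\rm dist}_g\bigl(\tilde{u}(z_k^-),\tilde{u}(z_{k+1}^-)\bigr) \leq 2^8(a_1-a_0)+8r_1 < {\textstyle\tfrac{1}{2}}r_0.
\end{equation*}

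To derive the contradiction, I propose to apply Lemma \ref{LEM_small_radius} on $M$ to a unit-speed reparametrization $\tilde{q}$ of $\pi_M\circ \tilde{u}|_{\mathcal{I}_{z_k^-}^{z_{k+1}^-}}$, where $\pi_M\colon \mathbb{R}\times M\to M$ is the canonical projection. This reduction is legitimate because $\mathcal{I}_{z_k^-}^{z_{k+1}^-}\subset \partial_0^-S$ is disjoint from ${\rm supp}(f)$ by property (p\ref{EN_p3}) of Definition \ref{DEF_perturbed_J_map}; on this interval $\tilde{u}=u$ and $\tilde{\alpha}=u^*\lambda$, and the image lies in $\{a_0\}\times M$ where the ambient $g$-length agrees with the $M$-length. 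The three hypotheses of Lemma \ref{LEM_small_radius} then hold: positivity of $\lambda(\tilde{q}')$ follows from $\tilde{\alpha}$ being a positive volume form on the oriented interval; the bound $r_0\leq \int\lambda\leq 10r_0$ is precisely (\ref{EQ_some_integral_bounds}); and the measure bound $\mu^1\{t\colon \lambda(\tilde{q}'(t))<1/2\}\leq r_0$ is inherited, by restriction of a positive measure, from hypothesis (LL\ref{EN_LL7}). Lemma \ref{LEM_small_radius} then yields ${\rm dist}_g(\tilde{u}(z_k^-),\tilde{u}(z_{k+1}^-))\geq r_0/2$, contradicting the upper bound above.

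I do not anticipate any significant obstacle; the argument is essentially a matter of assembling already-built pieces. The only mild technical point is confirming that each of the three hypotheses of Lemma \ref{LEM_small_radius} transfers correctly after restriction to the subinterval $\mathcal{I}_{z_k^-}^{z_{k+1}^-}$, and this relies crucially on the fact that the constants in the statement of Proposition \ref{PROP_local_local_area_bound_2} have been chosen so that $a_1-a_0$ and $r_1$ are both many powers of two smaller than $r_0$. This is precisely the purpose of the exacting bookkeeping of constants throughout Section \ref{SEC_proof_of_exp_area_bounds}.
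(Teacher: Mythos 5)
Your proposal is correct and follows essentially the same argument as the paper's proof: you obtain the upper bound on ${\rm dist}_g(\tilde{u}(z_k^-),\tilde{u}(z_{k+1}^-))$ by chaining the two gradient trajectories of length $\leq 2^7(a_1-a_0)$ through the small ball of radius $4r_1$, and the lower bound $\geq\frac{1}{2}r_0$ by verifying the three hypotheses of Lemma \ref{LEM_small_radius} for the bottom boundary segment $\mathcal{I}_{z_k^-}^{z_{k+1}^-}$ using (\ref{EQ_some_integral_bounds}), (LL\ref{EN_LL7}), and the fact that the perturbation $f$ vanishes near $\partial S$. The only cosmetic difference is that you explicitly project via $\pi_M$ to $M$ (which is harmless since $\pi_M$ does not increase $g$-lengths), whereas the paper works with $\tilde{q}=\tilde{u}\circ\phi$ directly in $\mathbb{R}\times M$.
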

%
We will prove Lemma \ref{LEM_small_cannot_intersect_both_one_boundaries}
  momentarily, but for now we make use of it to complete the proof of
  Proposition \ref{PROP_local_local_area_bound_2}.
Indeed,  as a consequence of Lemma
  \ref{LEM_small_cannot_intersect_both_one_boundaries} it must be the case
  that if \(\widetilde{S}_{4r_1}(\zeta)\cap \partial \Sigma_k\neq
  \emptyset\) then \(\widetilde{S}_{4r_1}(\zeta)\subset
  \Sigma_{k-1}\cup\Sigma_k\cup\Sigma_{k+1}\).
Consequently,
  \begin{equation}\label{EQ_Area_containtment_estimate}                    
    {\rm Area}_{\tilde{\gamma}} \big(\widetilde{S}_{4r_1}(\zeta)\big)\leq
    \frac{1}{2}.
    \end{equation}
Next, we note that since \(\tilde{u}\) is an \((\delta, \epsilon)\)-tame
  perturbation of \(u\) with \(\epsilon< r_1\) (recall inequality
  (\ref{EQ_epsilon_suff_small}) and Remark 
  \ref{REM_feature_of_de_perturbations}) it follows that for each \(z\in
  S_{r_1}(\zeta)\) we have \(\tilde{u}(z)\in \mathcal{B}_{r_1}(u(z))\).
Moreover since \(u(S_{r_1}(\zeta))\subset \mathcal{B}_{r_1}(u(\zeta))\)
  it then follows that  \(u(S_{r_1}(\zeta))\subset
  \mathcal{B}_{2r_1}(\tilde{u}(\zeta))\).
From this it follows that \(\tilde{u}(S_{r_1}(\zeta))\subset
  \mathcal{B}_{4r_1}(\tilde{u}(\zeta))\).
In other words we have shown that \( S_{r_1}(\zeta)\subset
  \tilde{u}^{-1}( \mathcal{B}_{4r_1}(\tilde{u}(\zeta)))\).
Note that by definition \( S_{r_1}(\zeta)\) is connected, and
  hence contained in a connected component of \(\tilde{u}^{-1}(
  \mathcal{B}_{4r_1}(\tilde{u}(\zeta)))\).
Also recall that by definition, \(\widetilde{S}_{4r_1}(\zeta)\) is the
  connected component of
  \(\tilde{u}^{-1}(\mathcal{B}_{4r_1}(\tilde{u}(\zeta)))\) containing
  \(\zeta\).
Consequently, to show that \(S_{r_1}(\zeta)\subset
  \widetilde{S}_{4r_1}(\zeta)\) it is sufficient to show that they have
  non-empty intersection, however this is obvious since they each contain
  \(\zeta\) by definition.
Thus we have shown
  \begin{equation*}                                                       
    S_{r_1}(\zeta)\subset 4\widetilde{S}_{r_1}(\zeta).
    \end{equation*}
Making use of this and equation (\ref{EQ_Area_containtment_estimate}),
  we have
  \begin{align*}                                                          
    \textstyle{\frac{1}{2}}&\geq 
      {\rm Area}_{\tilde{\gamma}}(\widetilde{S}_{4r_1}(\zeta))\\
    &\geq 
      {\rm Area}_{\tilde{\gamma}}(S_{r_1}(\zeta))\\
    &\geq 
      {\textstyle \frac{1}{2}}{\rm Area}_{\gamma}(S_{r_1}(\zeta)),
    \end{align*}
  where the final inequality follows from combining Lemma
  \ref{LEM_gamma_tilde_estimates} -- particularly inequality
  (\ref{EQ_gamma_tilde_estimate_1}) -- together with equation
  (\ref{EQ_hausdorff_measure})
  from Section \ref{SEC_co-area} which expresses the Hausdorff measure
  in terms of coordinates and a Riemannian metric.
Since \(\gamma=u^*g\), the above estimate can be restated as
  \begin{equation*}                                                       
    {\rm Area}_{\gamma}(S_{r_1}(\zeta))\leq 1,
    \end{equation*}
  which is also the desired inequality (\ref{EQ_key_area_estimate}).
Other than providing the proof of Lemma
  \ref{LEM_small_cannot_intersect_both_one_boundaries}, this completes the
  proof of Proposition
  \ref{PROP_local_local_area_bound_2}.
\end{proof}
In order to complete the proof of Proposition
  \ref{PROP_local_local_area_bound_2} it only remains to prove Lemma
  \ref{LEM_small_cannot_intersect_both_one_boundaries}, which we do at
  present.

\begin{proof}[Proof of Lemma
  \ref{LEM_small_cannot_intersect_both_one_boundaries}]
Because \(\partial_1^\pm\Sigma_k = q_{2k+1\pm 1}([0, T_{2k+1\pm 1}])\),
  and because of inequality (\ref{EQ_side_length_estimate}), it follows
  that
  \begin{equation*}                                                       
    \max\Big({\rm length}_{\tilde{\gamma}}(\partial_1^-\Sigma_k),
    {\rm length}_{\tilde{\gamma}}(\partial_1^+\Sigma_k)\Big) \leq
    2^7(a_1-a_0).
    \end{equation*}
In order to derive a contradiction, let us assume that
  \(\widetilde{S}_{4r_1}(\zeta)\cap \partial_1^- \Sigma_k\neq \emptyset \)
  and \(\widetilde{S}_{4r_1}(\zeta)\cap \partial_1^+ \Sigma_k\neq \emptyset
  \).
Consequently, there exists a piece-wise smooth path \(\beta\colon[0,
  1]\to\mathbb{R}\times M\) such that \(\beta(0)=\tilde{u}(z_k^-)=u(z_k^-)\),
 \(\beta(1)=\tilde{u}(z^-_{k+1})=u(z^-_{k+1})\), and
  \begin{equation}\label{EQ_another_length_estimate}                      
    {\rm length}_g(\beta([0, 1]))\leq 2^8(a_1-a_0) + 8r_1.
    \end{equation}
Indeed, this path is described by following
  \(\tilde{u}(\partial_1^-\Sigma_k)\) from \(\tilde{u}(z_k^-)\)
  to some point inside \(\mathcal{B}_{4r_1}(\tilde{u}(\zeta))\),
  following the unique geodesic to \(\tilde{u}(\zeta)\), following
  a geodesic in \(\mathcal{B}_{4r_1}(\tilde{u}(\zeta))\) to a
  point in \(\tilde{u}(\partial_1^+\Sigma_k)\), and then following
  \(\tilde{u}(\partial_1^+\Sigma_k)\) to \(\tilde{u}(z_{k+1}^-)\).
In light of inequality (\ref{EQ_another_length_estimate}), and
  the fact that \(r_1\leq 2^{-10} (a_1-a_0) \), we conclude that \({\rm
  length}_g(\beta([0, 1]))\leq 2^9(a_1-a_0)\), and hence
  \begin{equation}\label{EQ_inequality_124}                               
    {\rm dist}_g\big(\tilde{u}(z_k), \tilde{u}(z_{k+1})\big)\leq
    2^9(a_1-a_0).
    \end{equation}
We now parametrize \(\partial_0^-\Sigma_k\) by \(\phi:[0, T]\to
  \partial_0^-\Sigma_k\) so that \(\|\phi'\|_{\tilde{u}^*g}=1\) and
  \(\phi(0)=z_k^-\) and \(\phi(T)=z_{k+1}^-\).
Define \(\tilde{q}:=\tilde{u}\circ \phi\). 
Observe that \(\tilde{q}\) is a unit speed parametrization of a path
  between \(\tilde{u}(z_k^-)\) and \(\tilde{u}(z_{k+1}^-)\).
We also claim the following hold.
\begin{enumerate}                                                         
  \item 
    \(\lambda(\tilde{q}'(t))>0\) for all \(t\in [0, T]\),
  \item 
    \(r_0 \leq \int_{\tilde{q}}\lambda \leq 6r_0\)
  \item  
    \(\mu_{\tilde{q}^*g}^1(\{t\in [0, T]:
    \lambda(\tilde{q}'(t))<\frac{1}{2}\}) \leq  r_0\)
  \end{enumerate}
We take a moment to verify these properties. 
Recall that \(\partial_0^-\Sigma_k\subset \partial S\) and this is a
  connected component of the preimage of a regular value of the function
  \(a\circ u\).
Consequently we either have \(\lambda(\tilde{q}'(t))>0\) for all \(t\in
  [0, T]\) or we have \(\lambda(\tilde{q}'(t))<0\) for all \(t\in [0,
  T]\); inequality (\ref{EQ_some_integral_bounds}) then establishes the
  former holds.
The second property is simply a restatement of equation
  (\ref{EQ_some_integral_bounds}).
The third property follows from combining several observations, which
  we accomplish presently.
Because \(\tilde{u}\) is an \((\epsilon,
  \delta)\)-tame perturbation of \(u\), it follows that
  \(\tilde{u}\big|_{\partial S}=u\big|_{\partial S}\), and hence
  \(\tilde{u}\big|_{\partial_0^-\Sigma_k}=u\big|_{\partial_0^-\Sigma_k}\).
Because \(\phi\) is a \(\tilde{u}^*g\)-unit speed
  parametrization, and because of the first property, and because
  \(\tilde{u}(\partial_0^-\Sigma_k)=a_0\), it follows that pointwise we
  have \(\|u^*\lambda\|_{u^*g} = \lambda(\tilde{q}')\).
Combining these facts together with assumption \((LL\ref{EN_LL7})\)
  then yields
  \begin{align*}                                                          
    r_0
    &\geq
      \mu_{u^*g}^1\big(\{\zeta \in \partial S: a\circ u(\zeta)=a_0 \;
      \text{ and }\; \|u^* \lambda\|_{u^* g} < {\textstyle\frac{1}{2}}\}
      ) \\
    &\geq 
      \mu_{u^*g}^1\big(\{\zeta \in \partial_0^-\Sigma_k: \|u^*
      \lambda\|_{u^* g} < {\textstyle\frac{1}{2}}\} ) \\
    &=
      \mu_{u^*g}^1\big(\{t\in [0, T]: \lambda(\tilde{q}'(t)) <
      {\textstyle\frac{1}{2}}\} ),
    \end{align*}
  which is the claim of the third property. 
With these properties established, we now apply Lemma 
  \ref{LEM_small_radius}, which guarantees that
  \begin{equation}\label{EQ_inequality_123}                               
    {\rm dist}_{g}\big(\tilde{q}(0), \tilde{q}(T)\big)\geq
    {\textstyle\frac{1}{2}}r_0.
    \end{equation}
However,
  \begin{align*}                                                          
    {\textstyle\frac{1}{2}}r_0
    &\leq  
      {\rm dist}_{g}\big(\tilde{q}(0), \tilde{q}(T)\big) &\text{by
      }(\ref{EQ_inequality_123})\\
    &= 
      {\rm dist}_{g}\big(\tilde{u}(z_k^-),
      \tilde{u}(z_{k+1}^-)\big)&\text{since
      }\tilde{u}(z_k^-)=\tilde{q}(0)\text{ and
      }\tilde{u}(z_{k+1}^-)=\tilde{q}(T)\\
    &\leq 
      2^9(a_1-a_0)&\text{ by } (\ref{EQ_inequality_124})\\
    &\leq 
      2^9 2^{-11}r_0&\text{ by }(LL\ref{EN_LL4})\\
    &= 
      {\textstyle\frac{1}{4}}r_0
    \end{align*}
  which is the desired contradiction.  
This completes the proof of Lemma 
  \ref{LEM_small_cannot_intersect_both_one_boundaries}.
\end{proof}
%
\begin{lemma}[connected-local area bound for orbit cylinders]
  \label{LEM_local_local_orbit_cylinders}
  \hfill\\
Proposition \ref{PROP_local_local_area_bound_2} remains true when the
  assumption
  \begin{enumerate}[(LL1)]                                                
    \setcounter{enumi}{4}
    \item \(0< \int_{S} u^*\omega\leq
        r_0\big((a_1-a_0)^{-1}+10C_{\mathbf{h}}\big)^{-1} \)
    \end{enumerate}
  is weakened to the following
  \begin{enumerate}[(LL1')]                                               
  \setcounter{enumi}{4}
    \item \( \int_{S} u^*\omega\leq
        r_0\big((a_1-a_0)^{-1}+10C_{\mathbf{h}}\big)^{-1} \).
    \end{enumerate}
That is, we allow for the case that \(\int_S u^* \omega = 0\).
\end{lemma}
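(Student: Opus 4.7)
The proof splits naturally into two cases according to the value of $\int_S u^*\omega$. When $\int_S u^*\omega>0$, hypothesis (LL5$'$) specializes to the strict inequality (LL5) of Proposition \ref{PROP_local_local_area_bound_2}, so the desired conclusion is immediate. The substance of the lemma is the borderline case $\int_S u^*\omega = 0$, which I would treat by rerunning the proof of Proposition \ref{PROP_local_local_area_bound_2} and exploiting the rigid structural consequences of this equality.

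When $\int_S u^*\omega=0$, Lemma \ref{LEM_J_diff} shows that $u^*\omega$ evaluates non-negatively on each positively oriented basis of $TS$, so its vanishing integral forces $u^*\omega\equiv 0$ pointwise. By pseudoholomorphicity one also has $u^*(\omega(\cdot,J\cdot))\equiv 0$, whence
\[
u^*g \;=\; (u^*da)\otimes(u^*da)\;+\;(u^*\lambda)\otimes(u^*\lambda),
\]
and the $u^*g$-area form on $S$ equals $u^*(da\wedge\lambda)$. At every immersed point the tangent plane of $u$ lies in the $2$-dimensional integrable distribution $\ker\omega=\operatorname{span}(\partial_a,X_\eta)$, whose leaves are the flat orbit cylinders $\mathbb{R}\times\gamma$ (integrability follows from $[\partial_a,X_\eta]=0$). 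Since $d\lambda$ is pulled back from $M$, one has $d\lambda(\partial_a,\cdot)\equiv 0$, and since $\operatorname{span}(\partial_a,X_\eta)$ is $1$-dimensional modulo $\partial_a$, this forces $u^*d\lambda\equiv 0$ on the (dense, open) set of immersed points of $u$, hence everywhere on $S$. In particular $\tilde\alpha=u^*\lambda$ is closed away from the support of the perturbation $f$, and more generally Lemma \ref{LEM_d_alpha_tilde_bound} sharpens: the norm $\|d\tilde\alpha\|_{\tilde\gamma}$ is controlled solely by the perturbation data $f$, with no contribution from $u^*d\lambda$.

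With these structural inputs I rerun the proof of Proposition \ref{PROP_local_local_area_bound_2}. The only step that used strict positivity of $\int_Su^*\omega$ was the fifth constraint on $\delta_6$, namely $\delta_6\sum_{z\in\mathcal Z}k_z\leq\frac{C_{\mathbf h}}{8\pi}\int_Su^*\omega$; I replace it by $\delta_6\sum_{z\in\mathcal Z}k_z\leq \eta$ for an arbitrary auxiliary parameter $\eta>0$ that will be sent to $0$. All the subsequent estimates of the form $\int_{\pi(\mathcal B')}\tilde\alpha\leq 4C_{\mathbf h}\int_Su^*\omega$ and $\int_{\mathcal E}\tilde\alpha\leq 4C_{\mathbf h}\int_Su^*\omega$ now read $O(\eta)$, which is more than enough to carry out the partition of $\partial_0^-S$ into intervals $\{\mathcal T_k\}$, construct the associated pseudoholomorphic strips $\Sigma_k$ via Lemma \ref{LEM_existence_of_modest_gradient_trajectory}, and apply the area estimate ${\rm Area}_{\tilde\gamma}(\Sigma_k)\leq\tfrac16$ using (LL4), Lemma \ref{LEM_linear_alg_coercive}, and Lemma \ref{LEM_d_alpha_tilde_bound}. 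Lemma \ref{LEM_small_cannot_intersect_both_one_boundaries} then confines $\widetilde{S}_{4r_1}(\zeta)$ to at most three consecutive strips, yielding the bound ${\rm Area}_{u^*g}(S_{r_1}(\zeta))\leq 1$ after passing $\eta\to 0$.

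The main obstacle will be coherently coordinating the limit $\eta\to 0$ with the $(\delta,\epsilon)$-tame perturbation framework of Definition \ref{DEF_epsilon_tame}. Specifically, as $\delta$ shrinks the constant $C_B=\sup\{\|B_u(\zeta)\|_\gamma : \operatorname{dist}_\gamma(\zeta,\mathcal Z)\geq\tfrac12\delta\}$ typically blows up, since the second fundamental form of an orbit-cylinder cover diverges at its branch points in $\mathcal Z$; consequently the admissible range $\epsilon<1/(1+C_B)$ contracts simultaneously. One must therefore show that Lemma \ref{LEM_tame_perturbations} yields $(\delta_n,\epsilon_n)$-tame perturbations $(\tilde u_n,\tilde\jmath_n,f_n,u,S,j)$ along some sequence $(\delta_n,\epsilon_n,\eta_n)\to(0,0,0)$ for which all the strip-level area estimates remain uniform and the final bound passes cleanly to the limit; this essentially amounts to checking that the various $C_{\mathbf h}$-dependent inequalities used in the strip analysis depend on the perturbation only through the explicit factor $\epsilon/(2^{11}(1+C_B))$, which is satisfied by our earlier estimates.
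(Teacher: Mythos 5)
Your reduction to the case $\int_S u^*\omega=0$ and your identification of the structural consequence --- that $u^*\omega\equiv 0$ pointwise and hence $u(S)\subset\mathbb{R}\times\gamma(\mathbb{R})$ for a trajectory $\gamma$ of $X_\eta$ --- are correct and match the paper. But from that point on you take the wrong turn, and the turn you take runs into an obstacle you name but do not overcome.

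Having established that $u$ maps into a single flat orbit cylinder, there is no need to rerun the machinery of Proposition \ref{PROP_local_local_area_bound_2} at all. The orbit cylinder $\mathbb{R}\times\gamma(\mathbb{R})\subset\mathbb{R}\times M$ is totally geodesic and flat for the metric $g=da\otimes da + \lambda\otimes\lambda + \omega(\cdot,J\cdot)$, and the restriction of $J$ to it is the standard complex structure on a flat cylinder. The paper's proof exploits this directly: using (LL2), (LL3) it reads off a period $T=\int_{(a\circ u)^{-1}(a_1)\cap\partial S}u^*\lambda>0$ so that $\gamma(0)=\gamma(T)$, lifts $u$ through the holomorphic covering $\Phi\colon\mathbb{R}\times(\mathbb{R}/T\mathbb{Z})\to\mathbb{R}\times\gamma(\mathbb{R})$, and applies the maximum principle to conclude that the lift $\phi$ is an embedding onto $[a_0,a_1]\times(\mathbb{R}/T\mathbb{Z})$. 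In the resulting flat coordinates $u(s,t)=(s,\gamma(t))$, the preimage of a metric $r_1$-ball has area at most $\pi r_1^2\leq\pi(2^{-24}/100)^2\leq 1$, and the lemma follows immediately --- no strips, no perturbations, no limit.

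By contrast, your plan of replacing the fifth constraint on $\delta_6$ with $\delta_6\sum k_z\leq\eta$ and sending $\eta\to 0$ runs exactly into the problem you flag: as $\delta\to 0$ the constant $C_B=\sup\{\|B_u(\zeta)\|_\gamma:\operatorname{dist}_\gamma(\zeta,\mathcal Z)\geq\tfrac12\delta\}$ diverges, so the allowed $\epsilon<1/(1+C_B)$ collapses, and it is not clear that the strip-level estimates remain uniform along any sequence $(\delta_n,\epsilon_n,\eta_n)\to 0$. You gesture at ``checking that the various $C_{\mathbf h}$-dependent inequalities depend on the perturbation only through the factor $\epsilon/(2^{11}(1+C_B))$,'' but you do not carry this out, and it is the crux --- Lemma \ref{LEM_gamma_tilde_estimates}, Lemma \ref{LEM_linear_alg_coercive}, and Lemma \ref{LEM_d_alpha_tilde_bound} all require the combined bound $\|df\|_\gamma+\|\nabla df\|_\gamma\leq\epsilon/(2^{11}(1+C_B))$ with $\epsilon<2^{-24}$ \emph{fixed relative to the ambient geometry}, and you would need to verify the final area inequality survives the coupled limit. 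That verification is precisely what the lemma is supposed to render unnecessary. So the proposal as written is incomplete; the intended proof bypasses this difficulty entirely by working directly with the flat cylinder structure.
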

%
\begin{proof}
We begin by observing that we need only prove the case that \(\int_S u^*
  \omega = 0\).
Since \(u:S\to \mathbb{R}\times M\) is  pseudoholomorphic and \(\omega\)
  evaluates non-negatively on \(J\)-complex lines, it follows that in this
  case, there must exist a trajectory \(\gamma\colon\mathbb{R}\to M\) of the
  Hamiltonian vector field \(X_\eta\) with the property that \(u(S)\subset
  \mathbb{R}\times \gamma(\mathbb{R})\).

By property (LL\ref{EN_LL2}) we have \(a\circ u(\partial S) =
  \{a_0,a_1\}\), and by property (LL\ref{EN_LL3}) it follows that there are
  no critical points of \(a\circ u\) on \(\partial S\).
Consequently, \(u^*\lambda\big|_{\partial S}\) is non-vanishing, and thus
  there must exist 
  \begin{align*}                                                          
    0 < T = \int_{(a\circ u)^{-1}(a_1) \cap \partial S} u^*\lambda
    \end{align*}
  such that \(\gamma(0)=\gamma(T)\).
Moreover, associated to the covering map
  \begin{align*}                                                          
    &\Phi\colon \mathbb{R}\times (\mathbb{R}/T\mathbb{Z}) \to \mathbb{R}
      \times \gamma(\mathbb{R})\\
    &\Phi(s,t) = \big(s, \gamma(t)\big)
    \end{align*}
  there exists a lift \(\phi\colon S\to \mathbb{R}\times
  (\mathbb{R}/T\mathbb{Z})\) of \(u\colon S\to  \mathbb{R}\times
  \gamma(\mathbb{R}) \subset \mathbb{R}\times M\).
Equipping \(\mathbb{R}\times (\mathbb{R}/ T\mathbb{Z})\) with the almost
  complex structure \(J\partial_s = \partial_t\) makes \(\Phi\)
  pseudoholomorphic and hence \(\phi\) is pseudoholomorphic.
We then conclude from the maximum principle that \(\phi\) is an embedding
  of \(S\) into \([a_0, a_1]\times (\mathbb{R}/T\mathbb{Z})\).
Using \(\phi\) to pull back coordinates \((s,t)\), we then have \(u(s,t) =
  (s, \gamma(t))\).
Consequently for each \(\zeta\in S\) with 
  \begin{align*}                                                          
    \big|a\circ u(\zeta) - {\textstyle \frac{1}{2}}(a_1+a_0)\big| \leq
    {\textstyle \frac{1}{4}}(a_1 - a_0)
    \end{align*}
  we also have
  \begin{align*}                                                          
    {\rm Area}_{u^*g}\big(S_{r_1}(\zeta)\big) \leq  \pi r_1^2  \leq \pi
    \Big( 2^{-24} \frac{1}{100}\Big)^2 \leq 1,
    \end{align*}
  which is the desired conclusion and completes the proof of Lemma
  \ref{LEM_local_local_orbit_cylinders}.
\end{proof}

With the proof of Proposition \ref{PROP_local_local_area_bound_2}  and
  Lemma \ref{LEM_local_local_orbit_cylinders} established, we are now
  prepared to prove the main result of this section.

\setcounter{CurrentSection}{\value{section}}
\setcounter{CurrentTheorem}{\value{theorem}}
\setcounter{section}{\value{CounterSectionAsymptoticLocalLocalAreaBound}}
\setcounter{theorem}{\value{CounterTheoremAsymptoticLocalLocalAreaBound}}
\begin{theorem}[asymptotic connected-local area bound]\hfill\\
Let \((M, \eta)\) be a compact framed Hamiltonian manifold, and let \((J,
  g)\) be an \(\eta\)-adapted almost Hermitian structure on
  \(\mathbb{R}\times M\).
Then the positive constant \(r_1=r_1(M, \eta, J, g)\) guaranteed by
  Proposition \ref{PROP_local_local_area_bound_2} has the following
  additional significance.
For each generally immersed feral pseudoholomorphic curve \((u, S, j)\) in
  \(\mathbb{R}\times M\), there exists a compact set of the form
  \(K:=[-a_0, a_0]\times M\) with the property that for each \(\zeta\in
  S\) such that \(u(\zeta)\notin K\) we have
  \begin{equation*}                                                       
      {\rm Area}_{u^*g}\big(S_{r_1}(\zeta)\big)\leq 1;
    \end{equation*}
  here \(S_{r_1}(\zeta)\) is defined to be the connected component of
  \(u^{-1}(\mathcal{B}_{r_1}(u(\zeta)))\) containing \(\zeta\), and
  \(\mathcal{B}_{r_1}(p)\) is the open metric ball of radius \(r_1\)
  centered at the point \(p\in \mathbb{R}\times M\).
\end{theorem}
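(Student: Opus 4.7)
The plan is to reduce the theorem to Proposition \ref{PROP_local_local_area_bound_2} and its orbit-cylinder extension Lemma \ref{LEM_local_local_orbit_cylinders} by extracting, for each $\zeta \in S$ with $|a \circ u(\zeta)|$ sufficiently large, an annular neighborhood $A \subset S$ containing $\zeta$ on which the hypotheses (LL1)--(LL7) can be verified (or, when the $\lambda$-integral along the bottom of $A$ is too small, on which Theorem \ref{THM_area_bound_estimate} already gives a small total area). The finite-topology axioms defining a feral curve, together with $\int_S u^*\omega < \infty$, are exactly what allow such an extraction to happen outside a single compact set $K = [-a_0, a_0] \times M$ whose size depends on the curve.

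First I would fix $\delta := 2^{-12}\min(C_{\mathbf{h}}^{-1}, r_0)$ so that $2\delta$ lies inside the slab-thickness range required by (LL2) and (LL4), and pick an energy threshold $\hbar_0 > 0$ small enough to enforce (LL5) and, through Lemma \ref{LEM_Q_has_small_measure}, to keep $\mu(\mathcal{Q}_{u, 1/2, r_0}) \leq \delta/100$. Using finiteness of $\mathrm{Genus}(S)$, $\#\mu$, $\#D$, $\#\pi_0(S)$, $\#\pi_0(\partial S)$, and the monotonicity of $\mathrm{Punct}$ from Remark \ref{REM_punct}, I would then choose $a_0$ so large that $u^{-1}(\{|a| > a_0\})$ contains no marked or nodal points and no boundary of $S$, that every connected component there has genus zero and is non-compact, and that $\int_{u^{-1}(\{|a|>a_0\})} u^*\omega < \hbar_0$. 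For $\zeta$ with $b := a \circ u(\zeta) > a_0 + \delta$ (the case $b < -a_0 - \delta$ being symmetric), I would select regular values $a^\pm$ of $a \circ u$ with $b - \delta \leq a^- < b - \delta/2$, $b + \delta/2 < a^+ \leq b + \delta$, and $a^- \notin \mathcal{Q}_{u, 1/2, r_0}$, which places $\zeta$ in the middle half of the slab $[a^-, a^+]$ and arranges (LL7). Letting $A$ be the connected component of $(a\circ u)^{-1}([a^-, a^+])$ containing $\zeta$, the topological setup forces $A$ to be a topological annulus with $\partial A \subset (a\circ u)^{-1}(\{a^-, a^+\})$, delivering (LL1)--(LL5) and (LL7).

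The conclusion now splits into two cases. If $\int_{A \cap (a\circ u)^{-1}(a^-)} u^*\lambda \geq 100 r_0$, then (LL6) holds and Proposition \ref{PROP_local_local_area_bound_2} (or Lemma \ref{LEM_local_local_orbit_cylinders} when $\int_A u^*\omega = 0$) yields $\mathrm{Area}_{u^*g}(S_{r_1}(\zeta)) \leq 1$ directly. Otherwise the bottom $\lambda$-integral is at most $100 r_0$ and Theorem \ref{THM_area_bound_estimate}, applied to the restriction of $u$ to $A$ with $r = (a^+ - a^-)/2$ and $E_0 \leq \hbar_0$, bounds $\mathrm{Area}_{u^*g}(A)$ by an explicit constant strictly less than $1$ for our choice of $\delta$ and $\hbar_0$; since $r_1 < \delta$ and $S_{r_1}(\zeta)$ is connected and contains $\zeta$, we have $S_{r_1}(\zeta) \subset A$, completing the argument in this case.

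The main technical obstacle is justifying that the slab component $A$ is really a topological annulus with boundary exactly on $\{a = a^\pm\}$. Because of the choice of $a_0$, the end of $S$ containing $\zeta$ is a non-compact, genus-zero surface with no special points or original boundary---topologically a half-infinite cylinder---but $a \circ u$ restricted to this end need not be Morse, and for bad values of $a^\pm$ the slab preimage could contain non-annular components, or components disconnected from $\zeta$'s component. To control this I would mimic Step 1 of the proof of Proposition \ref{PROP_local_local_area_bound_2}: perturb $u$ on the end to $\tilde u$ so that $a \circ \tilde u$ is Morse outside small neighborhoods of the finite non-immersed locus, then invoke the ideas of Steps 2--3 of that proof (gradient-flow analysis together with Sard's theorem) to show that the Lebesgue measure of levels $a^-$ for which $A$ fails to be an annulus is bounded by a geometric constant times $\int_S u^*\omega$. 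With $\hbar_0$ chosen sufficiently small, this exceptional set has measure below $\delta/10$, so admissible $a^\pm$ satisfying every constraint always exist.
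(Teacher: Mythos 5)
Your overall strategy (reduce to Proposition \ref{PROP_local_local_area_bound_2} on a thin slab around $a\circ u(\zeta)$, with a fallback to Theorem \ref{THM_area_bound_estimate} when the bottom $\lambda$-integral is below $100r_0$) is the same as the paper's, and your two cases mirror its Cases IIa/IIb. The genuine gap is the annulus reduction, which you flag but do not repair. Taking $A$ to be the raw connected component of $(a\circ u)^{-1}([a^-,a^+])$ containing $\zeta$ does not give an annulus, and the failure cannot be removed by a generic (Sard-type) choice of $a^\pm$: extra boundary circles arise from shallow dips below $a^-$ and bumps above $a^+$ attached to $\zeta$'s component, and each such feature produces extra circles for \emph{every} level in an interval whose length equals the feature's height; the energy threshold (Theorem \ref{THM_energy_threshold}) only controls features of definite height, so the total measure of ``bad'' levels is not bounded by a constant times $\int_S u^*\omega$ --- and Steps 2--3 of the proof of Proposition \ref{PROP_local_local_area_bound_2}, which you cite, estimate the $\tilde\alpha$-measure of bad gradient trajectories along a \emph{fixed} level, not the Lebesgue measure of bad levels. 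Moreover, even at a perfectly generic level the component containing $\zeta$ can be a planar surface with several essential top or bottom circles, a feature of the end's topology independent of the choice of $a^\pm$; your choice of $a_0$ (genus zero, no special points, small energy) does not exclude this, since you never impose stabilization of ${\rm Punct}(S)$, and genus zero alone does not force one boundary circle per level. Consequently, in the regime where the bottom $\lambda$-integral is $\geq 100r_0$ but $A$ is not an annulus, neither of your cases applies.

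The paper closes precisely this hole by a different device: it does not work with the raw slab component, but caps off the inessential boundary circles (those bounding compact pieces of $S\setminus(S_{-\infty}^{a_0}\cup S_{a'}^{b'})$ disjoint from the core) to form $\widetilde{S}_{a'}^{b'}$, proves via the energy threshold that these caps protrude at most $2^{-13}\epsilon_0$ beyond the slab (Lemma \ref{LEM_extension_is_small}, which is what rescues your condition (LL4) after capping), and then shows the capped component meets each of $\{a'\}\times M$ and $\{b'\}\times M$ in at most one circle, using genus super-additivity together with the stabilization ${\rm Genus}(S)={\rm Genus}(\overline{S}^{a_0})$ and an auxiliary scale $a_1$ with $a'>1+a_1$ (Lemma \ref{LEM_at_most_one_part_1}), and the stabilization of ${\rm Punct}$ (Lemma \ref{LEM_at_most_one_part_2}); this also produces a third case (boundary meeting only one of the two levels), handled directly by Theorem \ref{THM_area_bound_estimate}. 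A further small defect in your setup: the requirement that every component of $u^{-1}(\{|a|>a_0\})$ be non-compact cannot in general be arranged, since short bumps of the curve keep creating compact components above any fixed $a_0$; these are exactly the pieces the paper's capping procedure absorbs rather than eliminates.
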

%
\begin{proof}
\setcounter{section}{\value{CurrentSection}}
\setcounter{theorem}{\value{CurrentTheorem}}

Before we begin, we note that our proof will primarily rely on Proposition
  \ref{PROP_local_local_area_bound_2} above to achieve the desired area bound.
In a sense, or goal is to show that outside a large compact set, a point
  in a feral curve lives inside an annulus which satisfies the hypotheses of
  Proposition \ref{PROP_local_local_area_bound_2}, and hence the desired
  area bound follows immediately.
As to be expected, the bulk of the work below is devoted to constructing
  such an annulus.

We begin by letting \(r_1\) denote the constant guaranteed by Proposition
  \ref{PROP_local_local_area_bound_2}, and we define
  \(\mathcal{R}^\pm\subset \mathbb{R}\) by 
  \begin{align*}                                                          
    {\mathcal R}^{\pm}=\{e\in {\mathbb R}\ |\ e\ \text{is regular for}\
    a\circ u\ \text{and}\ -a\circ u\}.
    \end{align*}
For notational convenience, we define
  \begin{align}\label{EQ_epsilon_not}                                     
    \epsilon_0:={\rm min}(C_{\mathbf{h}}^{-1}, r_0),
    \end{align}
  where \(C_{\mathbf{h}}\) is the ambient geometry constant given in
  Definition \ref{DEF_ambient_geometry_constant}, and \(r_0\) is the
  positive constant provided in Lemma \ref{LEM_small_radius}.
We also let \(\hbar=\hbar(M, \eta, J, g, 2^{-13}\epsilon_0, C_g)\) be the
  constant guaranteed by Theorem \ref{THM_energy_threshold} with the genus
  bound \(C_g := 0\).
For notational convenience, for each \(a_0\in \mathcal{R}^\pm\), we define
  \begin{equation*}                                                       
    S^{a_0} = (a\circ u)^{-1}\big((-a_0, a_0)\big)\qquad\text{and}\qquad
    \overline{S}^{a_0} = (a\circ u)^{-1}\big([-a_0, a_0]\big)
    \end{equation*}
Observe that there exists an \(a_0\in \mathcal{R}^\pm\) with the
  following properties.
\begin{enumerate}[(L1)]                                                   
  \item\label{EN_L1} 
    \(\int_{S\setminus S^{a_0}} u^*\omega \leq \min\big(\frac{1}{4},
    \frac{1}{2}\hbar, 2^{-19}\epsilon_0 r_0 , (2^{14}\epsilon_0^{-1}
    + 10 C_{\mathbf{h}})^{-1}\big)\)
  \item\label{EN_L2} 
    \({\rm Genus} (S) = {\rm Genus}(\overline{S}^{a_0})\)
  \item\label{EN_L3} 
    \(\#_{nc}(S\setminus S^{a_0})={\rm Punct}(S)\)
  \end{enumerate}
Recall the notion of \({\rm Punct}(S)\) is given by Definition
  \ref{DEF_generalized_punctures}, and \(\#_{nc}X\) denotes the number of
  path-connected components of \(X\) that are not compact.
Note that for any \(a_1\in \mathcal{R}^\pm\) for which \(a_1> a_0\),
  properties (L\ref{EN_L1}) - (L\ref{EN_L3}) hold even when \(S^{a_0}\)
  is replaced with \(S^{a_1}\).

Recall that our goal here is to show that for some sufficiently large
  \(c>0\), the portion of the pseudoholomorphic curve in the complement of
  \([-c, c]\times M\) satisfies the aforementioned connected-local area bound.
Strictly speaking, this breaks our problem up into two cases: the portion
  of curve in \((c, \infty)\times M\) and the portion in \((-\infty,
  -c)\times M\), however we shall henceforth only study the first case;
  the second is essentially identical.

Suppose \(a', b'\in \mathcal{R}^\pm\) so that \(a_0+\epsilon_0 < a' < b'\)
  and consider \(\overline{S}_{a'}^{b'}\), where here and throughout we
  use the notation
  \begin{align*}                                                          
    S_x^y:=(a\circ u)^{-1}((x,
    y))\qquad\text{and}\qquad\overline{S}_x^y:=(a\circ u)^{-1}([x, y]).
    \end{align*}
We characterize \(\partial \overline{S}_{a'}^{b'}\) by separating it into
  essential and inessential components.
More specifically we write \(\partial \overline{S}_{a'}^{b'} =
  \partial_{ess} \overline{S}_{a'}^{b'}\cup \partial_{ess}^\bot
  \overline{S}_{a'}^{b'}\) where \(\partial_{ess}
  \overline{S}_{a'}^{b'}\) consists of those connected components of
  \(\partial \overline{S}_{a'}^{b'}\)  which are contained in either
  non-compact connected components of \(S\setminus S_{a'}^{b'}\), or are
  contained in connected components of \(S\setminus (S_{a'}^{b'}\cup
  S_{-a_0}^{a_0})\) which have non-trivial intersection with both
  \(\overline{S}_{-a_0}^{a_0}\) and \(\overline{S}_{a'}^{b'}\).
More geometrically, if we think of \(\overline{S}_{-a_0}^{a_0}\) as being
  the core of \(S\), then the essential boundary components of
  \(\overline{S}_{a'}^{b'}\) are those which connect
  \(\overline{S}_{a'}^{b'}\) to the infinite positive end, or else they
  are boundary components of portions of curves which connect
  \(\overline{S}_{a'}^{b'}\) to the core of \(S\).

Observe that by definition of essential boundary components, we may
  cap off the inessential boundary components with the union of connected
  components of \(S\setminus (S_{-\infty}^{a_0} \cup S_{a'}^{b'})\)
  which satisfy the following conditions
  \begin{enumerate}                                                       
    \item 
      the connected component is compact
    \item 
      the connected component has empty intersection with
      \(\overline{S}_{-a_0}^{a_0}\)
    \end{enumerate} 
We will let \(\widetilde{S}_{a'}^{b'}\) denote the union of
  \(\overline{S}_{a'}^{b'}\) with the union of these specified capping
  components, so that \(\partial \widetilde{S}_{a'}^{b'}\) consists only
  of essential components.
Summarizing, we have constructed \(\widetilde{S}_{a'}^{b'}\) so that
  \begin{enumerate}[($\widetilde{S}$1)]                                   
    \item 
      \(\widetilde{S}_{a'}^{b'}\) is compact with \(a\circ u (\partial
      \widetilde{S}_{a'}^{b'})\subset \{a', b'\}\)
    \item 
      \(u(\widetilde{S}_{a'}^{b'})\subset(a_0, \infty)\times M\)
    \item 
      \(\partial \widetilde{S}_{a'}^{b'}\) is contained in the union
      of connected components of \(S\setminus (S_{-\infty}^{a_0}\cup
      S_{a'}^{b'})\) which are either non-compact, or have non-empty
      intersection with both \(\overline{S}_{-\infty}^{a_0}\) and
      \(\overline{S}_{a'}^{b'}\).
    \end{enumerate}
We now claim the following.

\begin{lemma}[short capping disks]
  \label{LEM_extension_is_small}
  \hfill\\
  \begin{equation*}                                                       
    \max \big(a'- \inf_{\zeta\in \widetilde{S}_{a'}^{b'}} a\circ
    u(\zeta), \sup_{\zeta\in \widetilde{S}_{a'}^{b'}} a\circ  u(\zeta)
    -b'\big) \leq 2^{-13}\epsilon_0.
    \end{equation*}
\end{lemma}
%

\begin{proof}
Suppose not.
For example, suppose
  \begin{equation} \label{EQ_extension_is_small}                          
    a'- \inf_{\zeta\in \widetilde{S}_{a'}^{b'}} a\circ u(\zeta)>
    2^{-13}\epsilon_0.
    \end{equation} 
Then there must exist a non-empty connected component \(\widehat{S}\)
  of \(S\setminus (\overline{S}_{-\infty}^{a_0}\cup S_{a'}^{\infty})\)
  with the following properties:
  \begin{enumerate}                                                       
    \item 
      \((u, \widehat{S}, j)\) is compact, connected, and generally
      immersed
    \item 
      \(a\circ u (S)\subset (a_0, \infty)  \)
    \item \
      \({\rm Genus}(\widehat{S}) =0 \)
    \item 
      \(a\circ u(\partial\widehat{S})=\{a_{min}+ c'\} \), 
    \end{enumerate}
   where \(a_{min}:=\inf_{\zeta\in \widehat{S}} a\circ u(\zeta)\),
   and \(c'> 2^{-13}\epsilon_0\).
We now apply Theorem \ref{THM_energy_threshold} with \(C_g=0\) and \(r =
  2^{-13}\epsilon_0\) to conclude
  \begin{equation}\label{EQ_omega_energy_contradiction}                   
    \int_{\widehat{S}}u^*\omega \geq \hbar, 
    \end{equation}
  where \(\hbar=\hbar(M, \eta, J, g, 2^{-13}\epsilon_0, 0)\).
However, equation (\ref{EQ_omega_energy_contradiction}) together with
  the fact that \(\widehat{S}\subset S\) and \(a\circ u(\widehat{S})\subset
  (a_0, \infty)\) contradicts the fact that \(a_0\) has been chosen so that
  \begin{equation*}                                                       
    0<\hbar \leq \int_{\widehat{S}}u^*\omega \leq \int_{S\setminus
    S_{-a_0}^{a_0}}u^*\omega \leq {\textstyle\frac{1}{2}}\hbar .
    \end{equation*} 
This shows that inequality (\ref{EQ_extension_is_small}) is impossible.
A similar argument shows that we must also have 
  \begin{equation*}                                                       
    \sup_{\zeta\in \widetilde{S}_{a'}^{b'}} a\circ u(\zeta)- b'\leq
    2^{-13}\epsilon_0.
    \end{equation*} 
This completes the proof of Lemma \ref{LEM_extension_is_small}
\end{proof}

In light of Lemma \ref{LEM_extension_is_small}, we conclude that another
  property of \(\widetilde{S}_{a'}^{b'}\) is the following.
\begin{enumerate}[($\widetilde{S}$1)]                                     
  \setcounter{enumi}{3}
    \item\label{EN_wS4} 
      \(\sup_{\zeta\in \widetilde{S}_{a'}^{b'}} a\circ u(\zeta)-
      \inf_{\zeta\in \widetilde{S}_{a'}^{b'}} a\circ u(\zeta)\leq b'-a'
      + 2^{-12}\epsilon_0\).
    \end{enumerate}

It will be useful to employ the following notation: If \(X\) is a
  topological space, then we let \({\rm Comp}(X)\) denote the set of
  connected components of \(X\).
It will also be useful to say that connected components \(L_1, L_2\in
  {\rm Comp}(\partial \overline{S}_{-a_0}^{a_0})\) are \emph{eventually
  connected} if there exists a connected component \(\check{S}\) of
  \(S\setminus S_{-a_0}^{a_0}\) for which \(L_1\cup L_2\subset \check{S}\).
We now note that because \(u:S\to \mathbb{R}\times M\) is a proper map
  and \(a_0\) is a regular value of both \(a\circ u\) and \(-a\circ u\),
  it follows that \({\rm Comp}(\partial \overline{S}_{-a_0}^{a_0})\)
  is finite.
Consequently, there exists \(a_1\in \mathbb{R}^+\) for which \(a_1>a_0\)
  and with the following property.
For each pair \(L_1, L_2\in {\rm
  Comp}(\partial\overline{S}_{-a_0}^{a_0})\) which are eventually
  connected, there exists a connected component \(\check{S}\)
  of \(\overline{S}_{-a_1}^{a_1}\setminus S_{-a_0}^{a_0}\) for which
  \(L_1\cup L_2\subset \check{S}\).
 
We henceforth consider \(\widetilde{S}_{a'}^{b'}\) with \(a'> 1+a_1\).
A consequence of this assumption is that each connected component of
  \(\widetilde{S}_{a'}^{b'}\) only has at most one bottom boundary
  component, and at most one top component.
We make this precise with the following two lemmas.

\begin{lemma}[bottom boundary is a circle or empty]
  \label{LEM_at_most_one_part_1}
  \hfill\\
Consider \(\widetilde{S}_{a'}^{b'}\) with \(a', b'\in \mathcal{R}^\pm\)
  and for which \(a'> 1+a_1\).
Then each connected component \(\check{S}\) of
  \(\widetilde{S}_{a'}^{b'}\) has the property that
  \begin{equation*}                                                       
    {\rm Comp}\big(\partial \check{S}\cap (a\circ u)^{-1}(a')\big)
    \leq  1.
    \end{equation*}
\end{lemma}
%
\begin{proof}
Suppose not; that is, suppose there exists a connected component
  \(\check{S}_1\) of \(\widetilde{S}_{a'}^{b'}\) for which
  \begin{equation*}                                                       
    {\rm Comp}\big(\partial \check{S}_1\cap (a\circ u)^{-1}(a')\big) \geq 2.
    \end{equation*}
Then by definition of \(a_1\) and the fact that \(a'>1 +a_1\), there
  exists \(c'\in \mathcal{R}^\pm\) for which \(a_0 < c' < a'\) and has
  the property that there exists a connected component \(\check{S}_2\)
  of \(\overline{S}_{c'}^{a'}\) for which
  \begin{equation*}                                                       
    \partial \check{S}_1\cap (a\circ u)^{-1}(a')\subset \check{S}_2.
    \end{equation*} 
We now define 
  \begin{align*}                                                          
    n_1^+ &= \#\big(\check{S}_1\cap (a\circ u)^{-1}(b')\big)\geq 0\\
    n_1^- &= \#\big(\check{S}_1\cap (a\circ u)^{-1}(a')\big)\geq 2\\
    n_2 &= \#\big(\partial \check{S}_2\big) - n_1^-\geq 1,
    \end{align*}
  where \(\#X\) denotes the number of connected components of \(X\).
Recall that for a compact two-dimensional surface \(S\) possibly with
  boundary, the Euler characteristic of \(S\) is given by
  \begin{equation*}                                                       
    \chi(S) = 2-2{\rm Genus}(S) - \#(\partial S),
    \end{equation*}
  and thus 
\begin{align*}                                                            
  \chi(\check{S}_1) &= 2 - 0 - (n_1^+ + n_1^-)\\
  \chi(\check{S}_2) &= 2 - 0 - (n_2 + n_1^-).
  \end{align*}
We now make two observations; first \(\check{S}_1\cap \check{S}_2=
  \partial \check{S}_1\cap (a\circ u)^{-1}(a')\).
Second, \(\check{S}_1 \cup \check{S}_2\subset S\) is a compact two
  dimensional surface with boundary, and which satisfies
  \begin{equation*}                                                       
    \#(\partial (\check{S}_1\cup\check{S}_2)) = n_1^+ + n_2.
    \end{equation*}
We then compute
  \begin{align*}                                                          
    \chi(\check{S}_1\cup \check{S}_2) 
    &= 
      \chi(\check{S}_1) + \chi(\check{S}_2)\\
    &= 
      \big(2 - 0 - (n_1^+ + n_1^-)\big) + \big(  2 - 0 - (n_2 +
      n_1^-)\big) \\
    &= 
      2 - 2 ( n_1^--1) - (n_1^+ +n_2)   \\
    &= 
      2 - 2( n_1^- -1) -\#(\partial (\check{S}_1\cup\check{S}_2))\\
    &= 
      2 - 2{\rm Genus}(\check{S}_1\cup \check{S}_2) -\#(\partial
      (\check{S}_1\cup\check{S}_2)),
    \end{align*}
   and conclude that \({\rm Genus}(\check{S}_1\cup \check{S}_2)= n_1^-
   - 1>0\).
However, from this it immediately follows that \({\rm
  Genus}(S_{-a_0}^{a_0}) < {\rm Genus}(S)\) which is impossible by the
  definition of \(a_0\) and genus super-additivity, namely Lemma
  \ref{LEM_genus_addition}.
This is the desired contradiction which proves Lemma
  \ref{LEM_at_most_one_part_1}.
\end{proof}
%

\begin{lemma}[top boundary is a circle or empty]
  \label{LEM_at_most_one_part_2}
  \hfill\\
Consider \(\widetilde{S}_{a'}^{b'}\) with \(a', b'\in \mathcal{R}^\pm\)
  and for which \(a'> 1+a_1\).
Then each connected component \(\check{S}\) of
  \(\widetilde{S}_{a'}^{b'}\) has the property that
  \begin{equation*}                                                       
    {\rm Comp}\big(\partial \check{S}\cap (a\circ u)^{-1}(b')\big)
    \leq  1.
    \end{equation*}
\end{lemma}
%
\begin{proof}
Suppose not.  
Then by Definition \ref{DEF_generalized_punctures} and Remark
  \ref{REM_punct} we have \({\rm Punct}(S_{-b'}^{b'})> {\rm
  Punct}(S_{-a_0'}^{a_0'})\) which contradicts the fact that \({\rm
  Punct}(S_{-a_0}^{a_0}) = {\rm Punct}(S)\) and the fact that
  \begin{equation*}                                                       
    S_{b_0}^{b_1}\subset S_{b_0'}^{b_1'}\qquad\text{implies}\qquad{\rm
    Punct}(S_{b_0}^{b_1}) \leq{\rm Punct}(S_{b_0'}^{b_1'}).
    \end{equation*}
This contradiction completes the proof.
\end{proof}

With the above topological preliminaries out of the way, we now turn our
  attention to more measure theoretic preliminaries.
Indeed, for the remainder of the proof fix \(\zeta \in S\) such that
  \(a\circ u(\zeta)\geq a_1+2\).
We also note that to prove Theorem \ref{THM_local_local_area_bound}
  we must establish the existence of a compact set \(K\subset
  \mathbb{R}\times M\), which we can now define explicitly as
  \begin{align*}                                                          
    K:= [-a_1-2, a_1+2]\times M.
    \end{align*}
Next, we consider surfaces \(S_{b_-''}^{b_+''}\)  and
  \(S_{a_-''}^{a_+''}\) where \(a_-'', a_+'', b_-'', b_+''\in
  \mathcal{R}^\pm\) and
  \begin{align*}                                                          
    a\circ u(\zeta) + 2^{-14}\epsilon_0-2^{-18}\epsilon_0 \leq\;\;\;
      & b_+''\; \leq a\circ u(\zeta) + 2^{-14}\epsilon_0 \\
    a\circ u(\zeta) + 2^{-15}\epsilon_0 \leq\;\;\;  & b_-''\; \leq
      a\circ u(\zeta) + 2^{-15}\epsilon_0 +2^{-18}\epsilon_0\\
    a\circ u(\zeta) - 2^{-15}\epsilon_0-2^{-18}\epsilon_0 \leq\;\;\;
      & a_+''\; \leq a\circ u(\zeta) - 2^{-15}\epsilon_0 \\
    a\circ u(\zeta) - 2^{-14}\epsilon_0 \leq\;\;\;  & a_-''\; \leq
      a\circ u(\zeta) - 2^{-14}\epsilon_0 +2^{-18}\epsilon_0.
    \end{align*}

Observe that by definition, we have 
  \begin{equation*}                                                       
    b_+'' - b_-'' \geq  2^{-14}\epsilon_0 - 2^{-18}\epsilon_0 -
    2^{-15}\epsilon_0 - 2^{-18}\epsilon_0\geq 2^{-16}\epsilon_0
    \end{equation*}
  and similarly
  \begin{equation}\label{EQ_a_minus_a}                                    
    a_+'' - a_-'' \geq 2^{-16}\epsilon_0.
    \end{equation}
Likewise, it is elementary to establish
  \begin{align}\label{EQ_a_and_b_estimates}                               
    2^{-14}\epsilon_0 \leq b_-'' - a_+''\qquad\text{and}\qquad b_+'' -
    a_-'' \leq 2^{-13}\epsilon_0.
    \end{align}
It is perhaps worth explicitly observing that
\begin{align*}                                                            
  a_-'' < a_+'' < a\circ u(\zeta) <  b_-'' < b_+''.
  \end{align*}
We define 
  \begin{equation*}            
    \mathcal{Q}_{u, \frac{1}{2},
    r_0}(S_{b_-''}^{b_+''}):=\Big\{t\in [b_-'', b_+'']\cap
    \mathcal{R}^\pm : \mu_{u^*g}^1\big(\{\zeta\in (a\circ
    u)^{-1}(t):\|(u^*\lambda)_{\zeta}\|_{u^*g}<
    {\textstyle\frac{1}{2}}\}\big)> r_0\Big\}
    \end{equation*}
  and 
  \begin{equation*}                                                       
    \mathcal{Q}_{u, \frac{1}{2},
    r_0}(S_{a_-''}^{a_+''}):=\Big\{t\in [a_-'', a_+'']\cap
    \mathcal{R}^\pm : \mu_{u^*g}^1\big(\{\zeta\in (a\circ
    u)^{-1}(t):\|(u^*\lambda)_{\zeta}\|_{u^*g}<
    {\textstyle\frac{1}{2}}\}\big)> r_0\Big\}.
    \end{equation*}
By construction we have \(\mathcal{Q}_{u, \frac{1}{2},
  r_0}(S_{a_-''}^{a_+''})\subset [a_-'', a_+'']\) and  \(\mathcal{Q}_{u,
  \frac{1}{2}, r_0}(S_{b_-''}^{b_+''})\subset [b_-'', b_+'']\).
However, as a consequence of Lemma \ref{LEM_Q_has_small_measure},
  we also have
  \begin{align*}                                                          
    \mu\big(\mathcal{Q}_{u, \frac{1}{2}, r_0}(S_{a_-''}^{a_+''})\big)
    &\leq
      \frac{1}{r_0(1-(\frac{1}{2})^2)}\int_{S_{a_-''}^{a_+''}}u^*\omega\\
    &=
      \frac{4}{3r_0}\int_{S_{a_-''}^{a_+''}}u^*\omega \\
    &\leq 
      \frac{4}{3r_0}\int_{S\setminus S_{-a_0}^{a_0}}u^*\omega\\
    &\leq 
      2^{-18}\epsilon_0,
    \end{align*} 
  where to obtain final inequality we have made use of property
  \((L\ref{EN_L1})\).
In other words, the subset
  \begin{equation*}                                                       
    \mathcal{Q}_{u, \frac{1}{2}, r_0}(S_{a_-''}^{a_+''})\subset [a_-'',
    a_+'']
    \end{equation*}
    satisfies
\begin{align*}                                                            
  \mu\big(\mathcal{Q}_{u, \frac{1}{2}, r_0}(S_{a_-''}^{a_+''})\big)
  \; \; \leq \; \;
  2^{-18}\epsilon_0
  \;\;<\;\; 
  2^{-16}\epsilon_0 
  \; \;\leq\; \;
  \mu\big([a_-'', a_+'']\big),
  \end{align*}
  where we have made use of inequality (\ref{EQ_a_minus_a}).
We conclude that there exists \(a'\in \mathcal{R}^\pm\cap [a_-'',
  a_+'']\setminus \mathcal{Q}_{u, \frac{1}{2}, r_0}(S_{a_-''}^{a_+''})\),
  and similarly there exists \(b'\in \mathcal{R}^\pm\cap [b_-'',
  b_+'']\setminus \mathcal{Q}_{u, \frac{1}{2}, r_0}(S_{b_-''}^{b_+''})\).
In other words, there exists \(a'\in [a_-'', a_+'']\) and \(b'\in
  [b_-'', b_+'']\) which are each regular values of \(a\circ u\), and
  \begin{equation}\label{EQ_meausre_estimate}                             
    \mu_{u^*g}^1\big(\{\zeta'\in (a\circ
    u)^{-1}(a'):\|u^*\lambda\|_{u^*g}<{\textstyle\frac{1}{2}}\}\big)\leq
    r_0
    \end{equation}
  and
  \begin{equation}\label{EQ_meausre_estimate_A}                           
    \mu_{u^*g}^1\big(\{\zeta'\in (a\circ
    u)^{-1}(b'):\|u^*\lambda\|_{u^*g}<{\textstyle\frac{1}{2}}\}\big)\leq
    r_0.
    \end{equation}
Importantly, we henceforth assume that \(a', b'\in \mathcal{R}^\pm \) have
  been fixed so that equation (\ref{EQ_meausre_estimate}) and equation
  (\ref{EQ_meausre_estimate_A}) are true, and so that
  \begin{align*}                                                          
    a_-''\; \leq\; a' \;\leq a_+''\; \leq\; a\circ u(\zeta)\; \leq \;
    b_-''\; \leq\;  b' \; \leq \; b_+''.
    \end{align*}
It then follows from equation (\ref{EQ_a_and_b_estimates}) that
  \begin{align*}                                                          
    2^{-14}\epsilon_0\; \; \leq\; \;  b_-'' - a_+'' \;\;\leq\;\; b' - a'
    \;\; \leq\;\; b_+'' - a_-''\;\; \leq\;\; 2^{-13}\epsilon_0,
    \end{align*}
  or for clarity,
  \begin{align}\label{EQ_b_minus_a}                                       
    2^{-14}\epsilon_0\; \leq\; b' - a'\;\leq \; 2^{-13}\epsilon_0.
    \end{align}

With these measure theoretic preliminaries out of the way, we can now
  complete the proof of Theorem \ref{THM_local_local_area_bound}.
Consider the surface \(S_{a'}^{b'}\), and more importantly,
  its extension \(\widetilde{S}_{a'}^{b'}\).
Furthermore, we let \(\widetilde{S}_{a'}^{b'}(\zeta)\) denote the
  connected component of \(\widetilde{S}_{a'}^{b'}\) containing \(\zeta\).
We list some properties of \(\widetilde{S}_{a'}^{b'}(\zeta)\) which
  have already been established. 
The properties (T1)--(T7) are listed in such a way that they can be
  compared with the hypotheses (LL1)--(LL7) of Proposition
  \ref{PROP_local_local_area_bound_2}.
We note that (T6) is still empty and will be filled during the discussion.
\begin{enumerate}[(T1)]                                                   
  \item\label{EN_T1_A} 
    \(\widetilde{S}_{a'}^{b'}(\zeta)\) is homeomorphic to either
    a sphere, a disk, or an annulus; this follows from Lemma
    \ref{LEM_at_most_one_part_1} and Lemma \ref{LEM_at_most_one_part_2},  
    which guarantee that \(\widetilde{S}_{a'}^{b'}(\zeta)\)
    has at most two boundary components, together with the fact that
    \(\widetilde{S}_{a'}^{b'}(\zeta)\subset S\setminus S_{-a_0}^{a_0}\),
    which, by definition of \(a_0\) and Lemma \ref{LEM_genus_addition},
    guarantees \({\rm Genus}(\widetilde{S}_{a'}^{b'}(\zeta))=0\).
  \item\label{EN_T2_A} 
    \(a\circ u(\partial \widetilde{S}_{a'}^{b'}(\zeta)) \subset \{a',
    b'\}\) with \(2^{-14}\epsilon_0 \leq b'- a'\); this follows from
    equation (\ref{EQ_b_minus_a}).
  \item\label{EN_T3} 
    \(\{\zeta'\in \widetilde{S}_{a'}^{b'}(\zeta): a\circ u(\zeta') \in
    \{a', b'\}\text{ and } d(a\circ u)(\zeta')=0\}=\emptyset\); this
    follows since \(a'\) and \(b'\) are regular values of \(a\circ u\).
  \item\label{EN_T4} 
    \(\sup_{\zeta'\in \widetilde{S}_{a'}^{b'}} a\circ u(\zeta') -
    \inf_{\zeta'\in \widetilde{S}_{a'}^{b'}} a\circ u(\zeta')\leq
    2^{-11}\epsilon_0\);  this follows from property
    (\(\widetilde{S}\)\ref{EN_wS4}) combined with inequality
    (\ref{EQ_b_minus_a}):
    \begin{align*}                                                        
      \sup - \inf 
      &\leq 
	b' - a' +2^{-12}\epsilon_0\\
      &\leq 
	2^{-13}\epsilon_0 + 2^{-12}\epsilon_0\\
      &\leq 
	2^{-11}\epsilon_0.
      \end{align*}
    which is the desired inequality.
  \item\label{EN_T5} 
    \(\int_{\widetilde{S}_{a'}^{b'}}u^*\omega \leq r_0\big(
    (b'-a')^{-1} +10 C_{\mathbf{h}})^{-1} \); this follows from the fact
    that \( \widetilde{S}_{a'}^{b'}(\zeta) \subset S\setminus
    S_{-a_0}^{a_0} \), the definition of \(a_0\), property (L\ref{EN_L1}),
    and the fact that \(b'- a' \geq 2^{-14}\epsilon_0\)
  \item\label{EN_T6} --- See the following discussion.
  \item\label{EN_T7} 
    \(\mu_{u^* g}^1\big(\{\zeta'\in \partial
    \widetilde{S}_{a'}^{b'}(\zeta): a\circ u(\zeta') = a'\text{
    and } \|u^*\lambda\|_{u*g} < \frac{1}{2}\}\big) \leq r_0\); this
    follows from our definition of \(a'\), and specifically equation
    (\ref{EQ_meausre_estimate}).
  \end{enumerate}
We also claim that 
  \begin{equation}\label{EQ_deep_interior}                                
    \big|a\circ u (\zeta) - {\textstyle\frac{1}{2}}(b'+a')\big| \leq
    {\textstyle\frac{1}{4}}(b'-a').
    \end{equation}
Before justifying inequality (\ref{EQ_deep_interior}) it may be helpful to
  recall that we have defined \(\epsilon_0=\min(C_{\mathbf{h}}^{-1},
  r_0)\).
We note that from properties (T\ref{EN_T1}) - (T\ref{EN_T7})
  and equation (\ref{EQ_deep_interior}), to apply Proposition
  \ref{PROP_local_local_area_bound_2} (or Lemma
  \ref{LEM_local_local_orbit_cylinders} in the case that
  \(\int_{\widetilde{S}_{a'}^{b'}}u^*\omega = 0\)), it is sufficient to
  establish that \(\widetilde{S}_{a'}^{b'}(\zeta)\) is an annulus with
  image of one boundary component in \(\{a'\}\times M\) and the other in
  \(\{b'\}\times M\), where the condition (T\ref{EN_T6}) given by 
  \begin{equation*}                                                       
    \int_{(a\circ u)^{-1}(a')\cap \partial \widetilde{S}_{a'}^{b'}}
    u^*\lambda \geq 100 r_0.
    \end{equation*}
We will do this momentarily, however at present we establish inequality
  (\ref{EQ_deep_interior}).
To that end, we begin by letting
  \begin{align*}                                                          
    b' &= a\circ u(\zeta) + 2^{-15}\epsilon_0 + \epsilon_b 
      &\text{ with }\;\;0\leq \epsilon_b \leq 2^{-15}\epsilon_0\\
    a' &= a\circ u(\zeta) - 2^{-15}\epsilon_0 - \epsilon_a 
      &\text{ with }\;\;0\leq \epsilon_a \leq 2^{-15}\epsilon_0
    \end{align*}
  and we define \(\epsilon_c:=\max(\epsilon_b, \epsilon_a)\).
We then observe that 
  \begin{equation*}                                                       
    |\epsilon_b -\epsilon_a| \leq \epsilon_c \leq  2^{-15}\epsilon_0 \leq
    2^{-15}\epsilon_0 + {\textstyle\frac{1}{2}}(\epsilon_b +\epsilon_a).
    \end{equation*}
  so that
  \begin{equation}\label{EQ_LL1}                                          
    {\textstyle\frac{1}{2}}|\epsilon_b -\epsilon_a| \leq
    {\textstyle\frac{1}{4}} \big(2^{-14}\epsilon_0 + (\epsilon_b
    +\epsilon_a)\big).
    \end{equation}
However, 
  \begin{equation}\label{EQ_LL2}                                          
    \big| a\circ u(\zeta) - {\textstyle\frac{1}{2}}(b'+a')\big| =
    {\textstyle\frac{1}{2}}|\epsilon_b -\epsilon_a|
    \end{equation}
  and
  \begin{equation}\label{EQ_LL3}                                          
    {\textstyle\frac{1}{4}}(b'-a') = {\textstyle\frac{1}{4}}(2^{-14}
    \epsilon_0 + \epsilon_b +\epsilon_a).
    \end{equation}
Combining equations (\ref{EQ_LL1}) - (\ref{EQ_LL3}) then establishes
  inequality (\ref{EQ_deep_interior}).
To complete the proof of Theorem \ref{THM_local_local_area_bound},
  we break the problem into cases.\\

\emph{Case I:} \(a\circ u(\partial \widetilde{S}_{a'}^{b'}(\zeta))
  \neq \{a', b'\}\).
In this case we will assume that \(a'\notin a\circ u(\partial
  \widetilde{S}_{a'}^{b'}(\zeta))\); the case that \(b'\notin a\circ
  u(\partial \widetilde{S}_{a'}^{b'}(\zeta))\) follows in identical
  fashion.
Next we define the surface \(\check{S}:=\widetilde{S}_{a'}^{b'}\cap
  (a\circ u)^{-1}[a' - 2^{-13}\epsilon_0, b']\).
We note that as a consequence of Lemma \ref{LEM_extension_is_small},
  \(\check{S}\) is indeed a smooth surface, possibly with smooth boundary,
  and \(a\circ u(\partial\check{S}) \subset \{ b'\}\).
Furthermore, 
  \begin{align*}                                                          
    \sup_{\zeta' \in \check{S}} a\circ u(\zeta') - \inf_{\zeta' \in
      \check{S}} a\circ u(\zeta') 
    &\leq 
      b'-a' + 2^{-13}\epsilon_0\\
    &\leq 
      2^{-12}\epsilon_0.
    \end{align*}
As a consequence of Theorem \ref{THM_area_bound_estimate}, we then have
  \begin{equation}\label{EQ_main_area_bound_1}                            
    {\rm Area}_{u^*g}(\check{S}) = \int_{\check{S}} u^*(da\wedge \lambda +
    \omega) \leq e^{C_{\mathbf{h}}
    2^{-12}\epsilon_0}\int_{\check{S}}u^*\omega \leq
    {\textstyle\frac{1}{2}}e^{C_{\mathbf{h}} 2^{-12}\epsilon_0}\leq 1.
    \end{equation}
Recall that \(r_1 = 2^{-24}\epsilon_0\),  and
  \(S_{r_1}(\zeta)\) is defined to be the connected component of
  \(u^{-1}(\mathcal{B}_{r_1}(u(\zeta)))\) containing \(\zeta\), where
  \(\mathcal{B}_{r_1}(p)\) is a metric ball in \(\mathbb{R}\times M\)
  of radius \(r_1\) centered at the point \(p\).
Writing \(a^\dag=a\circ u(\zeta)\), we then have
  \begin{align*}                                                          
    S_{r_1}(\zeta)\;\;
    &\subset\;\; 
      S_{a^\dag - 2^{-24}\epsilon_0}^{a^\dag +
      2^{-24}\epsilon_0}(\zeta)\\
    &\;\;\subset 
      S_{a^\dag - 2^{-15}\epsilon_0}^{a^\dag +
      2^{-15}\epsilon_0}(\zeta)\\
    &\;\;\subset 
      S_{a'}^{b'}(\zeta)\\
    &\;\;\subset 
      \check{S},
    \end{align*}
  where we have let \(S_{c_1}^{c_2}(\zeta)\) denote the connected
  component of \(S_{c_1}^{c_2}\) containing \(\zeta\).
Combining this containment with (\ref{EQ_main_area_bound_1}) then yields
  \begin{equation*}                                                       
    {\rm Area}_{u^*g}(S_{r_1}(\zeta))\leq 1, 
    \end{equation*}
  which is the desired inequality.

\emph{Case II:} \(a\circ u(\partial \widetilde{S}_{a'}^{b'}(\zeta))
  =\{a', b'\}\).
We break this into two further sub-cases.

\emph{Case IIa:} \(\int_{(a\circ u)^{-1}(a')\cap \partial
  \widetilde{S}_{a'}^{b'}} u^*\lambda \geq 100r_0\).
In this case we see that \(\widetilde{S}_{a'}^{b'} \) must be an annulus,
  with the image of one boundary component in \(\{a'\}\times M\) and the
  other in \(\{b'\}\times M\), furthermore by assumption \(\int_{(a\circ
  u)^{-1}(a')\cap \partial \widetilde{S}_{a'}^{b'}} u^*\lambda \geq
  100r_0\), and hence by the remarks immediately following the statements
  of properties (T\ref{EN_T1}) - (T\ref{EN_T7}), we may apply Proposition
  \ref{PROP_local_local_area_bound_2} (or Lemma
  \ref{LEM_local_local_orbit_cylinders} as appropriate), which precisely
  guarantees that
  \begin{equation*}                                                       
    {\rm Area}_{u^*g}(S_{r_1}(\zeta))\leq 1, 
    \end{equation*}
  which is the desired inequality.

\emph{Case IIb:} \(\int_{(a\circ u)^{-1}(a')\cap \partial
  \widetilde{S}_{a'}^{b'}} u^*\lambda \leq 100r_0\).
This case has more in similarity with Case I than Case IIa, in the sense
  that we will estimate area directly rather than invoke Proposition
  \ref{PROP_local_local_area_bound_2}.
We begin by claiming that \(S_{r_1}(\zeta)\subset S_{a'}^{b'}(\zeta)\);
  the proof is identical to that of Case I.
Moreover we have
  \begin{equation*}                                                       
    S_{r_1}(\zeta)\subset S_{a'}^{b'}(\zeta) \subset
    \overline{S}_{a'}^{b'}\cap  \widetilde{S}_{a'}^{b'}(\zeta) =:
    \check{S},
    \end{equation*}
We also define
  \begin{align*}                                                          
    \widehat{S}:=\overline{S}_{-\infty}^{a'}  \;\cap \;
    \widetilde{S}_{a'}^{b'}(\zeta)
    \end{align*}
In this way we have 
  \begin{align*}                                                          
    \overline{S}_{-\infty}^{b'}\cap \widetilde{S}_{a'}^{b'}(\zeta) =
    \check{S}\cup \widehat{S}\qquad\text{and}\qquad \partial \check{S}
    \cap \partial \widehat{S} = \partial \widehat{S},
    \end{align*}
  and in fact if we define 
  \begin{align*}                                                          
    \Lambda =  \partial \widetilde{S}_{a'}^{b'}(\zeta) \cap (a\circ
    u)^{-1}(a')
    \end{align*}
  we then have
  \begin{align*}                                                          
    \widetilde{S}_{a'}^{b'}(\zeta) \cap (a\circ u)^{-1}(a') \; \;= \; \;
    \partial \widehat{S}\; \cup\; \Lambda\qquad\text{and}\qquad
    \partial\widehat{S}\; \cap \;\Lambda = \emptyset.
    \end{align*}

With these preliminary definitions out of the way, we recall that by the
  hypotheses of Case IIb, we have
  \begin{align*}                                                          
    \int_{\Lambda} u^*\lambda \leq 100r_0,
    \end{align*}
  and we aim to estimate that area of \(\check{S}\), since
  \(S_{r_1}(\zeta) \subset \check{S}\).
Our technique will be to employ Theorem 
  \ref{THM_area_bound_estimate}, but first we must estimate the quantity
  \(\int_{\partial \widehat{S}\cup \Lambda} u^*\lambda\).
To that end, we employ Lemma \ref{LEM_extension_is_small} and Theorem
  \ref{THM_area_bound_estimate} in regards to
  \(\int\tilde{\alpha}\) estimates\footnote{Recall that
  \(\widetilde{\alpha} = -(\tilde{u}^*da)\circ \tilde{\jmath} \), and when
  the pseudoholomorphic map is unperturbed, we simply have
  \(\widetilde{\alpha} = -(u^*da)\circ j = u^*\lambda.\)}  to obtain,
  \begin{equation*}                                                       
    \int_{\partial \widehat{S}}u^*\lambda \leq C_{\mathbf{h}}
    e^{C_{\mathbf{h}} 2^{-13}\epsilon_0} \int_{\widehat{S}}u^*\omega.
    \end{equation*}
Consequently,
  \begin{align}                                                           
    \int_{(a\circ u)^{-1}(a')\cap \partial S_{a'}^{b'}(\zeta)} u^*\lambda
    &=
    \int_{\partial \widehat{S}} u^*\lambda + \int_{\Lambda}
    u^*\lambda\notag
    \\
    &\leq\notag
    C_{\mathbf{h}} e^{C_{\mathbf{h}} 2^{-13}\epsilon_0}
    \int_{\widehat{S}}u^*\omega+
    100r_0
    \\
    &\leq\notag
    C_{\mathbf{h}} e^{ 2^{-13}}
    \int_{\widehat{S}}u^*\omega+
    100r_0
    \\
    &\leq\label{EQ_boundary_integral_estimate}
    2C_{\mathbf{h}} 
    \int_{\widehat{S}}u^*\omega+
    100r_0
    \end{align}
  where the second inequality follows from equation
  (\ref{EQ_epsilon_not}), and the third inequality follows from the fact
  that \(e^{(2^{-13})} \leq 1 + \frac{1}{8} \leq 2\).
With this estimate in hand, we now apply Theorem
  \ref{THM_area_bound_estimate} to estimate the area of \(\check{S}\).
\begin{align*}                                                            
  {\rm Area}_{u^*g}(\check{S}) 
  &=\int_{\check{S}} u^*(da\wedge \lambda +\omega)
  \\
  &\leq
  \Big(C_{\mathbf{h}}^{-1} \int_{\Lambda \cup \partial \widehat{S}}
  u^*\lambda + \int_{\check{S}} u^*\omega \Big)
  \big(e^{C_{\mathbf{h}}(b'-a')} -1 \big) +\int_{\check{S}} u^*\omega
  \\
  &\leq
  \Big(C_{\mathbf{h}}^{-1} \int_{\Lambda \cup \partial \widehat{S}}
  u^*\lambda + \int_{\check{S}} u^*\omega \Big)
  \big(e^{C_{\mathbf{h}}2^{-13}\epsilon_0} -1 \big) +\int_{\check{S}}
  u^*\omega
  \\
  &\leq
  C_{\mathbf{h}}^{-1} \int_{\Lambda \cup \partial \widehat{S}}
  u^*\lambda + 2\int_{\check{S}} u^*\omega 
  \\
  &\leq
  C_{\mathbf{h}}^{-1} \Big(
    2C_{\mathbf{h}} \int_{\widehat{S}}u^*\omega+ 100r_0
  \Big) + 2\int_{\check{S}} u^*\omega 
  \\
  &\leq
  100r_0C_{\mathbf{h}}^{-1} + 2\int_{\widetilde{S}_{a'}^{b'}} u^*\omega
  \leq
  C_{\mathbf{h}}^{-1} + \frac{1}{2}\leq 
  \frac{1}{20} +\frac{1}{2}
  \leq 1,
  \end{align*}
  where the second inequality follows from equation (\ref{EQ_b_minus_a}),
  the third inequality follows from equation (\ref{EQ_epsilon_not}) and
  the fact that \(e^{(2^{-13})}\leq  2\), the fifth inequality the fact
  that \(\check{S}\cup \widehat{S} \subset \widetilde{S}_{a'}^{b'}\subset
  S\setminus S_{-a_0}^{a_0}\) and property (L\ref{EN_L1}), the sixth
  inequality follows from the fact that \(r_0 \leq \frac{1}{100}\) as
  guaranteed by Lemma \ref{LEM_small_radius} and property (L\ref{EN_L1})
  again, and the seventh inequality follows from the definition of the
  ambient geometry constant provided in Definition
  \ref{DEF_ambient_geometry_constant}.
Recall that we have already established that \(S_{r_1}(\zeta)\subset
   \check{S}\), and hence we have
\begin{equation*}                                                         
     {\rm Area}_{u^*g}(S_{r_1}(\zeta)) \leq   {\rm Area}_{u^*g}(\check{S})\leq 1,
   \end{equation*}
which is the desired estimate, and completes Case IIb.  

Since cases I, IIa, and IIb exhaust all possibilities, we conclude
   that
  \begin{align*}                                                            
    {\rm Area}_{u^*g}(S_{r_1}(\zeta))\leq 1
    \end{align*}
  for all \(\zeta\) such that \(a\circ u(\zeta) \geq a_1 +2\).
Recall we have defined the compact set \(K=[-a_1-2, a_1+2]\times M\), and
  hence this completes the proof Theorem
  \ref{THM_local_local_area_bound}.
\end{proof}

\subsection{Proof of Theorem \ref{THM_curv_bound}: Asymptotic
  Curvature Bound}
  \label{SEC_asymptotic_curvature_bound}\hfill \\

The main purpose of this section is to prove the following result.

\setcounter{CurrentSection}{\value{section}}
\setcounter{CurrentTheorem}{\value{theorem}}
\setcounter{section}{\value{CounterSectionAsymptoticCurvatureBound}}
\setcounter{theorem}{\value{CounterTheoremAsymptoticCurvatureBound}}
\begin{theorem}[asymptotic curvature bound]\hfill\\
Let \((M, \eta)\) be a compact framed Hamiltonian manifold, and let \((J,
  g)\) be an \(\eta\)-adapted almost Hermitian structure on
  \(\mathbb{R}\times M\).
For each feral pseudoholomorphic curve \(\mathbf{u}= (u, S, j,
  \mathbb{R}\times M, J, \mu, D)\), there exists a compact set of the form
  \(K:=[-a_2, a_2]\times M\), and positive constant \(C_\kappa=C_\kappa(M,
  \eta, J, g)\) with the following significance.
First, the restricted map
  \begin{equation*}                                                       
      u:S\setminus u^{-1}(K)\to \mathbb{R}\times M 
    \end{equation*}
  is an immersion. 
Second, for each \(\zeta\in S\setminus u^{-1}(K)\) we have 
  \begin{equation*}                                                       
      \|B_u(\zeta)\| \leq C_\kappa
    \end{equation*}
  where \(B_u(\zeta)\) is the second fundamental form of the immersion
  \(u\) evaluated at the point \(\zeta\).
\end{theorem}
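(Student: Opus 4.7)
The plan is to argue by contradiction, combining Hofer-lemma-style bubbling with the asymptotic connected-local area bound (Theorem \ref{THM_local_local_area_bound}), target-local Gromov compactness (Theorem \ref{THM_target_local_gromov_compactness}), and the $\omega$-energy threshold (Theorem \ref{THM_energy_threshold}). The statement splits into two parts: $u$ is immersed off a compact set, and $\|B_u\|$ is uniformly bounded there. For the immersion part, the critical set $\mathcal{Z} = \{\zeta \in S : T_\zeta u = 0\}$ is closed and discrete on each of the finitely many connected components of $S$, and near each $\zeta_0 \in \mathcal{Z}$ the standard local form $u(w) = (w^k, F(w))$ with $k \geq 2$ combined with positivity of $\omega$ on $J$-complex lines shows that any connected-local neighborhood of $\zeta_0$ of fixed intrinsic size carries a definite amount of $\omega$-energy depending only on $k$ and ambient geometry. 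Together with Theorem \ref{THM_local_local_area_bound} (which caps area in intrinsic balls of radius $r_1$ outside a compact set), this bounds the multiplicities $k$ uniformly in a tail, and then finiteness of $\int_S u^*\omega$ forces only finitely many critical points outside any large compact target set. Properness of $u$ then yields a compact $K_0 = [-a_1, a_1] \times M$ containing $u(\mathcal{Z})$.

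For the curvature bound, assume there exist $\zeta_k \in S \setminus u^{-1}(K_0)$ with $a \circ u(\zeta_k) \to +\infty$ and $\kappa_k := \|B_u(\zeta_k)\|_{u^*g} \to \infty$; the case $a \circ u(\zeta_k) \to -\infty$ is identical. Working intrinsically on $(S \setminus u^{-1}(K_0), u^*g)$, where Theorem \ref{THM_local_local_area_bound} supplies a connected-local area bound of $1$ on $r_1$-balls, I will apply a Hofer lemma to $\|B_u\|_{u^*g}$ to find replacement points $\zeta_k'$ and scales $\lambda_k := (\kappa_k')^{-1} \to 0$ such that $\|B_u\|_{u^*g} \leq 2\kappa_k'$ on the intrinsic $u^*g$-ball around $\zeta_k'$ of radius $\epsilon_k \lambda_k$ with $\epsilon_k \to \infty$. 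Using the $\mathbb{R}$-invariance of $(J, g)$ to shift each map so that $a \circ u(\zeta_k') = 0$, and intrinsically rescaling the domain by $\kappa_k'$, I obtain pseudoholomorphic immersions $\tilde u_k$ satisfying $\|B_{\tilde u_k}(0)\| = 1$, $\|B_{\tilde u_k}\| \leq 2$ on intrinsic balls of growing radius, and $\tilde u_k(0) \in \{0\} \times M$. The area bound from Theorem \ref{THM_local_local_area_bound} persists on fixed target balls after rescaling, so Theorem \ref{THM_target_local_gromov_compactness} together with standard elliptic bootstrapping yields a $C^\infty_{\text{loc}}$-limit pseudoholomorphic immersion $\tilde u_\infty : \Sigma \to \mathbb{R} \times M$ with $\|B_{\tilde u_\infty}(0)\| = 1$ and $\int_\Sigma \tilde u_\infty^* \omega < \infty$.

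The hard part will be extracting a \emph{uniform} positive lower bound $\hbar$ on the $\omega$-energy captured by $\tilde u_\infty$ in some fixed-size target neighborhood of the origin. My plan is to locate a compact connected subsurface $\Sigma_0 \subset \Sigma$ through a preimage of the origin whose image lies in $[a_0, \infty) \times M$ or in $(-\infty, a_0] \times M$, with $a_0$ attained in the interior of $\Sigma_0$, and whose boundary is disjoint from $[a_0 - r, a_0 + r] \times M$ for a small $r > 0$ determined only by ambient geometry; Theorem \ref{THM_energy_threshold} then yields $\int_{\Sigma_0} \tilde u_\infty^* \omega \geq \hbar$. Producing $\Sigma_0$ is the principal technical step, because $a \circ \tilde u_\infty$ need not admit a global extremum or be Morse; here the curvature normalization $\|B_{\tilde u_\infty}(0)\| = 1$ must be used to exclude the degenerate scenario that $\tilde u_\infty$ is an orbit cylinder (whose curvature is bounded by a constant depending only on ambient geometry, which is precluded once $\kappa_k'$ is taken sufficiently large). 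Once $\hbar$ is in hand, $C^\infty_{\text{loc}}$-convergence transports it back as $\geq \hbar/2$ of $\omega$-energy concentrated in a small neighborhood of each $\zeta_k'$; passing to a subsequence with the values $a \circ u(\zeta_k')$ spaced arbitrarily far apart makes these neighborhoods pairwise disjoint, so $\int_S u^*\omega \geq N\hbar/2$ for every $N$, contradicting the feral hypothesis $\int_S u^*\omega < \infty$ and completing the proof.
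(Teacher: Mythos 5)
Your proposal departs from the paper's argument at every major step, and unfortunately the key steps do not hold up. First, the immersion part: the claim that a critical point $\zeta_0 \in \mathcal{Z}$ forces a definite amount of $\omega$-energy in any connected-local neighborhood of fixed size is false. The form $\omega$ is only \emph{non}-negative on $J$-complex lines (Lemma \ref{LEM_J_diff}), not strictly positive, and it vanishes identically on $\ker\omega = \mathrm{Span}(\partial_a, X_\eta)$. A multiple branched cover of a patch of orbit cylinder, such as $z\mapsto z^k$ composed with $(s,t)\mapsto(s,\gamma(t))$, is a non-immersed pseudoholomorphic map with $\omega$-energy identically zero. So there is no energy quantization for critical points, and the immersion claim cannot be separated off this way. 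Second, and more seriously, the rescaling normalization $\|B_{\tilde u_k}(0)\|=1$ is geometrically incoherent: the second fundamental form $B_u$ (Definition \ref{DEF_second_fundamental_form}) is the extrinsic curvature of the image surface in $(\mathbb{R}\times M, g)$ and is invariant under reparametrization of the domain, so ``intrinsically rescaling the domain'' leaves $\|B_u\|$ unchanged at each image point. There is also no ambient conformal or scaling symmetry of the fixed $\mathbb{R}$-invariant $(J,g)$ available to renormalize curvature. Consequently the claimed rescaled limit with $\|B_{\tilde u_\infty}(0)\| = 1$ simply cannot be produced by the operations you describe.

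Third, even granting some normalized limit $\tilde u_\infty$, your plan to extract a positive $\omega$-energy threshold $\hbar$ from it via Theorem \ref{THM_energy_threshold} is doomed. The pieces being rescaled are the portions of the feral curve in $r_1$-balls around points with $|a\circ u(\zeta_k)|\to\infty$. These portions are pairwise disjoint and together carry bounded $\omega$-energy (since $\mathbf{u}$ is feral), so by non-negativity of $\omega$ their individual $\omega$-energies tend to zero, and the limit necessarily satisfies $\tilde u_\infty^*\omega \equiv 0$. The limit image thus lies in an orbit cylinder $\mathbb{R}\times\gamma$, $a\circ\tilde u_\infty$ is harmonic with no interior extremum, and Theorem \ref{THM_energy_threshold} cannot be applied to any subsurface $\Sigma_0$. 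Your proposed exclusion of the orbit cylinder scenario via ``taking $\kappa_k'$ sufficiently large'' is circular here: the Gromov limit of large-curvature pieces of a finite-$\omega$-energy feral curve is precisely a branched cover of an orbit cylinder, immersed with bounded $\|B\|$ away from its branch point and not immersed at the branch point, so there is no uniform curvature bound there to contradict. The paper handles both failure modes (non-immersed points, unbounded curvature) simultaneously: after target-local Gromov compactness and Lemma \ref{LEM_no_nodes}, the limit has a genuine critical point $\hat{\zeta}_\infty$ with $T\hat{v}(\hat{\zeta}_\infty)=0$, zero $\omega$-energy forces it to be a degree-$n_\dag$ ($n_\dag\ge 2$) branched cover of a holomorphic disk in an orbit cylinder, and this branching gives a $4n_\dag$-gon $\Sigma_{k,0}\subset\Sigma_k$. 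Lemma \ref{LEM_neg_euler_char} then shows $\chi(\widetilde{\Sigma}_k(\zeta_k))<0$, contradicting the bound $\chi\ge 0$ coming from the at-most-two-boundary-components and zero-genus facts (Lemmas \ref{LEM_at_most_one_part_1}, \ref{LEM_at_most_one_part_2}) that are established in the proof of Theorem \ref{THM_local_local_area_bound}. The contradiction in the paper is topological, not energetic, and this is unavoidable precisely because the limit is energetically trivial.
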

%
\begin{proof}
\setcounter{section}{\value{CurrentSection}}
\setcounter{theorem}{\value{CurrentTheorem}}
Suppose not.
Then there exists a sequence of points \(\zeta_k\in S\)  with the property
  that \(|a\circ u(\zeta_k)|\to \infty\) for which either
  \(Tu_{\zeta_k}=0\) for all \(k\in \mathbb{N}\) or else
  \(\|B_u(\zeta_k)\|\to \infty\).
Without loss of generality, we will assume \(a\circ u(\zeta_k)\to
  \infty\) monotonically; the case \(a\circ u(\zeta_k)\to -\infty\) is
  essentially identical.
Recall that Theorem \ref{THM_local_local_area_bound} guarantees that
  there exists an \(a_0>0\) and an \(r_1=r_1(M, \eta, J, g)>0\) such that
  \begin{equation*}                                                       
    {\rm Area}_{u^*g}\big(S_{r_1}(\zeta)\big)\leq 1
    \end{equation*}
  for each \(\zeta\in S\) for which \(a\circ u(\zeta) \geq a_0\), and
  where \(S_{r_1}(\zeta)\) is the connected component of
  \(u^{-1}(\mathcal{B}_{r_1}(u(\zeta)))\) containing \(\zeta\); here
  \(\mathcal{B}_r(p)\) denotes an open metric ball in \(\mathbb{R}\times
  M\) of radius \(r_1\) centered at \(p\).
By increasing \(a_0\) if necessary, we may also assume that \(a_0\) and \(
  -a_0\) are regular values of \(a\circ u\),
  \begin{align*}                                                          
    {\rm Genus}(S) = {\rm Genus}\big( u^{-1}\big([-a_0, a_0]\times
    M\big)\big),
    \end{align*}
  and \({\rm Punct}(S)\) equals the number of non-compact
  path-connected components of the set \(S\setminus u^{-1}\big((-a_0,
  a_0)\times M\big)\).
With the abbreviation $S^{a_0}=u^{-1}\big((-a_0,a_0)\times M\big)$ we may also assume 
  \begin{align}\label{EQ_small_energy}                                   
    \int_{S\setminus S^{a_0}} u^*\omega \leq \min\Big(\frac{1}{4},
    \frac{1}{2}\hbar, 2^{-19}\epsilon_0 r_0 , (2^{14}\epsilon_0^{-1} + 10
    C_{\mathbf{h}})^{-1}\Big)
    \end{align}
  where \(\hbar=\hbar(M, \eta, J, g, 2^{-13}\epsilon_0, 0)\) is the
  constant guaranteed by Theorem \ref{THM_energy_threshold}, and
  \(\epsilon_0 = {\rm min}(C_{\mathbf{h}}^{-1}, r_0)\) as in equation
  (\ref{EQ_epsilon_not}), \(C_{\mathbf{h}}\) is the ambient geometry
  constant given in Definition \ref{DEF_ambient_geometry_constant}, and
  \(r_0\) is the positive constant provided in Lemma
  \ref{LEM_small_radius}.
Furthermore, by passing to a subsequence if necessary we may assume
  \(a\circ u(\zeta_1) > a_0+1\), \(u(\mu\cup D) \in [-a_0, a_0]\times M\),
  and that
  \begin{align}\label{EQ_large_gaps}                                      
    a\circ u(\zeta_{k+1}) - a \circ u(\zeta_k) \geq 10(1+r_1)
    \end{align}
  for all \(k\in \mathbb{N}\).
For notational convenience, we define
  \begin{equation*}                                                       
      S_k:=S_{r_1}(\zeta_k),
    \end{equation*}
  and we define the maps 
  \begin{equation}\label{EQ_no_nodes_6}                                   
      v_k:S_k\to [-1, 1]\times M\qquad\text{given by}\qquad v_k(\zeta):=
      {\rm Sh}_{a\circ u(\zeta_k) }\circ u(\zeta)
    \end{equation}
  where \({\rm Sh}_{(\cdot)}\) is the shift map 
  \begin{align*}                                                          
    &{\rm Sh}_{x}\colon \mathbb{R}\times M\to \mathbb{R}\times M\\
    &{\rm Sh}_{x}(a,p) = (a-x,p).
    \end{align*}
Next observe that by construction \(v_k(\zeta_k)\in \{0\}\times M\)
  for all \(k\) where \(M\) is compact.
We conclude that after passing to a further subsequence, still denoted
  with subscripts \(k\), we have convergence of the sequence of points
\begin{equation}\label{EQ_point_p}
  v_k(\zeta_k) \to p:=(0,p') \in \{0\}\times M,
  \end{equation}
  and
  \begin{equation*}                                                       
      \overline{\mathcal{B}}_{\frac{1}{2}r_1}(p) \subset
      \mathcal{B}_{r_1}(v_k(\zeta_k)),
    \end{equation*}
  where \(\overline{\mathcal{B}}_r(p)\) denotes the closed metric ball of
  radius \(r\) centered at \(p\).
For notational convenience we define \(W=(-1, 1)\times M\).
Next we observe that the sequence of pseudoholomorphic curves
  \((v_k, S_k, j_k, W, J, \emptyset, \emptyset )\) have uniformly bounded
  area, zero genus, \(\partial S_k = \emptyset\),
  \(v_k^{-1}(\overline{\mathcal{B}}_{\frac{1}{2}r_1}(p))\) is compact, and
  \(v_k(\zeta_k)\to p\).
We conclude from Theorem \ref{THM_target_local_gromov_compactness}
  (target-local Gromov compactness) that after passing to a subsequence,
  still denoted with subscripts \(k\), there exist compact surfaces with
  boundary \(\widetilde{S}_k\subset S_k\) with the property that
  \(v_k(S_k\setminus \widetilde{S}_k) \subset W\setminus
  \overline{\mathcal{B}}_{\frac{1}{4}r_1}(p)\) and with the property that
  the pseudoholomorphic curves
  \begin{align*}                                                          
    \tilde{\mathbf{v}}_k :=(\tilde{v}_k, \widetilde{S}_k, \tilde{j}_k, W,
    J, \emptyset, \emptyset)
    \end{align*}
  defined by \(\tilde{v}_k = v_k\big|_{\widetilde{S}_k}\) and
  \(\tilde{j}_k = j_k\big|_{\widetilde{S}_k}\), converge in a Gromov
  sense\footnote{See Definition \ref{DEF_gromov_convergence}.} to the
  pseudoholomorphic curve
  \begin{align*}                                                          
    \tilde{\mathbf{v}} :=(\tilde{v}, \widetilde{S}, \tilde{j}, W,
    J, \emptyset, \widetilde{D}).
    \end{align*}
In particular, there will exist decorations \(\tilde{r}\) for the nodal
  points \(\widetilde{D}\subset \widetilde{S}\) and diffeomorphisms
  \begin{align*}                                                          
    \phi_k\colon \widetilde{S}^{\widetilde{D}, \tilde{r}}\to
    \widetilde{S}_k
    \end{align*}
  for which \(\tilde{v}_k\circ \phi_k\to \tilde{v}\) in
  \(\mathcal{C}_{loc}^\infty(\widetilde{S}^{\widetilde{D},
  \tilde{r}}\setminus \cup_i \Gamma_i)\) where the \(\Gamma_i\) are the
  special circles obtained by blowing up the nodal points, and
  \(\tilde{v}_k\circ \phi_k\to \tilde{v}\) in
  \(\mathcal{C}^0(\widetilde{S}^{\widetilde{D},\tilde{r}})\).
As a final consequence of Theorem
  \ref{THM_target_local_gromov_compactness}, we note that \(\tilde{v}\) is
  an immersion along \(\partial \widetilde{S}\), and \(\tilde{v}(\partial
  \widetilde{S})\cap \overline{\mathcal{B}}_{\frac{1}{4}r_1}(p) =
  \emptyset\).
As a consequence of these facts, we see that \(p\in
  \tilde{v}(\widetilde{S})\).
Moreover, we define \(\hat{\zeta}_k\in \widetilde{S}\) so that
  \(\phi_k(\hat{\zeta}_k) = \zeta_k \in S\), and thus \(\tilde{v}_k\circ
  \phi_k(\hat{\zeta}_k)\to p\).
If needed, we then pass to a subsequence so that  \(\hat{\zeta}_k\to
  \hat{\zeta}_\infty\in \widetilde{S}\setminus \partial \widetilde{S}\).

In what follows, it will be convenient to have a bit more control over the
  \(\partial \widetilde{S}_k\).
As such, we choose a regular value \(r_2\in (0,
  \frac{1}{4}r_1]\) of the function
  \begin{align*}                                                          
    &\rho\colon \widetilde{S}\to \mathbb{R}
    \\
    &\rho(\zeta)= {\rm dist}_g\big(p, \tilde{v}(\zeta)\big),
    \end{align*}
  for which \(\tilde{v}(\widetilde{D}) \cap \partial
  \overline{\mathcal{B}}_{r_2}(p)=\emptyset\).
We then define \( \widehat{S} \subset
  \tilde{v}^{-1}(\overline{\mathcal{B}}_{r_2}(p)) \) to be the union of
  connected components of \(\tilde{v}^{-1}(\overline{\mathcal{B}}_{r_2}(p))
  \)  with the property that \(|\widehat{S}| := \widehat{S}/\sim\) is
  connected\footnote{
    Here \(\zeta\sim \zeta'\) for \(\zeta\neq \zeta'\) if and only if
    \(\{\zeta, \zeta'\}\subset \widetilde{D}\cap \widehat{S}\) forms a nodal
    pair.}
  and \(\zeta_\infty \in \widehat{S}\).
This allows us to define the pseudoholomorphic curve
  \begin{align*}                                                          
    \hat{\mathbf{v}} = \big(\hat{v},
    \widehat{S}, \hat{j}, W, J, \emptyset,
    \widehat{D}\big)
    \end{align*}
  where
  \begin{align*}                                                          
    \hat{v} = \tilde{v}\big|_{\widehat{S}},
    \qquad\hat{j}=\tilde{j}\big|_{\widehat{S}},\qquad
    \text{and}\qquad \widehat{D} = \widetilde{D}\cap
    \widehat{S}.
    \end{align*}
We then define 
  \begin{align*}                                                          
    \widehat{S}_k:=
    \phi_k\big(\widehat{S}^{\widehat{D},
    \tilde{\tilde{r}}}\big)\subset \widetilde{S}_k
    \end{align*}
  so that for the pseudoholomorphic curves
  \begin{align*}                                                          
    \hat{\mathbf{v}}_k = \big(\hat{v}_k,
    \widehat{S}_k, \hat{j}_k, W, J, \emptyset,
    \emptyset\big)
    \end{align*}
  defined from the \(\tilde{\mathbf{v}}_k\) via domain restriction, we
  have \(\hat{\mathbf{v}}_k\to \hat{\mathbf{v}}\) in a
  Gromov sense.
To proceed, we will need the following.

\begin{lemma}[properties of limit curve]
  \label{LEM_no_nodes}
  \hfill\\
The limit curve \((\hat{v}, \widehat{S}, \hat{j}, W, J, \emptyset,
  \widehat{D})\) is not nodal; that is, \(\widehat{D}=\emptyset\).
Moreover, the limit curve is generally immersed in the sense of Definition
  \ref{DEF_generally_immersed}.
\end{lemma}
%

We will postpone the proof of Lemma \ref{LEM_no_nodes} until a bit later
  because the proof distracts from the main argument.
For the moment then, we assume it is true, and hence \(\widehat{S}\) is
  connected.
Observe that as a consequence of Gromov convergence, Lemma
  \ref{LEM_no_nodes}, and our construction of the \(\hat{\mathbf{v}}\), it
  follows that 
  $$
  \hat{v}_k\circ \phi_k \to \hat{v}\ \ \text{in}\ \
  \mathcal{C}^\infty(\widehat{S}, [-1, 1]\times M).
  $$
Recall that as part of our argument to derive a contradiction,
  we have assumed that the \(\zeta_k\in S\)  have the property that
  \(|a\circ u(\zeta_k)|\to \infty\) and either \(Tu(\zeta_k)=0\) or else
  \(\|B_u(\zeta_k)\|\to \infty\).
We have also defined \(\hat{\zeta}_k \in \widehat{S}\) so that
  \(\phi_k(\hat{\zeta}_k) = \zeta_k\) and \(\hat{\zeta}_k\to
  \hat{\zeta}_\infty\).
As a consequence of our above definitions, we then have:
$$
\text{either}\ \ 
T\hat{v}_k(\zeta_k)=0\ \ \text{or else}\ \ \ \|B_{\hat{v}_k}(\zeta_k)\|\to
  \infty.
  $$
Note that in either case, we must have \(T\hat{v}(\hat{\zeta}_\infty)=0\).
Indeed, if \(T\hat{v}(\hat{\zeta}_\infty)\neq 0\), then
  \(\hat{v}\) is immersed in a neighborhood of \(\hat{\zeta}_\infty\), and
  hence \(\|T\hat{v}\|\) is bounded away from zero in a neighborhood of
  \(\hat{\zeta}_\infty\) and \(\|B_{\hat{v}}\|\) is bounded in a
  neighborhood of \(\hat{\zeta}_\infty\); making use of the fact that
  \(\hat{\zeta}_k\to \hat{\zeta}_\infty\) and \(\hat{v}_k\to \hat{v}\)
  in \(\mathcal{C}^\infty\) then would yield a contradiction.
Thus we have shown that \(T\hat{v}(\hat{\zeta}_\infty)=0\) as claimed.

Our next task is then to prove that in fact \(T\hat{v}(\hat{\zeta}_\infty)
  \neq 0\), which would then yield the desired contradiction to prove
  Theorem \ref{THM_curv_bound}.
To that end, recall that we have assumed that \(|a\circ u(\zeta_k)|\to
  \infty\), and by construction the \(\widehat{S}_k\subset S\) are all
  pairwise disjoint.
Moreover, because \(\int_S u^*\omega<\infty\), and \(\omega\) evaluates
  non-negatively on \(J\)-complex lines, it follows that
  \(\int_{\widehat{S}_k} \hat{v}_k^*\omega \to 0\), and hence
  \(\int_{\widehat{S}}\hat{v}^*\omega = 0\).
Also recall that as a consequence of Lemma \ref{LEM_no_nodes}, \(\hat{v}\)
  is generally immersed.
From these facts we conclude the following about the image of \(\hat{v}\):
  \begin{equation*}                                                       
    \hat{v}(\widehat{S}) \subset
    \overline{\mathcal{D}}:=\overline{\mathcal{B}}_{r_2}(p)\;\cap\;
    \big([-1, 1]\times \beta\big([-2r_2, 2 r_2]\big)\big)
    \end{equation*}
  where \(\beta\) is a solution to the differential equation  \(\beta'
  = X_{\eta}(\beta)\) with \((0, \beta(0))=p\).
We note that \(\overline{\mathcal{D}}\) is a holomorphically embedded
  disk.
As a consequence we can find a compact disk-like domain with smooth
  boundary \(\mathsf{D}\subset \mathbb{C}\), which satisfies \(0\in
  \mathsf{D}\setminus \partial \mathsf{D} \subset \mathbb{C}\), supporting
  a holomorphic diffeomorphism of the form
  \begin{equation*}                                                       
      \psi:\mathsf{D} \to \overline{\mathcal{D}}
      \qquad\text{given by}\qquad \psi(s, t) = \big(s, \beta(t)\big).
    \end{equation*}
We note that $\psi$  is also an isometric embedding with respect to the
  flat metric \(ds^2 +dt^2\) on \(\mathbb{C}\).
Recall by construction that 
  \begin{equation*}                                                       
    \hat{v}(\hat{\zeta}_\infty)=p\in \overline{\mathcal{D}}\qquad
    \text{with} \qquad T \hat{v}(\hat{\zeta}_\infty)= 0,
    \end{equation*}
  and hence
  \(\hat{v}:\widehat{S}\to \overline{\mathcal{D}}\) is a branched cover
  with \(\hat{\zeta}_\infty\) a branch point.
A consequence of target-local Gromov compactness, Theorem
  \ref{THM_target_local_gromov_compactness}, is that the map \(\hat{v}\) is
  immersed along \(\partial \widehat{S}\), and by Lemma 2.4.1 of \cite{MS}
  it follows that the set of critical points of the map \(\hat{v}\) is
  finite.
Because of the latter, we will assume \(r_2>0\) has been chosen
  sufficiently small so that \(\hat{\zeta}_\infty\) is the unique critical
  point of \(\hat{v}\colon \widehat{S}\to \overline{\mathcal{D}}\).
More specifically, we follow the trimming procedure to obtain
  \(\hat{\mathbf{v}}\) from \(\tilde{\mathbf{v}}\) but for which \(r_2\)
  chosen sufficiently small so as to meet our needs.
In either case, we do not introduce new notation to indicated this newly
  trimmed curve.
As a consequence of this construction, it is then an elementary
  exercise from complex variables to show that there exists
  complex coordinates \(z\) on \(\widehat{S}\) so that
  \begin{equation}\label{EQ_branch_degree}                                
    \psi^{-1}\circ \hat{v}(z) = z^{n_\dag}\;\;\text{with}\;\; n_\dag\geq
    2.
    \end{equation}
With this local patch of limit curve understood as a branched cover of a
  disk, we aim to use this structure and Gromov convergence, to back up in
  the sequence to study compact manifolds with boundary of the form
  \begin{equation*}                                                       
      \Sigma_k:= \{\zeta\in S: a\circ u(\zeta_k)-c_5\leq a\circ
      u(\zeta)\leq a\circ u(\zeta_k)+c_5\}.
    \end{equation*}
  for some small generic choice of \(c_5\).
Modulo the addition of some small ``inessential capping disks'' (defined
  below), and for \(n_\dag \geq 2\) as defined in equation
  (\ref{EQ_branch_degree}) we will show that there is a \(4n_\dag\)-gon
  neighborhood \(\Sigma_{k,0} \subset \Sigma_k\) of \(\hat{\zeta}_k\),
  which we will use to show \(\Sigma_k\) (or rather the capped surface
  \(\widetilde{\Sigma}_k\)) has negative Euler characteristic for all
  sufficiently large \(k\in \mathbb{N}\), and hence \(S\) has either
  infinitely many ends (which is impossible), or infinite genus (which is
  also impossible).
This will yield the desired contradiction.
It may be helpful to consider Figure \ref{FIG_four_n_gon}.

We now proceed with the details.
Recalling the point $p=(0,p')$ defined in (\ref{EQ_point_p}), we begin by
  defining the set \(E\subset \widehat{S}\) by
\begin{equation*}                                                         
    E:=\hat{v}^{-1}\big([0, 1]\times\{p'\}\big).
  \end{equation*}
\begin{figure}\label{FIG_four_n_gon}
  \includegraphics[scale=0.2]{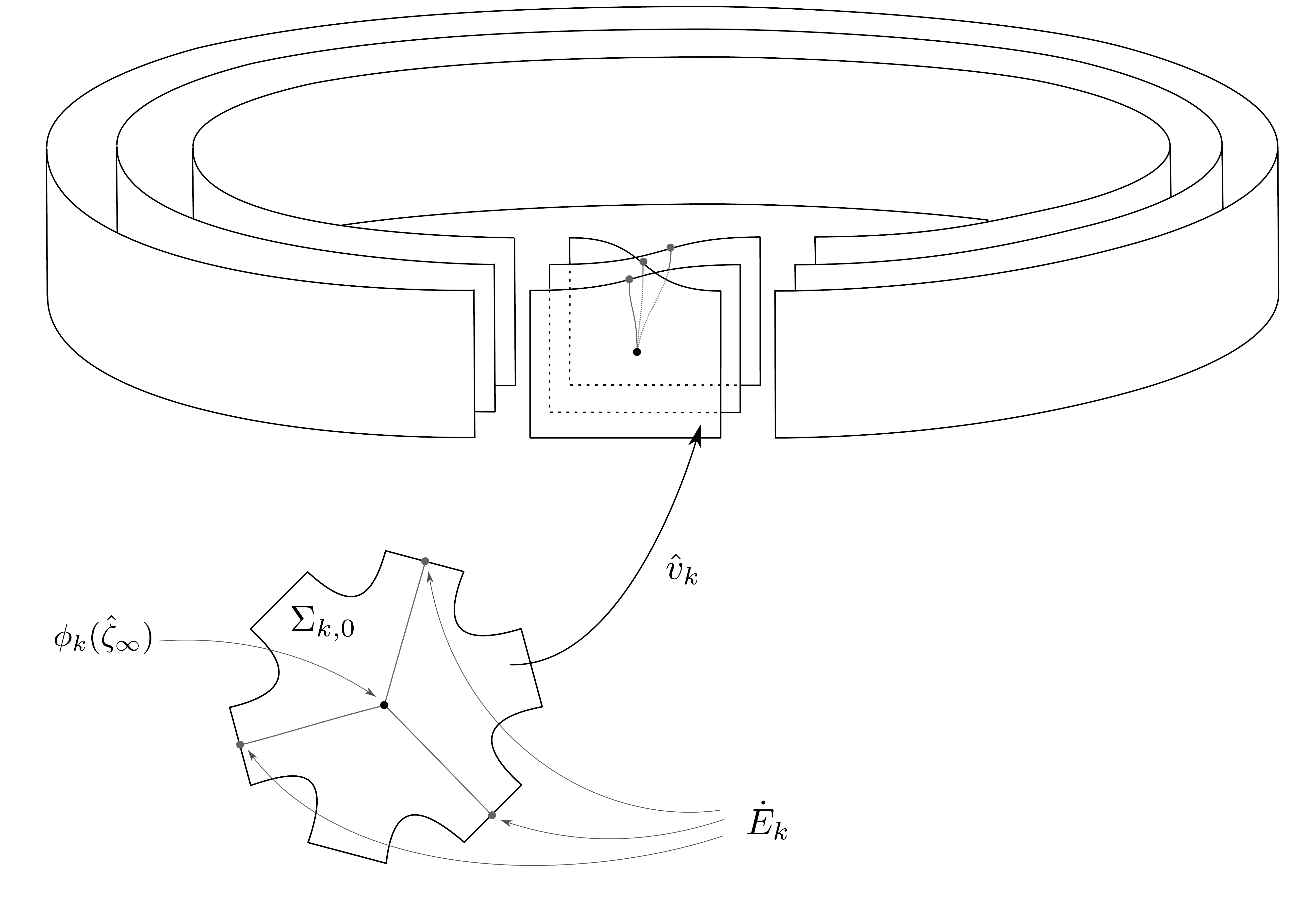}
  \caption{ 
    $\Sigma_{k,0}\subset \Sigma_k$. 
    Note that image of the  two-dimensional $\Sigma_{k,0}$ lies in a
      four-dimensional space (which is difficult to draw).
    Note that the important sequence $(\hat{\zeta}_k)$, which is not
      indicated in this figure, consists of points close to the points
      $\phi_k(\hat{\zeta}_k).$}
\end{figure}
We also fix \(c_5\in \mathbb{R}\) so that \(0< c_5 \leq \frac{1}{2}r_2\)
  with the property that the set \(\{a\circ u(\zeta_k) - c_5, a\circ
  u(\zeta_k) +c_5\}_{k\in \mathbb{N}}\) is contained in the set of regular
  values of the function \(a\circ u: S\to \mathbb{R}\).
Note that since \(c_5\neq 0\), we also have that \(\pm c_5\) are regular
  values \(a\circ \hat{v}:\widehat{S}\to \mathbb{R}\).
We then define the compact surfaces  with boundary
  \begin{equation*}                                                       
      \Sigma_k:= \{\zeta\in S: a\circ u(\zeta_k)-c_5\leq a\circ
      u(\zeta)\leq a\circ u(\zeta_k)+c_5\}.
    \end{equation*}
We also define important sub-surfaces of the \(\Sigma_k\) in the
  following manner.
Recall that \(\phi_k:\widehat{S}\to \widetilde{S}_k\subset S\), so we
  may regard the \(\phi_k\) as having image in \(S\).
Since we also have \(\Sigma_k\subset S\) by construction, we then define
  the sequence of sets \(\dot{E}_k\) by
  \begin{equation*}                                                       
    \dot{E}_k:= \phi_k(E)\cap \partial \Sigma_k.
    \end{equation*}
For all sufficiently large \(k\in\mathbb{N}\), we have by construction
  that the set \(\dot{E}_k\) consists of \(n_\dag\) points, where
  \(n_\dag\) is the natural number given in equation
  (\ref{EQ_branch_degree}).
We now define the manifolds \(\Xi_k:= \partial \Sigma_k\) and equip
  them with the metric \(\gamma_k= u^*g \big|_{\Xi_k}\).
Define \(\dot{F}_k \subset \Sigma_k\) to be the set of points
  given by \(\dot{F}_k:=\{\xi\in \Xi_k: {\rm dist}_{\gamma_k}(\xi,
  E_k)=\frac{1}{2}r_2\}\).
Observe that for all sufficiently large \(k\in
  \mathbb{N}\) the sets \(\dot{F}_k\) consist of \(2n_\dag\) points.
Define  \(L_k\subset \Sigma_k\subset S\) to be the (image of the)
  \(u^*g\)-gradient trajectories in \(\Sigma_k\) terminating 
  in \(\dot{F}_k\).
Define \(\Sigma_{k, 0}\) to be the closure of the connected component
  of \(\Sigma_k\setminus L_k\) which contains \(\zeta_k\).
Observe that by construction, for all \(k\in \mathbb{N}\) sufficiently
  large \(\Sigma_{k, 0}\) is a smooth manifold with piecewise smooth
  boundary and homeomorphic to a closed disk.
Furthermore, the single boundary component of \(\Sigma_{k,0}\) is
  comprised of \(4n_\dag\) smooth segments connected together at
  \(4n_\dag\) corners.
Moreover, \(2n_\dag\) of these smooth segments are \(u^*g\)-gradient flow
  lines of the function \(a\circ u\), and \(n_\dag\) segments are
  contained in the level set \((a\circ u)^{-1}(a\circ u(\zeta_k)+c_5)\),
  and \(n_\dag\) segments are contained in the level set \((a\circ
  u)^{-1}(a\circ u(\zeta_k)-c_5)\).
Later it will be useful to recall that for each connected component
  \(\Xi'\) of \((a\circ u)^{-1}(a\circ u(\zeta_k) \pm c_5)\) we have
  \begin{align}\label{EQ_integral_lambda_estimate_in_Sigma}               
    \int_{\Xi'} u^* \lambda \geq \textstyle{\frac{1}{2}} r_2
    \end{align}
  for all sufficiently large \(k\in \mathbb{N}\).

Next we define a set  \(\boldsymbol{\Delta}_k\) to consist of those
  compact connected components $\Delta'$ of the set \(S\setminus
  (\Sigma_k\setminus \partial \Sigma_k)\) which have empty intersection with
  \((a\circ u)^{-1}([-a_0, a_0])\).
We call these \emph{inessential caps}.
We note that as a consequence of equation (\ref{EQ_small_energy}),
  equation (\ref{EQ_large_gaps}), and Theorem \ref{THM_energy_threshold}, it
  follows that if \(\Delta'\in \boldsymbol{\Delta}_k\) with \(\Sigma_k \cap
  \Delta'\neq \emptyset\), then
  \begin{align}\label{EQ_oscillation}                                     
    a\circ u(\zeta_k) -c_5 - 1 \leq \inf_{\zeta\in \Delta'} a\circ
    u(\zeta) < \sup_{\zeta\in \Delta'} a\circ u(\zeta) \leq a\circ
    u(\zeta_k) +c_5 + 1.
    \end{align}
Consequently, each \(\Delta'\in\boldsymbol{\Delta}_k\) has non-empty
  intersection with at most one of the \(\Sigma_{k'}\), and thus we must
  have $k'=k$.
As in the proof of Theorem \ref{THM_local_local_area_bound}, we then
  define \(\widetilde{\Sigma}_k\) to be the union of \(\Sigma_k\) with all
  those elements of \(\boldsymbol{\Delta}_k\) which have non-empty
  intersection with \(\Sigma_k\).
We now claim the following.
                                                            
\begin{lemma}[inessential caps miss the $4n$-gon]
  \label{LEM_extensions_disjoint_from_4n_gon}
  \hfill \\
For all sufficiently large \(k\in \mathbb{N}\), we have
  \begin{equation*}                                                       
      \Sigma_{k, 0}\cap (\overline{\widetilde{\Sigma}_{k}\setminus
      \Sigma_k}) = \emptyset.
    \end{equation*}
In other words, when \(k\) is large enough, the \(4n_\dag\)-gons
  \(\Sigma_{k, 0}\) constructed above have empty intersection with the
  inessential caps added to the \(\Sigma_k\) to create
  \(\widetilde{\Sigma}_k\).
\end{lemma}
%

As above, we postpone the proof of Lemma 
  \ref{LEM_extensions_disjoint_from_4n_gon} for now and complete the proof
  of Theorem \ref{THM_curv_bound}.
To that end, we now claim the following.

\begin{lemma}[negative Euler characteristic]
  \label{LEM_neg_euler_char}
  \hfill\\
Letting \(\widetilde{\Sigma}_k(\zeta_k)\) denote the connected component
  of \(\widetilde{\Sigma}_k\) containing \(\zeta_k\), the following holds:
  \begin{equation*}                                                       
      \chi\big(\widetilde{\Sigma}_k(\zeta_k)\big) < 0
    \end{equation*}
  where \(\chi\) is the Euler characteristic.
\end{lemma}
%

Again, we postpone the proof of Lemma \ref{LEM_neg_euler_char} so as to
  complete the proof of Theorem \ref{THM_curv_bound}.
Recall the following terminology from the proof of Theorem
  \ref{THM_local_local_area_bound}.
We say that two connected components \(L_1\) and \(L_2\) of \((a\circ
  u)^{-1}(a_0)\) are \emph{eventually connected} provided that there exists
  a connected component \(\check{S}\) of \(S\setminus (a\circ
  u)^{-1}\big((-a_0, a_0)\big)\) for which \(L_1\cup L_2\subset
  \check{S}\).
Thus we fix \(a_1>a_0\) sufficiently large so that for each pair of
  connected components \(L_1\) and \(L_2\) of \((a\circ u)^{-1}(a_0)\) which
  are eventually connected, there exists a connected component of
  \((a\circ u)^{-1}\big([a_0, a_1]\big)\) which contains \(L_1\cup L_2\).
With \(a_1\) established as in the proof of Theorem
  \ref{THM_local_local_area_bound} (see after the proof of Lemma
  \ref{LEM_extension_is_small})  we now apply Lemma
  \ref{LEM_at_most_one_part_1} and Lemma \ref{LEM_at_most_one_part_2}
  which together guarantee that for all sufficiently large \(k\) we have
  that
  \begin{equation*}                                                       
      \#\big( \partial \widetilde{\Sigma}_k(\zeta_k)\big) \leq 2.
    \end{equation*}
That is, the number of connected components of \(\partial
  \widetilde{\Sigma}_k(\zeta_k)\) is at most two.
However by super-additivity of genus\footnote{See Lemma
  \ref{LEM_genus_addition}.}, we also have
  \begin{equation*}                                                       
      {\rm Genus}\big(\widetilde{\Sigma}_k(\zeta_k)\big) = 0
    \end{equation*}
  for all sufficiently large \(k\).
From these two observations, we deduce that
  \begin{equation*}                                                       
      \chi\big(\widetilde{\Sigma}_k(\zeta_k)\big) \geq 0,
    \end{equation*}
  but this contradicts Lemma \ref{LEM_neg_euler_char}.
This is the desired contradiction which completes the proof of
  Theorem \ref{THM_curv_bound} (modulo the proofs of Lemma
  \ref{LEM_no_nodes}, Lemma \ref{LEM_extensions_disjoint_from_4n_gon}, and
  Lemma \ref{LEM_neg_euler_char}).
\end{proof}

\begin{proof}[Proof of Lemma \ref{LEM_neg_euler_char}]
Recall that we must show that
  \(\chi\big(\widetilde{\Sigma}_k(\zeta_k)\big) < 0\) where
  \(\widetilde{\Sigma}_k(\zeta_k)\) is the connected component of
  \(\widetilde{\Sigma}_k\) containing \(\zeta_k\).
For the sake of notational convenience, we define
\begin{align*}                                                            
 \widetilde{\Sigma}_k':= \widetilde{\Sigma}_k(\zeta_k)\ \ \text{and}\ \
 \widetilde{\Sigma}_{k, 0}':={\Sigma}_{k, 0},
  \end{align*}
  and denote by  \(\widetilde{\Sigma}_{k, 1}', \ldots,
  \widetilde{\Sigma}_{k, n_k}'\) the connected components of
  \(\overline{\widetilde{\Sigma}_k'\setminus \widetilde{\Sigma}_{k,
  0}'}\).
We will need the following ad hoc definition.

\begin{definition}[surface with special boundary]
  \label{DEF_surface_with_boundary}
  \hfill \\
A surface with special boundary is a smooth compact real two-dimensional
  oriented manifold \(S\) with piece-wise smooth boundary and zero genus,
  which additionally has the following properties.
The boundary of \(S\) is the union of three sets denoted \(\partial_+
  S\), \(\partial_- S\), and \(\partial_1 S\) where
  \begin{enumerate}                                                       
      \item \(\partial_1 S\) is diffeomorphic to the disjoint union of
	finitely many compact intervals,
      \item each of \(\partial_- S\) and \(\partial_+ S\) is
	diffeomorphic to the disjoint union of finitely many compact
	intervals and circles \(\mathbb{R}/\mathbb{Z}\)
      \item \(\partial_- S\cap \partial_+ S = \emptyset\)
      \item each connected component of \(\partial_1 S\) intersects each
        of \(\partial_+ S\) and \(\partial_- S\) exactly once
      \item neither \(\partial_- S\) nor \(\partial_+ S\) is empty.
     \end{enumerate}
\end{definition}
%
It is worth noting that each of \(\widetilde{\Sigma}_{k, 0}',
  \widetilde{\Sigma}_{k, 1}', \ldots, \widetilde{\Sigma}_{k, n_k}'\) are
  surfaces with special boundary.
Next we need to understand the effect on the Euler characteristic of
  gluing such surfaces along their ``sides'' \(\partial_1 S\).
This is accomplished via the following.

\begin{lemma}[cuts increase Euler characteristic]
  \label{LEM_boundary_gluing}
  \hfill \\
Let \(S\) be a surface with special boundary as in Definition
  \ref{DEF_surface_with_boundary}.
Let \(L\subset S\) denote a smoothly embedded compact interval which
  transversely intersects each of \(\partial_- S\) and \(\partial_+ S\)
  precisely once and for which \(L\cap \partial_1 S= \emptyset\).
Let \(\Sigma\) denote the surface with special boundary obtained
  by cutting \(S\) along \(L\). 
More precisely, this means we consider \(S\setminus L\) as a Riemannian
  manifold with boundary, equipped with a metric (that is, a distance
  function) essentially defined as the length of the shortest path in
  \(S\setminus L\) connecting a pair of points, and then we define
  \(\Sigma\) to be the metric closure of \(S\setminus L\).
In this way, we have
  \begin{align*}                                                          
    S\neq \Sigma := \overline{S\setminus L} = (S\setminus L) \cup L_1\cup
    L_2
    \end{align*}
  where each \(L_i\) is diffeomorphic to \(L\), and \(L_1 \cup L_2\subset
  \partial_1 \Sigma\).
Then 
  \begin{equation*}                                                       
      \chi(\Sigma) = \chi(S) +1  
    \end{equation*}
  where \(\chi(M)\) is the Euler characteristic of \(M\).
\end{lemma}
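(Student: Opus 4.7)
My plan is to prove this by comparing CW structures (or smooth triangulations) on $S$ and $\Sigma = \overline{S \setminus L}$ that are compatible with the cut, and then counting cells. First, I would fix a smooth triangulation $\mathcal{T}$ of $S$ with the property that $L$, $\partial_- S$, $\partial_+ S$, and $\partial_1 S$ each appear as subcomplexes; such a triangulation exists by standard smoothing and subdivision arguments applied to the piecewise smooth boundary and the smoothly embedded arc $L$. Write the subcomplex $L$ as having exactly $n$ one-cells and $n+1$ zero-cells, where the two extreme zero-cells are the endpoints of $L$ lying on $\partial_- S$ and $\partial_+ S$ respectively.

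Next, I would transfer $\mathcal{T}$ to a CW structure $\mathcal{T}'$ on $\Sigma$ via the continuous surjection $q \colon \Sigma \to S$ that identifies $L_1$ with $L_2$ (this is the natural map, which is a homeomorphism off $L_1 \cup L_2$ and a $2{:}1$ quotient on them). The key observation is that because $L$ meets $\partial_\pm S$ transversely at single points and $L \cap \partial_1 S = \emptyset$, a local model near each endpoint shows that cutting truly splits that endpoint into two boundary points of $\Sigma$ (these are precisely the new corners of $\partial_1 \Sigma$ where $L_1$ and $L_2$ meet the preexisting $\partial_\pm S$). Therefore $\mathcal{T}'$ has exactly the same cells as $\mathcal{T}$ except that every cell of $L$ — the $n$ one-cells and all $n+1$ zero-cells, including the two endpoint zero-cells — is duplicated, and no two-cells are duplicated because $L$ has empty interior in $S$.

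Counting the change in cells dimension by dimension, the number of zero-cells increases by $n+1$, the number of one-cells increases by $n$, and the number of two-cells is unchanged. Hence
\begin{equation*}
\chi(\Sigma) - \chi(S) = (n+1) - n + 0 = 1,
\end{equation*}
as claimed. Equivalently, one can phrase the step as saying that $S = \Sigma / (L_1 \sim L_2)$ where the identification is a homeomorphism between two copies of $L$, and that for such a quotient $\chi(S) = \chi(\Sigma) - \chi(L) = \chi(\Sigma) - 1$, since $L$ is an interval.

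The main obstacle I anticipate is the bookkeeping at the two endpoints of $L$ on $\partial_\pm S$: I must verify that these really do split into two distinct points in $\Sigma$ (rather than remaining single points), and that the resulting corners are correctly compatible with the special boundary structure of $\Sigma$ in the sense of Definition~\ref{DEF_surface_with_boundary}. This is ultimately a local check using the transversality of $L$ with $\partial_\pm S$ and the hypothesis $L \cap \partial_1 S = \emptyset$, which guarantees the cut does not interact with the existing corners of $\partial S$ and so the endpoint-splitting behaves as in the standard planar local model.
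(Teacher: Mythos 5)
Your argument is correct, but it takes a genuinely different route from the paper's. You compare CW structures on $S$ and $\Sigma$: choosing a triangulation of $S$ in which $L$ (with, say, $n$ edges and $n+1$ vertices) and all the boundary pieces are subcomplexes, the cut duplicates exactly the cells of $L$ and nothing else, so $\chi(\Sigma) - \chi(S) = (n+1) - n = 1$; equivalently, viewing $S = \Sigma/(L_1 \sim L_2)$ gives $\chi(S) = \chi(\Sigma) - \chi(L) = \chi(\Sigma) - 1$. The paper instead invokes the Gauss--Bonnet theorem for surfaces with boundary and corners, after choosing a Riemannian metric on $S$ for which $L$, $\partial_\pm S$, and $\partial_1 S$ are all geodesics; the curvature integral and the (vanishing) geodesic-curvature integral are unaffected by the cut, and $\chi$ changes only through the external-angle sum, which gains four right-angle corners at the two endpoints of $L$, contributing $4\cdot(\pi/2) = 2\pi$, i.e.\ an increase of $1$ in $\chi$. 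Your combinatorial proof is more elementary and self-contained; the Gauss--Bonnet proof is shorter to state once the metric is chosen, and it matches the technique the paper already used to prove Lemma~\ref{LEM_genus_addition}. Your attention to the local model at the two endpoints of $L$ --- verifying via transversality and $L\cap\partial_1 S=\emptyset$ that each endpoint really splits into two boundary points of $\Sigma$ --- is exactly the right check; it is the same local fact the paper uses implicitly when asserting that the cut produces new corners.
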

 \begin{figure}[h]
\includegraphics[scale=0.42]{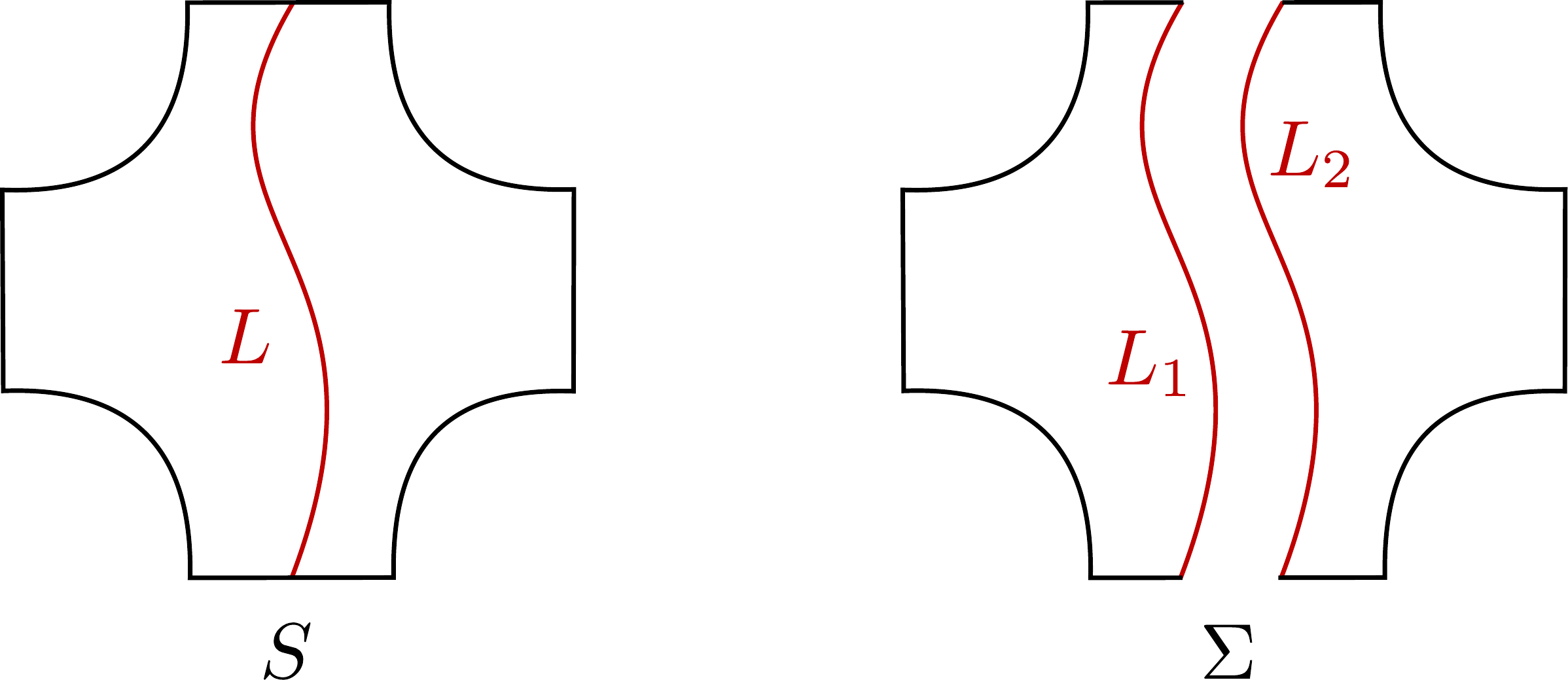}
\caption{Cutting $S$ to obtain $\Sigma$.}\label{FIG3}
\end{figure}
\begin{proof}
Recall the Gauss-Bonnet theorem for surfaces with boundary and corners,
  which states that
  \begin{equation*}                                                       
      \chi(S) = \frac{1}{2\pi}\Big(\int_S K_g\; d A + \int_{\partial S}
      \kappa_g ds + \sum_{i=1}^n \theta_i\Big)
    \end{equation*}
  where \(K_g\) is the Gaussian curvature, \(\kappa_g\) is the geodesic
  curvature, and the \(\theta_i\) are external angles at corners
  associated to a Riemannian metric \(g\).
To prove the lemma, first choose a metric on \(S\) for which \(L\),
  \(\partial_\pm S\),  and \(\partial_1 S\) are all geodesics, and then
  apply Gauss-Bonnet.
\end{proof}
We now observe that
  \begin{equation*}
      \widetilde{\Sigma}_k' = \widetilde{\Sigma}_{k, 0}'\cup
      \bigcup_{i=1}^{n_k} \widetilde{\Sigma}_{k, i}',
    \end{equation*}
  where each of the \(\widetilde{\Sigma}_{k, 0}', \ldots,
  \widetilde{\Sigma}_{k, n_k}'\) are special surfaces with boundary in
  the sense of Definition \ref{DEF_surface_with_boundary}.
Indeed, in this case we have
\begin{equation*}                                                         
    \partial_\pm\widetilde{\Sigma}_{k, i}' = \big(\partial
    \widetilde{\Sigma}_{k, i}'\big) \cap \Big( (a\circ u)^{-1}\big(a\circ
    u(\zeta_k) \pm c_5\big)\Big)
  \end{equation*}
for \(i\in \{0, \ldots, n_k\}\), and \(\partial_1 \widetilde{\Sigma}_{k,
  i}'\) consists of the remaining gradient-type boundary segments.
Moreover, we note that \(\widetilde{\Sigma}_k'\) can be obtained by
  gluing the \(\widetilde{\Sigma}_{k, 1}', \ldots, \widetilde{\Sigma}_{k,
  n_k}'\) components to  \(\widetilde{\Sigma}_{k, 0}'\) along appropriate
  gradient-type boundary segments.
Observe that by construction, we have \(\#(\partial_1
\widetilde{\Sigma}_{k, 0}') = 2n_\dag\) where \(n_\dag\geq 2\), and
\(n_k\leq n_\dag\).
Also note that 
  \begin{equation*}                                                       
      \chi(\widetilde{\Sigma}_{k, i}')\leq 1
    \end{equation*}
  for all \(k\in \mathbb{N}\) and \(i\in \{0, \ldots, n_k\}\).
We then apply Lemma \ref{LEM_boundary_gluing}, which guarantees the
  following
  \begin{align*}                                                          
      \chi(\widetilde{\Sigma}_k') &=\sum_{i=0}^{n_k}
	\chi(\widetilde{\Sigma}_{k, i}') - 2n_\dag\\
      &= 1 + \sum_{i=1}^{n_k} \chi(\widetilde{\Sigma}_{k, i}') - 2n_\dag\\
      &\leq 1 + n_k - 2n_\dag\\
      &\leq 1 - n_\dag\\
      &\leq -1.
    \end{align*}
This completes the proof of Lemma \ref{LEM_neg_euler_char}.  
\end{proof}

\begin{proof}[Proof of Lemma \ref{LEM_extensions_disjoint_from_4n_gon}]
Recall that we must show that for all sufficiently large \(k\in
  \mathbb{N}\), we have \(\Sigma_{k, 0}\cap
  (\overline{\widetilde{\Sigma}_{k}\setminus \Sigma_k}) = \emptyset\).
We note that as a consequence of our construction, specifically equation
  (\ref{EQ_oscillation}), we have
  \begin{equation*}                                                       
      \Big(\sup_{\zeta\in \widetilde{\Sigma}_k} a\circ u(\zeta)\Big)
      - \Big(\inf_{\zeta\in \widetilde{\Sigma}_k} a\circ u(\zeta)\Big)
      \leq 2(1+c_5)
    \end{equation*}
  and consequently the \(\widetilde{\Sigma}_k\) are all pairwise
  disjoint.
Since they are disjoint, we find that as \(k\to \infty\) we have
  \begin{equation*}                                                       
      \int_{\widetilde{\Sigma}_k}u^*\omega \to 0,
    \end{equation*}
  and hence another application of Theorem \ref{THM_energy_threshold},
  guarantees that 
  \begin{equation*}                                                       
      \Big(\sup_{\zeta\in \widetilde{\Sigma}_k} a\circ u(\zeta)\Big)
      - \Big(\inf_{\zeta\in \widetilde{\Sigma}_k} a\circ u(\zeta)\Big)
      \to 2c_5
    \end{equation*}
  and
  \begin{equation}\label{EQ_omega_to_zero}                                
	\int_{\widetilde{\Sigma}_k\setminus \Sigma_k}u^*\omega \to 0.
      \end{equation}
Now we note by construction that if \(\Sigma_{k, 0}\cap
  (\overline{\widetilde{\Sigma}_{k}\setminus \Sigma_k}) \neq \emptyset\),
  then \(\overline{\widetilde{\Sigma}_{k}\setminus \Sigma_k}\) and
  \(\Sigma_{k, 0}\) must overlap on a connected component of \(\Sigma_{k,
  0}\cap(a\circ u)^{-1}(\{a\circ u(\zeta_k)\pm c_5\})\subset \Sigma_{k,
  0}\).
Because the integral of \(\lambda\) along such components tend to
  \(r_2\), we can conclude that
  \begin{equation}\label{EQ_threshold_lambda_integral}                    
      \int_{\partial (\overline{\widetilde{\Sigma}_k\setminus \Sigma_k})
      }u^*\lambda \geq \textstyle{\frac{1}{2}} r_2
    \end{equation}
  for all sufficiently large \(k\); see for example equation
  (\ref{EQ_integral_lambda_estimate_in_Sigma}).
However, we then invoke Theorem \ref{THM_area_bound_estimate}, which
  guarantees that
  \begin{equation}\label{EQ_small_lambda_integral}                       
    \int_{\partial (\overline{\widetilde{\Sigma}_k\setminus
    \Sigma_k}) }u^*\lambda \leq \Big(C_{\mathbf{h}} \int_{
    \overline{\widetilde{\Sigma}_k\setminus \Sigma_k} }u^*\omega +
    0\Big) e^{C_{\mathbf{h}} \delta_k}
    \end{equation}
  where
  \begin{equation*}                                                       
      \delta_k:= \Big(\sup_{\zeta\in \widetilde{\Sigma}_k} a\circ
      u(\zeta)\Big) - \Big(\inf_{\zeta\in \widetilde{\Sigma}_k} a\circ
      u(\zeta)\Big) - 2c_5 \to 0.
    \end{equation*}
In light of equation (\ref{EQ_omega_to_zero}), we see that equation
  (\ref{EQ_small_lambda_integral}) contradicts equation 
  (\ref{EQ_threshold_lambda_integral}).
This contradiction then guarantees that indeed, 
  \begin{equation*}                                                       
      \Sigma_{k, 0}\cap (\overline{\widetilde{\Sigma}_{k}\setminus
      \Sigma_k}) = \emptyset
    \end{equation*}
  which completes the proof of Lemma 
  \ref{LEM_extensions_disjoint_from_4n_gon}.
\end{proof}

\begin{proof}[Proof of Lemma \ref{LEM_no_nodes}]
Recall that $W=(-1,1)\times M$ and that we must prove that the limit curve 
\begin{align*}                                                            
  \hat{\mathbf{v}}=(\hat{v}, \widehat{S}, \hat{j}, W, J,
  \emptyset, \widehat{D})
  \end{align*}
  is not nodal; that is, that \(\widehat{D}=\emptyset\). 
To that end, we suppose not, and we will derive a contradiction.
First however, we will need to briefly recall some facts about Gromov
  convergence.
In particular, the set of nodes is given by
  \(\widehat{D}=\{\overline{d}_1, \underline{d}_1, \ldots,
  \overline{d}_{n_d}, \underline{d}_{n_d}\}\), with \(\{\overline{d}_\nu,
  \underline{d}_\nu\}\subset \widehat{D}\) a nodal pair.
In particular, for each nodal pair  \(\{\overline{d}_\nu,
  \underline{d}_\nu\}\subset \widehat{D}\) we have
  \(\hat{v}(\overline{d}_\nu) = \hat{v}(\underline{d}_\nu)\).
Next we recall that \(\widehat{S}^{\widehat{D}}\) is defined to be the
  circle compactification of \(\widehat{S}\setminus \widehat{D}\) (or
  more specifically, an oriented blow-up at the points in
  \(\widehat{D}\)), and the newly added circles are denoted
  \(\overline{\Gamma}_\nu\) and \(\underline{\Gamma}_\nu\), which signifies
  that each circle \(\overline{\Gamma}_\nu\) is associated to a nodal point
  \(\overline{d}_\nu\) and similarly for \(\underline{\Gamma}_\nu\) and
  \(\underline{d}_\nu\).
The surface \(\widehat{S}^{\widehat{D}, \hat{r}}\) is then obtained
  by gluing pairs of circles \(\overline{\Gamma}_\nu\) and
  \(\underline{\Gamma}_\nu\) via the orientation reversing orthogonal maps
  \(r_\nu:\overline{\Gamma}_\nu\to \underline{\Gamma}_\nu\); here
  \(\hat{r}=\{r_1, \ldots, r_{n_d}\}\) is called a decoration.
It is useful to let \(\Gamma_\nu\subset \widehat{S}^{
  \widehat{D}, \hat{r}}\) denote the circle obtained by by gluing
  \(\overline{\Gamma}_\nu\) and \(\underline{\Gamma}_\nu\).
Also recall that the definition of Gromov convergence guarantees
  the existence of diffeomorphisms \(\phi_k:\widehat{S}^{ \widehat{D},
  \hat{r}}\to \widehat{S}_k\) with the property that
  \(\phi_k^*\hat{v}_k\to \hat{v}\) in \(\mathcal{C}^0\),
  \(\phi_k^*\hat{v}_k\to \hat{v}\) in
  \(\mathcal{C}_{loc}^\infty(\widehat{S}^{r, D}\setminus \cup_\nu
  \Gamma_\nu)\), \(\phi_k^*j_k\to j\) in
  \(\mathcal{C}_{loc}^\infty(\widehat{S}^{\widehat{D},
  \hat{r}}\setminus \cup_\nu \Gamma_\nu)\).  
In particular, this guarantees that there exists a sequence
  \(\epsilon_k\to 0\) with the property that
  \begin{equation}\label{EQ_no_nodes}                                     
      \phi_k^*\hat{v}_k(\Gamma_\nu)\subset
      \mathcal{B}_{\epsilon_k}(p_\nu),
    \end{equation}
  where
  \begin{equation*}                                                       
	p_\nu:=\hat{v}(\overline{d}_\nu)=\hat{v}(\underline{d}_\nu)
	=(a_\nu, q_\nu)\in W.  
	    \end{equation*}
We note that by the construction of \(\hat{v}\), the  \(p_{\nu}\) belong
  to \(W\); see the set-up before the initial statement of Lemma
  \ref{LEM_no_nodes}.

\begin{lemma}[some local properties]
  \label{LEM_some_local_properties}
  \hfill\\
Let \(\hat{\mathbf{v}}_k=(\hat{v}_k, \widehat{S}_k, \hat{j}_k, W,
  J, \emptyset, \emptyset)\) and \(\hat{\mathbf{v}}=(\hat{v},
  \widehat{S}, \hat{j}, W, J, \emptyset, \widehat{D})\) be as above
  with \(\hat{\mathbf{v}}_k\to \hat{\mathbf{v}}\) in a Gromov sense,
  and let \(\phi_k\colon \widehat{S}^{\widehat{D}, \hat{r}}\to
  \widehat{S}_k \) be the associated diffeomorphisms, and let
  \(\{\Gamma_1, \ldots, \Gamma_{n_d}\}\) be the collection of circles
  $\Gamma_{\nu}$ obtained by identifying
  $\overline{\Gamma}_{\nu}=\underline{\Gamma}_{\nu}$; see above.
Fix \(\Gamma\in \{\Gamma_1, \ldots , \Gamma_{n_d}\}\), and let
  \(\widehat{\Sigma}\) be the connected component of
  \(\widehat{S}^{\widehat{D},\hat{r}}\) containing \(\Gamma\).
Then \(\widehat{\Sigma}\setminus \Gamma\) is disconnected with connected
  components given by \(\widehat{\Sigma}_1\) and \(\widehat{\Sigma}_2\),
  and for all sufficiently large \(k\in \mathbb{N}\), there exists an
  \(\epsilon>0\), \(\zeta_1 \in \widehat{\Sigma}_1\), and \(\zeta_2\in
  \widehat{\Sigma}_2\) such that
  \begin{align} \label{EQ_goes_above}                                     
    a\circ \hat{v}_k\circ \phi_k(\zeta_i) - \sup_{\zeta \in \Gamma}
    a\circ \hat{v}_k\circ \phi_k(\zeta)  \geq \epsilon,
    \end{align}
  for each \(\zeta_i \in \{1, 2\}\).
\end{lemma}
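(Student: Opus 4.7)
The plan is to combine three ingredients: the genus-zero structure of $\widehat{S}^{\widehat{D},\hat{r}}$, which forces the embedded circle $\Gamma$ to separate its component; the vanishing of the limit $\omega$-energy, which makes $a\circ\hat{v}$ harmonic (in fact the real part of a holomorphic function) on the holomorphic disk $\overline{\mathcal{D}}$; and the Gromov convergence $\phi_k^*\hat{v}_k\to\hat{v}$, which transfers the resulting strict inequality from the limit to the pre-limit.

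For the disconnectedness claim, I would first observe that the $\widehat{S}_k$ are obtained by restriction from $S_k=S_{r_1}(\zeta_k)$, that the $S_k$ are pairwise disjoint connected subsets of the finite-genus surface $S$ by (\ref{EQ_large_gaps}), and hence by super-additivity of genus (Lemma \ref{LEM_genus_addition}) that ${\rm Genus}(S_k)=0$ for all sufficiently large $k$. Since arithmetic genus is preserved by Gromov convergence, ${\rm Genus}(\widehat{S}^{\widehat{D},\hat{r}})=0$, so every connected component is a planar surface (genus zero, possibly with boundary). Capping off the boundary of $\widehat{\Sigma}$ with disks produces a sphere on which $\Gamma$ is a simple closed curve, necessarily separating by Jordan's theorem; hence $\widehat{\Sigma}\setminus\Gamma$ has exactly two connected components $\widehat{\Sigma}_1$ and $\widehat{\Sigma}_2$.

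For the inequality, I note first that $\int_{\widehat{S}_k}\hat{v}_k^*\omega\to 0$ (the $\widehat{S}_k$ are pairwise disjoint subsets of $S$ and $\int_S u^*\omega<\infty$), so in the limit $\int_{\widehat{S}}\hat{v}^*\omega=0$; combined with the non-negativity of $\omega$ on $J$-complex lines, this confines $\hat{v}(\widehat{S})$ to the holomorphic disk $\overline{\mathcal{D}}=\psi(\mathsf{D})$ with $\psi(s,t)=(s,\beta(t))$. Consequently $a\circ\hat{v}=s\circ\psi^{-1}\circ\hat{v}$ is harmonic away from the nodes and continuous across them. Inside $\widehat{\Sigma}_i$, the irreducible component containing the blown-down node $d_\nu^{(i)}$ is non-constant: a maximal connected chain of constant components starting at $d_\nu^{(i)}$ must, by stability of the Gromov limit, terminate in a non-constant component, and every constant component in the chain maps identically to $p_\nu$, so the terminating non-constant component meets the chain at an interior point where $a\circ\hat{v}$ takes the value $a_\nu$. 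The strong maximum principle for harmonic functions then forces $a\circ\hat{v}$ to exceed $a_\nu$ in every neighborhood of that interior point; I pick such a point $\zeta_i\in\widehat{\Sigma}_i$ lying off every special circle $\Gamma_\mu$ with $a\circ\hat{v}(\zeta_i)\geq a_\nu+3\epsilon$ for some $\epsilon>0$ uniform in $i\in\{1,2\}$.

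Finally, (\ref{EQ_no_nodes}) gives $\phi_k^*\hat{v}_k(\Gamma)\subset\mathcal{B}_{\epsilon_k}(p_\nu)$ with $\epsilon_k\to 0$, so $\sup_{\zeta\in\Gamma}a\circ\hat{v}_k\circ\phi_k(\zeta)\to a_\nu$; meanwhile the $\mathcal{C}_{\rm loc}^\infty$ convergence of $\phi_k^*\hat{v}_k\to\hat{v}$ away from $\cup_\mu\Gamma_\mu$ yields $a\circ\hat{v}_k\circ\phi_k(\zeta_i)\to a\circ\hat{v}(\zeta_i)\geq a_\nu+3\epsilon$, so for $k$ sufficiently large the desired inequality (\ref{EQ_goes_above}) holds with room to spare. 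The principal obstacle I anticipate is the non-constancy assertion: ruling out a ``ghost tree'' inside $\widehat{\Sigma}_i$ on which $\hat{v}$ is identically $p_\nu$. I would address this by a tree induction using stability of $\hat{\mathbf{v}}$ (each constant sphere carries at least three special points, forcing the tree to terminate at a non-constant piece), and, if needed, by exploiting the explicit branching structure $\psi^{-1}\circ\hat{v}(z)=z^{n_\dag}$ at $\hat{\zeta}_\infty$ to ensure that neither side of any node is entirely collapsed.
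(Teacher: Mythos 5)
Your proof is correct, and at the level of ideas it uses the same three pillars as the paper's: genus zero forces $\Gamma$ to separate, stability combined with $\hat{v}^*\omega=0$ forbids a pure ghost tree on either side of the node, and Gromov convergence transfers the resulting strict inequality to the pre-limit curves. The packaging differs in one respect that is worth noting. The paper's route proceeds through an intermediate claim that each side $\widehat{\Sigma}_i$ must contain a piece of $\partial\widehat{S}^{\widehat{D},\hat{r}}$ (i.e.\ must reach the sphere $\partial\overline{\mathcal{B}}_{r_2}(p)$), and then invokes openness of the image of $\hat{v}|\widehat{\Sigma}_i$ in the orbit cylinder to find $(a'+2\epsilon,q')$ in both images. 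As written, that last step is slightly compressed: the image of $\hat{v}|\widehat{\Sigma}_i$ is not literally open if constant components are present, so one still needs to see that the constant tree hanging off $d_\nu^{(i)}$ terminates in a non-constant component through which $(a',q')$ is an interior value. You make exactly that step explicit with your tree induction, and then replace the global ``reaches the boundary'' observation with the local strong-maximum-principle argument at that interior point. This is a mild streamlining: you never need to track the image out to $\partial\overline{\mathcal{B}}_{r_2}(p)$, only to rule out the ghost tree, and your appeal to harmonicity of $a\circ\hat{v}$ on the holomorphic disk $\overline{\mathcal{D}}$ is precisely the open-mapping fact the paper is invoking. Two minor remarks: your derivation of ${\rm Genus}(S_k)=0$ via pairwise disjointness and Lemma~\ref{LEM_genus_addition} supplies a justification that the paper leaves implicit (``by construction''); and your fallback suggestion of using the branching model $\psi^{-1}\circ\hat{v}(z)=z^{n_\dag}$ at $\hat{\zeta}_\infty$ is unnecessary here, since $\Gamma$ need not be the circle around $\hat{\zeta}_\infty$ and the stability/tree argument already closes the gap.
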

%
\begin{proof}
Recall, for example from the proof of Lemma \ref{LEM_genus_addition}, 
  the removal of a loop from a surface either disconnects the surface or
  else reduces the genus.
However, by construction \({\rm Genus}(\widehat{S}_k) = 0\), so that the
  removal \(\phi_k(\Gamma)\) must disconnect \(\widehat{\Sigma}\) into
  \(\widehat{\Sigma}_1\) and \(\widehat{\Sigma}_2\) as required.
To proceed, we make the following claim.\\

\noindent{\bf Claim:} \(\partial \widehat{S}^{\widehat{D}, \hat{r}}
  \cap \widehat{\Sigma}_i \neq \emptyset\) for each \(i\in \{1, 2\}.\)\\

To see this, we first let \(\check{S}\) be a connected component of
  \(\widehat{S}^{\widehat{D},\hat{r}}\setminus \cup_\nu \Gamma_\nu
  \), and then observe that \(\hat{v}:\check{S}\to W\) is
  a pseudoholomorphic map, which is either a constant map or generally
  immersed.
Furthermore, because \(\int_S u^*\omega<\infty\) and because
  the \(\widehat{S}_k\subset S\) are disjoint, it follows that
  \(\int_{\widehat{S}_k}\hat{v}_k^*\omega \to 0\) and hence
  \(\int_{\check{S}}\hat{v}^*\omega=0\).
As a consequence of this, it follows that \(\hat{v}(\check{S})\)
  is contained in a patch of orbit cylinder.
By unique continuation\footnote{See Section 2.3 of \cite{MS}.} it then
  follows that \(\hat{v}(\check{S})\) is either a point in
  \(\mathcal{B}_{r_2}(p)\), or else it has nontrivial
  intersection with \(\partial \overline{\mathcal{B}}_{r_2}(p)\); and
  recall that \(\hat{v}^{-1}\big(\partial
  \overline{\mathcal{B}}_{r_2}(p\big)) = \partial \widehat{S}\).
Thus if \(\widehat{\Sigma}_i\cap \partial
  \widehat{S}^{\widehat{D},\hat{r}}=\emptyset\) for some \(i\in \{1, 2\}\)
  then we must have that \(\hat{v}\) restricted to any connected component
  of \((\widehat{S}^{\widehat{D},\hat{r}}\setminus \cup_k \Gamma_k)\cap
  \widehat{\Sigma}_i\) is a constant map.
However, letting \(\Sigma_i\) denote the image of
  \(\widehat{\Sigma}_i\) under the quotient map
  \(\widehat{S}^{\widehat{D},\hat{r}}\to S/(\overline{d}_k\sim
  \underline{d}_k)\), we see that \(\hat{v}\colon \Sigma_i\to W\) must
  give rise to a compact stable pseudoholomorphic curve, with no marked
  points, zero (arithmetic) genus, on which \(\hat{v}\) is constant on
  every component; but this is impossible.
We conclude that indeed, \(\widehat{\Sigma}_i\cap \partial
  \widehat{S}^{\widehat{D},\hat{r}}\neq \emptyset\).
This establishes the above claim.

To finish proving Lemma \ref{LEM_some_local_properties}, we let
  \(\{\underline{d},\bar{d}\}\) be the nodal pair associated to
  \(\Gamma\), and we let \((a',q')=\hat{v}(\underline{d}) =
  \hat{v}(\bar{d})\in W\).
With \(\hat{v}^{-1}\big(\partial \overline{\mathcal{B}}_{r_2}(p\big))
  = \partial \widehat{S}\),  \(\{\underline{d}, \bar{d}\}\cap \partial
  \widehat{S} = \emptyset\), \(\int_{\widehat{\Sigma}}\hat{v}^*\omega =
  0\), and because \(\widehat{\Sigma}_i\cap \partial
  \widehat{S}^{\widehat{D},\hat{r}}\neq \emptyset\)  for each \(i\in
  \{1,2\}\), it follows, since the images of the maps
  $\hat{v}|\widehat{\Sigma}_i$  are open  in an orbit cylinder, that there
  exists an \(\epsilon>0\) for which \((a'+2\epsilon, q') \in
  (\hat{v}(\widehat{\Sigma}_1)\cap \hat{v}(\widehat{\Sigma}_2))\setminus
  \partial \overline{\mathcal{B}}_{r_2}(p) \).  
Consequently, we define \(\zeta_1 \in \widehat{\Sigma}_1\) and
  \(\zeta_2\in \widehat{\Sigma}_2\) by fixing \(\zeta_1 \in
  \hat{v}^{-1}\big((a'+2\epsilon,q'))\cap \widehat{\Sigma}_1\) and
  \(\zeta_2\in \hat{v}^{-1}\big((a'+2\epsilon,q'))\cap
  \widehat{\Sigma}_2\).
Then by Gromov convergence, we have
  \begin{align*}                                                          
    \hat{v}_k\circ\phi_k(\zeta_i) \to (a'+2\epsilon, q')
    \end{align*}
  for each \(i\in \{1,2\}\).
Also as a consequence of Gromov convergence, there exist \(\epsilon_k\to
  0\) such that
  \begin{align*}                                                          
    \hat{v}_k \circ \phi_k(\Gamma)\subset \mathcal{B}_{\epsilon_k}(p')
    \end{align*}
  where \(p' = (a', q')= \hat{v}(\underline{d}) = \hat{v}(\bar{d})\).
Inequality (\ref{EQ_goes_above}) then follows immediately.
This completes the proof of Lemma \ref{LEM_some_local_properties}.
\end{proof}

We are now prepared to complete the proof of Lemma \ref{LEM_no_nodes}. 
 Indeed, as above we fix \(\Gamma\in \{\Gamma_1, \ldots, \Gamma_{n_d}\}\),
  and we will consider \(u^{-1}([a_0, \infty)\times M) \setminus
  \cup_{k=1}^\infty\phi_k(\Gamma)\). 
By construction, there exists sequences \(a_k\to \infty\) and
  \(\epsilon_k\to 0\) with \(\epsilon_k>0\) for which
  \begin{align*}                                                            
    u\circ \phi_k(z) \in [a_k-\epsilon_k,a_k+\epsilon_k]\times M
    \quad\text{for all }z\in \phi_k(\Gamma).
    \end{align*}
In view of (\ref{EQ_large_gaps}) we have the 
  inequality \(a_{k+1}-a_k\geq 10\) for large \(k\). 
Because \({\rm Punct}(S)<\infty\) and \({\rm Genus}(S)<\infty\), and
  because the \(\phi_k(\Gamma)\) are all pairwise disjoint, it follows that
  only finitely many connected components of \(u^{-1}([a_0,\infty)\times
  M)\setminus \cup_{k=1}^\infty\phi_k(\Gamma)\) have closure which is
  non-compact, and infinitely many which have compact closure.
We denote this infinite set of compact closures by
  \(\{\check{\Sigma}_{k'}\}_{k'\in \mathbb{N}}\), and observe that by
  construction it is the case that for each \(k'\in \mathbb{N}\) we have
  \(\partial \check{\Sigma}_{k'} \subset \cup_{k=1}^\infty
  \phi_k(\Gamma)\).
In fact, for each \(k'\in \mathbb{N}\) there exists a finite set
  \(F_{k'}\subset \mathbb{N}\) such that \(\partial \check{\Sigma}_{k'} =
  \cup_{k\in F_{k'}} \phi_k(\Gamma)\).
By virtue of the \(\check{\Sigma}_{k'}\) being compact and with boundary
  contained in \(\cup_{k=1}^\infty \phi_k(\Gamma)\), the application of
  Lemma \ref{LEM_some_local_properties} guarantees not only that the
  function \(a\circ u\) has an interior absolute maximum on each
  \(\check{\Sigma}_{k'}\), but also that for all sufficiently large \(k'\)
  the maximal value of \(a\circ u\) over \(\check{\Sigma}_{k'}\) is at
  least some uniform threshold amount larger than the maximal value of
  \(a\circ u\) along the boundary.
More precisely, there exists an \(\epsilon>0\) independent of \(k'\) such that
\begin{align*}                                                            
  \Big(\sup_{\zeta\in \check{\Sigma}_{k'}} a\circ u(\zeta)\Big) -
  \Big(\sup_{\zeta \in \partial \check{\Sigma}_{k'}} a \circ
  u(\zeta)\Big) \geq \epsilon.
  \end{align*}
Indeed, the uniformity of this inequality follows from Lemma
  \ref{LEM_some_local_properties}.
Now it  follows from Theorem \ref{THM_energy_threshold} that
  there exists an \(\hbar>0\) such that \(\int_{\check{\Sigma}_{k'}}u^*\omega
  \geq \hbar\) for all sufficiently large \(k'\in \mathbb{N}\), which then
  implies that \(\int_S u^*\omega = \infty\), which is impossible.
This is the desired contradiction which proves Lemma
  \ref{LEM_no_nodes}.
\end{proof}

\subsection{Proof of Theorem \ref{THM_existence}: Existence
  Workhorse}\label{SEC_workhorse} \hfill\\

This section is devoted to the proof of Theorem \ref{THM_existence}. 
The main argument is provided in Section
  \ref{SEC_existence_workhorse_proof}, however this relies on some
  preliminary notions established in Section
  \ref{SEC_workhorse_preliminaries}, and two technical results which are
  then proved in Section \ref{SEC_proof_of_feral_limit} and Section
  \ref{SEC_proof_of_bounded_intersections}.

\subsubsection{Preliminaries}\label{SEC_workhorse_preliminaries}
Recall that a gradient flow line of a smooth, but possibly degenerate,
  real-valued function defined on a closed manifold \(N\) need not have a
  unique point as its \(\omega\)-limit set.
That is to say, in general it may be the case that for a gradient flow
  line \(\gamma\colon \mathbb{R}\to N\) there exist sequences of real
  numbers \(t_k\to \infty\) and \(t_k'\to \infty\) for which
  \(\gamma(t_k)\to p\), \(\gamma(t_k')\to p'\), and \(p\neq p'\).
Nevertheless, both \(p\) and \(p'\) will be critical points of the
  associated function. 
This phenomenon is also known for finite energy pseudoholomorphic curves,
  see \cite{Siefring}.
Analogously, feral curves need not have unique limits, but
  nevertheless by passing to a subsequence one can extract the desired limit
  set which is indeed closed and invariant under the flow of \(X_\eta\).
Here we make this precise with Definition \ref{DEF_x_limit_set} and
  Proposition \ref{PROP_properties_of_x_limit_set} below.

\begin{definition}[$\mathbf{x}$-limit set]
  \label{DEF_x_limit_set}
  \hfill \\
Let \((M, \eta)\) be a closed framed Hamiltonian manifold, and let \((J,
  g)\) be an \(\eta\)-adapted almost Hermitian structure on
  \(\mathbb{R}\times M\).
Let \(\mathbf{u} = (u, S, j, W, J, \mu, D)\) be a feral curve in the sense
  of Definition \ref{DEF_feral_J_curve}.
For each \(x\in \mathbb{R}\), define 
  \begin{equation*}                                                       
    \widehat{\Xi}_{x} ={\rm Sh}_{x}\Big(((x-1, x+1)\times M )\cap
    u(S)\Big) \subset (-1,1)\times M,
    \end{equation*}
  where for each \(x\in \mathbb{R}\), the map \({\rm
  Sh}_{x}:\mathbb{R}\times M\to \mathbb{R}\times M\)  is the shift map
  defined by \({\rm Sh}_{x}(a,p) = (a-x,p)\).
Let \(\mathbf{x}= \{x_i\}_{i\in \mathbb{N}}\subset \mathbb{R}\) be a
  monotonic sequence  with either \(\lim_{i\to \infty}x_i= \infty\) or
  \(\lim_{i\to \infty} x_i= -\infty\).
We then define the \(\mathbf{x}\)-limit set of \(u\) to be the following:
  \begin{equation*}                                                       
    L_{\mathbf{x}}:=\bigcap_{k=1}^\infty {\rm cl}  \Big(
    \bigcup_{i=k}^\infty
    \widehat{\Xi}_{x_i}\Big) \subset (-1,1)\times M
    \end{equation*}
\end{definition}                                                                     
%

\begin{proposition}[properties of $\mathbf{x}$-limit set]
  \label{PROP_properties_of_x_limit_set}
  \hfill\\
Let \((M, \eta)\) be a closed framed Hamiltonian manifold, and let \((J,
  g)\) be an \(\eta\)-adapted almost Hermitian structure on
  \(\mathbb{R}\times M\).
Let \(\mathbf{u}=(u, S, j, W, J, \mu, D)\) be a feral curve and 
  \(\mathbf{x}= \{x_i\}_{i\in \mathbb{N}}\subset \mathbb{R}\) be a
  monotonic sequence with \(|x_i|\to \infty\).
\emph{Then} the \(\mathbf{x}\)-limit of \(u\) has the form \((-1, 1)\times
  \Xi \), where \(\Xi\subset M\) is a closed set which is invariant under
  the Hamiltonian flow of \(\eta\).
\end{proposition}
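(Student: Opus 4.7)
The strategy is to show that, setting $\Xi := \{p \in M : (0,p) \in L_\mathbf{x}\}$, one has $L_\mathbf{x} = (-1,1) \times \Xi$ with $\Xi$ closed in $M$ and $X_\eta$-invariant. Closedness of $L_\mathbf{x}$ is immediate from its definition as an intersection of closed sets, and closedness of $\Xi$ follows from closedness of the slice $L_\mathbf{x} \cap (\{0\} \times M)$. The substantive content is then: (a) $L_\mathbf{x}$ is invariant under translations along $\partial_a$ within the window, i.e.\ $(a_0,p_0)\in L_\mathbf{x}$ implies $(a,p_0)\in L_\mathbf{x}$ for every $a\in(-1,1)$; and (b) $\Xi$ is invariant under the flow of $X_\eta$. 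I will establish both via a single target-local Gromov compactness construction.

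Fix $(a_0, p_0) \in L_\mathbf{x}$. By definition there exist $\zeta_k \in S$ and $x_{i_k} \in \mathbf{x}$ with ${\rm Sh}_{x_{i_k}} \circ u(\zeta_k) \to (a_0, p_0)$. After passing to a subsequence I may assume the $x_{i_k}$ are $3$-separated, so the windows $W_k := (x_{i_k} - 1, x_{i_k} + 1) \times M$ are pairwise disjoint. Since $\int_S u^*\omega < \infty$ and $u^*\omega$ is pointwise non-negative on $J$-complex planes, this forces $\int_{u^{-1}(W_k)} u^*\omega \to 0$. Define $v_k := {\rm Sh}_{x_{i_k}} \circ u$ on $u^{-1}(W_k)$, which is a proper pseudoholomorphic map into $(-1,1) \times M$ with $v_k(\zeta_k) \to (a_0, p_0)$. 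For $k$ large, the points $\zeta_k$ lie outside the compact sets of Theorems \ref{THM_local_local_area_bound} and \ref{THM_curv_bound}, so the $v_k$ enjoy uniform connected-local area bounds and uniform second-fundamental-form bounds; combined with the finiteness of genus, marked points and nodes of $\mathbf{u}$, the hypotheses of Theorem \ref{THM_target_local_gromov_compactness} are met uniformly in $k$ on any pair of compact target regions containing a neighborhood of $(a_0,p_0)$. Applying that theorem (and trimming domains as it prescribes) I extract a subsequential Gromov limit $v_\infty : S_\infty \to (-1,1) \times M$ with $(a_0, p_0) \in v_\infty(S_\infty)$.

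The key structural input is $\int_{S_\infty} v_\infty^* \omega = 0$, coming from the disjointness of the $W_k$ together with $\mathcal{C}^\infty_{\rm loc}$-convergence on the Gromov limit. Since $\omega$ is pointwise non-negative on $J$-complex $2$-planes and its kernel in $T(\mathbb{R}\times M)$ equals ${\rm span}(\partial_a, X_\eta)$, every non-constant component of $v_\infty$ has tangent planes identically equal to this kernel. Hence $v_\infty$ factors through the orbit cylinder $\mathcal{C} := (-1,1) \times \gamma(\mathbb{R})$, where $\gamma$ is the integral curve of $X_\eta$ with $\gamma(0) = p_0$, and becomes a non-constant holomorphic map into the two-dimensional Riemann surface $\mathcal{C}$ (whose complex structure is the restriction of $J$ to ${\rm span}(\partial_a, X_\eta)$). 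The open mapping theorem then guarantees that the image of $v_\infty$ contains an open neighborhood of $(a_0, p_0)$ in $\mathcal{C}$; in particular, it contains $(a_0 + \delta, p_0)$ and $(a_0, \gamma(\delta))$ for all sufficiently small $|\delta|$, and the definition of Gromov convergence yields sequences $\zeta'_k, \zeta''_k \in S$ exhibiting these points as members of $L_\mathbf{x}$.

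To finish, set $A := \{a \in (-1,1) : (a, p_0) \in L_\mathbf{x}\}$. Then $A$ is closed in $(-1,1)$ since $L_\mathbf{x}$ is closed, and open by the local argument above applied at each point of $A$; since $a_0 \in A$ and $(-1,1)$ is connected, $A = (-1,1)$, proving (a). Applying the analogous argument to $B := \{t \in \mathbb{R} : \gamma(t) \in \Xi\}$ — closed since $\Xi$ is, open by the local argument — gives $B = \mathbb{R}$ whenever $0 \in B$, which is (b). The chief obstacle is controlling the Gromov extraction: one must verify not only that the hypotheses of Theorem \ref{THM_target_local_gromov_compactness} hold uniformly in $k$, but also that the limit $v_\infty$ is genuinely non-constant through $(a_0, p_0)$ rather than collapsing. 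The former uses the feral hypotheses together with Theorems \ref{THM_local_local_area_bound} and \ref{THM_curv_bound}; the latter is supplied by the monotonicity formula, which gives a uniform area lower bound for pseudoholomorphic curves passing through a prescribed point and thereby rules out collapse.
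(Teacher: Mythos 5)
Your proof follows the same route as the paper's Lemma \ref{LEM_local_invariance}: pass to well-separated windows so the $\omega$-energy in each goes to zero, extract a target-local Gromov limit through $(a_0,p_0)$, observe that zero $\omega$-energy forces the limit into an orbit cylinder, and then use openness of the image (in the cylinder) together with a global open/closed argument to get invariance under $\partial_a$-translation and the $X_\eta$-flow. The paper packages the local statement as a uniform-$\epsilon_0$ lemma and then treats globalization as immediate; your explicit open/closed step is the same content spelled out.

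There is, however, one genuine gap in the compactness step as you have written it. You define $v_k := {\rm Sh}_{x_{i_k}}\circ u$ on $u^{-1}(W_k)$ and claim the hypotheses of Theorem \ref{THM_target_local_gromov_compactness} are met. That theorem requires a uniform bound on ${\rm Area}_{v_k^*g}(S_k)$ over the \emph{entire} domain $S_k$, and for a feral curve the area of $u^{-1}(W_k)$ — which is precisely the Hofer energy restricted to a strip — is generally not finite, and is certainly not uniformly bounded in $k$. The connected-local area bound of Theorem \ref{THM_local_local_area_bound} only controls the area of the single connected component $S_{r_1}(\zeta_k)$ of $u^{-1}(\mathcal{B}_{r_1}(u(\zeta_k)))$ that contains $\zeta_k$. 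You must make that restriction \emph{before} invoking compactness, taking $S_k := S_{r_1}(\zeta_k)$ and choosing $\mathcal{K}_2 \subset \mathcal{B}_{r_1/2}(\cdot)$; the ``trimming'' in Theorem \ref{THM_target_local_gromov_compactness} is part of its conclusion, not a mechanism for satisfying its area hypothesis. Once that change is made, the rest goes through: genus and special-point bounds come from $S_{r_1}(\zeta_k)\subset S$ and the feral hypotheses, the limit is contained in an orbit cylinder since $\omega$-energy vanishes, and the boundary of the trimmed limit curve lies outside $\mathcal{B}_{r_1/2}((a_0,p_0))$. Incidentally, the curvature bound from Theorem \ref{THM_curv_bound} is not among the hypotheses of Theorem \ref{THM_target_local_gromov_compactness} and is not needed here; and while your monotonicity-formula appeal to rule out collapse is a legitimate route, the paper gets non-constancy more directly from the boundary-immersion and the requirement that $(a_0,p_0)$ lies in the image while the boundary maps outside $\mathcal{B}_{r_1/2}((a_0,p_0))$.
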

%
\begin{proof}
The main technical tool to prove this result will be the following.

\begin{lemma}[local invariance]
  \label{LEM_local_invariance}
  \hfill\\
Let \((M,\eta)\), \((J, g)\), \(\mathbf{u}\), and
  \(\mathbf{x}=\{x_i\}_{i\in \mathbb{N}}\) be as above in Proposition
  \ref{PROP_properties_of_x_limit_set}.
Then there exists an \(\epsilon_0>0\) with the following property.
If \((a_0, p_0)\in L_{\mathbf{x}}\), and if \(|\epsilon|< \epsilon_0\),
  then \((a_0+\epsilon, p_0) \in L_{\mathbf{x}}\) whenever
  \(|a_0+\epsilon|< 1\).
Similarly if \((a_0, p_0)\in L_{\mathbf{x}}\), and if \(|\epsilon|<
  \epsilon_0\), then \((a_0, \varphi_\eta^\epsilon(p_0))\in
  L_{\mathbf{x}}\), where \(\varphi_\eta^\epsilon\) is the time
  \(\epsilon\) flow of the Hamiltonian vector field associated to \(\eta\).
\end{lemma}
%
We will prove Lemma \ref{LEM_local_invariance} momentarily, however for
  the moment we use it to complete the proof of Proposition
  \ref{PROP_properties_of_x_limit_set}.
To that end, observe that Lemma  \ref{LEM_local_invariance} immediately
  establishes that \(L_{\mathbf{x}}=(-1, 1)\times \Xi\) with \(\Xi\)
  invariant under the Hamiltonian flow associated to \(\eta\).
Furthermore, by definition, \(L_{\mathbf{x}}\) is the 
  intersection of closed sets, and hence itself closed.
It is then elementary to deduce that \(\Xi\) is closed in \(M\).
This completes the proof of Proposition
  \ref{PROP_properties_of_x_limit_set}.
\end{proof}
\begin{proof}[Proof of Lemma \ref{LEM_local_invariance}]
We prove the case that \(x_i\to \infty\); the case that \(x_i\to -\infty\)
  is essentially the same.
To begin, we define \(\epsilon_0:= {\textstyle\frac{1}{4}}r_1\) where
  \(r_1=r_1(M, \eta, J, g)>0\) is the positive constant guaranteed by
  Theorem \ref{THM_local_local_area_bound} (asymptotic connected-local
  area bound).
We then suppose that  \((a_0,p_0)\in L_{\mathbf{x}}\), and
  \(q_0=\varphi_\eta^\epsilon(p_0)\) for some \(|\epsilon|< \epsilon_0 \).
By definition of \(L_{\mathbf{x}}\), there exists a sequence
  \(\{\zeta_k\}_{k\in \mathbb{N}}\subset  S\) and monotonic sequence
  \(\{i_k\}_{k\in \mathbb{N}}\subset \mathbb{N}\) with \(i_k\to \infty\)
  for which \({\rm Sh}_{x_{i_k}}\circ u(\zeta_k) \to (a_0, p_0)\).
Note that since \(i_k\to \infty\), we must have \(x_{i_k}\to \infty\).

We then define a sequence of pseudoholomorphic curves 
  \begin{align*}                                                          
    \mathbf{u}_k = (u_k, \widetilde{S}_k, j_k, \mathbb{R}\times M, J,
    \emptyset, \emptyset)
    \end{align*}
  where
  \begin{equation*}                                                       
      \widetilde{S}_k:= S_{r_1}(\zeta_k)\subset
      u^{-1}\big(\mathcal{B}_{r_1}(u(\zeta_k))\big)
      \subset S
      \end{equation*}
  is the connected component of
  \(u^{-1}\big(\mathcal{B}_{r_1}(u(\zeta_k))\big)\)  containing    \(\zeta_k\), and where \(j_k:=j\big|_{\widetilde{S}_k}\) and \(u_k:=
  {\rm Sh}_{x_{i_k}}\circ u\); here \(\mathcal{B}_r(p)\subset
  \mathbb{R}\times M\) denotes the open metric ball of radius \(r\)
  centered at \(p\).
Note that by construction we have \(\zeta_k\in \widetilde{S}_k\) for every
  \(k\in \mathbb{N} \), and \(u_k(\zeta_k)\to (a_0, p_0)\).
By construction, we may also apply Theorem
  \ref{THM_local_local_area_bound} (asymptotic connected-local area bound),
  which guarantees that
  \begin{equation*}                                                       
    {\rm Area}_{u_k^*g} (\widetilde{S}_k)\leq 1.
    \end{equation*}
Also recall that \(\mathbf{u}\) is a feral curve, and hence has finite
  genus, so by genus super-additivity\footnote{See Lemma
  \ref{LEM_genus_addition}.} and the fact that \(\widetilde{S}_k\subset
  S\), it follows that \({\rm Genus}(\widetilde{S}_k)\) is uniformly
  bounded in \(k\).
Also because \(\mathbf{u}\) is feral it follows that \(\#(\mu \cup
  D)<\infty\), and \(x_{i_k}\to \infty\) so that for all sufficiently
  large \(k\) we have \((\mu \cup D) \cap \widetilde{S}_k = \emptyset\),
  and hence the \(\mathbf{u}_k\) are stable for all sufficiently large
  \(k\).
With uniform area bounds, uniform genus bounds, and stability, it then
  follows from Theorem \ref{THM_target_local_gromov_compactness},
  target-local Gromov compactness, that after passing to a subsequence
  (still denoted with subscripts \(k\)), there exist compact Riemann
  surfaces with smooth boundary \(\widehat{S}_k\subset \widetilde{S}_k\)
  which satisfy the following properties.
  \begin{enumerate}                                                         
    \item 
    \(\zeta_k\in \widehat{S}_k\) for all \(i\), 
    \item 
    \( u_k(\zeta_k)\to (a_0, p_0)\),  
    \item 
    \(u_k(\partial \widehat{S}_k) \cap \mathcal{B}_{r_1/2}((a_0,p_0))
    =\emptyset\)
    \item 
    \((u_k, \widehat{S}_k, j_k, \mathbb{R}\times M, J, \emptyset,
    \emptyset)\to (u_\infty, \widehat{S}_\infty, j_\infty,
    \mathbb{R}\times M, J, \emptyset, D_\infty)\) in a Gromov
    sense, where the limit is a compact pseudoholomorphic curve with
    immersed boundary.
    \end{enumerate}
Furthermore, we note that because \(u:S\to \mathbb{R}\times M\) is a feral
  curve, it follows that \(\int_S u^*\omega< \infty\), and because
  \(\omega\) evaluates non-negatively on \(J\)-invariant planes and because
  \(a\circ u(\zeta_k)\geq x_{i_k} -1 \to \infty\), it follows that
  \(\int_{S_k} u_k^*\omega \to 0\), and hence \(u_\infty^*\omega=0\).
Recall that  \({\rm ker}\; \omega = {\rm Span} (\partial_a , X_\eta)\).
We conclude that \(u_\infty(\widehat{S}_\infty) \) is contained in
  \(\mathbb{R}\times \Gamma\) where \(\Gamma\) is the finite union of
  trajectories of the Hamiltonian vector field \(X_\eta\).

Letting \(\Gamma_{p_0}\) denote the Hamiltonian trajectory containing
  \(p_0\), we note from the fact that \(u_k(\zeta_k)\to (a_0, p_0)\) and
  by definition of Gromov convergence that there exists a connected
  component \(\widehat{S}_\infty'\subset \widehat{S}_\infty\) for which
  \(u_\infty(\widehat{S}_\infty')\subset \mathbb{R}\times \Gamma_{p_0}\).
Moreover, \((a_0, p_0)\in u_\infty(\widehat{S}_\infty')\subset
  \mathbb{R}\times \Gamma_p\) and \(u_\infty(\partial \widehat{S}_\infty')
  \cap \mathcal{B}_{r_1/2}((a_0, p_0)) = \emptyset\), from which it
  follows that for each \(q_0=\varphi_\eta^\epsilon(p_0)\) with
  \(|\epsilon|<\textstyle{\frac{1}{4}}r_1=\epsilon_0\) we have \((a_0,
  q_0) \in u_\infty(\widehat{S}_\infty')\).
But then by Gromov convergence, it follows that there exists a sequence
  \(\zeta_k' \in \widehat{S}_k\) such that \(u_k(\zeta_k')\to (a_0,
  q_0)\), and hence the sequence \(\zeta_k'\in S\) satisfies
  \(x_{i_k}-1 \leq a \circ u(\zeta_k') \leq x_{i_k}+1\) for all
  sufficiently large \(k\in \mathbb{N}\), so that \((a_0, q_0)\in
  L_{\mathbf{x}}\) as required.
A similar argument shows that \((a_0+\epsilon, p_0)\in L_{\mathbf{x}}\)
  whenever \(|\epsilon|< \epsilon_0\), and \(|a_0+\epsilon|< 1\). 
This completes the proof of Lemma \ref{LEM_local_invariance}. 
\end{proof}

\subsubsection{Main argument}\label{SEC_existence_workhorse_proof}
In what follows, it may be useful to review the notion of a marked nodal
  pseudoholomorphic curve as provided in Definition
  \ref{DEF_pseudholomorphic_curve}, as well as the notion of a marked nodal
  Riemann surface as provided in Definition \ref{DEF_nodal_riemann_surface}.
The latter specifically is expressed as \((S, j, \mu, D)\) where
  \(D=\{\underline{d}_1, \overline{d}_1, \underline{d}_2, \overline{d}_2,
  \ldots\}\) is the set of nodal points.
Furthermore, as discussed after Remark \ref{REM_nodal_noation}, a (marked)
  nodal Riemann surface gives rise to the topological space \(|S|\) obtained
  by identifying each point in \(D\) with its corresponding nodal pair; in
  other words \(|S| = S/ (\underline{d}_i \sim \overline{d}_i)\).
We are now prepared to re-state the result we aim to prove here. 

\setcounter{CurrentSection}{\value{section}}
\setcounter{CurrentTheorem}{\value{theorem}}
\setcounter{section}{\value{CounterSectionExistenceWorkhorse}}
\setcounter{theorem}{\value{CounterTheoremExistenceWorkhorse}}
\begin{theorem}[existence workhorse]
  \hfill \\
Let \((M, \eta)\) be a compact framed Hamiltonian manifold with \({\rm
  dim}(M) = 3\).
Let \(\{a_k\}_{k\in \mathbb{N}}\subset \mathbb{R}^-\) be a sequence for
  which \(a_k\to -\infty\) monotonically.
For each \(k\in \mathbb{N}\), let \((J_k, g_k)\) be a \(\eta\)-adapted
  almost complex structure on \(\mathbb{R}\times M\).
Suppose that there exists a positive constant \(C\geq 1\), and suppose that
  for each \(k\in \mathbb{N}\) and each \(b\in [a_k, 0] \) there exists a
  stable\footnote{
    Here we mean stable in the sense described in Definition \ref{DEF_stable}.
    } 
  unmarked but possibly nodal pseudoholomorphic curve
  \begin{equation*}                                                       
    \mathbf{u}_k^b=\big(u_k^b, S_k^b, j_k^b, (-\infty, 1)\times M, J_k,
    \emptyset, D_k^b \big)
    \end{equation*}
  with the following properties.
  \begin{enumerate}[(P1)]                                                 
    \item
    the topological space \(|S_k^b|\) is connected (implying (P4) below),
    \item 
    \(\mathbf{u}_k^b\) is compact and \(u_k^b(\partial S_k^b)\subset
    (0,1)\times M \),
    \item 
    \(\inf_{\zeta\in S_k^b} a\circ u_k^b(\zeta) = b\),
    \item 
  there exists a continuous path \(\alpha:[0,1]\to |S_k^b|\) satisfying 
    \begin{equation*}                                                     
      a\circ u_k^b\circ \alpha(0) = b \qquad\text{and}\qquad \alpha(1)\in
      \partial S_k^b,
      \end{equation*}
    \item 
    \({\rm Genus}(S_k^b)\leq C\),
    \item 
    \(\int_{S_k^b}(u_k^b)^*\omega \leq C\), 
    \item 
    \(\#D_k^b\leq C\),
    \item 
    the number of connected components of \(\partial S_k^b\) is bounded
    above by \(C\).
    \end{enumerate}
Furthermore, suppose that \(J_k\to \bar{J}\) in \(\mathcal{C}^\infty\), and for  
  each fixed \(k\), and each pair \(b, b'\in [a_k, 0]\) with \(b\neq b'\)
  we have\footnote{
    This is a geometric count of the intersection points of the images. 
    It does not involve multiplicities.
    }
  \begin{equation*}                                                       
    \#\big(u_k^b(S_k^b)\cap u_k^{b'}(S_k^{b'})\big)\leq C.
    \end{equation*}
\emph{Then} there exists a closed set \(\Xi\subset M\)  satisfying
  \(\emptyset \neq \Xi \neq M\) which is invariant under the flow of the
  Hamiltonian vector field \(X_{\eta}\).
\end{theorem}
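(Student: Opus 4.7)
The plan is to produce $\Xi$ as an $\mathbf{x}$-limit set (Definition \ref{DEF_x_limit_set}) of a feral curve extracted from the family $\{\mathbf{u}_k^b\}$ via exhaustive Gromov compactness. First I would select for each $k$ the deepest available curve $\mathbf{u}_k^{a_k}$ and regard the resulting sequence as lying in the exhausting sequence of regions $W_n := [-n, 1-\tfrac{1}{n}]\times M$ inside $(-\infty,1)\times M$. To invoke Theorem \ref{THM_exhaustive_gromov_compactness}, I need to verify hypotheses (C1)--(C3) on each $W_n$: the genus and nodal bounds are (P5) and (P7), while the area bound on $W_n$ follows from Theorem \ref{THM_area_bounds} combined with the $\omega$-energy bound (P6), using that $a\circ u_k^{a_k}$ avoids some level in $(0,1)$ by (P2). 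A subsequence thus converges in an exhaustive Gromov sense to a proper stable marked nodal pseudoholomorphic curve $\bar{\mathbf{u}} = (\bar u, \overline{S}, \bar j, (-\infty,1)\times M, \bar J, \emptyset, \overline{D})$ without boundary. I then claim that $\bar{\mathbf{u}}$ is feral: finite $\omega$-energy follows from (P6), finite genus from (P5) and Lemma \ref{LEM_genus_addition}, and finite nodes from (P7); finiteness of generalized punctures is the most delicate item and I expect it to follow from (P8) combined with the topological separation arguments of Lemmas \ref{LEM_at_most_one_part_1} and \ref{LEM_at_most_one_part_2} applied to the limit.

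Next I would verify that the image of $\bar u$ descends to $-\infty$: each $\mathbf{u}_k^{a_k}$ satisfies (P3) and (P4), providing a continuous path in $|S_k^{a_k}|$ from a point of depth $a_k$ to $\partial S_k^{a_k}\subset (0,1)\times M$. Under exhaustive Gromov convergence, a corresponding path persists in $|\overline{S}|$ through every compact sublevel of $a\circ \bar u$, and since $a_k\to -\infty$ this forces $\inf a\circ \bar u = -\infty$. Now fix a monotonic sequence $\mathbf{x}=\{x_i\}\subset\mathbb{R}^-$ with $x_i\to -\infty$; Proposition \ref{PROP_properties_of_x_limit_set} then yields a closed $X_\eta$-invariant set $\Xi\subset M$ with $L_{\mathbf{x}} = (-1,1)\times\Xi$. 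Nonemptiness of $\Xi$ is immediate from the deep descent: choose $\zeta_i\in \overline{S}$ with $a\circ \bar u(\zeta_i)=x_i$, pass to a subsequence so that the $M$-component of $\bar u(\zeta_i)$ converges to some $p\in M$ (using compactness of $M$), and conclude $(0,p)\in L_{\mathbf{x}}$, so $p\in\Xi$.

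The hard part, and the main obstacle, is to prove that $\Xi\neq M$. Here the plan is to exploit the bounded pairwise intersection hypothesis $\#\big(u_k^b(S_k^b)\cap u_k^{b'}(S_k^{b'})\big)\leq C$. Suppose for contradiction that $\Xi = M$, so that by Lemma \ref{LEM_local_invariance} the shifted images $\bar u(\overline S) - (x_i,0)$ become asymptotically dense in $(-1,1)\times M$ as $i\to\infty$. On the other hand, for fixed large $k$ I would select many distinct parameter values $b_1,\ldots,b_N \in [a_k,\,-|x_i|]$ with $N \gg C$, each providing an approximant $\mathbf{u}_k^{b_j}$ descending to depth $b_j$ and reaching the boundary near level $1$. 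Using Theorem \ref{THM_local_local_area_bound} and Theorem \ref{THM_curv_bound} to control each approximant asymptotically as an immersion with nearly-vertical tangent planes, one should show that asymptotic density of the limit forces any two curves $\mathbf{u}_k^{b_j}$ and $\mathbf{u}_k^{b_{j'}}$ with $b_j$ close to $b_{j'}$ to project onto overlapping dense subsets of $M$ in a window around some very negative level, and hence to cross at more than $C$ points in that window — contradicting the intersection bound. The delicate piece, and where I expect the heaviest technical work, is converting ``asymptotic density of $\bar u$'' into a genuine lower bound on the number of pairwise intersections of the approximants at finite stage, managing the drift in the $a$-coordinate and the possibility of non-transverse intersections along nearly parallel sheets.
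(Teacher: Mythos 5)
Your overall architecture — extract a feral limit curve via exhaustive Gromov compactness, take its $\mathbf{x}$-limit set to get a candidate closed invariant $\Xi$, then use the pairwise intersection bound to rule out $\Xi = M$ — matches the paper's strategy, and the ingredients you cite for the feral-ness of the limit and for nonemptiness of $\Xi$ are essentially correct. But there is a genuine gap in the $\Xi \neq M$ step, and it is precisely the obstruction you flag at the end without resolving: density of $\Xi = M$ does \emph{not} by itself force two nearly-parallel approximants to intersect. Two proper curves with nearly vertical tangent planes can both have $\mathbf{x}$-limit all of $M$ and yet never cross, just as two dense orbit cylinders in $\mathbb{R}\times M$ can be disjoint. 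Your proposal as written has no mechanism to create transversal crossings out of density alone, and the worry you articulate about ``nearly parallel sheets'' is fatal, not merely technical.

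The paper's actual contradiction argument is structured differently in two ways that defeat this obstruction. First, instead of counting intersections between two finite-stage approximants $\mathbf{u}_k^{b_j}$ and $\mathbf{u}_k^{b_{j'}}$, it first extracts a \emph{one-parameter family} of limit curves $\bar{\mathbf{w}}_c$ (for $c \geq 0$) from the shifted sequences $\mathbf{w}_{k,c} := {\rm Sh}_{a_k}\circ u_k^{a_k + c}$, uses a diagonal argument to make all these limits compatible, and pushes the intersection bound $\leq C$ from approximants to limit curves (Lemma \ref{LEM_bounded_transverse_intersections}). Second — and this is the crucial idea you are missing — the transversality of the $>C$ intersections is not extracted from density. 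It is manufactured by intersecting two qualitatively different kinds of pieces: near the absolute minimum of one limit curve, say the $\mathbf{v}$-part obtained by restricting $\bar{w}_\ell$ to a slab around level $\ell$ and Gromov-limiting, one selects points $z\in Z$ with $v^*\omega(z) \neq 0$, so the tangent plane there is genuinely ``leaning''; whereas the matching pieces of $\bar{w}_0$ far out in its deep end carry vanishing $\omega$-energy (by feralness and Theorem \ref{THM_local_local_area_bound}/Theorem \ref{THM_curv_bound}), and hence Gromov-limit to an embedded piece of orbit cylinder — tangent planes \emph{exactly} vertical. A vertical piece and a leaning piece that touch must cross transversally, and one can pick $\#Z > C$ such meeting points. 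Then $\mathcal{C}^\infty$-convergence and Lemma \ref{LEM_stability_of_transversal_intersections} push the $>C$ transverse crossings back to $\bar{w}_0$ versus $\bar{w}_{\ell_\nu'}$, contradicting the limit-level intersection bound. Without this orbit-cylinder-versus-leaning-minimum dichotomy, the density of $\Xi = M$ gives you nothing useful, which is exactly why your last paragraph stalls. One smaller point: the paper's contradiction hypothesis is applied to \emph{every} subsequence $\mathbf{x}$ of levels, not just one — this strengthened form is essential, because the extraction of convergent subpieces forces one onto successive subsubsequences, and you need density to survive all of them.
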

%

\begin{figure}[h]
  \label{FIG4_jwf_alternate}
  \includegraphics[scale=0.3]{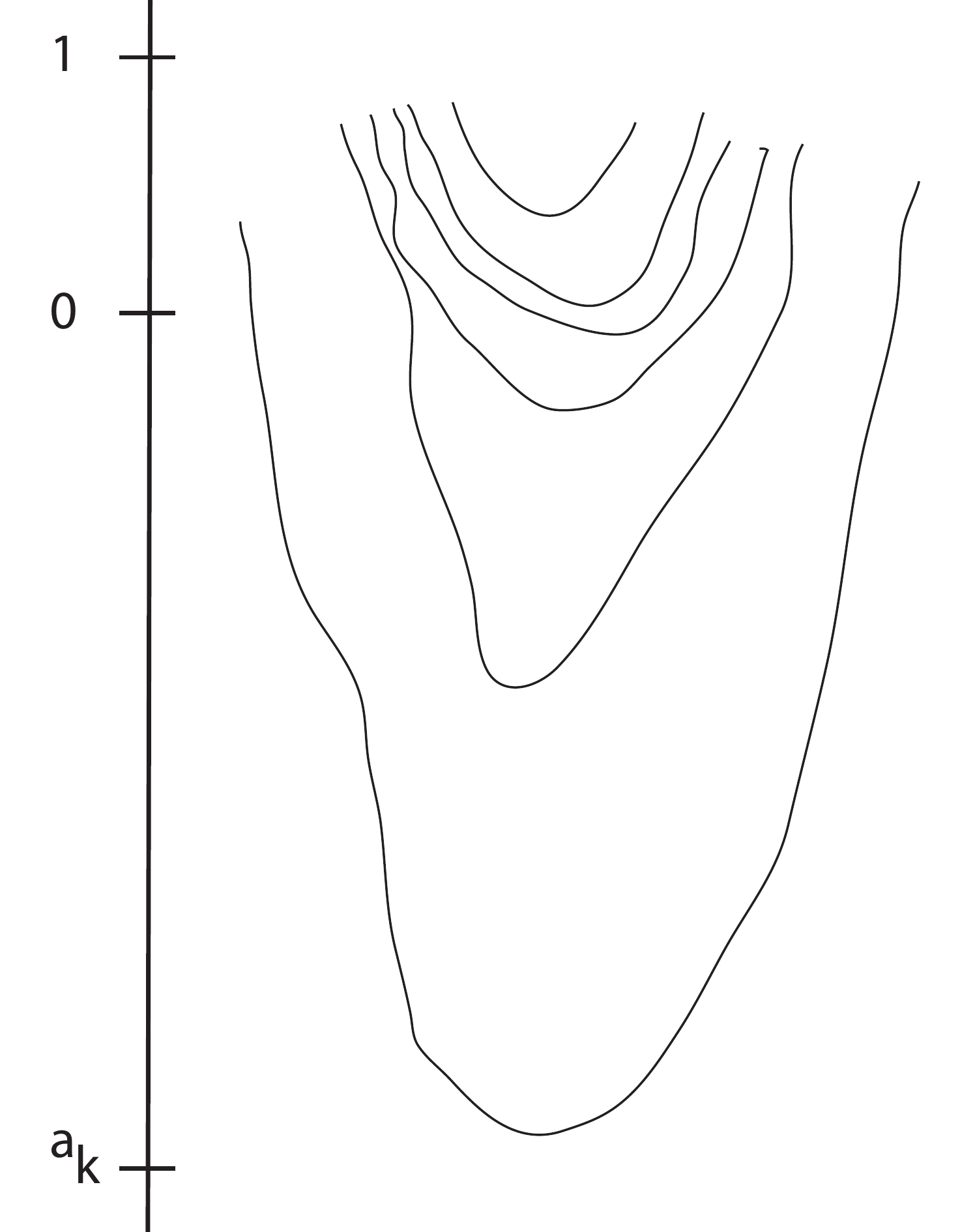}
  \caption{For every $k$ the figure shows a schematic family of
    $J_k$-holomorphic embedded mutually disjoint disks whose minimal
    ${\mathbb R}$-projection covers the interval $[a_k,0]$.}
\end{figure}
%
Before we prove this theorem, we illustrate the hypotheses with an
  example.
We consider for $(M,\eta)$ the manifold ${\mathbb R}\times M$  equipped
  with $(J_k,g_k)$.
Assume that for fixed $k$ there exists a family\footnote{
  It is not assumed to be  a continuous family!
  }  
  of embedded $J_k$-holomorphic disks with boundaries in $(0,1)\times M$.
We assume that any two different disks in the family do not intersect, the
  image of a disk in the family lies in $(-\infty,1)\times M$ and the set of
  minimum $a$-values  covers $[a_k,0]$.
Since the genus is $0$, the only additional assumption we need is a
  uniform $\omega$-energy bound.
Thus we assume that we have such a sequence of families, indexed by \(k\),
  with the additional property that \(a_k\to -\infty\) as \(k\to \infty\).
That is, as we progress through the sequence, the associated families
  extend more and more deeply into the negative end of \(\mathbb{R}\times
  M\).
Note that one can produce such a system of disks in the case in which we
  have an exact\footnote{
    One should be able to remove the exactness assumption, see
    Remark \ref{REM_removing_exactness}.
    } 
  symplectic cobordism from an overtwisted contact manifold \(M^+\) on
  top to \((M,\eta)\) on bottom.
One can then use Bishop's theorem on disk fillings to construct the
  families.
A complete discussion of Bishop's theorem can be found in
  \cite{Abbas-Hofer}.
Of course, these ideas must be combined with the constructions and
  estimates derived in the current manuscript.  
Allowing non-embedded curves and mutual intersections increases the
  complexity of the argument. 
However, the basic idea can be seen in the special example which we have
  just outlined.

\begin{proof}
\setcounter{section}{\value{CurrentSection}}
\setcounter{theorem}{\value{CurrentTheorem}}
We proceed via a proof by contradiction, and thus we begin by assuming
  Theorem \ref{THM_existence} is false.
Our first step in deriving a contradiction is then to fix \(c\geq 0\) and
  define a sequence of manifolds and surfaces via the following:
  \begin{align}                                                           
    &W_k:=\big({\textstyle \frac{1}{k}}-2, -a_k\big)\times M,\notag  
    \\
    &S_{k,c}:= S_k^{a_k+c} \label{EQ_Skc}
    \\
    &\widehat{S}_{k,c}:= ({\rm Sh}_{a_k}\circ u_k^{a_k+c})^{-1}(W_k)
    \subset S_{k,c}, \label{EQ_whSkc}
    \end{align}
  where for each \(x\in \mathbb{R}\) the shift map \({\rm Sh}_x\) is
  defined by
  \begin{align*}                                                          
    &{\rm Sh}_x:\mathbb{R}\times M\to \mathbb{R}\times M
    \\
    &{\rm Sh}_x(a_0, q_0) = (a_0 - x, q_0).
    \end{align*}
We then define pseudoholomorphic curves
\begin{equation}                                                          
  \mathbf{w}_{k,c} =\big(w_{k,c}, \widehat{S}_{k, c}, j_{k, c}, W_k, J_k,
  \emptyset, \widehat{D}_{k,c} \big)\label{EQ_wkc}
  \end{equation}
  where
  \begin{align}                                                           
    \widehat{D}_{k,c} &= D_k^{a_k+c}\cap \widehat{S}_{k, c}\notag
    \\
    j_{k,c} &= j_k^{a_k+c}\big|_{\widehat{S}_{k,c}}\notag
    \\
    w_{k,c} &= {\rm Sh}_{a_k}\circ u_k^{a_k+c}. \label{EQ_def_w}
    \end{align}
Observe that by definition, we also see that there exists a continuous
  path of the form \(\alpha:[0, 1]\to |\widehat{S}_{k, c}|\) satisfying 
  \begin{align}\label{EQ_spans}                                           
    a\circ w_{k, c} \circ \alpha(0) = c\qquad\text{and}\qquad a\circ w_{k,
    c}\circ \alpha(1) \geq -a_k = |a_k| \to \infty.
    \end{align}
Moreover, each \(w_{k,c}:\widehat{S}_{k,c} \to W_k =(\frac{1}{k}
  -2, -a_k)\times M \subset (-2, -a_k)\times M\) is a proper map with empty
  intersection with \((-\infty,0)\times M\).
To put this more geometrically, observe that \(\widehat{S}_{k,c} \subset
  S_{k,c}\setminus \partial S_{k,c} \) and therefore
  \(w_{k,c}:\widehat{S}_{k,c}\to W_k\) has a natural extension to
  \(w_{k,c}:S_{k,c}\to (-2,\infty)\times M\) via \(w_{k,c} = {\rm
  Sh}_{a_k}\circ u_k^{a_k,c}\).
Consequently we may regard the \emph{set-wise} boundary \(\partial
  \widehat{S}_{k,c} \subset S_{k,c} \setminus \partial S_{k,c}\), in which
  case we have
  \begin{align}\label{EQ_high_boundary}                                   
    \sup_{\zeta\in \partial \widehat{S}_{k, c} } a\circ w_{k, c}(\zeta) =
    \inf_{\zeta\in \partial \widehat{S}_{k, c} } a\circ w_{k, c}(\zeta)
    = -a_k = |a_k| \to \infty.
    \end{align}

For later use, we also define the nodal Riemann surface 
  \((S_{k,c}, j_{k,c}, D_{k,c})\) by letting \(S_{k, c}=S_k^{a_k+c}\) as
  above and letting \(D_{k,c}=D_k^{a_k+c}\cap S_{k,c}\).
Recall that because the \((J_k, g_k)\) are \(\eta\)-adapted almost
  Hermitian structures, it follows from Lemma
  \ref{LEM_eta_adapted_are_almost_Hermitian} that the \((W, J_k, g_k)\)
  are indeed almost Hermitian manifolds with the \(g_k\) expressible as
  \begin{equation*}                                                       
    g_k := da\otimes da + \lambda\otimes \lambda + \omega( \cdot, J_k
    \cdot).
    \end{equation*}
Also recall that by construction, the triples \((W_k, J_k, g_k)\)
  properly exhaust the almost Hermitian manifold  \((-2,\infty)\times M
  \subset \mathbb{R}\times M\) in the sense of Definition
  \ref{DEF_properly_exhausting_regions}.
Furthermore for each \(k\in \mathbb{N}\), the curve
  \(\mathbf{w}_{k,c}\) is a proper pseudoholomorphic curve in \((W_k,
  J_k)\), which can be included into \(\mathbb{R}\times M\).
Moreover, the symplectization coordinate of each \(\mathbf{w}_{k,c}\)
  has absolute minimum of \(c\); in other words, \(\inf_{\zeta\in S_{k,c}}
  a\circ w_{k,c}(\zeta) = c\).
We now apply Theorem  \ref{THM_area_bounds}, which guarantees the
  existence of a sequence of positive constants \(C_n\), with the property
  that for every \(k\geq n\in \mathbb{N}\) we have
  \begin{equation*}                                                       
    {\rm Area}_{g_k}\big(\widehat{S}_{k,c}^n\big) =
    \int_{\widehat{S}_{k,c}^n}w_{k,c}^*(da\wedge \lambda + \omega) \leq
    C_n
    \end{equation*}
  where
  \begin{equation*}                                                       
    \widehat{S}_{k,c}^n:= w_{k,c}^{-1}(W_n)\subset \widehat{S}_{k,c}.
    \end{equation*}
We then apply Theorem \ref{THM_exhaustive_gromov_compactness} (exhaustive
  Gromov compactness), which guarantees the existence of a proper stable
  nodal pseudoholomorphic curve without boundary
  \begin{equation*}                                                       
  (\bar{w}_c, \overline{S}_c, \bar{j}_c, \mathbb{R}\times M,
    \overline{J}, \emptyset , \overline{D}_c),
    \end{equation*} 
  to which a subsequence of the \({\bf w}_{k,c}\) converge.
We now make the following claim.

\setcounter{CounterSectionFeralLimitCurves}{\value{section}}
\setcounter{CounterLemmaFeralLimitCurves}{\value{lemma}}
\begin{proposition}[feral limit curves]
  \label{PROP_feral_limit_curves}
  \hfill\\
Let \(c\geq 0\) and let 
  \begin{equation*}                                                       
  \bar{\mathbf{w}}_c=(\bar{w}_c, \overline{S}_c, \bar{j}_c,
    \mathbb{R}\times M,
    \overline{J}, \emptyset , \overline{D}_c),
    \end{equation*}  
 be an exhaustive limit of some subsequence of the \(\mathbf{w}_{k, c}\).
\emph{Then} \(\bar{\mathbf{w}}_c\) is a feral pseudoholomorphic curve in the
  sense of Definition \ref{DEF_feral_J_curve}.
\end{proposition}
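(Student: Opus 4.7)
The plan is to verify the six finiteness conditions of Definition \ref{DEF_feral_J_curve} one by one, noting that the exhaustive Gromov compactness theorem already delivers that $\bar{\mathbf{w}}_c$ is a proper stable marked nodal pseudoholomorphic curve without boundary, so only the six conditions remain. Four of these are essentially immediate: condition $(4)$ is automatic because the $\mathbf{w}_{k,c}$ carry no marked points; condition $(1)$ follows by writing $\int_{S_{k,c}} w_{k,c}^* \omega = \int_{S_k^{a_k+c}} (u_k^{a_k+c})^* \omega \leq C$ by translation-invariance of $\omega$ and hypothesis (P6), combined with lower semicontinuity of $\omega$-energy under Gromov convergence on each exhausting subdomain; condition $(2)$ follows from (P5) together with the fact that on each $W_\ell$ the curves $\widehat{S}_{k,c}^\ell$ have genus bounded by $C$ (genus super-additivity, Lemma \ref{LEM_genus_addition}), and genus is preserved under Gromov convergence on compact pieces; and condition $(5)$ follows from (P7) together with the observation that any additional nodes produced in the Gromov degeneration inside a fixed $W_\ell$ are uniformly bounded there, while the total number cannot exceed the finite asymptotic count from the pre-limit curves.

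The main technical step is condition $(3)$, the finiteness of generalized punctures. Here the strategy will be to apply Lemma \ref{LEM_bounds_on_number_of_boundary_components} to the sequence $\{\mathbf{w}_{k,c}\}_{k \geq \ell}$ viewed as pseudoholomorphic curves in the compact region $\mathrm{cl}(W_\ell)$. The required input---uniform bounds on area, genus, and number of special points---is available: Theorem \ref{THM_area_bounds} gives a constant $C_\ell$ bounding $\mathrm{Area}_{g_k}(\widehat{S}_{k,c}^\ell)$ uniformly in $k$, while $\mathrm{Genus}(\widehat{S}_{k,c}^\ell) \leq C$ and $\#(\widehat{D}_{k,c} \cap \widehat{S}_{k,c}^\ell) \leq C$ by (P5) and (P7). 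Fixing a small $\delta > 0$, Lemma \ref{LEM_bounds_on_number_of_boundary_components} then supplies compact subsurfaces $\widetilde{S}_{k,\ell} \subset \widehat{S}_{k,c}^\ell$ with at most $C'(\delta)$ boundary components whose complements map into a $\delta$-neighborhood of $\partial W_\ell$. Passing to the Gromov limit on $W_\ell$ and keeping $\ell$ fixed, the number of boundary components of the analogous trimmed piece of $\bar{w}_c^{-1}(W_\ell)$ is also bounded by $C'(\delta)$. Since each non-compact component of $\overline{S}_c \setminus \bar{w}_c^{-1}(W_\ell)$ must contribute a boundary component to such a trimming, this bound is independent of $\ell$, and letting $\ell \to \infty$ in the definition of $\mathrm{Punct}$ shows $\mathrm{Punct}(\overline{S}_c) \leq C'(\delta) < \infty$.

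The remaining condition $(6)$ on $\#\pi_0(\overline{S}_c)$ I would establish by a stability-plus-threshold argument. Each connected component of $\overline{S}_c$ must, by the stability clause built into exhaustive Gromov convergence, either carry at least one nodal point in $\overline{D}_c$, or have at least one generalized puncture, or else be a closed component on which $\bar{w}_c$ is non-constant. In the last case, the image is not contained in an orbit cylinder (a non-constant pseudoholomorphic sphere into $\mathbb{R} \times S^1$ would be null-homologous, hence constant), so by the structure of $\omega$ on $\bar{J}$-complex lines the component carries positive $\omega$-energy, which by a standard threshold argument (essentially Theorem \ref{THM_energy_threshold} or the fact that $\omega$-energy of a closed non-constant holomorphic component is bounded below by a geometric constant) can only happen finitely often given $\int_{\overline{S}_c} \bar{w}_c^* \omega < \infty$. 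Combined with the already-established finiteness of nodes and punctures, this bounds $\#\pi_0(\overline{S}_c)$.

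The subtlest point, where I expect the only real work, is the transfer from the uniform boundary-component bound on the trimmed pieces to the uniform-in-$\ell$ bound on $\mathrm{Punct}^{W_\ell}(\overline{S}_c)$: one must carefully identify each non-compact component of the complement of $\bar{w}_c^{-1}(W_\ell)$ in $\overline{S}_c$ with a distinct boundary component of the trimmed limit, using the properness of $\bar{w}_c$ and the decoration structure on $\overline{S}_c^{\overline{D}_c, r}$ to avoid accidental coalescence across nodes.
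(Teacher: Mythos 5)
Your treatment of conditions $(1)$, $(2)$, and $(4)$ is correct and essentially matches what the paper observes as direct consequences of exhaustive Gromov compactness and the hypotheses on the $\mathbf{u}_k^b$. The critical gap is in your argument for condition $(3)$, the finiteness of generalized punctures. You apply Lemma \ref{LEM_bounds_on_number_of_boundary_components} on each $\mathrm{cl}(W_\ell)$ to get a constant $C'(\delta)$ bounding the number of boundary components of trimmed pieces, and then assert that ``this bound is independent of $\ell$.'' That assertion is unsupported and, as far as one can see from the lemma, false: the lemma's constant $C'(\delta)$ is produced by a contradiction argument that fixes a single compact region $W$ and a single uniform area bound $C$, and as $\ell$ grows, the area bound $C_\ell$ for $\widehat{S}_{k,c}^\ell$ supplied by Theorem \ref{THM_area_bounds} grows (indeed exponentially in $\ell$). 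There is no mechanism in Lemma \ref{LEM_bounds_on_number_of_boundary_components} that makes $C'(\delta)$ uniform over the exhausting sequence $W_\ell$, and you need precisely such uniformity because $\mathrm{Punct}(\overline{S}_c) = \lim_{\ell\to\infty}\mathrm{Punct}^{W_\ell}(\overline{S}_c)$ is an increasing limit. You flag a ``subtlest point'' about transferring the bound to punctures, but you have misidentified where the real work is: the transfer is not the issue, the $\ell$-uniformity of the bound itself is.

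The paper's proof of $(3)$ takes an entirely different route, by contradiction. Assuming $\mathrm{Punct}(\overline{S}_c)=\infty$, Lemma \ref{LEM_impossible_submanifold} produces a compact subsurface $\Sigma\subset\overline{S}_c$ with $\#\pi_0(\partial\Sigma) \geq 12(1+\hbar^{-1})C$ while meeting all connected components and having all complementary components non-compact. One then pulls $\Sigma$ back to the approximating curve $S_{k,c}^{D_{k,c},r_{k,c}}$ for $k$ large, splits the complement into constant components (controlled by $\#D_k^b \leq C$), boundary components (controlled by $\#\pi_0(\partial S_k^b) \leq C$), and interior non-constant caps, and uses Theorem \ref{THM_energy_threshold} to show each interior cap costs at least $\hbar$ of $\omega$-energy, so there are at most $C/\hbar$ of them. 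An Euler characteristic identity combined with the bound on arithmetic genus then forces $\#\pi_0(\partial\Sigma_1) \leq 11(1+\hbar^{-1})C$, which is the desired contradiction. The essential idea your proposal lacks is this trade-off: producing a generalized puncture costs either a node, a boundary component of the approximating curves, a unit of $\omega$-energy, or a unit of genus, and each of these resources is globally bounded. Moreover, the paper establishes finiteness of $\#\pi_0(\overline{S}_c)$ and $\#\overline{D}_c$ \emph{before} the puncture argument (via Lemma \ref{LEM_bounded_connected_components}, which relies on a stability count and the arithmetic-genus formula), because those bounds are inputs to the Euler-characteristic estimate; your proposed ordering reverses this dependence.

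Your sketch of condition $(6)$ is closer in spirit to the paper's Lemma \ref{LEM_bounded_connected_components}, but it is still too coarse: a constant closed component need not contain a puncture, it needs at least three special points by stability, and bounding those requires already having a handle on the arithmetic genus and on $\#\overline{D}_c$, which is where the paper's arithmetic-genus formula enters. The clause ``or have at least one generalized puncture'' is also not quite what stability gives you.
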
 
%

We postpone the proof of Proposition \ref{PROP_feral_limit_curves}  until
  Section \ref{SEC_proof_of_feral_limit} below, and assume its validity for
  the moment in order to complete the proof of Theorem \ref{THM_existence}.
To that end, we still aim to derive a contradiction, and thus we will need
  the following result.

\setcounter{CounterSectionBoundedIntersections1}{\value{section}}
\setcounter{CounterLemmaBoundedIntersections1}{\value{lemma}}
\begin{lemma}[bounded transverse intersections]
  \label{LEM_bounded_transverse_intersections}
  \hfill\\
Consider non-negative numbers \(c, c'\geq 0\) with \(c'> c\), and let  
  \(k\mapsto \ell_k\in \mathbb{N}\) be a strictly increasing sequence for
  which \(\mathbf{w}_{\ell_k,c}\to \bar{\mathbf{w}}_c\) and
  \(\mathbf{w}_{\ell_k,c'}\to \bar{\mathbf{w}}_{c'}\) in an exhaustive sense.
Then the subset \(\mathcal{P}\subset \mathbb{R}\times M\) of transversal
  intersection points of the two curves, which is defined by
  \begin{align*}                                                          
    \mathcal{P}:=\big\{p\in \mathbb{R}\times M &: \text{ there exists }
    (\zeta, \zeta')\in \overline{S}_c\times \overline{S}_{c'} \;
    \text{such that}\;
    \\
    &\; \; \bar{w}_c(\zeta) = p = \bar{w}_{c'}(\zeta')\;  \text{ and }\;
    T\bar{w}_c(\zeta)\pitchfork T\bar{w}_{c'}(\zeta') \big\},
    \end{align*}
  satisfies\footnote{
    Note that this a  bound on the number of intersection points of the
    images and not the number of parametrizing pairs.
    }
  \begin{equation*}                                                       
    \#\mathcal{P} \leq C.
    \end{equation*}
\end{lemma}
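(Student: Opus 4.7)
The plan is to argue by contradiction, leveraging the persistence of transverse intersections under exhaustive Gromov convergence together with the hypothesis $\#\bigl(u_k^b(S_k^b)\cap u_k^{b'}(S_k^{b'})\bigr)\leq C$ from Theorem \ref{THM_existence}. Suppose instead that $\#\mathcal{P}\geq C+1$. I select distinct $p_1,\dots,p_{C+1}\in\mathcal{P}$, and for each $i$ I pick preimages $\zeta_i\in\overline{S}_c$ and $\zeta_i'\in\overline{S}_{c'}$ realizing the transverse intersection at $p_i$. The definition of $\mathcal{P}$ forces $T\bar w_c(\zeta_i)$ and $T\bar w_{c'}(\zeta_i')$ to be $2$-dimensional, so $\zeta_i\notin\overline{D}_c$ and $\zeta_i'\notin\overline{D}_{c'}$; moreover, the points $\{\zeta_i\}_i$ (respectively $\{\zeta_i'\}_i$) are pairwise distinct since the $p_i$ are.

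Next I fix pairwise disjoint open neighborhoods $U_i\subset\mathbb{R}\times M$ of $p_i$ and pairwise disjoint open neighborhoods $V_i\subset\overline{S}_c\setminus\overline{D}_c$ of $\zeta_i$ and $V_i'\subset\overline{S}_{c'}\setminus\overline{D}_{c'}$ of $\zeta_i'$, all chosen small enough that $\bar w_c(V_i)\cup\bar w_{c'}(V_i')\subset U_i$ and that $\bar w_c|_{V_i}$, $\bar w_{c'}|_{V_i'}$ are embedded disks through $p_i$ with tangent planes close to $T\bar w_c(\zeta_i)$, $T\bar w_{c'}(\zeta_i')$. Choose $n$ large enough that $\bigcup_i U_i\subset W_n$ and that $V_i\cup V_i'$ lies in the compact subsurfaces $\overline{S}_c^n$, $\overline{S}_{c'}^n$ produced by the exhaustive Gromov convergence. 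Applying Definition \ref{DEF_exhaustive_gromov_convergence} at level $n$, the restricted sequences converge in the standard Gromov sense, so there exist diffeomorphisms $\phi_k$, $\phi_k'$ from the blown-up decorated surfaces to the domain-restricted $\mathbf{w}_{\ell_k,c}$, $\mathbf{w}_{\ell_k,c'}$ such that $\phi_k^*w_{\ell_k,c}\to\bar w_c$ and $(\phi_k')^*w_{\ell_k,c'}\to\bar w_{c'}$ in $C^\infty_{\mathrm{loc}}$ away from the special circles $\Gamma_\nu$. Because each $V_i,V_i'$ avoids these circles, the restrictions converge in $C^\infty$ on compact subsets of $V_i$ and $V_i'$.

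The key step is to promote this convergence to the existence of actual transverse intersections inside $U_i$. For large $k$ the restrictions $w_{\ell_k,c}\circ\phi_k|_{V_i}$ and $w_{\ell_k,c'}\circ\phi_k'|_{V_i'}$ are immersions into $U_i$ whose images, after choosing affine slices transverse to $T\bar w_c(\zeta_i)$ and $T\bar w_{c'}(\zeta_i')$ respectively, can be written as graphs over neighborhoods of $p_i$ in those tangent planes, with graph functions converging to $0$ in $C^1$. Since $T\bar w_c(\zeta_i)+T\bar w_{c'}(\zeta_i')=T_{p_i}(\mathbb{R}\times M)$, transversality is a $C^1$-open condition, and the implicit function theorem produces a unique intersection point of the two graphs in $U_i$ for every sufficiently large $k$. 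These $C+1$ intersection points are pairwise distinct because the $U_i$ are pairwise disjoint.

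Finally, intersections of the shifted curves correspond bijectively to intersections of the originals: since $w_{\ell_k,c}=\mathrm{Sh}_{a_{\ell_k}}\circ u_{\ell_k}^{a_{\ell_k}+c}$ and similarly for $c'$, and $\mathrm{Sh}_{a_{\ell_k}}$ is a diffeomorphism of $\mathbb{R}\times M$, we obtain
\[
\#\bigl(u_{\ell_k}^{a_{\ell_k}+c}(S_{\ell_k}^{a_{\ell_k}+c})\cap u_{\ell_k}^{a_{\ell_k}+c'}(S_{\ell_k}^{a_{\ell_k}+c'})\bigr)\geq C+1,
\]
and since $a_{\ell_k}+c\neq a_{\ell_k}+c'$, this contradicts hypothesis of Theorem \ref{THM_existence}. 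The main obstacle is the bookkeeping at the nodal circles $\Gamma_\nu$ of the limit curves: one must be certain the chosen preimages $\zeta_i,\zeta_i'$ lie in the $C^\infty$-convergence region, which is why the definition of transverse intersection in $\mathcal{P}$ (requiring full-rank tangent maps) is essential. Once this is arranged, the remainder is a standard graph/implicit-function argument.
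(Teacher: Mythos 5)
Your proof is correct and follows essentially the same route as the paper: argue by contradiction, choose disjoint disk neighborhoods of the preimages of more than $C$ transverse intersection points, use exhaustive Gromov convergence to obtain $C^\infty$ convergence on these disks (which avoid nodes), invoke persistence of transverse intersections, and undo the shift $\mathrm{Sh}_{a_{\ell_k}}$ to contradict the hypothesis bounding $\#\bigl(u_k^b(S_k^b)\cap u_k^{b'}(S_k^{b'})\bigr)$. The only cosmetic difference is that the paper delegates the persistence step to Lemma \ref{LEM_stability_of_transversal_intersections}, which it states without proof, whereas you unpack it directly as a graph/implicit-function-theorem argument.
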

%
As before, we will postpone this proof until Section
  \ref{SEC_proof_of_bounded_intersections} below, and in the meantime
  proceed with the proof of Theorem \ref{THM_existence}.
We now construct a sequence of feral curves in \(\mathbb{R}\times M\).
We start with the sequence of pseudoholomorphic curves
  given by \(\{\mathbf{w}_{\ell, 0}\}_{\ell\in \mathbb{N}}\), and pass to a
  subsequence so that this subsequences converge (in an exhaustive Gromov
  sense) to the feral limit curve \(\bar{\mathbf{w}}_0\).
We will need to keep track of the subsequence in \(\mathbb{N}\) which
  yields convergence, and thus we write
  \begin{equation*}                                                       
    \mathbf{w}_{k_\nu^0, 0} \to \bar{\mathbf{w}}_0\qquad\text{as}\qquad
    \nu\to \infty.
    \end{equation*}
We then consider the sequence of curves given by \(\{\mathbf{w}_{k_\nu^0,
  1}\}_{\nu\in \mathbb{N}}\).  
We pass to a further subsequence to the obtain exhaustive Gromov
  convergence
  \begin{equation*}                                                       
    \mathbf{w}_{k_\nu^1, 1} \to \bar{\mathbf{w}}_1.    
    \end{equation*}
We then consider the sequence of curves given by \(\{\mathbf{w}_{k_\nu^1,
  2}\}_{\nu\in \mathbb{N}}\), and we pass to a further subsequence to
  obtain exhaustive Gromov convergence
  \begin{equation*}                                                       
    \mathbf{w}_{k_\nu^2, 2} \to \bar{\mathbf{w}}_2. 
 \end{equation*}
In this way we pass to further and further subsequences and obtain a
  sequence of converging sequences:
  \begin{equation*}                                                       
    \mathbf{w}_{k_\nu^\ell, \ell} \to \bar{\mathbf{w}}_\ell\qquad\text{for
    each } \ell\in \mathbb{N}.
    \end{equation*}
We then pass to the diagonal subsequence, \(\bar{k}_\nu: = k_\nu^\nu\),
  which by definition has the property that
  \begin{equation*}                                                       
    \mathbf{w}_{\bar{k}_\nu, \ell}\to \bar{\mathbf{w}}_\ell\qquad\text{for
    each } \ell\in \mathbb{N}.
    \end{equation*}
Recalling our notation, we then have 
  \begin{equation*}                                                       
    \bar{\mathbf{w}}_\ell = (\bar{w}_\ell, \overline{S}_\ell,
    \bar{j}_\ell, \mathbb{R}\times M, \overline{J}, \emptyset,
    \overline{D}_\ell). 
    \end{equation*}
We introduce another sequence of pseudoholomorphic curves denoted
  \begin{equation*}                                                       
    \mathbf{v}_\ell  = \big(v_\ell, \Sigma_\ell, \jmath_\ell, (-1,
    1)\times M, \overline{J}, \emptyset, \Delta_\ell\big)
    \end{equation*}
  and defined by
  \begin{align*}                                                          
    &\Sigma_\ell:=\bar{w}_\ell^{-1}\big( (\ell-1, \ell+1)\times M\big)
    \\
    &v_\ell := {\rm Sh}_{\ell}\circ \bar{w}_\ell
    \\
    &\jmath_\ell := j_\ell\big|_{\Sigma_\ell}
    \\
    &\Delta_\ell:= \overline{D}_\ell \cap \Sigma_\ell .
    \end{align*}
By construction, the \(\mathbf{v}_\ell\) are proper curves in \((-1,
  1)\times M\), and they have uniformly bounded area and genus.
Consequently, by Theorem \ref{THM_target_local_gromov_compactness},
  target-local Gromov compactness, we can pass to a subsequence
  \(\mathbf{v}_{\ell_\nu}\) and find compact domains
  \(\widetilde{\Sigma}_\ell \subset \Sigma_\ell\) so that we have Gromov
  convergence
  \begin{equation*}                                                       
    \big(v_{\ell_\nu}, \widetilde{\Sigma}_{\ell_\nu}, \jmath_{\ell_\nu},
    (-1,1)\times M, \overline{J}, \emptyset,
    \widetilde{\Delta}_{\ell_\nu}\big) \to (v,\Sigma, \jmath, (-1,1)\times
    M, \overline{J}, \emptyset, \Delta)
    \end{equation*}
  as \(\nu\to \infty\); here 
  \(\widetilde{\Delta}_{\ell_\nu} = \widetilde{\Sigma}_{\ell_\nu}\cap
  \Delta_{\ell_\nu}\).
Also recall that these \(\widetilde{\Sigma}_{\ell_\nu}\) have the property
that
  \begin{equation*}                                                       
    v_{\ell_\nu}^{-1}\big([-{\textstyle\frac{1}{2}},
    {\textstyle\frac{1}{2}}]\times M\big) \subset
    \widetilde{\Sigma}_{\ell_\nu}.
    \end{equation*}
We also define the sets, \(\{\widehat{\Xi}_{\ell_\nu}\}_{\nu\in
  \mathbb{N}}\),  by
  \begin{equation*}                                                       
    \widehat{\Xi}_{\ell_\nu}:={\rm Sh}_{\ell_\nu} \Big(\big((\ell_\nu-1,
    \ell_\nu+1)\times M \big) \cap  \bar{w}_0 (\overline{S}_0) \Big).
    \end{equation*}
We now make the following observation. 
Let \(\ell_\nu'\) be a subsequence of \(\ell_\nu\); then for any such
  subsequence, the set
  \begin{equation*}                                                       
    \widehat{\Xi}:= \bigcap_{k=1}^\infty {\rm cl}\Big(\bigcup_{\nu =
    k}^\infty \widehat{\Xi}_{\ell_\nu'}\Big)
    \end{equation*}
  is the \(\mathbf{x}\)-limit set $L_{\mathbf{x}}$ of
  \(\bar{w}_0:\overline{S}_0\to \mathbb{R}\times M\) for
  \(\mathbf{x}=\{a_{\ell_\nu'}\}_{\nu \in \mathbb{N}}\), in the sense of
  Definition \ref{DEF_x_limit_set}.
By Proposition \ref{PROP_properties_of_x_limit_set}, we have
  \(L_{\mathbf{x}}=\widehat{\Xi}=(-1,1)\times \Xi \), where \(\Xi\subset
  M\) is a closed set which is invariant under the flow of the Hamiltonian
  vector field associated to \(\eta\).

At this point there are then three possible cases, where
  $\widehat{\Xi}=L_{\mathbf{x}}$: \vspace{8pt}

\noindent\emph{Case I:} \(\widehat{\Xi} = \emptyset\). \\
Note, however, that this case is impossible because the curve
  \(\bar{\mathbf{w}}_0\) is proper without boundary but not compact and
  has image contained in \([0, \infty)\times M\).
Indeed, properness follows as a consequence of \(\bar{\mathbf{w}}_0\)
  being feral, and it has image contained in \([0, \infty)\times M\) as a
  consequence of exhaustive compactness together with the fact that the
  approximating curves \(\mathbf{w}_{\bar{k}_\nu, 0}\) are a subsequence
  \(\{\mathbf{w}_{\ell, 0}\}_{\ell\in \mathbb{N}}\), and each
  \(\mathbf{w}_{\ell, 0}\) has image contained in \([0,\infty)\times M\) by
  definition.
To see that the curves are without boundary and not compact, it is
  sufficient to recall properties of the approximating curves,
  \(\{\mathbf{w}_{\bar{k}_\nu, 0}\}_{\nu\in \mathbb{N}}\subset
  \{\mathbf{w}_{\ell, 0}\}_{\ell\in \mathbb{N}}\), and specifically the
  properties expressed in equation (\ref{EQ_spans}) and  equation
  (\ref{EQ_high_boundary}) together with the definition of exhaustive
  compactness.

\vspace{8pt}
\noindent\emph{Case II:} \( \emptyset\neq \widehat{\Xi} \neq M\). \\
This case ruled out by our contradiction hypothesis. 
\vspace{8pt}

\noindent\emph{Case III:} \( \widehat{\Xi} = M\). \\
We assume this to be true for the remainder of our proof and seek to
  derive a contradiction.
In fact, our contradiction hypothesis allows us to assume much more,
  namely that for each subsequence \(\{\ell_\nu'\}_{\nu\in \mathbb{N}}\) of
  \(\{\ell_\nu\}_{\nu\in \mathbb{N}}\) we must have \(\widehat{\Xi} = M\)
  for the corresponding \(\mathbf{x}\)-limit set \(\widehat{\Xi}\).
To proceed, let us define
  \begin{align*}                                                          
    \mathbf{v}_{\ell_\nu} = \big(v_{\ell_\nu},
    \widetilde{\Sigma}_{\ell_\nu}, \jmath_{\ell_\nu}, (-1,1)\times M,
    \overline{J}, \emptyset, \widetilde{\Delta}_{\ell_\nu}\big)
    \end{align*}
  and
  \begin{align*}                                                          
  \mathbf{v}= (v,\Sigma, \jmath, (-1,1)\times M, \overline{J},\emptyset,
    \Delta),
    \end{align*}
  and recall from above that \(\mathbf{v}_{\ell_\nu} \to \mathbf{v}\) in a
  Gromov sense as \(\nu\to \infty\).
We then choose a finite set of points \(Z\subset \Sigma\setminus
  (\Delta \cup \partial \Sigma)\), with the property that each point in
  \(Z\) is an immersed point of \(v\), \(v(z)\neq v(z')\) for each
  \(z,z'\in Z\) with \(z\neq z'\), and for each \(z\in Z\) we have \(v^*
  \omega(z)\neq 0\), and \(\#Z > C\).
Such a set \(Z\) exists as a consequence of target-local Gromov compactness
  and properties of the approximating curves; specifically, \(a\circ v\)
  has an absolute minimum of \(0\), and \(\inf_{\zeta\in \partial
  \widetilde{\Sigma} } a\circ v(\zeta) \geq {\textstyle\frac{1}{2}}\).
We then let \(Q\subset \Sigma\) denote the union of pairwise disjoint 
  disk-like neighborhoods of the points in \(Z\), each of which contains
  precisely one element of \(Z\). 
We assume that these disk-like neighborhoods are chosen so small that
  \(v:Q\to \mathbb{R}\times M\) is an embedding.
By Gromov convergence, there exist exist maps
  \(\phi_{\ell_\nu}: Q \to \widetilde{\Sigma}_{\ell_\nu}\) for which
  \begin{equation}\label{EQ_final_eqation_1}                              
    v_{\ell_\nu} \circ  \phi_{\ell_\nu} \to v\qquad\text{in
    }\mathcal{C}^\infty(Q, \mathbb{R}\times M).
    \end{equation}
Next we note that by assumption (to derive a contradiction) it follows
  that after passing to a subsequence of the \(\ell_\nu\), denoted
  \(\ell_\nu'\), there exists a sequence of finite sets \(\{Z_\nu\}_{\nu\in
  \mathbb{N}} \) with \( Z_\nu\subset \overline{S}_0\) for each \(\nu\in
  \mathbb{N}\) with the property that \({\rm Sh}_{\ell_\nu'}\circ
  \bar{w}_0( Z_\nu) \to v(Z)\); it may be helpful to recall that
  \(\overline{S}_0\) is the domain of the curve \(\bar{\mathbf{w}}_0\).
We then let \(r_1>0\) be the positive constant guaranteed by Theorem
  \ref{THM_local_local_area_bound}, and we then define a sequence of open
  sets \(\{P_\nu\}_{\nu\in \mathbb{N}}\) with 
  \begin{equation*}                                                       
    P_\nu \subset  \bar{w}_0^{-1}\Big(\bigcup_{z\in Z_\nu}
    \mathcal{B}_{r_1}(\bar{w}_0(z))\Big) \subset\overline{S}_0,
    \end{equation*}
  and with the property that each connected component of each \(P_\nu\)
  has non-empty intersection with \(Z_\nu\).
Note that by shrinking \(r_1\) if necessary, we may assume that the
  \(\mathcal{B}_{r_1}\big(\bar{w}_0(z)\big)\) are pairwise disjoint, and
  hence each connected component of \(P_\nu\) contains exactly one element
  of \(Z_\nu\), and that for all sufficiently large \(\nu\in \mathbb{N}\)
  the number of connected components of \(P_\nu\) equals \(\#Z\).
We note that from Theorem \ref{THM_local_local_area_bound} it follows
  that \({\rm Area}_{\bar{w}_0^*g}(P_\nu)\) is uniformly bounded
  independent of \(\nu\), and the \(P_\nu\) have uniformly bounded genus
  since they are all subsets of \(\overline{S}_0\).
We conclude from Theorem \ref{THM_target_local_gromov_compactness},
  namely target-local Gromov compactness, that after passing to a
  subsequence, denoted with subscripts \(\ell_{\nu_k}'\), there exist
  compact Riemann surfaces with boundary \(\widetilde{P}_{\nu_k} \subset
  P_{\nu_k}\) with the property that
  \begin{align*}                                                          
    {\rm Sh}_{\ell_{\nu_k}'}\circ \bar{w}_0(\partial
    \widetilde{P}_{\nu_k})  \cap \bigcup_{z\in Z}
    \mathcal{B}_{\frac{1}{2}r_1}\big(v(z)\big) = \emptyset,
    \end{align*}
  while 
  \begin{align*}                                                          
    Z_{\nu_k}\subset \widetilde{P}_{\nu_k}\setminus \partial
    \widetilde{P}_{\nu_k} \qquad\text{and}\qquad {\rm
    Sh}_{\ell_{\nu_k}'}\circ \bar{w}_0(Z_{\nu_k})\to v(Z),
  \end{align*}
  and furthermore we can arrange to have Gromov convergence of the maps 
  \begin{equation*}                                                       
    ({\rm Sh}_{\ell_{\nu_k}'}\circ \bar{w}_0, \widetilde{P}_{\nu_k},
    \bar{j}_0, \mathbb{R}\times M, \overline{J}, \bar{\mu}_0\cap
    \widetilde{P}_{\nu_k}, \overline{D}_0\cap \widetilde{P}_{\nu_k}) \to
    (\check{w}, \widetilde{P}, \check{j}, \mathbb{R}\times M,
    \overline{J}, \check{\mu}, \check{D}).
    \end{equation*}
Importantly, by Theorem \ref{THM_curv_bound} (asymptotic
  curvature bound), the maps 
  \begin{align*}                                                          
    {\rm Sh}_{\ell_{\nu_k}'}\circ w_0 :\widetilde{P}_{\ell_{\nu_k}'}\to
    \mathbb{R}\times M
    \end{align*}
  are immersions with uniformly bounded curvature.
We conclude that that \(\check{D}=\emptyset\), and
  \(\check{w}:\widetilde{P}\to \mathbb{R}\times M\) is an immersion, and,
  by Gromov convergence, there exist embeddings \(\varphi_k:
  \widetilde{P}\to \widetilde{P}_{\nu_k}\) with the property that
  \begin{equation}\label{EQ_final_eqation_2}                              
    {\rm Sh}_{\ell_{\nu_k}'} \circ \bar{w}_0 \circ \varphi_k \to \check{w}
    \qquad\text{in }\mathcal{C}^\infty(\widetilde{P}, \mathbb{R}\times M).
    \end{equation}
Note that by shrinking \(r_1\) if necessary, we may assume in fact that 
  \(\check{w}\) is an embedding.
Moreover, by construction \(v(Z)\subset \check{w}(\widetilde{P})\), and,
  because \(\cup_{k\in \mathbb{N}} \widetilde{P}_{\nu_k}\subset
  \overline{S}_0 \) and \(\int_{\overline{S}_0}\bar{w}_0^* \omega<
  \infty\), it follows that \(\check{w}^*\omega \equiv 0\).
We conclude that \(\check{w}:\widetilde{P}\to \mathbb{R}\times M\) and 
  \(v:Q\to \mathbb{R}\times M\) transversally intersect at \(v(Z)\);
  indeed, this follows from the fact that \(\check{w}^*\omega \equiv 0\)
  but that \(v^*\omega \neq 0\).
Then by equation (\ref{EQ_final_eqation_1}) and equation
  (\ref{EQ_final_eqation_2}) and Lemma
  \ref{LEM_stability_of_transversal_intersections}, it follows that for
  all sufficiently large \(k\) the maps
  \begin{equation*}                                                       
    {\rm Sh}_{\ell_{\nu_k}'}\circ \bar{w}_0\circ \varphi_k
    :\widetilde{P}\to \mathbb{R}\times M
    \end{equation*}
  and
  \begin{equation*}                                                       
    v_{\ell_{\nu_k}'}\circ \phi_{\ell_{\nu_k}'}: Q\to \mathbb{R}\times M
    \end{equation*}
  intersect transversally at \(\#Z >C\) points.
However, by definition of the \(v_{\ell_{\nu_k}'}\) it follows that the
  maps
  \begin{equation*}                                                       
    {\rm Sh}_{\ell_{\nu_k}'}\circ \bar{w}_{\ell_{\nu_k}'}: 
    \overline{S}_{\ell_{\nu_k}'}\to \mathbb{R}\times M
    \end{equation*}
  and the maps 
  \begin{equation*}                                                       
    {\rm Sh}_{\ell_{\nu_k}'} \circ \bar{w}_0: \overline{S}_0\to
    \mathbb{R}\times M
    \end{equation*}
  intersect transversally at \(\#Z > C\) points.
And hence the maps \(\bar{w}_{\ell_{\nu_k}'}: \overline{S}_{\ell_{\nu_k}'}
  \to \mathbb{R}\times M \) and \(\bar{w}_0:\overline{S}_0\to
  \mathbb{R}\times M\) intersect transversally at \(\# Z >C\) points.
However, this contradicts Lemma
  \ref{LEM_bounded_transverse_intersections}.
This is the desired contradiction which completes the proof of 
  Theorem \ref{THM_existence}. 
\end{proof}

We finish Section \ref{SEC_existence_workhorse_proof} with a lemma which
  we use without proof. 

\begin{lemma}[stability of transversal intersections]
  \label{LEM_stability_of_transversal_intersections}
  \hfill\\
Let \(\Sigma\) and \(\dot{\Sigma}\) each be a compact manifold with
  boundary and diffeomorphic to the closed two-dimensional disk \(\{z\in
  \mathbb{C}: |z|\leq 1\}\).
Let \(W\) be a four dimensional manifold, and let 
  \begin{equation*}                                                       
    u:\Sigma\to W\qquad\text{and}\qquad\dot{u}:\dot{\Sigma}\to W
    \end{equation*}
  be embeddings, for which there exist \(\zeta\in \Sigma\setminus \partial
  \Sigma\) and \(\dot{\zeta}\in \dot{\Sigma}\setminus \partial
  \dot{\Sigma}\) with the  property that \(u(\zeta) =
  \dot{u}(\dot{\zeta})\) and \(Tu(\zeta)\pitchfork
  T\dot{u}(\dot{\zeta})\).
That is, \(u\) and \(\dot{u}\) intersect transversally at
  \(u(\zeta)=\dot{u}(\dot{\zeta})\).
\emph{Then} for any sequences \(\{u_k\}_{k\in \mathbb{N}}\) and
  \(\{\dot{u}_k\}_{k\in \mathbb{N}}\) for which \(u_k \to u\) and
  \(\dot{u}_k\to \dot{u}\) in \(\mathcal{C}^\infty\), it is the case that
  for all sufficiently large \(k\in \mathbb{N}\), there exist \(\zeta_k\in
  \Sigma\) and \(\dot{\zeta}_k\in \dot{\Sigma}\) with with the property that
  \(u_k\) and \(\dot{u}_k\) intersect transversally at \(u_k(\zeta_k) =
  \dot{u}_k(\dot{\zeta}_k)\).  
In addition we may assume that $\zeta_k\rightarrow \zeta$ and
  $\dot{\zeta}_k\rightarrow \dot{\zeta}$.
\end{lemma}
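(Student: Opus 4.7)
The plan is to reduce the statement to a standard application of the implicit function theorem, combined with openness of transversality in the Grassmannian.

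Let $p := u(\zeta) = \dot{u}(\dot{\zeta}) \in W$ and fix a smooth coordinate chart $\varphi: \mathcal{O} \to \mathbb{R}^4$ on an open neighborhood $\mathcal{O} \subset W$ of $p$ with $\varphi(p) = 0$. Choose a relatively compact product neighborhood $U \times \dot{U}$ of $(\zeta, \dot{\zeta})$ inside $(\Sigma \setminus \partial \Sigma) \times (\dot{\Sigma} \setminus \partial \dot{\Sigma})$ small enough that $u(\overline{U}) \cup \dot{u}(\overline{\dot{U}}) \subset \mathcal{O}$. Since $u_k \to u$ and $\dot{u}_k \to \dot{u}$ in $\mathcal{C}^\infty$, the same containment $u_k(\overline{U}) \cup \dot{u}_k(\overline{\dot{U}}) \subset \mathcal{O}$ holds for all sufficiently large $k$. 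I would then define
$$\Phi_k: U \times \dot{U} \longrightarrow \mathbb{R}^4, \qquad \Phi_k(\sigma, \dot{\sigma}) := \varphi(u_k(\sigma)) - \varphi(\dot{u}_k(\dot{\sigma})),$$
and analogously $\Phi$ built from $u$ and $\dot{u}$, so that intersections $u_k(\sigma) = \dot{u}_k(\dot{\sigma})$ in $\mathcal{O}$ correspond exactly to zeros of $\Phi_k$.

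Next I would verify that $d\Phi(\zeta, \dot{\zeta}) : T_\zeta \Sigma \oplus T_{\dot{\zeta}} \dot{\Sigma} \to \mathbb{R}^4$ is a linear isomorphism. Its image equals $d\varphi_p\bigl(Tu(\zeta)\bigr) + d\varphi_p\bigl(T\dot{u}(\dot{\zeta})\bigr)$, which is all of $\mathbb{R}^4$ by the transversality hypothesis $Tu(\zeta) \pitchfork T\dot{u}(\dot{\zeta})$ together with the dimension count $2 + 2 = 4$; since source and target both have dimension $4$, surjectivity forces bijectivity. Because $\Phi_k \to \Phi$ in $\mathcal{C}^1$ on $\overline{U} \times \overline{\dot{U}}$, the linearization $d\Phi_k(\zeta, \dot{\zeta})$ is also invertible for all sufficiently large $k$, and its inverse is bounded uniformly in $k$. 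The usual quantitative form of the implicit function theorem (equivalently, a contraction argument applied to the Newton iteration for $\Phi_k$ seeded at $(\zeta, \dot{\zeta})$) then produces, for all large $k$, a unique pair $(\zeta_k, \dot{\zeta}_k) \in U \times \dot{U}$ with $\Phi_k(\zeta_k, \dot{\zeta}_k) = 0$, and by the same quantitative estimate $(\zeta_k, \dot{\zeta}_k) \to (\zeta, \dot{\zeta})$.

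Finally I would upgrade the intersection to a transversal one. Since $u_k \to u$ and $\dot{u}_k \to \dot{u}$ in $\mathcal{C}^1$ and $(\zeta_k, \dot{\zeta}_k) \to (\zeta, \dot{\zeta})$, the pair of tangent $2$-planes $\bigl(Tu_k(\zeta_k), T\dot{u}_k(\dot{\zeta}_k)\bigr)$ converges in $\mathrm{Gr}_2(TW) \times \mathrm{Gr}_2(TW)$ to $\bigl(Tu(\zeta), T\dot{u}(\dot{\zeta})\bigr)$. Transversality of two $2$-planes in a $4$-dimensional space is an open condition in this Grassmannian product, so transversality persists for large $k$. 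No essential obstacle arises here — the whole argument is the finite-dimensional implicit function theorem, with the only minor care needed being that $U$ and $\dot{U}$ are chosen to lie in the interiors of $\Sigma$ and $\dot{\Sigma}$, which is immediate from the hypothesis $\zeta \notin \partial \Sigma$ and $\dot{\zeta} \notin \partial \dot{\Sigma}$.
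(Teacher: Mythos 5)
Your proof is correct. Note that the paper states this lemma explicitly ``without proof'' (it is invoked as a standard fact in Section \ref{SEC_workhorse}), so there is no argument of the authors to compare against; your implicit-function-theorem argument is precisely the standard one they leave to the reader. All the key points are in place: transversality of the two $2$-planes in the $4$-manifold makes $d\Phi(\zeta,\dot{\zeta})$ a surjection between spaces of equal dimension, hence an isomorphism; $\mathcal{C}^\infty$ (indeed $\mathcal{C}^1$) convergence on the compact closure $\overline{U}\times\overline{\dot{U}}$ gives uniformly invertible linearizations and small $\Phi_k(\zeta,\dot{\zeta})$, so the quantitative IFT/Newton argument produces zeros $(\zeta_k,\dot{\zeta}_k)\to(\zeta,\dot{\zeta})$; and the interiority hypothesis keeps everything away from $\partial\Sigma$ and $\partial\dot{\Sigma}$. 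One small streamlining: your final Grassmannian step is redundant, since surjectivity of $d\Phi_k(\zeta_k,\dot{\zeta}_k)$ --- which you already have for large $k$ by $\mathcal{C}^1$ convergence and continuity of the differential --- is literally the statement that $u_k$ and $\dot{u}_k$ intersect transversally at $u_k(\zeta_k)=\dot{u}_k(\dot{\zeta}_k)$; alternatively, if you do argue via tangent planes, you should first note (as you do) that $Tu_k(\zeta_k)$ and $T\dot{u}_k(\dot{\zeta}_k)$ have rank $2$ for large $k$ so that the planes are well defined in the Grassmannian.
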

%

\subsubsection{Proof of Proposition \ref{PROP_feral_limit_curves}}
  \label{SEC_proof_of_feral_limit}

We begin with a restatement.

\setcounter{CurrentSection}{\value{section}}
\setcounter{CurrentLemma}{\value{lemma}}
\setcounter{section}{\value{CounterSectionFeralLimitCurves}}
\setcounter{lemma}{\value{CounterLemmaFeralLimitCurves}}
\begin{restatementproposition}[feral limit curves]\hfill\\
Let \(c\geq 0\) and let 
  \begin{equation*}                                                       
    \bar{\mathbf{w}}_c=(\bar{w}_c, \overline{S}_c, \bar{j}_c,
    \mathbb{R}\times M,
    \overline{J},\bar{\mu}_c, \overline{D}_c),
    \end{equation*}   
  be an exhaustive limit of some subsequence of the \(\mathbf{w}_{k, c}\).
  \emph{Then} \(\bar{\mathbf{w}}_c\) is a feral pseudoholomorphic curve
  with $\bar{\mu}_c=\emptyset$ in the sense of Definition
  \ref{DEF_feral_J_curve}.
\end{restatementproposition}
%
\setcounter{section}{\value{CurrentSection}}
\setcounter{lemma}{\value{CurrentLemma}}
As already mentioned before, the feral curve compactness theorem uses the
  \({\mathbb R}\)-action in a less systematic way then in the SFT
  compactness theory.
As previously explained the reason is that we would need a better
  understanding of the behavior of the ends in a `generic' situation to
  give a compactness theorem comparable to the SFT-compactness theory.  
At this point it is not even clear what the notion of `generic' has to be,
  or if even such a notion exists.
With the current notion of convergence there is in general loss of
  information which we shall describe by a few examples.
\begin{figure}[h]
  \includegraphics[scale=0.4]{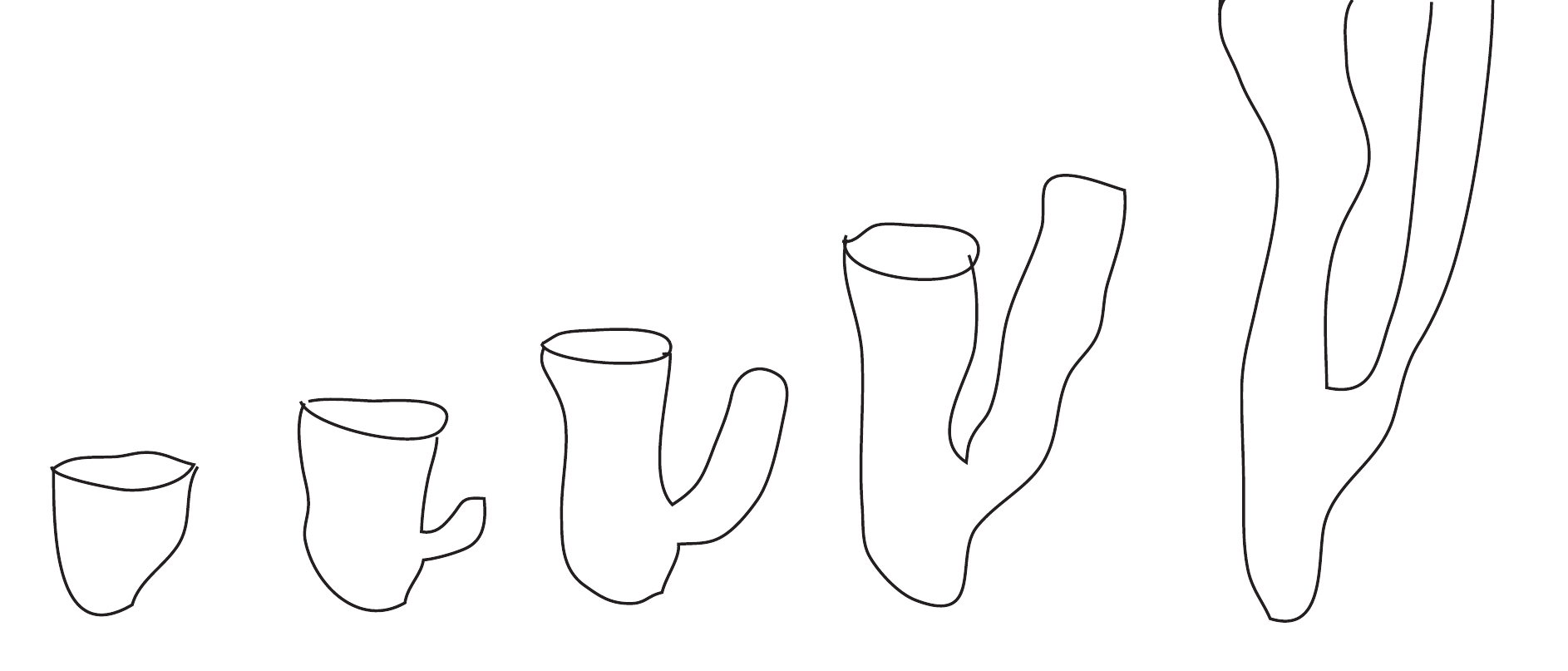}
  \caption{
    \label{FIG_disk_escapes}
    The figure shows a sequence of disks converging in an exhaustive
    Gromov compactness sense to a properly mapped two-punctured sphere.
    A cap flies away to $\infty$ creating a second end.
  }
\end{figure}
%
As a consequence of Theorem \ref{THM_energy_threshold}, it should be clear
  that it is impossible for an arbitrarily large number of caps to ``fly
  away to infinity'' provided we have a uniform $\omega$-energy bound.
Indeed, each such cap would remove at least an $\hbar>0$ of
  $\omega$-energy, leading to the absurd conclusion that the sequence of
  disks failed to have uniform \(\omega\)-energy bound.
Figure \ref{FIG_disk_escapes} show an example of one such disk escaping to
  infinity.
Similarly, Figure \ref{FIG_handle_escapes} shows how it is possible for a
  sequence of curves with genus one to limit to a once-punctured sphere
  because a handle escapes to infinity.
\begin{figure}[h]
  \includegraphics[scale=0.4]{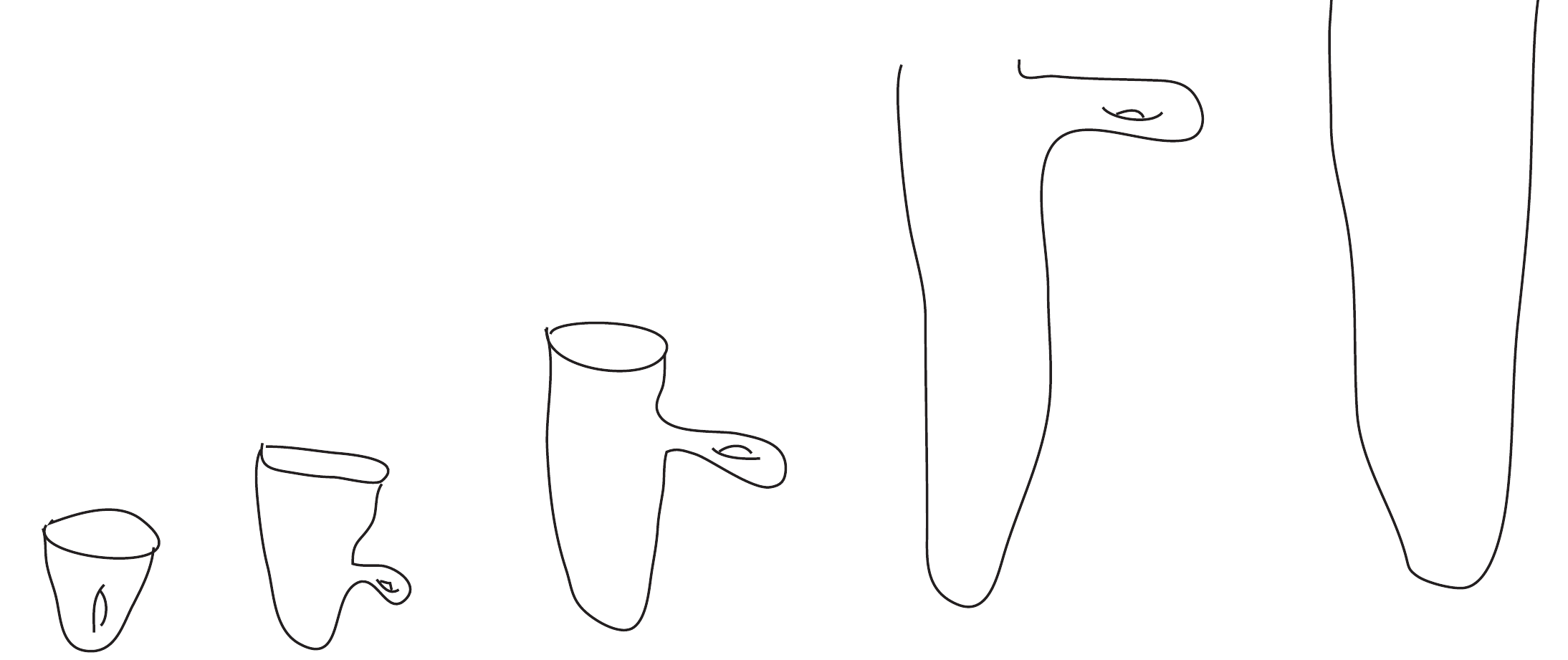}
  \caption{
  \label{FIG_handle_escapes}
    The figure shows a sequence of disks converging in an exhaustive
    Gromov compactness sense to a properly mapped one-punctured sphere
    shedding genus.
    }
\end{figure}
%
These are just two examples of what can happen, and below we provide a
  comprehensive discussion.
We leave it to the reader to imagine an example which lacks certain
  uniform topology bounds (like genus, connected components, etc) and hence
  can have a sequence of compact curves which develops infinitely many
  connected components or infinitely many nodal pairs.
Indeed, without topological bounds, the limit curve can get notably wild,
  however a key result of Proposition \ref{PROP_feral_limit_curves} is that
  the limit is a feral curve, and hence has bounded topology.
The reason for this is that producing ends or producing nodal pairs or
  other examples of infinite topology either requires approximating curves
  to have unbounded topology or unbounded \(\omega\)-energy, each of which
  are excluded by the hypotheses of Proposition
  \ref{PROP_feral_limit_curves}.
The proof of this result, takes some effort, which we now provide.
\begin{proof}
We begin by observing that as a result of Definition
  \ref{DEF_exhaustive_gromov_convergence} (exhaustive Gromov compactness)
  and properties of the \(\mathbf{u}_k^b\), it follows that
 \begin{enumerate}[(w1)]                                                 
    \item \label{EN_w1}
    \(\bar{w}_c:\overline{S}_c\to \mathbb{R}\times M\) is proper,
    \item \label{EN_w2}
    \(a\circ \bar{w}_c(\overline{S}_c) = [c, \infty)\), 
    \item \label{EN_w3}
    \(\int_{\overline{S}_c} \bar{w}_c^* \omega \leq C\)  
    \item \label{EN_w4}
    \({\rm Genus}(\overline{S}_c) \leq C\).
    \end{enumerate}
Thus, to establish that \(\bar{\mathbf{w}}_c\) is feral, it remains to
  establish that
  \begin{enumerate}[(F1)]                                                   
    \item  \label{EN_F1}
    \(\#\overline{D}_c < \infty\)
    \item  \label{EN_F2}
    \(\#\bar{\mu}_c =0\)
    \item  \label{EN_F3}
    \(\#\pi_0(\overline{S}_c)< \infty\)
    \item \label{EN_F4}
    \({\rm Punct}(\overline{S}_c) <\infty\)
    \end{enumerate}
  where \({\rm Punct}(S)\) is the number of generalized punctures
  (see Definition \ref{DEF_generalized_punctures}).
We note that \(\#\bar{\mu}_c=0\) since the \(\mathbf{w}_{k, c}\) had no
  marked points.
Establishing the remaining properties of a feral curve will take more
  effort than this, and so we first establish some notation.
For any topological space \(X\), we will let \(\pi_0(X)\) denote the set
  of connected components of \(X\), and we let \(\#\pi_0(X)\) denote the
  number of connected components of \(X\).
We now establish the finiteness of the number of nodal points and the
  number of connected components with Lemma
  \ref{LEM_bounded_connected_components} below.

\newcounter{CounterSectionBoundedConnectedComponents}
\newcounter{CounterLemmaBoundedConnectedComponents}
\setcounter{CounterSectionBoundedConnectedComponents}{\value{section}}
\setcounter{CounterLemmaBoundedConnectedComponents}{\value{lemma}}
\begin{lemma}[Some bounds on the limit curve]
  \label{LEM_bounded_connected_components}
  \hfill\\
For the pseudoholomorphic curve 
  \begin{equation*}                                                       
    \bar{\mathbf{w}}_c=(\bar{w}_c, \overline{S}_c, \bar{j}_c,
    \mathbb{R}\times M, \overline{J}, \emptyset, \overline{D}_c),  
    \end{equation*} 
  defined above, the following inequalities hold.
\begin{enumerate}                                                         
  \item 
  \({\rm Genus}_{arith}(\overline{S}_c,\bar{j}_c,\overline{D}_c)\leq 3C\)
  \item 
  \(\#\pi_0(\overline{S}_c)< 6(1  + \hbar^{-1} )C\)
  \item 
  \(\#\overline{D}_c < 18(1+\hbar^{-1})C \);
  \end{enumerate}
  where \({\rm Genus}_{arith}(\overline{S}_c, \bar{j}_c, \overline{D}_c)\)
  is the arithmetic genus as in Definition \ref{DEF_arithmetic_genus} and
  \(0 < \hbar = \hbar(M, \eta, \overline{J}, \bar{g}, 1, C)\) is the
  positive constant guaranteed by Theorem  \ref{THM_energy_threshold}.
\end{lemma}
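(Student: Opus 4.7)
The plan is to leverage exhaustive Gromov convergence \(\mathbf{w}_{k,c}\to \bar{\mathbf{w}}_c\) by analyzing each compact piece \(\overline{S}_c^\ell\subset \overline{S}_c\) provided by Definition \ref{DEF_exhaustive_gromov_convergence} and Theorem \ref{THM_exhaustive_gromov_compactness}. On each such piece, target-local Gromov compactness (Theorem \ref{THM_target_local_gromov_compactness}) identifies \(\overline{S}_c^\ell\) topologically with an approximating subdomain \(S_{k,c}^\ell\subset S_{k,c}\) for all sufficiently large \(k\); in particular, this identification preserves arithmetic genus, the number of connected components, and the number of nodal points. The proof then reduces to producing uniform-in-\(\ell\) topological bounds for \(S_{k,c}^\ell\) from the hypotheses on the \(\mathbf{u}_k^b\).

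For the arithmetic genus bound, an Euler-characteristic identity yields \({\rm Genus}(S^{D,r})={\rm Genus}(S)+n_g\), where \(n_g\) counts those nodal pairs that do not connect distinct connected components under gluing. Applied to the subdomain \(S_{k,c}^\ell\subset S_{k,c}\), genus super-additivity (Lemma \ref{LEM_genus_addition}) gives \({\rm Genus}(S_{k,c}^\ell)\leq {\rm Genus}(S_{k,c})\leq C\), while the number of inherited nodal pairs is at most \(\#D_{k,c}/2\leq C/2\), so \({\rm Genus}_{arith}(S_{k,c}^\ell)\leq 3C/2<3C\). Since Gromov convergence preserves arithmetic genus, \({\rm Genus}_{arith}(\overline{S}_c^\ell)<3C\); and since arithmetic genus is monotone non-decreasing under inclusion of subdomains, this bound passes to the limit to yield \({\rm Genus}_{arith}(\overline{S}_c)\leq 3C\).

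For the component bound, I would partition the components of \(\overline{S}_c^\ell\) into those whose closure meets \(\partial \overline{S}_c^\ell\) and ``bubble'' components whose closure lies in the interior. Each bubble component is closed with bounded genus, so taking \(a_0\) to be its minimum \(a\)-value places it in the setting of Theorem \ref{THM_energy_threshold}, forcing it to carry at least \(\hbar\) units of \(\omega\)-energy; since \(\int\bar{w}_c^*\omega\leq C\), the total number of bubble components encountered across all \(\ell\) is at most \(C/\hbar\). The boundary-meeting components are inherited from boundary structures of the approximating \(S_{k,c}\): combining property (P1), which forces \(|S_{k,c}|\) to be connected (so \(\#\pi_0(S_{k,c})\leq 1+C/2\) using \(\#D_{k,c}\leq C\)), with the boundary-component bound \(\#\pi_0(\partial S_{k,c})\leq C\) and the bound on cuts introduced by \(\partial W_\ell\), a generous accounting yields \(\#\pi_0(\overline{S}_c)<6(1+\hbar^{-1})C\). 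Finally, writing \(\#\overline{D}_c=2(n_g+n_c)\) with \(n_g\leq {\rm Genus}_{arith}(\overline{S}_c)\leq 3C\) and \(n_c\leq \#\pi_0(\overline{S}_c)<6(1+\hbar^{-1})C\) gives \(\#\overline{D}_c<18(1+\hbar^{-1})C\).

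The main obstacle will be justifying that every bubble component genuinely contributes \(\hbar\) of \(\omega\)-energy. This requires ruling out components that are constant maps (handled by the stability condition of Definition \ref{DEF_stable}, which is inherited by the limit) and components whose image lies entirely within an orbit cylinder (where \(\omega\) vanishes identically so Theorem \ref{THM_energy_threshold} would give no threshold), as well as ensuring through careful bookkeeping that a single bubble component is not double-counted as \(\ell\) varies over levels in which the bubble persists.
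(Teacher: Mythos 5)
Your proposal reaches the heart of the lemma but leaves the central difficulty unresolved. The component bound is \emph{not} a matter of counting ``bubbles'' that carry $\hbar$ of $\omega$-energy: the problem is the \emph{ghost} components, i.e.\ connected components of $\overline{S}_c$ on which $\bar w_c$ is constant. These carry \emph{zero} $\omega$-energy, so Theorem~\ref{THM_energy_threshold} gives no control over them whatsoever. You flag this in your final paragraph and say it is ``handled by the stability condition of Definition~\ref{DEF_stable},'' but stability alone is circular here: stability bounds the number of constant components in terms of the number of nodal points, while the number of nodal points (item (3) of the lemma) is itself what you are trying to bound, and $\#\overline{D}_c$ can genuinely increase as you pass to the exhaustive limit. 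The paper calls this out explicitly as ``the main difficulty.''

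The paper breaks the circularity with a bootstrap through the \emph{arithmetic genus}. One first obtains the a priori bound ${\rm Genus}_{arith}(\overline{S}_c)\leq 3C$ directly from the formula of Lemma~A.1 in \cite{FH1} together with genus super-additivity and the hypotheses. Then, for a carefully defined compact exhausting piece $(S^x,j^x,D^x)$ (the greatest element of a poset ${\rm Stab}^x$ built to ensure stability is inherited and the pieces stabilize as $x\to\infty$), one splits $\pi_0(S^x)$ into non-constant components $S^x_{nc}$ — whose count is at most $\hbar^{-1}C$ by the energy threshold — and constant components $S^x_{const}$. Feeding the stability inequality $3\#\pi_0(S^x_{const}(0))+\#\pi_0(S^x_{const}(1))\leq \#D^x$ back into the arithmetic-genus formula yields
\[
{\rm Genus}_{arith}(S^x,j^x,D^x)\geq -\#\pi_0(S^x_{nc})+\tfrac12\#\pi_0(S^x_{const}),
\]
and hence $\#\pi_0(S^x)\leq 2{\rm Genus}_{arith}(S^x)+3\#\pi_0(S^x_{nc})$, which is bounded by the quantities already controlled. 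Passing $x\to\infty$ gives item (2), and solving the arithmetic-genus formula for $\#D$ gives item (3). Your proposed bound $n_c\leq\#\pi_0(\overline{S}_c)$ on ``component-connecting'' nodal pairs also does not hold in general (two components joined by several nodal pairs is allowed), so the final step as written would need repair as well. In short, the boundary-vs-bubble decomposition and the invocation of Theorem~\ref{THM_energy_threshold} are on the right track for the non-constant components, but the argument is missing the arithmetic-genus bootstrap that actually controls the ghost components, which is precisely what the lemma is about.
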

%

We will postpone the proof of Lemma \ref{LEM_bounded_connected_components}
  until later; for now we continue with the proof of Proposition
  \ref{PROP_feral_limit_curves}, and to that end, all that remains is to
  establish that \({\rm Punct}(\overline{S}_c) < \infty\),  which we will
  prove by contradiction.
Thus, assuming \({\rm Punct}(\overline{S}_c) = \infty\), we make use of
  the following result.

\newcounter{CounterSectionImpossibleSubmanifold}
\newcounter{CounterLemmaImpossibleSubmanifold}
\setcounter{CounterSectionImpossibleSubmanifold}{\value{section}}
\setcounter{CounterLemmaImpossibleSubmanifold}{\value{lemma}}
\begin{lemma}[impossible submanifold]
  \label{LEM_impossible_submanifold}
  \hfill\\
Let \(c\geq 0\) and let 
  \begin{equation*}                                                       
    \bar{\mathbf{w}}_c=(\bar{w}_c, \overline{S}_c, \bar{j}_c,
    \mathbb{R}\times M,
    \overline{J},\emptyset,  \overline{D}_c),
    \end{equation*} 
  be an exhaustive limit of some subsequence of the \(\mathbf{w}_{k, c}\).
If \({\rm Punct}(\overline{S}_c)=\infty\) \emph{then} there exists a
  compact manifold with smooth boundary \(\Sigma\subset \overline{S}_c\)
  with the following properties.
\begin{enumerate}[(g1)]                                                   
  \item 
  \(\#\pi_0(\Sigma) = \#\pi_0(\overline{S}_c) \)
  \item \label{EN_g2}
  \(\#\pi_0(\partial \Sigma)\geq 12(1+\hbar^{-1})C\)
  \item \label{EN_g3}
  each connected component of \(\overline{S}_c\setminus (\Sigma\setminus
  \partial \Sigma)\)  is non-compact.
  \end{enumerate} 

\end{lemma}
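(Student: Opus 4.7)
The plan is to realize the hypothesis $\mathrm{Punct}(\overline{S}_c)=\infty$ directly as a submanifold with many boundary circles, then trim away any compact complement pieces. By Definition~\ref{DEF_generalized_punctures} and Remark~\ref{REM_punct}, for any $N\in\mathbb{N}$ there exists an open $\mathcal{W}\subset\mathbb{R}\times M$ with compact closure such that $\overline{S}_c\setminus\bar{w}_c^{-1}(\mathcal{W})$ has at least $N$ non-compact path-connected components. I take $N:=12(1+\hbar^{-1})C$.

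I would then enlarge $\mathcal{W}$ to a compact region of the form $W=[-r,r]\times M$ where $r$ is a regular value of $|a\circ\bar{w}_c|$, chosen large enough that: (i) $W$ contains every compact connected component of $\overline{S}_c$ (finitely many by Lemma~\ref{LEM_bounded_connected_components}), and (ii) within each non-compact connected component $S'$ of $\overline{S}_c$, the preimage $\bar{w}_c^{-1}(W)\cap S'$ is connected. The function $a\circ\bar{w}_c$ is proper on $S'$ because $\bar{w}_c$ is proper and $M$ is compact, and $S'$ has finite genus by Lemma~\ref{LEM_bounded_connected_components}, so standard surface-exhaustion arguments deliver such an $r$; monotonicity (Remark~\ref{REM_punct}) preserves $\mathrm{Punct}^{W}(\overline{S}_c)\geq N$ under enlargement.

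Setting $\Sigma_0:=\bar{w}_c^{-1}(W)$, which is a compact manifold with smooth boundary by transversality at $r$, I decompose $\overline{S}_c\setminus(\Sigma_0\setminus\partial\Sigma_0)$ into its path-connected components $\{U_\alpha\}_{\alpha\in A}$ and partition $A=A_{\mathrm{cpt}}\sqcup A_{\mathrm{ncpt}}$ by compactness; by construction $\#A_{\mathrm{ncpt}}\geq N$. I then define
\[
  \Sigma \;:=\; \Sigma_0 \;\cup\; \bigcup_{\alpha\in A_{\mathrm{cpt}}} U_\alpha.
\]
Each $U_\alpha$ with $\alpha\in A_{\mathrm{cpt}}$ is a compact surface with boundary attached to $\Sigma_0$ along a disjoint union of circles of $\partial\Sigma_0$, so $\Sigma$ is a compact smooth manifold with smooth boundary. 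Property~(g\ref{EN_g3}) is immediate from the construction. Property~(g1) follows from (ii), since attaching compact surfaces along boundary circles cannot split connected components. For (g\ref{EN_g2}), $\partial\Sigma$ consists precisely of the circles of $\partial\Sigma_0$ adjacent to some $U_\alpha$ with $\alpha\in A_{\mathrm{ncpt}}$; as the $U_\alpha$ are pairwise disjoint and each has non-empty intersection with $\partial\Sigma_0$, we conclude $\#\pi_0(\partial\Sigma)\geq\#A_{\mathrm{ncpt}}\geq N$.

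The main obstacle is condition~(ii): forcing $\bar{w}_c^{-1}(W)\cap S'$ to be connected within each non-compact component $S'$ while keeping $\mathrm{Punct}^{W}(\overline{S}_c)\geq N$. If (ii) fails for the natural choice $W=[-r,r]\times M$, I would modify $W$ by attaching thin tubular neighborhoods of arcs in $\mathbb{R}\times M$ that join the offending preimage components along their images; this augments $\Sigma_0$ only by essentially one-dimensional bridges and, because the bridges can be kept far from the end-producing pieces, does not lower $\mathrm{Punct}^{W}(\overline{S}_c)$. Careful book-keeping of these modifications, together with the finiteness of connected components and genus from Lemma~\ref{LEM_bounded_connected_components}, should complete the construction.
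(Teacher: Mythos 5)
Your approach is genuinely different from the paper's. You try to take \(\Sigma_0 := \bar{w}_c^{-1}(W)\) for a large slab \(W=[-r,r]\times M\) and then absorb compact complement pieces, so that \((g1)\) follows from the preimage being connected on each component of \(\overline{S}_c\). The paper instead puts one marker point \(\zeta_k\) on each component of \(\overline{S}_c\), selects paths from the markers to a large collection of non-compact ends, \emph{cuts} the surface along circles sitting in those ends, and keeps only the pieces containing the markers; a second truncation at a higher level then produces compactness and \((g3)\). This path-and-cut construction never needs any preimage \(\bar{w}_c^{-1}(W)\cap S'\) to be connected.

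The gap is exactly the condition you flag as ``the main obstacle,'' and it is a real one. There is no reason a slab \(W=[-r,r]\times M\) satisfying your (ii) should exist: we are working under the hypothesis \({\rm Punct}(\overline{S}_c)=\infty\), and a connected non-compact component \(S'\) with infinitely many ends can have \(\bar{w}_c^{-1}([-r,r]\times M)\cap S'\) disconnected for \emph{every} regular \(r\) (think of successive ``valleys'' near the minimum of \(a\circ\bar{w}_c\) separated by ever higher ``ridges''). Exhaustion does not save you: given two points of \(S'\) one can always find an \(r\) putting them in the same component, but increasing \(r\) simultaneously spawns new components near \(\{a\circ\bar{w}_c=r\}\), so the number of components of \(\bar{w}_c^{-1}(W)\cap S'\) need not stabilize at \(1\). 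Your proposed repair -- attaching thin tubular neighborhoods of arcs in \(\mathbb{R}\times M\) -- does not close the gap: the preimage of such a tube under a pseudoholomorphic map is in no sense a thin bridge in \(\overline{S}_c\); it can have many components, spoil the smoothness and transversality of \(\partial\Sigma_0\), and produce compact complement pieces that then falsify \((g3)\). The missing idea is to abandon the requirement that \(\Sigma\) be a sublevel/slab preimage, and instead certify \((g1)\) directly by construction (one marker point per component, retained by fiat), which is what the paper's cutting argument accomplishes.
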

%

Again we postpone the proof of Lemma \ref{LEM_impossible_submanifold}
  until after we have completed the proof of Proposition
  \ref{PROP_feral_limit_curves}.
We pause for a moment to collect the structure of our argument.
We are proving Proposition \ref{PROP_feral_limit_curves}, which amounts to
  establishing properties (F\ref{EN_F1}) - (F\ref{EN_F4}).
Property (F\ref{EN_F2}) was easily established, and properties
  (F\ref{EN_F1}) and (F\ref{EN_F3}) are established by Lemma
  \ref{LEM_bounded_connected_components}, although the proof is deferred
  until later.
All that remains is to prove property (F\ref{EN_F4}), which is that \({\rm
  Punct}(\overline{S}_c)<\infty\).
We will prove property (F\ref{EN_F4}) by contradiction, and hence assume
  \({\rm Punct}(\overline{S}_c)= \infty\), and as a consequence of this
  contradiction hypothesis, we can apply Lemma
  \ref{LEM_impossible_submanifold}, which will guarantee the existence of a
  compact submanifold with smooth boundary  \(\Sigma\subset
  \overline{S}_c\) with a number of implausible properties.
In particular, \(\Sigma\) will have a very large number of essential
  boundary components, and this is the feature that we will exploit in order
  to derive our desired contradiction, which will hence establish that
  indeed \({\rm Punct}(\overline{S}_c)< \infty\).
Thus, modulo the proofs of Lemma \ref{LEM_bounded_connected_components}
  and Lemma \ref{LEM_impossible_submanifold}, we will complete the proof of
  Proposition \ref{PROP_feral_limit_curves} by showing that although we
  have
  \begin{align*}                                                          
    \#\pi_0(\partial \Sigma) \geq 12 (1+\hbar^{-1})C,
    \end{align*}
  we must also have 
  \begin{align*}                                                          
      \#\pi_0(\partial \Sigma) \leq 11 (1+\hbar^{-1})C;
      \end{align*}
  this will be the desired contradiction.

Continuing on with the proof of Proposition \ref{PROP_feral_limit_curves},
  we have assumed that 
  $$
  {\rm Punct}(S) = \infty,
  $$
   and thus we may assume
  that the conclusions of Lemma \ref{LEM_impossible_submanifold} are true.
Consequently, we let \(\widetilde{S} = \Sigma_0\) be the surface guaranteed   
  by Lemma \ref{LEM_impossible_submanifold}, and we define
  \((\widetilde{S}, \tilde{j}, \widetilde{D})\) to be the compact nodal
  Riemann surface with boundary for which
  \(\tilde{j}:=j\big|_{\widetilde{S}}\) and \(\widetilde{D}:=D\cap
  \widetilde{S}\).

Next we recall that \(\bar{\mathbf{w}}_c\) is the exhaustive Gromov
  limit of a suitable subsequence of the curves \(\mathbf{w}_{k,c}\).
We further recall that the domain of the former is \((\overline{S}_c,
  \bar{j}_c, \overline{D}_c)\) and the domains of the latter are
  \((\widehat{S}_{k,c}, j_{k,c}, \widehat{D}_{k,c})\); see equation
  (\ref{EQ_wkc}).
Also recall from equations (\ref{EQ_Skc}) and (\ref{EQ_whSkc}) that we have
  defined the Riemann surfaces \((S_{k,c}, j_{k,c}, D_{k,c})\), which have
  the property that \(\widehat{S}_{k,c}\subset S_{k,c}\) and
  \(\widehat{D}_{k,c}\subset D_{k,c}\).
We then employ exhaustive Gromov compactness\footnote{See Theorem
  \ref{THM_exhaustive_gromov_compactness}.} to obtain decorations
  \(\widetilde{r}\), \(\hat{r}_{k, c}\), and \(r_{k,c}\) respectively for
  \((\widetilde{S}, \tilde{j}, \widetilde{D})\), \((\widehat{S}_{k, c},
  j_{k, c} , \widehat{D}_{k, c})\) in the sense of Definition
  \ref{DEF_decorated_nodal_riemann_surface}, and \((S_{k, c}, j_{k, c} ,
  D_{k, c})\), and we obtain embeddings
  \begin{equation*}                                                       
    \phi_k: \widetilde{S}^{\widetilde{D}, \tilde{r}} \to
    \widehat{S}_{k,c}^{\widehat{D}_{k,c},\hat{r}_{k,c}} \hookrightarrow
    S_{k,c}^{D_{k,c}, r_{k,c}},
    \end{equation*}
  for all sufficiently large \(k\in \mathbb{N}\).
We then fix some sufficiently large \(k\in \mathbb{N}\), and we define 
  \begin{equation*}                                                       
    \Sigma_0 = S_{k, c}^{D_{k,c}, r_{k,c}}, \qquad \Sigma_1 :=
    \phi_k(\widetilde{S}^{\widetilde{D}, \tilde{r}}) \subset
    \Sigma_0\qquad\text{and}\qquad \Sigma_2:={\rm cl}\big(\Sigma_0\setminus
    \Sigma_1).
    \end{equation*}
In particular, we will assume that \(k\in \mathbb{N}\) has been chosen
  sufficiently large so that for each connected component \(\Sigma'\) of
  \(\Sigma_2\) for which \(\partial \Sigma' \subset \partial \Sigma_1\) we
  have
  \begin{align*}                                                          
    \sup_{\zeta\in \Sigma'} a \circ u_k^{a_k+c}(\zeta) - \sup_{\zeta\in
    \partial \Sigma'} a \circ u_k^{a_k+c}(\zeta) \geq 1.
    \end{align*}
That \(k\in \mathbb{N}\) can be chosen sufficiently large to arrange this
  follows from property (g\ref{EN_g3}) together with the definition of
  exhaustive Gromov convergence; see Definition
  \ref{DEF_exhaustive_gromov_convergence}.
As a consequence of this inequality, we then immediately have the
  following.
\begin{lemma}[energy threshold acquired]
  \label{LEM_energy_threshold_acquired}
  \hfill\\
Let \(\Sigma'\) be a connected component of \(\Sigma_2\) for which
  \(\partial \Sigma'\subset \partial \Sigma_1\).
Then 
  \begin{align*}                                                          
    \int_{\Sigma'} (u_k^{a_k +c})^* \omega \geq \hbar,
    \end{align*}
  where \(\hbar = \hbar(M, \eta, \overline{J}, \bar{g}, 1, C)>0\) is the
  positive constant guaranteed by Theorem \ref{THM_energy_threshold}.
\end{lemma}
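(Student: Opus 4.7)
The plan is to invoke Theorem \ref{THM_energy_threshold} on a carefully chosen compact connected sub-patch of $\Sigma'$ that captures a local maximum of the symplectization coordinate in its interior while pushing its boundary a definite distance below that maximum. First, write $M_0 := \sup_{\zeta \in \Sigma'} a \circ u_k^{a_k+c}(\zeta)$ and $m_0 := \sup_{\zeta \in \partial \Sigma'} a \circ u_k^{a_k+c}(\zeta)$. By the choice of $k$ (made just before the statement of the lemma) we have $M_0 - m_0 \geq 1$, so in particular $M_0$ is attained at some interior point of $\Sigma'$ away from $\partial\Sigma'$.

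Next I would pick, via Sard's theorem, a regular value $b$ of $a \circ u_k^{a_k+c}|_{\Sigma'}$ lying strictly between $m_0$ and $M_0 - \tfrac{1}{2}$, and let $\Sigma''$ be the connected component of $(a \circ u_k^{a_k+c})^{-1}\!\big([b, \infty)\big) \cap \Sigma'$ that contains an interior point at which $a \circ u_k^{a_k+c}$ attains the value $M_0$. Because $b > m_0 \geq \sup_{\partial \Sigma'} a \circ u_k^{a_k+c}$, the boundary $\partial \Sigma''$ is entirely contained in the level set $(a \circ u_k^{a_k+c})^{-1}(b)$ and is disjoint from $\partial \Sigma'$. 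Moreover, $\Sigma''$ is compact with piecewise smooth boundary, the restriction $u_k^{a_k+c}|_{\Sigma''}$ is pseudoholomorphic and generally immersed, and $\mathrm{Genus}(\Sigma'') \leq \mathrm{Genus}(S_{k,c}) \leq C$.

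Now apply Theorem \ref{THM_energy_threshold} to $\Sigma''$ with the choices $a_0 := M_0$, $r := \tfrac{1}{2}$, $C_g := C$, using the second alternative in hypothesis ($\hbar$\ref{EN_hbar1}). The hypotheses are satisfied: $a \circ u_k^{a_k+c}(\Sigma'') \subset (-\infty, M_0] = (-\infty, a_0]$ by construction; $a_0 = M_0 \in a \circ u_k^{a_k+c}(\Sigma'')$ since the maximum is realized in the interior; $a \circ u_k^{a_k+c}(\partial \Sigma'') = \{b\}$ with $b < M_0 - \tfrac{1}{2} = a_0 - r$, so $a \circ u_k^{a_k+c}(\partial \Sigma'') \cap [a_0 - r, a_0 + r] = \emptyset$; and the genus bound holds. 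Theorem \ref{THM_energy_threshold} then yields
\[
\int_{\Sigma''}(u_k^{a_k+c})^{*}\omega \;\geq\; \hbar,
\]
and since $\Sigma'' \subset \Sigma'$ and $\omega$ evaluates nonnegatively on $\overline{J}$-complex lines, the same lower bound descends to $\Sigma'$, as claimed.

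The work here is not in the inequality itself but in verifying that $\Sigma''$ is a bona fide domain for Theorem \ref{THM_energy_threshold}—chiefly that it is compact, connected, generally immersed, and that its genus is inherited from $S_{k,c}$; these are routine given the setup. A minor reconciliation is needed between the advertised constant $\hbar = \hbar(M,\eta,\overline{J},\bar{g},1,C)$ and the constant produced by the proof (which uses $r = \tfrac{1}{2}$): any choice of $r \in (0, 1)$ gives a positive threshold, so the specific numerical label in the statement is immaterial to the argument and can be absorbed into the definition of $\hbar$.
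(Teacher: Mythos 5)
Your proof is correct and supplies precisely the details that the paper's one-line proof (``This follows immediately from Theorem \ref{THM_energy_threshold}'') leaves implicit: pass to a connected, compact, generally immersed sub-patch $\Sigma''\subset\Sigma'$ whose boundary lies a definite distance below the interior maximum of $a\circ u_k^{a_k+c}$, and invoke the threshold there. Your closing remark about the parameter $r$ is a genuine sharpening of the statement, not merely cosmetic: the setup only guarantees $\sup_{\Sigma'} a\circ u - \sup_{\partial\Sigma'} a\circ u \geq 1$, so $a\circ u(\partial\Sigma'')$ can brush the closed interval $[a_0-1,a_0+1]$, and some $r<1$ (as you take) is what the argument actually delivers; the constant $\hbar$ here and in Lemmas \ref{LEM_bounded_connected_components} and \ref{LEM_impossible_submanifold} just needs to be relabeled consistently.
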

%
\begin{proof}
This follows immediately from Theorem \ref{THM_energy_threshold}.
\end{proof}
The reader should note that the situation described in the lemma is the
  phenomenon where a cap, perhaps with some universally bounded genus,
  flies away.
Each such occurrence takes at least an \(\hbar\)-amount of
  \(\omega\)-energy away.
If for the initial sequence of (compact) pseudoholomorphic curves the
  number of boundary components as well as the total \(\omega\)-energy is
  bounded, then the number of occurrences just described must be bounded.
Of course, we will need to establish the details in order to get better
  bounds on constants.
For now we now turn our attention to more topological estimates.

We note that by property (g\ref{EN_g2}) of Lemma
  \ref{LEM_impossible_submanifold} and the definition of
  \(\Sigma_1\), we have
\begin{equation}\label{EQ_Sig1_num_bdry_components}                       
  \#\pi_0(\partial \Sigma_1) \geq 12(1+\hbar^{-1})C.
  \end{equation}
We then recall that the Euler characteristic is additive, so that 
  \begin{equation}\label{EQ_euler_additivity}                             
    \chi(\Sigma_0)=\chi(\Sigma_1)+\chi(\Sigma_2) .
    \end{equation}
We also recall that the Euler characteristic is given by
  \begin{equation}\label{EQ_euler_char}                                   
    \chi(\Sigma_0) = 2 \# \pi_0(\Sigma_0) - 2{\rm Genus}(\Sigma_0) - \#
    \pi_0(\partial \Sigma_0),
    \end{equation}
  and similarly for \(\Sigma_1\) and \(\Sigma_2\).
Combining equations (\ref{EQ_euler_additivity}) and (\ref{EQ_euler_char}),
  we find
  \begin{align*}                                                          
    &2\#\pi_0(\Sigma_0) - 2g(\Sigma_0) - \# \pi_0(\partial \Sigma_0)
     \\
&= \chi(\Sigma_0)
    =
    \chi(\Sigma_1) + \chi(\Sigma_2)
    \\
    &=
    2\#\pi_0(\Sigma_1) - 2g(\Sigma_1) - \# \pi_0(\partial \Sigma_1)
    \\
    &\quad+ 2\#\pi_0(\Sigma_2) - 2g(\Sigma_2) -  \# \pi_0(\partial
    \Sigma_2)
    \\
    &=
    2\#\pi_0(\Sigma_1) - 2g(\Sigma_1) - \# \pi_0(\partial \Sigma_1)
    \\
    &\quad+ 2\#\pi_0(\Sigma_2) - 2g(\Sigma_2) - \big(\# \pi_0(\partial
    \Sigma_0)  + \# \pi_0(\partial \Sigma_1) \big),
    \end{align*}
  where we have simplified the notation by writing  \(g(\Sigma_0)= {\rm
  Genus}(\Sigma_0)\), and to obtain the final equality, we have made use
  of the following observation:
  \begin{equation*}                                                       
    \# \pi_0(\partial \Sigma_2) = \# \pi_0(\partial \Sigma_0)  + \#
    \pi_0(\partial \Sigma_1).
    \end{equation*}
After rearranging and simplifying, we have the following estimate.
  \begin{align}                                                           
  \# \pi_0(\partial \Sigma_1) &= g(\Sigma_0)  - g(\Sigma_1) - g(\Sigma_2)
    + \#\pi_0(\Sigma_1) + \#\pi_0(\Sigma_2) -  \#\pi_0(\Sigma_0)\notag\\
    &\leq g(\Sigma_0)  + \#\pi_0(\Sigma_1) + \#\pi_0(\Sigma_2)  
    \label{EQ_boundary_bound}
    \end{align}
Next we estimate the genus of \(\Sigma_0\), as follows:
  \begin{align}                                                           
    {\rm Genus}(\Sigma_0) 
    &= {\rm Genus}(S_{k,c}^{D_{k,c}, r_{k,c}}) \notag
    \\
    &= {\rm Genus}_{arith}(S_{k,c}, j_{k,c}, D_{k,c})\notag
    \\
    &=\#\pi_0(|S_{k,c}|) - \#\pi_0(S_{k,c}) + {\rm Genus}(S_{k,c}) +
    {\textstyle \frac{1}{2}} \# D_{k,c}\notag
    \\
    &\leq 
    \#\pi_0(|S_{k,c}|) + {\rm Genus}(S_{k,c}) + {\textstyle
    \frac{1}{2}} \# D_{k, c} \notag
    \\
    &\leq 3C, \label{EQ_genus_bound}
    \end{align}
  where to obtain the third equality we have employed Lemma A.1 from
  \cite{FH1}, and to obtain the final inequality, we have used the fact
  that \(S_{k, c} = S_k^{a_k +c}\), and hence by the assumptions of
  Theorem \ref{THM_existence} we have \(\#\pi_0(|S_{k,c}|) = 1 \leq C\),
  \({\rm Genus}(S_{k,c})\leq C\), and \({\textstyle \frac{1}{2}} \# D_{k,
  c} \leq \frac{1}{2} C \leq C\).
Combining inequality (\ref{EQ_boundary_bound}) with inequality
  (\ref{EQ_genus_bound}) then yields
  \begin{align}\label{EQ_component_bound_a}                               
    \# \pi_0(\partial \Sigma_1) \leq 3C +  \#\pi_0(\Sigma_1) +
    \#\pi_0(\Sigma_2).
    \end{align}
We can then estimate \(\#\pi_0(\Sigma_1)\) as follows. 
\begin{align*}                                                            
  \#\pi_0 (\Sigma_1) 
  &=\#\pi_0(\widetilde{S}^{\widetilde{D}, \tilde{r}} )
  &\text{by Definition of }\Sigma_1
  \\
  &\leq \#\pi_0(\widetilde{S})
  &\text{by properties of nodal curves}
  \\
  &= \#\pi_0(\overline{S}_c)
  &\text{by Lemma \ref{LEM_impossible_submanifold}}
  \\
  &\leq 6(1 +\hbar^{-1} )C
  &\text{by Lemma \ref{LEM_bounded_connected_components}}
  \end{align*}
Or in other words,
  \begin{equation}\label{EQ_component_bound_1}                            
    \#\pi_0 (\Sigma_1) \leq 6(1 +\hbar^{-1} )C.
    \end{equation}

To proceed further, we partition \(\Sigma_2\) into three disjoint sets
  denoted \(\Sigma_2^{bdry}\),  \(\Sigma_2^{int}\), and
  \(\Sigma_2^{const}\); here \(\Sigma_2^{const}\) consists of connected
  components of \(\Sigma_2\) on which the map \(u_k\) is constant,
  \(\Sigma_2^{bdry}\) consists of connected components of \(\Sigma_2\)
  which have non-trivial intersection with \(\partial \Sigma_0\), and we
  define \(\Sigma_2^{int}:=\Sigma_2\setminus(\Sigma_2^{const}\cup
  \Sigma_2^{bdry})\).

As a consequence of the fact that the number of connected components of
  the \(\partial S_k^b\) is uniformly bounded by \(C\), it follows from
  the definition of \(\Sigma_0\) and \(\Sigma_2^{bdry}\) that we must have
  \(\# \pi_0(\Sigma_2^{bdry})\leq C\).
Also, because the curves 
  \begin{equation*}                                                       
    \mathbf{u}_k^b=\big(u_k^b, S_k^b, j_k^b, (-\infty, 1)\times M, J_k,
    \emptyset, D_k^b \big)
    \end{equation*}
  are stable and without marked points, it follows that each connected
  component of \(\Sigma_2^{const}\) must contain a nodal point in  
  \(D_k^b\).  Recalling that \(\#D_k^b\leq C\)  it follows that
  \(\#\pi_0(\Sigma_2^{const}) \leq C\).
Combining these two inequalities then yields
  \begin{equation}\label{EQ_component_bound_2}                            
    \#\pi_0(\Sigma_2^{const})  + \#\pi_0(\Sigma_2^{bdry}) \leq 2C.
    \end{equation} 
Lastly we note that \(\Sigma_2^{int}\) consists of connected
  components on which \(u_k^b\) is non-constant, and \(\partial
  \Sigma_2^{int} \subset \partial \Sigma_1\), and hence by Lemma
  \ref{LEM_energy_threshold_acquired} and the assumption that our curves
  have \(\omega\)-energy bounded by \(C\), we have
  \begin{equation}\label{EQ_component_bound_3}                            
    \hbar\cdot  \#\pi_0( \Sigma_2^{int})\leq
    \int_{\Sigma_2^{int}}u_{k,c}^* \omega \leq C.   
    \end{equation}
Here we have abused notation somewhat since, strictly speaking,
  \(\Sigma_2^{int}\) is a circle-compactified surface rather
  than a domain of a pseudoholomorphic curve, however this can be made
  rigorous by removing the added-special circles from
  \(\Sigma_2^{int}\) in the above integral; in any case, the
  desired estimate \(\hbar\cdot  \# \pi_0(\Sigma_2^{int}) \leq C\)
  holds.
Combining inequalities (\ref{EQ_component_bound_1}),
  (\ref{EQ_component_bound_2}), and (\ref{EQ_component_bound_3}) with
  inequality (\ref{EQ_component_bound_a}) then yields
  \begin{align*}                                                          
    \# \pi_0(\partial \Sigma_1) &\leq 3C + 6(1+\hbar^{-1})C + 2C
    +\hbar^{-1}C
    \\
    &\leq 11(1+\hbar^{-1})C.
    \end{align*}
However, combining the above inequality with inequality
  (\ref{EQ_Sig1_num_bdry_components})
  \begin{equation*}                                                       
    12(1+\hbar^{-1})C\leq \#\pi_0(\partial \Sigma_1) \leq
    11(1+\hbar^{-1})C
    \end{equation*}
  which is the desired contradiction, which establishes that we must have
  \({\rm Punct}(\overline{S}_c)<\infty\).
Thus, modulo the proofs of Lemma \ref{LEM_bounded_connected_components}
  and Lemma \ref{LEM_impossible_submanifold}, we have completed the proof
  of Proposition \ref{PROP_feral_limit_curves}.
\end{proof}

We now turn our attention to the proof of Lemma
  \ref{LEM_bounded_connected_components}.
We begin with a restatement.

\setcounter{CurrentSection}{\value{section}}
\setcounter{CurrentLemma}{\value{lemma}}
\setcounter{section}{\value{CounterSectionBoundedConnectedComponents}}
\setcounter{lemma}{\value{CounterLemmaBoundedConnectedComponents}}
\begin{restatementlemma}[Some bounds on the limit curve]\hfill\\
For the pseudoholomorphic curve 
  \begin{equation*}                                                       
    \bar{\mathbf{w}}_c=(\bar{w}_c, \overline{S}_c, \bar{j}_c,
    \mathbb{R}\times M, \overline{J},\emptyset , \overline{D}_c),
    \end{equation*} 
  defined above, the following inequalities hold.
\begin{enumerate}                                                         
  \item 
  \({\rm Genus}_{arith}(\overline{S}_c,\bar{j}_c,\overline{D}_c)\leq 3C\)
  \item 
  \(\#\pi_0(\overline{S}_c)< 6(1  + \hbar^{-1} )C\)
  \item 
  \(\#\overline{D}_c <18(1+\hbar^{-1})C \);
  \end{enumerate}
  where \({\rm Genus}_{arith}(\overline{S}_c, \bar{j}_c, \overline{D}_c)\)
  is the arithmetic genus as in Definition \ref{DEF_arithmetic_genus} and
  \(0 < \hbar = \hbar(M, \eta, \overline{J}, \bar{g}, 1, C)\) is the
  positive constant guaranteed by Theorem  \ref{THM_energy_threshold}.
\end{restatementlemma}
%
\begin{proof}
\setcounter{section}{\value{CurrentSection}}
\setcounter{lemma}{\value{CurrentLemma}}
In an effort to simplify notation a bit, we will drop the subscripts
  \(c\), and write, for example, \(\bar{\mathbf{w}}\) and \(\overline{S}\)
  instead of \(\bar{\mathbf{w}}_c\) and \(\overline{S}_c\).

We begin by recalling Definition \ref{DEF_arithmetic_genus} which
  guarantees that
  \begin{align*}                                                          
    {\rm Genus}_{arith}(\overline{S}, j, \mu, D) = {\rm
    Genus}(\overline{S}^{D, r}).
    \end{align*}
Moreover, the genus of a non-compact surface is obtained as the limit of
  genera of an exhausting sequence of compact surfaces with boundary.
By genus super-additivity\footnote{See Lemma \ref{LEM_genus_addition}.},
  the definition of exhaustive Gromov compactness\footnote{See Definition
  \ref{DEF_exhaustive_gromov_convergence}.}, and properties of the
  \(S_k^b\), it follows that
  \begin{align}\label{EQ_bound_on_arith_genus_a}                          
    {\rm Genus}_{arith}(\overline{S},\bar{j}, \overline{D}) \leq \sup_{k,
    b} {\rm Genus}_{arith}(S_k^b,j_k^b, D_k^b)
    \end{align}
However, recall Lemma A.1 from \cite{FH1} which provides a formula
  for the arithmetic genus of a compact Riemann surface with boundary:
  \begin{align}\label{EQ_formula_arithmetic_genus}                        
    &{\rm Genus}_{arith}(\overline{S}, j, D)
    \\
    &\qquad=\#\pi_0(|\overline{S}|)-\#\pi_0(\overline{S}) +\Big(
    \sum_{\Sigma\in \pi_0(\overline{S})} {\rm
    Genus}(\Sigma) \Big) +{\textstyle \frac{1}{2}}\# D. \notag
    \end{align}
In light of the bounds we have on \({\rm Genus}(S_k^b)\) and \(\#D_k^b\)
  due to the hypotheses of Theorem \ref{THM_existence}, we immediately see
  that
  \begin{equation}\label{EQ_bound_on_arith_genus}                         
    {\rm Genus}_{arith}(\overline{S},\bar{j}, \overline{D}) \leq 1 +
    2C\leq 3C.
    \end{equation}
This establishes the first desired inequality; the next two will require a
  bit more effort.

We pause for a moment to highlight the difficulty in proving the second  
  desired inequality, namely that
  \begin{align*}                                                          
    \#\pi_0(\overline{S})< 6(1  + \hbar^{-1} )C.
    \end{align*}
If the map \(\bar{w}\) were non-constant on each connected
  component of \(\overline{S}\), then of course the estimate (in fact a
  better estimate) would follow quickly.
Thus the main difficulty is to establish a bound on the number of constant
  components.
Because \(\bar{\mathbf{w}}\) is stable, we could easily bound the number
  of constant components in terms of the number of nodal points, but we do
  not have an a priori bound on that either, since the number of nodal
  points can increase in the exhaustive Gromov limit of a sequence of
  curves.
Finally, it would also be easier to bound the number of constant
  components if we knew that either the non-constant components were compact
  or we knew that the number of nodal points was finite, however a priori
  we know neither of these.
As such, the path to obtaining the desired bound may not seem
  straightforward, even though the basic idea is; that is, we essentially
  aim to use the stability condition plus an energy threshold to bound the
  number of constant components in terms of \(\omega\)-energy.
This is the tack we take, and we return to the proof presently.

The next step is to define the set \(\mathcal{I}_{reg}\subset \mathbb{R}\)
  to be the intersection of the set \( [c,\infty)\setminus a\circ
  \bar{w}(\overline{D})\subset (c,\infty)\) with the set of regular values
  of the function \(a\circ \bar{w}: \overline{S}\to \mathbb{R}\).
We note that \(\mathcal{I}_{reg}\) is an open and dense subset of \((c,
  \infty)\).
Next, for each \(x\in \mathcal{I}_{reg}\) we define a compact nodal
  Riemann surface \((S^x, j^x, D^x)\) in the following manner.
First, we enumerate the connected components of
 \(\overline{S}\) by \(\overline{S}_k\), so that
  \(\overline{S}=\bigcup_{k=1}^\infty \overline{S}_k\).
Next, on each connected component 
\(\overline{S}_k\) we choose \(\zeta_k\in \overline{S}_k\) 
so that
  \begin{equation*}                                                       
    \inf_{\zeta\in S_k} a\circ \bar{w}(\zeta) = a\circ \bar{w}(\zeta_k).
    \end{equation*}
We denote the collection of these points by \(Z=\{\zeta_1, \zeta_2,
  \ldots\}\).
For each \(x\in \mathcal{I}_{reg}\) we then define
  \begin{equation*}                                                       
    \Sigma^x:=(a\circ \bar{w})^{-1}\big((-\infty, x]\big)
    \end{equation*}
  \begin{equation*}                                                       
    \mathcal{S}^x:= \big\{\Sigma\in \pi_0(\Sigma^x): Z\cap \Sigma \neq
    \emptyset \; \; \text{and}\; \; a\circ \bar{w} (Z\cap \Sigma) \leq x
    -1\ \big\}
    \end{equation*}
    \begin{figure}[h]
\includegraphics[scale=0.3]{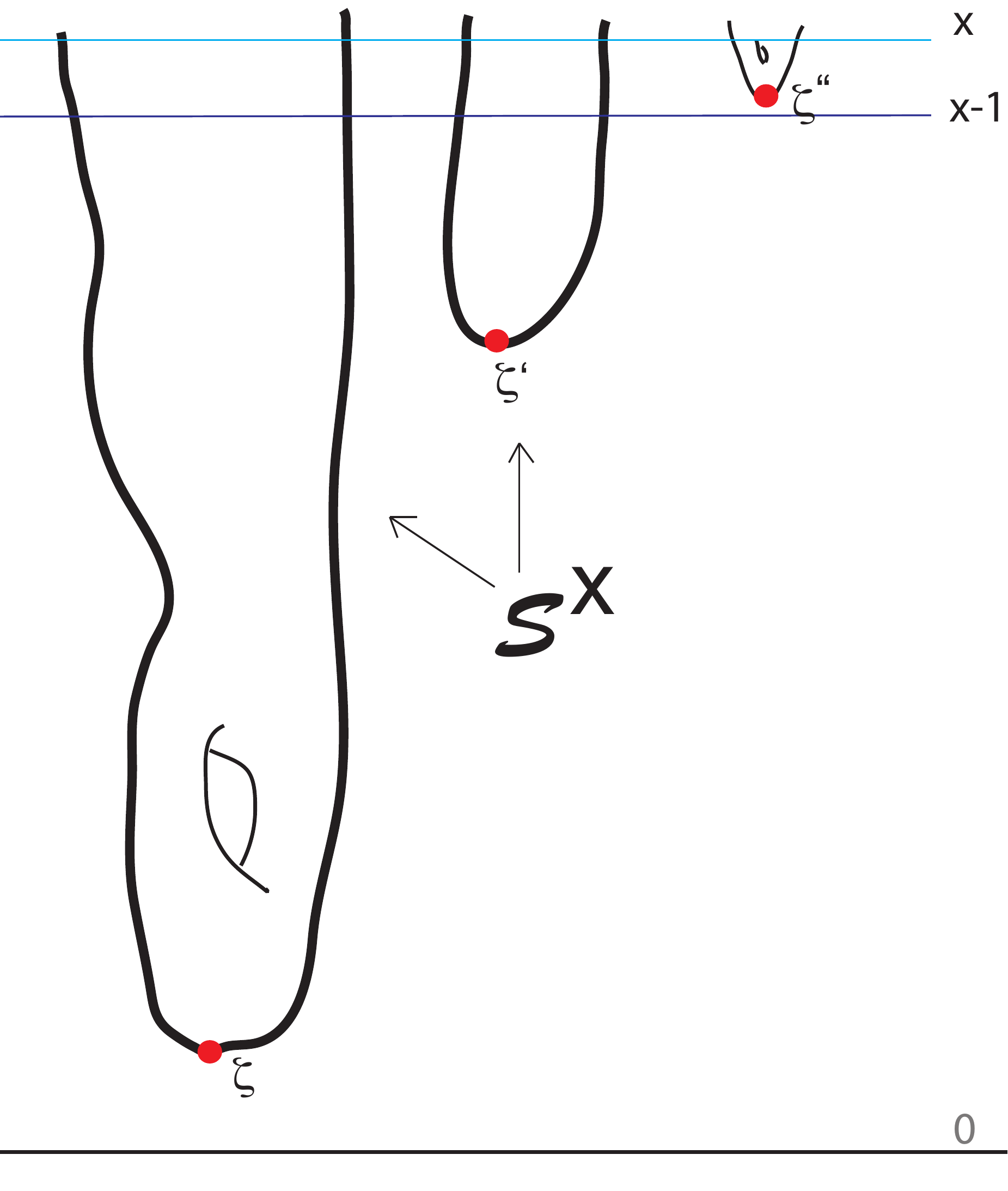}\ \ \ \includegraphics[scale=0.3]{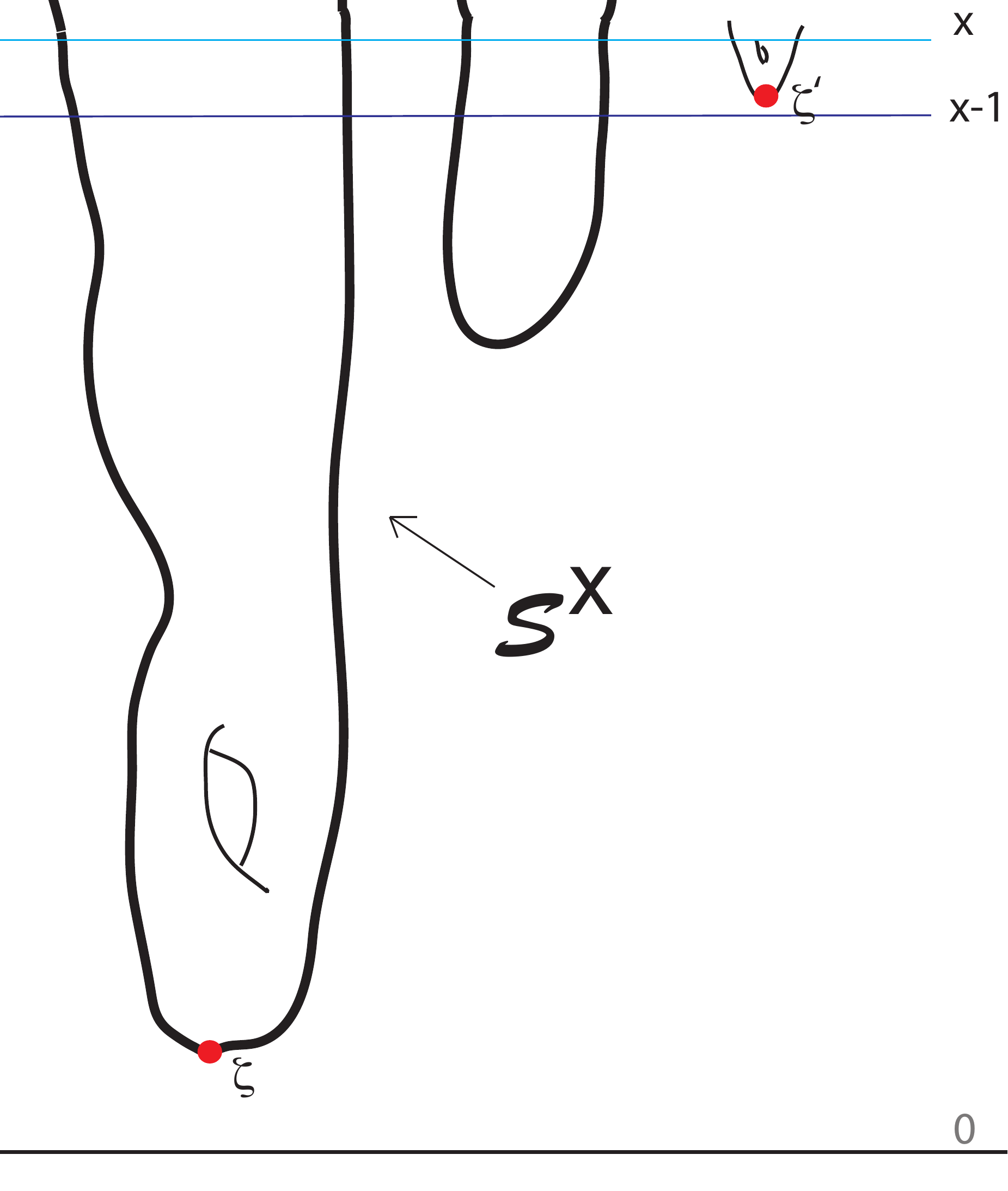}
\caption{Two examples of the set ${\mathcal S}^x$. In the left figure it
  has two elements and in the right figure one element. These components are
  obtained from the sets indicated by taking those points for which
  $a\circ\bar{w}$ takes a value not exceeding $x$.}
\label{FIG4}
\end{figure}
Observe that  ${\mathcal S}^x$ is a finite set.
It is worth pausing to describe this set \(\mathcal{S}^x\).
Indeed, this can be regarded as a set of ``essential'' connected
  components of \(\Sigma^x\), where by essential we mean those components
  which contain both a marker \(\zeta_k\) which identifies connected
  components of \(\overline{S}\), and those components on which the
  minimum value of \(a\circ \bar{w}\) differs from \(x\) (which will often
  be that maximal value of \(a\circ \bar{w}\)) by at least 1.
We will exploit these features momentarily, but we first must continue our
  definition of \(S^x\).

Next we aim to define a certain collection of Riemann surfaces which we
  denote \({\rm Stab}^x\).
To do this, we let \(2^{\mathcal{S}^x}\) denote the power set of
  \(\mathcal{S}^x\), we let \(\sigma:D\to D\) denote the involution
  satisfying \(\sigma(\underline{d}_i)=\overline{d}_i\) and
  \(\sigma(\overline{d}_i) = \underline{d}_i\) for each \(\underline{d}_i,
  \overline{d}_i\in D\)
and we say a triple \((\widetilde{\Sigma}, \tilde{j}, \widetilde{D})\) is
  \(\bar{\mathbf{w}}\)-stable provided it is a nodal Riemann surface with
  \(\widetilde{\Sigma}\subset \overline{S}\) \(\widetilde{D}\subset
  \overline{D}\), and for each connected component \(\Sigma\subset
  \widetilde{\Sigma}\) for which \(\bar{w}:\Sigma\to W\) is constant we
  have 
  \begin{equation*}                                                    
    2 {\rm Genus}(\Sigma) + \# (\widetilde{D}\cap \Sigma) \geq 3.
    \end{equation*}
We then define \({\rm Stab}^x\) via the following.
\begin{align*}                                                         
  {\rm Stab}^x :=\Big\{(\widetilde{\Sigma}, \tilde{j}, \widetilde{D}):
  \qquad &\widetilde{\Sigma} = \bigcup_{\Sigma\in \mathcal{A}} \Sigma \;
  \; \text{where} \; \mathcal{A} \in 2^{\mathcal{S}^x},  \; \; \;\;
  \tilde{j} = j\big|_{\widetilde{\Sigma}}
  \\
  &\widetilde{D} \subset \widetilde{\Sigma}\cap D\; \; \text{satisfies} \;
  \sigma(\widetilde{D}) = \widetilde{D}
  \\
  &\text{and} \; (\widetilde{\Sigma}, \tilde{j}, \widetilde{D}) \;
  \text{is }\mathbf{w}\text{-stable}\qquad\qquad \Big\}
  \end{align*}
Observe that ${\mathcal S}^x$ for $x\in (c,\infty)$ is nonempty. 
We introduce  a partial order on \({\rm Stab}^x\) by defining 
  \((\widetilde{\Sigma}_1, \tilde{j}_1, \widetilde{D}_1) \leq
  (\widetilde{\Sigma}_2, \tilde{j}_2, \widetilde{D}_2)\) if and only if
  \(\widetilde{\Sigma}_1 \subset \widetilde{\Sigma}_2\) and
  \(\widetilde{D}_1 \subset \widetilde{D}_2\).
Finally, we note that given two elements \((\widetilde{\Sigma}_1,
  \tilde{j}_1, \widetilde{D}_1),  (\widetilde{\Sigma}_2,
  \tilde{j}_2, \widetilde{D}_2)\in {\rm Stab}^x\) their union (in the
  obvious manner) is again in \({\rm Stab}^x\), and hence the partially
  ordered set \({\rm Stab}^x\) has a greatest element.
We define \((S^x, j^x, D^x)\) to be the greatest element of \({\rm Stab}^x\).
At this point, we have defined the compact  nodal Riemann surface \((S^x,
  j^x, D^x)\), which may have boundary.

The definition provided may seem convoluted, however it has a number of
  features we now state and will exploit momentarily.

First, we note that for each \(x,y\in \mathcal{I}_{reg}\) with \(x< y\),
  we have \(\# \pi_0(S^x) \leq \# \pi_0(S^y) \leq \# \pi_0(\overline{S})\),
  and \(\bigcup_{x\in \mathcal{I}_{reg}} S^x = \overline{S}\), from which
  we conclude that
  \begin{equation}\label{EQ_limit_of_num_of_con_comp}                     
    \lim_{x\to \infty} \#\pi_0(S^x) = \# \pi_0(S).
    \end{equation}
Similarly, for each \(x,y\in \mathcal{I}_{reg}\) with \(x<y\), we have
  \begin{equation*}                                                       
    {\rm Genus}_{arith}(S^x, j^x, D^x) \leq {\rm Genus}_{arith}(S^y, j^y,
    D^y),
    \end{equation*}
  and by definition of the arithmetic genus, we have
  \begin{equation}\label{EQ_limit_of_arith_genus}                         
    \lim_{x\to \infty} {\rm Genus}_{arith}(S^x, j^x, D^x) = {\rm
    Genus}_{arith}(\overline{S}, \bar{j}, \overline{D}).
    \end{equation}
    Second, we let \(\hbar=\hbar(M, \eta, \overline{J}, \bar{g}>0, 1, C)>0 \)
  be the positive constant guaranteed by Theorem
  \ref{THM_energy_threshold}, which has the property that on each
  connected component \(\Sigma\) of \(S^x\) on which \(\bar{w}\) is
  non-constant, we have
  \begin{equation*}                                                       
    \int_\Sigma \bar{w}^*\omega \geq \hbar.
    \end{equation*}
To make use of this property, we first decompose \(S^x\) into two sets,
  \(S_{const}^x\) and \(S_{nc}^x\), where \(S_{const}^x\) is the union of
  connected components on which \(\bar{w}\) is constant and \(S_{nc}^x=S^x
  \setminus S_{const}^x \), and we then recall that \(\omega\) evaluates
  non-negatively on \(\overline{J}\)-complex lines, so that by properties
  of the \(\mathbf{w}_k\) and exhaustive Gromov compactness, we have
  \begin{equation}\label{EQ_Snc_bound}                                    
    C \geq \int_{S_{nc}^x} \bar{w}^*\omega \geq \hbar \cdot
    \#\pi_0(S_{nc}^x).
    \end{equation}
Third, the \(\bar{\mathbf{w}}\)-stability condition guarantees that for
  each \(\Sigma \in S_{const}^x\) we have
  \begin{equation*}                                                       
    2 {\rm Genus}(\Sigma) + \# (D^x\cap \Sigma) \geq 3.
    \end{equation*}
To make use of this, it will be convenient to define 
  \begin{equation*}                                                       
    S_{const}^x(k):=\Big\{\Sigma^x\in \pi_0(S_{const}^x): {\rm
    Genus}(\Sigma^x) = k  \Big\},
    \end{equation*}
  in which case we can estimate:
  \begin{equation}\label{EQ_stability_inequality}                         
    3\#\pi_0\big(S_{const}^x(0)\big)  + \#\pi_0\big(S_{const}^x(1)\big)
    \leq \# D^x.
    \end{equation}
We are now prepared to complete the proof of Lemma
  \ref{LEM_bounded_connected_components}.
As above, we have a formula for the arithmetic genus of \((S^x, j^x,
  D^x)\) given by
  \begin{align}\label{EQ_formula_for_appx_arith_genus}                    
    &{\rm Genus}_{arith}(S^x, j^x, D^x)
    \\
    &\qquad=\#\pi_0(|S^x|)-\#\pi_0(S^x)
    +\Big( \sum_{\Sigma^x\in \pi_0(S^x)} {\rm Genus}(\Sigma^x) \Big)
    +{\textstyle \frac{1}{2}}\# D^x. \notag
    \end{align}
We then note that 
  \begin{equation*}                                                       
    \# \pi_0(S^x) =\#\pi_0(S_{nc}^x)  + \sum_{k=0}^\infty
    \#\pi_0\big(S_{const}^x(k)\big),
    \end{equation*}
  and we recall that 
  \begin{equation*}                                                       
    \sum_{\Sigma^x\in \pi_0(S^x)} {\rm Genus}(\Sigma^x) = {\rm
    Genus}(S_{nc}^x) +\sum_{k=1}^\infty k\cdot \#\pi_0\big(
    S_{const}^x(k)\big) \geq \sum_{k=1}^\infty k\cdot \#\pi_0\big(
    S_{const}^x(k)\big)
    \end{equation*}
  which is finite since \(\sup \{k\in \mathbb{N}: S_{const}^x(k)\neq
  \emptyset \} \leq {\rm Genus}(\overline{S}) \leq C\).

Combining the above two (in)equalities with inequality
  (\ref{EQ_stability_inequality}) and the formula for the arithmetic genus
  then yields the following.
  \begin{align*}                                                          
    &{\rm Genus}_{arith}(S^x, j^x, D^x)\\
    &\qquad=\#\pi_0(|S^x|)-\#\pi_0(S^x)
    +\Big( \sum_{\Sigma^x\in \pi_0(S^x)} {\rm Genus}(\Sigma^x) \Big)
    +{\textstyle \frac{1}{2}}\# D^x
    \\
    &\qquad\geq\#\pi_0(|S^x|)
    -\#\pi_0(S_{nc}^x)  - \sum_{k=0}^\infty
    \#\pi_0\big(S_{const}^x(k)\big)
    \\
    &\qquad\qquad+\sum_{k=1}^\infty k\cdot \#\pi_0\big(
    S_{const}^x(k)\big)
+{\textstyle \frac{3}{2}}\#\pi_0\big(S_{const}^x(0)\big)
    +{\textstyle \frac{1}{2}} \#\pi_0\big(S_{const}^x(1)\big)
    \\
    &\qquad=\#\pi_0(|S^x|)-\#\pi_0(S_{nc}^x)
 +\sum_{k=1}^\infty (k-1)\cdot \#\pi_0\big(
    S_{const}^x(k)\big)
    \\
    &\qquad\qquad+{\textstyle \frac{1}{2}}\#\pi_0\big(S_{const}^x(0)\big)
    +{\textstyle \frac{1}{2}} \#\pi_0\big(S_{const}^x(1)\big)
    \\
    &\qquad\geq \#\pi_0(|S^x|)-\#\pi_0(S_{nc}^x) +{\textstyle\frac{1}{2}}
    \#\pi_0(S_{const}^x)
    \\
    &\qquad\geq -\#\pi_0(S_{nc}^x) +{\textstyle\frac{1}{2}}
    \#\pi_0(S_{const}^x).
    \end{align*}
Or in other words, 
  \begin{equation*}                                                       
    2{\rm Genus}_{arith}(S^x, j^x, D^x) + 2\#\pi_0(S_{nc}^x) \geq
    \#\pi_0(S_{const}^x),
    \end{equation*}
  and thus
  \begin{equation*}                                                       
    \#\pi_0(S^x)\leq 2{\rm Genus}_{arith}(S^x, j^x, D^x) +
    3\#\pi_0(S_{nc}^x) .
    \end{equation*}
Next, we recall equations (\ref{EQ_limit_of_num_of_con_comp}),
  (\ref{EQ_limit_of_arith_genus}) and (\ref{EQ_Snc_bound}), which
  guarantee the following
  \begin{align*}                                                          
    \#\pi_0(\overline{S}) &= \lim_{x\to \infty} \#\pi_0(S^x)\\
    &\leq \lim_{x\to \infty}2{\rm Genus}_{arith}(S^x,
    j^x, D^x) + 3\hbar^{-1}  C
    \\
    &= 2{\rm Genus}_{arith}(\overline{S},
    \bar{j}, \overline{D}) + 3\hbar^{-1}  C
    \\
    &\leq 2(1+2C) + \hbar^{-1} 3C,
    \\
    &\leq 6(1 + \hbar^{-1})C
    \end{align*}
  where to obtain the second inequality we have employed inequality
  (\ref{EQ_bound_on_arith_genus}).
This establishes the desired bound on the number of connected components of
  \(\overline{S}\), and proves the second part of the conclusions of Lemma
  \ref{LEM_bounded_connected_components}.

To establish the third part of Lemma
  \ref{LEM_bounded_connected_components}, we recall equation
  (\ref{EQ_formula_for_appx_arith_genus}), which states the following.
  \begin{align*}                                                           
    &{\rm Genus}_{arith}(S^x, j^x, D^x)
    \\
    &\qquad=\#\pi_0(|S^x|)-\#\pi_0(S^x)
    +\Big( \sum_{\Sigma^x\in \pi_0(S^x)} {\rm Genus}(\Sigma^x) \Big)
    +{\textstyle \frac{1}{2}}\# D^x 
    \end{align*}
Solve for \(\#D^x\) and pass to the limit as \(x\to \infty\) in
  \(\mathcal{I}_{reg}\) to obtain the following:
  \begin{align*}                                                          
    \# \overline{D}  &= 2{\rm Genus}_{arith}(\overline{S}, \bar{j},
    \overline{D}) -2 \#\pi_0(|\overline{S}|)+2 \#\pi_0(\overline{S}) -2
    \Big( \sum_{\overline{\Sigma}\in \pi_0(\overline{S})} {\rm
    Genus}(\overline{\Sigma}) \Big)
    \\
    &\leq 2{\rm Genus}_{arith}(\overline{S}, \bar{j},
    \overline{D}) +2 \#\pi_0(\overline{S}) 
    \\
    &\leq 6C + 12(1+\hbar^{-1})C
    \\
    &\leq 18(1+\hbar^{-1})C
    \end{align*}
This is the desired estimate, which then completes the proof of Lemma
  \ref{LEM_bounded_connected_components}.
\end{proof}

At this point we note that we have proved Proposition
  \ref{PROP_feral_limit_curves} modulo only the proof of Lemma
  \ref{LEM_impossible_submanifold}, and so we turn our attention to that.
First however, it will be important to define a procedure called a
  \emph{cut}.
We make the definition precise below.

\begin{definition}[cut]
  \label{DEF_cut}
  \hfill \\
Let \(u:S\to \mathbb{R}\times M\) be a proper pseudoholomorphic map. 
Letting \(\mathcal{I}_{reg}\) denote the regular values of \(a\circ u\),
  and assuming \(a\circ u(z)\in \mathcal{I}_{reg}\), we define \({\rm
  cut}_z(S)\) by first defining \(\Gamma_z\) to be the connected component
  of \((a\circ u)^{-1}\big(a\circ u(z)\big)\) containing \(z\), and we let
  \({\rm cut}_z(S)\) be the surface obtained by gluing in two disjoint
  copies of \(\Gamma_z\) into \(S\setminus \Gamma_z \).
\end{definition}
For example, suppose  \(u:\mathbb{R}\times S^1\to  \mathbb{R}\times M\) is
  a proper pseudoholomorphic cylinder for which the function \(a\circ u\) has
  no critical points, then for each \(z\in \mathbb{R}\times S^1\) the
  surface \({\rm cut}_z(\mathbb{R}\times S^1)\) is diffeomorphic to the
  disjoint union of \((-\infty,0] \times S^1 \) and \([0,\infty)\times S^1\).

We now recall what we will prove.

\setcounter{CurrentSection}{\value{section}}
\setcounter{CurrentLemma}{\value{lemma}}
\setcounter{section}{\value{CounterSectionImpossibleSubmanifold}}
\setcounter{lemma}{\value{CounterLemmaImpossibleSubmanifold}}
\begin{restatementlemma}[impossible submanifold]\hfill\\
Let \(c\geq 0\) and let 
  \begin{equation*}                                                       
    \bar{\mathbf{w}}_c=(\bar{w}_c, \overline{S}_c, \bar{j}_c,
    \mathbb{R}\times M,
    \overline{J},\emptyset, \overline{D}_c),
    \end{equation*} 
  be an exhaustive limit of some subsequence of the \(\mathbf{w}_{k, c}\).
If \({\rm Punct}(\overline{S}_c)=\infty\) \emph{then} there exists a
  compact manifold with smooth boundary \(\Sigma\subset \overline{S}_c\)
  with the following properties.
\begin{enumerate}[(g1)]                                                   
  \item 
  \(\#\pi_0(\Sigma) = \#\pi_0(\overline{S}_c) \)
  \item 
  \(\#\pi_0(\partial \Sigma)\geq 12(1+\hbar^{-1})C\)
  \item
  each connected component of \(\overline{S}_c\setminus (\Sigma\setminus
  \partial \Sigma)\)  is non-compact.
  \end{enumerate} 
\end{restatementlemma}
%
\begin{proof}
\setcounter{section}{\value{CurrentSection}}
\setcounter{lemma}{\value{CurrentLemma}}
As in the proof of Lemma \ref{LEM_bounded_connected_components}, we will
  attempt to simplify notation a bit by dropping the subscripts \(c\), and
  writing, for example, \(\bar{\mathbf{w}}\) and \(\overline{S}\) instead of
  \(\bar{\mathbf{w}}_c\) and \(\overline{S}_c\).

Our first step is to put precisely one special point, \(\zeta_k\), on each
  connected component of \(\overline{S}\).
We denote the set of such points \(Z=\{\zeta_1, \ldots, \zeta_n\}\), and
  note that this set is finite as a consequence of Lemma
  \ref{LEM_bounded_connected_components}.
By assumption we have \({\rm Punct}(\bar{\mathbf{w}}) = \infty\), so it
  follows that there exists \(x_0>0\) with the property that for each
  \(x>x_0\), the number of non-compact connected  components of
  \(\overline{S}\setminus(a\circ \bar{w})^{-1}((-\infty, x))\) is greater
  than or equal to \(12(1+\hbar^{-1})C\).
To make use of this, we first define \(\mathcal{I}_{reg}\) to be the
  intersection of the sets \(\mathbb{R}\setminus a\circ
  \bar{w}(\overline{D})\) and the set of regular values of the function
  \(a\circ \bar{w}:\overline{S}\to \mathbb{R}\).
We then choose \(x_0\) sufficiently large so that for each \(x \in
  \mathcal{I}_{reg}\) with \(x> x_0\), and for \(\Sigma^x:= (a\circ
  \bar{w})^{-1}((-\infty,x])  \) we have
  \begin{enumerate}                                                       
    \item 
    \(Z\cup \overline{D}\subset \Sigma^x\),
    \item 
    \({\rm Genus}(\Sigma^x) = {\rm Genus}(\overline{S})\),
    \item 
    each compact connected component of \(\overline{S}\) is contained
    in \(\Sigma^x\),
    \item
    the number of non-compact connected components of
    \(\overline{S}\setminus (\Sigma^x\setminus \partial \Sigma^x)\) is
    greater than \(12(1+\hbar^{-1})C\). 
    \end{enumerate}
We note that the existence of such a \(x_0\) relies both on the validity
  of Lemma \ref{LEM_bounded_connected_components} and the assumption that
  \({\rm Punct}(\bar{\mathbf{w}})=\infty\). 
We henceforth assume \(x\in \mathcal{I}_{reg}\) with \(x>x_0\) has been
  fixed.
We also note an important property, namely that as a consequence of the
  fact that \({\rm Genus}(\Sigma^x) = {\rm Genus}(\overline{S})\), it
  follows that any embedded loop removed from \(\overline{S}\setminus
  \Sigma^x\) disconnects the surface \(\overline{S}\); this follows from
  genus super-additivity and an straightforward Euler characteristic
  argument.
 \begin{figure}[h]
\includegraphics[scale=0.5]{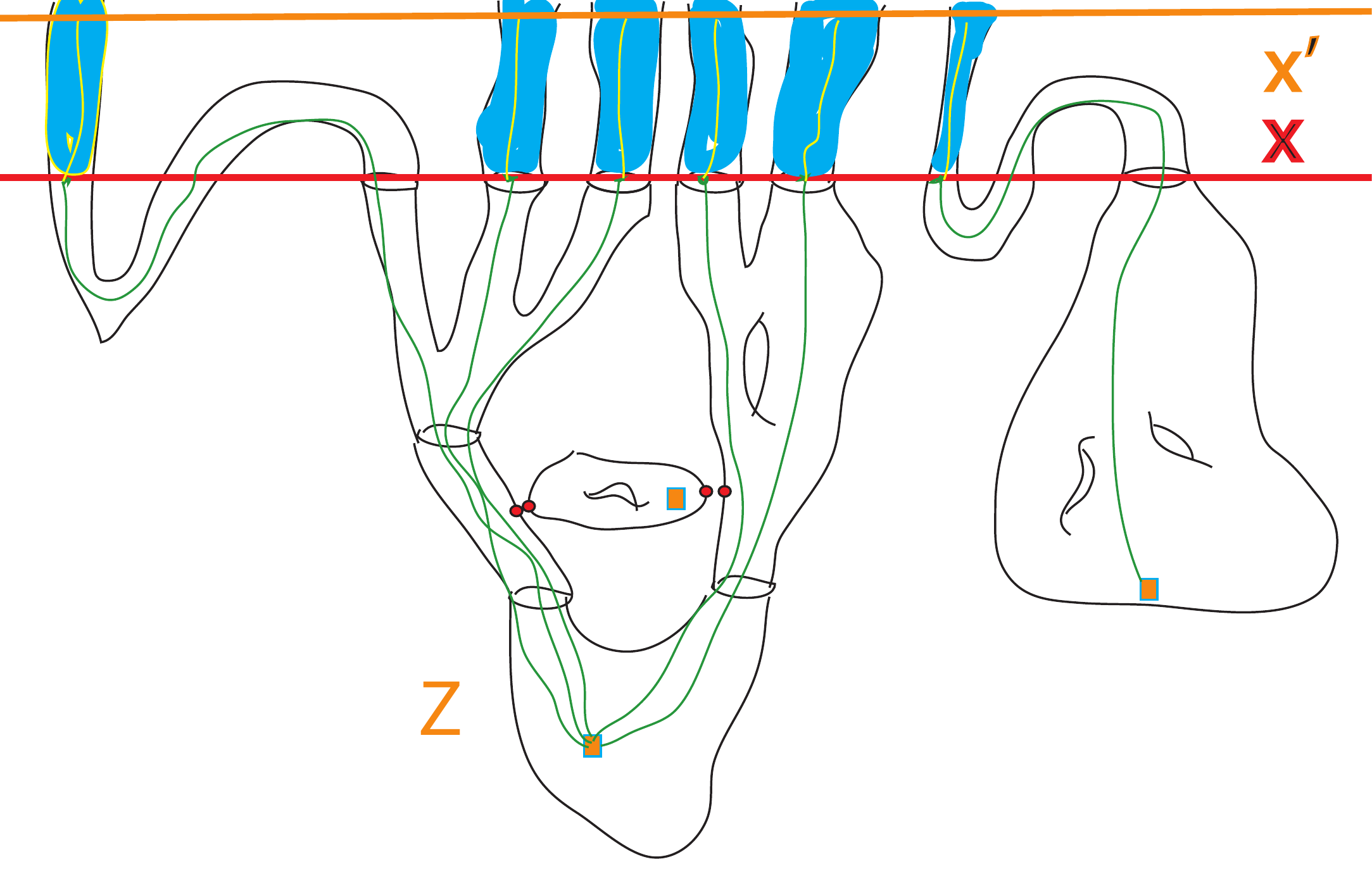}
\caption{The figure illustrates the construction. It shows $x$, $x'$ and
  the extended curves $\gamma_k$.
  The actual situation can be in generally much wilder. In our case we
  have above $x'$ only non-compact components
  (not shown), i.e. $E_1'$,...,$E'_m$. A later figure will show additional
  possible features.}
\label{FIG40_jwf_alternate}
\end{figure}

Next, we enumerate the set of non-compact connected components of
  \(\overline{S}\setminus (\Sigma^x\setminus \partial \Sigma^x)\) as
  \(E_1, \ldots, E_m\) with
  \begin{equation*}                                                       
    m \geq 12(1+\hbar^{-1})C.
    \end{equation*}
Also, for each \(k\in \{1,\ldots, m\}\) we choose a continuous path
  \(\gamma_k:[0,1] \to \overline{S},\) each with the property that
  \(\gamma_k(0)\in Z \) and \(\gamma_k(1)\in \partial E_k\).
At this point, we fix a \(x' \in \mathcal{I}_{reg}\) with \(x'>x\) with
  the additional property that
  \begin{equation*}                                                       
    \bigcup_{k=1}^m \gamma_k\big([0, 1]\big) \subset \Sigma^{x'}.
    \end{equation*}
     \begin{figure}[h]
\includegraphics[scale=0.25]{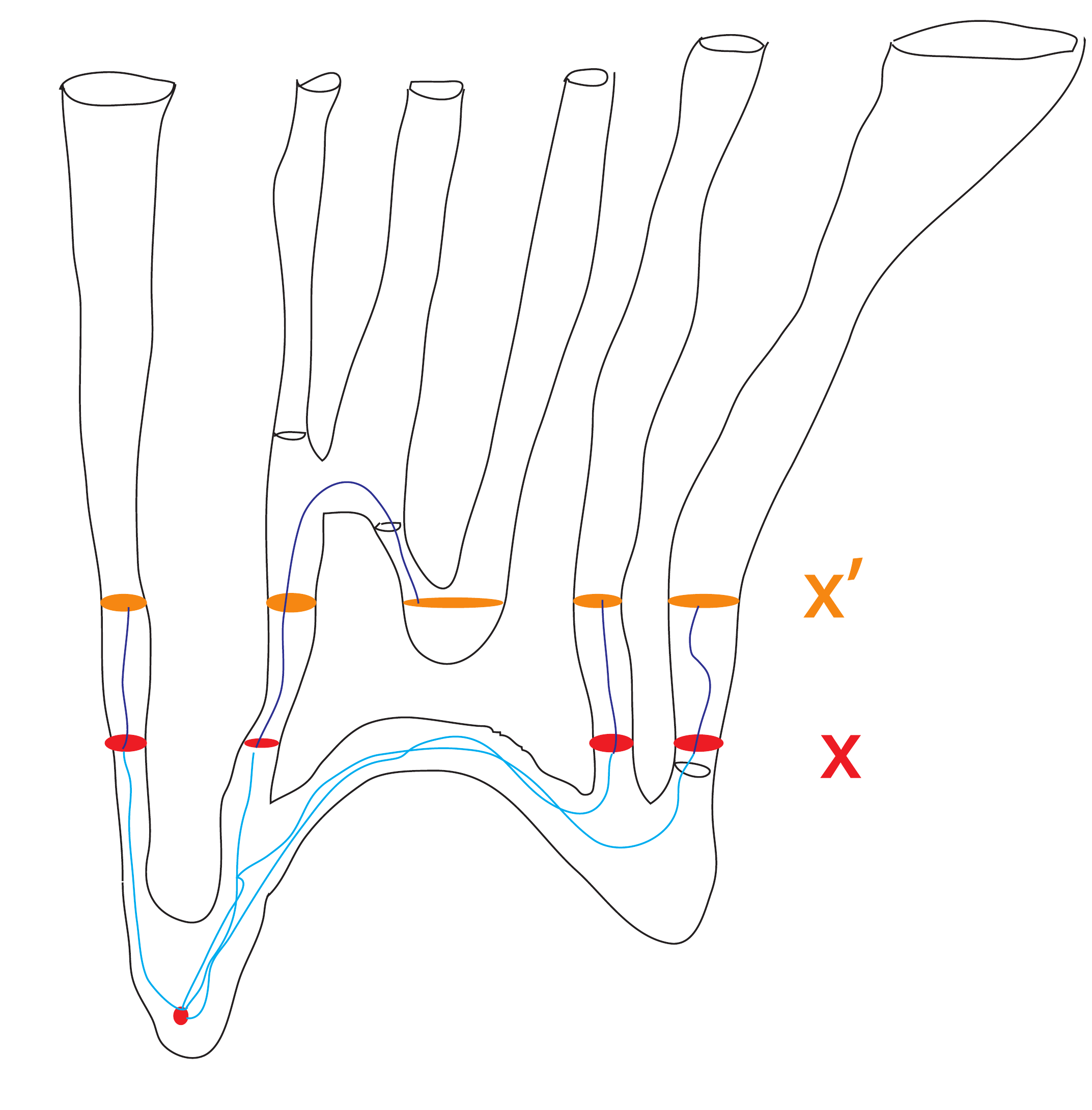}\ \ \ \includegraphics[scale=0.25]{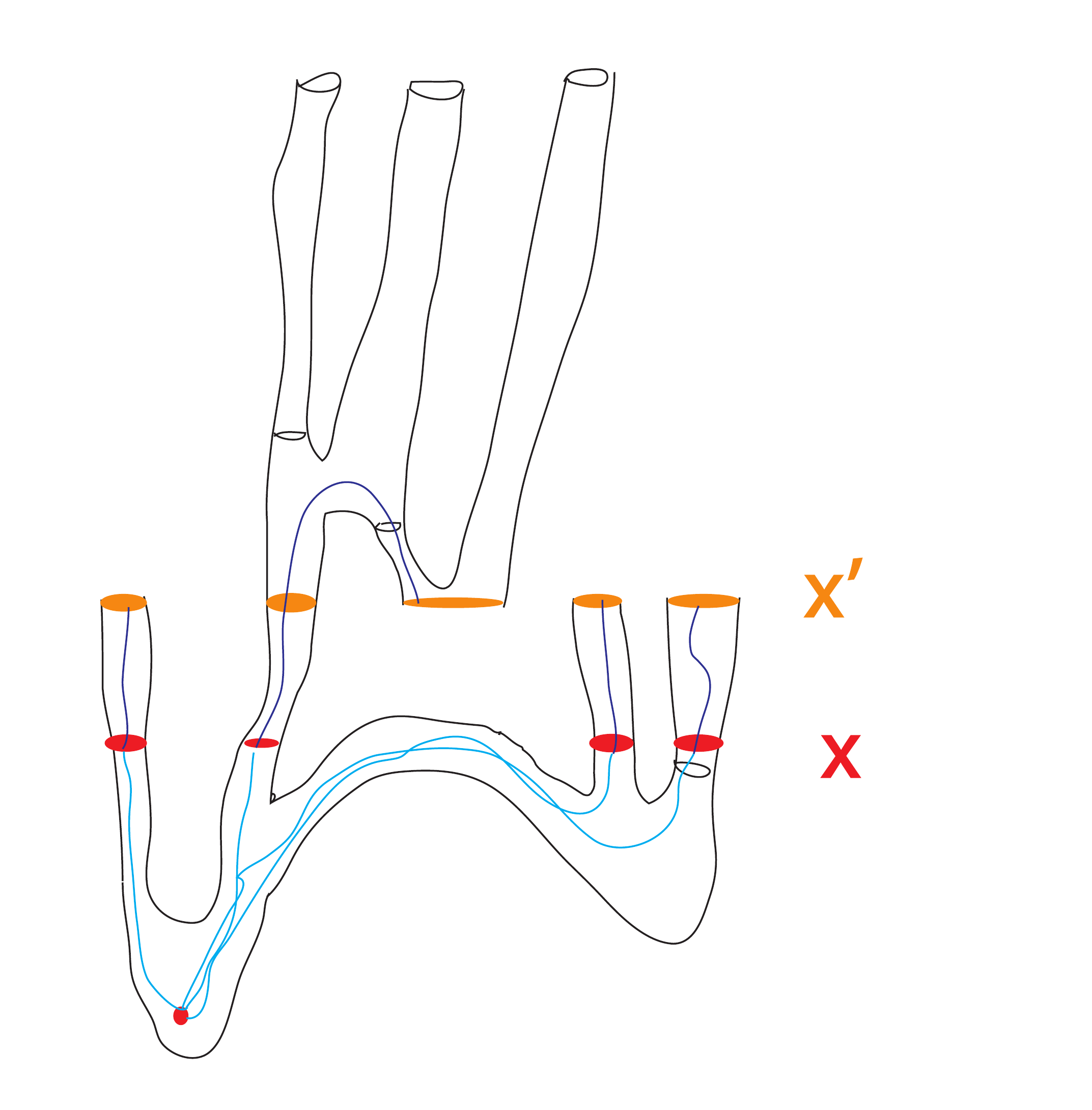}\\
\includegraphics[scale=0.25]{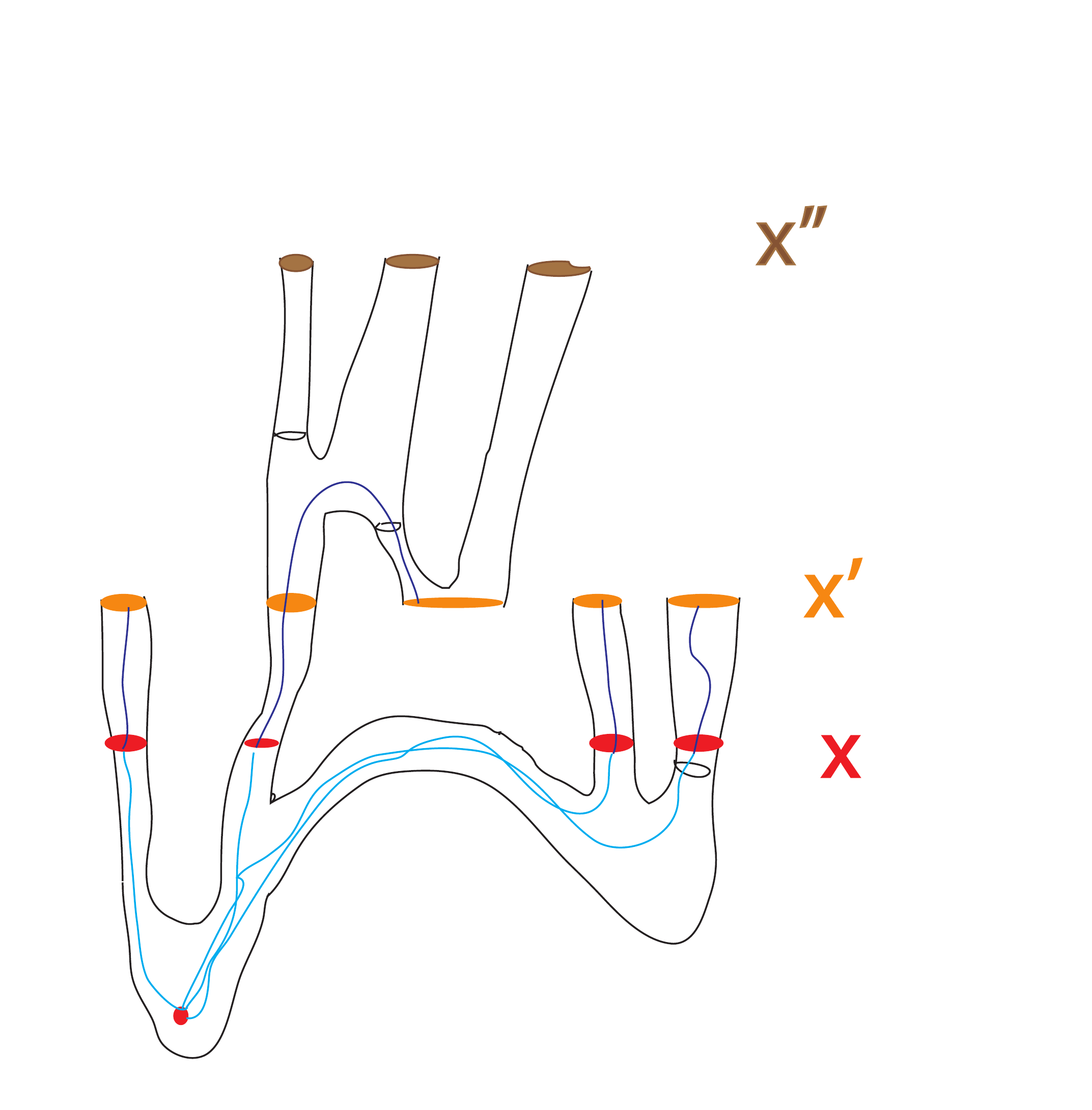}
\caption{Left: In this case $Z$ consists of one point and we have four
non-compact components $E_1,..,E_4$. We also have four $E_1',.,E'_4$.
Right: The set $\widetilde{\Sigma}$, which we note is not compact. This
set has already a lot of desirable properties. Below: The desired set
$\Sigma$ is obtained by trimming it further.} 
\label{FIG40}
\end{figure}
    
Next, we enumerate the non-compact ends of \(\overline{S}\setminus
  (\Sigma^{x'}\setminus \partial \Sigma^{x'}\big) \) via \(E_1', E_2',
  \ldots, E_{m'}'\).
We also extend each \(\gamma_k:[0,1]\to \overline{S}\) to continuous
  \(\gamma_k:[0, 2]\to \overline{S} \) so that
  \begin{equation}\label{EQ_gamma_properties_1}                           
    \gamma_k(1,2) \subset \overline{S}\setminus \Sigma^x
    \end{equation}
  and for each \(k\in \{1,\ldots, m\}\) we have
  \begin{equation}\label{EQ_gamma_properties_2}                           
    \gamma_k(t) \in \cup_{i=1}^{m'} E_i'\qquad\qquad\text{if and only
    if}\qquad\qquad t=2.
    \end{equation}
We then obtain a new surface, denoted by \(\widetilde{\Sigma}\), by
  cutting \(\overline{S}\) at the circles associated to the points
  \(\gamma_1(2), \ldots, \gamma_m(2)\), and defining
  \(\widetilde{\Sigma}\) to be the union of the connected components of
  the cut surface which have non-empty intersection with \(Z\).

We pause for a moment to consider the properties of the surface
  \(\widetilde{\Sigma}\), since it is close to the surface we seek.
To that end, we first observe that \(\partial \widetilde{\Sigma}\subset
  \cup_{k=1}^{m'} \partial E_k'\); this follows from equations
  (\ref{EQ_gamma_properties_1}) and (\ref{EQ_gamma_properties_2}).

Second, we claim that \(\#\pi_0(\widetilde{\Sigma}) = n =
  \#\pi_0(\overline{S})\).
To see this, first note that by definition \(Z\subset
  \widetilde{\Sigma}\subset \overline{S}\), and each element of \(Z\) lies
  on a different connected component of \(\overline{S}\), and hence
  \(\#\pi_0(\widetilde{\Sigma}) \geq \#\pi_0(\overline{S}) = n\); and
  because each connected component of \(\widetilde{\Sigma}\) must contain
  a point in \(Z\), the opposite inequality must hold as well.

Third, we claim that \(\#\pi_0(\partial \widetilde{\Sigma}) = m \geq
  12(1+\hbar^{-1})C\).
To establish this, it is  important to observe that
  \(\cup_{k=1}^m\gamma_k([0, 2])\subset \widetilde{\Sigma}\).
To see this, recall that equation (\ref{EQ_gamma_properties_2}) guarantees
  that for each \(k\in \{1, \ldots, m\}\), we have \(\gamma_k(t) \in
  \cup_{i=1}^{m'}E_i\) if and only if \(t=2\); furthermore, since
  \(\partial\widetilde{\Sigma}\subset \cup_{k=1}^{m'} \partial E_k' \), it
  follows that each \(\gamma_k([0, 2])\) is contained in a connected
  component of the surface obtained by cutting \(\overline{S}\) at the
  circles associated to the points \(\gamma_1(2), \ldots, \gamma_m(2)\).
However, because \(\gamma_k(0)\in Z\) for \(k\in \{1, \ldots, m\}\), and
  \(Z\subset \widetilde{\Sigma}\), it follows that indeed, \(\cup_{k=1}^m
  \gamma_k([0, 2]) \subset \widetilde{\Sigma}\).
We can now prove that \(\#\pi_0(\partial \widetilde{\Sigma})= m\).
To see this, recall that by construction, for each \(k\in \{1, \ldots,
  m\}\), the point \(\gamma_k(1)\) is an element of a connected component of
  \(\overline{S}\setminus (\Sigma^x\setminus \partial \Sigma^x)\), and
  moreover no two such points \(\gamma_k(1)\) and \(\gamma_{k'}(1)\) are
  contained in the same connected component of \(\overline{S}\setminus
  (\Sigma^x\setminus \partial \Sigma^x)\).
Furthermore, by equation (\ref{EQ_gamma_properties_1}) we have
  \(\gamma_k((1,2)) \subset \overline{S}\setminus \Sigma^x\), and since
  \(\gamma_k(2)\in \partial \widetilde{\Sigma}\) for each \(k\in \{1,
  \ldots, m\}\), it follows that \(\#\pi_0(\partial \widetilde{\Sigma})\geq
  m\).
The equality \(\#\pi_0(\partial \widetilde{\Sigma})=m\)  follows from the
  fact that each connected component of \(\partial \widetilde{\Sigma}\)
  contains one point of the form \(\gamma_k(2)\).

Fourth, and finally, we claim that each connected component of
  \(\overline{S}\setminus(\widetilde{\Sigma}\setminus \partial
  \widetilde{\Sigma})\) is non-compact.
To establish this, let us define \(\widehat{\Sigma}\) to be the surface
  obtained by cutting \(\overline{S}\) at the circles associated to the
  points \(\gamma_1(2), \ldots, \gamma_m(2)\); recall that
  \(\widetilde{\Sigma}\) is then defined to be the union of the connected
  components of \(\widehat{\Sigma}\) which have non-empty intersection with
  \(Z\).
Consequently, suppose \(\Sigma\) is a compact connected component of
  \(\widehat{\Sigma}\) .
There are two cases to consider.
In the first case, \(\partial \Sigma = \emptyset\), in which case it
  follows that \(\Sigma\cap Z \neq \emptyset\) by definition of \(Z\).
In the second case, \(\partial \Sigma \neq \emptyset\), it follows that
  \(\partial \Sigma\) has non-trivial intersection with the set
  \(\{\gamma_1(2), \gamma_2(2), \ldots, \gamma_m(2)\}\).
It then follows from the fact that \(\partial\widetilde{\Sigma}\subset
  \cup_{k=1}^{m'} \partial E_k'\), that either \(\Sigma\subset
  \widetilde{\Sigma}\) or else \(\Sigma\subset \cup_{k=1}^{m'} E_k'\).
However, since each connected component of \(\cup_{k=1}^{m'} E_k'\) is
  non-compact and \(\Sigma\) is compact, it follows that we must have
  \(\Sigma\subset \widetilde{\Sigma}\).
Thus whenever \(\Sigma\) is a compact connected component of
  \(\widehat{\Sigma}\), we have \(\Sigma\subset \widetilde{\Sigma}.\)
Summarizing, we have constructed a surface \(\widetilde{\Sigma}\subset
  \overline{S}\) with the properties
  \begin{enumerate}                                                       
    \item
    \(\partial \widetilde{\Sigma} \subset \cup_{k=1}^{m'} \partial E_k'\)
    \item 
    \(\#\pi_0(\widetilde{\Sigma}) = n = \#\pi_0(\overline{S})\)
    \item 
    \(\#\pi_0(\partial \widetilde{\Sigma}) = m\geq 12(1+\hbar^{-1})C\).
    \item 
    each connected component of
    \(\overline{S}\setminus(\widetilde{\Sigma}\setminus \partial
    \widetilde{\Sigma})\) is non-compact.
    \end{enumerate}

Observe that we would have found the desired Riemann surface if only
  \(\widetilde{\Sigma}\) had been compact.
Since \(\widetilde{\Sigma}\) need not be compact, we must trim it further
  to obtain the desired surface.
To that end, we fix, \(x''\in \mathcal{I}_{reg}\) with \(x''> x'\). 
We then define \(E''\) to be the union of the interiors of the non-compact
  connected components of \(\widetilde{\Sigma} \cap (a\circ w)^{-1}([x'',
  \infty))\), and we define \(\Sigma\) to be the set of all points in
  \(p\in \widetilde{\Sigma}\) for which there exists a continuous path in
  \(\widetilde{\Sigma}\setminus E''\) from \(x\) to \(Z\).

We now establish the required properties. 
First we note that \(\widetilde{\Sigma}\setminus E''\) is compact, and
  \(\Sigma\subset \widetilde{\Sigma}\setminus E''\) is closed, so that
  \(\Sigma\) is indeed compact.
Again, every connected component of \(\Sigma\) is path-connected to \(Z\),
  and hence \(\#\pi_0(\Sigma) =n  = \#\pi_0(\overline{S}) \).
Also by construction \(\partial \widetilde{\Sigma}\subset \partial
  \Sigma\), and hence 
  \begin{equation*}                                                       
    \#\pi_0(\partial \Sigma)\geq \#\pi_0(\partial\widetilde{ \Sigma}) = m
    \geq 12(1+\hbar^{-1})C.
    \end{equation*}
Finally, we claim that each connected component of \(\overline{S}\setminus
  (\Sigma\setminus \partial \Sigma)\)  is non-compact. 
To see this, we first note that each connected component of \(\overline{S}
\setminus (\widetilde{\Sigma}\setminus\partial \widetilde{\Sigma})\) is a
  connected component of \(\overline{S}\setminus (\Sigma\setminus \partial
  \Sigma)\), and we have already established that each of these is
  non-compact.
Thus it is sufficient to show that the connected components of
  \(\widetilde{\Sigma}\setminus (\Sigma\setminus \partial \Sigma)\) are
  non-compact.
Observe that any connected component of \(\widetilde{\Sigma}\setminus
  (\Sigma\setminus \partial \Sigma)\) having nontrivial intersection with
  \(E''\) must be non-compact.
However, by definition of \(\Sigma\), any point \(p\in
  \widetilde{\Sigma}\setminus (\Sigma\setminus \partial \Sigma)\) has the
  property that every path connecting \(p\) to \(Z\) will intersect
  \(E''\).
In other words, the connected component of \(\widetilde{\Sigma}\setminus
  (\Sigma\setminus \partial \Sigma)\) which contains such a \(p\) must
  also contain a connected component of \(E''\), and hence must be
  non-compact.
This establishes all the required properties of \(\Sigma\), and hence
  completes the proof of Lemma \ref{LEM_impossible_submanifold}.
\end{proof}

We now observe that we have completed the proof of Proposition
  \ref{PROP_feral_limit_curves}, including all dependencies.

\subsubsection{Proof of Lemma
  \ref{LEM_bounded_transverse_intersections}}
  \label{SEC_proof_of_bounded_intersections}

\setcounter{CurrentSection}{\value{section}}
\setcounter{CurrentLemma}{\value{lemma}}
\setcounter{section}{\value{CounterSectionBoundedIntersections1}}
\setcounter{lemma}{\value{CounterLemmaBoundedIntersections1}}
\begin{restatementlemma}[bounded transverse intersections]\hfill\\
Consider non-negative numbers \(c, c'\geq 0\) with \(c'> c\), and let
  \(k\mapsto \ell_k\in \mathbb{N}\) be a strictly increasing sequence for
  which \(\mathbf{w}_{\ell_k,c}\to \bar{\mathbf{w}}_c\) and
  \(\mathbf{w}_{\ell_k,c'}\to \bar{\mathbf{w}}_c'\) in an exhaustive sense.
Then the subset \(\mathcal{P}\subset \mathbb{R}\times M\) of transversal
  intersection points of the two curves, which is defined by
  \begin{align*}                                                          
    \mathcal{P}:=\big\{p\in \mathbb{R}\times M &: \text{ there exists }
    (\zeta, \zeta')\in \overline{S}_c\times \overline{S}_{c'} \; \text{such
    that}\;
    \\
    &\; \; \bar{w}_c(\zeta) = p = \bar{w}_{c'}(\zeta')\;  \text{ and }\;
    T\bar{w}_c(\zeta)\pitchfork T\bar{w}_{c'}(\zeta') \big\},
    \end{align*}
  satisfies 
  \begin{equation*}                                                       
    \#\mathcal{P} \leq C.
    \end{equation*}
\end{restatementlemma}
%
\begin{proof}
\setcounter{section}{\value{CurrentSection}}
\setcounter{lemma}{\value{CurrentLemma}}
We will proceed via a proof by contradiction, and assume that
  \(\#\mathcal{P} > C\).
Consequently there exist distinct \(p_1, \ldots, p_n\in \mathbb{R}\times
  M\) with \(n> C\), and there exist \(\zeta_1,\ldots, \zeta_n\in
  \overline{S}\) and \(\zeta_1', \ldots, \zeta_n'\in \overline{S}'\) with
  the property that 
\begin{equation*}                                                         
  \bar{w}_c(\zeta_k) = p_k = \bar{w}_{c'}(\zeta_k') \qquad\text{and}\qquad
  T\bar{w}_c(\zeta_k)\pitchfork T\bar{w}_{c'}(\zeta_k')
  \end{equation*}
  for each \(k\in \{1, \ldots, n\}\).
Because the \(p_1, \ldots, p_n\) are distinct and
  because \(\bar{w}_c\) and \(\bar{w}_{c'}\) are respectively immersions at
  the points \(\zeta_k\) and \(\zeta_k'\) for each \(k\in \{1, \ldots, n\}\)
  it follows that we may find closed disks \(\Delta_1,\ldots,
  \Delta_n\subset \overline{S}_c\) and \(\Delta_1', \ldots,
  \Delta_n'\subset \overline{S}_{c'}\) which are pairwise disjoint, and
  satisfy \(\zeta_k\in \Delta_k\setminus \partial \Delta_k\) and
  \(\zeta_k'\in \Delta_k'\setminus \partial \Delta_k'\), and for which the
  maps
  \begin{equation*}                                                       
    \bar{w}_c: \bigcup_{k=1}^n \Delta_k \to \mathbb{R}\times M
    \qquad\text{and}\qquad \bar{w}_{c'}: \bigcup_{k=1}^n \Delta_k' \to
    \mathbb{R}\times M
    \end{equation*}
  are embeddings.

We then note as a consequence of the definition of exhaustive Gromov
  compactness, there exist, for all sufficiently large \(k\in \mathbb{N}\),
  embeddings
  \begin{equation*}                                                       
    \phi_{\ell_k}: \bigcup_{\nu=1}^n\Delta_\nu \to S_{\ell_k,c}
    \qquad\text{and}\qquad \phi_{\ell_k}': \bigcup_{\nu=1}^n\Delta_\nu'
    \to S_{\ell_k,c'}
    \end{equation*}
  with the property that the maps
  \begin{equation*}                                                       
    w_{\ell_k,c}\circ \phi_{\ell_k}: \bigcup_{\nu=1}^n\Delta_\nu \to
    \mathbb{R}\times M \qquad\text{and}\qquad w_{\ell_k,
    c'}\circ\phi_{\ell_k}': \bigcup_{\nu=1}^n\Delta_\nu' \to
    \mathbb{R}\times M
    \end{equation*}
  respectively converge in \(\mathcal{C}^\infty\) to 
  \begin{equation*}                                                       
    \bar{w}_{c}: \bigcup_{\nu=1}^n\Delta_\nu \to \mathbb{R}\times M
    \qquad\text{and}\qquad \bar{w}_{c'}: \bigcup_{\nu=1}^n\Delta_\nu' \to
    \mathbb{R}\times M.
    \end{equation*}
Then by Lemma \ref{LEM_stability_of_transversal_intersections}, it follows
  that for all sufficiently large \(k\in \mathbb{N}\) there exist distinct
  \begin{equation*}                                                       
    z_1,\ldots, z_n \in \phi_{\ell_k}\Big(\bigcup_{\nu=1}^n\Delta_\nu\Big)
    \subset S_{\ell_k,c} \qquad\text{and}\qquad z_1',\ldots, z_n' \in
    \phi_{\ell_k}'\Big(\bigcup_{\nu=1}^n\Delta_\nu'\Big) \subset
    S_{\ell_k,c'}
    \end{equation*}
  for which \(w_{\ell_k, c}(z_\nu) = w_{\ell_k,c'}(z_\nu')\) for \(\nu
  \in \{1,\ldots, n\}\).
Recall equation (\ref{EQ_def_w}) which guarantees
  \begin{equation*}                                                       
    w_{k,c} = {\rm Sh}_{a_k}\circ u_k^{a_k+c}
    \end{equation*}
  where \({\rm Sh}_x\) is the shift map \({\rm Sh}_x(a,p)= (a-x,p)\), and
  the \(u_k^{a_k+c}\) are of the curves \(u_k^b\) specified in the
  hypotheses of Theorem \ref{THM_existence}.
Consequently, 
  \begin{equation*}                                                       
   {\rm Sh}_{a_{\ell_k}}\circ u_{\ell_k}^{a_{\ell_k}+c}(z_\nu) =
   w_{\ell_k, c}(z_\nu) = w_{\ell_k,c'}(z_\nu') = {\rm
   Sh}_{a_{\ell_k}}\circ u_{\ell_k}^{a_{\ell_k}+c'}(z_\nu')
   \end{equation*}
  and hence 
  \begin{equation*}                                                       
    u_{\ell_k}^{a_{\ell_k}+c}(z_\nu) =
    u_{\ell_k}^{a_{\ell_k}+c'}(z_\nu')
    \end{equation*}
  for \(\nu\in \{1, \ldots, n\}\) with \(n >C\).
Recall equation (\ref{EQ_Skc}) which guarantees that 
  \(S_{\ell_k, c} = S_{\ell_k}^{a_{\ell_k} +c}\) and \(S_{\ell_k, c'} =
  S_{\ell_k}^{a_{\ell_k} +c'}\), and hence we conclude that
  \begin{equation*}                                                       
  \#\big(u_{\ell_k}^{a_{\ell_k}+c}(S_{\ell_k}^{a_{\ell_k} +c})\cap
  u_{\ell_k}^{a_{\ell_k}+c'}(S_{\ell_k}^{a_{\ell_k}+ c'})\big)  > C,
    \end{equation*}
  for all sufficiently large \(k\in \mathbb{N}\).
However this contradicts the hypothesis of Theorem \ref{THM_existence}
  which states
  \begin{equation*}                                                       
    \# \big(u_k^b(S_k^b) \cap u_k^{b'}(S_k^{b'})\big) \leq C
    \end{equation*}
  for all \(b, b'\geq 0\) and \(k\in \mathbb{N}\).
This is the contradiction we have sought, and hence the proof of Lemma
  \ref{LEM_bounded_transverse_intersections}.
\end{proof}

\appendix

\addtocontents{toc}{\protect\setcounter{tocdepth}{1}}
\section{Minor Miscellanea}\label{SEC_minor_miscellanea}
\subsection{Riemannian Recollections}\label{SEC_riemannian}

Let \((M, g)\) be a Riemannian manifold of dimension \(n\).  
Recall that the metric \(g\) uniquely determines a torsion-free metric
  connection, called the Levi-Civta connection.
We denote the associated covariant derivative by \(\nabla\). 
That is, \(\nabla\) satisfies
  \begin{equation*}
    \nabla_X Y - \nabla_Y X = [X, Y] \quad\text{and}\quad \nabla \langle
    X, Y\rangle_g = \langle\nabla X, Y\rangle_g +\langle X, \nabla
    Y\rangle_g.
    \end{equation*} 
Let \(p\in M\), and let \(\mathbf{x}=(x^1, \ldots, x^n)\) be local
  coordinates near \(p\) so that \(\mathbf{x}(p)=0\in \mathbb{R}^n\).
We express \(g\) in local coordinates by the following. 
  \begin{equation*}
    g = g_{ij}\, dx^i\otimes dx^j
    \end{equation*}
Here, and throughout, we employ Einstein's  notation for summing over
  repeated indices.
We also uniquely define \(n^2\) functions \(g^{ij}\) by the equations
  \begin{equation*}
    g^{i\ell}g_{\ell j} = \delta_j^i
    \end{equation*} 
  with \(\delta_j^i\) the Kronecker delta.  
In this case we may express \(\nabla\) in local coordinates as 
  \begin{equation*}
    \nabla_{X^i\partial_{x^i}} (Y^j \partial_{x^j}) = X^i
    dY^j(\partial_{x^i})\partial_{x^j} + X^i Y^j \Gamma_{ij}^k
    \partial_{x^k},
    \end{equation*}
  where \(\Gamma_{ij}^k\) are the Christoffel symbols, which are given by
  \begin{equation}\label{EQ_christoffel}
    \Gamma_{ij}^k = {\textstyle \frac{1}{2}} g^{k\ell}\big(g_{i\ell, j} +
    g_{j\ell, i} - g_{i j, \ell}\big)
    \end{equation}
  where \(g_{ij, k}= \frac{\partial}{\partial x^k}g_{ij}\).  
It is worth noting that
  \begin{align*}
    0 &= \nabla_{\partial_{x^k}} (\delta_j^i)
    \\
    &= \nabla_{\partial_{x^k}} (dx^i(\partial_{x^j}))
    \\
    &= (\nabla_{\partial_{x^k}} dx^i)(\partial_{x^j}) + dx^i
    (\nabla_{\partial_{x^k}}\partial_{x^j})
    \\
    &= (\nabla_{\partial_{x^k}} dx^i)(\partial_{x^j}) + dx^i
    (\Gamma_{kj}^\ell \partial_{x^\ell})
    \\
    &= (\nabla_{\partial_{x^k}} dx^i)(\partial_{x^j}) + \Gamma_{kj}^i 
    \end{align*}
  from which we conclude that 
  \begin{equation*}
    \nabla_{\partial_{x^k}}dx^i=-\Gamma_{ki}^\ell dx^\ell.
    \end{equation*}

Recall that given a point \(p\in M\) and a (sufficiently small) vector
  \(Z\in T_p M\), there exists unique geodesic emanating from \(p\) with
  initial velocity \(Z\).
That is to say, there exists a unique solution \(\gamma:[0, 1]\to M\) to
  the initial value problem
  \begin{equation}\label{EQ_geodesic}
    \nabla_{\dot{\gamma}(t)} \dot{\gamma}(t) =
    0\qquad\text{and}\qquad\gamma(0)=p,\quad\gamma'(0)=Z.
    \end{equation}
The exponential map associated to \(g\), denoted \(\exp_p^g:T_p M \to M\),
  is defined by \(\exp_p^g(Z)=\gamma(1)\) where \(\gamma\) solves the
  differential equation (\ref{EQ_geodesic}).
Furthermore, given an orthonormal basis \((Z_1, \ldots, Z_n)\) of
  \(T_p M\), we define normal geodesic polar coordinates \(\mathbf{x}=(x^1,
  \ldots, x^n)\) near \(p\) by the following
  \begin{equation*}
    x^i(q)=\big\langle (\exp_p^g)^{-1}(q), Z_i \big\rangle_g.
    \end{equation*} 
Recall that in these normal geodesic coordinates, we have the following
  \begin{equation}\label{EQ_christoffel_prop_1}
    g_{ij}(p) = \delta_{ij}\qquad\text{and}\qquad \Gamma_{ij}^k(p) = 0.
    \end{equation} 
Furthermore, we also have
  \begin{equation}\label{EQ_christoffel_prop_2}
    \frac{\partial}{\partial x^k} g_{ij}(p) = 0.
    \end{equation}
To see that equation (\ref{EQ_christoffel_prop_2}) holds, we simply compute
  \begin{align*}
    \frac{\partial}{\partial x^k} g_{ij} &= \nabla_{\partial_{x^k}}
    \langle \partial_{x^i}, \partial_{x^j}\rangle_g
    \\
    & = \langle \nabla_{\partial_{x^k}} \partial_{x^i},
    \partial_{x^j}\rangle_g +\langle \partial_{x^i},
    \nabla_{\partial_{x^k}} \partial_{x^j}\rangle_g
    \\
    & = \langle \Gamma_{ki}^\ell\partial_{x^\ell}, \partial_{x^j}\rangle_g
    +\langle \partial_{x^i}, \Gamma_{kj}^\ell\partial_{x^\ell}\rangle_g
    \end{align*} 
  evaluating at \(p\) and employing  equation
  (\ref{EQ_christoffel_prop_1}) then establishes equation
  (\ref{EQ_christoffel_prop_2}).

We now consider \(\mathbb{R}\times M\) with \((M, g)\) as above, a
  coordinate \(a\) on \(\mathbb{R}\), and the metric \(\bar{g}:=da\otimes da
  + g\).
Fix a point \((p_0, p_1)\in \mathbb{R}\times M\) and fix \((Z_1, Z_2,
  \ldots, Z_n)\) an orthonormal basis of \(T_{p_1} M\), and let \((x^1,
  \ldots, x^n)\) denote the associated normal geodesic coordinates defined
  near \(p_1\).
We extend these to coordinates \((x^0, x^1, \ldots, x^n)\) by taking
  \(x^0=a\). 
We now claim the following.

\begin{lemma}[properties of $g$ and $\Gamma$]
  \label{LEM_g_and_Gamma_properties}
  \hfill\\
In the coordinates \((x^0, \ldots, x^n)\) established above, the following
  hold.
\begin{equation}\label{EQ_symp_normal_coords}
  \bar{g}_{ij}(p_1, p_2) = \delta_{ij}\qquad\text{and}\qquad
  \overline{\Gamma}_{ij}^k(p_0, p_1)=0,
  \end{equation}
  where \(\overline{\Gamma}_{ij}^k\) are the Christoffel symbols
  associated to \(\bar{g}\) in the coordinates \((x^0, \ldots, x^n)\).
Moreover, in these local coordinates, we have 
  \begin{equation}\label{EQ_translation}
  \frac{\partial}{\partial x^0} \bar{g}_{ij} = 0 =
  \frac{\partial}{\partial x^0} \overline{\Gamma}_{ij}^k.
  \end{equation}
\end{lemma}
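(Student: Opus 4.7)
The plan is to unpack the product structure of $\bar{g}=da\otimes da+g$ in the chosen coordinates and then reduce everything to the two facts already recalled for $(M,g)$, namely equation (\ref{EQ_christoffel_prop_1}) and equation (\ref{EQ_christoffel_prop_2}), together with the tautological observation that $g$ depends only on the $M$-variables.

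First I would write out the components of $\bar{g}$ in the coordinates $(x^0,x^1,\ldots,x^n)$. Since $x^0=a$ parametrizes the $\mathbb{R}$-factor while $(x^1,\ldots,x^n)$ are pulled back from $M$, we obtain
\begin{equation*}
\bar{g}_{00}\equiv 1,\qquad \bar{g}_{0k}=\bar{g}_{k0}\equiv 0\text{ for }k\geq 1,\qquad \bar{g}_{ij}=g_{ij}\text{ for }i,j\geq 1.
\end{equation*}
Evaluating at $(p_0,p_1)$ and invoking the first half of (\ref{EQ_christoffel_prop_1}) for $g$ yields $\bar{g}_{ij}(p_0,p_1)=\delta_{ij}$ for all $0\leq i,j\leq n$. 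This is the first half of (\ref{EQ_symp_normal_coords}).

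Next I would address the Christoffel symbols via the formula (\ref{EQ_christoffel}) applied to $\bar{g}$. The key point is that every derivative $\bar{g}_{ij,k}$ that appears vanishes at $(p_0,p_1)$: if any of the indices $i,j,k$ equals $0$ then the relevant component is either the constant $1$, the constant $0$, or equals $g_{ij}$ which is independent of $x^0$; if all of $i,j,k$ are at least $1$ then $\bar{g}_{ij,k}(p_0,p_1)=g_{ij,k}(p_1)=0$ by (\ref{EQ_christoffel_prop_2}). Since $\bar{g}^{ij}$ is bounded, the formula (\ref{EQ_christoffel}) gives $\overline{\Gamma}_{ij}^k(p_0,p_1)=0$, completing (\ref{EQ_symp_normal_coords}).

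Finally, for (\ref{EQ_translation}), I would simply note that the translation invariance is global on the coordinate patch rather than pointwise. Each component $\bar{g}_{ij}$ is, as displayed above, either constant or equal to some $g_{ij}$ depending only on $(x^1,\ldots,x^n)$; therefore $\partial\bar{g}_{ij}/\partial x^0\equiv 0$. The Christoffel symbols are algebraic expressions in $\bar{g}_{ij}$, $\bar{g}^{ij}$, and their first derivatives, so they too are independent of $x^0$, giving $\partial\overline{\Gamma}_{ij}^k/\partial x^0\equiv 0$. There is no real obstacle here; the lemma is essentially a bookkeeping exercise showing that normal geodesic coordinates on $M$, augmented by the translation coordinate $a$, are themselves normal geodesic coordinates for the product metric at every point of the fiber over $p_1$.
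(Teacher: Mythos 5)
Your proof is correct and follows essentially the same route as the paper's: express the product metric $\bar{g}=da\otimes da+g$ in the coordinates $(x^0,\ldots,x^n)$, and reduce to equations (\ref{EQ_christoffel_prop_1}) and (\ref{EQ_christoffel_prop_2}) for $(M,g)$ via the Christoffel formula (\ref{EQ_christoffel}). The one organizational difference is that you verify the uniform statement $\bar{g}_{ij,k}(p_0,p_1)=0$ for \emph{all} index triples (noting that components involving the index $0$ are globally constant or $x^0$-independent, and the rest vanish at $p_1$ by (\ref{EQ_christoffel_prop_2})) and then kill every Christoffel symbol at once, whereas the paper splits into the cases $0\in\{i,j,k\}$ and $0\notin\{i,j,k\}$ and analyzes the sum over $\ell$ separately; your version is a touch cleaner, but there is no substantive difference in method or in what the argument relies on.
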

%
\begin{proof}
For ease of notation, we write \(\bar{p}=(p^0, p^1)\).  
Begin by observing that whenever \(0\notin \{i, j, k\}\) we have
  \(\bar{g}_{ij}(\bar{p})=\delta_{ij}\) and
  \(\overline{\Gamma}_{ij}^k(\bar{p})=0\).
This follows from the fact that the \((x^1, \ldots, x^n)\) for normal
  geodesic coordinates associated to \(g\).
Next observe that by definition of \(\bar{g}\) we have \(\bar{g}_{00}=1\),
  and because \(\bar{g}\) is a product metric, it follows that \(\bar{g}_{0
  j}=0=\bar{g}_{j 0}\) for \(j\in \{1, \ldots, n\}\).  
These results establish the first equality in equation
  (\ref{EQ_symp_normal_coords}).

We next aim to prove that \(\overline{\Gamma}_{ij}^k(\bar{p})=0\) when
  \(0\in \{i, j, k\}\).
To that end, first observe that \(\bar{g}^{i0} =
  \bar{g}^{0i}=\delta_{i0}\), \(\bar{g}^{ij} = g^{ij}\) whenever \(0\notin
  \{i, j\}\), and all \(\bar{g}_{ij}\) are independent of \(x^0\).
This latter fact together with the formula for the Christoffel symbols
  given in equation (\ref{EQ_christoffel}) then guarantee equation
  (\ref{EQ_translation}).
Furthermore, the term \((g_{j\ell, i} + g_{i\ell, j} - g_{ij, \ell})\) in
  equation (\ref{EQ_christoffel}) vanishes whenever \(0\in \{i, j, \ell\}\).
If \(0\notin \{i, j, k\}\), then in particular \(k\neq 0\) so that
  \begin{equation*}
    \overline{\Gamma}_{ij}^k = \sum_{\ell=0}^n{\textstyle \frac{1}{2}}
    g^{k\ell}\big(g_{i\ell, j} + g_{j\ell, i} - g_{i j, \ell}\big) =
    \sum_{\ell=1}^n{\textstyle \frac{1}{2}} g^{k\ell}\big(g_{i\ell, j} +
    g_{j\ell, i} - g_{i j, \ell}\big), 
    \end{equation*}
  and hence for \(0\notin \{i, j, k\}\) we have
  \(\overline{\Gamma}_{ij}^k(p_0, p_1)= \Gamma_{ij}^k(p_1) = 0\).
We conclude that for arbitrary \((i, j, k)\) we have
  \(\overline{\Gamma}_{ij}^k(p_0, p_1)=0\). 
This completes the proof of Lemma \ref{LEM_g_and_Gamma_properties}.
\end{proof} 

\begin{corollary}[properties of $g$ and $\Gamma$]
  \label{COR_g_and_Gamma_properties}
  \hfill\\
In the coordinates as above, we have
  \begin{equation*}
    \overline{\nabla}_{\partial_{x^i}} \partial_{x^j} \big|_{(p_0, p_1)} =
    0\qquad\text{and}\qquad \overline{\nabla}_{\partial{x^i}}
    dx^j\big|_{(p_0, p_1)} = 0.
    \end{equation*}
\end{corollary}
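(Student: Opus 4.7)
The plan is to derive both vanishing statements as direct consequences of Lemma \ref{LEM_g_and_Gamma_properties}, which established that all Christoffel symbols $\overline{\Gamma}_{ij}^k$ associated to the metric $\bar{g}$ in the coordinates $(x^0, x^1, \ldots, x^n)$ vanish at the base point $(p_0, p_1)$. The essential point is that covariant differentiation of coordinate vector fields and coordinate one-forms is expressed pointwise purely in terms of the Christoffel symbols, with no contribution from derivatives of the coefficients (which are constants $1$ and $0$ in the coordinate frame/coframe).

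For the first identity, I would simply invoke the coordinate formula
\begin{equation*}
\overline{\nabla}_{\partial_{x^i}} \partial_{x^j} = \overline{\Gamma}_{ij}^k \partial_{x^k},
\end{equation*}
which is immediate from the definition of Christoffel symbols. Evaluating at $(p_0, p_1)$ and applying the second equality in (\ref{EQ_symp_normal_coords}) from Lemma \ref{LEM_g_and_Gamma_properties} yields
\begin{equation*}
\overline{\nabla}_{\partial_{x^i}} \partial_{x^j}\big|_{(p_0, p_1)} = \overline{\Gamma}_{ij}^k(p_0, p_1)\,\partial_{x^k} = 0.
\end{equation*}

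For the second identity, I would use the duality calculation already carried out in Section \ref{SEC_riemannian}: from $0 = \overline{\nabla}_{\partial_{x^i}}(dx^j(\partial_{x^k})) = (\overline{\nabla}_{\partial_{x^i}} dx^j)(\partial_{x^k}) + dx^j(\overline{\nabla}_{\partial_{x^i}} \partial_{x^k})$, one reads off
\begin{equation*}
\overline{\nabla}_{\partial_{x^i}} dx^j = -\overline{\Gamma}_{ik}^j\, dx^k.
\end{equation*}
Again, evaluating at $(p_0, p_1)$ and using that all $\overline{\Gamma}_{ik}^j$ vanish there gives $\overline{\nabla}_{\partial_{x^i}} dx^j\big|_{(p_0, p_1)} = 0$.

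There is no real obstacle here: the corollary is essentially a restatement of the vanishing of Christoffel symbols in the preferred coordinates of Lemma \ref{LEM_g_and_Gamma_properties}, repackaged in the intrinsic language of covariant derivatives. The only thing to be careful about is emphasizing that the conclusion is a pointwise statement at $(p_0, p_1)$ only — the covariant derivatives need not vanish elsewhere — and that the identity for one-forms follows formally from the identity for vector fields via compatibility of $\overline{\nabla}$ with the natural pairing between $T(\mathbb{R} \times M)$ and $T^*(\mathbb{R} \times M)$.
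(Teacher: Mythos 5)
Your proof is correct and follows essentially the same approach as the paper's: both derive the first identity directly from the coordinate formula $\overline{\nabla}_{\partial_{x^i}} \partial_{x^j} = \overline{\Gamma}_{ij}^k \partial_{x^k}$ and the vanishing of $\overline{\Gamma}_{ij}^k$ at $(p_0,p_1)$ from Lemma~\ref{LEM_g_and_Gamma_properties}, and both obtain the second by covariantly differentiating the pairing $\delta_j^i = dx^i(\partial_{x^j})$. Your version is slightly more explicit in writing out $\overline{\nabla}_{\partial_{x^i}} dx^j = -\overline{\Gamma}_{ik}^j\, dx^k$, but this is cosmetic; the argument is identical.
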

%
\begin{proof}
The first equality follows from the fact that in our coordinate system,
  \(\overline{\nabla}_{\partial_{x^i}} \partial_{x^j} =
  \overline{\Gamma}_{ij}^k \partial_{x^k}\) and the
  \(\overline{\Gamma}_{ij}^k\) vanish at \((p_0, p_1)\) by Lemma
  \ref{LEM_g_and_Gamma_properties} above.
To prove the second equality, we covariantly differentiate the equality
  \(\delta_j^i=dx^i(\partial_{x^j})\) to find
  \begin{align*}
    0&= (\overline{\nabla}_{\partial_{x^k}} dx^i) (\partial_{x^j}) +
    dx^i(\overline{\nabla}_{\partial_{x^k}} \partial_{x^j}).
    \end{align*}
By our previous results, the second term vanishes when evaluated at
  \((p_0, p_1)\).
Since the above equality holds for each \(k\in \{0, \ldots , n-1\}\), and
    \(\{\partial_{x^k}\}_{k\in \{0, \ldots, n-1\}}\) forms a basis of
  \(T_{(p_0, p_1)} (\mathbb{R}\times M)\), we see that indeed
  \(\overline{\nabla}_{\partial_{x^i}}  dx^j\big|_{(p_0, p_1)}=0\) as
  claimed.  
\end{proof}

\begin{corollary}[$\partial_a$ and $da$ are parallel]
  \label{COR_da_is_parallel}
  \hfill\\
Let \((M, g)\) be a Riemannian manifold, and consider the manifold
  \(\mathbb{R}\times M\) equipped with the Riemannian metric \(\bar{g} =
  da \otimes da + g\) where \(a\) is the coordinate on \(\mathbb{R}\).
Then for the Levi-Civita connection \(\overline{\nabla}\) associated to
  \(\bar{g}\) on \(\mathbb{R}\times M\), we have
  \begin{align*}                                                          
    \overline{\nabla} da =0 \qquad\text{and}\qquad \overline{\nabla}
    \partial_a = 0.
    \end{align*}
\end{corollary}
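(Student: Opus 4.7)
The plan is to deduce this corollary as essentially an immediate consequence of Corollary~\ref{COR_g_and_Gamma_properties}, by observing that $\partial_a$ and $da$ are precisely the zeroth coordinate vector field and differential of the zeroth coordinate function in the adapted chart built in Lemma~\ref{LEM_g_and_Gamma_properties}. Since $\overline{\nabla} \partial_a$ and $\overline{\nabla} da$ are tensor fields on $\mathbb{R} \times M$, it suffices to verify the claimed identities pointwise, so I would fix an arbitrary point $(p_0, p_1) \in \mathbb{R} \times M$ and work in coordinates centered there.

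More concretely, I would choose an $\bar{g}$-orthonormal basis $(Z_1, \ldots, Z_n)$ of $T_{p_1} M$, form the associated $g$-normal geodesic coordinates $(x^1, \ldots, x^n)$ on $M$ near $p_1$, and set $x^0 = a$, producing the coordinate system $(x^0, x^1, \ldots, x^n)$ used in Lemma~\ref{LEM_g_and_Gamma_properties}. By construction $\partial_{x^0} = \partial_a$ and $dx^0 = da$. Corollary~\ref{COR_g_and_Gamma_properties} then gives
\begin{equation*}
\overline{\nabla}_{\partial_{x^i}} \partial_{x^0}\big|_{(p_0,p_1)} = 0
\qquad\text{and}\qquad
\overline{\nabla}_{\partial_{x^i}} dx^0\big|_{(p_0,p_1)} = 0
\end{equation*}
for every $i \in \{0, 1, \ldots, n\}$. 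Since $(\partial_{x^0}, \ldots, \partial_{x^n})$ is a basis of $T_{(p_0,p_1)}(\mathbb{R}\times M)$ and the connection is tensorial in its vector field argument, this implies $\overline{\nabla}_Y \partial_a|_{(p_0,p_1)} = 0$ and $\overline{\nabla}_Y da|_{(p_0,p_1)} = 0$ for every $Y \in T_{(p_0,p_1)}(\mathbb{R}\times M)$.

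Because $(p_0, p_1)$ was arbitrary, the tensor fields $\overline{\nabla}\partial_a$ and $\overline{\nabla} da$ vanish identically on $\mathbb{R}\times M$, which is the assertion. There is no real obstacle here, since the substantive content is in the vanishing of the Christoffel symbols involving the index $0$ already extracted in Lemma~\ref{LEM_g_and_Gamma_properties} from the product structure $\bar{g} = da \otimes da + g$ together with the independence of $\bar{g}_{ij}$ from $x^0$ recorded in equation~\eqref{EQ_translation}; the only step worth being careful about is the pointwise-to-global reduction and the identification $\partial_a = \partial_{x^0}$, $da = dx^0$ in the chosen chart.
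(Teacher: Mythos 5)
Your argument is correct, but it takes a genuinely different route from the paper's. The paper does not cite Corollary~\ref{COR_g_and_Gamma_properties} at all; it recomputes the relevant Christoffel symbols $\overline{\Gamma}_{i0}^k$ directly from equation~(\ref{EQ_christoffel}) and shows they vanish \emph{identically} on the adapted chart, using only the $\mathbb{R}$-invariance of $\bar g$ (so $\partial_{x^0}\bar g_{ij}=0$, equation~(\ref{EQ_translation})) and the product structure (so $\bar g_{i0}=\delta_{i0}$). Neither of those two facts depends on the $M$-coordinates being geodesic normal, so the computation gives $\overline{\Gamma}_{i0}^k\equiv 0$ on the whole chart and no pointwise-to-global reduction is needed. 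Your proof instead invokes the stronger but strictly pointwise fact from Corollary~\ref{COR_g_and_Gamma_properties} (all $\overline{\Gamma}_{ij}^k$ vanish at the chart's center), and then globalizes by observing that $\overline{\nabla}\partial_a$ and $\overline{\nabla}da$ are tensor fields and that the center of the chart can be placed at an arbitrary point; the identifications $\partial_a=\partial_{x^0}$ and $da=dx^0$ hold in the chart by construction, so the deduction is sound. The paper's route isolates exactly which Christoffel symbols vanish and why (and that they vanish everywhere, not just at a designated point), while yours is shorter because it reuses the preceding corollary rather than redoing the Christoffel computation. Both are correct; the tensoriality reduction is the only step in yours that requires care, and you have stated it correctly.
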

%
\begin{proof}
Let \((x_0, x_1, \ldots, x_n)\) be coordinates as above and let \(V = v^i
  \partial_{x^i}\) be an arbitrary smooth vector field.
Then
  \begin{align*}                                                          
    \overline{\nabla}_V \partial_a = v^i \overline{\nabla}_{\partial_x^i}
    \partial_{x^0}
    = v^i \overline{\Gamma}_{i0}^k \partial_{x^k},
  \end{align*}
where 
  \begin{align*}                                                          
    \Gamma_{ij}^k = {\textstyle \frac{1}{2}} g^{k\ell}\big(g_{i\ell, j} +
    g_{j\ell, i} - g_{i j, \ell}\big),
    \end{align*}
  and more importantly
  \begin{align*}                                                          
    \Gamma_{i0}^k &= {\textstyle \frac{1}{2}} g^{k\ell}\big(g_{i\ell, 0} +
    g_{0\ell, i} - g_{i 0, \ell}\big),
    \\
    &={\textstyle \frac{1}{2}} g^{k\ell}\big(g_{0\ell, i} -
    g_{i 0, \ell}\big),
    \\
    &={\textstyle \frac{1}{2}} g^{k\ell}\big(g_{0\ell, i} 
    \big),
    \\
    &=0,
    \end{align*}
  where to obtain the second inequality we note that the metric
  \(\bar{g}\) is \(\mathbb{R}\)-invariant and hence
  \(\frac{\partial}{\partial x_0} g_{ij} =0 \); to obtain the third
  equality we have used that \(\bar{g}_{i0} = \delta_{i0}\); the fourth
  equality follows similarly.
This establishes that \(\overline{\nabla}\partial_a = 0\).
As noted above, we also have
  \begin{equation*}
    \nabla_{\partial_{x^k}}dx^i=-\Gamma_{ki}^\ell dx^\ell,
    \end{equation*}
  which then establishes that \(\overline{\nabla}da =0\).
\end{proof}
%

\begin{definition}[second fundamental form $B$]
  \label{DEF_second_fundamental_form} 
  \hfill\\
Let \(u:S\to M\) denote an immersion into a Riemannian manifold \((M,
  g)\).
Then the \emph{second fundamental form} associated to \(u\) and \(g\) is
  denoted \(B_u \in \Gamma(u^*(T^*M\otimes T^*M \otimes TM))\), and is
  defined by
  \begin{equation*}
    B_u(X, Y):= (\nabla_X Y)^\bot
    \end{equation*} 
  where \(X, Y\in \Gamma(u^*(T M))\), \(\nabla\) denotes covariant
  differentiation with respect to the Levi-Civita connection associated to
  \(g\), and \(Z\mapsto Z^\bot\) denotes orthogonal projection to the
  normal bundle of \(u(S)\).
\end{definition}
%

\subsection{Carefully Formulating the Co-Area Formula}\label{SEC_co-area}

Given an oriented \(m\)-dimensional Riemannian manifold \((M, g)\), there
  exists a canonical volume form given by \(*(1)\), where \(*\) is the Hodge
  \(*\)-operator; we shall denote this form \(d\mu_g^m\).
Recall that in local coordinates \((x_1, \ldots, x_m)\) with
  \(\partial_{x_1}, \ldots, \partial_{x_m}\) a positive basis, we have
  \begin{equation}\label{EQ_hausdorff_measure}
    d\mu_g^m = \sqrt{{\rm det} (g_{ij})}\, dx_1\wedge \cdots\wedge dx_m;
    \end{equation}
  here \(g=g_{ij}dx_i\otimes dx_j\). 
Although \(d\mu_g^m\) is a volume form, we may regard it as a measure via
  integration: \(d\mu_g^m(\mathcal{O}):=\int_{\mathcal{O}} d\mu_g^m\).
We will abuse notation by letting \(d\mu_g^m\) denote both the measure and
  volume form referring to each as needed.

Before continuing on to establish some useful results, we make two last
  observations.
First, if \(S\) is oriented and \(u:S\to M\) is an immersion, then \((S,
  u^*g)\) is an oriented Riemannian manifold.
Second, if \((S, g)\) is an oriented Riemannian manifold of dimension
  \(k\), and \(\omega\) is a differentiable \(k\)-form on \(S\), then we
  have the following.
\begin{equation*}
  \int_S \omega = \int_S \omega(e_1, \ldots, e_k) d\mu_{u^*g}^k
  \end{equation*} 
  where \((e_1, \ldots, e_k)\) forms a positive \(u^*g\)-orthonormal frame.  
This is straightforward to verify. 

\begin{proposition}[The co-area formula]\label{PROP_co-area}\hfill\\
Let \((S, g)\) be a \(\mathcal{C}^1\) oriented Riemannian manifold of
  dimension two; we allow that \(S\) need not be complete\footnote{That is,
  there may exist Cauchy sequences, with respect to \(g\), which do not
  converge in \(S\).}.
Suppose that \(\beta:S\to [a, b]\subset\mathbb{R}\) is a \(\mathcal{C}^1\)
  function without critical points.
Let \(f:S\to [0, \infty)\) be a measurable function  with respect to
  \(d\mu_g^2\).  
Then
  \begin{equation}\label{EQ_coarea0}
  \int_S f\|\nabla \beta\|_g \, d\mu_g^2 = \int_a^b \Big(
  \int_{\beta^{-1}(t)} f \, d\mu_g^1\Big) dt
  \end{equation}
  where \(\nabla\beta\) is the gradient of \(\beta\) computed with respect
  to the metric \(g\).
\end{proposition}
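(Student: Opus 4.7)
The plan is to prove the formula in two stages: first locally, in coordinates adapted to $\beta$, and then globally by a partition-of-unity argument that assembles the local result. Since $f\geq 0$ and both sides of (\ref{EQ_coarea0}) are countably additive with respect to $f$ (by monotone convergence for the left side, and by summing the integrands and applying monotone convergence inside the outer integral on the right), it suffices to prove the identity for each piece of a partition of unity and then sum.

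For the local step, fix $p\in S$. Because $d\beta(p)\neq 0$, the implicit function theorem produces an open neighborhood $U$ of $p$ and a $\mathcal{C}^1$ coordinate chart $(s,t)\colon U\to V\subset\mathbb{R}^2$ with $t=\beta|_U$. In these coordinates write $g=g_{ss}\,ds\otimes ds+g_{st}\,(ds\otimes dt+dt\otimes ds)+g_{tt}\,dt\otimes dt$, and set $G:=\det(g_{ij})>0$. Since $d\beta=dt$, a direct computation with the inverse metric gives $\|\nabla\beta\|_g^2=g^{tt}=g_{ss}/G$, while the $2$-dimensional Hausdorff measure is $d\mu_g^2=\sqrt{G}\,ds\wedge dt$. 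Multiplying,
\begin{equation*}
\|\nabla\beta\|_g\,d\mu_g^2=\sqrt{g_{ss}/G}\cdot\sqrt{G}\,ds\wedge dt=\sqrt{g_{ss}}\,ds\wedge dt.
\end{equation*}
On the other hand, each level slice $\{t=t_0\}\cap U$ inherits the Riemannian metric $g_{ss}(\,\cdot\,,t_0)\,ds^2$, so its induced $1$-dimensional Hausdorff measure is $d\mu_g^1=\sqrt{g_{ss}(\,\cdot\,,t_0)}\,ds$. Consequently, for a non-negative measurable $f$ supported in $U$, Fubini's theorem on $V$ yields
\begin{equation*}
\int_U f\|\nabla\beta\|_g\,d\mu_g^2=\iint_V f(s,t)\sqrt{g_{ss}(s,t)}\,ds\,dt=\int\!\!\Big(\int_{\beta^{-1}(t)}f\,d\mu_g^1\Big)dt,
\end{equation*}
which is the local version of (\ref{EQ_coarea0}). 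An elementary consequence of this computation is that $t\mapsto\int_{\beta^{-1}(t)\cap U}f\,d\mu_g^1$ is Lebesgue measurable, since it equals an iterated integral of the measurable function $f(s,t)\sqrt{g_{ss}(s,t)}$ on $V$.

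For the global step, choose a countable locally finite open cover $\{U_k\}_{k\in\mathbb{N}}$ of $S$ by charts of the above type (possible since $S$ is second countable and paracompact), and fix a $\mathcal{C}^1$ partition of unity $\{\rho_k\}$ subordinate to $\{U_k\}$. Apply the local identity to each non-negative measurable function $\rho_k f$, sum over $k$, and interchange sum and integral by monotone convergence on each side. Measurability of $t\mapsto\int_{\beta^{-1}(t)}f\,d\mu_g^1$ follows by writing this function as the countable sum of the measurable functions $t\mapsto\int_{\beta^{-1}(t)}\rho_k f\,d\mu_g^1$, and the resulting identity is precisely (\ref{EQ_coarea0}).

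The routine-but-not-trivial point is the bookkeeping in the local computation: one must verify that the formula $\|\nabla\beta\|_g\,d\mu_g^2=d\mu_g^1\wedge dt$ is independent of the choice of transverse coordinate $s$, which it is because both sides are coordinate-free and because the identity $g^{tt}\cdot G=g_{ss}$ is just Cramer's rule for the $(2,2)$ cofactor. The fact that $S$ need not be complete poses no obstacle: we never flow along $\nabla\beta$, so the argument is purely local on charts and the partition-of-unity step is insensitive to completeness. Hence the main and only real task is the careful coordinate computation performed above; everything else is standard measure-theoretic assembly.
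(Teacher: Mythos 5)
Your proof is correct and follows essentially the same route as the paper: local coordinates in which $\beta$ is one of the coordinates, the cofactor identity $\|\nabla\beta\|_g^2=g_{ss}/\det(g_{ij})$ giving $\|\nabla\beta\|_g\,d\mu_g^2=\sqrt{g_{ss}}\,ds\wedge dt$ together with $d\mu_g^1=\sqrt{g_{ss}}\,ds$ on level sets, Tonelli on the chart, and then a partition-of-unity plus monotone-convergence assembly. The only variation is in how the adapted chart is produced — you invoke the inverse function theorem to take $t=\beta$ directly, while the paper builds the chart by flowing along $\nabla\beta/\|\nabla\beta\|_g^2$ and the orthogonal unit field — but the local computation and the global argument are the same.
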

%
\begin{proof}
We begin by defining two vector fields \(\vec{x}=(\nabla \beta)/\|\nabla
  \beta\|^2\) and \(\vec{y}\) which is uniquely defined by the three
  conditions: \(\|\vec{y}\|_g=1\), \(\langle \vec{x}, \vec{y}\rangle_g =
  0\), and \(\{\vec{x}, \vec{y}\}\) is a positive basis.
The flow of \(\vec{y}\) preserves \(\beta\) since it is orthogonal to
  \(\nabla \beta\), and \(d\beta (\vec{x})=1\).
Let \(\varphi_{\vec{y}}^t\) and \(\varphi_{\vec{x}}^s\) respectively
  denote the time \(t\) flow of \(\vec{y}\) and the time \(s\) flow of
  \(\vec{x}\).
For each \(\zeta\in S\), we then define the map
  \(\Phi_\zeta:\tilde{\mathcal{O}}_\zeta:= (-\epsilon_\zeta,
  \epsilon_\zeta)\times (-\epsilon_\zeta, \epsilon_\zeta)\to S\) to be
  \(\Phi_\zeta(s, t) = \varphi_{\vec{x}}^s(\varphi_{\vec{y}}^t(\zeta))\),
  with \(\epsilon_\zeta>0\) chosen sufficiently small so that \(\Phi_\zeta\)
  is a diffeomorphism with its image, which we denote \(\mathcal{O}_\zeta\).
Observe that  \(\beta\circ \Phi_\zeta^{-1}(x_1, x_2) = x_1\) by
  construction.

We now define functions \(\tilde{f}:= f\circ \Phi_\zeta\) and
  \(\tilde{\beta}:=\beta\circ \Phi_\zeta\), and the metric \(\tilde{g}:=
  \Phi_\zeta^* g\).
In these local coordinates, we write \(\tilde{g}=\tilde{g}_{ij}dx_i\otimes
  dx_j\), and then
  \begin{equation*}
    d\mu_{\tilde{g}}^2 = \sqrt{{\rm det}\, (\tilde{g}_{ij})}\, dx_1\wedge
    dx_2 = \sqrt{{\rm det}\, (\tilde{g}_{ij})}\,d\tilde{\beta} \wedge dx_2
    .
    \end{equation*}
Similarly, the volume form on the level sets of \(\tilde{\beta}\) are then
  given as
  \begin{equation*}
    d\mu_{\tilde{g}}^1 = \sqrt{\tilde{g}_{22}}\, dx_2
    \end{equation*}
Making use of the fact that in our special case
  \(\tilde{\mathcal{O}}_\zeta\) is a product space, we may employ Tonelli's
  theorem to obtain
  \begin{align*}
    \int_{\tilde{\mathcal{O}_\zeta}} \tilde{f} \|\nabla
    \tilde{\beta}\|_{\tilde{g}} d\mu_{\tilde{g}}^2 &=
    \int_{\tilde{\mathcal{O}_\zeta}} \tilde{f} \|\nabla
    \tilde{\beta}\|_{\tilde{g}} \sqrt{{\rm det}\, (\tilde{g}_{ij})}\,
    dx_1\wedge dx_2
    \\
    &= \int_{(-a_\zeta, b_\zeta)\times (-\epsilon_\zeta, \delta_\zeta)}
    \tilde{f} \|\nabla \tilde{\beta}\|_{\tilde{g}} \sqrt{{\rm det}\,
    (\tilde{g}_{ij})}\, dx_1 dx_2
    \\
    &= \int_{-a_\zeta}^{b_\zeta} \Big(
    \int_{-\epsilon_\zeta}^{\delta_\zeta} \tilde{f} \|\nabla
    \tilde{\beta}\|_{\tilde{g}} \sqrt{{\rm det}\, (\tilde{g}_{ij})}\,
    dx_2\Big) dx_1
    \\
    &= \int_{-a_\zeta}^{b_\zeta} \Big( \int_{\tilde{\beta}^{-1}(t)}
    \frac{\tilde{f} \|\nabla \tilde{\beta}\|_{\tilde{g}} \sqrt{{\rm det}\,
    (\tilde{g}_{ij})}}{\sqrt{\tilde{g}_{22}}}\, d\mu_{\tilde{g}}^1 \Big)
    dt
    \end{align*}

We now claim the following. 
  \begin{equation}\label{EQ_norm_db}
    \|\nabla \tilde{\beta}\|_{\tilde{g}} =\frac{\sqrt{\tilde{g}_{22}}}
    {\sqrt{{\rm det}\, (\tilde{g}_{ij})}}
    \end{equation}
To see this is true we first note that it is sufficient to work pointwise. 
Next, we let  \(v\) be a \(\tilde{g}\)-unit vector orthogonal to the level
  sets of \(\tilde{\beta}\) (i.e. orthogonal to the sets \(\{x_1=const\}\)).
It is elementary to show that \(v\) can be written as 
  \begin{equation*}
    v= \big(\tilde{g}_{22} \partial_{x_1} - \tilde{g}_{12}
    \partial_{x_2}\big)/\big(\sqrt{\tilde{g}_{22}}\sqrt{{\rm det}\,
    (\tilde{g}_{ij})}\big).
    \end{equation*}
We then compute
  \begin{equation*}
    \|\nabla \tilde{\beta}\|_{\tilde{g}}^2 = \big(d\tilde{\beta}(v)\big)^2
    = \big( dx_1(v)\big)^2
    =\frac{\tilde{g}_{22}^2}{\tilde{g}_{22} {\rm det}\, (\tilde{g}_{ij})} 
    =\frac{\tilde{g}_{22}}{ {\rm det}\, (\tilde{g}_{ij})} 
    \end{equation*}
  and equation (\ref{EQ_norm_db}) is established. Consequently we have
  established
  \begin{align}\label{EQ_coarea_basic}
    \int_{\mathcal{O}_\zeta} f \|\nabla \beta\|_g d\mu_g^2 &=
    \int_{\tilde{\mathcal{O}}_\zeta} \tilde{f} \|\nabla
    \tilde{\beta}\|_{\tilde{g}} d\mu_{\tilde{g}}^2
    \\
    &= \int_{-a_\zeta}^{b_\zeta} \Big( \int_{\tilde{\beta}^{-1}(t)}
    \tilde{f} \, d\mu_{\tilde{g}}^1 \Big) dt\notag
    \\
    &=\int_{\beta(\zeta)-a_\zeta}^{\beta(\zeta)+b_\zeta} \Big(
    \int_{\mathcal{O}_\zeta\cap\beta^{-1}(t)} f \, d\mu_g^1 \Big) dt \notag.
    \end{align}

We now prove the more general case by a partition of unity argument.  
First, for each point \(\zeta\in S\), we let \(\mathcal{O}_\zeta\) denote
  the open set containing \(\zeta\) constructed above, and we let
  \(\Phi_\zeta:\tilde{\mathcal{O}}_\zeta \to \mathcal{O}_\zeta\) denote the
  associated diffeomorphism.
These diffeomorphisms show \(S\) is locally compact. 
Since \(S\) is a manifold, it is second countable and Hausdorff; together
  with being locally compact this guarantees \(S\) is paracompact and
  Hausdorff, and hence the open cover \(\{\mathcal{O}_\zeta\}_{\zeta\in S}\)
  admits a subordinate partition of unity \(\{\rho_\alpha:S \to [0,
  1]\}_{\alpha\in I}\).
That is, there is an index set \(I\) and an open cover
  \(\{\mathcal{U}_\alpha\}_{\alpha\in I}\) of \(S\), and there exist
  functions \(\{\rho_\alpha\}_{\alpha\in I}\) with the property that
  \begin{itemize}
    \item 
    \({\rm supp}( \rho_\alpha)\subset \mathcal{U}_\alpha \subset
    \mathcal{O}_{\zeta_\alpha}\),
    \item 
    for each \(\zeta\in S\) we have \(\#\{\alpha \in I: \zeta\in
    \mathcal{U}_\alpha \}<\infty\),
    \item 
    \(\sum_{\alpha\in I} \rho_\alpha = 1\).
    \end{itemize}

Now, making use of the partition of unity, equation
  (\ref{EQ_coarea_basic}), and the monotone convergence theorem to pass
  limits through integrals, we find the following.
\begin{align*}
  \int_S f\|\nabla \beta\|_g \, d\mu_g^2 &= \int_S \sum_{\alpha\in I}
  \rho_\alpha f \|\nabla \beta\|_g d\mu_g^2 =\sum_{\alpha\in I} \int_S
  \rho_\alpha f \|\nabla \beta\|_g d\mu_g^2
  \\
  &=\sum_{\alpha\in I} \int_{\mathcal{O}_{\zeta_\alpha}} \rho_\alpha f
  \|\nabla \beta\|_g d\mu_g^2 =\sum_{\alpha\in I}
  \int_{\beta(\zeta_\alpha)-a_{\zeta_\alpha}}^{\beta(\zeta_\alpha)+b_{\zeta_\alpha}}
  \Big(\int_{\mathcal{O}_{\zeta_\alpha}\cap \beta^{-1}(t)} \rho_\alpha f
  \, d\mu_g^1\Big) dt
  \\
  &=\sum_{\alpha\in I} \int_a^b \Big(\int_{\beta^{-1}(t)} \rho_\alpha f \,
  d\mu_g^1\Big) dt = \int_a^b \Big(\int_{\beta^{-1}(t)} \sum_{\alpha\in
  I}\rho_\alpha f \, d\mu_g^1\Big) dt
  \\
  &= \int_a^b \Big(\int_{\beta^{-1}(t)} f \, d\mu_g^1\Big) dt .
  \end{align*}
This is the desired result, and this completes the proof of Proposition
  \ref{PROP_co-area}.
\end{proof}

\subsection{Typically Tame Perturbations}\label{SEC_tame_perturbations}
The purpose of this section is to prove Lemma \ref{LEM_f_perturbation}
  below, which is the lemma which essentially proves the existence of tame
  perturbations via Lemma \ref{LEM_tame_perturbations}.
In order to prove the main result here, it will be useful to have the
  following definition established.

\begin{definition}[generally-Riemannian metric]
 \label{DEF_generally_Riemannian_metric}
  \hfill\\
On a manifold \(S\), which is smooth and may have boundary and corners, we
  call the pair \((\mathcal{Z}, \gamma)\) a \emph{generally-Riemannian
  metric} provided \(\mathcal{Z}\subset S\setminus \partial S\) is finite,
  \(\gamma\) is a Riemannian metric on \(S\setminus \mathcal{Z}\), and
  \(\gamma\) vanishes on \(\mathcal{Z}\).
\end{definition}
%

\begin{remark}[generally-Riemannian metrics yield distances]
  \label{REM_dist}
  \hfill\\
Although a generally-Riemannian metric is not, strictly speaking, a
  Riemannian metric, it nevertheless induces a distance function defined by
  the following.
\begin{equation*}                                                         
  {\rm dist}_\gamma(\zeta_0, \zeta_1):= \inf \Big\{ \int_0^1
  \gamma\big(\alpha'(t), \alpha'(t)\big)^{\frac{1}{2}} \, dt :\alpha\in
  \mathcal{C}^1([0, 1], S)\text{ and } \alpha(i)=\zeta_i\Big\}.
  \end{equation*}
\end{remark}
%

\begin{lemma}[sufficienty small perturbations]
  \label{LEM_f_perturbation}
  \hfill\\
Let \(\epsilon', \delta>0\) be small positive constants.  
Let \(S\) be a compact real two-dimensional manifold possibly with
  boundary and possibly with corners.
Suppose further that \(S\) is equipped with the following data.
\begin{enumerate}
  \item 
  \(h:S\to \mathbb{R}\) a smooth function satisfying \(\{\zeta\in \partial
  S: dh(\zeta) =0\}=\emptyset\),
  \item 
  \((\mathcal{Z}, \gamma)\) a generally-Riemannian metric.
  \end{enumerate}
Suppose \(\delta\) satisfies  
  \begin{equation*}                                                       
    \delta < {\textstyle \frac{1}{10}} \min \Big( {\rm dist}_\gamma
    (\mathcal{Z}, \partial S), \min_{\substack{\zeta_0, \; \zeta_1\in
    \mathcal{Z}\\ \zeta_0\neq \zeta_1}} {\rm dist}_{\gamma}(\zeta_0,
    \zeta_1), \; {\rm dist}_{\gamma}\big(\{\zeta\in S: dh(\zeta)=0\},
    \partial S\big) \Big).
    \end{equation*}
Define the sets 
  \begin{align*}                                                          
    \mathcal{U}_{\frac{1}{2}\delta} &= \{\zeta\in S: {\rm
    dist}_{\gamma}(\zeta, \mathcal{Z})<{\textstyle \frac{1}{2}}\delta\}
    \\
    \mathcal{U}_{\delta} &= \{\zeta\in S: {\rm dist}_{\gamma}(\zeta,
    \mathcal{Z})<\delta\}
    \\
    \mathcal{V}_{\delta} &= \big\{\zeta\in S: {\rm
    dist}_{\gamma}\big(\zeta, \{z\in S: dh(z)=0\}\big)<\delta\big\}.
    \end{align*}
\emph{Then} there exists a function \(f\in \mathcal{C}^\infty(S)\)
  satisfying the following conditions
  \begin{enumerate}[(f1)]                                                 
    \item \label{EN_f1} 
    \({\rm supp}(f)\subset \mathcal{V}_{\delta}\setminus
    \mathcal{U}_{\frac{1}{2}\delta}\)  with \(\mathcal{V}_{\delta}\cap
    \partial S = \emptyset\),
    \item \label{EN_f2}
    \begin{equation*}
      \sup_{\zeta \in \Omega}|f(\zeta)|+\sup_{\zeta\in \Omega}
      \|df(\zeta)\|_\gamma + \sup_{\zeta\in \Omega}\|\nabla
      df(\zeta)\|_\gamma< \epsilon'
      \end{equation*}
      where \(\Omega={\rm supp}(f)\), and \(\nabla\) denotes covariant
      differentiation associated to the Levi-Civita connection
      corresponding to \(\gamma\),
    \item\label{EN_f3} 
    on \(S\setminus \mathcal{U}_{\delta}\) the function  \(h+f\) is Morse;
    that is, the Hessian \(\nabla d(h+f)\) at critical points of \(h+f\)
    is non-degenerate.
    \end{enumerate}
\end{lemma}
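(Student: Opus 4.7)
The plan is to construct $f$ as a sum of bump-function cutoffs of linear perturbations in local coordinates, one near each critical point of $h$ lying outside $\mathcal{U}_{\delta/2}$, with the perturbation parameters chosen generically by Sard's theorem. Property (f1) is essentially automatic: the assumption $\delta < \tfrac{1}{10}\mathrm{dist}_\gamma(\{dh=0\},\partial S)$ forces $\mathcal{V}_\delta \cap \partial S = \emptyset$, and the supports of the cutoffs will be placed in charts compactly contained in $\mathcal{V}_\delta \setminus \mathcal{U}_{\delta/2}$.

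To set things up, let $K := \{dh=0\} \cap (S \setminus \mathcal{U}_{\delta/2})$, which is a compact subset of $\mathcal{V}_\delta \setminus \mathcal{U}_{\delta/2}$. For each $p \in K$ pick coordinates $(x^{(p)}_1, x^{(p)}_2)$ centered at $p$ on an open neighborhood $\mathcal{O}_p$ compactly contained in $\mathcal{V}_\delta \setminus \mathcal{U}_{\delta/2}$, together with a concentric shrunk neighborhood $\mathcal{O}_p' \Subset \mathcal{O}_p$ satisfying $p \in \mathcal{O}_p'$ and $\{dh=0\} \cap \overline{\mathcal{O}_p} \subset \mathcal{O}_p'$. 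Extract a finite subcover $\mathcal{O}_1',\dots,\mathcal{O}_n'$ of $K$, with corresponding enclosing charts $\mathcal{O}_i$ and bump functions $\chi_i\colon S \to [0,1]$ satisfying $\chi_i \equiv 1$ on $\mathcal{O}_i'$ and $\mathrm{supp}(\chi_i) \Subset \mathcal{O}_i$. For a parameter $a = (a_{i,j}) \in \mathbb{R}^{2n}$, set
\[
f_a := \sum_{i=1}^n \chi_i \cdot \bigl(a_{i,1}\, x^{(i)}_1 + a_{i,2}\, x^{(i)}_2\bigr),
\]
extended by zero. Then $\mathrm{supp}(f_a) \subset \bigcup_i \mathrm{supp}(\chi_i) \subset \mathcal{V}_\delta \setminus \mathcal{U}_{\delta/2}$, giving (f1). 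Because $\gamma$ is a genuine Riemannian metric on the compact set $\bigcup_i \mathrm{supp}(\chi_i)$, there is a constant $C_0$ depending only on the bump functions and coordinates with $\sup|f_a|+\sup\|df_a\|_\gamma+\sup\|\nabla df_a\|_\gamma \leq C_0 \max_{i,j}|a_{i,j}|$, so (f2) is achieved by shrinking $a$.

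The heart of the proof, property (f3), is then verified by decomposing $S \setminus \mathcal{U}_\delta$ into three regions and handling each separately. On the complement of $\bigcup_i \mathrm{supp}(\chi_i)$ we have $f_a = 0$, so $d(h+f_a) = dh$; a critical point here would lie in $\{dh=0\} \setminus \mathcal{U}_\delta \subset K \subset \bigcup_i \mathcal{O}_i'$, a contradiction. On each core $\mathcal{O}_i'$ the cutoff $\chi_i$ equals $1$, so in local coordinates $h+f_a = h + a_{i,1} x^{(i)}_1 + a_{i,2} x^{(i)}_2$, and critical points of $h+f_a$ in $\mathcal{O}_i'$ are exactly the preimages of $-(a_{i,1},a_{i,2})$ under the smooth map $\Phi_i\colon \mathcal{O}_i' \to \mathbb{R}^2$ given by $\Phi_i = \bigl(\partial h/\partial x^{(i)}_1,\ \partial h/\partial x^{(i)}_2\bigr)$; by Sard's theorem a full-measure set of $(a_{i,1},a_{i,2})$ near the origin consists of regular values of $-\Phi_i$, at which the Jacobian of $\Phi_i$ coincides with the coordinate Hessian of $h+f_a$ and is therefore non-degenerate. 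Finally, on each annulus $\mathrm{supp}(\chi_i)\setminus \mathcal{O}_i'$, by construction $\{dh=0\}$ is disjoint from this compact set, so $\|dh\|_\gamma \geq c_i > 0$ there; after shrinking $a$ so that $C_0 \max_{i,j}|a_{i,j}| < \tfrac{1}{2}\min_i c_i$, we obtain $\|d(h+f_a)\|_\gamma \geq \tfrac{1}{2}\min_i c_i$, ruling out critical points on these annuli. Intersecting the finitely many full-measure sets produced by Sard with a small ball about the origin yields the required $f = f_a$.

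The sole subtlety, and the whole reason for introducing the shrunk neighborhoods $\mathcal{O}_i'$ together with the gradient lower bound on the annuli, is to guarantee that critical points created by the Sard perturbation inside the cores are not accompanied by stray critical points drifting in from the transition annuli or the complement; the three-region decomposition is engineered precisely to make this bookkeeping transparent and to decouple the roles of the perturbation parameter $a$ in the two competing requirements of smallness (f2) and transversality (f3).
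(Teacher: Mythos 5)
Your approach is recognizable as a cousin of the paper's argument, but there is a genuine gap: the chart-by-chart application of Sard's theorem fails when the coordinate charts overlap, and in general they must overlap.

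Concretely, you define
\[
f_a = \sum_{j=1}^n \chi_j\,\bigl(a_{j,1}\,x^{(j)}_1 + a_{j,2}\,x^{(j)}_2\bigr),
\]
and then on the core $\mathcal{O}_i'$ you write ``$h+f_a = h + a_{i,1}x^{(i)}_1 + a_{i,2}x^{(i)}_2$'' and apply Sard to the map $\Phi_i = \nabla h$ read off in the $i$-th chart. But on $\mathcal{O}_i'$ the perturbation is the full sum; the terms with $j\neq i$ contribute whenever $\mathrm{supp}(\chi_j)\cap\mathcal{O}_i'\neq\emptyset$. Thus the critical points of $h+f_a$ in $\mathcal{O}_i'$ are not simply the preimages of $-(a_{i,1},a_{i,2})$ under $\Phi_i$, and the Hessian of $h+f_a$ at such a point picks up contributions $\nabla d\bigl(\chi_j(a_{j,1}x^{(j)}_1+a_{j,2}x^{(j)}_2)\bigr)$ from the other charts; these depend on \emph{all} of the other parameters $a_{j,\cdot}$ in a way your per-chart Sard argument cannot see. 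Intersecting the per-chart full-measure sets therefore does not, by itself, produce a single $a$ for which every critical point in $S\setminus\mathcal{U}_\delta$ is nondegenerate. This overlap problem cannot be avoided by shrinking the charts: the critical set $K = \{dh=0\}\cap(S\setminus\mathcal{U}_{\delta/2})$ need not be discrete (in the intended application $h = a\circ u$ with $u$ not yet perturbed to be Morse, so $K$ may contain arcs or even two-dimensional pieces), and a connected non-discrete set cannot be covered by pairwise disjoint cores $\mathcal{O}_i'$ each of which sits inside a coordinate chart.

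The correct fix is the parametric transversality (Thom--Smale) argument that the paper actually uses: assemble the family of perturbations into a single section $\tilde\sigma(\zeta,\mathbf{s}) = dh(\zeta) + \sum_i s^i\,d\tilde f_i(\zeta)$ of the pulled-back bundle $\mathrm{pr}^*T^*S \to S\times\mathbb{R}^{\tilde m}$, verify that its principal-part linearization is surjective along $\mathcal{O}\times\{0\}$ (so that $\mathcal{B} := \tilde\sigma^{-1}(0)\cap\widetilde{\mathcal{O}}$ is a smooth manifold), and then apply Sard to the \emph{projection} $\mathrm{pr}_2\colon\mathcal{B}\to\mathbb{R}^{\tilde m}$. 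By the standard cokernel-matching lemma (Lemma A.3.6 in McDuff--Salamon, cited as Lemma \ref{LEM_A36} in the paper), a regular value $\mathbf{s}$ of $\mathrm{pr}_2$ is exactly a parameter for which the frozen section $d(h+f_{\mathbf{s}})$ is transverse to the zero section on $\mathcal{O}$, and such regular values accumulate at $\mathbf{s}=0$. This global Sard argument handles the coupling between overlapping charts automatically, because all the parameters enter through the single map $\mathrm{pr}_2$. As a minor further difference, the paper also saves parameters by only perturbing in the directions of $\ker A_z$ at degenerate critical points, but that refinement is cosmetic; the essential point you are missing is the need for a \emph{single} parametric Sard application on the joint zero set rather than independent applications in each chart.
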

%
\begin{proof}
We begin by regarding \(dh\) as a section of the cotangent bundle \(T^*
  S\), so the zeros of \(dh\) are precisely the critical points of \(h\),
  denoted by
  \begin{align*}
    {\rm Crit}_h = \{\zeta\in S: dh(\zeta)=0\}.
    \end{align*}
For each \(z\in {\rm Crit}_h\setminus \mathcal{Z}\), we may regard the
  linear map
  \begin{align*}
    &A_z:T_z S\to T_z^* S
    \\
    &Y\mapsto \nabla_Y dh\big|_z
    \end{align*}
  as the linearization of the principal part\footnote{By ``linearization
  of the principal part'' we mean the following.
Given a vector bundle \(\mathcal{E}\to \mathcal{B}\) with a connection
  \(T\mathcal{E} = V\mathcal{E}\oplus H\mathcal{E}\),  which for our
  purposes will always be the Levi-Civita connection,  together with a
  continuously differentiable section \(\sigma: \mathcal{B}\to
  \mathcal{E}\), then the linearization of the principal part of
  \(\sigma\) is defined to be \({\rm pr}_V\circ T\sigma \) where \({\rm
  pr}_V: T\mathcal{E}\to V\mathcal{E}\) is the projection to vertical
  sub-bundle associated to the connection.} of the section \(dh\in
  \Gamma(T^*S \to S)\) at the point \(z\).
Because \(T_z S\) and \(T_z^* S \) have the same dimension, we see that
  \(z\) is a non-degenerate critical point of \(h\) if and only if \(A_z\)
  has trivial kernel.
For each \(z\in{\rm Crit}_h\setminus \mathcal{Z}\) we can define
  \(\gamma\)-geodesic coordinates \((x_z^1, x_z^2)\)  centered at \(z\),
  with the additional property that if \(A_z\) has nontrivial kernel, then
  \(A_z(\partial_{x_z^1})= 0\).
Next, for each \(z\in {\rm Crit}_h\setminus \mathcal{Z}\) we define the
  number \(n_z\in \{0, 1, 2\}\) by \(n_z:={\rm dim}\, {\rm ker}(A_z)\),
  and we define the neighborhood
  \begin{equation*}
    \mathcal{W}_z:=\big\{\zeta\in S: {\rm dist}_\gamma(\zeta, z)<
    {\textstyle \frac{1}{2}}{\rm min}\big(\delta_1, {\rm
    inj}^\gamma(z)\big)\big\},
    \end{equation*}
  where \({\rm inj}^\gamma(z)\) is the injectivity radius associated to
  \(\gamma\) at \(z\in S\).
Letting \({\rm pr}_1:S\times \mathbb{R}^{n_z}\to S\) denote the canonical
  projection to the first factor, we define the section \(\sigma_z\in
  \Gamma({\rm pr}_1^* T^* S\to S\times \mathbb{R}^{n_z})\) by
  \begin{align*}
    \sigma_z(\zeta, \mathbf{s}) =
    \begin{cases}
    dh(\zeta) & \text{if }\zeta\notin \mathcal{W}_z \text{ or } n_z=0
    \\
    dh(\zeta) + \sum_{i=1}^{n_z} s^i d( x_z^i \beta_{z})(\zeta)&
    \text{otherwise}
    \end{cases}
    \end{align*}
  where \((x_z^1, x_z^2)\) are the coordinates established above, and
  \(\beta_z\) is a smooth cut-off function with \(\beta_z(\zeta)=1\) in a
  neighborhood of \(\zeta=z\) and
  \begin{equation}\label{EQ_f_support}                                     
    {\rm supp}(\beta_z)\subset \big\{\zeta\in S: {\rm dist}_\gamma(\zeta ,
    z) < {\textstyle \frac{1}{4}}\min(\delta_1, {\rm inj}^\gamma(z))\big\}.
    \end{equation}
By construction the section \(\sigma_z\) is  transverse to the zero
  section at the point \((z, 0)\) with \(z\in {\rm Crit}_h\setminus
  \mathcal{Z}\), and hence the linearization of the principal part of
  \(\sigma_z\) is surjective at \((z, 0)\).
Consequently, there exists an open set \(\mathcal{O}_z\subset S\)
  containing the  point \(z\) with the additional property that for each
  \((w, 0)\in \mathcal{O}_z\times \mathbb{R}^{n_z}\)  the linearization of
  the principal part of \(\sigma_z\) at \((w, 0)\) is surjective.
We now repeat this construction for each \(z\in {\rm Crit}_h\setminus
  \mathcal{Z}\).
Note that \({\rm Crit}_h\setminus \mathcal{U}_{\delta}\) is compact, and
  the collection \(\{\mathcal{O}_z\}_{z\in {\rm Crit}_h\setminus
  \mathcal{Z}}\) is an open cover of \({\rm Crit}_h\setminus
  \mathcal{U}_{\delta}\), and hence may be reduced to a finite sub-cover of
  \({\rm Crit}_h\setminus \mathcal{U}_{\delta}\), which we denote by
  \(\{\mathcal{O}_{z_1}, \ldots, \mathcal{O}_{z_m}\}\).
Let \(\{w_1, \ldots, w_{\ell}\}\subset \{z_1, \ldots, z_m\}\) be the
  subset for which \(A_{w_k}\) fails to be surjective, and define the
  section \(\tilde{\sigma}\) by
  \begin{align*}                                                          
    &\tilde{\sigma}\in\Gamma\big({\rm pr}^*T^* S \to
    S\times\mathbb{R}^{n_{w_1}}\times \cdots \times
    \mathbb{R}^{n_{w_{\ell}}}\big)
    \\
    &\tilde{\sigma}(\zeta, \mathbf{s}) = dh(\zeta) + \sum_{j=1}^{\ell}
    \sum_{i=1}^{n_{w_j}} s^{j, i} d( x_{w_j}^i \beta_{w_j})(\zeta)
    \end{align*}
For ease of notation, let us re-index and rewrite the above as
  \begin{align*}                                                          
    &\tilde{\sigma}\in \Gamma\big({\rm pr}^*T^*S\to S\times
    \mathbb{R}^{\tilde{m}}\big)
    \\
    &\tilde{\sigma}(\zeta, \mathbf{s}) = dh(\zeta) + \sum_{i=1}^{\tilde{m}}
    s^i d\tilde{f}_i (\zeta).
    \end{align*}
By construction, the linearization of the principal part of the section
  \(\tilde{\sigma}\) is surjective over the set \(\mathcal{O}\times \{0\}\)
  where \(\mathcal{O}=\cup_{i=1}^m \mathcal{O}_{z_i}\).
Consequently, there exists an open set
  \(\widetilde{\mathcal{O}}=\mathcal{O}\times
  \{|\mathbf{s}|<\hat{\epsilon}\}\subset S\times \mathbb{R}^{\tilde{m}}\)
  which has the following two important properties.
\begin{enumerate}[(T1)]
  \item \label{EN_T1} 
  the linearization of the principal part of \(\tilde{\sigma}\) is
  surjective at every point in \(\widetilde{\mathcal{O}}\)
  \item \label{EN_T2} 
  if \((\zeta, \mathbf{s})\in S\times \{|\mathbf{s}|< \hat{\epsilon}\}\)
  solves \(\tilde{\sigma}(\zeta, \mathbf{s})=0\), then  \(\zeta\in
  \mathcal{O}\cup \mathcal{U}_{\delta}\)
  \end{enumerate}
The first property follows essentially because surjectivity is an open
  condition; the second property follows because \(dh\) is non-vanishing on
  the compact set \(S\setminus (\mathcal{O}\cup \mathcal{U}_{\delta})\), so
  that \(\|dh\|\) attains a non-zero minimum on this set, and hence for all
  \(\mathbf{s}\) sufficiently close to \(0\) we have
  \begin{equation*}
    \sup_{\zeta\in S} \|\sum_{i=1}^{\tilde{m}} s^i d\tilde{f}^i
    (\zeta)\|_{\gamma} < \inf_{\zeta\in S\setminus (\mathcal{O}\cup
    \mathcal{U}_{\delta})} \|dh(\zeta)\|.
    \end{equation*}
As a consequence of property (T\ref{EN_T1}) and the implicit function
  theorem, the set \(\mathcal{B}:=\widetilde{\mathcal{O}}\cap
  \tilde{\sigma}^{-1}(0)\subset S\times \mathbb{R}^{\tilde{m}}\) is a smooth
  manifold of dimension \(\tilde{m}\).  
Fix \((\zeta, \mathbf{s})\in \mathcal{B}\), and note that the associated
  tangent fiber of \(\mathcal{B}\) is given by
  \begin{align*}
    T_{(\zeta, \mathbf{s})} \mathcal{B} &= \{(v, \hat{s}^1, \ldots,
    \hat{s}^{\tilde{m}})\in T_\zeta S \times
    \mathbb{R}^{\tilde{m}}:0=\nabla_v dh + \sum_{i=1}^{\tilde{m}}s^i
    \nabla_vd \tilde{f}^i +\hat{s}^i d\tilde{f}^i  \}.
    \end{align*}
We denote the following vector spaces \(X=T_\zeta S\),  \(Y=T_\zeta^* S\),
  and \(Z=\mathbb{R}^{\tilde{m}}\), and we define the following linear
  maps.
\begin{equation*}                                                         
  D:X\to Y\qquad\text{by}\qquad D(v) =  \nabla_v dh
  +\sum_{i=1}^{\tilde{m}}  s^i \nabla_v d\tilde{f}^i
  \end{equation*}
\begin{equation*}                                                         
  L:Z \to Y\qquad\text{by}\qquad L(\hat{s}^1, \ldots,
  \hat{s}^{\tilde{m}})=\sum_{i=1}^{\tilde{m}} \hat{s}^i\, d\tilde{f}^i
  \end{equation*}
Consequently, we may express 
  \begin{align*}                                                          
    T_{(\zeta, \mathbf{s})} \mathcal{B} ={\rm ker}\, (D\oplus L).
    \end{align*}
Finally, we define the projection
  \begin{equation*}                                                       
    \Pi : {\rm ker}\, (D\oplus L) \to Z\qquad\text{by}\qquad \Pi(v,
    \hat{s}^1, \ldots, \hat{s}^i)=  (\hat{s}^1, \ldots, \hat{s}^i).
    \end{equation*}
At this point we note that \(D\oplus L\) is the linearization of the
  principal part of \(\tilde{\sigma}\) at the point \((\zeta, \mathbf{s})\),
  which by construction is surjective, and hence \(D\oplus L\) is onto.
By Lemma \ref{LEM_A36} below, it follows that \(D\) is surjective if and
  only if \(\Pi\) is surjective.

At this point, our aim is to show that there exist many choices of
  \(\mathbf{s}\in \mathbb{R}^{\tilde{m}}\) with the property that whenever
  \(\tilde{\sigma}(\zeta, \mathbf{s})=0\), we also have that \(\Pi\) is
  surjective.
To that end, consider \({\rm pr}_2: \mathcal{B}\subset S\times
  \mathbb{R}^{\tilde{m}}\to \mathbb{R}^{\tilde{m}}\) the canonical
  projection to the second factor, and observe that this map is smooth, and
  \(T{\rm pr}_2 = \Pi\).
By Sard's theorem, the regular values of \({\rm pr}_2\) have full measure,
  and hence there exists a sequence \(\{\mathbf{s}_k\}_{k\in \mathbb{N}}\)
  in \(\mathbb{R}^{\tilde{m}}\) satisfying \(\mathbf{s}_k\to 0\) and each
  \(\mathbf{s}_k\) is a regular value of \({\rm pr}_2\).
For each such \(\mathbf{s}_k\) and every \((\zeta, \mathbf{s}_k)\in
  \mathcal{B}\) we then have that \(D\) is surjective.
That is to say, for each fixed such  \(\mathbf{s}_k=(s_k^1, \ldots,
  s_k^{\tilde{m}})\), and each \(\zeta\in S\) which is a zero of the section
  \(d(h+f_k):=d(h+\sum_i s_k^i\tilde{f}^i)\in \Gamma(T^*S\to S)\), the
  linearization of the principal part of this section  is surjective.
In other words, for each such \(\mathbf{s}_k\), the section \(d(h+\sum_i
  s_k^i\tilde{f}^i)\) is transverse to the zero-section for all
  \(\zeta\in\mathcal{O}\); by property (T\ref{EN_T2}) the only zeros of
  \(d(h+f_k)\) lie in \(\mathcal{O}\cup\mathcal{U}_{\delta}\), and hence all
  critical point of \(h+f_k\) in \(S\setminus \mathcal{U}_{\delta}\) are
  non-degenerate.
This establishes property (f\ref{EN_f3}) for any \(f=f_k\).
Next we note that \(f_k\to 0\) in \(\mathcal{C}^\infty\) so that property
  (f\ref{EN_f2}) holds for any sufficiently large \(k\).
Finally property (f\ref{EN_f1}) follows from the definition of the \(f_k\)
  -- specifically the support of the cut-off functions \(\beta_z\)
  established in equation (\ref{EQ_f_support}).
This completes the proof of Lemma \ref{LEM_f_perturbation}
\end{proof}
%

\begin{lemma}[Lemma A.3.6, \cite{MS}]
  \label{LEM_A36}
  \hfill\\
Assume \(D:X\to Y\) is a Fredholm operator and \(L:Z\to Y\) is a bounded
  linear operator such that \(D\oplus L:X\oplus Z\to Y\) is onto.
Then \(D\oplus L \) has a right inverse. 
Moreover, the projection \(\Pi: {\rm ker}(D\oplus L)\to Z\) is a Fredholm
  operator with \({\rm ker}\, \Pi\cong {\rm ker}\, D\) and \({\rm coker}\,
  \Pi\cong {\rm coker}\, D\), and hence \({\rm index}\, \Pi = {\rm index}\,
  D\).
\end{lemma}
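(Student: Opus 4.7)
The plan is to handle the three assertions separately, exploiting the fact that $D$ being Fredholm forces all the ``obstructions'' in the problem to be finite-dimensional, where we can split at will.

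For the right inverse of $D\oplus L$, I would first use that $D$ Fredholm gives a topological splitting $X=\ker D\oplus X_0$ with $D|_{X_0}\colon X_0\to \mathrm{im}\,D$ a Banach isomorphism (by open mapping), and a finite-dimensional splitting $Y=\mathrm{im}\,D\oplus W$. Surjectivity of $D\oplus L$ then forces $\pi_W\circ L\colon Z\to W$ to be surjective onto the finite-dimensional $W$, so I can pick a finite-dimensional $Z_0\subset Z$ on which $\pi_W\circ L$ is a linear isomorphism onto $W$. The right inverse is then assembled in the obvious way: given $y=y_1+y_2$ with $y_1\in\mathrm{im}\,D$, $y_2\in W$, set $z_0=(\pi_W L|_{Z_0})^{-1}y_2$ and $x=(D|_{X_0})^{-1}(y_1-\pi_{\mathrm{im}\,D}Lz_0)$, and verify $(D\oplus L)(x,z_0)=y$. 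Boundedness is automatic since each factor is a bounded linear map on/off a finite-dimensional space.

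For the structural part, I would just compute $\ker\Pi$ and $\mathrm{coker}\,\Pi$ by hand. By definition $\ker\Pi=\{(x,0)\in\ker(D\oplus L)\}=\{(x,0):Dx=0\}\cong \ker D$, which is finite-dimensional. For the cokernel, observe $\mathrm{im}\,\Pi=\{z\in Z: -Lz\in \mathrm{im}\,D\}=L^{-1}(\mathrm{im}\,D)$, and consider the linear map $\bar L\colon Z/L^{-1}(\mathrm{im}\,D)\to Y/\mathrm{im}\,D=\mathrm{coker}\,D$ induced by $L$. By construction $\bar L$ is injective, and it is surjective because $D\oplus L$ is: for any $y\in Y$ there are $x,z$ with $Dx+Lz=y$, so $[Lz]=[y]$ in $\mathrm{coker}\,D$. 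Hence $\mathrm{coker}\,\Pi=Z/L^{-1}(\mathrm{im}\,D)\cong \mathrm{coker}\,D$, which is again finite-dimensional. Combining, $\Pi$ is Fredholm with $\mathrm{index}\,\Pi=\dim\ker D-\dim\mathrm{coker}\,D=\mathrm{index}\,D$.

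I do not foresee a genuine obstacle: the only mildly delicate point is checking that the closed subspace $L^{-1}(\mathrm{im}\,D)$ really has the correct finite-dimensional quotient, which is immediate once $\bar L$ is identified as an isomorphism onto $\mathrm{coker}\,D$. Everything else is a bookkeeping exercise in splittings combined with a single application of the open mapping theorem, and no analytic input beyond the Fredholm hypothesis on $D$ is required.
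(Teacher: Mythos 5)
Your proof is correct and follows essentially the same splitting argument as the one in McDuff--Salamon (which the paper simply cites rather than reproving): split $X=\ker D\oplus X_0$ and $Y=\mathrm{im}\,D\oplus W$ using the Fredholm property, build the right inverse by solving for a finite-dimensional slice $Z_0$ of $Z$ first, and identify $\ker\Pi\cong\ker D$ and $\mathrm{coker}\,\Pi\cong\mathrm{coker}\,D$ directly. The verification that $\bar L\colon Z/L^{-1}(\mathrm{im}\,D)\to\mathrm{coker}\,D$ is an isomorphism is exactly the right observation for the cokernel computation, and boundedness of the right inverse is handled as you indicate by the open mapping theorem on the closed-range factor together with finite-dimensionality of $Z_0$ and $W$.
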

%

\bibliography{bibliography}{}
\bibliographystyle{plain}

\end{document}